\algrenewcommand\algorithmicindent{2.5em}
\algrenewcommand{\algorithmiccomment}[1]{{\color{brown} \hfill$\blacktriangleright$ #1 }}
\DeclareMathOperator{\pgcd}{pgcd}
\DeclareMathOperator{\Act}{Act}
\DeclareMathOperator{\Arg}{Arg}
\DeclareMathOperator{\cc}{cc}
\DeclareMathOperator{\cycl}{cycl}
\DeclareMathOperator{\face}{f}
\DeclareMathOperator{\arete}{a}
\DeclareMathOperator{\som}{s}
\DeclareMathOperator{\inte}{intact}
\DeclareMathOperator{\ext}{extact}
\DeclareMathOperator{\Cat}{Cat}
\DeclareMathOperator{\Ima}{Im}
\DeclareMathOperator{\Ree}{Re}
\newcommand{\R}{\mathbb R}
\newcommand{\Q}{\mathbb Q}
\newcommand{\C}{\mathbb C}
\newcommand{\N}{\mathbb N}
\newcommand{\K}{\mathbb K}
\newcommand{\F}{\mathbb F}
\newcommand{\pd}[2]{\frac{\partial#1}{\partial#2}}
\newcommand{\pdd}[3]{\frac{\partial^{#1} #2}{\partial#3^#1}}
\newcommand{\pdtrip}[3]{\frac{\partial^{#1} #2}{\partial#3}}
\newcommand{\diffs}[2]{#1 \, \ominus \, #2}
\newcounter{nota}
\newcommand{\tut}{\mathcal T_{G,\Delta}}
\newcommand{\sing}[1]{\{#1\}}
\newcommand{\ens}[1]{\left\{#1\right\}}
\newcommand{\pare}[1]{\left(#1\right)}
\newcommand{\module}[1]{\left|#1\right|}
\newcommand{\enstq}[2]{\left\{#1\ \left|\ #2\right.\right\}}
\newcommand{\ar}[1]{\{#1,#1'\}}
\newcommand{\contract}[2]{\cccc(#1,#2)}
\newcommand{\delete}[2]{\dddd(#1,#2)}
\DeclareMathOperator{\cccc}{contract}
\DeclareMathOperator{\dddd}{delete}
\newcommand{\ent}[2]{\ens{#1,\dots,#2}}
\newcommand{\Som}[1]{\mathcal {S} \left(#1 \right)}
\newcommand{\Ar}[1]{\mathcal A \hspace{-2pt} \left(#1 \right)}
\newcommand{\bSi}{\textbf{S}$_i$}
\newcommand{\mSi}{\textbf{S}_i}
\newcommand{\bSe}{\textbf{S}$_e$}
\newcommand{\mSe}{\textbf{S}_e}
\newcommand{\bI}{\textbf{I}}
\newcommand{\bL}{\textbf{L}}
\newcommand{\ua}[1]{\underset{#1}{\rightarrow}}
\newcommand{\fig}[3]{
\begin{figure}[h!]
\begin{center}
 \includegraphics #1
 \end{center}
\vspace{-15pt}
\caption{ #2}
\label{#3}
\end{figure}
}
\newtheorem{prop}{Proposition}[section]
\newtheorem{lem}[prop]{Lemme}
\newtheorem{defi}[prop]{Définition}
\newtheorem{theo}[prop]{Théorème}
\newtheorem{cor}[prop]{Corollaire}
\newtheorem{conjecture}{Conjecture}
\newtheorem{leme}[prop]{Lemma}
\newtheorem{defie}[prop]{Définition}
\newtheorem{core}[prop]{Corollary}
\newtheorem{theoe}[prop]{Theorem}
\definecolor{darkgray}{gray}{0.50}
\definecolor{Navy}{RGB}{0,0,180}
\def\emm#1,{{\em #1}}
\newcommand{\beq}{\begin{equation}}
\newcommand{\eeq}{\end{equation}}
\def\section{\@startsection{section}{1}%
 \z@{10pt}{5pt}%
 {\normalfont\bfseries\scshape\centering\Large}
}
\def\subsubsection{\@startsection{subsubsection}{3}%
 \z@{.5\linespacing\@plus\linespacing}{-.5em}
  {\normalfont\bfseries\itshape}}
	\renewcommand{\chaptermark}[1]{\markboth{\thechapter.\ #1}{}}
	\let\origdoublepage\cleardoublepage
	\newcommand{\clearemptydoublepage}{%
	\clearpage
	{\pagestyle{empty}\origdoublepage}%
	}
	\let\cleardoublepage\clearemptydoublepage 
\title{} 
\author{}
\date{}
\begin{document}

\newlength{\plarg}
\setlength{\plarg}{\textwidth}

\begin{titlepage}
\thispagestyle{empty}

\thispagestyle{empty}

\noindent
\vspace{-20pt}
\begin{figure}[!h]
\begin{minipage}[t]{0.35\textwidth}
\vspace{-80pt}
 \hspace{-0.4 \textwidth}\includegraphics[width= 1.1\textwidth]{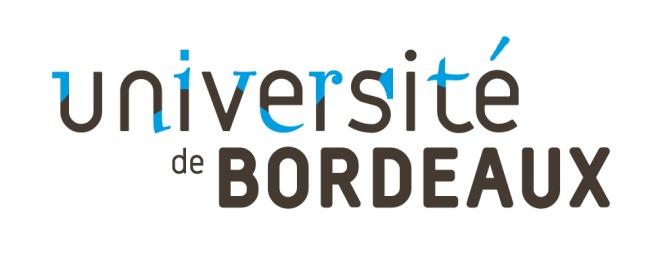} 
\end{minipage}
\begin{minipage}{0.3\textwidth}
\vspace{0.4cm} \centering
{\Large\bfseries Thèse}\\ \vspace{0.4cm}
présentée à\\ \vspace{1cm}
\end{minipage}
\hfill
\end{figure}
\vspace{0.4cm}
\noindent
\vspace{-80pt}
\begin{center}
\begin{minipage}{\plarg}
\vspace{0.6cm} \centering
{\Large\bfseries L'UNIVERSIT\'E DE BORDEAUX}\\ \vspace{0.5cm}
\'ECOLE DOCTORALE DE MATH\'EMATIQUES ET INFORMATIQUE
\vspace{1cm}

\newlength{\trz}
\settowidth{\trz}{Mme Mireille Bousquet-Mélou}
\newlength{\tyy}
\settowidth{\tyy}{Examinatrice}
\newlength{\tqq}
\settowidth{\tqq}{Directrice de recherche CNRS}

{par \Large\bfseries \hspace{10pt}  Julien Courtiel\hspace{10pt} }\\ \vspace{1cm}
{POUR OBTENIR LE GRADE DE}\\\vspace{0.2cm}
{\Large \bfseries DOCTEUR}\\\vspace{0.5cm}
{\large SP\'ECIALIT\'E: \bfseries Informatique}
\centerline{\line(1,0){500}} \vskip 0.5cm
{\Huge\bfseries Combinatoire du polynôme de Tutte et des cartes planaires}\\ \vspace{0.2cm}
\centerline{\line(1,0){500}} \vskip 1cm
Soutenue le 3 octobre 2014 au Laboratoire Bordelais de Recherche en Informatique\\
\textbf{après avis des rapporteurs :}
\begin{tabular}{p{\trz}p{\tqq}p{\tyy}}
\vfill M. Marc Noy & \vfill Professeur & \   \\
\vfill M. Gilles Schaeffer  & \vfill Directeur de recherche CNRS  \\
\end{tabular}
\vspace{0.5cm}

\textbf{devant la commission d'examen composée de :}
\begin{tabular}{p{\trz}p{\tqq}p{\tyy}}
\vfill Mme Frédérique Bassino & \vfill Professeure & \vfill Examinatrice \\
\vfill Mme Mireille Bousquet-Mélou & \vfill Directrice de recherche CNRS & \vfill Directrice \\
\vfill M. Jérémie Bouttier & \vfill Chercheur CEA & \vfill Examinateur \\
\vfill M. Robert Cori & \vfill Professeur émérite & \vfill Examinateur \\
\vfill M. Marc Noy & \vfill Professeur & \vfill Rapporteur \\
\vfill M. Bruno Salvy & \vfill Directeur de recherche INRIA & \vfill Examinateur \\
\end{tabular}
\end{minipage}
\end{center}

\end{titlepage}

\maketitle

\tableofcontents

\selectlanguage{french}

\chapter*{Remerciements}
\chaptermark{Remerciements}

Les premiers remerciements s'adressent tout naturellement à ma chère directrice de thèse, Mireille. Je m'estime très chanceux d'avoir travaillé sous sa gouverne. J'ai beaucoup appris auprès d'elle, que ce soit d'un point de vue scientifique ou extra-scientifique. Je ne la remercierai jamais assez pour la patience qu'elle a usée lors de la lecture et relecture de ce manuscrit, pour tous les conseils avisés qu'elle m'a prodigués, et pour tous les moments de complicité que nous avons pu partager. Je lui décerne sans hésiter la médaille d'or des directrices de thèse. 

Un grand merci à Marc Noy et Gilles Schaeffer pour avoir bien voulu rapporter ma thèse. La lecture d'un manuscrit de deux cents pages et l'écriture d'un rapport ne constituent pas un programme estival très enthousiasmant ; je suis très reconnaissant du sacrifice. Je les remercie également pour leurs appréciations élogieuses qui sauront m'encourager et m'inspirer pour la suite.

Je suis très flatté que Frédérique Bassino, Jérémie Bouttier, Robert Cori et Bruno Salvy aient également accepté de faire partie du jury de cette thèse. C'est au final un grand honneur de voir les noms de toutes ces personnes, dont j'apprécie les qualités humaines et scientifiques, sur la première page de ce manuscrit.

Je remercie également tous les collègues combinatoriciens que j'ai pu côtoyer dans ma petite vie de doctorant. Je mesure la chance, d'un point de vue professionnel et humain, de faire partie de ce qui porte maintenant le nom de la "communauté Aléa". Je salue tout particulièrement Adeline, Alice, Axel, Basile, Cécile, \'Elie, \'Eric, Gwendal, Jérémie, Julien, Kerstin, Kilian, Philippe, Vincent.
\`A une échelle plus locale, je remercie tous les membres du LaBRI, et plus particulièrement tous ceux qui ont fait/font partie du groupe combinatoire bordelais : Adrien, Bétréma, Jean-Christophe, Jean-François, Jérôme, Marie-Line, Marni, Mathilde, Nicolas, Olivier, Philippe, Robert, Romaric, Sacha, Valentin, Vincent, Yvan...
Leur accueil fut si chaleureux, leur bonne humeur si communicative, les déjeuners du vendredi midi si conviviaux \footnote{bien qu'on me forçât régulièrement à remplir la carafe} qu'ils firent oublier à un Toulousain qu'il habitait Bordeaux.

Je ne saurais oublier tous mes amis qui ont été présents dans cette thèse,  particulièrement dans les moments un peu difficiles : Fif, Grand Ju, Guigui, Lolo, Mama, Ninou, Olaf, Toto, Tutu, Vass, Yoyo, Zez\footnote{Oui, on aime bien les surnoms un peu bizarres.}... Leur amitié m'est précieuse, je ne les oublierai pas malgré la distance future.

Les derniers remerciements sont adressés à l'ensemble de ma famille, pour leur proximité et leur soutien. Le prochain Noël va être difficile\footnote{Et je ne dis pas ça parce que je n'aurai pas de cadeaux...} ! Je remercie tout particulièrement mes parents pour m'avoir aidé dans le grand rush final ; que ce soit pour le déménagement qui a malheureusement coïncidé avec la fin de rédaction de cette thèse, ou pour la préparation de cette soutenance. Un remerciement final à ma s\oe{}ur et à mes grands-parents qui ont pris la peine de parcourir des kilomètres pour voir ma modeste pomme raconter des choses qu'ils ne comprendront pas forcément.

\newpage
\strut
\newpage

\chapter{Introduction}

\section{La combinatoire énumérative}

\subsection{Principe}

Que ce soit en informatique, en mathématiques ou en physique théorique, les structures combinatoires telles que les arbres ou les graphes  abondent. La\textit{ combinatoire énumérative} est la science qui cherche à compter ces objets, que ce soit de manière exacte ou asymptotique. Les motivations sont diverses : analyse d'algorithmes, étude de probabilités, lien avec les fonctions de partition en physique statistique, génération aléatoire...

Illustrons notre propos par un exemple classique  : celui des arbres plans binaires. Un \textit{arbre plan binaire} (ou \textit{arbre binaire}) est une structure de données arborescente constituée de n\oe{}uds de deux sortes :  les \textit{n\oe{}uds internes}, qui possèdent chacun deux n\oe{}uds enfants, un enfant gauche 
 et un enfant droit, et les \textit{feuilles},  qui ne possèdent pas d'enfants. Il existe un unique n\oe{}ud sans parent, qui est appelé \textit{n\oe{}ud racine}. La figure \ref{arbresbinaires} montre des exemples d'arbres  binaires.

\fig{[width = 0.9 \textwidth]{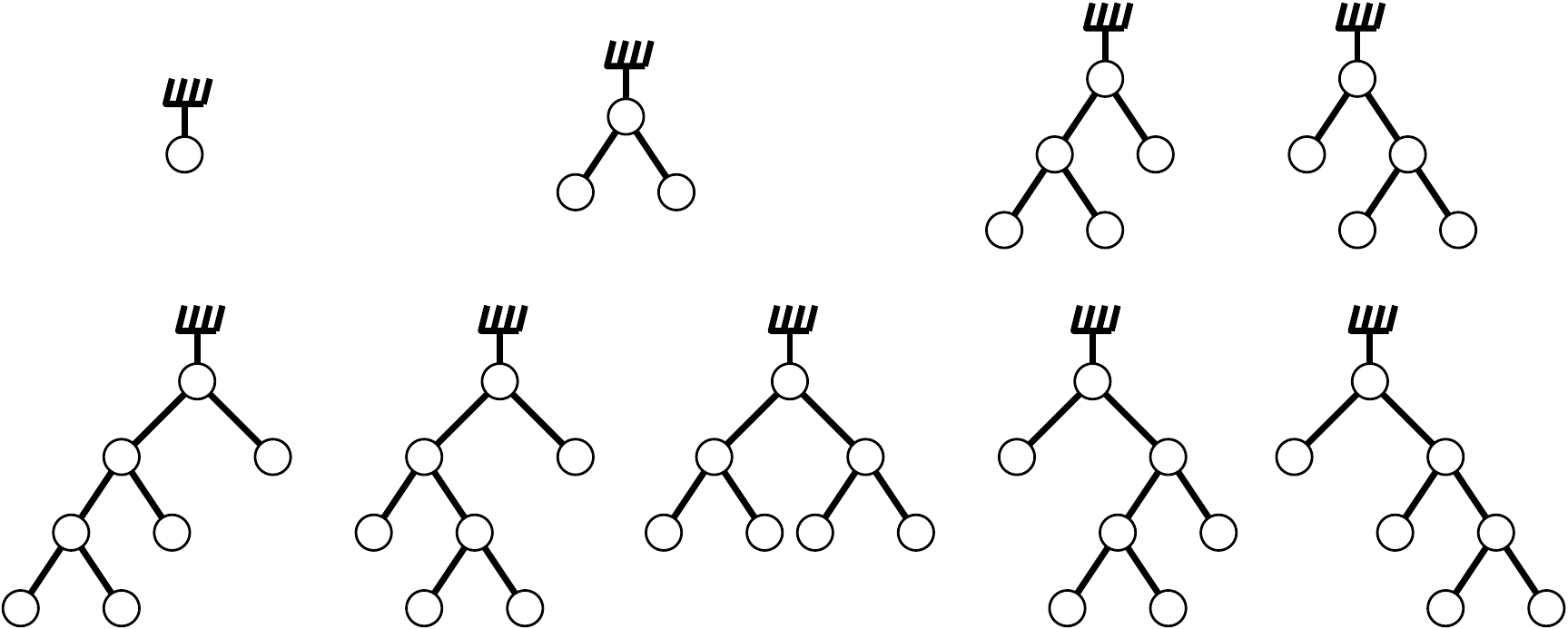}}{Les arbres binaires de taille au plus $4$. Le sommet racine est celui indiqué par le symbole en forme de râteau.}{arbresbinaires}


Appelons $c_n$ le nombre d'arbres binaires à $n$ feuilles, pour $n \geq 0$. Ce nombre est le célèbre \textit{nombre de Catalan}
\begin{equation} \label{catalan}
c_{n-1} = \frac 1 {n+1} {2n \choose n},
\end{equation}
 intervenant dans de nombreux problèmes de dénombrement : citons par exemple les arbres plans comptés selon le nombre d'arêtes ou les chemins de Dyck. Deux questions naturelles se posent alors : comment ces objets sont-ils reliés entre eux ? Y a-t-il un argument qui expliquerait la simplicité du cardinal de ces objets ?
Des réponses peuvent être apportées sous forme de \textit{bijections} : contour de l'arbre, mots de \L{}ukasiewicz, lemme cyclique, etc. (Nous ne donnerons aucun détail, car cela serait légèrement hors propos ; nous renvoyons le lecteur aux livres de Richard Stanley \cite{stanley-vol1} \cite{stanley-vol-2}.)

De manière générale, trouver des bijections entre objets combinatoires de différents types  est une des vocations de la combinatoire énumérative. L'exemple qui nous intéressera particulièrement pour ce mémoire est la bijection entre ce qu'on appellera les \textit{cartes planaires} et certains arbres plans dits \textit{bourgeonnants} \cite{bdg2002} (voir Chapitre \ref{c:cartesarbres}).

\subsection{Séries génératrices et approche récursive}

Beaucoup de problèmes énumératifs ne présentent pas de formules closes aussi simples que \eqref{catalan}. Il est souvent plus pratique d'obtenir des informations sous forme de séries génératrices. La \textit{série génératrice} (ici \textit{ordinaire}) 
associée à une classe d'objets combinatoires dont le cardinal à taille $n$ fixée est donné par $a_n$ est définie comme la série formelle
$$A(z) = \sum_{n \geq 0} a_n z^n.$$
Les séries génératrices constituent un outil fondamental de la combinatoire énumérative pour la simple et bonne raison qu'une décomposition récursive des objets se traduit souvent en une équation fonctionnelle satisfaite par la série génératrice.

Pour revenir à l'exemple précédent, si un arbre binaire n'est pas réduit à une feuille, alors il se décompose au niveau du sommet racine en un couple (ordonné) de deux arbres binaires de taille inférieure (voir figure \ref{arbrebinairedecompo}). Dans ce cas-là, la somme des tailles des deux sous-arbres donne la taille totale de l'arbre. Nous obtenons alors la relation de récurrence 
\begin{equation}
c_1 = 1, \quad c_n = \sum_{k = 1}^{n-1} c_k \, c_{n-k} \quad \textrm{ pour }n \geq 1,
\label{reccn}
\end{equation}
ce qui se traduit pour la série génératrice $C(z)$ des arbres binaires par
\begin{equation}
C(z) = z + C(z)^2.
\label{ef:catalan}
\end{equation}
La résolution de cette équation du second degré nous donne immédiatement la valeur de $C(z)$, à savoir
\begin{equation}
C(z) = \frac 1 2 - \frac 1 2 \sqrt{1 - 4z}.
\label{sg:catalan}
\end{equation}
Nous retrouvons alors la formule close \eqref{catalan} des nombres $c_n$ par un développement de Taylor de cette série (ou alors en appliquant le \textit{théorème d'inversion de Lagrange} \cite[Théo. 5.4.2]{stanley-vol1} à l'équation \eqref{ef:catalan}, ce qui a l'avantage d'être une méthode plus générale).

\fig{[width = 0.6 \textwidth]{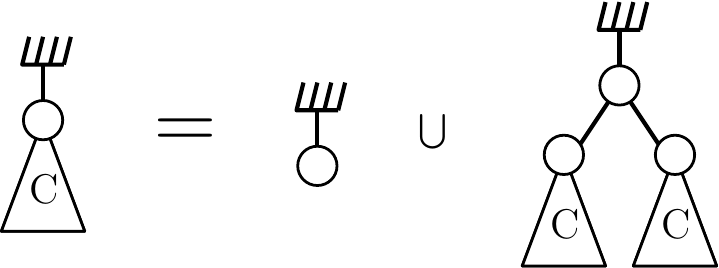}}{Décomposition d'un arbre plan binaire au sommet racine.}{arbrebinairedecompo}

Dans ce mémoire, les objets seront généralement énumérés selon plusieurs paramètres. En d'autres termes, il n'y aura pas qu'une seule fonction de taille. La définition de série génératrice pour des objets énumérés par une suite multivariée $a_{n_1,n_2,\dots,n_m}$ est donnée par la formule plus générale 
$$A(z_1,\dots,z_m) = \sum_{n_1,n_2,\dots,n_m \geq 0} a_{n_1,n_2,\dots,n_m} \, {z_1}^{n_1} \, {z_2}^{n_2} \, \dots \, {z_m}^{n_m}.$$
Par exemple, nous pouvons nous intéresser au nombre $c_{n,p}$ d'arbres binaires à $n$ feuilles et $p$ sommets internes \footnote{ce qui est plutôt idiot puisque le nombre de n\oe{}uds internes est égal au nombre de feuilles moins un, mais gardons cet exemple pour l'illustration de la méthode.} et appeler $C(z,x)$ la nouvelle série génératrice $\sum_{n,p\geq 0} c_{n,p} \,z^n\, x^p$.

Obtenir une relation de récurrence comme \eqref{reccn} devient de plus en plus laborieux lorsque le nombre de paramètres grandit. La  \textit{méthode symbolique} de Philippe Flajolet et Robert Sedgewick \cite{flajolet-sedgewick} permet de contourner ce problème en donnant directement une équation fonctionnelle comme \eqref{ef:catalan}. Pour expliquer en quelques mots le principe de cette méthode, nous voyons  les objets combinatoires comme des agrégats d'\textit{atomes}, des sortes de "briques élémentaires" que nous pondérons individuellement par une variable. Par exemple pour les arbres binaires, nous pondérons chaque feuille par $z$ et chaque n\oe{}ud interne par $x$. Puis les opérations ensemblistes classiques sont traduites par des opérations basiques sur les séries génératrices : l'union disjointe correspond à l'addition, le produit cartésien à la multiplication... Cela permet de transformer beaucoup de décompositions récursives en des équations fonctionnelles. Par exemple, quand nous disons qu'un arbre binaire est soit une feuille (poids $z$), soit un n\oe{}ud interne (poids $x$) sur lequel est attaché deux arbres binaires (poids $C(z,x)$ chacun), cela se traduit par
$$C(z,x) = z + x  \, C(z,x)^2.$$
Nours verrons une application plus poussée de la méthode symbolique quand nous décrirons la série génératrice des arbres \textit{bourgeonnants} (arbres liés aux cartes planaires, on le rappelle).

\subsection{\'Enumération asymptotique et combinatoire analytique}
\label{ss:asympt}

La question du comportement asymptotique est très présente en combinatoire énumérative. En effet, les propriétés qui souvent nous intéressent  concernent des objets de grande taille (complexité asymptotique en informatique, modèle discret qui tend vers le continu en physique statistique, etc.).

Une simple application de la formule de Stirling à \eqref{catalan} permet d'obtenir l'équivalent asymptotique
$$c_n \sim \frac {4^{n-1}}{\pi n^{3/2}}.$$
Comme souligné plus haut, tous les problèmes énumératifs ne mènent pas à des formules closes comme \eqref{catalan}. Toutefois, de nombreux théorèmes d'analyse complexe permettent de déduire les régimes asymptotiques des nombres étudiés à partir du comportement singulier de leur série génératrice. Ces théorèmes forment ce qu'on appelle la \textit{combinatoire analytique}. Ils sont en grande majorité répertoriés dans le célèbre livre de Philippe Flajolet et Robert Sedgewick \cite{flajolet-sedgewick} (citons également Andrew Odlyzko \cite{flajolet-odlyzko} qui a grandement contribué à cette théorie) et seront fréquemment utilisés au cours de ce mémoire\footnote{Le lecteur motivé sera donc avisé de se munir de ce livre !}.

Le théorème qui est peut-être le plus représentatif de la combinatoire analytique s'appelle \textit{théorème de transfert} et sera utilisé à plusieurs reprises dans cette thèse. Voici son énoncé (sous une forme qui nous sera utile ici).

\begin{theo} \emph{\textbf{(théorème de transfert)}}  \label{theotransfert}
Appelons $\Delta$-domaine de rayon $\rho$ tout domaine complexe de la forme
$$ \enstq z {\module z < \rho + \varepsilon} \ \bigcap \ \enstq z {\module{\Arg(z-\rho)} > \alpha   },$$
avec $\varepsilon > 0$ et $\alpha \in ]0,\pi/2[$ (voir figure \ref{deltadomaine}).

\fig{[scale = 1.2]{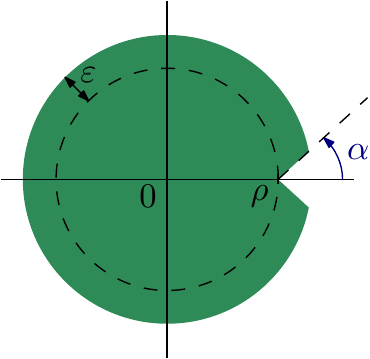}}{Un $\Delta$-domaine de rayon $\rho$.}{deltadomaine}

Soit $f(z) = \sum_{n \geq 0} f_n z^n$ une fonction analytique sur un $\Delta$-domaine de rayon  $\rho$. Si le comportement singulier de $f$ au voisinage de $\rho$ est de la forme
$$f(z) \underset{z \rightarrow \rho}\sim c \, (1-z/\rho)^{-\alpha},$$
où $\alpha$ est un nombre complexe qui n'est pas un entier négatif et où $c$ est une constante non nulle, alors
$$f_n \sim \frac{c}{\Gamma(\alpha)} \rho^{-n} n^{\alpha - 1}.$$
\end{theo}

Citons également quelques petites variantes du théorème de transfert que nous utiliserons par la suite, valables pour tout entier $k \geq 0$ et pour toute constante $c \neq 0$ (nous avons toujours besoin de l'analyticité de $f$ sur un $\Delta$-domaine de rayon $\rho$) :
\begin{align*} f(z) \underset{z \rightarrow \rho}\sim  c \pare{1 - z/\rho}^k \ln\pare{1 - z/\rho} \quad & \Longrightarrow \quad f_n \sim  (-1)^{k+1} c \,k! \,\rho^{-n} \, n^{-(k+1)},\\
 f(z) \underset{z \rightarrow \rho}\sim  c \pare{1 - z/\rho}^k \ln^{-1}\pare{1 - z/\rho} \quad & \Longrightarrow \quad f_n \sim  (-1)^{k} c \,k! \,\rho^{-n} \, n^{-(k+1)} \ln^{-2}(n),\\
 f(z) \underset{z \rightarrow \rho}\sim  c \pare{1 - z/\rho}^{-(k+1)} \ln^{-2}\pare{1 - z/\rho} \quad & \Longrightarrow \quad f_n \sim   \frac{c} {k!} \,\rho^{-n} \,  n^{k} \, \ln^{-2}(n).
\end{align*}

Ainsi  l'équivalent $c_n \sim 4^{n-1}/\pare{\pi n^{3/2}}$ provient d'une simple application du théorème de transfert, une fois  \eqref{sg:catalan} obtenu. Notons que l'exposant en $-3/2$ est typique des arbres plans.

\subsection{Nature des séries}
\label{ss:nature}

Il existe une hiérarchie naturelle entre les séries formelles (potentiellement multivariées) que nous pouvons présenter de la manière suivante (les définitions vont suivre) :
$$\textrm{rationnelle }\rightarrow\textrm{ algébrique }\rightarrow\textrm{ holonome }\rightarrow\textrm{ différentiellement algébrique.}$$
Toutes ces classes possèdent des propriétés de clôture (clôture par composition, dérivation, addition, multiplication, etc. -- on se référera notamment aux articles de Lipshitz pour les séries holonomes \cite{lipshitz-diag,lipshitz-df})  mais aussi des propriétés donnant des informations sur la série elle-même, comme par exemple le comportement asymptotique possible de ses coefficients (nous reviendrons sur ce point). En quelque sorte, cette hiérarchie  mesure la complexité d'une série formelle, les séries les plus simples étant les séries rationnelles, les plus complexes celles qui ne sont pas différentiellement algébriques. 

Passons maintenant aux définitions. Soit $A(z_1,\dots,z_m)$ une série à $m$ variables et à coefficients dans $\Q$. Cette série est dite \textit{rationnelle} si elle est de la forme $A=P/Q$ où $P$ et $Q$ sont deux polynômes à coefficients dans $\Q$. Elle est dite \textit{algébrique} s'il existe un polynôme $P$ à coefficients rationnels tel que $P(A(z_1,\dots,z_m),z_1,\dots,z_m) = 0$. Elle est dite \textit{holonome} (ou \textit{différentiellement finie}) si l'espace vectoriel sur $\mathbb Q(z_1,\dots,z_m)$  engendré par toutes ses dérivées partielles, autrement dit
$$\textrm{Vect}_{\mathbb Q(z_1,\dots,z_m)} \ens{ \frac {\partial^{i_1+ \dots + i_m} A} {\partial {z_1}^{i_1} \dots \partial {z_m}^{i_m}} }_{i_1,\dots,i_m \geq 0},$$
est de dimension finie. Dans le cas particulier des séries formelles d'une seule variable $z$, être holonome signifie satisfaire une équation différentielle linéaire (non triviale) avec des coefficients polynômiaux en $z$.  La série $A$ est dite \textit{différentiellement algébrique} par rapport à la variable $z_i$ s'il existe un entier $n \geq 0$ et un polynôme non nul $P \in \mathbb Q[x_0,x_1,\dots,x_n,z_1,\dots,z_m]$  tel que
$$P\pare{A,\pd A {{z_i}},\dots, \pdd n  A {{z_i}}  ,z_1,\dots,z_m} = 0.$$

Comme nous l'indiquions plus haut, ces classes sont imbriquées les unes dans les autres. Par exemple, la série génératrice \eqref{sg:catalan} des arbres binaires est algébrique (comme l'indique l'équation polynomiale \eqref{ef:catalan}), donc holonome et différentiellement algébrique. Toutefois, la présence de la racine carrée l'empêche d'être rationnelle.

\begin{figure}[h!]
\begin{center}
\begin{tabular}{|c|c|} \hline Nature de la série & Comportement asymptotique des coefficients  \\
\hline
Rationnelle & $\sim \,C  \rho^{-n} n^\ell$ \\ 
Algébrique & $\sim \,C \rho^{-n} n^{p/q}$ \\
Holonome &  $\sim \,C \rho^{-n} e^{P(n^{1/r})}n^\theta \ln^\ell(n)$ \\ \hline
\end{tabular}
\end{center}
\vspace{-15pt}
\caption{Comportement  asymptotique des coefficients selon la nature de la série. (Ici $C$ est une constante, $\ell$ et $r$ des entiers positifs, $p/q$ un rationnel, $\rho$ et $\theta$ sont des nombres algébriques et $P$ un polynôme.)}
\label{hierar}
\end{figure}

En outre, le comportement asymptotique possible des coefficients d'une série dépend de la nature de cette série. La figure \ref{hierar}, qui est une copie de \cite[Fig. VII.1 p. 445]{flajolet-sedgewick}, montre la forme d'un tel comportement en cas de rationa\-lité/algé\-bricité/holo\-nomie. 
Concernant les séries différentiellement algébriques, les propriétés  asymptotiques semblent moins claires en tout généralité\footnote{du moins à ma connaissance}. Néanmoins, il arrive que l'on puisse déduire des informations asymptotiques des équations différentielles polynomiales. Par exemple, le papier d'Odlyzko et Richmond \cite{odlyzko-richmond} montre comment obtenir le comportement asymptotique des \textit{triangulations bien coloriées} à partir d'une équation différentielle satisfaite par la série génératrice.

 \section{Les cartes planaires}
 
 Objets combinatoires sur lesquels notre attention va se porter pendant la première partie de ce mémoire, les cartes planaires  constituent un domaine de recherche particulièrement actif. Initiée par Tutte dans les années soixante à des fins combinatoires \cite{tutte-triangulations,tutte-census-maps}, l'énumération des cartes planaires s'est avérée intéressante dans beaucoup d'autres domaines mathématiques, comme la physique statistique, la théorie des probabilités ou la géométrie algorithmique. 
 
 Cette section a pour objectif d'introduire la notion de carte planaire et de présenter les différents concepts qui s'y rattachent.

 \label{s:introcartes}

 \subsection{Graphes et cartes}

 Un \textit{graphe} est la donnée d'un ensemble de \textit{sommets}, d'un ensemble d'\textit{arêtes} et d'une relation d'\textit{incidence} entre sommets et arêtes. Chaque arête est incidente à un ou deux sommets (ses \textit{extrémités}). Pour un graphe $G$, l'ensemble des sommets est noté $\Som G$ et l'ensemble des arêtes $\Ar G$. 
Les arêtes qui sont incidentes à une même paire de sommets (ce qu'on appelle \textit{arêtes multiples}) sont distinguables les unes des autres.
Tout graphe peut être dessiné en représentant chaque sommet par un point et chaque arête par une ligne qui relie ses deux extrémités. La figure \ref{troisgraphes} montre un même graphe dessiné de quatre manières différentes.

\fig{[width=\textwidth]{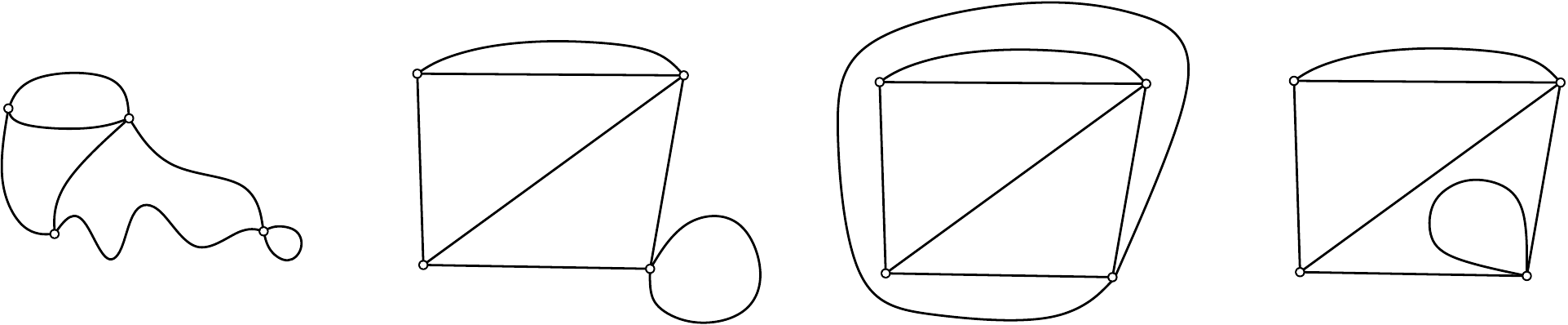}}{Quatre plongements d'un même graphe. Les trois premiers plongements représentent la même carte, le quatrième est différent. }{troisgraphes}

Le fait de dessiner un graphe sans croisement d'arêtes (sauf éventuellement en leur extrémités) définit un \textit{plongement} de ce graphe. Seuls certains graphes admettent un plongement dans le plan, on les appelle les \textit{graphes planaires}. Remarquons que tout graphe planaire admet également un plongement dans la sphère, ce dernier pouvant se déduire d'un plongement planaire par inverse de la projection stéréographique (voir figure \ref{stereo}).

\fig{[scale=0.85]{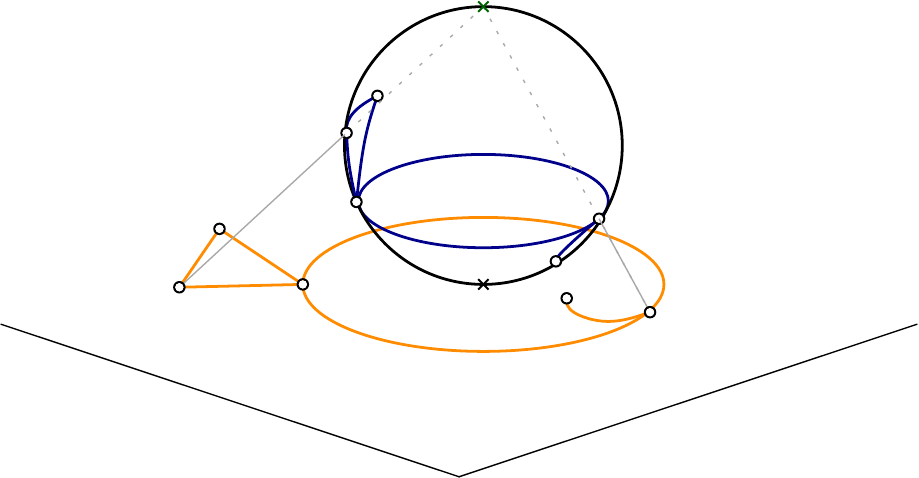}}{Image d'un plongement par projection stéréographique.}{stereo}

Une \textit{carte planaire} est un plongement d'un graphe connexe et planaire sur la sphère, considéré à \textit{déformation continue} près. Bien que définies sur la sphère, on préfèrera dessiner nos cartes dans le plan. Les feuilles de papier étant planes, il est en effet plus commode de représenter nos cartes dans le plan que sur une sphère. Il ne faudra toutefois pas oublier que ces objets sont en réalité plongés dans la sphère et qu'une arête n'est pas "bloquée" par le point à l'infini.
Par exemple, les trois premiers plongements de la figure \ref{troisgraphes} donnent la même carte planaire \footnote{même s'il n'existe pas de déformation continue \textit{du plan} entre le deuxième et le troisième}. Par contre, même si le graphe sous-jacent est identique, le quatrième plongement est une carte planaire différente des autres. (Une explication sera donnée par la suite.)  

En coupant une arête en son milieu, on obtient deux \textit{demi-arêtes}, chacune étant incidente à une des extrémités de l'arête. Un \textit{coin} désigne un couple de demi-arêtes incidentes à un même sommet qui se suivent dans le sens trigonométrique. Pour représenter un coin sur un carte, on insèrera une flèche entre les deux demi-arêtes correspondantes, pointant vers le sommet incident (voir figure \ref{enracinement}). On peut d'ailleurs facilement voir que les demi-arêtes sont en bijection naturelle avec les coins. 

Dans le contexte de la combinatoire énumérative, la présence de symétries rend l'étude des cartes planaires délicate. Pour contourner ce problème, on distingue dans chacune de nos cartes un coin baptisé \textit{racine}. Une carte avec une racine est appelée \textit{carte enracinée}.  La figure \ref{enracinement} illustre cette définition. Le \textit{sommet racine} est l'unique sommet incident à la racine. L'\textit{arête racine} (resp. \textit{demi-arête racine}) est l'arête (resp. demi-arête) qui suit la racine dans le sens trigonométrique. Dans tout ce mémoire, \textbf{toutes nos cartes planaires seront supposées enracinées.} 

\fig{[scale=0.85]{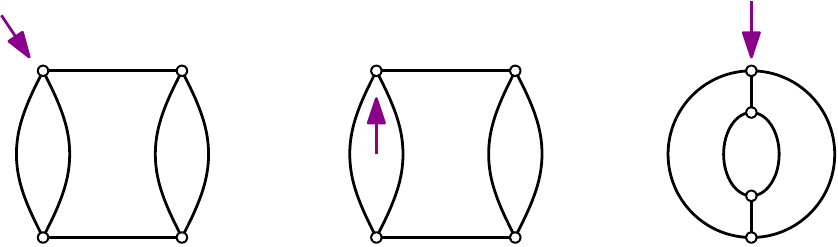}}{Trois cartes enracinées. Les deux dernières sont égales.}{enracinement}

\subsection{Faces et dualité}

Données topologiques du plongement, les \textit{faces} d'une carte planaire correspondent aux composantes connexes de la sphère privée du plongement. Les relations d'incidence peuvent être naturellement  étendues aux faces : une face $f$ est \textit{incidente} à une arête $a$ (resp. un coin $c$, un sommet $s$) si $a$ (resp. $c$, $s$) est inclus dans la frontière de $f$. Le \textit{degré} d'une face est alors le nombre de coins incidents à cette face\footnote{Notons que le degré d'un sommet peut se définir exactement de la même manière.}. La \textit{face racine} est l'unique face incidente à la racine. Pour revenir aux exemples de la figure \ref{troisgraphes}, on compte une face de degré $5$ et aucune face de degré $4$ pour chacune des trois premières cartes. Par contre, la dernière a deux faces de degré $4$ mais aucune de degré $5$, ce qui justifie \textit{a posteriori} la non-égalité avec les trois autres cartes.

La \textit{relation d'Euler} est une identité qui lie le nombre de sommets, le nombre d'arêtes et le nombre de faces d'une carte planaire $C$. Plus précisément, si ces derniers nombres sont respectivement notés $\som(C)$, $\arete(C)$, $\face(C)$, alors 
$$\som(C) - \arete(C) + \face(C) = 2.$$

Dans le monde des cartes, les faces et les sommets jouent des rôles symétriques. La notion qui permet d'échanger faces et sommets s'appelle \textit{dualité}. \'Etant donnée une carte $C$, on construit la \textit{carte duale} $C^*$ en insérant un sommet de $C^*$ dans chaque face de $C$ et en traçant une arête de $C^*$ de manière transversale pour chaque arête de $C$ (voir la figure \ref{dualite}). L'enracinement de la carte duale est canonique : en superposant les deux cartes, on obtient une quadrangulation où chaque face est composée de deux demi-arêtes de la carte originale et de deux demi-arêtes de la carte duale. La racine de la carte duale est alors l'unique coin de la carte duale qui appartient à la même face que la racine de la carte originale. Comme toute autre notion de dualité en mathématiques,  $\left(C^* \right)^* = C$. De plus, les faces sont en bijection directe avec les sommets de la carte duale, et cette bijection préserve les degrés. Ainsi, comme le montre la figure \ref{dualite}, une carte dont chaque sommet a  degré $4$ (également appelée \textit{carte tétravalente}) admet comme duale une carte dont chaque face a  degré $4$ (autrement dit une \textit{quadrangulation}).

\fig{[width=\textwidth]{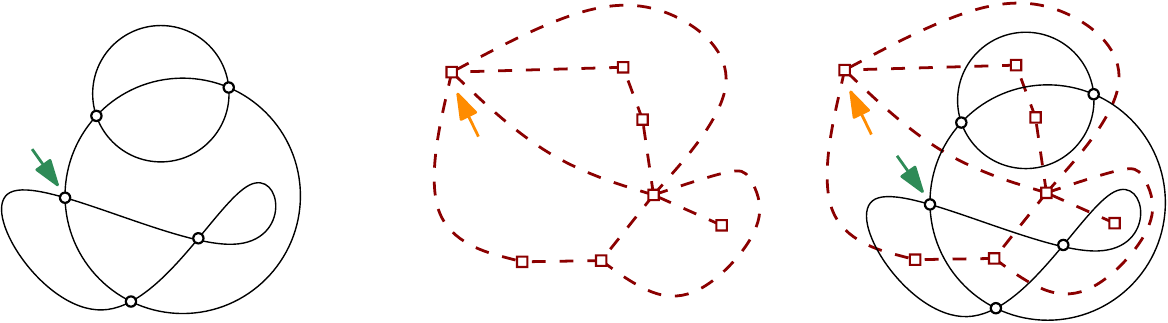}}{\`A gauche : une carte tétravalente. Au milieu : la quadrangulation duale. \`A droite : la superposition des deux cartes. }{dualite}

\subsection{\'Enumération des cartes planaires}


Le comptage des cartes est une science dans laquelle les méthodes employées n'ont cessé de se diversifier de décennie en décennie. En voici une rapide chronologie. Tutte fut dans les années soixante le précurseur  du domaine. Sa \textit{méthode récursive} \cite{tutte-triangulations,tutte-census-maps}, très robuste, a permis l'énumération de nombreuses familles de cartes. Quelques années plus tard, des physiciens théoriciens  ont mis au point une redoutable technique (mais pas toujours rigoureuse) pour les problèmes de comptage de cartes : les  \textit{intégrales de matrices} \cite{brezin}. Puis vient l'\textit{approche bijective} initiée par Robert Cori et Bernard Vauquelin \cite{cori-vauquelin}, et approfondie par Gilles Schaeffer \cite{schaeffer-these}, puis Jérémie Bouttier, Philippe Di Francesco et Emmanuel Guitter \cite{bdg2002,BDG-blocked}. Elle consiste à transformer les cartes planaires en des arbres, qui sont plus facilement énumérables. La dernière application en date de cette théorie concerne les \textit{limites d'échelle}. Il a été récemment prouvé \cite{legall-brownian,miermont-brownian} que les cartes, en tant qu'espaces métriques aléatoires, convergent vers une carte continue et universelle, appelée \textit{carte brownienne}. 


Afin de mettre en relief certains points de cette thèse, présentons   quelques résultats énumératifs sur les cartes planaires tétravalentes. Le nombre de cartes planaires tétravalentes à $n$ sommets est 
$$m_n = 2 \, \frac{3^n} {(n+1)(n+2)} {2n \choose n}.$$
Cette très belle formule et sa proximité avec \eqref{catalan} suggèrent l'existence d'une bijection entre les cartes tétravalentes et des objets plus simples, comme des arbres.  C'est le cas ;  nous décrivons d'ailleurs dans le chapitre \ref{c:cartesarbres} de telles bijections. Nous démontrons ainsi de nombreuses formules énumératives. Par exemple, ces bijections peuvent expliquer pourquoi la série génératrice $M(t) = \sum_n m_n t^n$ des cartes tétravalentes satisfait
$$M(t) = T(t) - t \, T(t)^3,$$
où 
$$T(t) = 1 + 3\,t\,T(t)^2.$$
Nous constatons  que la série génératrice des cartes tétravalentes est algébrique. Ceci n'est pas un résultat isolé : les travaux de Bender et Canfield \cite{bender-canfield} montrent que sous des hypothèses raisonnables sur la distribution des degrés, la série génératrice des cartes planaires est algébrique. Dans cette thèse, nous montrerons (chapitre \ref{c:ed}) avec des hypothèses similaires que la série génératrice des cartes planaires munies d'une forêt couvrante est différentiellement algébrique.

Intéressons-nous au comportement asymptotique de $m_n$. D'après la formule de Stirling, il se comporte comme
$$m_n \sim 2.\, 12^n \, n^{-5/2} / \sqrt{\pi}.$$
Il est à noter que cet exposant en $-5/2$ est typique du comportement asymptotique des cartes planaires. Nous le retrouverons à plusieurs reprises au cours de cette thèse. Toutefois nous observerons (et prouverons) des comportements plus étranges, qui à notre connaissance n'ont jamais été vus auparavant dans le monde des cartes. Par exemple, nous montrerons (chapitre \ref{c:asympt}) que le nombre des quadrangulations munies d'une configuration récurrente du modèle du sable avec un nombre minimal de grains se comporte comme $c \, \rho^{-n} n^{-3} (\ln n)^{-2}$.

\section{Polynôme de Tutte et modèles associés}
\label{s:poltutte}

Le polynôme de Tutte constitue la clef de voûte de cette thèse. Dans une première partie, nous allons étudier les cartes planaires munies d'une forêt couvrante, ce qui s'interprète comme une spécialisation du polynôme de Tutte. Dans une seconde partie, nous donnerons de nouvelles descriptions de ce polynôme, plus précisément une nouvelle manière de voir les \textit{activités} qui s'y rattachent.

Cette section introduit le polynôme de Tutte, et deux modèles de physique statistique apparentés : le modèle de Potts et le modèle du tas de sable.



\subsection{Polynôme de Tutte}

Voulu dans un premier temps comme une généralisation du polynôme chromatique \cite{whitney,tutte54}, le polynôme de Tutte est un célèbre invariant de graphe jouant un rôle fondamental en combinatoire. Parce qu'il a été étudié de manière intensive et que sa propriété de contraction/suppression est universelle \cite{broxley}, ce polynôme est souvent utilisé comme référence quand il s'agit de relier entre eux les polynômes de graphe à travers divers domaines de recherches. Parmi les exemples que nous ne détaillerons pas dans ce mémoire, citons le polynôme énumérateur des poids en théorie des codes \cite{curtis}, le polynôme de fiabilité en théorie des réseaux \cite{oxley-welsh} et le polynôme de Jones d'un n\oe{}ud alternant en théorie des n\oe{}uds \cite{jonespoly}. Tous peuvent être exprimés comme des spécialisations du polynôme de Tutte.

Passons à la définition de ce polynôme. \'Etant donné un graphe $G$, un \textit{sous-graphe couvrant} de $G$ désigne un graphe ayant le même ensemble de sommets que $G$ et dont l'ensemble des arêtes est inclus dans celui de $G$ (voir figure \ref{tutte1}). Le \textit{nombre cyclomatique} d'un graphe $G$ est le nombre minimal d'arêtes qu'on doit supprimer dans $G$ pour que $G$ devienne acyclique. Il se définit alternativement comme $\cc(G) + \arete(G) -  \som(G)$, où $\cc(G)$ est le nombre de composantes connexes de $G$, $\arete(G)$ son nombre d'arêtes et $\som(G)$ son nombre de sommets. Par exemple, le nombre cyclomatique du graphe de la figure \ref{tutte1} est $2$. Le \textit{polynôme de Tutte} d'un graphe $G$, traditionnellement noté $T_G(x,y)$, est le polynôme bivarié  : 
$$T_G(x,y) = \sum_{\substack{S\textrm{ sous-graphe} \\\textrm{couvrant de }G}} (x-1)^{\cc(S)-\cc(G)} (y-1)^{\cycl(S)}.$$
Si $G$ comporte des arêtes multiples, alors les sous-graphes couvrants $S$ dans la somme ci-dessus sont comptés avec multiplicité.

\fig{[scale =1.2]{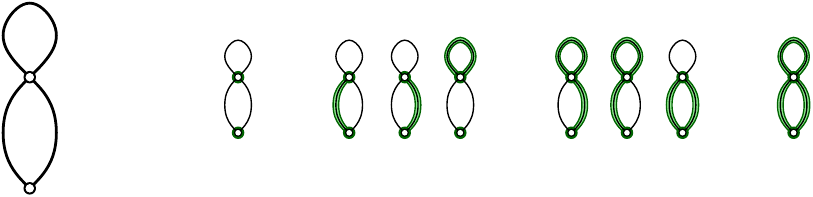}}{Un graphe et ses neuf sous-graphes couvrants, triés selon le nombre d'arêtes.}{tutte1}

Si nous reprenons l'exemple du graphe $G$ de la figure \ref{tutte1},  le polynôme de Tutte vaut
$$T_G(x,y) = (x-1) + 2 + (x-1)(y-1) + 3 (y-1) + (y-1)^2 = xy + y^2.$$  

Propriété nullement manifeste au vu de la première définition, le polynôme de Tutte a des coefficients positifs en $x$ et en $y$. La raison de ceci réside dans la propriété suivante.

\begin{prop} \label{p:eqderec}
Soit $G$ un  graphe ayant au moins une arête $e$. Le polynôme satisfait l'identité de récurrence
$$T_G(x,y) = \left\{ \begin{array}{ll} x \, T_{\contract G e}(x,y) & \textrm{si }e\textrm{ est un isthme}, \\
y \, T_{\delete G e}(x,y) & \textrm{si }e\textrm{ est une boucle}, 
\\
T_{\contract G e}(x,y) + T_{\delete G e}(x,y) & \textrm{si }e\textrm{ n'est ni une boucle, ni une arête}.  \end{array} \right. $$
où $\contract G e$ (resp. $\delete G e$) désigne  le graphe $G$ dans lequel on a contracté (resp. supprimé) l'arête $e$. (Un isthme est une arête telle que la suppression de cette arête augmente de un le nombre de composantes connexes dans le graphe.)
\end{prop}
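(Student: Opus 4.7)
Ma strat\'egie consiste \`a partitionner la somme d\'efinissant $T_G(x,y)$ selon que le sous-graphe couvrant $S$ contient ou non l'ar\^ete $e$, puis \`a r\'eexprimer chacune des deux sommes partielles en fonction de $T_{\delete G e}$ et $T_{\contract G e}$. D'un c\^ot\'e, les sous-graphes couvrants de $G$ ne contenant pas $e$ sont exactement ceux de $\delete G e$, avec les m\^emes valeurs de $\cc(S)$ et $\cycl(S)$. De l'autre, les sous-graphes couvrants de $G$ contenant $e$ sont en bijection avec ceux de $\contract G e$ par la contraction $S \mapsto S/e$~; en utilisant la formule $\cycl(S) = \cc(S) + \arete(S) - \som(S)$, on v\'erifie que cette op\'eration pr\'eserve $\cc$ et $\cycl$ lorsque $e$ n'est pas une boucle, et pr\'eserve $\cc$ en diminuant $\cycl$ d'une unit\'e lorsque $e$ est une boucle.

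Il suffit ensuite de traiter chaque cas de la proposition en comparant $\cc(G)$ aux nombres $\cc(\delete G e)$ et $\cc(\contract G e)$. Lorsque $e$ n'est ni boucle ni isthme, ces trois nombres co\"{\i}ncident et l'on obtient directement $T_G = T_{\delete G e} + T_{\contract G e}$. Lorsque $e$ est une boucle, les nombres de composantes restent encore tous \'egaux, mais le d\'ecalage de $\cycl$ dans la somme restreinte aux sous-graphes contenant $e$ produit un facteur $(y-1)$, d'o\`u $T_G = T_{\delete G e} + (y-1)\, T_{\delete G e} = y \, T_{\delete G e}$.

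Le cas o\`u $e$ est un isthme est \`a mon sens l'obstacle principal. On a alors $\cc(\delete G e) = \cc(G) + 1$, ce qui fait appara\^{\i}tre un facteur $(x-1)$ dans la somme des sous-graphes ne contenant pas $e$, tandis que $\cc(\contract G e) = \cc(G)$. On obtient ainsi $T_G = (x-1)\, T_{\delete G e} + T_{\contract G e}$, et il reste \`a \'etablir l'\'egalit\'e auxiliaire $T_{\delete G e} = T_{\contract G e}$ pour conclure que $T_G = x \, T_{\contract G e}$. Cette \'egalit\'e r\'esulte du fait que, $e$ \'etant un isthme, aucun sous-ensemble $F \subseteq \Ar G \setminus \sing e$ ne peut connecter les deux extr\'emit\'es de $e$~; par cons\'equent, le nombre de composantes du sous-graphe de $\delete G e$ associ\'e \`a $F$ d\'epasse d'exactement un celui de son analogue dans $\contract G e$, d\'ecalage pr\'ecis\'ement compens\'e par la diff\'erence $\cc(\delete G e) - \cc(\contract G e) = 1$, tandis que les cyclomatiques sont pr\'eserv\'es par la formule d'Euler. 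Les deux polyn\^omes co\"{\i}ncident alors terme \`a terme, ce qui ach\`eve la preuve.
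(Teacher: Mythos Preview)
Your proof is correct and follows the standard approach via the subgraph expansion. The paper itself does not give a proof of this proposition: it is stated as a known result due to Tutte, both in the introductory chapter and later as Proposition~\ref{eq:ind}, so there is nothing to compare against. Your treatment of the isthmus case via the auxiliary identity $T_{\delete G e} = T_{\contract G e}$ is clean and well justified.
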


Ainsi, par une induction quasi-immédiate (dans le cas où $G$ n'a aucune arête,  $T_G = 1$ par définition), nous en déduisons la positivité des coefficients.

\begin{cor} Quel que soit le graphe $G$, les coefficients du polynôme de Tutte de $G$ sont entiers et positifs.
\end{cor}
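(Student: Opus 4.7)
Le plan consiste � proc�der par r�currence sur le nombre d'ar�tes du graphe $G$, en s'appuyant directement sur la Proposition \ref{p:eqderec}. Le cas de base traite la situation o� $G$ ne poss�de aucune ar�te : par d�finition $T_G(x,y) = 1$ dans ce cas, et son unique coefficient est bien un entier positif.

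Pour le pas de r�currence, je supposerais le r�sultat acquis pour tout graphe ayant strictement moins d'ar�tes que $G$, puis je choisirais une ar�te $e$ de $G$ et distinguerais les trois cas de la Proposition \ref{p:eqderec}. Dans les deux premiers cas, $T_G$ s'�crit comme $x \, T_{\contract G e}(x,y)$ ou $y \, T_{\delete G e}(x,y)$~; comme la multiplication par $x$ ou $y$ pr�serve �videmment la positivit� et l'int�grit� des coefficients, et que le graphe obtenu par contraction ou suppression de $e$ poss�de une ar�te de moins que $G$, l'hypoth�se de r�currence fournit directement la conclusion. Dans le troisi�me cas, $T_G$ appara�t comme la somme de $T_{\contract G e}(x,y)$ et $T_{\delete G e}(x,y)$, et l'addition de deux polyn�mes � coefficients entiers positifs pr�serve cette propri�t�.

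La seule v�rification � ne pas oublier est que, dans les trois cas, les op�rations de contraction et de suppression diminuent bien strictement le nombre d'ar�tes, ce qui d�coule imm�diatement de leur d�finition ; c'est pr�cis�ment ce qui l�gitime l'appel � l'hypoth�se de r�currence. Aucun obstacle s�rieux n'est pr�vu dans cette d�monstration, qui exploite directement la forme tr�s favorable de la r�currence �nonc�e par la Proposition \ref{p:eqderec} : c'est en cela que l'induction m�rite effectivement le qualificatif de \emph{quasi-imm�diate} employ� dans le texte.
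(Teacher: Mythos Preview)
Your proof is correct and follows exactly the approach indicated in the paper, which simply states that the result follows by a quasi-immediate induction using Proposition~\ref{p:eqderec} with base case $T_G=1$ when $G$ has no edge. You have faithfully expanded this sketch into a complete argument.
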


Naturellement nous pouvons nous demander s'il existe une interprétation combinatoire expliquant ces coefficients entiers positifs. La réponse à cette question est positive et se base sur  la notion d'\textit{activité} d'arbre couvrant\footnote{Historiquement, Tutte s'était servi de cette notion d'activité la première fois qu'il a défini le polynôme qui porte son nom. La définition en termes de séries génératrices de sous-graphes couvrants, qui est plus pratique par bien des aspects, arrivera plus tard.}.  Le polynôme de Tutte d'un graphe $G$ peut alors s'écrire 
\begin{equation} \label{tuttepremiereecriture}
T_G(x,y) = \sum_{T\textrm{ arbre couvrant de }G}\, x^{\inte(T)}\, y^{\ext(T)},
\end{equation}
où $\inte(T)$ et $\ext(T)$ désignent respectivement le nombre d'arêtes internes actives et externes actives.
 Le lecteur se réfèrera à la section \ref{sec:activity} p. \pageref{sec:activity} pour la définition des arêtes actives (un peu longue pour être introduite ici).

Remarquons que les variables $x$ et $y$ jouent un rôle symétrique dans le cas planaire. En effet, pour deux cartes duales $C$ et $C^*$, nous pouvons montrer que
\begin{equation}
T_{C^*}(x,y) = T_C(y,x).
\label{tuttedual}
\end{equation}

De nombreuses informations quantitatives sur le graphe émanent du polynôme de Tutte et la plupart se déduisent directement de sa définition. Par exemple, quand $G$ est connexe, le nombre d'arbres couvrants de $G$ est compté par $T_G(1,1)$, le nombre de sous-graphes couvrants connexes  par $T_G(1,2)$ et -- ce qui nous intéressera particulièrement dans le cadre de cette thèse -- le nombre de forêts couvrantes par $T_G(2,1)$. Le \textit{polynôme chromatique} (i.e. le nombre de coloriages propres en $q$ couleurs) est également une spécialisation du polynôme de Tutte\footnote{Tutte a inventé ce polynôme pour être une généralisation à deux variables du polynôme chromatique. D'ailleurs, la terminologie originelle était "polynôme dichromatique".} :
$$\chi_G(q) = (-1)^{\som(G)} q^{\cc(G)} T_G(1-q,0),$$
$\cc(G)$ désignant le nombre de composantes de $G$ et $\som(G)$ son nombre de sommets.
Cette identité se prouve par induction en utilisant l'équation de récurrence de la proposition \ref{p:eqderec}, comme la plupart des identités impliquant le polynôme de Tutte.

\subsection{Le modèle de Potts}
\label{ss:potts}

Le lien établi ci-dessus avec le polynôme chromatique peut être généralisé pour couvrir le polynôme de Tutte dans sa globalité. En effet, à un changement affine de variables près \cite{fk}, le polynôme de Tutte compte tous les coloriages de $G$ en $q$ couleurs, non nécessairement propres, avec un poids $\nu$ par arête monochrome. Plus précisément, pour tout entier naturel $q$ de la forme $q = (\mu -1)(\nu - 1)$, 
$$\pare{\mu - 1}^{\cc(G)} \, \pare{\nu - 1}^{\som(G)} \, T_G(\mu,\nu) = P_G(q,\nu),$$
avec 
$$P_G(q,\nu) = \sum_{\substack{c \textrm{ coloriage}\\\textrm{à }q\textrm{ couleurs}}}  \nu^{m(c)},$$
où $m(c)$ désigne le nombre d'arêtes monochromes de $G$.

Ce polynôme $P_G(q,\nu)$ est très étudié en physique statistique \cite{wu} : il s'agit de la \textit{fonction de partition du modèle de Potts à $q$ états} sur le graphe $G$. Dans ce modèle, des particules occupant les sommets possèdent des états  indexés par $\ens{1,2,\dots,q}$. La probabilité d'une configuration $c$ dépend du nombre d'arêtes monochromes (ici une arête monochrome est une arête reliant deux particules de même état) : elle est proportionnelle à 
$ \exp(K \, m(c))$
où $K$ est une paramètre et $m(c)$ est le nombre d'arêtes monochromes. Lorsque $q = 2$, nous retrouvons le célèbre \textit{modèle d'Ising}.

Le modèle de Potts s'étudie plus naturellement sur des réseaux réguliers et sur des cartes planaires que sur des graphes sans structure, notamment pour des raisons physiques. Les phénomènes qu'on cherche à comprendre se produisent sur des surfaces (ou des volumes) avec un très grand nombre de particules. Dans le cadre de la gravitation quantique, un  bon modèle pour étudier ces comportements macroscopiques est de considérer une carte planaire aléatoire munie d'une coloration de ses sommets, de faire tendre la taille de cette carte vers l'infini et d'observer son comportement asymptotique. Il s'agit notamment de comprendre les phénomènes de \textit{transitions de phase} qui apparaissent, c'est-à-dire des changements abrupts (non analytiques) pour certaines valeurs des paramètres $q$ et $\nu$. 

Dans ce contexte, nous aimerions étudier la série génératrice des cartes planaires pondérées par $P_G(q,\nu)$. Beaucoup de travaux ont été accomplis  dans ce sens  \cite{baxter82,daul,DF-Eynard-Guitter,eynard-bonnet-potts,zinn-justin-dilute-potts} -- entre autres, Olivier Bernardi et Mireille Bousquet-Mélou ont montré  l'algébricité différentielle de cette série \cite{bernardi-mbm-de} -- mais l'étude de cette série est très difficile de manière générique. Néanmoins de nombreux cas particuliers ont été résolus. Citons par exemple l'étude des cartes munies d'un arbre couvrant réalisée par Mullin \cite{mullin-boisees} et celle du modèle d'Ising, d'abord résolu en utilisant des  intégrales de matrice \cite{boulatov,BDG-hard-bip-matrix}, puis de manière bijective \cite{mbm-schaeffer}. Dans  cette thèse, nous travaillerons  avec une spécialisation de la fonction de partition du modèle de Potts qui revient à dénombrer les cartes planaires munies d'une forêt couvrante.

\subsection{Modèle du tas de sable}
\label{ss:sable}

Un autre modèle de physique statistique lié au polynôme de Tutte est celui du tas de sable \cite{bak,dhar}. Il est également connu en combinatoire sous la forme d'un jeu à base de jetons dont le nom anglais est \textit{chip firing game} \cite{bjorner}. Dans ce modèle, des grains de sable sont disposés sur les sommets d'un graphe.  Lorsque le nombre de grains sur un même sommet excède le degré de ce sommet, nous assistons à \textit{l'éboulement} de ce sommet, c'est-à-dire que ce sommet perd autant de grains que son degré et chacun de ses voisins en gagne un.

Passons à des définitions plus précises. Nous considérons $G$ un graphe connexe avec un sommet distingué $s_0$ appelé \textit{puits} (si le graphe considéré est une carte enracinée, le sommet racine est naturellement désigné comme le puits). Une \textit{configuration} du tas de sable est une fonction $\gamma$ de l'ensemble des sommets de $G$  vers $\N$ : pour un sommet $s$, l'entier $\gamma(s)$ représente le nombre de grains sur $s$. Un sommet $s$ est \textit{instable} pour une configuration $\gamma$ si $\gamma(s)$ est supérieur ou égal au degré de ce sommet. Quand un sommet $s$ est instable, nous pouvons procéder à son \textit{éboulement} : cela mène à une nouvelle configuration $\gamma'$ où $\gamma'(t) = \gamma(t) + \deg(s,t)$ pour $s \neq t$ et $\gamma'(s) = \gamma(s) - \deg_*(s)$, où $\deg(s,t)$ désigne le nombre d'arêtes d'extrémités $s$ et $t$, et $\deg_*(s)$ le nombre d'arêtes non boucle incidentes à $s$. La figure \ref{tasdesable} montre des exemples d'éboulements.

\fig{[width=\textwidth]{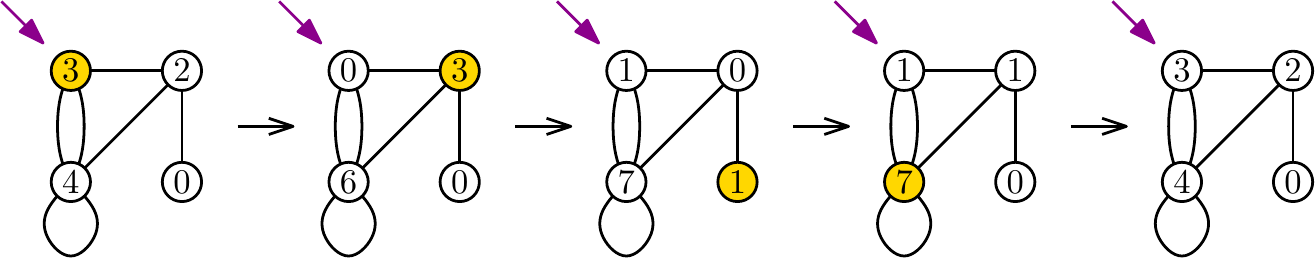}}{Une configuration récurrente avec une suite d'éboulements valide.}{tasdesable}

Une configuration est \textit{stable} si aucun sommet à part éventuellement le puits n'est instable. Une configuration stable est \textit{récurrente}\footnote{Comme ce nom le suggère, se cache derrière le modèle du tas de sable une chaîne de Markov. Nous ne la détaillerons pas ici.} si le nombre de grains sur le puits est égal à son degré et s'il existe une permutation $s_0, s_1, \dots, s_{n -1}$ de l'ensemble des sommets de $G$  telle qu'il est possible d'ébouler successivement $s_0, s_1, \dots, s_{n -1}$ dans cet ordre. Remarquons que si on éboule chaque sommet d'un graphe, on revient forcément à la configuration initiale. Par exemple, la première configuration  de la figure \ref{tasdesable} est récurrente, comme le prouvent les éboulements qui suivent. 

On peut montrer que le nombre total  de grains dans une configuration récurrente ne peut être inférieur au nombre d'arêtes dans le graphe. Le \textit{niveau} $\ell(\gamma)$ d'une configuration récurrente $\gamma$ se définit alors comme le nombre de grains normalisé :
$$\ell(\gamma) = \pare{\sum_{s\textrm{ sommet de }G} \gamma(s) } - \arete(G),$$
avec $\arete(G)$ le nombre d'arêtes dans le graphe $G$. Pour revenir à notre exemple, le niveau de la configuration récurrente de la figure \ref{tasdesable} vaut $3$.

Merino-L\'opez a prouvé \cite{merino} que le polynôme de Tutte d'un graphe $G$ en $x = 1$ compte les configurations récurrentes de $G$ selon leur niveau :
$$T_G(1,y) = \sum_{\substack{\gamma \textrm{ configuration}\\\textrm{récurrente de }G}} y^{\ell(\gamma)}.$$
Il existe plusieurs preuves bijectives de cette identité, comme celles apportées par Robert Cori et Yvan Le Borgne \cite{cori-borgne} ou Olivier Bernardi \cite{bernardisandpile}.

\section{Organisation de ce mémoire}


Avant d'introduire les résultats trouvés pendant cette thèse, nous commençons par un chapitre introductif (\textbf{chapitre \ref{c:cartesarbres}}) qui présente quelques bijections connues entre cartes planaires et arbres. D'une part, cela permet d'introduire la notion d'\textit{arbre bourgeonnant} \cite{Sch97,bdg2002} qui sera récurrente dans ce mémoire. D'autre part, nous donnons quelques résultats énumératifs sur les cartes planaires, que nous utiliserons tout au long de la première partie.

\subsection*{Partie I : cartes forestières.}

Nous étudions dans cette partie les  \textit{cartes forestières}, c'est-à-dire les cartes planaires munies d'une forêt couvrante, et plus particulièrement leur série génératrice $F(z,u,t)$ (avec prescription sur les degrés des sommets) où $z$ compte les faces, $u$ les composantes connexes de la forêt, et $t$ les arêtes.  De manière équivalente, nous énumérons les cartes planaires munies d'un arbre couvrant avec un poids $z$ par face, un poids $u+1$ par arête interne active (au sens de Tutte) et un poids $t$ par arête. En termes du modèle de Potts, ce problème énumératif revient à faire tendre le nombre de couleurs $q$ vers $0$ (d'une manière bien précise). Ce problème a été déjà étudié dans \cite{sportiello} par une approche matricielle, mais sans aucune explicite.

Par une approche purement combinatoire directement inspiré de \cite{BDG-blocked}, nous décrivons un système fonctionnel satisfait par la série génératrice $F$. Nous en déduisons que cette série est différentiellement algébrique  selon chacune de ses trois variables $z$, $u$ et $t$ (sous quelques hypothèses sur les degrés prescrits, pas trop contraignantes). Puis, pour $ u \geq -1$ fixé et $t = 1$, nous étudions les singularités de $F$ et le comportement asymptotique de ses coefficients. Pour $u >0$, nous retrouvons le régime asymptotique standard des cartes planaires avec 
un facteur sous-exponentiel en $n^{-5/2}$. En $u=0$, nous constatons  une transition  de phase avec un facteur $n^{-3}$. Lorsque $u \in [-1,0[$, nous obtenons un comportement extrêmement inhabituel en $n^{-3} \pare{\ln n}^{-2}$ : c'est la première fois (à notre connaissance) que nous observons une telle "classe d'universalité" pour les cartes planaires. En particulier, cela montre que $F(z,u,t)$ n'est pas holonome.

Précisons plus en détail comment sont articulés les différents chapitres de cette partie. Le premier chapitre  (\textbf{chapitre \ref{c:cartesforestieres}}) introduit la série génératrice $F(z,u,t)$ des cartes forestières. Nous présentons dans un premier temps les nombreuses interprétations combinatoires qui y sont liées (polynôme de Tutte, modèle de Potts, modèle du tas de sable...). Puis par une approche empruntée à \cite{BDG-blocked}, nous exprimons $\pd F z$ en fonction de deux séries $R$ et $S$ qui sont solutions d'un système fonctionnel à deux inconnues. Dans le \textbf{chapitre \ref{c:ed}}, nous montrons grâce à ce système fonctionnel que $F$ est différentiellement algébrique. En réalité, le résultat que nous prouverons est plus général que cela ; nous démontrerons l'algébricité différentielle pour des cartes dites \textit{décorées} par des objets combinatoires qui admettent une série génératrice holonome. 

Le reste de la première partie est essentiellement consacré à l'étude du comportement asymptotique des coefficients de $F$. Comme l'analyse du système est plutôt difficile, notamment quand $u \in [-1,0[$, nous aurons besoin de nombreux lemmes techniques (mais intéressants en soi) pour arriver au bout de certaines preuves. Dans le \textbf{chapitre \ref{c:enrichis}}, nous expliquons comment  interpréter combinatoirement les séries $R$ et $S$ susmentionnées (et autres séries apparentées) lorsque $u \in [-1,+\infty[$, notamment en termes d'arbres bourgeonnants. Ces interprétations combinatoires font intervenir des séries à coefficients positifs, m\^eme pour $u \in [-1,0[$.
 Mentionnons également que ce chapitre comporte une description d'une bijection entre cartes forestières et arbres bourgeonnants munis d'une forêt couvrante qui permet de faire le lien avec la seconde partie. Dans le  \textbf{chapitre \ref{c:implicites}}, nous établissons quelques résultats d'analyse complexe concernant les fonctions implicites. 

Dans les chapitres qui suivent, nous déterminons le comportement asymptotique du nombre (pondéré) de cartes forestières, pour plusieurs classes de cartes.  Le \textbf{chapitre \ref{c:asympt}} traite des cartes forestières \textit{4-eulériennes et apériodiques} (toujours avec prescription des degrés), c'est-à-dire où le degré de chaque sommet est pair, différent de $2$, et qu'il existe deux cartes dont le nombre de faces ne diffère que d'un.  Ces conditions, qui sont satisfaites par exemple par les cartes tétravalentes, facilitent l'analyse du comportement asymptotique, notamment grâce à des équations plus simples (on n'a qu'une seule inconnue : la série $S$ vaut $0$). Dans le \textbf{chapitre \ref{c:2qreg}}, nous nous intéressons aux cartes forestières $2q$-régulières (tous les sommets ont pour degré $2q$). La nouvelle difficulté  est la présence d'une période supérieure à $1$, ce que nous avions exclu par hypothèse dans le chapitre précédent. Enfin, nous étudions dans le \textbf{chapitre \ref{c:cubique}} les cartes forestières cubiques  (tous les sommets ont pour degré $3$). Ici le système fait bien intervenir deux inconnues $R$ et $S$. Son analyse est  délicate, même dans le cas où $u$ est positif. Tous ces résultats se basent sur les théorèmes de combinatoire analytique  de \cite{flajolet-sedgewick} (voir sous-section \ref{ss:asympt}).

Nous terminons cette partie par le \textbf{chapitre \ref{c:perspectives}} qui ouvre sur de nouvelles perspectives. Nous y introduisons la notion de \textit{cartes animalières}, c'est-à-dire des cartes munies d'un sous-graphe connexe non nécessairement couvrant.  Ces objets sont en lien avec la \textit{percolation}, modèle de physique statistique décrivant  le passage d'un fluide à travers un milieu plus ou moins perméable. D'un point de vue combinatoire, les cartes animalières sont l'extension naturelle des cartes forestières. Nous prouvons quelques résultats susceptibles d'être prolongés dans des travaux futurs.

\subsection*{Partie II : une notion générale d'activité pour le polynôme de Tutte.}

La littérature recense plusieurs descriptions possibles pour le polynôme de Tutte d'un graphe. La première définition due à Tutte se base sur une notion d'\textit{activité} qui nécessite d'ordonner les arêtes du graphe. Quelques décennies plus tard, Olivier Bernardi a décrit une notion différente de l'activité en plongeant le graphe sur une surface. Dans cette partie, nous expliquons comment d'autres notions d'activité peuvent être définies et comment elles peuvent être regroupées au sein d'une même famille d'activités, appelées $\Delta$-activités. Nous développons ainsi une modeste théorie qui éclaire les relations entre les différentes expressions du polynôme de Tutte.

Voici comment est organisée cette seconde partie. Le \textbf{chapitre   \ref{c:activities}} introduit de nombreuses définitions qui nous seront utiles par la suite, dont celle de l'\textit{activité} : une \textit{activité} désigne ici une application qui associe à tout arbre couvrant un sous-ensemble d'arêtes (alors appelées \textit{arêtes actives}). Pour un graphe $G$ connexe, on dit qu'une activité $\psi$ \textit{décrit le polynôme de Tutte} si elle satisfait
$$T_G(x,y) = \sum_{T\textrm{ arbre couvrant de }G} \, x^{\module{\psi(T)\cap T}} \, y^{\module{\psi(T) \backslash T}},$$
où $T_G$ est le polynôme de Tutte de $G$ et  $\module{\psi(T)\cap T}$ (resp. $\module{\psi(T) \backslash T}$) est le nombre d'arêtes actives qui appartiennent (resp. qui n'appartient pas) à l'arbre couvrant $T$.
Toujours dans le même chapitre, nous définissons trois familles d'activités qui décrivent le polynôme de Tutte, parmi lesquelles se trouvent celles de Tutte et Bernardi.

Dans le \textbf{chapitre \ref{c:delta}}, nous introduisons la notion de $\Delta$-activité. Dans un premier temps, nous définissons cette notion via un algorithme. Notons que ce dernier comporte de nombreuses similarités avec l'algorithme décrit dans \cite{gordon-traldi}\footnote{Dans cet article, Gordon et Traldi indiquent comment retrouver la notion d'activité au sens de Tutte, pour un matroïde $M$, par une "réduction" arête par arête de $M$. Notre approche ajoute un degré de liberté dans cette réduction.}. Nous prouvons que ces $\Delta$-activités décrivent  le polynôme de Tutte. Puis, nous donnons une nouvelle description des $\Delta$-activités en termes d'ordres totaux sur les arêtes. En quelques mots, cette définition dit qu'une arête est active si elle est maximale (pour un certain ordre dépendant de l'arbre couvrant) dans son \textit{cycle/cocycle fondamental}.
Dans le \textbf{chapitre \ref{c:partition}}, nous expliquons comment  définir une partition de l'ensemble des sous-graphes à partir d'une $\Delta$-activité. Nous étendons ainsi les résultats de Crapo \cite{crapo} aux \mbox{$\Delta$-activités}. Nous donnons à travers cette partition une interprétation combinatoire des différents liens entre les descriptions du polynôme de Tutte.

Dans le \textbf{chapitre \ref{c:spec}}, nous montrons que les trois familles d'activité décrites dans le chapitre \ref{c:activities} peuvent être définies en termes de $\Delta$-activités. 
Le \textbf{chapitre \ref{c:bloact}} introduit une quatrième famille de $\Delta$-activités, appelées \textit{activités bourgeonnantes}. Sa définition s'inspire de la transformation d'une carte forestière en arbre bourgeonnant munie d'une forêt couvrante, décrite dans la chapitre \ref{c:enrichis}. Dans le contexte des cartes, les activités bourgeonnantes peuvent constituer une alternative intéressante aux activités de Bernardi.

Enfin, l'ultime chapitre (\textbf{chapitre \ref{s:com}}) donne quelques extensions des $\Delta$-activités, comme la généralisation de cette notion aux matroïdes. Nous concluons ce mémoire par une conjecture qui justifierait à elle seule l'intérêt des $\Delta$-activités. Grossièrement, elle stipule que les activités qui décrivent le polynôme de Tutte avec une propriété \textit{à la Crapo} coïncident précisément avec les $\Delta$-activités.

\chapter{Bijections entre cartes planaires et arbres}
\label{c:cartesarbres}

Le but de ce chapitre est de présenter les bijections (connues) entre cartes planaires et arbres. La structure  récursive des arbres  nous permettra de trouver des formules énumératives qui nous seront utiles par la suite.

\section{Des cartes planaires aux arbres bourgeonnants}
\label{s:bourgeonnants}

Dans cette partie, nous allons présenter quelques notions et résultats issus de l'article \cite{bdg2002} écrit par Jérémie Bouttier, Philippe Di Francesco et Emmanuel Guitter. Ce papier généralise la bijection \cite{Sch97} de Gilles Schaeffer entre cartes eulériennes et \textit{arbres bourgeonnants} (eulériens). L'idée de ces travaux est qu'une carte planaire peut être transformée en un arbre plan (avec certaines contraintes) sans changer la distribution des degrés des sommets. 

Commençons par définir les arbres en question. Un arbre \textit{bourgeonnant} est un arbre plan enraciné avec des demi-arêtes qui pendent sur certains sommets. Ces demi-arêtes sont de trois types différents : la \textit{racine}, qui est unique et représentée par un râteau, les \textit{bourgeons}, portant chacun une \textit{charge} $-1$ et représentés par des flèches blanches, et les \textit{feuilles}, portant chacune une \textit{charge} $+1$ et représentées par des flèches noires\footnote{Les couleurs des feuilles et des bourgeons ont été échangées entre \cite{Sch97} et \cite{bdg2002}. Nous reprenons la représentation originelle de Gilles Schaeffer : les bourgeons sont blancs et les feuilles noires.} (voir la figure \ref{exRS}). La \textit{charge} d'un arbre bourgeonnant est alors la différence entre le nombre de feuilles et le nombre de bourgeons dans cet arbre. En coupant en deux une arête\footnote{Avertissement pour la suite : un bourgeon n'est pas une arête, mais une demi-arête. La notion de sous-arbre ne peut donc pas s'appliquer à un bourgeon seul.} d'un arbre bourgeonnant, on obtient deux arbres : celui qui ne contient pas la racine est un \textit{sous-arbre}. Un sous-arbre est naturellement enraciné sur la demi-arête où il y a eu coupure. 

\begin{defi}  Un arbre bourgeonnant est appelé \textbf{R-arbre} (resp. \textbf{S-arbre}) quand :
\begin{enumerate}
\item[(i)] sa charge vaut $1$ (resp. $0$),
\item[(ii)] tout sous-arbre a pour charge $0$ ou $1$.
\end{enumerate}
\end{defi}

\fig{[width=\textwidth]{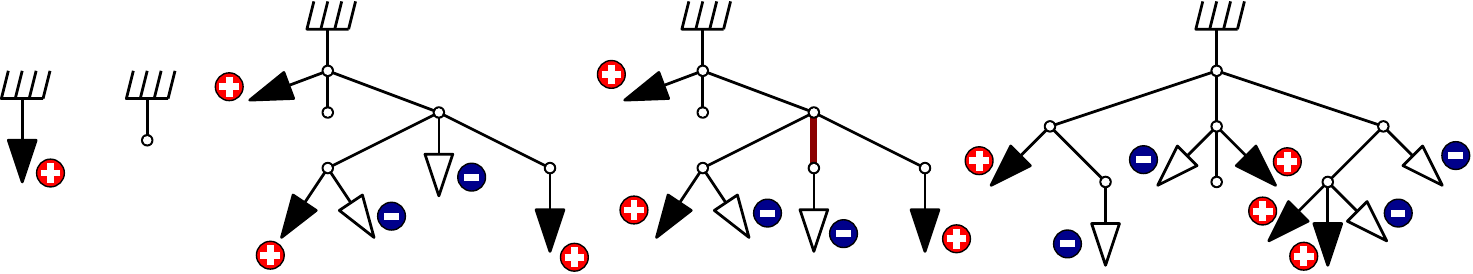}}{De gauche à droite : le plus petit R-arbre (une feuille), le plus petit S-arbre (un sommet de degré $1$ incident à une racine), un R-arbre, un arbre bourgeonnant qui \textit{n'est pas} un R-arbre (car le sous-arbre associé à l'arête en trait gras est de charge $-1$), un S-arbre.}{exRS}

Par convention, on considèrera qu'une feuille simple (sans sommet) est également un R-arbre. La figure \ref{exRS} donne des exemples de R- et S-arbres.

Les R- et les S-arbres admettent clairement une propriété d'hérédité : tout sous-arbre d'un R-arbre ou d'un S-arbre est lui-même un R-arbre ou un S-arbre. Cette première observation va nous permettre de décrire les séries génératrices desdits objets.  On note $R(z,u)$ (resp. $S(z,u)$) la série génératrice des R-arbres (resp. S-arbres) comptés selon le nombre de feuilles (variable $z$) et le nombre de sommets (variable $u$). De plus, afin que l'énumération puisse s'adapter à différentes classes de cartes planaires,\textbf{ chaque sommet de degré $\boldsymbol k$ est pondéré par une variable $\boldsymbol {g_k}$}. Par souci de concision, ces variables de poids n'apparaîtront pas dans les variables de nos séries génératrices.

\begin{prop} \label{heqRS}
Les séries génératrices $R(z,u)$ et $S(z,u)$ des R- et S-arbres satisfont $R(0,u) = S(0,u) = 0$ et le système d'équations fonctionnelles
\begin{equation}
R = z + u \sum_{i \geq 1} \sum_{j \geq 0} g_{2i+j} { 2i+j-1 \choose i-1,i,j } R^i S^j, 
\label{equerre}
\end{equation}
\begin{equation}
 S = u \sum_{i \geq 0} \sum_{j \geq 0} g_{2i+j+1} { 2i+j \choose i,i,j } R^i S^j,
 \label{equesse}
\end{equation}
où ${ a+b+c \choose a,b,c }$ désigne le coefficient trinomial $(a+b+c)!/(a!b!c!)$.
\end{prop}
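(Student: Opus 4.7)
The plan is to apply the symbolic method, decomposing an $R$-arbre or $S$-arbre at its root vertex to translate its recursive structure into the announced functional equations. I would first observe the initial condition: since the charge of an $R$-arbre equals $\#\text{feuilles}-\#\text{bourgeons}=1$, every $R$-arbre contains at least one leaf, so $R(0,u)=0$. The only $R$-arbre without any vertex is the single leaf, contributing the base term $z$; every other $R$-arbre has a root vertex $v$ (the vertex incident to the rake), and every $S$-arbre has such a vertex as well.

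For the recursive step, I would consider the root vertex $v$ of degree $k$. The $k-1$ half-edges at $v$ other than the rake appear in a linear sequence (the rake breaks the cyclic order around $v$). Each of these half-edges is of one of three types: a bud (charge $-1$), a leaf \emph{or} the root-side of an edge leading to a charge-$1$ subtree (which, by the hereditary property, is itself an $R$-arbre), or the root-side of an edge leading to a charge-$0$ subtree (an $S$-arbre). The crucial point is that a bare leaf at $v$ is absorbed into the count of ``$R$-slots'', since the minimal $R$-arbre is itself a single leaf; this is precisely what the $z$-term in $R=z+\cdots$ encodes, so that $R^i$ correctly enumerates $i$-tuples of $R$-slots regardless of whether some slots are bare leaves or lead to proper subtrees.

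Letting $i, j, b$ denote the numbers of $R$-slots, $S$-slots, and buds respectively, the charge balance of the whole tree gives $i-b=1$ for an $R$-arbre (hence $b=i-1$ with $i\geq 1$), and $i-b=0$ for an $S$-arbre (hence $b=i$ with $i\geq 0$). The degree of $v$ is then $k=2i+j$ in the $R$-case and $k=2i+j+1$ in the $S$-case. The number of ways to interleave $b$ buds, $i$ $R$-slots and $j$ $S$-slots in a linear sequence is the trinomial $\binom{b+i+j}{b,\,i,\,j}$, which specializes respectively to $\binom{2i+j-1}{i-1,\,i,\,j}$ and $\binom{2i+j}{i,\,i,\,j}$. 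Multiplying by $u\,g_k$ for the root vertex and by $R^i S^j$ for the choice of subtrees, then summing over admissible $(i,j)$ and adding the base case $z$ in the $R$-equation, yields exactly equations~\eqref{equerre} and~\eqref{equesse}.

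The point that I expect to require the most care is the interplay between the hereditary property and the recursive construction. One direction is automatic: every subtree of an $R$- or $S$-arbre is, by definition, another $R$- or $S$-arbre, so the decomposition above is well-defined. The converse needs a short induction: if one builds a tree by attaching $R$-arbres, $S$-arbres and buds at a root vertex according to the recipe, then the immediate subtrees are $R$- or $S$-arbres by construction, and every deeper subtree inherits the charge-$0$-or-$1$ property from the inductive hypothesis applied to its containing child. This ensures that the decomposition is a genuine bijection between trees and decorated root-vertex data, which is what the symbolic translation into generating functions requires to produce the stated system.
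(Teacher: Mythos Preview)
Your proof is correct and follows essentially the same route as the paper: decompose at the root vertex, identify the slots as buds, $R$-arbres (absorbing bare leaves), or $S$-arbres, use the charge constraint to fix the number of buds, and read off the trinomial coefficient from the linear interleaving. The paper's proof is somewhat terser and omits the converse direction of the bijection that you spell out; your extra care there is welcome but not strictly needed, since the hereditary property makes both directions immediate. One small remark: you justify $R(0,u)=0$ but not $S(0,u)=0$, and in fact the latter fails when $g_1\neq 0$ (the smallest $S$-arbre is a single degree-$1$ vertex with weight $u\,g_1$); this appears to be a minor imprecision in the statement itself, the relevant initial condition being that both series have zero constant term in $(z,u)$.
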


\begin{proof} Un R-arbre est soit réduit à une feuille (contribution : $z$), soit contient un sommet racine\footnote{Cela veut dire incident à la racine.}. Dans le second cas, le sommet racine est incident à plusieurs demi-arêtes (dont la racine, et potentiellement des bourgeons et des feuilles) et à des sous-arbres. D'après la propriété d'hérédité décrite plus haut, chacun de ces sous-arbres est soit un R-arbre (non réduit à une feuille), soit un S-arbre. Puisqu'une feuille est également un R-arbre, on peut dire que le sommet racine est uniquement incident à la racine, à des bourgeons, à des R-arbres et à des S-arbres. Si on compte $i$ R-arbres attachés au sommet racine, alors exactement $i-1$ bourgeons sont attachés au sommet racine car la charge totale de l'arbre doit être égale à $1$. Le nombre $j$ de S-arbres est quant à lui arbitraire. Le sommet racine a alors pour degré $1 + (i-1) + i + j = 2i+j$ et est donc pondéré par $u g_{2i+j}$. On retrouve donc \eqref{equerre}, où le coefficient trinomial provient du nombre de mélanges possibles entre bourgeons, R-arbres et S-arbres.

L'équation \eqref{equesse} se prouve de manière similaire. (Dans ce cas-là, le nombre de R-arbres et le nombre de bourgeons attachés au sommet racine sont les mêmes.)
\end{proof}

Les séries génératrices $R$ et $S$ sont en réalité caractérisées par les équations de la proposition précédente, comme le stipule le lemme qui suit. 

\begin{lem} \label{caracteresse}
Considérons une suite $(g_i)_{i \geq 0}$ à valeurs dans un corps $\mathbb K$.
Il existe un unique couple $(R,S)$ de séries formelles bivariées en $z$ et $u$, à coefficients dans $\mathbb K$, qui satisfont les équations \eqref{equerre} et \eqref{equesse}.

\end{lem}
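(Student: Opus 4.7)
The plan is to prove existence and uniqueness by an elementary induction on the total degree $n = a + b$ of monomials $z^a u^b$.

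First, a preliminary observation: for the right-hand sides of \eqref{equerre} and \eqref{equesse} to define elements of $\K[[z, u]]$, one must have $R(0, 0) = S(0, 0) = 0$. Otherwise a given monomial $z^a u^b$ would receive contributions from infinitely many pairs $(i, j)$, and the sums would fail to make sense as formal power series. Conversely, if $R$ and $S$ lie in the maximal ideal $(z, u) \subset \K[[z, u]]$, then each product $R^i S^j$ has valuation at least $i + j$, so for fixed $(a, b)$ only the finitely many pairs $(i, j)$ with $i + j \le a + b$ contribute to $[z^a u^b]$; the right-hand sides are then well-defined formal power series.

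I would then proceed by induction on $n$. The case $n = 0$ forces $R_{0, 0} = S_{0, 0} = 0$, consistent with the preliminary observation. For the inductive step, assume that $R_{a', b'}$ and $S_{a', b'}$ are known for every $(a', b')$ with $a' + b' < n$, and fix $(a, b)$ with $a + b = n$. Extracting $[z^a u^b]$ from \eqref{equerre} yields
\begin{equation*}
R_{a, b} \;=\; \delta_{(a, b), (1, 0)} \,+\, \sum_{i \ge 1,\, j \ge 0} g_{2i + j} \binom{2i + j - 1}{i - 1,\, i,\, j} \, [z^a u^{b - 1}](R^i S^j),
\end{equation*}
the sum being empty when $b = 0$. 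Inside it, each coefficient $[z^a u^{b-1}](R^i S^j)$ is a finite polynomial expression in the coefficients of $R$ and $S$ at positions of positive total degree summing to $a + b - 1 = n - 1$; in particular all these positions have total degree strictly less than $n$, and are therefore already determined by the induction hypothesis. Thus $R_{a, b}$ is uniquely determined. The argument for $S_{a, b}$ via \eqref{equesse} is identical, with the single observation that the term $(i, j) = (0, 0)$ only contributes to $[z^0 u^{b - 1}]$ and involves no coefficient of $R$ or $S$.

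Conversely, the same formulas serve as recursive definitions of $R_{a, b}$ and $S_{a, b}$ by induction on $n$, producing series $R$ and $S$ in $(z, u)\K[[z, u]]$ that satisfy \eqref{equerre} and \eqref{equesse} by construction. The only subtle point is the well-definedness of the infinite sums, which is handled by the preliminary observation; everything else reduces to mechanical coefficient extraction. A more conceptual alternative would be to note that the operator $(R, S) \mapsto \bigl(\text{RHS of }\eqref{equerre},\, \text{RHS of }\eqref{equesse}\bigr)$ is a strict contraction on $(z, u) \times (z, u)$ for the valuation metric on $\K[[z, u]]$, so that Banach's fixed-point theorem yields existence and uniqueness at once; the direct induction, however, reflects more closely the combinatorial decomposition underlying Proposition~\ref{heqRS}.
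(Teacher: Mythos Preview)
Your proof is correct and follows the same induction on total degree $a+b$ as the paper. The only difference is that the paper invokes Proposition~\ref{heqRS} for existence (the combinatorial $R$- and $S$-trees furnish an explicit solution), whereas you establish existence directly from the same recursion; your preliminary observation about well-definedness of the right-hand sides is a point the paper leaves implicit.
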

\begin{proof} Il suffit de prouver l'unicité du couple puisque l'existence est garantie par la proposition \ref{heqRS}. Plus précisément, nous montrons par récurrence sur $a+b$ que les coefficients de $z^a\,u^b$ sont déterminés de manière unique par les équations \eqref{equerre} et \eqref{equesse}. Le cas de base $a=b=0$ se résout en substituant $z$ et $u$ par $0$ dans \eqref{equerre} et \eqref{equesse} : on voit alors que les coefficients constants de $R$ et $S$ sont nuls. Pour la propriété d'hérédité, il n'est pas difficile de voir que si on connaît pour chaque $(a,b)$ avec $a+b < n$, le coefficient de $z^a \, u^b$ dans $R$ et $S$, alors pour chaque  $(a,b)$ avec $a+b = n$, l'équation \eqref{equerre} détermine le coefficient de $z^a \,  u^b$ dans $R$   et l'équation \eqref{equesse} celui dans $S$.  \end{proof}

Calculons les premiers termes de $R(z,u)$ et de $S(z,u)$ :
$$
R = z + g_2 z u +  (g_2^2 + 2 g_1 g_3) z u^2 + 3 g_4 u z^2 + O\pare{(z+u)^4},
$$
$$
S = g_1 u + g_1 g_2 u^2 + 2 g_3 z u + (g_1^2 g_3 + g_1 g_2^2) u^3 + (6 g_1 g_4 + 4 g_2 g_3) z u^2 + 6 g_5 u z^2 + O\pare{(z+u)^4}.
$$
(La notation $O\pare{(z+u)^4}$ signifie que les  termes  manquants sont en $z^a u^b$ avec $a+b \geq 4$.)
Nous pouvons remarquer sur ces petites valeurs qu'un S-arbre comporte forcément un sommet de degré impair. Cette propriété est vraie en toute généralité : le nombre de demi-arêtes accrochées à un S-arbre (en incluant la racine) est impair, ce qui ne peut arriver si l'arbre n'a pas de sommets de degré impair. Si on raisonne sur les séries génératrices, cela signifie que si $g_{2k+1}=0$  pour tout $k \geq 0$ alors $S=0$.

Nous allons maintenant voir que les arbres bourgeonnants constituent  le squelette des cartes planaires. Plus précisément, nous allons nous intéresser aux cartes planaires non plus enracinées sur un coin, mais sur une demi-arête (voir figure \ref{cg1}) et à leur série génératrice $\Gamma_1$. Plus que le résultat en lui-même, ce qui nous intéressera ici sera la méthode pour y arriver.

\fig{[scale=1]{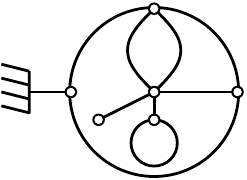}}{Une carte enracinée sur une demi-arête.}{cg1}

\begin{theo} \label{g1rs}
Soit $\Gamma_1(z,u)$ la série génératrice des cartes planaires enracinées sur une demi-arête, avec un poids $z$ par face, un poids $u$ par sommet et un poids $g_k$ par sommet de degré $k$. On a :
$$
\frac {\partial \Gamma_1} {\partial z}(z,u) = S(z,u).
$$ 
\end{theo}

\begin{proof} \textbf{1. Définition des $\boldsymbol  S$-arbres équilibrés.} Nous commençons par associer à chaque S-arbre un chemin  constitué de pas montants et de pas descendants : partant de la racine, nous faisons le tour du S-arbre dans le sens trigonométrique. Nous notons la visite d'un bourgeon par un pas montant et la visite d'une feuille par un pas descendant. Une fois revenus à la racine, nous obtenons un chemin qui comporte autant de pas montants que de pas descendants. Un exemple est montré sur la figure \ref{conj1}.

\fig{[scale=1]{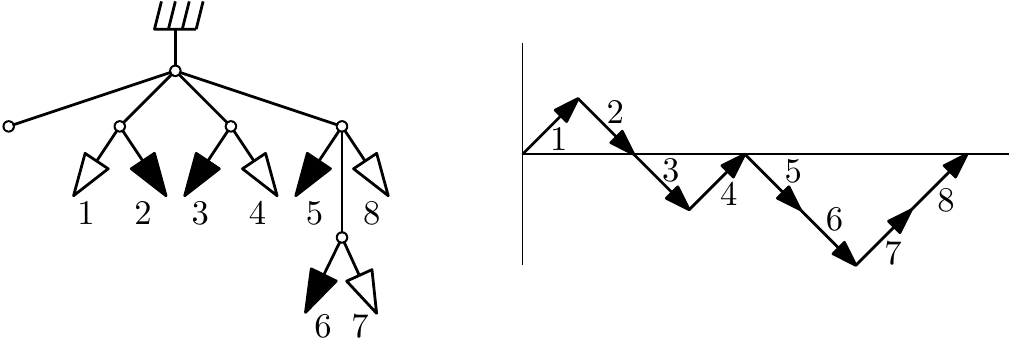}}{Un S-arbre \textit{non }équilibré et le chemin associé.}{conj1}

Un S-arbre est dit \textit{équilibré} lorsque le chemin en question est un chemin de Dyck (voir la figure \ref{conj2}). On appelle $E(z)$ la série génératrice des arbres $S$-équilibrés. 

\textbf{2. Montrons la relation dite de \textit{conjugaison} $\boldsymbol {(zE)'=S}$.} Dans chacun des S-arbres, remplaçons la racine par une feuille de sorte que la charge totale de l'arbre soit maintenant égale à $1$. Autrement dit, les S-arbres sont maintenant enracinés sur une feuille.

\fig{[scale=1]{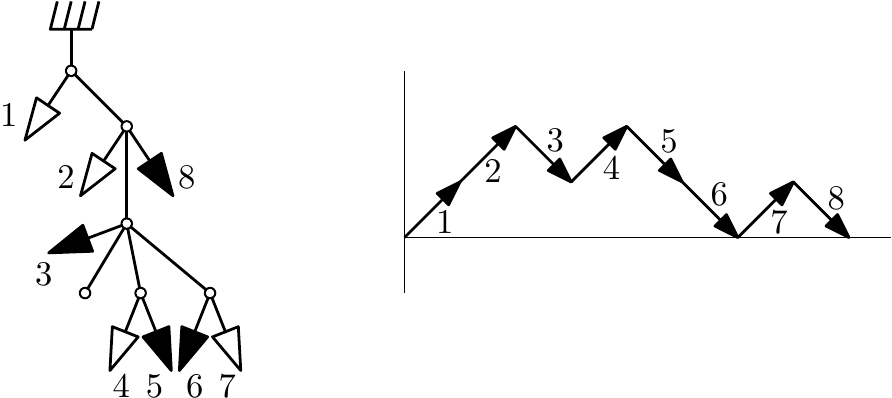}}{Un S-arbre équilibré, conjugué au S-arbre de la figure \ref{conj1}, et le chemin (de Dyck) associé.}{conj2}

Soit $\tau_1$ un S-arbre. Distinguons une deuxième feuille dans l'arbre que nous prenons comme racine, puis oublions la première feuille. Nous obtenons un nouvel arbre bourgeonnant $\tau_2$. Il n'est pas difficile de montrer que $\tau_2$ est un S-arbre. 
On dit que les deux S-arbres $\tau_1$ et $\tau_2$ sont \textit{conjugués}. Les arbres des figures \ref{conj1} et \ref{conj2} constituent un exemple de S-arbres conjugués. (La feuille distinguée du deuxième arbre est la feuille numéro 6 du premier.)

Montrons que tout S-arbre $\tau$ est conjugué à un unique arbre équilibré. Pour rester cohérent avec la nouvelle convention qui consiste à remplacer la racine par une feuille marquée, nous rajoutons un pas descendant à la fin du chemin associé à $\tau$. Ainsi la différence entre nombre de pas descendants et de pas montants dans ce chemin est de $1$. On utilise alors le \textit{lemme cyclique} \cite{cycliclemma} : pour tout chemin $C$, il existe une unique permutation circulaire de $C$ tel que le chemin obtenu soit un chemin de Dyck auquel on a ajouté un pas descendant à la fin. En terme de S-arbre, cela veut dire qu'il existe une unique feuille de $\tau$ sur laquelle l'enracinement mène à un S-arbre équilibré. On prouve de cette manière que $\tau$ est conjugué à un unique S-arbre équilibré.

Or, tout S-arbre peut être réenraciné de $(n+1)$ manières différentes\footnote{Il y a une petite subtilité à vérifier ici : deux enracinements sur des feuilles différentes doivent mener à des S-arbres différents -- c'est une conséquence du lemme cyclique.} où $n$ est le nombre de feuilles non racine. Par conséquent l'ensemble des S-arbres avec $n$ feuilles non racine peut être partitionné en classes de conjugaison de cardinal $n+1$ possédant chacune un unique S-arbre équilibré. En termes de séries génératrices, cela signifie que $(zE)'=S$. (On rappelle que $z$ ne compte pas la feuille racine, ni pour $E$, ni pour $S$.)

\textbf{3. Transformons une carte $\boldsymbol C$ enracinée  sur une demi-arête en un arbre bourgeonnant.} La figure \ref{trans} illustre les étapes de cette transformation. Commençant par la racine, nous marchons dans le sens trigonométrique le long des arêtes de la face incidente à la racine. Après avoir longé une arête qui n'est pas un isthme\footnote{Un isthme est une arête dont la suppression déconnecte le graphe.}, nous coupons cette arête en deux. La première demi-arête est un bourgeon, la deuxième demi-arête une feuille. Nous nous arrêtons quand il ne reste plus qu'une seule face : nous avons obtenu notre arbre bourgeonnant. 

Remarquons que les faces non racine de $C$ sont en correspondance avec les feuilles de l'arbre bourgeonnant. En effet, à chaque fois qu'une arête est coupée, une feuille se forme et une face non racine est détruite.

\fig{[width=\textwidth]{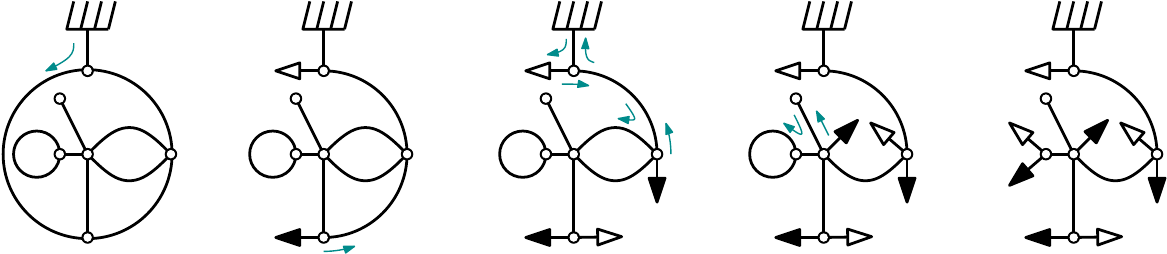}}{Transformation d'une carte enracinée sur une demi-arête en un S-arbre bourgeonnant équilibré.}{trans}

\textbf{4. L'arbre bourgeonnant obtenu est un $\boldsymbol S$-arbre équilibré.} Appelons $\tau_C$ l'arbre bourgeonnant dérivant de la carte $C$. Il est clairement de charge totale nulle. Considérons une arête de $\tau_C$. On veut montrer que le sous-arbre associé à cette arête dans $\tau_C$ a une charge $0$ ou $1$. Appelons $E_2$ l'ensemble des sommets de cet arbre et $E_1$ son complémentaire dans $\Som C$.
Nous pouvons facilement voir que seules les coupures d'arêtes qui ont une extrémité dans $E_1$, l'autre dans $E_2$, peuvent changer la charge totale de $E_2$. Néanmoins les passages de $E_1$ vers $E_2$ et de $E_2$ vers $E_1$ alternent, de sorte que les charges induites par deux coupures lors de deux passages consécutifs se compensent. La charge de $E_2$ ne dépend donc que de la parité du nombre de passages entre $E_1$ et $E_2$ avec coupure d'arête : s'il est pair, la charge de $E_2$ vaut $0$, sinon elle vaut $1$. Quelle que soit la parité, on a bien montré que $\tau_C$ est un S-arbre. 

En outre, si nous regardons comment se construit le chemin associé à $\tau_C$ à partir du chemin vide, nous observons que couper une arête revient à insérer un pas montant et un pas descendant accolés quelque part dans le chemin, qui de fait reste positif. L'arbre $\tau_C$ est bien un S-arbre équilibré.

\textbf{5. Montrons que $\boldsymbol{ \Gamma_1=z\,E}$.} Remarquons que $C$ peut être facilement retrouvée à partir de $\tau_C$ ; il suffit de relier par une arête tout couple de bourgeon et de feuille qui correspondent à deux pas qui se font face dans le chemin associé à $\tau_C$ (voir la figure \ref{appareil}).  Ainsi, l'application $C \mapsto \tau_C$ est une bijection entre les cartes enracinées sur une demi-arête et les S-arbres équilibrés qui conserve l'ensemble des sommets qui transforme les faces non racine en des feuilles. Retranscrite en terme de séries génératrices, cette bijection montre l'égalité $\Gamma_1=zE$. Le théorème est ainsi prouvé. \end{proof}

\fig{[scale=1]{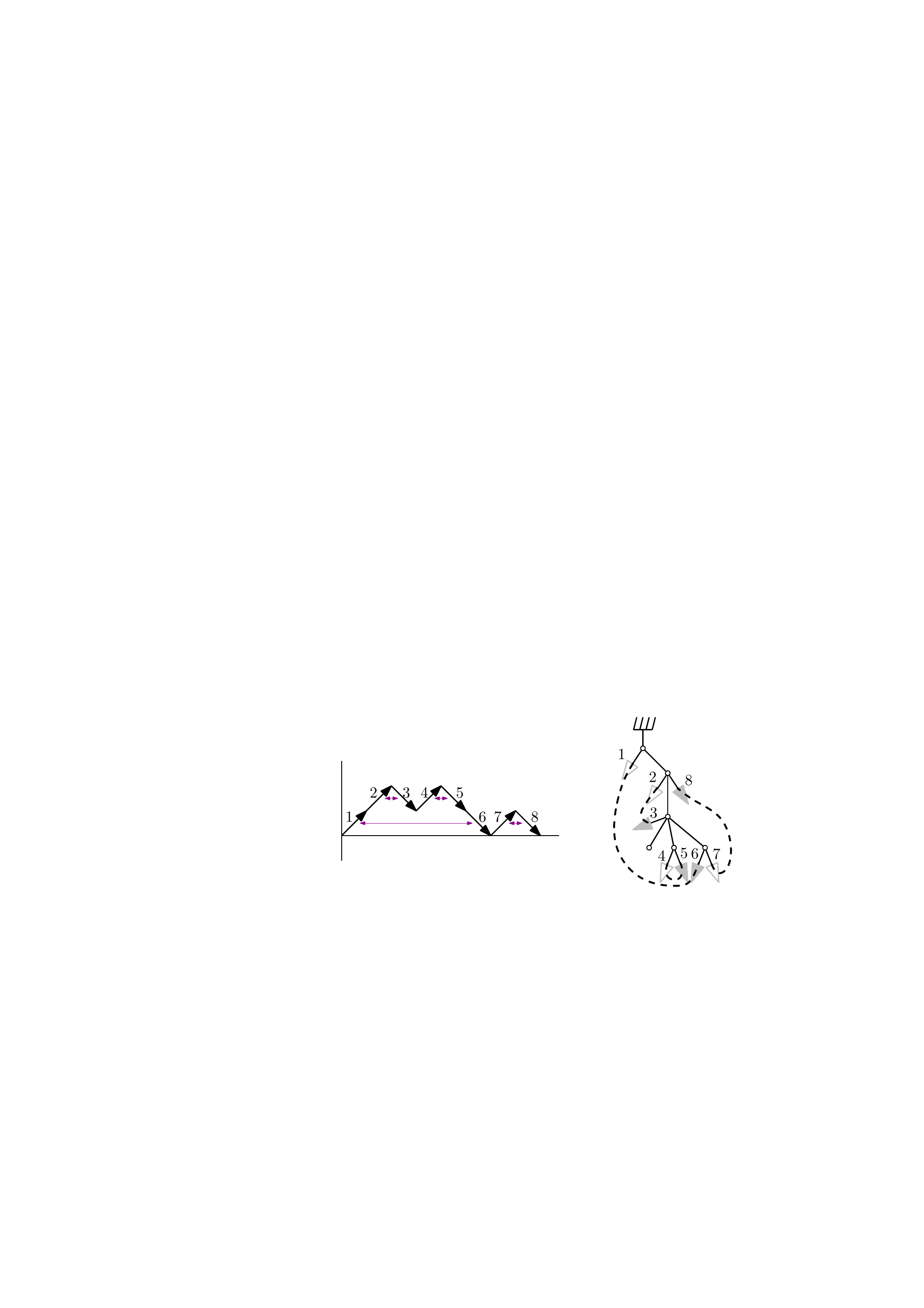}}{Transformation du S-arbre équilibré de la figure \ref{conj2} en carte enracinée sur une demi-arête.}{appareil}

La bijection de la preuve précédente, que nous avons détaillée pour illustrer l'intérêt des arbres bourgeonnants, admet plusieurs variantes qui permettent d'interpréter combinatoirement de nombreuses équations fonctionnelles. Le théorème suivant liste quelques unes de ces équations, extraites de \cite{bdg2002}, que nous utiliserons dans ce mémoire.

\begin{theo} \label{cartesgk}
Appelons
\begin{itemize}
\item $M(z,u)$ la série génératrice des cartes planaires (enracinées sur un coin),
\item $\Gamma_1(z,u)$ la série génératrice des cartes planaires enracinées sur une demi-arête,
\item $\Gamma_2(z,u)$ la série génératrice des cartes planaires enracinées sur une demi-arête et avec une seconde demi-arête incidente à la face racine \footnote{Contrairement au papier originel \cite{bdg2002}, nous excluons la carte vide.} (voir la figure \ref{cg2}),
\item $R(z,u)$ la série génératrice des R-arbres,
\item $S(z,u)$ la série génératrice des S-arbres,
\end{itemize}
où la variable $z$ compte le nombre de faces dans les séries génératrices de cartes et le nombre de feuilles dans les séries génératrices d'arbres bourgeonnants, et où la variable $u$ compte les sommets.
Ces séries satisfont les formules suivantes (où aucune dérivée n'apparaît) :
$$
M = \frac {\Gamma_1^2} z + z \Gamma_2,
$$

$$
\Gamma_1 = S - u \sum_{\substack{i \geq 2 \\ j \geq 0}} g_{2i+j-1} { 2i - j - 2 \choose i,i-2,j} R^i S^j,
$$
$$
\Gamma_2 = R - z - \frac u z \sum_{\substack{i \geq 3 \\ j \geq 0}}   g_{2i+j-2} { 2i - j - 3 \choose i,i-3,j} R^i S^j - \left(\frac u z \sum_{\substack{i \geq 2 \\ j \geq 0}}  g_{2i+j-1} { 2i - j - 2 \choose i,i-2,j} R^i S^j \right)^2.
$$
En éliminant $\Gamma_1$ et $\Gamma_2$ dans les précédentes équations, nous pouvons exprimer $M$ en fonction de $R$ et $S$ :
$$
M = zR + zS^2 - z^2 - 2 u S \sum_{\substack{i \geq 2 \\ j \geq 0}}  g_{2i+j-1} { 2i - j - 2 \choose i,i-2,j} R^i S^j - u \sum_{\substack{i \geq 3 \\ j \geq 0}}  g_{2i+j-2} { 2i - j - 3 \choose i,i-3,j} R^i S^j.
$$
\end{theo}

\fig{[scale=1]{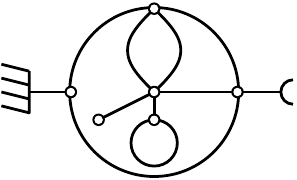}}{Une carte enracinée sur une demi-arête avec une deuxième demi-arête incidente à la face racine.}{cg2}

\section{Des cartes planaires aux mobiles}
\label{s:mobiles}

Que deviennent les arbres bourgeonnants de la partie précédente une fois transposés dans le modèle dual ? La réponse a été apportée par Jérémie Bouttier, Philippe Di Francesco et Emmanuel Guitter  dans \cite{bouttier-mobiles} où sont définis les \textit{mobiles}. Ces objets généralisent les arbres bien étiquetés de Robert Cori et Bernard Vauquelin \cite{cori-vauquelin} et Gilles Schaeffer \cite{schaeffer-these}. Définir les mobiles dans ce mémoire serait hors-sujet ; la figure \ref{mobile} peut néanmoins donner un aperçu de la notion.

\fig{[width=\textwidth]{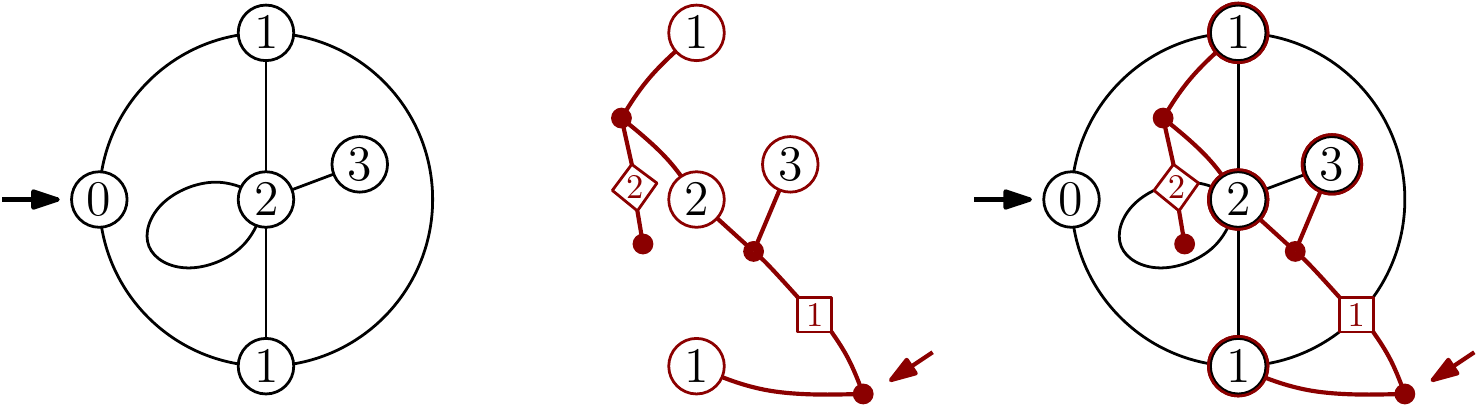}}{\`A gauche : une carte planaire enracinée. Au milieu : le mobile correspondant. \`A droite : leur superposition.}{mobile}

Les mobiles sont les objets idéaux quand on souhaite travailler sur les incidences face-arête sur les cartes planaires, à l'opposé des arbres bourgeonnants qui sont plus adaptés pour les incidences sommet-arête. Toutefois il faut savoir que les mobiles et ce qu'on appelle les \textit{demi-mobiles} jouent un rôle dual par rapport aux R- et aux S-arbres ; leurs séries génératrices satisfont d'ailleurs les mêmes équations \eqref{equerre} et \eqref{equesse}.

En outre, Jérémie Bouttier et Emmanuel Guitter dans \cite{bg-continuedfractions} ont réussi à trouver  grâce aux mobiles  une équation décrivant la série génératrice des cartes où le degré de la face racine\footnote{ou du sommet racine si on raisonne par dualité} est fixé. Elle peut être formulée de la manière suivante.

\begin{prop} 
\label{prop:mp}
Soit $\overline M(z,u)$ la série génératrice des cartes avec un poids $z$ par face, un poids $u g_k$ par sommet non racine de degré $k$ et un poids $u h_k$ par sommet racine\footnote{Même si la formulation est étrange, chaque carte ayant un seul sommet racine.} de degré $k$. Alors 
\begin{equation} 
\label{eqmp}
\frac {\partial \overline M} {\partial z} = u \sum_{i \geq 0} \sum_{j \geq 0} h_{2i+j} {2i+j \choose i,i,j} R^i S^j,
\end{equation}
où $R$ et $S$ sont les séries bivariées définies par \eqref{equerre} et \eqref{equesse}.
\end{prop}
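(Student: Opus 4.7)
The statement is the natural extension of Theorem \ref{g1rs} to cartes whose root vertex carries a modified weight, so the proof parallels that of Theorem \ref{g1rs}. Observe first that the right-hand side has the exact shape of the equation \eqref{equesse} defining $S$, with $g_{2i+j+1}$ replaced by $h_{2i+j}$; I therefore interpret it as the generating function $\widetilde S$ of S-arbres in which the root vertex (whose tree-degree $2i+j+1$ comprises the racine, $i$ R-sous-arbres, $i$ bourgeons and $j$ S-sous-arbres) is weighted by $u h_{2i+j}$ in place of $u g_{2i+j+1}$.

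The first step is to turn the corner root of $\overline M$ into a half-edge root. Given a carte enracinée sur un coin of root-vertex degree $k$, plant a dangling half-edge at the root corner. This canonical operation preserves the number of faces and the degrees of every non-root vertex, while the degree of the root vertex grows from $k$ to $k+1$. Consequently, $\overline M$ coincides with the generating function of cartes enracinées sur une demi-arête in which the root vertex of (augmented) degree $d$ is weighted by $u h_{d-1}$ instead of $u g_d$.

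Applying the bijection from the proof of Theorem \ref{g1rs} verbatim to this modified setup then yields $\overline M = z \widetilde E$, where $\widetilde E$ is the generating function of S-arbres équilibrés with the modified weight convention. Indeed, the bijection preserves vertex degrees (so the modified weight on the root vertex is carried over without change) and sends non-root faces of the carte to non-root leaves of the tree (which accounts for the extra factor $z$ absorbing the root face). A decomposition at the root vertex entirely analogous to that of Proposition \ref{heqRS} identifies $\widetilde S$ with the generating function of all S-arbres carrying the same modified weighting.

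It remains to establish the identity $\partial(z \widetilde E)/\partial z = \widetilde S$, which gives the proposition. In the uniformly weighted setting of Theorem \ref{g1rs}, this is immediate from the lemme cyclique: the conjugacy class of an S-arbre with $n$ non-root leaves has size $n+1$, contains exactly one equilibrated element, and all its members share the same weight. The main obstacle here is that the distinguished weight $u h_{d-1}$ on the root vertex is transferred to a new vertex under re-enracinement, so weights are no longer constant along a conjugacy class. Pushing the conjugation argument through therefore requires carefully summing the weighted contributions of all $(n+1)$ re-rootings of a fixed underlying tree and matching them to the contribution of the equilibrated representative; alternatively, one may bypass this bookkeeping by deriving the identity algebraically from the functional equations \eqref{equerre} and \eqref{equesse} satisfied by $R$ and $S$, viewing $h_k$ as a formal parameter independent of $g_k$.
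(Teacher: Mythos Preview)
Your diagnosis of the obstacle is exactly right, but you have not overcome it, and neither of your two suggested fixes works. The conjugacy argument of Theorem~\ref{g1rs} rests on the fact that all $n{+}1$ re-rootings of an S-arbre carry the \emph{same} weight; here the special weight $h_{d-1}$ migrates with the root, so for a fixed underlying tree the total $\widetilde S$-contribution is $\sum_\ell w_\ell$ while the $(z\widetilde E)'$-contribution is $(n{+}1)\,w_{\ell_0}$ for the single equilibrated rooting $\ell_0$, and these quantities have no reason to agree tree by tree. Your algebraic alternative cannot rescue this either: the functional equations~\eqref{equerre}--\eqref{equesse} for $R$ and $S$ involve only the $g_k$, so no identity containing the free parameters $h_k$ can be derived from them alone.

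The paper does not prove this proposition in place: it is quoted from Bouttier--Guitter (via mobiles). However, the very next section (Section~\ref{s:bb}) supplies precisely the missing idea. There one introduces \emph{T-arbres}, which are corner-rooted (not half-edge-rooted) blossoming trees of total charge $0$, and exhibits an explicit bijection $\Theta$ between T-arbres and planar maps with a \emph{marked face} that preserves the degree of the root vertex. The marked face accounts for $\partial/\partial z$, the corner rooting makes the root vertex of degree $2i+j$ (not $2i+j+1$), and the decomposition at the root vertex gives exactly $\sum_{i,j} h_{2i+j}\binom{2i+j}{i,i,j}R^iS^j$. The paper explicitly remarks that this bijection ``a l'avantage par rapport \`a celles d\'ecrites dans la section~\ref{s:bourgeonnants} de ne pas reposer sur un argument de conjugaison'', which is the point where your argument stalls.
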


La formule précédente est concise mais comporte une dérivée selon $z$ potentiellement gênante pour la suite. Il faut en effet savoir que les cartes pointées -- ici cela veut dire qu'une face a été distinguée -- sont plus faciles à énumérer que les cartes seules, ce qui justifie la présence de la dérivée. Toutefois, toujours dans \cite{bg-continuedfractions}, se trouve une formule sans dérivée, mais plus complexe, liant la précédente série $\overline M$ aux séries $R$ et~$S$ : 

\begin{theo}
\label{vintegrale}
Soient $\overline M(z,u)$, $R(z,u)$ et $S(z,u)$ les séries de la proposition \ref{prop:mp}. Nous avons la relation
$$
\overline M = u \alpha(R,S)-u^2\,\beta(R,S), 
$$
où $\alpha$ et $\beta$ sont des séries formelles en $x,y,g_1,g_2,\dots,h_1,h_2,\dots$ définies par
$$
\alpha(x,y) =  \sum_{i \geq 0} \sum_{j \geq 0} h_{2i+j} \frac{(2i+j)!}{i! (i+1)! j!}  x^{i+1} y^j,
$$
$$
\beta(x,y) =   \sum_{i \geq 0} \sum_{j \geq 0}  \sum_{k \geq 0} \sum_{\ell \geq 0} \sum_{q=0}^{2i+j} h_{2i+j-q} \, g_{2k+\ell+q+2} \, \frac{(2i+j)!}{i! (i+1)! j!}  \frac{(2k+\ell)!}{(k!)^2 \ell!} x^{i+k} y^{j+\ell}.
$$
\end{theo}

\section{Une autre bijection bourgeonnante}
\label{s:bb}

Nous donnons dans cette section une interprétation combinatoire de l'identité \eqref{eqmp} en termes d'arbres bourgeonnants. Cette interprétation a été décrite pour la première fois par Olivier Bernardi dans \cite[Théo. 8]{bernardi-zagier} sans faire intervenir explicitement les arbres bourgeonnants. Signalons également que cette transformation peut être déduite d'un article récent de Marie Albenque et Dominique Poulalhon \cite{AlPo}.

Cette bijection a l'avantage par rapport à celles décrites dans la section \ref{s:bourgeonnants} de ne pas reposer sur un argument de conjugaison. En d'autres termes, nous gardons le contrôle sur le degré du sommet racine.

Cette bijection revêt un intérêt particulier dans ma thèse ; nous la réutiliserons quand la carte sera munie d'une forêt couvrante. En résultera une nouvelle description du polynôme de Tutte basée sur ce qu'on appellera \textit{l'activité bourgeonnante}.

\subsection{T-arbres}
\label{ss:tarbre}

Un \textit{$T$-arbre } est un arbre bourgeonnant $T$ tel que 
$(i)$ $T$ ne comporte pas de demi-arête racine mais est enraciné sur un coin,
$(ii)$ la charge totale de $T$ est $0$,
$(iii)$ tout sous-arbre a pour charge $0$ ou $1$.
La figure \ref{tartre} montre un exemple d'un tel arbre. 
 
\fig{[scale=1.2]{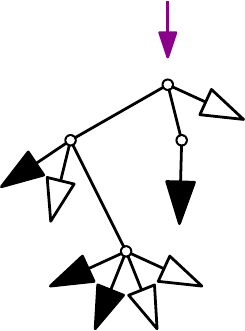}}{Exemple de T-arbre}{tartre}

\begin{prop}\label{p:tarbre}
La série génératrice $T(z,u)$ des T-arbres avec un poids $z$ par feuille, un poids $u$ par sommet, un poids $g_k$ par sommet non racine de degré $k$ et un poids $h_k$ si le sommet racine est de degré $k$ satisfait
$$T(z,u) = u \sum_{i \geq 0} \sum_{j \geq 0} h_{2i+j} \, {2i+j \choose i,i,j} \, R^i \, S^j,$$
où les séries $R$ et $S$ sont définies par \eqref{equerre} et \eqref{equesse}.
\end{prop}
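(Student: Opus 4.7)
L'id�e consiste � adapter la preuve de l'�quation \eqref{equesse} de la proposition \ref{heqRS}, en d�composant un T-arbre au niveau de son sommet racine. Les deux seules diff�rences avec le cas des S-arbres sont les suivantes : le sommet racine n'est plus incident � une demi-ar�te racine mais � un coin racine, ce qui r�duit son degr� d'une unit� ; et ce sommet est pond�r� par $h_k$ et non par $g_k$.

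D'abord, je v�rifierais la propri�t� d'h�r�dit� : tout sous-arbre (obtenu par coupure d'une ar�te) d'un T-arbre satisfait encore la condition (iii), donc est de charge $0$ ou $1$, et constitue par cons�quent soit un R-arbre, soit un S-arbre. Au sommet racine d'un T-arbre, chaque sous-arbre est donc de l'un de ces deux types, auxquels s'ajoutent d'�ventuels bourgeons pendants ; par la convention selon laquelle une feuille simple est un R-arbre, les feuilles pendantes au sommet racine sont assimil�es � des R-arbres d�g�n�r�s. Soient alors $i$ le nombre de R-arbres attach�s au sommet racine, $j$ le nombre de S-arbres et $b$ le nombre de bourgeons : la contrainte de charge totale nulle impose $i - b = 0$, donc $b = i$. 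Le degr� du sommet racine vaut alors $i + i + j = 2i + j$ (pas de demi-ar�te racine), ce qui justifie le poids $u \, h_{2i+j}$. Enfin, le coefficient multinomial ${2i+j \choose i, i, j}$ d�nombre les entrelacements cycliques des $i$ R-arbres, des $i$ bourgeons et des $j$ S-arbres autour du sommet racine, le coin racine servant de r�f�rence pour briser la sym�trie cyclique.

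Le seul point demandant un peu d'attention est la convention qui identifie une feuille pendante au sommet racine � un R-arbre r�duit � une feuille : ces contributions sont automatiquement prises en compte par le terme $z$ de la s�rie $R$, et il faut veiller � ne pas les r�int�grer s�par�ment dans la d�composition (au risque d'obtenir des r�p�titions). Une fois ce point clarifi�, sommer sur tous les couples $(i,j)$ avec $i, j \geq 0$ donne imm�diatement la formule annonc�e.
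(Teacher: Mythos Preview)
Your proposal is correct and follows essentially the same approach as the paper: both proofs decompose a T-arbre at its root vertex into $i$ R-arbres, $j$ S-arbres and $i$ bourgeons (the equality being forced by the zero-charge condition), yielding the weight $u\,h_{2i+j}$ and the multinomial coefficient. Your version is in fact slightly more detailed than the paper's (you make explicit the heredity property, the feuille/R-arbre convention, and the role of the root corner in linearizing the cyclic arrangement), but the argument is identical.
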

\begin{proof}La preuve est similaire à celle de la proposition \ref{heqRS}. Un T-arbre est un arbre bourgeonnant enraciné sur un coin dans lequel $i$ R-arbres, $j$ S-arbres et $k$ bourgeons sont attachés au sommet racine. Pour que la charge de l'arbre soit nulle, il faut que $i = k$. Mais les séries génératrices des R- et S-arbres sont décrites par \eqref{equerre} et \eqref{equesse} ; nous en déduisons l'expression de $T$ annoncée.
\end{proof}

Décrivons dans la sous-section ci-dessous une bijection $\Theta$ entre T-arbres et cartes planaires $C$ avec une face  marquée $\chi$ :
\begin{itemize}
\item qui transforme les feuilles en faces non racine,
\item qui préserve le nombre de sommets ainsi que leurs degrés,
\item qui préserve le degré du sommet racine.
\end{itemize}
Cette bijection fournit une interprétation combinatoire de \eqref{eqmp}.

\subsection{Des T-arbres aux cartes planaires avec face marquée}
\label{ss:cloture}

La transformation d'un T-arbre enrichi en une carte planaire est peut-être le sens le plus naturel de la bijection. Considérons $T$ un  T-arbre.

Nous réutilisons  le procédé que nous avions employé dans le point 5 de la preuve du théorème \ref{g1rs}  (mais formulé de manière différente) : si une feuille suit immédiatement un bourgeon lorsque nous longeons la face externe dans le sens trigonométrique, alors nous relions les deux pour former une arête. Cette arête est tracée de sorte que si on l'oriente dans le sens bourgeon-feuille, la face externe se trouve à la droite de cette arête. Il se peut que le coin racine se trouve entre un bourgeon et une feuille. Cela ne change rien, le bourgeon et la feuille seront reliés et le coin racine sera dans une face non externe. Nous répétons ce processus jusqu'à qu'il n'y ait plus de feuille et de bourgeon. Nous obtenons alors une carte planaire $C$. Nous marquons la face externe que nous appelons $\chi$. On note $(C,\chi) = \Theta(T)$. La figure \ref{tartransfo} illustre les différentes étapes de cette transformation.

\fig{[width = \textwidth]{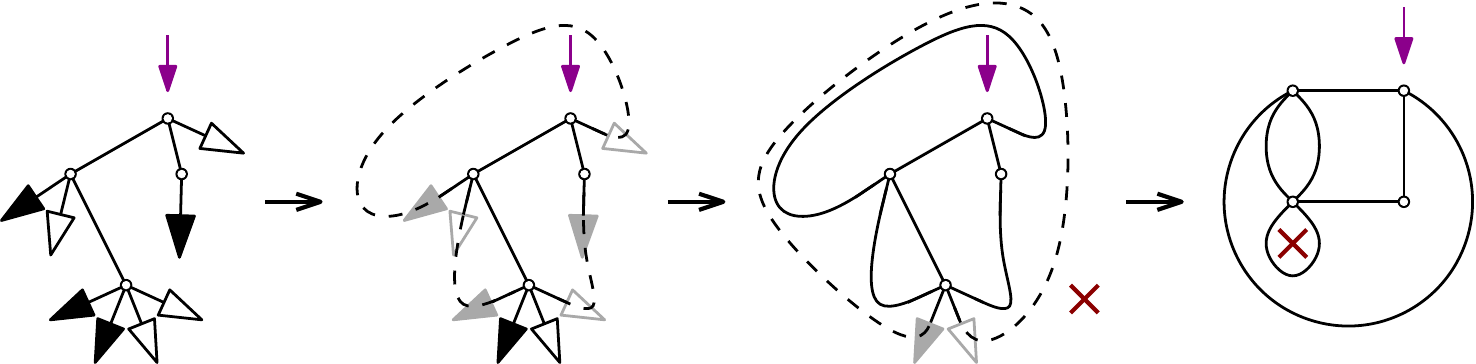}}{Les différentes étapes de la transformation $\Theta$. Les deux dernières cartes sont exactement les mêmes, la dernière est dessinée de sorte que la racine soit sur la face externe.}{tartransfo}

Notons que la carte obtenue ne dépend pas de l'ordre dans lequel on a choisi de regrouper les couples bourgeon/feuille. En effet, chaque bourgeon est associé de manière unique à une feuille. Pour le voir, il suffit de considérer le chemin de pas montants et descendants comme décrit dans le point 1 de la démonstration du théorème \ref{g1rs} : un bourgeon et une feuille sont associés si et seulement si les pas correspondants se font face.

De plus, nous pouvons voir que chaque feuille de $T$ donne naissance  une face non racine quand elle est reliée à un bourgeon. Le nombre de faces non racine de $C$ est donc égal au nombre de feuilles dans $T$. En outre, comme nous l'avions annoncé, les degrés des sommets, dont celui la racine, sont préservés par $\Theta$.

\subsection{Des cartes planaires avec face marquée aux T-arbres}
\label{ss:ouverture}

Décrivons maintenant l'application réciproque de $\Theta$, que nous notons provisoirement $\Theta^*$.

Considérons  une carte planaire $C$ avec une face marquée $\chi$. Commençons par tracer $C$ dans le plan de manière à ce que $\chi$  devienne la face externe.

Décrivons une itération de la transformation. Considérons $E$ l'ensemble des arêtes  incidentes à la face extérieure qui ne sont pas des isthmes. Nous orientons les arêtes de $E$ selon le sens trigonométrique dans la face externe. Pour chaque arête $e$ de $E$, nous procédons comme suit. Nous notons $I_e$ l'ensemble des arêtes $e'$ de $E$ telles que $e$ et $e'$ sont incidentes aux deux mêmes faces. (L'ensemble $I_e$ est représenté schématiquement sur la figure \ref{ie}.)  Si $I_e$ est réduit à $e$, alors on coupe $e$ en deux : la première demi-arête (selon l'orientation de l'arête) est un bourgeon, la seconde est une feuille. Si $I_e$ a plus de deux éléments, alors la suppression de deux arêtes de $I_e$ déconnecte le graphe. Il faut donc déterminer quelle arête de $I_e$ nous allons couper. Considérons la composante de $C \backslash I_e$ contenant la racine et l'arête $\hat e$ qui est l'unique arête de $I_e$ qui sort de cette composante. Nous remplaçons $\hat e$ par un couple bourgeon/feuille en respectant cet ordre, en conservant les autres arêtes de $I_e$.

\fig{[scale=0.7]{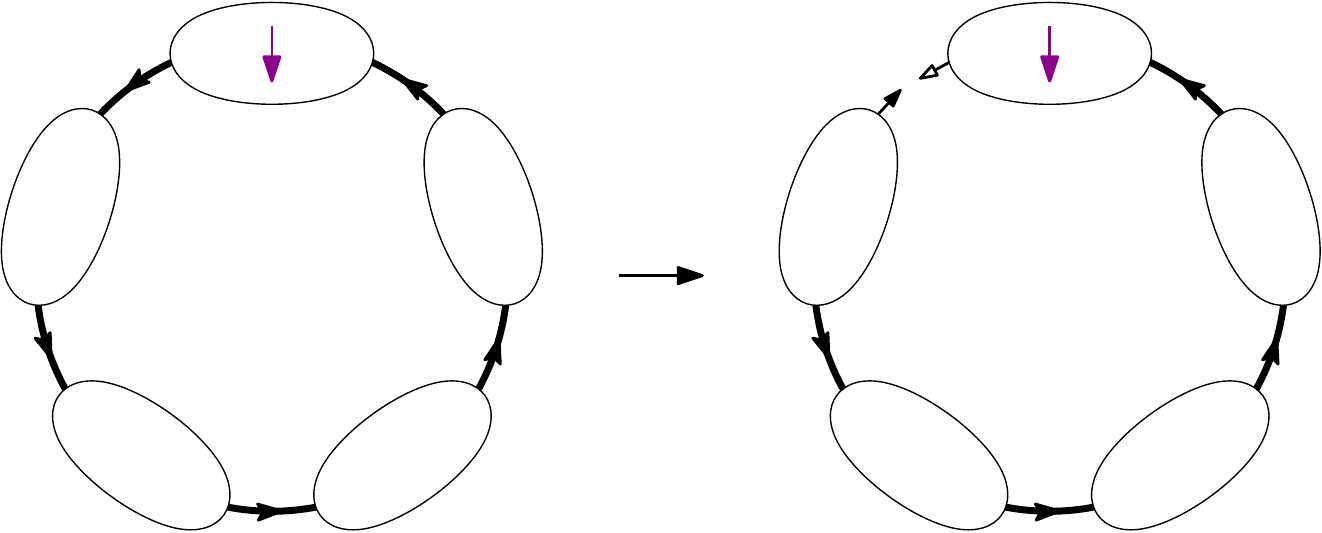}}{En gras : l'ensemble $I_e$. Nous coupons l'arête qui sort de la composante racine.}{ie}

Nous répétons la précédente itération jusqu'à qu'il n'y ait plus que  des isthmes sur la face extérieure. Autrement dit, nous itérons jusqu'à que nous obtenions un arbre bourgeonnant  $T$. On pose $T = \Theta^*(C,\chi)$. Les différentes étapes de cette transformation sont illustrées sur la figure \ref{tof}.

\fig{[width = \textwidth]{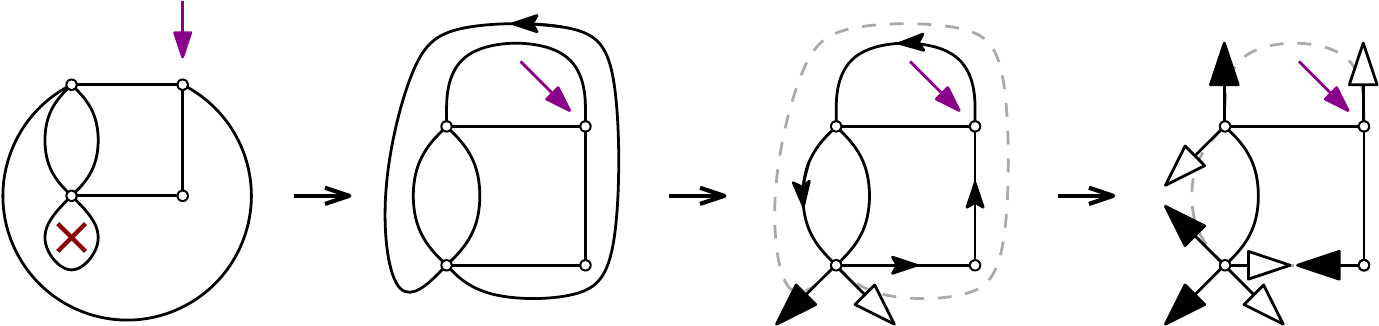}}{Les différentes étapes de la transformation $\Theta^*$.}{tof}

Montrons que nous avons bien obtenu un T-arbre.

\begin{prop}
L'image $T$ par $\Theta^*$ de n'importe quelle carte planaire avec face marquée $(C,\chi)$  est un T-arbre.
\end{prop}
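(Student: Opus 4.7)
Mon plan consiste � v�rifier que $T = \Theta^*(C,\chi)$ satisfait les trois conditions d�finissant un T-arbre (sous-section \ref{ss:tarbre}) : $(i)$ $T$ est un arbre bourgeonnant enracin� sur un coin ; $(ii)$ sa charge totale est nulle ; $(iii)$ tout sous-arbre de $T$ a pour charge $0$ ou $1$. Les deux premi�res conditions s'obtiennent par des arguments structuraux directs, la troisi�me demandera un argument combinatoire plus fin.

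Je commencerais par �tablir $(i)$. Chaque coupure remplace une ar�te non-isthme par un couple bourgeon/feuille, ce qui fait d�cro�tre strictement le nombre cyclomatique tout en pr�servant la connexit� de la carte. Par cons�quent, l'algorithme termine et aboutit � une carte dont toutes les ar�tes incidentes � la face ext�rieure sont des isthmes. Un petit argument de dualit� faciale montre alors qu'une telle carte ne comporte qu'une seule face : toute face interne induirait un cycle dont au moins une ar�te serait incidente � la face ext�rieure sans �tre un isthme, ce qui contredirait la condition d'arr�t. La carte r�sultante est donc bien un arbre bourgeonnant, et le coin racine initial, pr�serv� � travers toutes les it�rations, fournit l'enracinement. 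La condition $(ii)$ d�coule du fait que chaque coupure cr�e exactement un bourgeon (charge $-1$) et une feuille (charge $+1$), laissant invariante la charge totale, initialement nulle.

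Pour $(iii)$, soit $A$ un sous-arbre de $T$ ne contenant pas le coin racine, et $V_A$ l'ensemble de ses sommets. Comme $\Theta^*$ pr�serve les sommets, $V_A$ s'identifie � un sous-ensemble des sommets de $C$. La charge de $A$ est �gale au nombre de feuilles moins le nombre de bourgeons dans $V_A$, et seules les coupures d'ar�tes de $C$ ayant une extr�mit� dans $V_A$ et l'autre dans son compl�mentaire peuvent contribuer. Je r�utiliserais l'argument d'alternance employ� dans la preuve du th�or�me \ref{g1rs} : en suivant, dans l'ordre chronologique des coupures, les ar�tes qui traversent la fronti�re de $V_A$, les passages entre $V_A$ et son compl�mentaire alternent, ce qui fait que les contributions en bourgeon et en feuille se compensent deux � deux, laissant au plus une unit� de charge r�siduelle.

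L'obstacle principal sera de justifier rigoureusement la r�gle de s�lection de $\hat e$ lorsque $|I_e| \geq 2$. Sans cette prescription, on pourrait obtenir un sous-arbre de charge $-1$, ce qui violerait $(iii)$. En coupant pr�cis�ment l'unique ar�te de $I_e$ qui sort de la composante racine, on garantit que le bourgeon se trouve du c�t� contenant la racine ; ceci force le solde de charge de tout sous-arbre ne contenant pas la racine � appartenir � $\ens{0,1}$ plut�t qu'� $\ens{-1,0}$. Pour formaliser ce point, je proc�derais par r�currence sur les it�rations de l'algorithme, en maintenant l'invariant suivant : pour toute composante connexe $K$ de la carte courante obtenue en retirant les ar�tes ext�rieures non encore coup�es, l'exc�s du nombre de feuilles sur le nombre de bourgeons dans $K$ appartient � $\ens{0,1}$ d�s lors que $K$ ne contient pas le coin racine. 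Au terme du processus, lorsque $T$ est l'arbre final, cet invariant fournit directement la condition $(iii)$ pour tous les sous-arbres, ce qui ach�verait la preuve.
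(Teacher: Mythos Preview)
Your handling of $(i)$ and $(ii)$ is correct and more explicit than the paper, which dismisses them as ``trivialement v\'erifi\'es''. For $(iii)$, your opening move --- fix a subtree $A$, let $\gamma$ be the cocycle of edges of $C$ crossing the boundary of $V_A$, and track how cuts of edges of $\gamma$ affect the charge of $V_A$ --- is exactly the paper's strategy.

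Two gaps remain in your execution. First, the alternation argument you borrow from Th\'eor\`eme~\ref{g1rs} does not carry over as stated: there, one walks around a single face and the alternation of crossings is immediate; here the algorithm cuts one edge per class $I_e$ per iteration, and there is no well-defined ``chronological order of cuts'' within an iteration, nor any obvious reason the cuts lying in $\gamma$ should alternate in direction across iterations. The paper sidesteps this by an induction on $|\gamma|$: planarity (a cocycle is a dual cycle, hence meets any face in at most two edges) guarantees exactly two edges of $\gamma$ on the current external face, with opposite orientations; if $|\gamma|\ge 3$ both are eventually cut (neither can become an isthmus) and the charge of $S_2$ stays at $0$; if $|\gamma|=2$ the two edges share both incident faces, hence lie in the same $I_e$, and the $\hat e$ rule cuts precisely the one oriented from $S_1$ toward $S_2$, giving charge $+1$.

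Second, your proposed invariant is ill-posed. There is no for\^et here, so ``ar\^etes ext\'erieures'' has no intrinsic meaning; if you intend the edges incident to the current external face, then at termination every edge of $T$ is such an edge, removing them leaves isolated vertices, and the invariant says nothing about subtrees of $T$. The paper's per-subtree induction on $|\gamma|$ is both simpler and sufficient --- your first paragraph on $(iii)$ was already pointing there.
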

\begin{proof}
Les points $(i)$ et $(ii)$ de la définition de \textit{T-arbre} sont trivialement vérifiés. Considérons une arête  $e$ de $T$. Appelons $S_2$ l'ensemble des sommets du sous-arbre associé à $e$ et $S_1$ le complémentaire de $S_2$ dans $T$. L'ensemble $\gamma$ des arêtes de $C$ avec une extrémité dans $S_1$, l'autre dans $S_2$, est un \textit{cocycle} de $C$, c'est-à-dire un ensemble d'arêtes minimal pour l'inclusion dont la suppression augmente le nombre de composantes connexes de $1$. Notons que seules les arêtes de $\gamma$ peuvent augmenter ou réduire totale la charge de $S_2$ au cours de la transformation $\Theta^*$. Itérons le processus jusqu'à qu'une arête de $\gamma$ soit incidente à la face extérieure. Prouvons alors par récurrence\footnote{La récurrence porte sur des objets un peu plus généraux que les cartes planaires dans la mesure où des bourgeons et des feuilles sont attachés aux sommets. Quand on parle de "charge d'un ensemble de sommets", on parle de la différence entre feuilles et bourgeons attachés à cet ensemble de sommets.} sur le cardinal de $\gamma$ que la charge de $S_2$ est égale à $0$ ou $1$.

Si $\gamma$ a un seul élément, alors il s'agit de $e$ : cette arête est et restera un isthme et la charge de $S_2$ est nulle.

Si $\gamma$ comporte au moins trois éléments, alors exactement deux arêtes de $\gamma$ sont incidentes à la face racine de $C$, donc appartiennent à l'ensemble $E$ des arêtes incidentes à la face externe qui ne sont pas des isthmes. Une fois qu'une arête appartient à  $E$, soit elle est supprimée, soit elle devient un isthme. Or ces deux arêtes ne peuvent pas devenir des isthmes car $\gamma$ contient au moins trois éléments. Elles sont donc supprimées. En outre, une des deux arêtes est orientée dans le sens $S_1$ vers $S_2$ alors que l'autre est orientée de $S_2$ vers $S_1$. Lors de leur coupure, ces arêtes laissent donc un bourgeon et une feuille sur $S_2$. En conclusion, après une itération, $\gamma$ perd deux arêtes et la charge de $S_2$ reste nulle. On peut donc appliquer l'hypothèse de récurrence à la carte obtenue après cette itération.

Si $\gamma$ comporte exactement deux arêtes, alors ces deux arêtes sont incidentes à la face externe et une même autre face. Elles appartient donc à $I_e$. Comme précédemment, une est orientée de $S_1$ vers $S_2$ et l'autre de $S_2$ vers $S_1$. Mais l'arête de $I_e$ qui est supprimée est celle qui sort de la composante de $C \backslash I_e$ contenant la racine. Or $S_2$ ne contient pas la racine, donc c'est l'arête qui est orientée de $S_1$ vers $S_2$ qui est supprimée. L'ensemble $S_2$ sera donc muni d'une feuille, sa charge est et restera égale à $1$.

Nous avons donc bien prouvé par récurrence que la condition $(iii)$ était vraie. \end{proof}

Expliquons rapidement pourquoi $\Theta$ et $\Theta^*$ sont deux applications réciproques. On se convainc facilement que $\Theta \circ \Theta^*$ est l'identité. En effet, $\Theta$ relie les couples bourgeon/feuille que la transformation $\Theta^*$ a coupés. 
L'application $\Theta^* \circ \Theta$ est aussi égale à l'identité, mais cela s'avère fastidieux à prouver directement sur les objets combinatoires. Toutefois nous pouvons utiliser un simple argument de cardinalité. Plus précisément, comme $\Theta \circ \Theta^*$ vaut l'identité, nous savons que $\Theta$ est surjective de l'ensemble des T-arbres  à $f$ feuilles et $s$ sommets vers l'ensemble des cartes planaires avec face marquée à $f+1$ faces et $s$ sommets. Or d'après la relation \eqref{eqmp}, ces deux ensembles ont même cardinal. Donc $\Theta$ est bijective ; les applications $\Theta$ et $\Theta^*$ sont réciproques.

\part{Cartes forestières}

 \chapter{Série génératrice des cartes forestières }
\label{c:cartesforestieres} 
 

\subsection*{Introduction} 

Le chapitre \ref{c:cartesarbres} nous a montré que l'énumération des cartes planaires est très bien comprise aujourd'hui. Ce sont deux problèmes plus généraux qui concentrent maintenant l'attention : l'énumération des cartes en genre quelconque \cite{bender-surface-I,chapuy-marcus-schaeffer} et l'énumération des cartes munies d'une structure additionnelle. Cette dernière question est particulièrement pertinente d'un point de vue physique ; les surfaces sur lesquelles rien ne se passe ("gravité pure") n'ont que peu d'intérêt. Ont ainsi été étudiées les cartes munies d'un polymère \cite{DK88}, d'un modèle d'Ising  \cite{Ka86,mbm-schaeffer-ising,BDG-hard-part-blossoming}, d'une coloration propre \cite{tutte-triangulations}, d'un arbre couvrant \cite{mullin-boisees}... En particulier, de nombreux papiers ont été consacrés ces vingt dernières années à l'étude du modèle de Potts sur des familles de cartes planaires \cite{baxter82,daul,DF-Eynard-Guitter,eynard-bonnet-potts,zinn-justin-dilute-potts}. Comme nous l'avons souligné dans le chapitre introductif, cela revient à énumérer les cartes planaires pondérées par leur polynôme de Tutte.

Olivier Bernardi et Mireille Bousquet-Mélou ont récemment prouvé \cite{bernardi-mbm-de} que la fonction de partition du modèle de Potts était différentiellement algébrique (voir Sous-section \ref{ss:nature}). Ceci a été au moins vérifié pour les cartes planaires générales et pour les triangulations. Toutefois, la méthode qu'ils utilisent pour calculer ces équations différentielles est particulièrement technique et ne met pas en évidence la structure combinatoire des cartes coloriées. En outre, il semble difficile de trouver par cette approche le comportement asymptotique du nombre de cartes coloriées ou l'emplacement des transitions de phase.

Le but de cette première partie de thèse est de remédier à ces problèmes, du moins pour une spécialisation à une variable du polynôme de Tutte. Cette spécialisation est obtenue en fixant une de ses deux variables à $1$. Combinatoirement, nous énumérons tout simplement les cartes munies d'une forêt couvrante, que nous appelons \textit{cartes forestières}. La série génératrice de telles cartes est notée $F(z,u,t)$, où $z$ compte les faces, $u$ les composantes non racine de la forêt et $t$ les arêtes.

Notre approche est purement combinatoire. Utilisant les théorèmes du chapitre \ref{c:cartesarbres}, la série $F(z,u,t)$ est exprimée en termes de deux séries $R$ et $S$, solutions couplées d'un système fonctionnel. Nous exploitons par la suite ce système afin de montrer que $F$ est différentiellement algébrique, consolidant ainsi le résultat susmentionné d'Olivier Bernardi et Mireille Bousquet-Mélou.

Puis, pour $u \geq -1$ et $t=1$, nous étudions les singularités de $F(z,u,t)$ et le comportement asymptotique de son $n$-ième coefficient. Pour $u > 0$, nous retrouvons  le régime asymptotique standard avec un facteur en $n^{-5/2}$. En $u = 0$, nous observons une transition de phase avec un facteur $n^{-3}$. Enfin, pour $u \in [-1,0[$, nous obtenons un comportement extrêmement atypique en $n^{-3} \pare{\ln n}^{-2}$. \`A notre connaissance, c'est une nouvelle "classe d'universalité" pour les cartes planaires.

Une grande partie de ces résultats provient d'un article conjointement réalisé avec ma directrice de thèse, Mireille Bousquet-Mélou \cite{regular-fpsac,mbm-courtiel}.

\subsection*{Remarque préliminaire}

Nous souhaitons mener l'étude des séries génératrices de différentes classes de cartes (cartes cubiques, cartes tétravalentes,  cartes eulériennes, etc.). Afin d'éviter les redondances dans les calculs, cette étude s'effectuera dans le cadre général suivant : pour chaque carte planaire enracinée, chaque sommet de degré $k$ est muni d'une variable de poids $d_k$. Plusieurs classes de cartes peuvent être ainsi retrouvées par spécialisation des variables $d_k$. Par exemple, l'ensemble des cartes cubiques est obtenu en posant $d_3=1$ et $d_k=0$ pour $k \neq 3$. 

\section{Premières descriptions}


\subsection{Définition}

Considérons une carte planaire enracinée $C$. On rappelle que $\Som C$ désigne l'ensemble des sommets de $C$ et $\Ar C$ l'ensemble des arêtes. Une \textit{forêt couvrante} de $C$ est un graphe $F$ tel que l'ensemble des sommets soit égal à $\Som C$ et l'ensemble des arêtes  soit un sous-ensemble de $\Ar C$ ne comportant pas de cycle. Chaque composante connexe de $F$ est un arbre : la \textit{composante racine} désigne l'arbre contenant le sommet racine. Une arête de $C$ appartenant à la forêt $F$ est dite \textit{interne} ; à l'inverse elle est dite \textit{externe}. Tout couple de la forme $(C,F)$ est appelé \textit{carte forestière}. La figure \ref{forestierenormale} illustre cette définition.

\begin{figure}[h!]
\begin{center}
\includegraphics[scale=1.3]{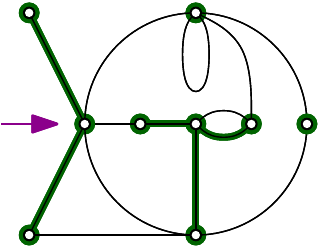}
\end{center}
\vspace{-15pt}
\caption{Une carte forestière avec 3 composantes non-racine.}
\label{forestierenormale}
\end{figure}

Nous allons étudier la série génératrice $F(z,u,t)$ des cartes forestières, comptées selon le nombre de faces (variable $z$), le nombre de composantes non racine (variable $u$) et le nombre d'arêtes (variable $t$) :
\begin{equation}
F(z,u,t) = \sum_{\substack{C\textrm{ carte planaire}\\F\textrm{ forêt couvrante de }C }} \pare{\prod_{k \geq 1} d_{k}^{\ \som_{k}(C)}} z^{\face(C)}\,u^{\cc(F)-1}\,t^{\arete(C)},
\label{Fdef}
\end{equation}
où $\som_{k}(.)$, $\face(.)$, $\cc(.)$, $\arete(.)$ désignent respectivement le nombre de sommets de degré $k$, de faces, de composantes connexes, d'arêtes.

Prenons l'exemple des cartes cubiques (rappel : $d_3=1$ et $d_k=0$ pour $k \neq 3$). Les premiers termes de la série $F$ selon les suivants :
\begin{equation}
F(z,u,t) =  (6 + 4 u) z^3t^3 + (140 + 234 u + 144 u^2 + 32 u^3) z^4t^4 + O(z^5).
\label{premierstermesF}
\end{equation}
(On remarquera que la variable $t$ est donc redondante avec $z$.) Combinatoirement, le terme $(6 + 4 u) \,z^3\,t^3$ signifie qu'il y a exactement $10$ cartes forestières cubiques avec trois faces : six d'entre elles sont munies d'un arbre couvrant, les quatre autres d'une forêt à deux composantes (voir figure \ref{dixcartes}).

\begin{figure}[h!]
\begin{center}
\includegraphics[scale=1.6]{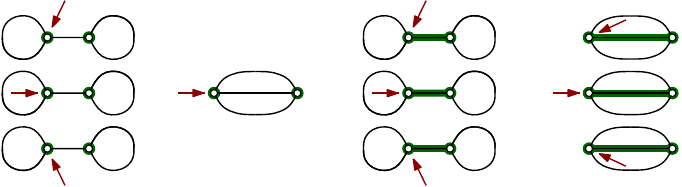}
\end{center}
\vspace{-15pt}
\caption{Les dix cartes forestières cubiques à trois faces.}
\label{dixcartes}
\end{figure}

\subsection{Cartes forestières, polynôme de Tutte et modèles apparentés}
\label{ss:modele}

Faisons maintenant le rapprochement entre la série $F$ et le polynôme de Tutte défini en section \ref{s:poltutte}. Rappelons que le polynôme de Tutte d'un graphe connexe $G$ a pour expression
\begin{equation}
T_G(x,y) = \sum_{S\textrm{ sous-graphe couvrant de }G}(x-1)^{\cc(S)-1}\,(y-1)^{\cycl(S)},
\label{deftutte}
\end{equation}
où $\cycl(S)$ est le nombre cyclomatique du sous-graphe $S$, c'est-à-dire le nombre minimal d'arêtes à supprimer pour que $S$ devienne acyclique. 
En posant $y=1$, tous les termes de la précédente somme s'annulent sauf ceux qui sont associés à des sous-graphes $S$ tels que $\cycl(S) = 0$, autrement dit des forêts couvrantes. Ainsi, notre série $F$ admet une description en termes du polynôme de Tutte :
\begin{equation}
F(z,u,t) = \sum_{C\textrm{ carte planaire}}  \pare{\prod_{k \geq 1} d_{k}^{\ \som_{k}(C)}} T_C(u+1,1)\, z^{\face(C)} \, t^{\arete(C)}.
\label{lientutteF}
\end{equation}
\'Ecrit en termes d'activité interne (voir l'équation \eqref{tuttepremiereecriture} p. \pageref{tuttepremiereecriture}), on obtient
\begin{equation}
F(z,u,t) = \sum_{\substack{C\textrm{ carte planaire}\\T\textrm{ arbre couvrant de }C }}  \pare{\prod_{k \geq 1} d_{k}^{\ \som_{k}(C)}} (u+1)^{\inte(T)} \,  z^{\face(C)} \, t^{\arete(C)},
\label{Fintact}
\end{equation}
où $\inte(T)$ désigne le nombre d'arêtes internes actives\footnote{Il existe plusieurs notions d'activité qui peuvent convenir ici. La deuxième partie de ce mémoire est d'ailleurs consacrée à ce sujet.} d'un arbre couvrant $T$.
Remarquons que nous pouvons ainsi donner un sens combinatoire à la variable $\mu = u+1$ et considérer $u$ dans l'intervalle $[-1,+\infty[$.

Nous allons maintenant donner cinq autres descriptions de $F$ en termes de cartes duales, c'est-à-dire des cartes où la variable $d_k$ compte non plus les sommets mais les faces de degré $k$. En effet, pour toute carte \textbf{planaire} $C$, le polynôme de Tutte de $C$ est le symétrique du polynôme de Tutte de $C^*$ (voir équation \eqref{tuttedual} p.~\pageref{tuttedual}) :
$$T_{C^*}(x,y) = T_C(y,x).$$
Dès lors, il existe une version duale à \eqref{lientutteF} :
\begin{equation}
F(z,u,t) = \sum_{C\textrm{ carte planaire}}  \pare{\prod_{k \geq 1} d_{k}^{\ \face_{k}(C)}} T_{C}(1,u+1)\, z^{\som(C)} \, t^{\arete(C)},
\label{Fdual}
\end{equation}
où $\som(C)$ est le nombre de sommets de la carte $C$ et $\face_{k}(C)$ son nombre de faces de degré $k$. Les expressions duales de \eqref{Fdef} et \eqref{Fintact} s'en déduisent :
$$
F(z,u,t)  = \sum_{\substack{C \textrm{ carte planaire} \\ K\textrm{sous-graphe connexe de }C}} \pare{\prod_{k \geq 1} d_{k}^{\ \face_{k}(C)}} u^{\cycl(K)} \, z^{\som(C)} \, t^{\arete(C)} 
$$
\begin{equation} \label{interp1}
 	\phantom{F(z,u,t) = i} =  \sum_{\substack{C \textrm{ carte planaire} \\ T\textrm{arbre couvrant de }C}} \pare{\prod_{k \geq 1} d_{k}^{\ \face_{k}(C)}}  (u+1)^{\ext(T)} \, z^{\som(C)} \, t^{\arete(C)},
\end{equation}
où $\ext(T)$ désigne le nombre d'arêtes externes actives de l'arbre couvrant $T$. Par exemple, le terme $(6 + 4 u) \,z^3\,t^3$ qui apparaît dans le développement \eqref{premierstermesF} de $F$ avec $d_3=1$ et $d_k=0$ pour $k \neq 3$ correspond aux 10 triangulations à 3 sommets munies d'un sous-graphe couvrant connexe. Elles sont dessinées figure \ref{triangulations}.

\begin{figure}[h!]
\begin{center}
 \includegraphics[width=\textwidth - 20 pt]{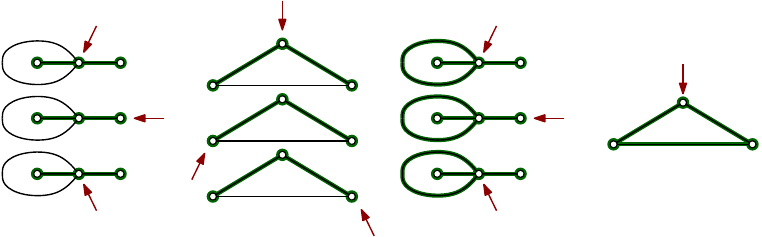}
 \end{center}
\vspace{-15pt}
\caption{ Les dix triangulations à trois sommets équipées d'un sous-graphe couvrant connexe.}
\label{triangulations}
\end{figure}

La série $F$ peut être également interprétée dans le modèle du tas de sable sur des cartes. En effet, il est connu \cite{merino,cori-borgne} que $T_C(1,y)$ compte les configurations \textit{récurrentes} (voir sous-section~\ref{ss:sable} p.~\pageref{ss:sable}) du modèle du tas de sable sur $C$ selon le niveau, noté $\ell(\gamma)$ pour une configuration $\gamma$:
$$T_C(1,y) = \sum_{\gamma\textrm{ configuration récurrente}} y^{\ell(\gamma)}.$$
Par conséquent, 
\begin{equation} \label{interp2}
F(z,u,t)  =   \sum_{\substack{C \textrm{ carte planaire} \\ \gamma \textrm{ configuration récurrente de }C}} \pare{\prod_{k \geq 1} d_{k}^{\ \face_{k}(C)}} (u+1)^{\ell(\gamma)} \, z^{\som(C)} \, t^{\arete(C)} 
\end{equation}
est également la série génératrice des cartes planaires équipées d'une configuration récurrente $\gamma$ du modèle du tas de sable, avec un poids $d_k$ pour chaque face de degré $k$, comptées selon le nombre de sommets  et d'arêtes de $M$ et le niveau de $\gamma$. Le précédent terme $(6 + 4 u) \,z^3\,t^3 =  4 (u+1)\,z^3\,t^3  + 2\,z^3\,t^3 $ est cette fois illustré par la figure \ref{triangulations2} où sont dessinées les 6 triangulations munies d'une configuration récurrente. 

\begin{figure}[h!]
\begin{center}
 \includegraphics[width=\textwidth]{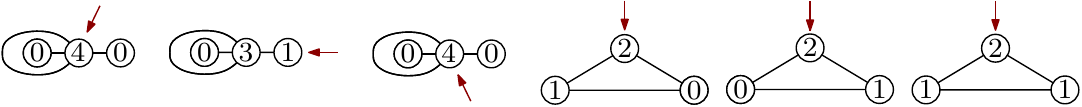}
 \end{center}
\vspace{-15pt}
\caption{ Les six triangulations à 3 sommets équipées d'une configuration récurrente du modèle de tas de sable.}
\label{triangulations2}
\end{figure}

Pour finir, nous allons exprimer notre série $F$ en termes du \textit{modèle de Potts}
(voir sous-section~\ref{ss:potts} p.~\pageref{ss:potts}). 
 On rappelle que la \textit{fonction de partition du modèle de Potts} $P_G$ sur un graphe $G$ compte les colorations selon le nombre d'arêtes monochromatiques, noté $m(c)$ pour une coloration $c$ : 
 $$P_G(q,\nu) = \sum_{c \, : \,  \Som G \rightarrow \ent 1 q} \nu^{m(c)}.$$
 Fortuin et Kasteleyn ont établi dans \cite{fk} l'équivalence entre  le \textit{polynôme réduit de Potts} $\tilde P_G(q,\nu)$ , défini par 
 $$ \tilde P_G(q,\nu) = \frac 1 q \,  P_G(q,\nu),$$
 et le polynôme de Tutte d'un graphe (ici connexe) $G$ selon la relation :
 $$ \tilde P_G\left( (\mu-1)(\nu -1) ,\nu\right) = (\nu - 1)^{\som (G) -1} T_G(\mu,\nu).$$ 
 En posant $(\mu,\nu) = (1,u + 1)$ et en injectant la précédente relation dans \eqref{Fdual}, on obtient 
 \begin{equation} \label{interp3}
  F(z,u,t) = u  \sum_{C\textrm{ carte planaire}} \pare{\prod_{k \geq 1} d_{k}^{\ \face_{k}(C)}}
 \tilde{P}_{C}(0,u+1)\, (z/u)^{\som(C)}\, t^{\arete(C)}.
 \end{equation}

\section{\'Equations pour les cartes forestières}

Dans cette section, nous donnons plusieurs équations qui caractérisent la série génératrice des cartes forestières $F(z,u,t)$ et des séries génératrices apparentées. Pour cela, nous établissons une bijection entre cartes forestières et \textit{cartes décorées} par des arbres, c'est-à-dire des cartes équipées d'une collection $(T_s)$ d'arbres indexée par les sommets.  

\subsection{Des cartes forestières aux cartes décorées}
\label{ss:cf}

Dans ce chapitre, nos arbres auront des \textit{pattes}, c'est-à-dire des demi-arêtes accrochées à certains sommets\footnote{Nous voulons éviter le terme de "feuilles", afin de les distinguer des sommets de degré $1$ ou des feuilles des arbres bourgeonnants de la section \ref{s:bourgeonnants}.}. Notons $T_\ell(t)$ la série génératrice des arbres à $\ell$ pattes, enracinés sur une patte, avec un poids $t$ par arête (on rappelle que les pattes ne comptent pas pour une arête) et un poids $d_k$ pour chaque sommet de degré $k$ (voir la figure \ref{arbresapattes}). Ces séries sont facilement calculables. En effet, si on pose $\mathcal T(t,y) = \sum_{\ell \geq 1} T_\ell(t) y^{\ell-1}$, alors par une décomposition à la racine,
\begin{equation} \label{deftl}
\mathcal T(t,y) = \sum_{k \geq 1} d_k {\left(t\,\mathcal T(t,y)+y\right)}^{k-1}.
\end{equation}
Pour revenir à notre exemple cubique ($d_3=1$ et $d_k=0$ pour $k$ différent de $3$), les arbres à pattes sont comptés par les nombres de Catalan. Plus précisément, si $\Cat(n)$ désigne le $n$-ième nombre de Catalan, alors $T_1(t)=T_2(t)=0$ et $T_\ell(t) = \Cat(\ell-2) t^{\ell-3}$ pour $\ell\geq 3$.  

\fig{[scale=1.5]{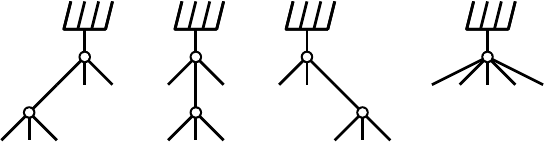}}{Les quatre arbres à $6$ pattes, enracinés sur une patte, où on a supposé $d_1=d_2=d_3=0$. On a alors $T_6(t) = 3 \,d_4^2\, t + d_6$.}{arbresapattes}

\begin{theo}  \label{tfemme}
Appelons $T_\ell(t)$ la série génératrice des cartes à $\ell$ pattes précédemment définie et $\overline M(z,u;g_1,g_2,g_3,\dots;h_1,h_2,h_3,\dots)$ la série génératrice des cartes planaires avec un poids $z$ par face, un poids $u$ par sommet, un poids $g_k$ par sommet non racine de degré $k$ et un poids $h_k$ par sommet racine de degré $k$. La série génératrice $F(z,u,t)$ des cartes forestières se déduit de la série $\overline M$ grâce à la formule
\begin{equation}
\label{femme}
F(z,u,t)= \frac 1 u \, \overline M( z ; g_1(t), g_2(t) , g_3(t), \dots; h_1(t), h_2(t) , h_3(t), \dots),
\end{equation}
où $g_k(t)= t^{k/2} T_k(t)$ et $h_k(t) = t^{k/2}(\frac 2 k t T'_k(t) + T_k(t))$ pour $k \geq 1$. 
\end{theo}

\begin{proof} La preuve de ce théorème repose sur une idée simple : en contractant chaque arbre d'une carte forestière, nous obtenons une carte planaire sans structure, donc facilement énumérable. Ce raisonnement nous a été inspiré par \cite[Appendice A]{BDG-blocked}, où les auteurs étudient les cartes forestières tétravalentes dans lesquelles l'arête racine n'appartient à la forêt. Cette idée est également présente dans \cite{sportiello}.

Passons aux détails de la preuve. Montrons tout d'abord que \mbox{$\frac 2 k t T'_k(t) + T_k(t)$} est la série génératrice $T^c_k(t)$ des arbres à $k$ pattes enracinés sur un coin (voir la figure \ref{arbresacoin}). Plus précisément, montrons l'égalité 
$$ k\,T^c_k(t) =  2 \,  t \, T'_k(t) + T_k(t).$$ Pour expliquer cette formule, il suffit de procéder à un double comptage, celui des arbres à $k$ pattes enracinés à la fois sur une patte et sur un coin. Le membre de gauche correspond aux arbres à $k$ pattes enracinés sur un coin dans lesquels une des $k$ pattes a été marquée. Le membre de droite correspond quant à lui aux arbres à $k$ pattes enracinés sur une patte dans lesquels un des coins a été marqué. En effet, le nombre de coins d'un tel arbre est $2m + k$, où $m$ est le nombre d'arêtes.

\fig{[scale=1.5]{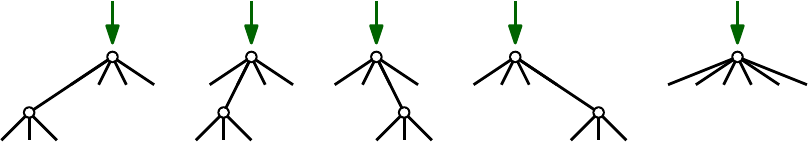}}{Les cinq arbres à $6$ pattes, enracinés sur un coin, où on a supposé $d_1=d_2=d_3=0$. Ils sont comptés par $\frac 1 3 t T'_6(t) + T_6(t) = 4 \,d_4^2\, t + d_6$.}{arbresacoin}

Décrivons maintenant une bijection $\Phi$ entre cartes forestières $(C,F)$ et \textit{cartes décorées par des arbres}, c'est-à-dire des cartes $C'$ munies d'une collection d'arbres $(T_s)_{s \in \mathcal S (C')}$ telle que :
\begin{itemize}
\item pour tout sommet $s$ de $C'$, le nombre de pattes de $T_s$ correspond au degré de $s$,
\item si $\hat s$ désigne le sommet racine de $C'$, alors $T_{\hat s}$ est enraciné sur un coin,
\item pour tout sommet non racine $s$ de $C'$, l'arbre $T_s$ est enraciné sur une patte.
\end{itemize} 
Contractons toutes les arêtes de la forêt $F$ dans $C$ et notons la carte obtenue $C'$. Elle est enracinée sur un coin -- la flèche qui indique la racine reste à la même place. De plus, il y a une correspondance canonique entre les sommets $s$ de $C'$ et les composantes connexes $K_s$ de $F$. En coupant en deux chaque arête de $C$ qui n'appartient pas à $F$, on obtient une famille d'arbres à pattes. L'arbre $T_s$ se définit alors comme l'unique arbre à pattes contenant $K_s$. L'arbre $T_{\hat s}$, où $\hat s$ est le sommet racine de $C'$, hérite naturellement de l'enracinement sur un coin de $C$. 

\fig{[scale=1]{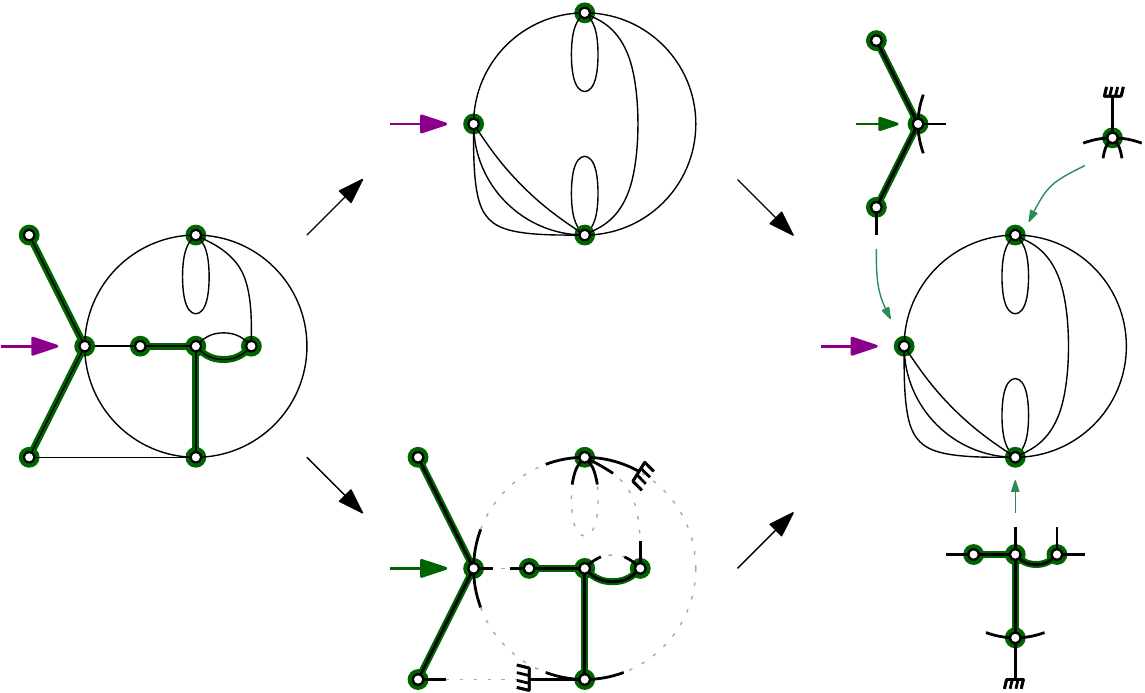}}{\`A gauche : Une carte forestière $(C,F)$. En haut : La carte contractée $C'$. En bas : La collection d'arbres $(T_s)$. \`A droite : La carte décorée $\Phi(C,F)$.}{decompo}

Soit $s$ un sommet non racine. Expliquons comment enraciner l'arbre $T_s$ sur une feuille. Une carte enracinée n'a pas de symétrie : nous pouvons donc fixer de manière canonique un ordre total sur les demi-arêtes de $C'$. Cet ordre peut avoir une origine combinatoire (un bon choix ici serait l'ordre de première visite lors de la transformation d'une carte en arbre bourgeonnant -- voir section \ref{s:bourgeonnants}) mais ce n'est pas obligatoire. L'arbre $T_s$ est alors enraciné sur la patte correspondant à la plus petite demi-arête pour cet ordre.

Posons $\Phi(C,F) = (C',(T_s))$ et montrons que $\Phi$ est bijective.
Pour retrouver la carte forestière $(C,F)$ à partir de $(C',(T_s))$, ôtons chaque sommet $s$ de $C'$ sans toucher aux arêtes et insérons dans l'espace ainsi créé l'arbre $T_s$. Si $\hat s$ est le sommet racine, nous orientons $T_{\hat s}$  de sorte que sa racine coïncide avec la racine de $C'$. Si $s$ est un sommet non racine, nous orientons $T_s$ de sorte que la patte racine coïncide avec la plus petite demi-arête de $C'$ pour l'ordre précédemment choisi. Alors, en recollant pattes des arbres et arêtes de $C'$, nous retombons sur la carte forestière $(C,F)$. Compte tenu que cette construction inverse peut être appliquée à toute carte décorée d'arbres, l'application $\phi$ est bien bijective.

Du reste, nous pouvons facilement vérifier que :
\begin{itemize}
\item $C$ et $C'$ ont le même nombre de faces,
\item le nombre de composantes de $F$ est égal au nombre de sommets de $C'$,
\item le nombre d'arêtes de $C$ dans la forêt $F$ est égal au nombre total d'arêtes parmi tous les arbres $T_s$,
\item le nombre d'arêtes de $C$ en dehors de la forêt $F$ est égal au nombre d'arêtes\footnote{Cela peut se réécrire, grâce à la formule des poignées de main, comme la moitié de la somme des degrés des sommets, ce qui revient bien à pondérer chaque sommet de degré $k$ par $t^{k/2}$.} de $C'$.
\end{itemize}
En transposant toutes ces observations au niveau des séries génératrices, nous obtenons la formule \eqref{femme}. \end{proof}

\subsection{Les équations}

Le théorème précédent permet de voir notre série $F(z,u,t)$ comme une spécialisation de la série génératrice des cartes planaires $\overline M(z,u;g_1,g_2,g_3,\dots;h_1,h_2,h_3,\dots)$. Tous les résultats énoncés dans la section \ref{s:mobiles} sur cette série $\overline M$ peuvent être appliqués.

\begin{theo} \label{central}
Soient $\theta$, $\phi_1$ et $\phi_2$ les séries 
$$\theta(t,x,y) =  \sum_{i \geq 0} \sum_{j \geq 0}  \left(\frac {2\,t} {2i+j} T'_{2i+j}(t) + T_{2i+j}(t)\right) {2i+j \choose i,i,j}  x^i y^j,$$
$$
\phi_1(t,x,y) = \sum_{i \geq 1} \sum_{j \geq 0} T_{2i+j}(t) { 2i+j-1 \choose i-1,i,j } x^i y^j, 
$$
$$
\phi_2(t,x,y) = \sum_{i \geq 0} \sum_{j \geq 0} T_{2i+j+1}(t) { 2i+j \choose i,i,j } x^i y^j,
$$
où la série $T_\ell(t)$ désigne la série génératrice des arbres à $\ell$ pattes comptés selon les arêtes définie par \eqref{deftl}.

Il existe un unique couple $(R,S)$ de séries formelles en $z$, $u$, $t$, $d_1,d_2,d_3,\dots$ qui satisfait
\begin{equation}
\label{forestiere}
R = t\,z + t\,u\,\phi_1(t,R,S),
\end{equation}
\begin{equation}
\label{forteresse}
S = t\,u\, \phi_2(t,R,S).
\end{equation}

La série génératrice $F(z,u,t)$ des cartes forestières est décrite par les relations $F(0,u,t) = 0$ et 
\begin{equation} \label{forfait}
\frac {\partial F} {\partial z} (z,u,t) = \theta(t,R,S).
\end{equation}
\end{theo}
\begin{proof}
On utilise la proposition \ref{prop:mp} couplée au théorème précédent. Nous remplaçons donc dans \eqref{equerre}, \eqref{equesse} et \eqref{eqmp} la variable $g_k$ par $t^{k/2} T_k(t)$ et la variable $h_k(t)$ par $t^{k/2}(\frac 2 k t T'_k(t) + T_k(t))/u$ pour chaque $k \geq 1$. Les équations \eqref{forestiere}, \eqref{forteresse} et \eqref{forfait} sont alors obtenues en effectuant la substitution bijective $(R,S) \mapsto (t^{-1}R,t^{-1/2}S)$. Il n'est pas très difficile de voir que $R$ et $S$ sont devenues des séries formelles en $t$. En outre, l'unicité du couple $(R,S)$ provient du lemme \ref{caracteresse}. \end{proof}

\noindent \textbf{\textit{Remarque 1.}} Le théorème \ref{vintegrale} permet de trouver une expression sans dérivation de $F(z,u,t)$  en fonction des séries $R$ et $S$. Toutefois cette relation s'avèrera moins pratique à manipuler à cause des cinq\footnote{voire sept si on compte les sommes naturellement incluses dans $T_\ell(t)$ !} sommes imbriquées dans la série $\beta$.

\noindent \textbf{\textit{Remarque 2.}} Lorsque $u=0$, le système se simplifie considérablement. En effet, nous trouvons $R=tz$ et $S=0$, de sorte que \eqref{forfait} donne :
$$F(z,0,t) =  \sum_{i \geq 1} \left(\frac {t} {i} T'_{2i}(t) + T_{2i}(t)\right) {2i \choose i}  t^i \frac{z^{i+1}} {i+1}. $$
Nous obtenons de cette manière une expression explicite de $F(z,0,t)$ sous condition qu'on en trouve une pour $T_{2i}$ (généralement le  théorème d'inversion de Lagrange suffit pour en exhiber une, voir par exemple le lemme \ref{l:euln}). La série $F(z,0,t)$  compte les cartes planaires munies d'un arbre couvrant, et cette expression peut se déduire du papier de Mullin \cite{mullin-boisees}.

\noindent \textbf{\textit{Remarque 3.}} Le lecteur désireux de voir  des exemples pourra se référer à la section \ref{spanorama} où  les précédentes équations sont explicitées pour  plusieurs classes de cartes forestières.

\section{Variations sur les cartes forestières et leurs équations}

Nous allons définir et étudier deux classes particulières de cartes forestières : celles qui sont enracinées sur une feuille et celles où l'arête racine n'appartient pas à la forêt. Nous retrouvons la logique de la section précédente ; la même méthode s'applique et permet de décrire les séries génératrices associées.

\subsection{Cartes forestières enracinées sur une feuille}

Une carte est \textit{enracinée sur une feuille} si le sommet racine a degré $1$. Notons $G(z,u,t)$ la série génératrice des cartes forestières enracinées sur une feuille, avec un poids $z$ par face, un poids $u$ par composante non racine, un poids $t$ par arête et un poids $d_k$ pour chaque sommet non racine de degré $k$ (voir la figure \ref{forestierefeuille}).

\fig{[scale = 1.1]{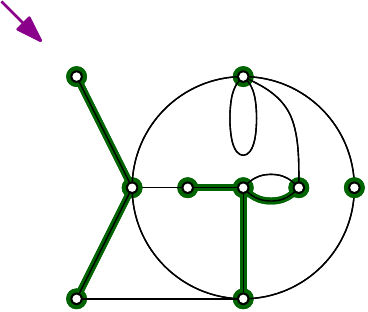}}{Une carte forestière enracinée sur une feuille.}{forestierefeuille} 

\begin{prop}
\label{GRS}
La série génératrice des cartes forestières enracinées sur une feuille est 
$$G(z,u,t) = \left(1 + \frac 1 u \right) \left( S  - u  \sum_{i \geq 2} \sum_{j \geq 0} T_{2i+j-1}(t) { 2i - j - 2 \choose i,i-2,j} R^i S^j \right),$$
où les séries $R$ et $S$ sont définies par \eqref{forestiere} et \eqref{forteresse} et où $T_\ell(t)$ désigne la série génératrice des arbres à $\ell$ pattes, définie par \eqref{deftl}. De plus,  
$$\frac {\partial G}{\partial z}(z,u,t) = \left(1 + \frac 1 u \right) S(z,u,t). $$
\end{prop}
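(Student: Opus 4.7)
The plan is to adapt the bijection $\Phi$ of Theorem \ref{tfemme} to leaf-rooted forest maps, then use Proposition \ref{prop:mp} to derive $\partial G/\partial z = (1+1/u) S$, and finally deduce the closed formula for $G$ by integrating and invoking Theorem \ref{cartesgk}. Under $\Phi$, a leaf-rooted forest map $(C, F)$ becomes a corner-rooted tree-decorated map $(C', (T_s))$ whose root tree $T_{s_0}$ has a distinguished degree-one vertex $v_0$ (the original leaf root) carrying no weight, with $T_{s_0}$ rooted at the unique corner of $v_0$.

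One splits on whether the edge $e_0$ incident to $v_0$ lies in the forest. If $e_0 \notin F$, the half-edge of $v_0$ becomes a leg of $T_{s_0}$; since $v_0$ has degree one, the whole root tree reduces to a single vertex with this leg, contributing $\delta_{k,1}$, where $k = \deg_{C'}(s_0)$. If $e_0 \in F$, cutting $e_0$ removes $v_0$ and leaves a leg-rooted tree on $k+1$ legs enumerated by $T_{k+1}(t)$, with an additional factor $t$ for the suppressed edge. Together with the usual factor $t^{1/2}$ per leg, the root tree contributes the weight $t^{k/2}\hat T_k(t)$, where $\hat T_k(t) := \delta_{k,1} + t\, T_{k+1}(t)$.

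Identifying $G$ with $\overline M$ from Proposition \ref{prop:mp} under the weights $g_k = t^{k/2} T_k(t)$ and $u h_k = t^{k/2} \hat T_k(t)$ (the $u$ absorbed into $h_k$ encodes that the root component carries no $u$-weight), formula \eqref{eqmp} gives
$$
\frac{\partial G}{\partial z} = \sum_{i, j \geq 0} t^{(2i+j)/2}\, \hat T_{2i+j}(t)\, \binom{2i+j}{i, i, j}\, R_M^i\, S_M^j,
$$
where $(R_M, S_M)$ satisfies \eqref{equerre}--\eqref{equesse}. Splitting $\hat T_k = \delta_{k,1} + t T_{k+1}(t)$ and substituting $(R_M, S_M) = (t^{-1} R, t^{-1/2} S)$: the Kronecker part selects only $(i,j)=(0,1)$ and yields $t^{1/2} S_M = S$, while the second part equals $t\,\phi_2(t, R, S) = S/u$ by \eqref{forteresse}. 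Hence $\partial G/\partial z = S + S/u = (1+1/u) S$.

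For the closed formula, let $\widetilde E$ denote the bracket in the statement of the proposition. Applying Theorem \ref{cartesgk} with $g_k = t^{k/2} T_k(t)$ and the same substitution shows that $\widetilde E = t^{1/2}\, \Gamma_1$, so Theorem \ref{g1rs} yields $\partial \widetilde E/\partial z = S$ and $\widetilde E(0, u, t) = 0$. Since $G(0, u, t) = 0$ as well, both $G$ and $(1+1/u)\widetilde E$ are the primitive of $(1+1/u) S$ in $z$ vanishing at the origin, and hence coincide. The main technical step is the case analysis identifying the adjusted root-tree weight $\hat T_k(t)$; once this is in place, the rest follows mechanically from the substitutions of Theorem \ref{central} and Theorem \ref{cartesgk}.
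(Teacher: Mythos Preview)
Your proof is correct and rests on the same bijection $\Phi$ as the paper. The paper's route is slightly more direct: instead of keeping the leaf root and encoding it through a modified root weight $\hat T_k=\delta_{k,1}+t\,T_{k+1}$, the paper simply removes that degree-$1$ vertex together with its half-edge (weight $\sqrt t$), landing immediately on a half-edge-rooted decorated map and hence on the single identity $G=\sqrt t\,(1+1/u)\,\Gamma_1$ (Lemma~\ref{GM}); both formulas then drop out of Theorems~\ref{g1rs} and~\ref{cartesgk} without any integration step. Your approach recovers the same content but in two passes---first $\partial G/\partial z$ via Proposition~\ref{prop:mp}, then the closed form by recognizing the bracket as $\sqrt t\,\Gamma_1$ and matching primitives---which is a bit more work for the same result.
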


Comme dans la section précédente, il est possible de transformer ces cartes  en cartes décorées. De cette manière, nous pouvons exprimer la série $G$ en fonction de la série $\Gamma_1(z,u;g_1,g_2,g_3,\dots)$ des cartes enracinées sur une demi-arête\footnote{que nous avons étudiées section \ref{s:bourgeonnants}}, avec un poids $z$ par face et un poids $u g_k$ par sommet de degré $k$.

\begin{lem} \label{GM}
 L'équivalent de \eqref{femme} pour les cartes forestières enracinées sur une feuille est 
$$G(z,u,t) =  \sqrt{t} \left(1 + \frac 1 u \right) \Gamma_1(z,u;t^{1/2} T_1(t),t T_2(t),t^{3/2} T_3(t), \dots).$$
\end{lem}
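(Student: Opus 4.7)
Le plan est de d\'ecomposer $G$ en introduisant une s\'erie auxiliaire $G^\sharp(z,u,t)$ qui compte les cartes foresti\`eres avec une demi-ar\^ete racine pendante (et dont la composante racine, celle contenant l'extr\'emit\'e de la demi-ar\^ete racine, n'est pas pond\'er\'ee par $u$). On reliera d'abord $G$ \`a $G^\sharp$, puis $G^\sharp$ \`a $\Gamma_1$ par un analogue direct du Th\'eor\`eme \ref{tfemme}.

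Premi\`ere \'etape : relier $G$ \`a $G^\sharp$. Soit $(C,F)$ une carte foresti\`ere enracin\'ee sur une feuille, $s_0$ son sommet racine (de degr\'e $1$) et $e_0$ son unique ar\^ete incidente. On associe \`a $(C,F)$ la carte $(C^\sharp,F^\sharp)$ obtenue en supprimant $s_0$ et en tronquant $e_0$ en une demi-ar\^ete racine pendante attach\'ee \`a son autre extr\'emit\'e ; la for\^et $F^\sharp$ s'en d\'eduit en retirant $e_0$ si n\'ecessaire. R\'eciproquement, chaque $(C^\sharp,F^\sharp)$ provient de deux cartes foresti\`eres distinctes enracin\'ees sur une feuille, selon que l'ar\^ete $e_0$ restaur\'ee appartient ou non \`a la for\^et. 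La comparaison des pond\'erations montre que chaque $(C^\sharp,F^\sharp)$ contribue \`a $G$ avec un poids $t$ (pour l'ar\^ete $e_0$) multipli\'e par $(1+u)$, le facteur $u$ dans le cas $e_0 \notin F$ venant de la nouvelle composante isol\'ee $\{s_0\}$ dans $F$. D'o\`u $G = t(1+u)\,G^\sharp$.

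Deuxi\`eme \'etape : relier $G^\sharp$ \`a $\Gamma_1$. On applique la m\^eme bijection $\Phi$ que dans la preuve du Th\'eor\`eme \ref{tfemme} aux cartes comptabilis\'ees par $G^\sharp$ ; on obtient des cartes d\'ecor\'ees par des arbres, qui rentrent dans le cadre de $\Gamma_1$ apr\`es la substitution $g_k = t^{k/2} T_k(t)$. La comparaison des pond\'erations fait appara\^itre deux ajustements : d'une part un facteur $u$, provenant de ce que $\Gamma_1$ pond\'ere chaque sommet de la carte contract\'ee (racine incluse), alors que $G^\sharp$ ne pond\'ere pas la composante racine ; d'autre part un facteur $\sqrt{t}$, provenant de ce que la substitution $g_k = t^{k/2} T_k(t)$ attribue implicitement un demi-poids $t^{1/2}$ \`a la demi-ar\^ete racine pendante au sommet racine (via le facteur $t^{k/2}$ qui compte toutes les demi-ar\^etes), alors que cette demi-ar\^ete ne correspond \`a aucune ar\^ete r\'eelle de $G^\sharp$. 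On en d\'eduit $G^\sharp = \Gamma_1(z,u; t^{1/2}T_1(t), tT_2(t), \dots)/(u\sqrt{t})$.

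En combinant les deux \'etapes,
$$G(z,u,t) = t(1+u) \cdot \frac{\Gamma_1(z,u; t^{1/2}T_1(t), tT_2(t), \dots)}{u\sqrt{t}} = \sqrt{t}\left(1 + \frac{1}{u}\right) \Gamma_1(z,u; t^{1/2}T_1(t), tT_2(t), \dots),$$
comme annonc\'e. La principale difficult\'e r\'eside dans le suivi pr\'ecis des pond\'erations de la deuxi\`eme \'etape, en particulier l'identification du facteur $\sqrt{t}$ d\^u \`a la demi-ar\^ete racine pendante qui re\c{c}oit un demi-poids via la substitution sans correspondre \`a une ar\^ete r\'eelle. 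Une v\'erification alternative est possible en comparant directement les expressions de $G$ et $\Gamma_1$ en fonction des s\'eries $R$ et $S$ fournies respectivement par la Proposition \ref{GRS} et le Th\'eor\`eme \ref{cartesgk}, apr\`es le changement de variable $\tilde R = R/t$, $\tilde S = S/\sqrt{t}$ utilis\'e dans la preuve du Th\'eor\`eme \ref{central}.
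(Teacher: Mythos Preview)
Your proof is correct and takes essentially the same approach as the paper. The only difference is the order of operations: the paper first restricts to the case $e_0\notin F$, applies $\Phi$, and removes the (then trivial) root vertex afterward, whereas you remove $s_0$ first to form $G^\sharp$ and handle the two cases $e_0\in F$ and $e_0\notin F$ simultaneously via the factor $t(1+u)$.
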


\begin{proof} Soit $E$ l'ensemble des cartes forestières enracinées sur une feuille dans lesquelles l'arête racine n'appartient à la forêt.  Nous pouvons facilement vérifier que l'application $\Phi$ (de la preuve du théorème \ref{tfemme}) induit une bijection entre $E$ et les cartes décorées par des arbres, enracinées sur une feuille, et où l'arbre associé au sommet racine est l'arbre trivial (un sommet et une patte). En retirant ce sommet et la demi-arête racine (poids total $\sqrt{t}$), nous obtenons des cartes décorées enracinées sur une demi-arête, comptées par $\Gamma_1(z,u;t^{1/2} T_1(t),t T_2(t),t^{3/2} T_3(t), \dots)$. La série génératrice associée à $E$ est donc 
$$\sqrt{t}\,\Gamma_1(z,u;t^{1/2} T_1(t),t T_2(t),t^{3/2} T_3(t), \dots).$$
Pour obtenir les autres cartes forestières enracinées sur une feuille, il suffit de considérer une carte forestière de $E$ et de rajouter l'arête racine à la forêt. En faisant ainsi, nous perdons une composante connexe, d'où le facteur $\frac 1 u$.
\end{proof}

La proposition \ref{GRS} se déduit directement du lemme précédent et des formules des théorèmes \ref{g1rs} et \ref{cartesgk}. 

\subsection{Où l'arête racine n'appartient pas à la forêt}
\label{hforet}

Dans la sous-section précédente, nous avons constaté que l'enracinement sur une feuille évite les considérations sur la composante racine, et donc facilite l'énumération. Cette idée se transpose naturellement sur l'ensemble des cartes forestières où l'arête racine\footnote{On rappelle que l'arête racine est l'arête qui suit la racine dans le sens trigonométrique.} n'appartient pas à la forêt (voir figure \ref{forestiereout}). Appelons $H(z,u,t)$ la série génératrice de telles cartes, où la variable $z$ compte les faces, $u$ les composantes non racine et $t$ les arêtes. Les sommets de degré $k$ sont toujours pondérés par une variable $d_k$.

\fig{[scale = 1.1]{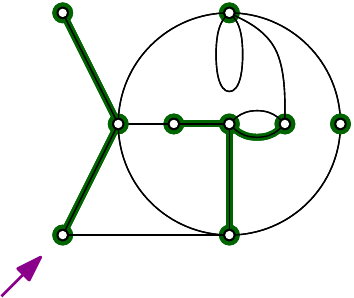}}{Une carte forestière dans laquelle l'arête racine n'appartient pas à la forêt. A contrario, les arêtes racines des cartes des figures \ref{forestierenormale} et \ref{forestierefeuille} sont dans la forêt.}{forestiereout} 

\begin{prop} 
\label{HRS}
La série génératrice des cartes forestières où l'arête racine n'appartient pas à la forêt est 
\begin{multline} H(z,u,t) = \frac z {ut} R + \frac z {ut} S^2 - \frac {z^2} {u} \\
-  \frac 2 t\,  S\, \sum_{\substack{i \geq 2 \\ j \geq 0}}  T_{2i+j-1}(t) { 2i - j - 2 \choose i,i-2,j} R^i S^j - \frac 1 t \sum_{\substack{i \geq 3 \\ j \geq 0}}  T_{2i+j-2}(t) { 2i - j - 3 \choose i,i-3,j} R^i S^j. \label{H&M}
 \end{multline}
 On rappelle que les séries $R$ et $S$ sont définies par \eqref{forestiere} et \eqref{forteresse} et $T_\ell(t)$ par \eqref{deftl}.
\end{prop}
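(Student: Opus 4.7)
My plan is to establish the identity
$$u\,H(z,u,t) \;=\; M\!\left(z,u;\,t^{1/2}T_1(t),\,t\,T_2(t),\,t^{3/2}T_3(t),\ldots\right),$$
where $M(z,u;g_1,g_2,\ldots)$ denotes the planar-map series of Theorem~\ref{cartesgk}, and then to plug in the explicit formula for $M$ given there, combined with the change of variables $(R_{\text{old}},S_{\text{old}})\mapsto(R/t,S/\sqrt t)$ already used in the proof of Theorem~\ref{central}.

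First I would revisit the bijection $\Phi$ of Theorem~\ref{tfemme}, applied to a forestry map $(C,F)$ whose root edge $e$ does not belong to $F$. Since $e$ is not contracted by $\Phi$, it survives as the root edge of $C'$; equivalently, the half-edge of $T_{\hat s}$ immediately following the root corner of $T_{\hat s}$ (in counterclockwise tree-order) must be a leg, and not an internal edge of the tree. The key combinatorial input is then the canonical bijection (corner-followed-by-a-leg)$\,\leftrightarrow\,$(leg) in any tree with $k$ legs, obtained by sending a corner to the leg immediately following it. Hence $T_{\hat s}$ may be re-rooted on that leg and now carries the weight $T_k(t)$ instead of the weight $T_k^c(t)$ appearing for the full series $F$. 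Every vertex of $C'$ of degree $k$---root or not---therefore contributes the same factor $g_k(t)=t^{k/2}T_k(t)$; since $F$ weighs $u$ per non-root forest component while $M$ weighs $u$ per vertex (including the root), this yields the displayed identity.

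The remaining step is a direct substitution. Starting from the closed form of $M$ in Theorem~\ref{cartesgk}, I would insert $g_k=t^{k/2}T_k(t)$ and $(R_{\text{old}},S_{\text{old}})=(R/t,S/\sqrt t)$; in each trinomial sum the various powers of $t$ telescope to a single factor $t^{-1}$, the polynomial part $zR_{\text{old}}+zS_{\text{old}}^2-z^2$ becomes $\frac{zR}{t}+\frac{zS^2}{t}-z^2$, and dividing by $u$ reproduces exactly the formula of Proposition~\ref{HRS}. The hard part will be the bijective step: one must track with care how corners at the root vertex of $C$ merge under the contraction $\Phi$ (in particular when the half-edge preceding the root corner is itself a forest edge), so as to justify rigorously the equivalence ``root edge $\notin F$'' $\Leftrightarrow$ ``the half-edge following the root corner of $T_{\hat s}$ is a leg'', and then to verify that the weight transport via the (corner-before-leg)/(leg) bijection is correct even in boundary cases such as loops at the root vertex or a root vertex incident only to forest edges apart from~$e$.
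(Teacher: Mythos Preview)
Your proposal is correct and follows essentially the same route as the paper: the identity $u\,H(z,u,t)=M(z,u;t^{1/2}T_1(t),t\,T_2(t),\ldots)$ is precisely Lemma~\ref{HM}, proved there via the same observation that under $\Phi$ the condition ``root edge $\notin F$'' becomes ``the half-edge following the root corner of $T_{\hat s}$ is a leg'', which allows re-rooting $T_{\hat s}$ on that leg; the formula~\eqref{H&M} is then obtained by substituting into Theorem~\ref{cartesgk} exactly as you describe. The paper treats the bijective step rather briskly (``Il est facile de d\'ecrire\ldots''), so your list of boundary cases to check is a reasonable expansion of what the paper leaves implicit.
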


Là encore, la transformation en cartes décorées nous permet d'exprimer $H$ en termes d'une série déjà étudiée : ici, il s'agit de la série génératrice $M(z,u,t;g_1,g_2,\dots)$ des cartes planaires où les faces sont pondérées par $z$, les arêtes par $t$ et les sommets de degré $k$ par $u g_k$.

\begin{lem} \label{HM}
 L'équivalent de \eqref{femme} pour les cartes forestières   où l'arête racine n'appartient pas à la forêt est 
$$H(z,u,t) =  \frac 1 u M(z,u;t^{1/2} T_1(t),t T_2(t),t^{3/2} T_3(t), \dots).$$
\end{lem}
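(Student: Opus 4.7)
Le plan est d'adapter la preuve du th�or�me \ref{tfemme} (et celle du lemme \ref{GM}) en restreignant la bijection $\Phi$ de contraction de la for�t � la sous-classe des cartes foresti�res $(C,F)$ dont l'ar�te racine n'appartient pas � $F$. Pour une telle carte, l'ar�te racine n'est pas contract�e, donc survit dans la carte contract�e $C'$~; de m�me, le coin racine est pr�serv�.

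L'observation cl� est alors que l'ar�te suivant le coin racine dans le sens trigonom�trique de $C'$ est exactement cette ar�te racine non contract�e. Par cons�quent, $C'$ est une carte planaire enracin�e de mani�re parfaitement canonique, sans recours � un ordre auxiliaire sur ses demi-ar�tes (contrairement � la bijection g�n�rale du th�or�me \ref{tfemme}). Qui plus est, l'ar�te racine de $C'$ �tant incidente au sommet racine $\hat s$, elle correspond, via l'inverse de $\Phi$, � une patte pr�cise de l'arbre $T_{\hat s}$. Cette patte offre un enracinement naturel de $T_{\hat s}$ \emph{sur une patte} (et non plus sur un coin comme dans la bijection g�n�rale), de sorte que l'ensemble des arbres $T_s$, y compris $T_{\hat s}$, sont d�sormais enracin�s sur une patte. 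Il faudra v�rifier (sans difficult�) que cette restriction de $\Phi$ constitue bien une bijection entre les cartes foresti�res dont l'ar�te racine est hors for�t et les cartes planaires enracin�es $C'$ d�cor�es par une collection $(T_s)_{s \in \Som{C'}}$ d'arbres tous enracin�s sur une patte, chacun ayant autant de pattes que le degr� du sommet correspondant.

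La conclusion viendra de la traduction en s�ries g�n�ratrices : chaque sommet de $C'$ de degr� $k$ re�oit un poids $t^{k/2} T_k(t)$ pour son arbre attach� (s�rie g�n�ratrice des arbres � $k$ pattes enracin�s sur une patte, avec un poids $t$ par ar�te), les faces sont pond�r�es par $z$, et chaque composante non racine de $F$ contribue pour un facteur $u$. Puisque la s�rie $M$ attribue d�j� un poids $u$ par sommet de $C'$ (racine comprise), le facteur $1/u$ devant $M$ compense exactement le d�calage entre les $|\Som(C')|$ facteurs $u$ de $M$ et les $|\Som(C')| - 1 = \cc(F) - 1$ facteurs $u$ comptabilis�s dans $H$, et l'on obtient la formule annonc�e. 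Le point le plus d�licat, mais tout � fait g�rable, sera la canonicit� et l'inversibilit� de la patte racine choisie pour $T_{\hat s}$~; elle d�coule directement du fait qu'elle est l'unique patte appari�e � l'ar�te racine de $C'$ dans la construction inverse (recollement des pattes aux demi-ar�tes de $C'$).
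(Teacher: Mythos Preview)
Votre d�marche est correcte et co�ncide avec celle de l'article : restreindre la bijection $\Phi$ aux cartes foresti�res dont l'ar�te racine est externe, observer que la demi-ar�te suivant le coin racine dans $T_{\hat s}$ est alors une patte, r�enraciner $T_{\hat s}$ sur cette patte sans perte d'information, puis traduire en s�ries g�n�ratrices avec le facteur $1/u$ compensant le poids $u$ attribu� � la composante racine par $M$. L'article est simplement plus laconique sur ces points.
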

\begin{proof} Pour une carte $C$ décorée par des arbres, on note $\hat T_C$ l'arbre associé au sommet racine de $C$.
Il est facile de décrire l'image par $\Phi$ (voir la preuve du théorème \ref{tfemme} pour la définition de $\Phi$) des cartes forestières dans lesquelles l'arête racine n'est pas dans la forêt. En effet, il s'agit des cartes $C$ décorées par des arbres telles que la demi-arête qui suit la racine dans $\hat T_C$ est une patte. Nous pouvons alors enraciner $\hat T_C$ sur cette patte sans perte d'information. \`A un facteur $u$ près, qui provient du fait que la composante racine n'est pas pondérée dans $H(z,u,t)$, nous retrouvons alors la série génératrice $M(z,u;t^{1/2} T_1(t),t T_2(t),t^{3/2} T_3(t), \dots)$ des cartes planaires où chaque sommet de degré $k$ est pondéré par le nombre d'arbres à $k$ pattes enracinés sur une patte.
\end{proof}

Le lemme précédent et la caractérisation de la série $M$ (voir théorème \ref{cartesgk}) prouvent directement la relation \eqref{H&M}.

La série $H$ est étroitement liée à la série génératrice $F$ des cartes forestières "normales", comme l'indique la relation suivante.

\begin{prop} \label{FH} Soit $F_{f,k,a}$ (resp. $H_{f,k,a}$) le coefficient de $z^f u^k t^a$ dans la série $F(z,u,t)$ (resp. $H(z,u,t)$). Alors
$$a \,H_{f,k,a} = (f + k - 1)\,F_{f,k,a}.$$
\end{prop}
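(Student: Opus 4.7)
The plan is to establish this identity by a combinatorial re-rooting bijection, rather than by algebraic manipulation of the series $R, S$ from Theorem \ref{central} and Proposition \ref{HRS}. The first preparatory step is a direct consequence of Euler's relation. Any forested map contributing to $F_{f,k,a}$ has $V = 2 + a - f$ vertices; its spanning forest has $k+1$ components and hence $V - (k+1) = a - f - k + 1$ edges, leaving exactly $f+k-1$ non-forest edges. Thus $(f+k-1) F_{f,k,a}$ counts pairs $((C,F),e)$ where $(C,F)$ is a forested map with the prescribed statistics and $e$ is a non-forest edge of $C$.

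Next I would multiply both sides of the identity by $2$ and reinterpret them in terms of corners. The right-hand side $2(f+k-1)F_{f,k,a}$ enumerates triples $((C,F),c)$ where $c$ is a corner of $C$ whose following edge (in the counterclockwise sense) is non-forest: each non-forest edge yields exactly two such corners, one at each of its endpoints. The left-hand side $2aH_{f,k,a}$ enumerates triples $((C',F'),c')$ where $(C',F')\in H_{f,k,a}$ and $c'$ is any of the $2a$ corners of $C'$. The bijection between the two sets simply swaps the two distinguished corners: given $((C,F),c)$ with original root corner $\hat c$, re-root $C$ at $c$ to obtain $C'$, keep $F'=F$, and set $c':=\hat c$, now viewed as a corner of $C'$. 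By construction the new root edge of $C'$ is the edge following $c$, which is non-forest by hypothesis, so $(C',F')\in H_{f,k,a}$; the inverse map applies the same swap in the opposite direction. Dividing the resulting equality by $2$ yields the claim.

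The step requiring the most care is the bookkeeping around the factor of $2$: one must verify that each corner of $C'$ contributes exactly once on the left and that each pair (non-forest edge, endpoint) contributes exactly once on the right, with no collision from symmetries (which is automatic because rooted planar maps are rigid). A purely algebraic derivation using the system for $R$ and $S$ seems possible in principle --- one could try, for instance, to extract $aH$ by differentiating \eqref{H&M} with respect to $t$ --- but the combinatorial argument above is both more transparent and more in keeping with the spirit of this chapter.
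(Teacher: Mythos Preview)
Your proof is correct and is essentially the same double-counting argument the paper gives: multiply by $2$, interpret both sides as counting doubly-marked forested maps (one mark on a corner preceding a non-forest edge, the other on an arbitrary corner), and use the re-rooting swap as the bijection. Your derivation of the number $f+k-1$ of non-forest edges via Euler's relation is exactly the ingredient the paper alludes to in its footnote.
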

\begin{proof} On utilise un argument de double comptage. En effet, chaque membre de l'égalité du dessus dénombre les cartes forestières doublement enracinées avec $f$ faces, $k+1$ composantes et $a$ arêtes, où la première racine se trouve sur un coin qui précède (dans le sens trigonométrique) une arête hors forêt (nombre de tels coins\footnote{La formule qui suit se prouve par récurrence.} : $2(f + k - 1)$), et où la seconde racine se trouve sur un coin quelconque (nombre de tels coins : $2a$).
\end{proof}

\section{Cartes forestières eulériennes}
\label{seulerien}

Cette section est consacrée à l'étude des cartes forestières eulériennes et des équations qui les régissent. En effet, de nombreuses simplifications s'opèrent dans ce contexte particulier. Nous allons voir lesquelles.

Une carte est \textit{eulérienne} si le degré de chaque sommet est pair. En termes de poids sur les sommets, cela veut dire que $d_{2k+1}=0$ pour chaque $k \geq 0$ ; c'est ce qui sera supposé dans cette section.

\fig{[scale = 1.1]{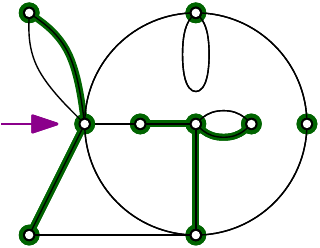}}{Une carte forestière eulérienne.}{forestiereeulerienne} 

Nous pouvons dans un premier temps remarquer qu'il n'existe pas d'arbres avec un nombre impair de pattes et un poids $d_k$ pour chaque sommet de degré $k$. En d'autres mots, la série génératrice $T_{2\ell+1}(t)$ des arbres à $2 \ell + 1$ pattes est nulle pour chaque $\ell \geq 0$.  Par conséquent, comme indiqué dans la section \ref{s:bourgeonnants}, la série $S$ apparaissant dans \eqref{forteresse} est elle aussi nulle 
$S = 0$.
Par conséquent le théorème \ref{central} se lit :

\begin{prop} \label{teulerien}
Supposons que $d_{2k+1} = 0$ pour tout $k \geq 0$. Soient $\theta$ et $\phi$ les séries 
$$\theta(t,x) =  \sum_{i \geq 0} \left(\frac {t} {i} T'_{2i}(t) + T_{2i}(t)\right) {2i \choose i}  x^i,$$
$$
\phi(t,x) = \sum_{i \geq 1} T_{2i}(t) { 2i-1 \choose i } x^i.
$$

Il existe une unique série formelle $R$ en $z$, $u$, $t$, $d_2,d_4,d_6\dots$ et solution de l'équation
\begin{equation}
\label{eulR}
R = t\,z + t\,u\,\phi(t,R).
\end{equation}
La série génératrice $F(z,u,t)$ des cartes forestières eulériennes est décrite par les relations $F(0,u,t) = 0$ et 
\begin{equation} \label{eulF}
\frac {\partial F} {\partial z} (z,u,t) = \theta(t,R).
\end{equation}
\end{prop}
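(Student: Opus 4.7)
Le plan est d'appliquer directement le théorème \ref{central} en exploitant l'annulation massive que procure l'hypothèse eulérienne. Je commencerais par observer que sous l'hypothèse $d_{2k+1} = 0$, la série génératrice $T_p(t)$ des arbres à $p$ pattes est nulle dès que $p$ est impair. Pour cela, je raisonnerais par un argument de parité de type formule des poignées de main : dans un arbre $\tau$ à $m$ arêtes et $p$ pattes, la somme des degrés des sommets vaut $2m + p$ (chaque arête contribue $2$, chaque patte contribue $1$). Pour que $\tau$ contribue avec un poids non nul sous l'hypothèse eulérienne, tous ses sommets doivent être de degré pair, ce qui force la somme des degrés, donc $p$, à être pair.

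Je montrerais ensuite que $S = 0$ est solution de l'équation \eqref{forteresse}. En effet, en substituant $S = 0$ dans la définition de $\phi_2$, seuls les termes avec $j = 0$ ne s'annulent pas, soit
\begin{equation*}
\phi_2(t,R,0) = \sum_{i \geq 0} T_{2i+1}(t) \binom{2i}{i,i,0} R^i,
\end{equation*}
somme identiquement nulle d'après l'étape précédente. Par l'unicité du couple $(R,S)$ garantie par le théorème \ref{central}, le couple solution doit vérifier $S = 0$, et cela démontre du même coup l'unicité annoncée pour $R$ dans la proposition.

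Il reste à vérifier que les équations \eqref{eulR} et \eqref{eulF} de l'énoncé découlent de \eqref{forestiere} et \eqref{forfait} une fois $S = 0$ substitué. Pour \eqref{forestiere}, seuls les termes $j = 0$ de $\phi_1(t,R,0)$ survivent :
\begin{equation*}
\phi_1(t,R,0) = \sum_{i \geq 1} T_{2i}(t) \binom{2i-1}{i-1,i,0} R^i = \sum_{i \geq 1} T_{2i}(t) \binom{2i-1}{i} R^i = \phi(t,R),
\end{equation*}
en utilisant l'identité $\binom{2i-1}{i-1} = \binom{2i-1}{i}$. Le même raisonnement appliqué à $\theta(t,R,S)$ après substitution $S=0$ ne laisse subsister que les termes $j=0$ et redonne exactement la fonction $\theta(t,R)$ définie dans l'énoncé.

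L'obstacle principal est minime ; l'essentiel du travail conceptuel réside dans l'observation combinatoire de parité qui annule les arbres à nombre impair de pattes. Une fois cette observation faite, tout le reste se déduit par substitution directe et par une simplification élémentaire de coefficient trinomial.
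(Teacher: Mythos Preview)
Your proof is correct and follows essentially the same route as the paper: observe that $T_p(t)=0$ for odd $p$ by a parity argument, deduce $S=0$, and then specialize Theorem~\ref{central}. The only cosmetic difference is that the paper justifies $S=0$ by a direct combinatorial observation on S-arbres (an S-arbre has an odd number of half-edges, hence necessarily a vertex of odd degree), whereas you substitute $S=0$ into \eqref{forteresse} and invoke uniqueness; both arguments are equally valid and equally short.
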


De même, la formule \eqref{H&M} se réécrit 
\begin{equation}
H(z,u,t) = \frac z {ut} R  - \frac {z^2} {u} 
 - \frac 1 t \sum_{i \geq 3 }  T_{2i-2}(t) { 2i - 3 \choose i} R^i ,
\end{equation}
où $H$ est la série génératrice des cartes forestières dans lesquelles l'arête racine n'appartient pas à la forêt (voir sous-section \ref{hforet}).

De plus, en combinant les relations \eqref{eqmp} et \eqref{eulR}, nous obtenons pour toute suite de poids $(g_{2k})_{k \geq 1}$ l'égalité
$$t \frac{\partial \overline M}{\partial z} (z,u;0,t\,T_2(t),0,t^2\,T_4(t),0,\dots;0,t\,T_2(t),0,t^2\,T_4(t),0,\dots) = 2(R - tz),$$
où $\overline M(z,u;g_1,g_2,\dots,h_1,h_2,\dots)$ est la série génératrice des cartes planaires et où $R(z,u,t)$ est toujours la solution de l'équation \eqref{eulR}. D'après le lemme \ref{HM}, le premier terme de cette égalité vaut également $ ut \pd H z(z,u,t)$ . 
Nous obtenons alors la formule très simple
\begin{equation} \label{eulHR}
\pd H z(z,u,t)= \frac 2 {ut} (R - tz).
\end{equation}

Cette relation présente un avantage : si on connaît les coefficients de $R$ (qui peuvent être calculés rapidement avec la méthode de Newton appliquée à l'équation \eqref{eulR}), on peut déduire immédiatement les coefficients de $H$, et donc de $F$ (grâce à la proposition \ref{FH}).

\noindent \textbf{Remarque.} Il n'existe pas d'équivalent à la proposition \ref{GRS} compte tenu qu'une carte eulérienne n'a pas de feuille !

\section{Spécialisation des équations à différentes classes de cartes}
\label{spanorama}

Dans cette section, nous dressons la liste des équations portant sur $F(z,u,t)$, la série génératrice  des cartes forestières, parmi différentes classes de cartes. Nous voulons en effet montrer un panorama des équations que nous verrons dans la suite de ce mémoire. La figure \ref{recapitulatif} donne un tableau récapitulatif de toutes ces équations. 

\fig{[width=\textwidth]{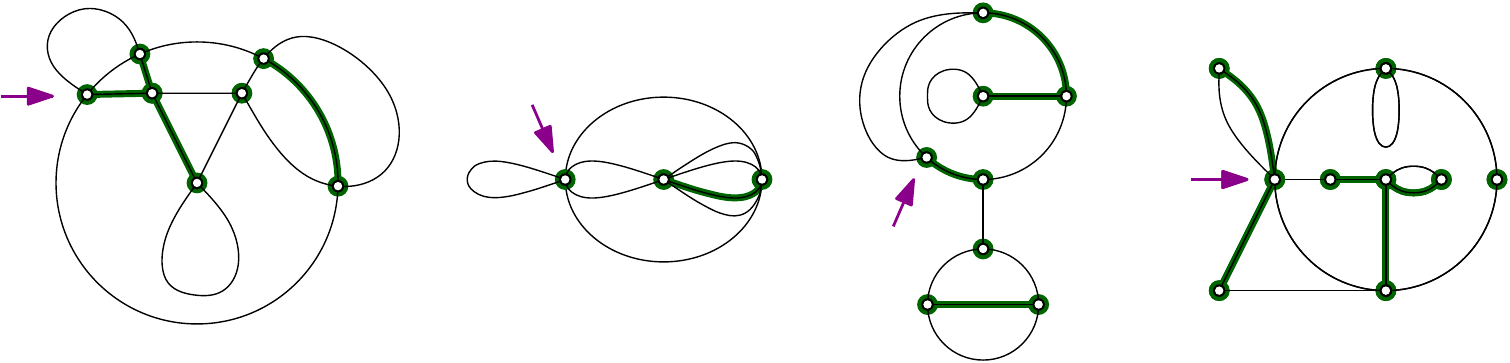}}{De gauche à droite : une carte forestière tétravalente, une carte forestière $6$-régulière, une carte forestière cubique, une carte forestière eulérienne.}{tetrex}

\subsection{Cartes forestières régulières}

Nous allons voir ce que deviennent les équations du théorème \ref{central} dans le cadre particulier des cartes forestières \textit{$p$-régulières}. 

Une carte est dite \textit{$p$-régulière} (ou \textit{régulière}) si chaque sommet a pour degré $p$, avec $p \geq 3$. En termes de variables de poids, cela veut dire que $d_p = 1$ et $d_k = 0$ pour $k$ différent de $p$.

En utilisant la relation d'Euler et le fait que chaque sommet est incident à exactement $p$ demi-arêtes, nous prouvons  la relation
\begin{equation} \label{relreg}
2 (p-2) \arete(C) = p (p-2)  \som(C) = 2 p (\face(C) - 2)
\end{equation} 
pour toute carte $p$-régulière $C$, où $\arete(C)$, $\som(C)$ et $\face(C)$ désignent respectivement le nombre d'arêtes, sommets, faces de $C$. Par conséquent, la variable des arêtes est redondante par rapport à la variable des faces ; nous pouvons la supprimer (c'est-à-dire la substituer par $1$)\footnote{Au vu des équations du théorème \ref{central}, il est plus adéquat de compter les cartes forestières selon les faces que selon les arêtes.}. Nous notons donc $F(z,u)$ la série génératrice des cartes forestières $p$-régulières dans laquelle la variable $z$  représente les faces et $u$ les composantes non racine.

Un arbre $p$-régulier à $\ell$ pattes n'existe que si $\ell = (p-2)k + 2$ avec $k \geq 1$. Le nombre $T_\ell$ de tels arbres est explicite \cite[Théorème 5.3.10]{stanley-vol-2} et vaut $$T_{\ell} = \frac {((p-1)k)!}{k!((p-2)k+1)!}$$ si $\ell = (p-2)k + 2$ et vaut $0$ sinon. 
 Quant à 
$(2 t T'_\ell(t) /\ell + T_\ell(t))$, sa valeur en $t=1$ est après calcul
$$T^c_\ell= p  \frac {((p-1)k)!}{(k-1)!((p-2)k+2)!},$$
si  $\ell = (p-2)k + 2$ avec $k \geq 1$, et  $0$ dans le cas contraire. Nous rappelons que cela correspond au nombre d'arbres à $\ell$ pattes enracinés sur un coin.

 On déduit alors du théorème central \ref{central} la proposition suivante.
 
\begin{prop} \label{regulierecentral}
Soient $\theta$, $\phi_1$ et $\phi_2$ les séries 
$$\theta(x,y) =  \sum_{i \geq 0} \sum_{j \geq 0}  T^c_{2i+j} {2i+j \choose i,i,j}  x^i y^j,$$
\begin{equation} \label{rphi}
\phi_1(x,y) = \sum_{i \geq 1} \sum_{j \geq 0} T_{2i+j} { 2i+j-1 \choose i-1,i,j } x^i y^j, \quad
\phi_2(x,y) = \sum_{i \geq 0} \sum_{j \geq 0} T_{2i+j+1} { 2i+j \choose i,i,j } x^i y^j,
\end{equation}
où $T_\ell$ et $T^c_\ell$ sont précédemment définis.

Il existe un unique couple $(R,S)$ de séries formelles  en $z$ et $u$  qui satisfait
$$
R = z + u\,\phi_1(R,S),\quad S = u\, \phi_2(R,S).
$$

La série génératrice $F(z,u)$ des cartes forestières $p$-régulières est caractérisée par $F(0,u) = 0$ et 
$$
\frac {\partial F} {\partial z} (z,u) = \theta(R,S).
$$
\end{prop}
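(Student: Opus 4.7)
The plan is to obtain Proposition~\ref{regulierecentral} as a direct specialization of Theorem~\ref{central} to the case $d_p=1$, $d_k=0$ for $k\neq p$. The main tasks are therefore: (i) to compute the relevant tree generating functions $T_\ell(t)$ and $T^c_\ell := \frac{2t}{\ell}T'_\ell(t)+T_\ell(t)$ in closed form in this $p$-regular setting, and (ii) to justify eliminating the edge variable $t$ by setting $t=1$. No fundamentally new combinatorial construction is required.

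First I would extract the $T_\ell$'s. Substituting the $p$-regular weights in \eqref{deftl}, the bivariate series $\mathcal T(t,y)=\sum_{\ell\geq 1}T_\ell(t)\,y^{\ell-1}$ satisfies $\mathcal T=(t\mathcal T+y)^{p-1}$, a Fuss--Catalan-type equation. A handshake count for a $p$-regular tree with $k$ vertices, $k-1$ edges and $\ell$ pattes gives $2(k-1)+\ell=pk$, hence $T_\ell(t)=0$ unless $\ell=(p-2)k+2$ for some $k\geq 1$; in that case $T_\ell(t)=T_\ell\cdot t^{k-1}$ with $T_\ell$ given by Lagrange inversion \cite[Th. 5.4.2]{stanley-vol1} applied to the above equation, yielding the stated $T_\ell=\frac{((p-1)k)!}{k!((p-2)k+1)!}$.

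Next I would compute $T^c_\ell$ at $t=1$. Since $T_\ell(t)=T_\ell t^{k-1}$, one has
\[
T^c_\ell\big|_{t=1}=\frac{2(k-1)}{\ell}T_\ell+T_\ell=\frac{2(k-1)+\ell}{\ell}T_\ell=\frac{pk}{\ell}T_\ell,
\]
and injecting $\ell=(p-2)k+2$ and the Fuss--Catalan value of $T_\ell$ gives $T^c_\ell=p\,\frac{((p-1)k)!}{(k-1)!((p-2)k+2)!}$, matching the announcement.

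Finally I would use the Euler-type identities \eqref{relreg} to argue that, for a $p$-regular map, the edge count is an affine function of the face count; hence the variable $t$ in $F(z,u,t)$ is redundant and one may set $t=1$ without losing information, recovering the univariate-in-$u$, univariate-in-$z$ series $F(z,u)$ of the proposition. Substituting $t=1$ in the functional system and in the formula for $\partial F/\partial z$ provided by Theorem~\ref{central}, and replacing $T_\ell(1)$ and $T^c_\ell$ by the explicit values just computed, gives precisely the formulas for $\theta$, $\phi_1$, $\phi_2$ stated here. Existence and uniqueness of the pair $(R,S)$ are inherited from Theorem~\ref{central}. I do not anticipate any genuine obstacle: the whole argument is essentially a bookkeeping check, the only subtle point being the consistent matching between the general $T_\ell(t)$-framework of Theorem~\ref{central} and the Fuss--Catalan specialization at $t=1$.
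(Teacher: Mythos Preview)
Your approach is essentially the one the paper takes: compute $T_\ell$ and $T^c_\ell$ in closed form for the $p$-regular weights (which the paper does in the paragraph immediately preceding the proposition), then specialize Theorem~\ref{central} at $t=1$.

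There is one genuine, if small, gap. You write that ``existence and uniqueness of the pair $(R,S)$ are inherited from Theorem~\ref{central}''. Existence yes, but not uniqueness: Theorem~\ref{central} asserts uniqueness among formal power series in $z,u,t,d_1,d_2,\dots$, and specializing $t=1$, $d_p=1$, $d_k=0$ does not automatically transport uniqueness to series in $z$ and $u$ alone (the specialization map on formal series is far from injective). The paper flags exactly this point in the remark immediately following the proposition and invokes Lemma~\ref{caracteresse} instead, which establishes uniqueness directly for any fixed sequence of weights $(g_k)$ in a field. You should cite that lemma rather than Theorem~\ref{central} for the uniqueness claim.
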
 

\noindent \textbf{Remarque 1.} L'unicité du couple $(R,S)$ ne peut pas être déduite du théorème \ref{central} telle quelle. Par contre, elle est une application directe du lemme \ref{caracteresse}.

\noindent \textbf{Remarque 2.} La proposition précédente permet de calculer les premiers termes de la série $F$. Par exemple pour $p=4$,
$$
F(z,u) = 2 z^3 + (9 u + 15) z^4 + (54 u^2 + 180 u + 168 ) z^5 + O(z^6),
$$
et pour $p=3$,
$$F(z,u) =  (6 + 4 u) z^3 + (140 + 234 u + 144 u^2 + 32 u^3) z^4 + O(z^5).$$

\noindent \textbf{Remarque 3.} Nous avons ici une expression explicite 
 de $F(z,0)$, la série génératrice des cartes équipées d'un arbre couvrant :
\begin{equation} \label{Fz0reg}
F(z,0) = \sum_{k \geq 1} \frac{p((p-1)k)!} {(k-1)! (1 + (p-2)k/2)! (2 + (p-2)k/2)!} z^{2 + (p-2)k/2},
\end{equation}
où  $k$ est restreint aux entiers pairs quand $p$ est impair.

\noindent \textbf{Remarque 4.} Lorsque $p$ est pair, les cartes deviennent eulériennes. Par conséquent, la série $S$ disparaît -- voir  section \ref{seulerien}. Les équations obtenues figurent dans le tableau récapitulatif de la figure \ref{recapitulatif}. 

Quand $p = 2q$ avec $q \geq 3$, nous remarquons que $F(z,u)$ n'est plus apériodique. En effet, le nombre de faces dans une carte planaire $2q$-régulière est de la forme $2 + (q-1)k$, avec $k \geq 1$, comme le montre la relation \eqref{relreg}. Nous pouvons par exemple l'observer sur les premiers termes de la série génératrice des cartes forestières $6$-régulières : 
$$F(z,u) = 5z^4+(100u+252)z^6+(3000 u^2 + 14175 u+ 19305) z^8 + O(z^{10}).$$

\noindent \textbf{Remarque 5.} Lorsque $p$ est impair, les simplifications du cas eulérien ne s'opèrent pas. En particulier, les séries $\phi_1$ et $\phi_2$ définies par \eqref{rphi} sont bivariées en $x$ et $y$. Toutefois quand $p=3$, ces séries peuvent être réduites à des séries à une seule variable. En effet, on peut vérifier que 
\begin{equation} \label{phi1cubique}
\phi_1(x,y) = (1-4y)^{3/2}\,\psi_1\left(\frac x {(1-4y)^2}\right) - x,
\end{equation}
\begin{equation}
\phi_2(x,y) = \sqrt{1-4y}\,\psi_2\left(\frac x {(1-4y)^2}\right) + \frac 1 4 \, (1 - \sqrt{1 - 4y})^2,
\end{equation}
où $\psi_1$ et $\psi_2$ sont les séries monovariées
\begin{equation} \label{psicubique}
\psi_1(z) = \sum_{i \geq 1} \frac{(4i-4)!}{i!(i-1)!(2i-2)!}z^i \quad\textrm{et}\quad \psi_2(z) = \sum_{i \geq 1} \frac{(4i-2)!}{i!^2(2i-1)!}z^i.
\end{equation}
Cette réécriture astucieuse nous sera salvatrice pour la suite de ce mémoire -- l'étude des séries à plusieurs variables est beaucoup plus délicate que celle des séries à une seule variable.

\noindent \textbf{Remarque 6.} Le cas $p=2$ est trivial. En effet, les cartes $2$-régulières sont les cycles et la série génératrice des cycles forestiers avec un poids $u$ par composante non racine et un poids $t$ par arête\footnote{Ici nous sommes obligés  de compter selon les arêtes plutôt que les faces car le nombre de faces est égal à $2$.} est égale à\footnote{Expliquons pourquoi les cycles forestiers sont comptés par $\frac t {(1 - t)(1 - (1+u)t)}$. En partant de la racine, nous regardons quelles suites d'arêtes internes/externes nous pouvons obtenir en faisant le tour du cycle (par exemple dans le sens trigonométrique). Cette suite commence par une succession d'arêtes internes (poids $1/1-t$), qui constituent une partie de la composante racine (l'autre partie étant de l'autre côté de la racine). Ces premières arêtes sont forcément suivies par une arête externe (poids $t$) -- nous ne voulons pas que les arêtes internes forment un cycle. Après quoi, nous visitons une succession (potentiellement vide) d'arêtes internes/externes. Une arête interne prolonge la composante qui précède donc est simplement pondérée par $t$, tandis qu'une arête externe clôt une composante donc est pondérée par $ut$.  Les dernières arêtes internes que nous visitons ne sont pas pondérées par $u$ car elles appartiennent à la composante racine. Ce raisonnement nous mène à la série génératrice attendue.}
\begin{equation} \label{cycle}
F(u,t) = \frac t {(1 - t)(1 - (1+u)t)}.
\end{equation}

\subsection{Cartes forestières eulériennes et $\boldsymbol 4$-eulériennes}

Nous supposons maintenant que $d_{2k-1}=0$ et $d_{2k}=1$ pour tout $k \geq 1$. Nous obtenons alors l'ensemble des cartes eulériennes considérées uniformément, c'est-à-dire sans tenir compte des poids.

Pour appliquer la proposition \ref{teulerien}, nous devons d'abord calculer le nombre d'arbres à pattes eulériens.

\begin{lem} \label{l:euln} Pour tout $i \geq 1$ et $j \geq 0$, le nombre d'arbres eulériens à $2i$ pattes et $j$ arêtes, enracinés sur une pattes, est 
$$
\frac 1 {2i-1}  { 2i + j - 1  \choose j + 1 } { i + j - 1 \choose j }.
$$
\end{lem}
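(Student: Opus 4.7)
Le plan est d'appliquer le th\'eor\`eme d'inversion de Lagrange \`a la s\'erie g\'en\'eratrice multivari\'ee $\mathcal{T}(t,y) = \sum_{\ell \geq 1} T_\ell(t)\, y^{\ell-1}$ d\'efinie par \eqref{deftl}. Le nombre recherch\'e est en effet $[t^j]\, T_{2i}(t) = [t^j\, y^{2i-1}]\,\mathcal{T}(t,y)$.

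D'abord, en sp\'ecialisant l'\'equation \eqref{deftl} au cas eul\'erien ($d_{2k}=1$ pour $k \geq 1$ et $d_{2k+1}=0$ pour $k \geq 0$) et en utilisant la somme g\'eom\'etrique $\sum_{k \geq 1} x^{2k-1} = x/(1-x^2)$, on obtient
$$\mathcal{T} = \frac{t\mathcal{T}+y}{1-(t\mathcal{T}+y)^2}.$$
En introduisant la nouvelle inconnue $W := t\mathcal{T} + y$, un calcul alg\'ebrique \'el\'ementaire met cette \'equation sous la forme
$$W = y\,\phi(W), \qquad \text{avec}\qquad \phi(W) := \frac{1-W^2}{1-t-W^2}.$$
Comme $\phi(0) = 1/(1-t)$ est inversible dans $\Q[[t]]$, le th\'eor\`eme d'inversion de Lagrange par rapport \`a $y$ (la variable $t$ \'etant trait\'ee comme un param\`etre) s'applique sans difficult\'e.

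Puisque $\mathcal{T} = (W-y)/t$, on a $[t^j\, y^{2i-1}]\,\mathcal{T} = [t^{j+1}\, y^{2i-1}]\, W$. L'inversion de Lagrange avec $n = 2i-1$ donne
$$[y^{2i-1}]\, W = \frac{1}{2i-1}\,[W^{2i-2}]\,\phi(W)^{2i-1}.$$
Pour extraire ensuite le coefficient de $t^{j+1}$, j'\'ecris $\phi(W)^{2i-1} = (1-W^2)^{2i-1}(1-t-W^2)^{-(2i-1)}$, puis je d\'eveloppe le second facteur par la formule du bin\^ome n\'egatif en la variable $t$, autour de $1-W^2$. Apr\`es simplification des facteurs $(1-W^2)^{2i-1}$, il vient
$$[t^{j+1}]\,\phi(W)^{2i-1} = \binom{2i+j-1}{j+1}(1-W^2)^{-(j+1)}.$$
Enfin, un ultime d\'eveloppement bin\^omial fournit $[W^{2i-2}](1-W^2)^{-(j+1)} = \binom{i+j-1}{i-1} = \binom{i+j-1}{j}$, ce qui, apr\`es multiplication par $1/(2i-1)$, m\`ene bien \`a la formule annonc\'ee.

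L'ensemble de la preuve n'est qu'une application directe du th\'eor\`eme d'inversion de Lagrange coupl\'ee \`a deux d\'eveloppements bin\^omiaux. Je ne pr\'evois pas d'obstacle s\'erieux ; la seule vigilance requise concerne la bonne gestion des variables $t$ et $W$ lors des d\'eveloppements (le d\'eveloppement bin\^omial doit \^etre appliqu\'e en $t$ en consid\'erant $1-W^2$ comme un \og scalaire \fg).
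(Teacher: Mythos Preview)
Your proof is correct. Both you and the paper proceed by Lagrange inversion followed by binomial expansions, but the roles of the two variables are exchanged. The paper works directly with the functional equation $E = x + y\,E/(1-E^2)$ (with $x$ counting non-root legs and $y$ counting edges$+1$) and applies Lagrange in the \emph{edge} variable via the derivative form $E = x + \sum_{j\ge 1}\frac{y^j}{j!}\,\partial_x^{\,j-1}\!\bigl(x/(1-x^2)\bigr)^j$, then expands $(1-x^2)^{-j}$ binomially. You instead perform the substitution $W=t\mathcal T+y$ to reduce to $W=y\,\phi(W)$ and apply the standard Lagrange formula in the \emph{leg} variable; the factorisation $1-t-W^2=(1-W^2)\bigl(1-t/(1-W^2)\bigr)$ then makes the two binomial expansions immediate. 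Your route avoids the derivative form of Lagrange at the cost of the preliminary change of unknown; both computations are of comparable length.
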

\begin{proof} Soit $E(x,y)$ la série génératrice des arbres eulériens à pattes comptés selon le nombre de pattes non racine ($x$) et le nombre d'arêtes plus un ($y$). Nous considérons l'arbre réduit à une patte comme un arbre en tant que tel. La décomposition d'un arbre à pattes à la racine montre que
$$
E = x + y \frac E {1 - E^2}.
$$
(En effet, soit l'arbre est réduit à une feuille, soit un nombre impair d'arbres à pattes sont attachés au sommet racine.)
D'après le théorème d'inversion de Lagrange \cite[\'Equation 3.6.6 p. 14]{AS}\footnote{Cette version n'est pas forcément la plus connue -- elle énonce que si $f(x,y) = x + y \, \phi(f(x,y))$, alors $$f(x,y) = x + \sum_{k \geq 1} \frac{y^k} {k!} \pare{\pd \, x}^{k-1} \pare{\phi(x)^k}.$$ } , nous avons
$$
E(x,y) = x + \sum_{j \geq 1} \frac {y^j} {j!} \left( \frac \partial {\partial x} \right)^{j-1} \left( \frac{x}{1-x^2} \right)^{j}.
$$
Après avoir utilisé l'identité
$$ (1 - x^2)^{-j} = \sum_{i \geq 0} { i + j - 1 \choose i }
 x^{2i},$$
nous obtenons
$$E(x,y) = x  + \sum_{i \geq 0} \sum_{j \geq 1} \frac  1 {2i+1}  { 2i + j  \choose j } { i + j - 1 \choose i } x^ {2i+1} y^j,$$
ce qui donne le cardinal voulu en remplaçant $i$ par $i-1$ et $j$ par $j+1$.
\end{proof}

D'après le lemme précédent, la série $T_{2i}(t)$ vaut 
$$ \sum_{i \geq 1} \sum_{k \geq 0} \frac 1 {2i-1}  { 2i + j - 1  \choose j + 1 } { i + j - 1 \choose j } x^i t^k.$$
Donc d'après la proposition \ref{teulerien}, la série génératrice $F(z,u,t)$ des cartes forestières eulériennes est solution du système :
$$\pd F z = \theta(t,R),\quad R = tz + tu\phi(t,R)$$
où $\theta$ and $\phi$ sont définies par
$$
\theta(t,x) = 2 \sum_{i \geq 1} \sum_{k \geq 0} \frac{ (2i+k-1)!(i+k)!      } {i!^2(i-1)!k!(k+1)!} x^i t^k,
$$
$$
\phi(t,x) = \sum_{i \geq 1} \sum_{k \geq 0} \frac{ (2i+k-1)!(i+k-1)!      } {i!(i-1)!^2k!(k+1)!} x^i t^k,
$$
les coefficients de $\phi$ et de $\theta$ étant déduits du lemme précédent.

Comme $d_2 \neq 0$, il existe une infinité de cartes eulériennes avec un nombre de faces fixé. Autrement dit, quel que soit l'entier $f \geq 2$, le coefficient de $z^f$ dans $F(z,u,t)$ n'est pas un polynôme en $t$. Par exemple, le coefficient de $z^2$ compte les cycles munis d'une forêt couvrante, dont la série génératrice est décrite par \eqref{cycle}.

Toutefois, le coefficient de $t^a$ dans $F(z,u,t)$ est un polynôme en $z$, pour tout $a \geq 1$. La variable naturelle pour $F$ est donc la variable des arêtes $t$. Montrons par exemple le développement de $F$ à l'ordre $3$ en $t$ :
$$F(z,u,t) = tz^2+((2+u)z^2+2z^3)t^2+ ((3+3u+u^2)z^2+(12+6u)z^3+5z^4)t^3 + O(t^4).$$
Néanmoins l'étude asymptotique de $F$ par rapport à la variable des arêtes n'est donc pas chose aisée. En effet, il est difficile d'exploiter le système ci-dessus sachant qu'il fait intervenir la dérivée de $F$ par rapport à la variable des faces $z$. 

C'est pourquoi nous allons contourner le problème en étudiant plutôt les cartes forestières \textit{$4$-eulériennes}. Une carte $4$-eulérienne est une carte où le degré de chaque sommet est  pair, et au moins égal à $4$. Comme nous avons interdit les sommets de degré $2$, il n'existe qu'un nombre fini de cartes forestières $4$-eulériennes avec un nombre de faces fixé. Ces cartes peuvent donc être énumérées selon la variable $z$. 

Les cartes $4$-eulériennes ne sont pas des objets si éloignés des cartes eulériennes, comme en témoigne le lemme suivant.
   
\begin{prop} \label{correspondanceeul}
La série génératrice $F(z,u,t)$ des cartes forestières eulériennes et la série génératrice $F_4(z,u,t)$ des cartes forestières $4$-eulériennes sont liées par la relation
$$F(z,u,t) =   F_4 \pare{\frac{z} {1- t(1+u)},\frac{u} {1- t(1+u)},\frac t {1-t}}  +  \frac {z^2 t} {(1 - t)(1-t(1+u))} .$$
\end{prop}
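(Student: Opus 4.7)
The plan is to set up a bijection that decomposes Eulerian forested maps into two families: forested cycles (in which every vertex has degree $2$) and non-cycle Eulerian forested maps, which necessarily contain a vertex of degree at least $4$. The cycle contribution is handled directly by Remark~6 of Section~\ref{spanorama}: forested cycles weighted by faces, non-root components, and edges generate exactly $\frac{z^2 t}{(1-t)(1-t(1+u))}$, which is precisely the additive term on the right-hand side of the proposition.

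For the remaining (non-cycle) maps, I would construct a bijection onto pairs consisting of a $4$-Eulerian forested map $(C', F')$ together with a ``chain decoration'' for each edge of $C'$. In the forward direction, one contracts every maximal chain of degree-$2$ vertices of $(C,F)$ to a single edge of the skeleton $C'$, and declares such a skeleton edge to lie in $F'$ if and only if every edge of the corresponding chain lies in $F$; this prescription makes $F'$ a genuine sub-forest of $C'$, since any cycle in $F'$ would lift, chain by chain, to a cycle in $F$. The inverse operation takes a $4$-Eulerian forested map together with, for each of its edges, a chain length $k \geq 1$ and an internal/external label on each chain edge (with edges of $F'$ constrained to receive all-internal chains), and reinserts the chain accordingly.

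The central computation is then the weight of the chain decorations. An internal edge of $F'$ admits chains with all $k \geq 1$ edges internal and contributes $\sum_{k \geq 1} t^k = t/(1-t)$. An external edge of $C'$ admits chains with $\ell \in \{1,\ldots,k\}$ external edges out of $k$; each such configuration introduces $\ell - 1$ new non-root forest components, and summing over $k$ and $\ell$ gives the contribution $\sum_{k \geq 1}\sum_{\ell=1}^{k} \binom{k}{\ell}\, t^k u^{\ell-1} = \frac{t}{(1-t)(1 - t(1+u))}$. Multiplying these weights across the edges of $(C', F')$ and invoking the Euler identity $a_e = f + k - 1$ relating the number of external edges to the number of faces and non-root components of the $4$-Eulerian skeleton, the resulting sum rearranges into $F_4$ evaluated at the claimed substitutions, from which the identity of the proposition follows once the cycle term is added back.

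The main technical obstacle will be the precise bookkeeping of the powers of $(1-t(1+u))^{-1}$: one must verify that the chain decorations, the Euler relation, and the treatment of loop edges of $C'$ (whose chains close into topological cycles rather than paths, requiring the same $\ell \geq 1$ constraint to avoid a cycle in $F$) combine to produce exactly the substitution $z \mapsto z/(1-t(1+u))$, $u \mapsto u/(1-t(1+u))$, $t \mapsto t/(1-t)$. Small consistency checks, such as matching the $[z^2 t]$ coefficient with the smallest forested cycle and the $[z^3 t^2]$ coefficient with the $4$-Eulerian map consisting of a single vertex carrying two loops, are useful safeguards to fix the normalizations while setting up the bijection.
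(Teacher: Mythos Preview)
Your plan is exactly the paper's: separate off the forested cycles via the explicit series of Remark~6 in Section~\ref{spanorama}, then contract each maximal chain of degree-$2$ vertices to a single edge of a $4$-Eulerian skeleton, keeping the internal/external pattern along each chain as a decoration. Your chain-weight computations and the Euler identity $a_e = f + k - 1$ are both correct.

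The gap lies precisely at the root, which your bijection does not address. When the root corner of the Eulerian map sits on a degree-$2$ vertex, it lies in the interior of a chain and is lost upon contraction; conversely, your inverse only produces Eulerian maps rooted at a vertex of degree $\geq 4$, so it misses a non-trivial subfamily. If you run your bookkeeping exactly as written, the decorated sum comes out as
\[
(1 - t(1+u))\; F_4\!\left(\frac{z}{1-t(1+u)},\ \frac{u}{1-t(1+u)},\ \frac{t}{1-t}\right)
\]
rather than $F_4$ alone: the substitutions in $z$ and $u$ contribute a factor $(1-t(1+u))^{-(f+k)}$, whereas your external-edge chains only supply $(1-t(1+u))^{-a_e} = (1-t(1+u))^{-(f+k-1)}$, leaving one stray factor of $1-t(1+u)$. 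The paper's proof confronts this explicitly: in the forward direction, a map whose root sits inside a chain $m$ is re-rooted on the corner just preceding $m$; in the inverse direction, one inserts an additional chain of freely internal-or-external edges between the root corner and the edge that precedes it, with generating weight $1/(1-t(1+u))$. This root chain is the device meant to restore the missing root positions and to absorb the stray factor.
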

\begin{proof} Dans une carte enracinée eulérienne qui n'est pas un cycle, les sommets de degré $2$ peut être organisés en chaînes maximales, connectées par des sommets de degré au moins $4$ en leurs extrémités.

\fig{[width=\textwidth]{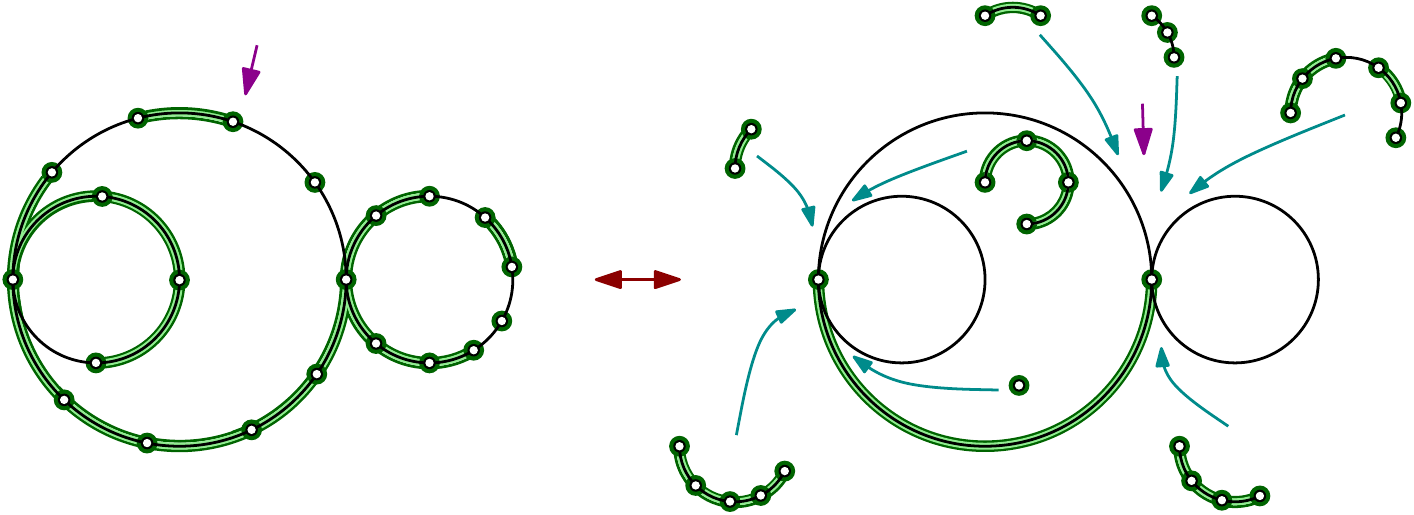}}{Correspondance entre cartes forestières eulériennes et cartes forestières $4$-eulériennes.}{4eul}

Considérons $C$ une carte forestière eulérienne qui n'est pas un cycle. Nous remplaçons chaque chaîne maximale par une arête. Si cette chaîne comportait une arête externe, alors la nouvelle arête est externe\footnote{Cette condition est nécessaire pour ne pas créer de cycles au sein de la forêt.} ; si elle n'était constituée que d'arêtes internes, alors la nouvelle arête est interne. Si la racine était au milieu d'une chaîne maximale $m$, nous réenracinons la carte sur le coin précédant $m$. Nous obtenons ainsi une carte forestière $4$-eulérienne. 

Décrivons maintenant l'application inverse. Soit $C'$ une carte forestière $4$-eulé\-rienne. On insère avant\footnote{Le terme "avant" présuppose que l'arête est orientée. Cette orientation peut être arbitraire.} chaque arête $e$ de $C'$ une  chaîne arbitrairement longue (et potentiellement vide) d'arêtes internes (poids en plus par arête : $1/(1-t)$). Si  $e$ était de surcroît externe, nous rajoutons après $e$ une chaîne d'arêtes  qui peuvent être individuellement internes ou externes de longueur également arbitrairement longue (poids en plus par arête externe : $1/(1-t(1+u))$, une arête interne ne change pas le nombre de composantes connexes, une arête externe l'augmente de $1$). Enfin, nous rajoutons une dernière chaîne d'arêtes entre la racine et l'arête qui précède la racine (poids :  $1/(1-t(1+u))$). Nous obtenons ainsi une carte forestière eulérienne qui n'est pas un cycle. La figure \ref{4eul} illustre cette correspondance.

L'ensemble de ces cartes est énuméré par $F - \frac{z^2 t} {(1 - t)(1-t(1+u))}$ (voir équation \eqref{cycle}).  De plus, nous pouvons prouver par récurrence sur la taille qu'une carte forestière $4$-eulérienne avec $n$ faces et $k$ composantes non racine comporte exactement $n + k - 1$ arêtes externes. Par conséquent, d'après la correspondance décrite ci-dessus, ce même ensemble est énuméré par
$$
\frac 1 {1-u(1+t)} \pare{1-u(1+t)} F_4 \pare{\frac{z} {1- t(1+u)},\frac{u} {1- t(1+u)},\frac t {1-t}},
 $$
ce qui prouve l'égalité voulue. 
\end{proof}

En utilisant l'inversion de Lagrange à l'équation $E = x + y \frac{E^3}{1- E^2}$, nous pouvons établir l'équivalent du lemme \ref{l:euln} pour les cartes $4$-eulériennes :

\begin{lem} \label{l:4euln} Pour tout $i \geq 1$ et $i - 2 \geq j \geq 0$, le nombre d'arbres à $2i-1$ pattes and $j$ arêtes sans sommet  de degré $2$ est 
$$
\frac 1 {2i-1}  { 2i + j - 1  \choose j + 1 } { i - 2 \choose j }.
$$
\end{lem}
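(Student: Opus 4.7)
Je suivrais le m\^eme sch\'ema que pour la preuve du Lemme \ref{l:euln}, en rempla\c{c}ant uniquement l'\'equation fonctionnelle par celle adapt\'ee \`a la nouvelle contrainte. Soit $E(x,y)$ la s\'erie g\'en\'eratrice des arbres \`a pattes dont tous les sommets ont un degr\'e pair au moins \'egal \`a $4$, enracin\'es sur une patte, avec les m\^emes conventions qu'au Lemme \ref{l:euln}: $x$ compte les pattes non racines, et $y$ les ar\^etes plus un. En d\'ecomposant au sommet racine, celui-ci \'etant de degr\'e pair $2k$ avec $k\geq 2$, la patte racine occupe l'une de ses demi-ar\^etes et chacune des $2k-1$ autres est la racine d'un sous-arbre compt\'e par $E$. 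On obtient ainsi l'\'equation fonctionnelle
$$E \;=\; x + y\sum_{k\geq 2} E^{2k-1} \;=\; x + y\,\frac{E^{3}}{1-E^{2}}.$$

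J'appliquerais ensuite la m\^eme version de l'inversion de Lagrange que pr\'ec\'edemment, pour en d\'eduire
$$E(x,y) \;=\; x + \sum_{j\geq 1}\frac{y^{j}}{j!}\pare{\frac{\partial}{\partial x}}^{j-1}\!\pare{\frac{x^{3}}{1-x^{2}}}^{j}.$$
L'identit\'e $(1-x^{2})^{-j}=\sum_{i\geq 0}\binom{i+j-1}{i}x^{2i}$ donne imm\'ediatement
$$\pare{\frac{x^{3}}{1-x^{2}}}^{j} \;=\; \sum_{i\geq 0}\binom{i+j-1}{i}x^{2i+3j},$$
et la $(j-1)$-i\`eme d\'eriv\'ee en $x$ s'effectuant terme \`a terme, on lit directement le coefficient de $x^{2i+2j+1}y^{j}$ dans $E$, qui vaut
$$\frac{1}{j!}\binom{i+j-1}{i}\frac{(2i+3j)!}{(2i+2j+1)!}.$$

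Il reste alors \`a r\'eindexer pour retrouver la forme de l'\'enonc\'e, comme on l'avait fait pour le Lemme \ref{l:euln}. Le changement d'indices consistant \`a substituer $i\mapsto i-j-2$ et $j\mapsto j+1$ envoie l'exposant de $x$ sur $2i-1$ et celui de $y$ sur $j+1$; la positivit\'e de l'indice de sommation $i-j-2$ se traduit exactement par la contrainte $i-2\geq j\geq 0$ annonc\'ee. L'identit\'e \'el\'ementaire
$$\frac{(2i+j-1)!}{(2i-1)!\,(j+1)!} \;=\; \frac{1}{2i-1}\binom{2i+j-1}{j+1}$$
fait alors appara\^itre les trois facteurs de la formule voulue. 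L'unique point qui demandera de la vigilance est la gestion minutieuse de ces changements d'indices et l'appariement correct des conventions sur $y$ (ar\^etes plus un) avec le nombre d'ar\^etes figurant dans l'\'enonc\'e; tout le reste se ram\`ene \`a des manipulations factorielles standards.
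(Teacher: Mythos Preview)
Your proof is correct and follows exactly the approach the paper indicates. The paper itself only gives a one-line sketch (``En utilisant l'inversion de Lagrange \`a l'\'equation $E = x + y \frac{E^3}{1- E^2}$, nous pouvons \'etablir l'\'equivalent du lemme \ref{l:euln}''), and you have filled in the details precisely as in the proof of Lemma~\ref{l:euln}: same functional equation, same form of Lagrange inversion, same expansion of $(1-x^2)^{-j}$, and the analogous reindexing to recover the stated formula. The factorial manipulations you outline are all correct.
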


Nous pouvons alors appliquer la proposition \ref{teulerien} et trouver un système fonctionnel caractérisant la série génératrice des cartes forestières $4$-eulériennes. Nous retrouvons ce système dans le tableau récapitulatif de la figure \ref{recapitulatif}.

\begin{figure}[h!]

\begin{center}
\begin{tabular}{|c|c|c|c|} \hline
\begin{minipage}{0.13 \textwidth} \vspace{0.15cm} \begin{center}
Classe de cartes
\end{center} \vspace{-0.2cm} \end{minipage} & Système & Fonctions $\theta$ et $\phi$ & Commentaires \\ \hline
\begin{minipage}{0.13 \textwidth} \begin{center}
Cartes forestières tétra\-valentes
\end{center} \end{minipage} & \begin{minipage}{0.15 \textwidth} \footnotesize $$\pd F z = \theta(R) $$ $$ R = z + u \phi(R) $$\end{minipage} & \begin{minipage}{0.38 \textwidth} $$\theta(x) = 4 \,\sum_{i \geq 2} \frac {(3i-3)!} {i!^2\,(i-2)!} x^i $$ $$ \phi(x) = \sum_{i \geq 2} \frac {(3i-3)!} {i!\,(i-1)!^2} x^i$$ \end{minipage}  & \begin{minipage}{0.2 \textwidth} \vspace{0.15cm} \ding{51} Pas de série $S$ \\
\ding{51}\,Séries mono\-variées \\ \ding{51} Apériodicité \\ \ding{51}\,Cas le plus simple \end{minipage}  \\ \hline
\begin{minipage}{0.13 \textwidth} \begin{center}
Cartes forestières $2q$-régulières, $q \geq 3$
\end{center} \end{minipage} & \begin{minipage}{0.15 \textwidth} \footnotesize $$\pd F z = \theta(R) $$ $$ R = z + u \phi(R) $$\end{minipage} & \begin{minipage}{0.38 \textwidth} \footnotesize $$\theta(x) = 2q\,\sum_{i \geq 1}  \frac{((2q-1)i)! x^{(q-1)i+1}}{ (i-1)! ((q-1)i+1)!^2 } $$ $$ \phi(x) = \sum_{i \geq 1} \frac{((2q-1)i)! x^{(q-1)i+1}}{ i! ((q-1)i)! ((q-1)i+1)!  } $$ \end{minipage}  & \begin{minipage}{0.2 \textwidth} \vspace{0.15cm} \ding{51} Pas de série $S$ \\
\ding{51}\,Séries mono\-variées \\ \ding{55}~Période $q-1$, supérieure à $1$ \\ \ding{55}\:Dépend d'un paramètre $q$ \end{minipage}  \\ \hline
\begin{minipage}{0.13 \textwidth} \begin{center}
Cartes forestières cubiques
\end{center} \end{minipage} & \begin{minipage}{0.165 \textwidth} \footnotesize $$\pd F z = \theta(R,S)$$ $$\hspace{-0.1cm} R = z + u \phi_1(R,S)$$ $$S = u \phi_2(R,S)$$ \end{minipage} & \begin{minipage}{0.385 \textwidth} \footnotesize $$\theta(x,y) = 3 \sum_{\substack{i \geq 0 \\ j \geq 0 \\ 2i+j \geq 3}} \frac {(4i+2j-4)!} {i!^2j!(2i+j-3)!} x^i y^j
$$ $$ \hspace{-0.05cm}
\phi_1(x,y) =  \sum_{\substack{i \geq 0 \\ j \geq 0 \\ 2i+j \geq 3}} \frac {(4i+2j-4)!x^i y^j } {i!(i-1)!j!(2i+j-2)!} 
$$ $$
\phi_2(x,y) = \sum_{\substack{i \geq 0 \\ j \geq 0 \\ 2i+j \geq 2}} \frac {(4i+2j-2)!} {i!^2j!(2i+j-1)!} x^i y^j
$$ \end{minipage}  & \begin{minipage}{0.2 \textwidth} \vspace{0.15cm} \ding{51} Apériodicité  \\ \ding{51}\,Séries biva\-riées mais réductibles à des séries monovariées  \\
\ding{55}\,Retour de la série $S$ \\ \ding{55} Système fonctionnel avec deux équations couplées\end{minipage}  \\ \hline
\begin{minipage}{0.13 \textwidth} \begin{center}
Cartes forestières eulériennes non pondérées
\end{center} \end{minipage} & \begin{minipage}{0.165 \textwidth} \footnotesize $$\pd F z = \theta(t,R) $$ $$\hspace{-0.035cm} R = tz + tu \phi(t,R) $$\end{minipage} & \begin{minipage}{0.390 \textwidth} \footnotesize $$
\theta(t,x) = 2 \sum_{\substack{ i \geq 1 \\ k \geq 0 }} \frac{ (2i+k-1)!(i+k)!      } {i!^2(i-1)!k!(k+1)!} x^i t^k
$$
$$ \hspace{-0.1cm}
\phi(t,x) = \sum_{\substack{ i \geq 1 \\ k \geq 0 }} \frac{ (2i+k-1)!(i+k-1)!      } {i!(i-1)!^2k!(k+1)!} x^i t^k\hspace{-0.3 cm} \ $$
\end{minipage}  & \begin{minipage}{0.2 \textwidth} \vspace{0.15cm} \ding{51} Pas de série $S$ \\
\ding{51} Apériodicité \\ \ding{55} Variable naturelle $= t$ (arêtes) \\ \ding{55} Séries bivariées\end{minipage}  \\ \hline
\begin{minipage}{0.13 \textwidth} \begin{center}
Cartes forestières $4$-eulé\-riennes non pondérées
\end{center} \end{minipage} & \begin{minipage}{0.165 \textwidth} \footnotesize $$\pd F z = \theta(t,R) $$ $$\hspace{-0.035cm} R = tz + tu \phi(t,R) $$\end{minipage} & \begin{minipage}{0.390 \textwidth} \footnotesize \vspace{-0.3cm}
\begin{multline*}
\theta(t,x) =  2 \times \\
 \hspace{-0.9cm} \sum_{\substack{ i \geq 2 \\  0 \leq k \leq i - 2  }} \hspace{-0.05cm} \frac{ (i+k)(2i+k-1)!(i-2)!} {i!^2(i-k-2)!k!(k+1)!} x^i t^k \hspace{-0.3 cm} \ 
\end{multline*}
\vspace{-0.6cm}
\begin{multline*}
\phi(t,x) = \\
\hspace{-0.9cm} \sum_{\substack{ i \geq 2 \\  0 \leq k \leq i - 2  }} \hspace{-0.25cm} \frac{ (2i+k-1)!(i-2)!      } {i!(i-1)!(i-k-2)!k!(k+1)!} x^i t^k\hspace{-0.45 cm} \ \end{multline*}
\end{minipage}  & \begin{minipage}{0.2 \textwidth} \vspace{0.15cm} \ding{51} Pas de série $S$ \\ \ding{51} Apériodicité \\ \ding{51} On peut compter selon les faces. \\ \ding{55} Séries bivariées\end{minipage}  \\ \hline
 \end{tabular}
\end{center}

\caption{Tableau récapitulatif.}
\label{recapitulatif}

\end{figure}

\chapter{\'{E}quations différentielles}
\label{c:ed}

Comme nous l'avions dit en introduction (voir sous-section~ \ref{ss:nature} p.~\pageref{ss:nature}), il est intéressant de connaître la nature des séries génératrices étudiées (à savoir où se placent ces séries dans la hiérarchie rationnelle / algébrique / holonome / différentiellement algébrique). Dans ce chapitre, nous prouvons que la série génératrice des cartes forestières est différentiellement algébrique (sous des hypothèses raisonnables). Nous confirmons ainsi le résultat d'Olivier Bernardi et Mireille Bousquet-Mélou \cite{bernardi-mbm-de} qui traitait de l'algébricité différentielle de la série génératrice de Potts, mais uniquement pour des cartes planaires générales et pour les triangulations. Ce résultat étant d'abord obtenu de manière théorique, nous indiquons dans un second temps comment obtenir en pratique les équations différentielles.

\section{Algébricité différentielle  chez les cartes décorées}

L'algébricité différentielle sera établie dans un cadre plus général que celui des séries génératrices de cartes forestières. Nous allons montrer que sous de bonnes hypothèses d'holonomie, la série génératrice des cartes décorées (c'est-à-dire des cartes où chaque sommet est pondéré par un objet combinatoire tel un arbre ou une carte) est différentiellement algébrique. Nous appliquerons par la suite ce résultat à nos cartes forestières.


%
%
\noindent \textbf{Rappel. }Une série (multivariée) est dite \textit{holonome} quand l'espace vectoriel engendré par toutes ses dérivées partielles est de dimension finie. Une série est dite \textit{différentiellement algébrique} en une variable $x$ si elle satisfait une équation différentielle polynomiale par rapport à $x$.

\begin{theo} \label{t:dalg}
Soient  $g_k(z,u,t)$ et $h_k(z,u,t)$ des séries de variables $z$, $u$ et $t$ à coefficients rationnels indexées par $k \geq 1$. Notons $C(z,u,t)$ la série génératrice des cartes planaires avec un poids $z$ par face, un poids $u$ par sommet non racine, un poids $t$ par arête, un poids $g_k(z,u,t)$ par sommet non racine de degré $k$ et un poids $h_k(z,u,t)$ si le sommet racine a degré $k$.


Si $\sum_{k \geq 1} g_k(z,u,t) v^k $ et $\sum_{k \geq 1} h_k(z,u,t) v^k$ sont deux séries holonomes (par rapport au jeu de variables $z,u,t,v$), alors $C$ est différentiellement algébrique en $t$, en $u$ et en $z$.
\end{theo}
\begin{proof} Nous prouvons ici l'algébricité différentielle de $C$ selon $t$. Celle selon $u$ et $z$ se démontre de manière similaire.

Nous utilisons le théorème \ref{vintegrale} p. \pageref{vintegrale} dû à Jérémie Bouttier et Emmanuel Guitter. Dans le cadre du présent théorème, le poids des sommets non racine de degré $k$ est $ t^{k/2} g_k(z,u,t)$ et le poids du sommet racine de degré $k$ est $t^{k/2} h_k(z,u,t)$. Le théorème \ref{vintegrale} se réécrit donc comme suit. Il existe deux séries $R$ et $S$ (attention, ce ne sont pas tout à fait les mêmes que celles du théorème \ref{vintegrale} -- la série $R$ a été multipliée par $t$ et $S$ par $\sqrt t$ comme dans la preuve du théorème \ref{tfemme} p. \pageref{tfemme}) telles que
\begin{equation} \label{firs}
R = t \,z + t \,u \,\phi_1(z,u,t,R,S), \quad S = t\, u\, \phi_2(z,u,t,R,S)
\end{equation}
avec
$$\phi_1(z,u,t,x,y) = \sum_{i \geq 1} \sum_{j \geq 0} g_{2i+j}(z,u,t) \frac{(2i+j-1)!}{i! (i-1)! j!} x^i y^j,$$
$$\phi_2(z,u,t,x,y) = \sum_{i \geq 0} \sum_{j \geq 0} g_{2i+j+1}(z,u,t) \frac{(2i+j)!}{(i!)^2 j!} x^i y^j.$$
La série $C$ vérifie alors
\begin{equation} \label{Calphabeta}
C = \alpha(z,u,t,R,S)-u\,\beta(z,u,t,R,S), 
\end{equation}
où $\alpha(z,u,t,x,y)$ et $\beta(z,u,t,x,y)$ sont deux séries définies par
$$
\alpha =  \sum_{i \geq 0} \sum_{j \geq 0} h_{2i+j}(z,u,t) \frac{(2i+j)!}{i! (i+1)! j!}  x^{i+1} y^j,
$$
$$
\beta =   \sum_{i, j, k, \ell \geq 0} \sum_{q=0}^{2i+j} h_{2i+j-q}(z,u,t) \, g_{2k+\ell+q+2}(z,u,t) \, \frac{(2i+j)!}{i! (i+1)! j!}  \frac{(2k+\ell)!}{(k!)^2 \ell!} x^{i+k} y^{j+\ell}.
$$
Commençons par prouver le lemme suivant.

\begin{lem}Les séries $\phi_1$, $\phi_2$, $\alpha$ et $\beta$ sont holonomes (pour le jeu de variables $z,u,t,x,y$).
\end{lem}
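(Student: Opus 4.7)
Le plan de la preuve repose sur les propri\'et\'es classiques de cl\^oture des s\'eries holonomes dues \`a Lipshitz \cite{lipshitz-diag,lipshitz-df} : la classe des s\'eries holonomes est stable par produit, par diagonale, par produit de Hadamard (en une variable commune) et par substitution alg\'ebrique s'annulant \`a l'origine. L'id\'ee est de reconna\^itre dans $\phi_1$, $\phi_2$, $\alpha$, $\beta$ des produits de Hadamard entre, d'une part, les s\'eries holonomes de l'\'enonc\'e $\sum_k g_k(z,u,t) v^k$ et $\sum_k h_k(z,u,t) v^k$, et d'autre part, des \emph{noyaux} alg\'ebriques \`a coefficients trinomiaux.

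D\'etaillons l'argument pour $\phi_1$. On l'\'ecrit
$$\phi_1(z,u,t,x,y) = \sum_{n \geq 1} g_n(z,u,t) \, K_n(x,y), \qquad K_n(x,y) = \sum_{\substack{i \geq 1,\,j \geq 0 \\ 2i+j = n}} \frac{(n-1)!}{i!(i-1)!j!} x^i y^j,$$
et l'on introduit le noyau g\'en\'erateur $L_1(v,x,y) = \sum_n K_n(x,y) v^n$. Celui-ci est alg\'ebrique : apr\`es la substitution $a = xv^2$, $b = yv$, il s'identifie \`a $\tilde L_1(a,b) = \sum_{i,j} {2i+j-1 \choose i-1,\, i,\, j} a^i b^j$, s\'erie trinomiale classique que l'on sait alg\'ebrique (par exemple via une interpr\'etation combinatoire en termes de chemins de Motzkin color\'es satisfaisant une \'equation polynomiale du second degr\'e ; voir aussi l'\'equation \eqref{equerre} o\`u cette s\'erie joue un r\^ole). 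Le produit de Hadamard selon $v$ de $\sum_k g_k(z,u,t) v^k$ et de $L_1(v,x,y)$ est alors holonome par cl\^oture, et il co\"incide avec $\phi_1(z,u,t,x,y)$. Les s\'eries $\phi_2$ et $\alpha$ se traitent de mani\`ere identique via les noyaux alg\'ebriques analogues
$$L_2(v,x,y) = \sum_{i,j \geq 0}\frac{(2i+j)!}{(i!)^2 j!} x^i y^j v^{2i+j+1}, \qquad L_\alpha(v,x,y) = \sum_{i,j \geq 0} \frac{(2i+j)!}{i!(i+1)!j!} x^{i+1} y^j v^{2i+j}.$$

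Pour $\beta$, la structure est bilin\'eaire en $h$ et $g$ avec une convolution suppl\'ementaire sur l'indice $q$. On l'exprime comme un produit de Hadamard multiple entre $\sum_k h_k(z,u,t) v_1^k$, $\sum_k g_k(z,u,t) v_2^k$ et un noyau alg\'ebrique appropri\'e $L_\beta(v_1, v_2, x, y)$ qui encode les coefficients trinomiaux et r\'ealise la convolution en $q$ via une diagonale auxiliaire. L'holonomie de $\beta$ r\'esulte derechef des m\^emes propri\'et\'es de cl\^oture.

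L'obstacle principal est de v\'erifier proprement l'alg\'ebricit\'e des noyaux trinomiaux $L_1, L_2, L_\alpha, L_\beta$ ; dans chaque cas, elle d\'ecoule d'une \'equation polynomiale du second degr\'e associ\'ee \`a une interpr\'etation combinatoire classique (chemins de Motzkin, arbres enracin\'es ou leurs variantes). Une subtilit\'e secondaire tient \`a la justification rigoureuse du produit de Hadamard en une seule variable : elle passe par une prise de diagonale du produit $(\sum_k g_k v^k) \cdot L_1(w,x,y)$ selon $(v,w)$, suivie d'une sp\'ecialisation \`a la variable diagonale \'egale \`a $1$, op\'eration l\'egitime dans notre cadre formel car chaque coefficient en $x^i y^j$ de $\phi_1$ se r\'eduit \`a une unique contribution (celle correspondant \`a $n = 2i+j$).
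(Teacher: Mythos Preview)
Votre strat\'egie diff\`ere de celle de l'article. Celui-ci \'etablit d'abord un lemme de r\'eindexation --- si $\sum a_{n_1,\ldots,n_m}\prod z_i^{n_i}$ est holonome alors $\sum a_{n_1,\ldots,2n_m+n_{m+1}}\prod z_i^{n_i}$ l'est aussi --- via un produit de Hadamard avec une s\'erie rationnelle explicite, puis conclut pour $\phi_1,\phi_2,\alpha$ par produit de Hadamard avec des s\'eries de factorielles. Pour $\beta$, la convolution en $q$ est d'abord r\'ealis\'ee par un produit de Cauchy ordinaire entre $\sum_q h_q w^q$ et $\sum_{m,q} g_{m+q+2}\, x^m w^q$, puis le m\^eme sch\'ema s'applique. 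Tout est ainsi ramen\'e aux seules cl\^otures standard de Lipshitz, sans faire appel \`a l'alg\'ebricit\'e d'un noyau ni \`a une sp\'ecialisation.

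Votre approche par noyaux alg\'ebriques est \'el\'egante mais comporte deux lacunes. Premi\`erement, le produit de Hadamard en $v$ de $\sum_k g_k v^k$ et de $L_1(v,x,y)$ donne une s\'erie en les \emph{six} variables $(z,u,t,x,y,v)$, et non $\phi_1$ ; pour obtenir $\phi_1$ vous sp\'ecialisez $v=1$. Or votre justification --- chaque coefficient en $x^iy^j$ provient d'une unique contribution --- \'etablit seulement que la s\'erie sp\'ecialis\'ee est bien d\'efinie comme s\'erie formelle, mais ne prouve pas que l'holonomie est pr\'eserv\'ee. La substitution d'une variable par une constante non nulle n'est pas couverte par les \'enonc\'es de cl\^oture de Lipshitz que vous citez (o\`u les substitutions alg\'ebriques doivent s'annuler \`a l'origine) ; le r\'esultat est vrai mais demande un argument s\'epar\'e que vous ne donnez pas. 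Deuxi\`emement, l'alg\'ebricit\'e des noyaux est affirm\'ee sans d\'emonstration ; pour $\tilde L_2, \tilde L_\alpha$ elle se v\'erifie effectivement (par exemple $\tilde L_2(a,b)=((1-b)^2-4a)^{-1/2}$), mais pour $\beta$ vous n'exhibez pas $L_\beta$ et l'invocation d'une diagonale auxiliaire est trop vague pour constituer une preuve.
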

\begin{proof} Nous allons utiliser les propriétés de clôture des séries holonomes montrées par Lipshitz \cite{lipshitz-diag,lipshitz-df}. Nous allons plus précisément utiliser :
\begin{itemize}
\item la clôture par produit de Cauchy,
\item la clôture par produit de Hadamard (c'est-à-dire le produit terme à terme des coefficients),
\item la clôture par décalage d'indice.
\end{itemize}
Nous utilisons également le fait que les séries formelles dont les coefficients sont des produits de factorielles (par exemple $\sum i!/(j!(j-1)!(k+2)!) x^i y^i z^j$) sont holonomes\footnote{En effet, $\exp(x) = \sum_{i \geq 0} x^i/i!$ est trivialement holonome, et le reste se déduit par décalage d'indice et produit de Hadamard.}.

Prouvons que si $\sum a_{n_1,\dots,n_m} \prod_{i=1}^m z^{n_i}$ est holonome, alors  $\sum a_{n_1,\dots,n_m+n_{m+1}} \prod_{i=1}^{m+1} z^{n_i}$  et $\sum a_{n_1,\dots,2n_m+n_{m+1}} \prod_{i=1}^{m+1} z^{n_i}$ sont holonomes. Soit $k \in \, \ens{1,2}$.
La série $$\sum_{n_1,\dots,n_m,n_{m+1},j} a_{n_1,\dots,n_{m-1},j}  \, w^j \,  \prod_{i=1}^{m+1} z_i^{n_i} $$ est holonome par hypothèse, tout comme $$\sum_{n_1,\dots,n_m,n_{m+1}} \pare{z_{m}\,w^k}^{n_m}  \, \pare{z_{m+1}\,w}^{n_{m+1}} \, \prod_{i=1}^{m-1} z_i^{n_i}$$
qui est rationnelle. En prenant le produit de Hadamard de ces deux séries par rapport aux variables $z_1,\dots,z_{m-1},w$, nous voyons que $$\sum_{n_1,\dots,n_m,n_{m+1}} a_{n_1,\dots,n_{m-1},kn_m+n_{m+1}} \, y^{kn_m+n_{m+1}} \prod_{i=1}^{m+1} z_i^{n_i},$$
est holonome, ce qui montre l'holonomie des deux séries.

En utilisant cela et la clôture par produit de Hadamard et par décalage d'indice, il est facile de montrer que $\phi_1$, $\phi_2$ et $\alpha$ sont holonomes. Par exemple, pour $\phi_1$, nous commençons par appliquer la propriété ci-dessus à la série (holonome) 
$$\sum_{k \geq 0} g_k(z,u,t) \, (k-1)! \, z^k$$
et ainsi montrer l'holonomie de 
$$\sum_{i \geq 0} \sum_{j \geq 0} g_{2i-j}(z,u,t) \,(2i+j-1)!\, x^i \,y^j.$$
Nous concluons en effectuant le produit de Hadamard de cette série et $$\sum_{i \geq 1} \sum_{j \geq 0} \frac 1 {i!(i-1)!j!}\, x^i \,y^j.$$

Passons à l'holonomie de $\beta$. Par clôture du produit de Cauchy,  $$\left(\sum_{q \geq 0} h_q(z,u,t) w^q \right) \times  \left(\sum_{m \geq 0}\sum_{q \geq 0} g_{m+q+2}(z,u,t) x^m w^q \right)$$ est holonome, ce qui peut se réécrire 
$$\sum_{m \geq 0}\sum_{p \geq 0} \left(\sum_{q=0}^p h_{p-q}(z,u,t) g_{m+q+2}(z,u,t) \right) x^m w^p.$$
Ainsi $$ \sum_{m \geq 0}\sum_{p \geq 0} p! m!  \sum_{q=0}^p h_{p-q}(z,u,t) g_{m+q+2}(z,u,t) x^m w^p $$ est une série holonome, donc d'après l'implication que nous avons prouvée au début du lemme,  $$(2i+j)! (2k+\ell)! \left(\sum_{q=0}^{2i+j} h_{2i+j-q}(z,u,t) \, g_{2k+\ell+q+2}(z,u,t) \right)$$ est également une suite holonome. Par suite, $\beta$ est bien holonome.
\end{proof}

La différentiation des deux relations de \eqref{firs} par rapport à $t$ donne un système linéaire en $\pd R t$ et $\pd S t$. Le déterminant de la matrice associée, qui est une série en $z$, $u$ et $t$ à coef\-ficients dans $\Q $, est non nul car son coefficient constant vaut $1$. Par conséquent, $\pd R t$ et $\pd S t$ s'expriment de manière rationnelle en termes de $z$, $u$ et des dérivées partielles $\pd {\phi_\ell} t$, $\pd {\phi_\ell} x$ et $\pd {\phi_\ell} y$, évaluées en $(z,u,t,R,S)$, avec $\ell \in \ens{1,2}$.

Notons $\K$ le corps $\Q(z,u,t)$. Grâce à la remarque précédente, il est facile de prouver par récurrence (en partant de la relation $C = \alpha(z,u,t,R,S)-u\,\beta(z,u,t,R,S)$) que pour tout $k \geq 0$, il existe une expression rationnelle de $\pdd k C t$ en fonction de 
$$\ens{\pdtrip {i+j+k} {\phi_\ell} {t^i \partial x^j \partial y^k} (z,u,t,R,S), \pdtrip {i+j+k} \alpha {t^i \partial x^j \partial y^k} (z,u,t,R,S), \pdtrip {i+j+k} \beta {t^i \partial x^j \partial y^k} (z,u,t,R,S)}_{i, j, k \geq 0, \ell \in \, \ens{1,2}}$$
à coefficients dans $\K$. 
Or, d'après le lemme précédent, les séries $\phi_1$, $\phi_2$, $\alpha$ et $\beta$ sont holonomes. Donc l'ensemble ci-dessus engendre un espace vectoriel de dimension finie sur $\K(R,S)$ notée $d$. En d'autres termes, il existe $d$ fonctions $f_1, f_2, \dots, f_d$ dans cet ensemble et des fonctions rationnelles $A_k \in \, \K(x,y,f_1,\dots,f_d)$ (indexées sur $\N$), telles que $\pdd k C t  = A_k(R,S,f_1,\dots,f_d)$ pour tout $k \geq 1$.

Mais comme le degré de transcendance (pour cette notion voir par exemple \cite[p. 254]{lang}) de $\K(R,S,f_1,\dots,f_d)$  est (au plus) $d+2$ sur $\K$, les $d+3$ séries $\pdd k C t$, avec $k \in \, \ens{0,\dots,d+2}$, sont algébriquement liées. En d'autre termes, $C$ est différentiellement algébrique.
\end{proof}

Comme nous le mentionnons, nous pouvons appliquer ce théorème aux cartes forestières.

\begin{cor} Supposons que la série $D(x) = \sum_k d_k x^{k-1}$ à coefficients dans $\Q$ soit algébrique. Alors la série génératrice $F(z,u,t)$ des cartes forestières avec un poids $d_k$ pour chaque sommet de degré $k$ est différentiellement algébrique par rapport à $z$, par rapport à $u$ et par rapport à $t$. 

Il en est de même pour la série génératrice $G(z,u,t)$ des cartes forestières enracinées sur une feuille et la série génératrice $H(z,u,t)$ des cartes forestières  dans lesquelles l'arête racine n'appartient pas à la forêt (voir propositions \ref{GRS} et \ref{HRS} p. \pageref{GRS} et \pageref{HRS}).
\end{cor}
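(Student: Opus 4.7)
L'argument est une adaptation de la preuve du th\'eor\`eme \ref{t:dalg} o\`u l'on remplace le syst\`eme \eqref{firs} par celui du th\'eor\`eme \ref{central}. La premi\`ere \'etape consiste \`a montrer que la s\'erie bivari\'ee $\mathcal T(t,y)=\sum_{\ell\geq 1}T_\ell(t)\,y^{\ell-1}$ est alg\'ebrique en $t$ et $y$. D'apr\`es \eqref{deftl}, elle satisfait $\mathcal T(t,y)=D\bigl(t\,\mathcal T(t,y)+y\bigr)$, o\`u $D(x)=\sum_k d_k\,x^{k-1}$. Si $P(X,x)=0$ annule $D$, alors
\[
P\bigl(\mathcal T(t,y),\,t\,\mathcal T(t,y)+y\bigr)=0
\]
est une relation polynomiale non triviale en $\mathcal T$ (le coefficient de son mon\^ome dominant en $\mathcal T$ est un mon\^ome non nul en $t$). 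Donc $\mathcal T$ est alg\'ebrique, et en particulier holonome en $t,y$.

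La deuxi\`eme \'etape \'etablit l'holonomie des s\'eries $\phi_1,\phi_2,\theta$ du th\'eor\`eme \ref{central}. On observe que $\sum_k T_k(t)\,v^k=v\,\mathcal T(t,v)$ est alg\'ebrique, donc holonome. Pour les coefficients $T^c_k=\tfrac{2t}{k}T'_k+T_k$ apparaissant dans $\theta$, on \'ecrit
\[
\sum_k T^c_k(t)\,v^k=v\,\mathcal T(t,v)+2t\,\mathcal J(t,v),
\]
o\`u $\mathcal J(t,v)=\sum_k\tfrac{T'_k(t)}{k}v^k$ est la primitive en $v$ de $\pd{\mathcal T}{t}$ s'annulant en $v=0$. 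Comme la classe des s\'eries holonomes est stable par d\'erivation et par int\'egration \cite{lipshitz-df}, $\mathcal J$ est holonome. Les s\'eries $\phi_1,\phi_2,\theta$ se d\'eduisent alors de $v\mathcal T$ et de $v\mathcal T^c$ par produits de Hadamard avec des s\'eries factorielles et d\'ecalages d'indices, op\'erations qui pr\'eservent l'holonomie (cf.\ le lemme interne de la preuve du th\'eor\`eme \ref{t:dalg}) ; elles sont donc holonomes en $t,x,y$.

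Il reste \`a reprendre l'argument final de la preuve du th\'eor\`eme \ref{t:dalg}. La d\'erivation de \eqref{forestiere}--\eqref{forteresse} par rapport \`a $t$ fournit $\pd R t$ et $\pd S t$ comme fonctions rationnelles de $R,S$ et des d\'eriv\'ees premi\`eres de $\phi_1,\phi_2$, le d\'eterminant du syst\`eme lin\'eaire associ\'e \'etant une s\'erie de terme constant $1$. Par r\'ecurrence, chaque $\pdd k F t$ s'\'ecrit rationnellement en $R,S$ et un nombre fini de d\'eriv\'ees partielles de $\phi_1,\phi_2,\theta$ \'evalu\'ees en $(t,R,S)$. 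L'holonomie de ces derni\`eres implique que l'ensemble de ces d\'eriv\'ees engendre un espace vectoriel de dimension finie sur $\K(R,S)$, avec $\K=\Q(z,u,t)$ ; un argument de degr\'e de transcendance permet alors de conclure que les $\pdd k F t$ sont alg\'ebriquement li\'ees, c'est-\`a-dire que $F$ est diff\'erentiellement alg\'ebrique en $t$. Le m\^eme raisonnement s'applique aux variables $u$ et $z$, et les s\'eries $G$ et $H$ se traitent de la m\^eme mani\`ere gr\^ace aux propositions \ref{GRS} et \ref{HRS}, qui les expriment comme combinaisons rationnelles de $R,S$ et de sommes structurellement identiques \`a $\phi_1,\phi_2$. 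Le point technique le plus d\'elicat sera la v\'erification de l'holonomie de $\mathcal J$, qui repose crucialement sur la cl\^oture de la classe holonome par int\'egration.
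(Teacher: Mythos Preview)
Your treatment of the holonomy of $\mathcal T$, $\mathcal J$ and $\phi_1,\phi_2,\theta$ is correct and mirrors the paper's. The gap is in the final step for $F$ with respect to $t$ and $u$. The system of Theorem~\ref{central} only gives $\pd F z=\theta(t,R,S)$, not $F$ itself. Differentiating \eqref{forestiere}--\eqref{forteresse} in $t$ and running the induction yields rational expressions for $\partial_t^k(\partial_z F)$, not for $\partial_t^k F$: your assertion ``chaque $\pdd k F t$ s'\'ecrit rationnellement en $R,S$\dots'' is therefore unjustified. What your argument actually establishes is that $\partial_z F$ is differentially algebraic in $t$ (and in $u$), which does not obviously transfer to $F$. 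For the variable $z$ there is no issue, since an algebraic differential equation in $z$ for $\partial_z F$ is already one for $F$; but for $t$ and $u$ integration in $z$ does not preserve the property (cf.\ the discussion in Subsection~\ref{s:ed2?}, where integrating produces $\int\phi^2/x\,dx$, a new transcendental).

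The paper circumvents this by applying Theorem~\ref{t:dalg} directly rather than reproving it on a different system. That theorem rests on the identity~\eqref{Calphabeta} from Theorem~\ref{vintegrale}, which expresses the series $C$ itself (not $\partial_z C$) as $\alpha(z,u,t,R,S)-u\beta(z,u,t,R,S)$. Via Theorem~\ref{tfemme}, the series $F$ is of this form with $g_k=T_k(t)$ and $h_k=\tfrac{2t}{k}T'_k(t)+T_k(t)$; one then only needs the holonomy of $\sum_k g_k v^k$ and $\sum_k h_k v^k$, which is precisely what your first two steps supply. For $G$ and $H$ your route does work, since Propositions~\ref{GRS} and~\ref{HRS} express these series themselves (not merely their $z$-derivatives) in terms of $R,S$ and holonomic sums.
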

\begin{proof}La proposition \ref{femme} montre que $F(z,u,t)$ est une spécialisation de la précédente série $C(z,u,t)$ avec $g_k(z,u,t) = T_k(t)$ et $h_k(z,u,t) = 2\,t\,T'_k(t)/k + T_k(t)$, où $T_k$ est la série définie par \eqref{deftl}. 

D'après le théorème précédent, il suffit  de prouver que  $\mathcal T(t,v) = \sum_{k \geq 1}  T_k(t) $ et $\mathcal T^c(t,v) = \sum_{k \geq 1}  \pare{2\,t\,T'_k(t)/k + T_k(t)}  v^{k}$  sont holonomes. Or d'après \eqref{deftl}, la série  $\mathcal T(t,v)$ satisfait
$$\mathcal T = D\pare{t \mathcal T + v}.$$
Par hypothèse, $D$ est algébrique : il existe donc un polynôme $P(x,y)$ à coefficients dans $\Q$ tel que $P(x,D(x)) = 0$. Ceci implique que
$$P(t \mathcal T + v,\mathcal T) = P\pare{t \mathcal T + v,D(t \mathcal T + v)} = 0.$$
Par conséquent, $\mathcal T$ est aussi algébrique et donc holonome. Quant à $\mathcal T^c$, les propriétés classiques de clôture sur l'holonomie permettent de conclure.

Pour la série  $H(z,u,t)$, il suffit d'utiliser le lemme \ref{HM} p. \pageref{HM}. On voit alors que $H$ est une spécialisation de $C(z,u,t)$ avec $g_k(z,u,t)=h_k(z,u,t)=T_k(t)$. Nous venons de prouver que $\sum_{k \geq 1}  T_k(t) v^k$ était holonome : la série $H$ est différentiellement algébrique en $z$, en $u$ et en $t$.

L'algébricité différentielle de la série  $G(z,u,t)$  est moins directe. Nous utilisons le lemme \ref{GM} p. \pageref{GM} : la série $G(z,u,t)$ est, à un facteur $\sqrt t (1 + 1/u)$ près, la série génératrice des cartes enracinées sur une demi-arête avec un poids $z$ par face, un poids $t$ par arête et un poids $u T_k(t)$ par sommet de degré $k$. La suppression de la demi-arête racine est réversible, pour peu qu'on réenracine sur le coin où était accrochée la demi-arête racine. On voit alors que la série génératrice des cartes obtenues par suppression de la demi-arête racine est $C(z,u,t)$ avec $g_k(z,u,t) = T_k(t)$ et $h_k(z,u,t) = u T_{k+1}(t)$. On prouve de cette manière que $G$ est différentiellement algébrique en $z$, en $u$ et en $t$.
\end{proof}

\noindent \textbf{Remarque. } Supposons que la suite des poids $(d_k)$ soit à valeurs dans $\ens{0,1}$ (cela revient à autoriser seulement certains degrés, sans pondération). D'après un théorème du à Carlson \cite{carlson}, si la série $D(x) = \sum_{k} d_k x^{k-1}$ est algébrique, alors elle est rationnelle. Dans ce cas-là, l'hypothèse d'algébricité du précédent corollaire est inutilement faible, nous pouvons la remplacer par une hypothèse de rationalité.

\section{Calcul explicite d'équations différentielles}

Nous appliquons  la méthode utilisée dans la section précédente pour construire des équations différentielles explicites dans différentes classes de cartes forestières et ainsi montrer que l'approche n'est pas seulement théorique.

\subsection{Le cas tétravalent}
\label{ss:tetraed}

Nous cherchons à expliciter les équations différentielles selon la variable $z$ comptant les faces dans le cas des cartes forestières tétravalentes\footnote{Rappel : $d_4 = 1$ et $d_k = 0$ pour $k \neq 4$, la variable des arêtes est alors redondante avec celle des faces.}. Nous adopterons la même approche que celle utilisée dans la preuve du théorème \ref{t:dalg}, à part que le raisonnement se basera sur l'équation \eqref{forfait} p.~\pageref{forfait} plutôt que sur l'équation \eqref{Calphabeta}. De manière générale, la fonction $\beta$ qui intervient dans cette dernière équation est beaucoup trop complexe\footnote{Par exemple, le simple calcul d'une équation différentielle linéaire pour $\beta$ semble un véritable tour de force !}  par rapport à $\theta$, si bien que nous préférons raisonner sur l'équation \eqref{forfait} quand nous travaillons avec la variable $z$. Cela revient à trouver des équations différentielles pour $\pd F z$ plutôt que $F$ elle-même.

Rappelons que dans le cas tétravalent (voir par exemple le tableau récapitulatif de la figure \ref{recapitulatif}), le système d'équations satisfait par la série génératrice $F$ des cartes forestières  tétravalentes est
$$R = z + u \phi(R), \quad  \pd F z = \theta(R),$$
où
\begin{equation}
 \phi(x) = \sum_{i \geq 2}
 \frac{(3i-3)!}{(i-1)!^2i!} x^i
  \quad \hbox{et} \quad 
 \theta(x) = 4 \sum_{i \geq 2} \frac{(3i-3)!}{(i-2)!i!^2} x^i.
\label{fiq}
\end{equation}

Les séries $\phi$, $\theta$ et leurs dérivées appartiennent à un espace vectoriel de dimension $3$ sur $\Q(x)$ engendré par $1$, $\phi$ et $\phi'$ (par exemple). Plus précisément,
$$
x (27x-1 ) \phi'' ( x )+6\phi( x )  +6x = 0,
$$ 
\begin{equation}
 3\theta(x)= 2(27x-1 ) \phi'(x) -42\phi(x) +  12x.
 \label{thetaphi4}
\end{equation}
Comme dans la preuve du théorème \ref{t:dalg}, nous exprimons $\pd R z$, puis $\pd F z$ et toutes ses dérivées comme fonctions rationnelles de $u$, $R$, $\phi(R)$ et $\phi'(R)$. Mais comme $R = z + u \phi(R)$, ces fonctions sont également rationnelles en $u$, $z$, $R$ et $\phi'(R)$. Une fois ces expressions rationnelles calculées explicitement pour $\pd F z$, $\pdd 2 F z$ et $\pdd 3 F z$, nous éliminons  $R$ et $\phi'(R)$ de ces trois équations en utilisant des résultants. Nous obtenons ainsi une équation différentielle d'ordre $2$ et de degré $7$ satisfaite par $\pd F z$ par rapport à $z$. Le détail de cette équation
\begin{equation} \label{edF4}
  \substack{9\,{F'}^{2}{F''}^{5}{u}^{6}+36\,{F'}^{2}{F''}^{3
}F'''\,{u}^{5}z+144\,{F'}^{2}{F''}^{4}{u}^{5}-12\,
 \left( 21\,z-1 \right) F'\,{F''}^{5}{u}^{5}+432\,{F'
}^{2}{F''}^{2}F'''\,{u}^{4}z \\ 
-48\, \left( 24\,z-1 \right)
F'\,{F''}^{3}F'''\,{u}^{4}z
 +864\,{F'}^{2}{F''
}^{3}{u}^{4}-96\, \left( 27\,z-2 \right) F'\,{F''}^{4}{u}^{
4}+4\, \left( 27\,z-1 \right)  \left( 15\,z-1 \right) {F''}^{5}{u
}^{4} \\ 
+1728\,{F'}^{2}F''\,F'''\,{u}^{3}z - 288\, \left( 21
\,z-2 \right) F'\,{F''}^{2}F'''\,{u}^{3}z
+10368\,F'\,{F'''}^{2}{u}^{2}{z}^{3}+16\, \left( 27\,z-1 \right)  \left( 
21\,z-1 \right) {F''}^{3}F'''\,{u}^{3}z
\\
+2304\,{F'}^{2}{
F''}^{2}{u}^{3} -288\, \left( 31\,z-4 \right) F'\,{F''}^{3}{u}^{3}
-64\, \left( 6\,uz-162\,{z}^{2}+33\,z-1 \right) {F''}^
{4}{u}^{3}+2304\,{F'}^{2}F'''\,{u}^{2}z
\\ -2304\, \left( 6\,z-1
 \right) F' \,F''\,F'''\,{u}^{2}z 
-192\, \left( 8\,uz-54\,{z}^{2}+29\,z-1 \right) {F''}^{2}F'''\,{u}^{2}z-768\,
 \left( 2\,u+189\,z-7 \right) {F'''}^{2}u{z}^{3}
 \\+2304\,{F'}^
{2}F''\,{u}^{2}-3072\, \left( 3\,z-1 \right) F'\,{F''}
^{2}{u}^{2}
-192\, \left( 24\,uz-27\,{z}^{2}+55\,z-2 \right) {F''}
^{3}{u}^{2}
-1536\, \left( 21\,z-2 \right) F'\,F'''\,uz
\\ -768\,
 \left( 12\,uz+81\,{z}^{2}+24\,z-1 \right) F''\,F'''\,uz+1536
\, \left( 9\,z+2 \right) F'\,F''\,u
-512\, \left( 39\,uz+81
\,{z}^{2}+51\,z-2 \right) {F''}^{2}u \\ +36864\,F'\,z-1024\,
 \left( 12\,uz-162\,{z}^{2}+33\,z-1 \right) F'''\,z-1024\, \left( 
36\,uz+27\,z-1 \right) F''-24576\,z
=0}
\end{equation}
nous convaincra que l'intérêt de la démarche réside plus dans la méthode employée que l'équation elle-même. Nous verrons que $F$ ne satisfait pas d'équation différentielle d'ordre $2$ et que par conséquent \eqref{edF4} constitue l'équation différentielle d'ordre minimal pour $F$ (à constante multiplicative près). Pour plus de détails, nous nous réfèrerons à la fin de cette sous-section.

Appliquons maintenant la même méthode à la série $H$ de la proposition \ref{HRS} :
$$
H(z,u)=\frac z u R-\frac {z^2} u-\Lambda(R)
$$
avec
$$
\Lambda(x)= \sum_{i\ge 3} \frac{(3i-6)!}{(i-3)!(i-2)!i!}x^i.
$$
Cette dernière série satisfait
$$
30\Lambda(x)=x(27x-1) \phi'(x)+(1-24x)\phi(x)+3x^2.
$$
Nous trouvons alors une équation différentielle pour $H$ d'ordre $2$ et de degré $3$ par rapport à $z$ :
\begin{multline*}
3\, ( u+1 ) {u}^{2}{H'}^{2}H'' +  12\,{u}^{2}zH' H''  
+6\, ( u-8 ) u{H'}^{2}
+240\,H  \\
+4\, ( 6\,uz-54\,z+1 ) H'
  +4\, (3\,u{z}^{2}+30\,uH  +27\,{z}^{2}-z )H''+24\,{z}^{2}=0 .
\end{multline*}
La simplicité relative de cette équation différentielle n'est pas surprenante au vu de l'équation \eqref{eulHR}, à savoir $\pd H z = 2(R-z)/u$.

\subsection*{Une équation différentielle d'ordre 2 sur $\boldsymbol F$ ?}
\label{s:ed2?}

L'équation différentielle obtenue précédemment pour la série génératrice $F$ des cartes forestières tétravalentes est en réalité une équation différentielle d'ordre $2$ portant sur $\pd F z$. Nous pouvons naturellement nous demander si $F$ satisfait elle-même une équation différentielle d'ordre $2$.

Utilisons pour cela le théorème \ref{vintegrale} p. \pageref{vintegrale} couplé avec le théorème \ref{tfemme} p. \pageref{tfemme}. Dans le cas tétravalent, nous obtenons le système fonctionnel
$$F = \psi(R), \quad R = z + u \phi(R) $$ où $\phi$ est définie par \eqref{fiq} et $\psi$ par
$$\psi(x) = 4 \sum_{i \geq 2} \frac{(3i-3)!}{(i-2)!i!^2} \, \frac{x^{i+1}} {i+1} - u \sum_{\substack{ i \geq 2 \\ j \geq 1}} \frac{(3i-3)!}{(i-2)!i!^2} \, \frac{(3j)!}{j!^3} \,  \frac{x^{i+j+1}}{i+j+1}.$$
(La dernière somme a été simplifiée : un calcul rapide nous permet de passer d'une somme triple à une somme double.)

Supposons que $F$ satisfait une équation différentielle d'ordre $2$. Il existe alors un polynôme non nul $P$ tel que
$$P\pare{F,\pd F z,\pdd 2 F z,z,u} = 0,$$
soit de manière équivalente
$$P\pare{\psi(R),\theta(R),R'\theta'(R),z,u} = 0,$$
avec $\theta$ donnée par \eqref{fiq}. 
Mais les séries $\theta$, $\phi$ et $\phi'$ sont liées par la relation \eqref{thetaphi4} et $R = z + u \phi(R)$. Il existe donc un polynôme $Q$ non nul tel que
$$Q\pare{\psi(R),\phi(R),\phi'(R),R,u} = 0.$$
Comme $R(z,u) = z + O(z^2)$, il existe une série formelle $Z(r,u)$ telle que $R(Z(r,u),u) = r.$ En subtituant $z$ par $Z$ dans l'égalité du dessus, nous obtenons 
$$Q\pare{\psi(r),\phi(r),\phi'(r),r,u} = 0.$$
Par conséquent se dessinent deux possibilités : soit la série $\psi(x)$ est algébrique sur $\Q(x,u,\phi(x),\phi'(x))$, soit $ \phi$ et $\phi'$ sont algébriquement liées sur $\Q(x)$. Examinons séparément chacune de ces deux possibilités.

\noindent 1. Est-ce que $\psi(x)$ est algébrique sur $\Q(x,u,\phi(x),\phi'(x))$ ? Nous pouvons constater par le calcul que
$$ 60 \sum_{i \geq 2} \frac{(3i-3)!}{(i-2)!i!^2} \, \frac{x^{i+1}} {i+1} = 54 x^2 - 2(1+81x)\phi(x) + 8x(27x-1)\phi'(x)$$
et que
$$ 3 \sum_{\substack{ i \geq 2 \\ j \geq 1}} \frac{(3i-3)!}{(i-2)!i!^2} \frac{(3j)!}{j!^3}   \frac{x^{i+j+1}}{i+j+1} = 12 x \phi(x) - 2 (1-27x)\phi(x)\phi'(x) - 48 \phi^2(x) + 12 \int \frac{\phi^2(x)} x dx.
$$
Au vu de la définition de $\psi$, l'algébricité de $\psi$ sur $\Q(x,u,\phi(x),\phi'(x))$ est donc équivalente à l'algébricité de $\int \phi^2(x)/x dx$ sur $\Q(x,\phi(x),\phi'(x))$.
 Par suite, nous nous ramenons à la série hypergéométrique $$f(x) = \, {_2F _1} \pare{\frac 1 3 , \frac 2 3 ; 2 ;  x }$$
grâce à la relation 
$\phi(x) = x \pare{ f(x) - 1}$,
où $_2 F_1 (a,b;c;x)$ désigne la série hypergéométrique standard de paramètres $a$, $b$, $c$. Comme 
$$20 \int x f(x) dx = 9 x^2f(x) + 9x^2(1-x) f'(x),$$
l'algébricité de $\int \phi^2(x)/x dx$ sur $\Q(x,\phi(x),\phi'(x))$ est équivalente à celle de $g = \int  x f^2(x) dx$ sur $\Q(x,f(x),f'(x))$.

La suite du raisonnement utilise la théorie de Galois différentielle. Elle nous a été soufflée par Alin Bostan, Bruno Salvy et Michael Singer, que nous remercions vivement pour leur aide.  Il s'agit d'un raisonnement dont je ne maîtrise pas toutes les subtilités, c'est pourquoi je vais juste mentionner l'argument principal. Il se base sur l'équation différentielle  d'ordre $3$ satisfaite par $g'$
$$ -4g' + 8x(x-1)g'' + 27x(x-1)^3 g^{(3)} + 9 x^2 (x-1)^2 g^{(4)}(x) =0$$ 
et un résultat de Bertrand \cite{bertrand}, qui implique que si la série $g$ est algébrique sur $\Q(x,f(x),f'(x))$, alors $g$ est combinaison linéaire de $1$, de $f$ et ses dérivées. Or la commande \textit{maple} \texttt{ode\_int\_y}, appliquée à l'équation différentielle du dessus, permet de tester s'il existe une telle combinaison linéaire. La réponse étant négative, nous  en déduisons que $\psi$ n'est pas algébrique sur $\Q(x,u,\phi(x),\phi'(x))$.

\noindent 2. Est-ce que $\phi$ et $\phi'$ sont algébriquement liées sur  $\Q(x)$ ?  C'est impossible car $f'$ diverge en $1$ comme $\ln(1-x)$ et $f(1) = \sqrt 2/(12 \pi)$ est fini et transcendant (voir \cite[\'Equation (15.3.11)]{AS}).

En conclusion, il n'existe pas d'équation différentielle d'ordre $2$ satisfaite par la série génératrice $F(z,u)$ des cartes forestières tétravalentes.

\subsection{Le cas cubique}

Nous raisonnons ici sur le système fonctionnel vérifié par $\pd F z$ dans le cas cubique  (voir par exemple la figure \ref{recapitulatif}) :
$$\pd F z = \theta(R,S), \quad R = z + u \phi_1(R,S), \quad S = u \phi_2(R,S)$$ 
avec
$$
 \theta(x,y) = 3 \sum_{i \geq 0} \sum_{\substack{j \geq 0 \\ 2i+j \geq 3 }} {\frac { \left( 4\,i+2\,j-4 \right) !}{ \left( 2i+j-3 \right) !\,
  i!^{2}j!}}
 x^iy^j, 
$$
$$
 \phi_1(x,y) = \sum_{i \geq 1}\sum_{\substack{j \geq 0 \\ 2i+j \geq 3 }} \frac{(4i+2j-4)!}{(2i+j-2)!\,(i-1)!\,i!\,j!}x^iy^j,
$$
$$
 \phi_2(x,y) = \sum_{i \geq 0}\sum_{\substack{j \geq 0 \\ 2i+j \geq 2 }} \frac{(4i+2j-2)!} {(2i+j-1)!i!^2j!}x^iy^j.
$$
La méthode reste la même que précédemment, bien que nous ayons  affaire maintenant à trois séries bivariées. Commençons par observer que
$$
\theta(x,y)=-2\phi_1(x,y)+(1-y)\phi_2(x,y)-2x-y^2.
$$
Cette équation, spécialisée en $(x,y) = (R,S)$, donne
\begin{equation}
\label{F-cubic}
F'=2 \frac z u +\frac S u - \pare{1+\frac 1 u}(2R+S^2).
\end{equation} 
Puis, rappelons les identités \eqref{phi1cubique}-\eqref{psicubique} p. \pageref{phi1cubique} exprimant les séries $\phi_1$ et $\phi_2$ en termes des deux séries monovariées $\psi_1$ et $\psi_2$ définies par 
$$\psi_1(z) = \sum_{i \geq 1} \frac{(4i-4)!}{i!(i-1)!(2i-2)!}z^i \quad\textrm{et}\quad \psi_2(z) = \sum_{i \geq 1} \frac{(4i-2)!}{i!^2(2i-1)!}z^i.
$$
Réécrit en fonction de ces deux séries, notre système devient 
\begin{eqnarray}
 \label{phi1enpsi}
R &=&z+u \left( 1-4\,S \right) ^{3/2}\,{\Psi_1} \left(
\frac R {(1-4S)^2}\right) -uR,
\\
 \label{phi2enpsi}
S&=&u \sqrt {1-4S}\,{\Psi_2} \left(
\frac R {(1-4S)^2}
 \right) 
+ \frac u 4\left({1-\sqrt{1-4S}}\right)^2.
\end{eqnarray}

Les séries $\psi_1$, $\psi_2$ et leurs dérivées appartiennent à un espace vectoriel de dimension $3$ sur $\Q(x)$ engendré par $1$, $\psi_1$ et $\psi_2$. Nous pouvons vérifier les identités suivantes :
\begin{equation} \label{relpsicubique}
(1-64z) \psi_1'(z)+ 4\psi_1(z)+ 2\psi_2(z)=1 ,
 \quad \quad 
z (1-64  z) \psi_2'(z)+ 6\psi_1(z)+ 16  z\psi_2(z)=8  z.
\end{equation}
En dérivant \eqref{phi1enpsi} et \eqref{phi2enpsi}, nous exprimons $\pd R z$ et $\pd S z$ comme fonctions rationnelles de $u, R, S, \psi_1(T)$ et $\psi_2(T)$, où $T = R/(1-4S)^2$ (résolution d'un système $2 \times 2$). Mais $\psi_1(T)$ et $\psi_2(T)$ peuvent s'exprimer en fonction de $z, u, R$ et $\sqrt{1-4S}$, toujours grâce à \eqref{phi1enpsi} et \eqref{phi2enpsi}. Donc $\pd R z$ et $\pd S z$ peuvent s'exprimer elles aussi en termes de $z, u, R$ et $\sqrt{1-4S}$.  En réalité, ces expressions ne contiennent aucune racine carrée :
\begin{eqnarray}
  \pd R z &=& \frac{R(48z-1+16(u+1)R+2(3+u)S-8(u+1)S^2)}{D} ,\nonumber
\\
\pd S z &=&\frac{2(3z+(u-3)R-12zS+4(u+1)RS)}D,\nonumber
\end{eqnarray}
où
$$
D=36z^2+(24z-1+24uz)R+4(u+1)RS-4(u+1)^2RS^2+4(u+1)^2R^2.
 $$
 En couplant ces deux équations avec \eqref{F-cubic}, nous pouvons exprimer $\pd F z$, $\pdd 2 F z$ et $\pdd 3 F z$ en termes de $u$, $z$, $R$ et $S$. L'élimination de $R$ et $S$ permet alors d'obtenir une énorme équation différentielle satisfaite par $\pd F z$, de degré $17$ mais d'ordre $2$.

Quant à la série génératrice $G(z,u)$ des cartes  cubiques forestières enracinées sur une feuille\footnote{Rappel : la feuille racine n'est pas pondérée. Une carte cubique enracinée sur une feuille est une carte avec uniquement des sommets de degré $3$, sauf un sommet de degré $1$, sur lequel la carte est naturellement enracinée. On dit alors que la carte est quasi-cubique.} (voir proposition \ref{GRS}), il suffit de remplacer \eqref{F-cubic} par 
$$ 10 \, G= \pare{1+\frac 1 u} \left(z-R+6\,z\,S\,+2\,(u+1)\,R\,S\right),$$ 
et nous obtenons une équation différentielle d'ordre $2$ et de degré $5$ portant sur $G$. Cette dernière devient un peu plus simple quand nous réécrivons $G$ sous la forme $G=(W+z/u)/2$ :
\begin{multline*}
\left( 3\,{u}^{4}z {{W}'} ^{4}-{u}^{3} ( 5\,W
      u-uz+z )  {{W}'} ^{3}+4\, ( u+1 ) (
      5\,W u-uz+z ) ^{2} \right) {W}''\\
 -48\,{u}^{2}z ( u+1)  {{W}'} ^{3}+8\,u ( u+1 )  ( 5\,W u-uz+z )  {{W}'} ^{2}+4\, ( u^2-1) ( 5\,W u-uz+z ){{W}'} = 0.
\end{multline*}
Comme le soulignent les travaux d'Olivier Bernardi et Mireille Bousquet-Mélou  \cite{bernardi-mbm-de}, l'introduction de la série $W$ est naturelle dans le cadre du modèle de Potts. D'ailleurs, la précédente équation différentielle a été d'abord obtenue dans leur papier.

\subsection{Les autres classes}

Malheureusement pour les autres classes de cartes, les équations sont trop grosses pour être calculées explicitement. En effet, la taille de ces polynômes croît exponentiellement chaque fois que nous éliminons une variable parmi plusieurs polynômes. Si nous devons éliminer plus de $3$ variables (les cas précédents ne faisaient intervenir que $2$ variables intermédiaires), nous nous retrouvons très rapidement avec des équations trop grandes pour être manipulées. 

Nous pouvons toutefois majorer très simplement l'ordre des équations différentielles. Plaçons-nous dans le cas eulérien afin d'expliquer pourquoi (il s'agit d'appliquer la même méthode que précédemment). Le système fonctionnel est  
$$\pd F z = \theta(t,R),\quad R = tz + tu\phi(t,R).$$
Supposons que l'espace vectoriel engendré par  
$$E = \ens{ \pdd i \phi x (t,x), \pdd i \theta x  (t,x)}_{i\geq 0}$$
est de dimension $d$ sur $\Q(x,t)$, et que $1$ appartient à l'espace vectoriel engendré par $E$. Alors nous pouvons exprimer $\pd F z$ et ses dérivées de manière rationnelle sur $\Q(z,u,t)$ en fonction de $R$, $1$ et $d-1$ éléments de $E$ évalués en $(t,R)$. Mais comme $R =  tz + tu \phi(t,R)$, la série $R$ appartient également à l'espace vectoriel engendré par $E$ avec $x=R$, de sorte que $\pd F z$ et ses dérivées appartiennent à l'algèbre engendrée par $1$ et $d-1$ éléments de $E$. Nous pouvons donc éliminer ces $d-1$ éléments  et  ainsi obtenir une équation différentielle portant sur $\pd F z$ d'ordre au plus $d-1$.  

Le logiciel \textit{maple} permet de découvrir expérimentalement des relations entre les dérivées de $\phi$ et $\theta$. Nous utilisons notamment la commande \texttt{pade2} du paquet \texttt{gfun} programmé par Bruno Salvy et Paul Zimmerman \cite{gfun}. Dans le cas eulérien non pondéré, nous constatons que le précédent ensemble $E$ est de dimension $5$. Par conséquent, il existe une équation différentielle satisfaite par $\pd F z$ d'ordre au plus $4$. Nous observons le même phénomène pour les cartes forestières $4$-eulériennes non pondérées et pour les cartes forestières $6$-régulières (le degré de chaque sommet est $6$.)

De manière empirique, il semblerait que pour les cartes forestières $2q$-régulières, l'ensemble $E$ est de dimension $2q-1$. Si cela se vérifie, alors la dérivée selon $z$ de la série génératrice des cartes forestières $(2q)$-régulières satisfait une équation différentielle d'ordre au plus $2q-2$.

\chapter{Combinatoire des arbres forestiers} \label{c:enrichis}

Nous avons vu dans la sous-section \ref{ss:modele} p. \pageref{ss:modele} que la série génératrice $F(z,u,t)$ des cartes forestières est aussi la série génératrice des cartes planaires $C$ pondérées par $T_C(u+1,1)$, où $T_C$ désigne le polynôme de Tutte. De fait, la série $F(z,\mu-1,t)$ admet nombreuses descriptions combinatoires (voir \eqref{interp1} et  \eqref{interp2}). Il est donc intéressant d'étudier le comportement asymptotique des coefficients de $F(z,u,t)$ quand $u$ décrit l'intervalle $[- 1,+ \infty[$, et pas uniquement $[0,+\infty[$. La difficulté de ce genre d'entreprise réside dans le fait que certaines séries affiliées à $F$ (comme les séries $R$ et $S$, voir le théorème \ref{central}) peuvent comporter des coefficients négatifs, alors que la plupart des théorèmes d'analyse complexe (comme le théorème de Pringsheim) ne s'appliquent que pour des séries à coefficients positifs. 

L'objectif de ce chapitre est de prouver de manière purement combinatoire que certaines séries liées à $F$ admettent des coefficients positifs (en un sens que nous préciserons). Nous serons alors amenés à introduire ce que nous appellerons \textit{arbres bourgeonnants enrichis}, ce qui est en substance la version forestière des arbres bourgeonnants de la section \ref{s:bourgeonnants}. Nous profitons du propos pour montrer une bijection entre arbres bourgeonnants enrichis et cartes forestières. Cette bijection revêt une importance particulière dans cette thèse dans la mesure où elle joue le rôle de passerelle entre les deux parties.

\section{$\boldsymbol{(u+1)}$-positivité}

\subsection{Définition}

Considérons une série formelle $A$ à coefficients dans $\R[u]$ avec pour jeu de variables $z_1,z_2,\dots,z_m$. Si cette série admet des coefficients positifs dans la base algébrique \mbox{$1+u,z_1,z_2,\dots,z_m$} (autrement dit les coefficients de $A(\mu-1,z_1,z_2,\dots,z_m)$ sont positifs en $\mu$), alors la série $A$ est dite \textit{$(u+1)$-positive}.

La série génératrice $F(z,u,t)$ des cartes forestières constitue un exemple non trivial de série $(u+1)$-positive. En effet, comme nous l'avons précisé en introduction, $F$ est la série génératrice des cartes planaires $C$ pondérés par $T_C(u+1,1)$, qui est bien un polynôme en $(u+1)$ avec des coefficients positifs. Par exemple, pour les cartes cubiques, nous avons
\begin{multline*}
F(z,u,t)  =  (6 + 4 u) z^3t^3 + (140 + 234 u + 144 u^2 + 32 u^3) z^4t^4 + O(z^5) \\
  = \pare{2 + 4(u+1)}  z^3t^3 + \pare{18 + 42(u+1) + 48 (u+1)^2 + 32 (u+1)^3 } z^4t^4 + O(z^5).
\end{multline*}

La prochaine sous-section donne une condition suffisante pour que des séries génératrices d'arbres avec une forêt couvrante, avec un poids $u$ par composante connexe de la forêt, soient $(u+1)$-positives.

\subsection{La $\boldsymbol{(u+1)}$-positivité chez les arbres forestiers}
\label{ss:arbresforestiers}

Nous rappelons qu'un \textit{arbre forestier} est un couple $(T,F)$ tel que $T$ est un arbre plan (possiblement décoré) et $F$ une forêt couvrante de  $T$. Considérons $\mathcal F$ une classe d'arbres forestiers. Nous allons définir une propriété sur $\mathcal F$ qui garantit la $(u+1)$-positivité de sa série génératrice $A_{\mathcal F}$ (après division par $u$), où $u$ compte le nombre de composantes  dans la forêt couvrante.

Soient $(T,F)$ un arbre forestier de $\mathcal F$ et $e$ une de ses arêtes. Si $e$ appartient à $F$, on dit qu'elle est \textit{interne} ; elle est dite \textit{externe} sinon. Le  \textit{basculement} de $e$ est l'opération qui consiste à changer le statut interne/externe de l'arête $e$. En d'autres termes, si $e$ est interne, alors on la retire de la forêt $F$ ; si $e$ est externe, alors on l'intègre à $F$ (voir la figure \ref{classestable}). Nous obtenons ainsi une nouvelle forêt couvrante $F'$ de $T$. On dit que $e$ est \textit{basculable}  si $(T,F')$ appartient à $\mathcal F$. La classe d'arbres forestiers $\mathcal F$ est \textit{stable} si pour tout  $(T,F) \in \mathcal F$,
\begin{enumerate}
\item[(i)] chaque arête externe de $(T,F)$ est basculable,
\item[(ii)] l'arbre forestier résultant du basculement de n'importe quelle arête basculable de $(T,F)$ a le même ensemble d'arêtes basculables que $(T,F)$.
\end{enumerate}

\fig{[width=\textwidth]{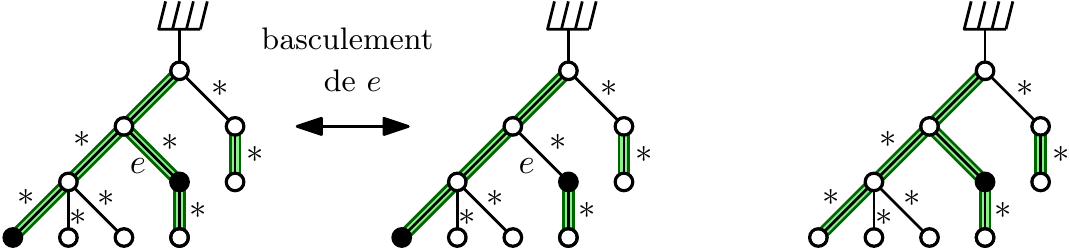}}{\`A gauche : deux arbres forestiers de $\mathcal F_1$ qui ne diffèrent que d'un basculement de l'arête $e$. \`A droite : un arbre forestier de $\mathcal F_2$. Les étoiles indiquent les arêtes basculables.}{classestable}

\noindent \textbf{Exemple 1.} Soit $\mathcal F_1$ la classe des arbres forestiers avec un bicoloriage des sommets en blanc et noir tels que le sommet racine est relié à un sommet noir par un chemin uniquement constitué d'arêtes internes. La classe $\mathcal F_1$ n'est \textbf{pas} stable. En effet, la condition $(i)$ est tout le temps vérifiée mais pas la condition $(ii)$. Le premier arbre forestier de la figure \ref{classestable} constitue un contre-exemple. Le basculement de l'arête $e$ donne bien un arbre forestier de $\mathcal F_1$ mais deux arêtes ne sont plus basculables : si on retire l'une d'entre elles de la forêt couvrante, le sommet racine ne sera plus relié à un sommet noir par des arêtes internes.

\noindent \textbf{Exemple 2.} Soit $\mathcal F_2$ la classe des arbres forestiers où tous les sommets sont coloriés en blanc, sauf un qui est colorié en noir, tels que le sommet racine est relié au sommet noir par un chemin uniquement constitué d'arêtes internes. La classe $\mathcal F_2$ est stable. En effet, la condition $(i)$ est trivialement vérifiée. De plus, dans un arbre forestier de $\mathcal F_2$, une arête est mutable si et seulement si elle n'appartient pas à l'unique chemin reliant le sommet racine au sommet noir\footnote{Si elle appartient à l'unique chemin reliant le sommet racine au sommet noir, alors elle est forcément interne.}. Cette propriété ne dépend pas de la forêt couvrante considérée, d'où la propriété $(ii)$.

\begin{lem} \label{l:stable}
Soient $\mathcal F$ une classe stable d'arbres forestiers et $(T,F) \in \, \mathcal F$. Appelons $\mathcal F_T$ l'ensemble des forêts couvrantes de $T$  telles que $(T,F') \in \, \mathcal F$.  Les éléments de $\mathcal F_T$ ont tous le même nombre d'arêtes basculables. Appelons $b$ ce nombre. Si $T$ a au moins un sommet, la série génératrice de $\mathcal F_T$, où chaque composante connexe est pondérée par un poids $u$, est égale à $u\,(1+u)^b$.
\end{lem}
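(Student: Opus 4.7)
The plan is to show that $\mathcal{F}_T$ is a single orbit of the swap operation restricted to swappable edges, that this orbit has exactly $2^b$ elements in bijection with subsets of the set $B$ of swappable edges, and finally to read off the generating function from this description.

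First I would exploit conditions (i) and (ii) to describe the structure of any orbit. By (i), every external edge of any $(T,F')\in\mathcal{F}_T$ is swappable, so the non-swappable edges must always be internal. By (ii), the set of swappable edges is invariant along any sequence of swaps. Consequently, within the orbit of the given $(T,F)$, the swappable set is constantly equal to $B$ (of cardinality $b$) and the non-swappable edges are frozen as internal. Since we may toggle the $b$ edges of $B$ independently, one at a time (each intermediate configuration remains in $\mathcal{F}$ by (ii), and the swappable set remains $B$), the orbit contains exactly $2^b$ elements, parameterized by the subsets of $B$ chosen to be internal.

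The main step, and the only delicate one, is to prove that $\mathcal{F}_T$ coincides with this single orbit. Given any $(T,F')\in\mathcal{F}_T$, I would build a path of swaps from $(T,F)$ to $(T,F')$ by routing through the ``union'' configuration $(T,F\cup F')$. Indeed, every edge of $F'\setminus F$ is external in $(T,F)$, hence swappable by (i); adding these edges one at a time produces at each step an external edge that is again swappable by (i), and we eventually reach $(T,F\cup F')\in\mathcal{F}$. The symmetric argument, starting from $(T,F')$ and adding the edges of $F\setminus F'$, shows that $(T,F')$ also reaches $(T,F\cup F')$ by swaps. Hence $(T,F)$ and $(T,F')$ lie in the same orbit, so $\mathcal{F}_T$ is this single orbit; in particular the set of swappable edges, and thus $b$, depends only on $T$ and not on the chosen representative $F$.

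For the generating function the computation is then direct. Let $n\geq 1$ be the number of vertices of $T$ and $m$ the number of non-swappable edges. Since $T$ is a tree, $m+b=n-1$. In an element of $\mathcal{F}_T$ with $j$ swappable edges internal, the forest has $m+j$ edges and therefore $n-m-j=1+b-j$ connected components. Summing over $j$ and using the binomial identity gives
\[
\sum_{F'\in\mathcal{F}_T} u^{\cc(F')} \;=\; \sum_{j=0}^{b}\binom{b}{j}\,u^{1+b-j} \;=\; u\,(1+u)^{b},
\]
which is the announced formula. The main obstacle is really the orbit step: condition (i) is exactly what makes the detour through $F\cup F'$ work, and condition (ii) is what ensures that all intermediate configurations along the swap sequence stay inside $\mathcal{F}$.
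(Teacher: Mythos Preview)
Your proof is correct and follows essentially the same route as the paper's: use condition~(i) to move between configurations by adding external edges, use condition~(ii) to conclude that the swappable set is an invariant, and then count. The only cosmetic difference is the choice of pivot: you connect $(T,F)$ and $(T,F')$ through their union $F\cup F'$, while the paper routes everything through the maximal forest $F_{\max}$ (all edges of $T$ internal), which slightly streamlines the component count since $F_{\max}$ has a single component and each removed swappable edge adds exactly one.
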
 
\begin{proof} La condition $(i)$ implique que la forêt $F_{\max}$, qui est constituée de toutes les arêtes de $T$, appartient à $\mathcal F_T$. Elle indique en outre que nous pouvons obtenir $F_{\max}$ à partir de n'importe quelle forêt $F'$ de $\mathcal F_T$ en insérant successivement dans la forêt chaque arête externe à $F'$. Donc, d'après la condition $(ii)$, toute forêt de $\mathcal F_T$ a le même ensemble d'arêtes basculables que $F_{\max}$ (et \textit{a fortiori} le même nombre, $b$). Par conséquent, pour obtenir n'importe quelle forêt $F'$ de $\mathcal F_T$ à partir de $F_{\max}$, il suffit de déterminer pour chaque arête basculable si cette arête appartient à $F'$ ou non. Mais $F_{\max}$ a une unique composante (poids $u$ -- l'existence d'une composante est due à la présence d'un sommet dans $T$) et le retrait d'une arête d'une forêt couvrante augmente de $1$  le nombre de composantes (on multiplie par un facteur $u$ pour chaque arête choisie pour être externe). La série génératrice de $\mathcal F_T$ est donc $u (1+u)^b$, comme annoncé.
\end{proof}

La proposition qui suit est une conséquence directe du lemme précédent (on a sommé sur tous les arbres $T$ de $\mathcal F$).

\begin{prop} \label{p:stable}
Soit $\mathcal F$ une classe stable d'arbres forestiers avec au moins un sommet. Associons à chaque arbre $T$ tel que $(T,F) \in \, \mathcal F$ des statistiques\footnote{par exemple : le nombre de feuilles, le nombre d'arêtes, le nombre de sommets blancs, etc.} $n_1(T),n_2(T),\dots,n_m(T)$ (la forêt couvrante $F$ n'influe pas sur ces paramètres) de sorte que le nombre d'arbres $T$ à statistiques $n_1(T),n_2(T),\dots,n_m(T)$ fixées est fini. Appelons  $A_{\mathcal F}(u,z_1,\dots,z_m)$ la série génératrice des arbres forestiers $(T,F)$ de $\mathcal F$, où la variable $u$ compte le nombre de composantes dans $F$ et où $z_i$ compte $n_i(T)$. La série  $A_{\mathcal F}/u$ est $(u+1)$-positive.
\end{prop}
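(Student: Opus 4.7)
La preuve se r\'eduit essentiellement \`a une application directe du lemme \ref{l:stable}, coupl\'ee \`a un regroupement des arbres forestiers de $\mathcal F$ selon la valeur de leur arbre sous-jacent. L'id\'ee est que le lemme \ref{l:stable} a d\'ej\`a fait tout le travail non trivial : chaque ar\^ete basculable y contribue un facteur $(1+u)$ dans la s\'erie g\'en\'eratrice de $\mathcal F_T$. Il ne reste plus qu'\`a sommer sur les arbres sous-jacents et \`a constater la positivit\'e des coefficients obtenus.

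Plus pr\'ecis\'ement, je commencerais par partitionner l'ensemble $\mathcal F$ selon la premi\`ere composante $T$ du couple $(T,F)$. Pour chaque arbre $T$ tel que $\mathcal F_T$ soit non vide, le lemme \ref{l:stable} donne
$$\sum_{F'\in \mathcal F_T} u^{\cc(F')} = u\,(1+u)^{b(T)},$$
o\`u $b(T)$ d\'esigne le nombre (bien d\'efini, d'apr\`es la condition $(ii)$) d'ar\^etes basculables commun \`a tous les \'el\'ements de $\mathcal F_T$. Par hypoth\`ese, les statistiques $n_i(T)$ ne d\'ependent pas de la for\^et couvrante choisie. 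La contribution de l'arbre $T$ \`a la s\'erie $A_{\mathcal F}$ est donc
$$u\,(1+u)^{b(T)} \prod_{i=1}^m z_i^{n_i(T)}.$$

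En sommant sur tous les arbres sous-jacents admissibles $T$ puis en divisant par $u$, j'obtiendrais
$$\frac{A_{\mathcal F}}{u}(u,z_1,\dots,z_m) = \sum_T (1+u)^{b(T)} \prod_{i=1}^m z_i^{n_i(T)},$$
qui est manifestement une somme \`a coefficients entiers positifs dans la base alg\'ebrique $\ens{1+u,z_1,\dots,z_m}$. L'hypoth\`ese de finitude sur le nombre d'arbres \`a statistiques fix\'ees garantit que le coefficient de chaque mon\^ome $\prod_i z_i^{n_i}$ dans cette s\'erie est un polyn\^ome (fini) en $(1+u)$ \`a coefficients entiers positifs ; la s\'erie $A_{\mathcal F}/u$ est donc bien d\'efinie et $(u+1)$-positive. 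Le travail principal ayant \'et\'e accompli dans le lemme \ref{l:stable} et en amont dans la notion m\^eme de classe stable, il n'y a pas d'obstacle technique \`a surmonter : la proposition \ref{p:stable} en est une cons\'equence imm\'ediate.
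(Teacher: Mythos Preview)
Your proof is correct and follows exactly the approach indicated in the paper, which states that the proposition is a direct consequence of Lemma~\ref{l:stable} obtained by summing over all underlying trees $T$. You have spelled out this summation carefully, including the role of the finiteness hypothesis, but the argument is the same.
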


La section suivante constitue un bon exemple d'application de cette proposition.

\section{Arbres bourgeonnants enrichis}
\label{s:abe}

Rappelons que le théorème central \ref{central} p. \pageref{central} fait intervenir deux séries $R$ et $S$ telles que 
\begin{equation} \label{bouger}
R =  tz +  tu \sum_{i \geq 1} \sum_{j \geq 0} T_{2i+j}(t) { 2i+j-1 \choose i-1,i,j } R^i S^j,
\end{equation}
\begin{equation} \label{bouse}
S =  tu \sum_{i \geq 0} \sum_{j \geq 0} T_{2i+j+1}(t) { 2i+j \choose i,i,j } R^i S^j,
\end{equation}
où $T_k(t)$ est la série génératrice des arbres à $k$ pattes, avec un poids $d_\ell$ pour chaque sommet de degré $\ell$, comptés selon le nombre d'arêtes -- cette série est définie par \eqref{deftl} p. \pageref{deftl}. Nous souhaitons obtenir des propriétés de $(u+1)$-positivité sur $R$, $S$ et leurs variantes. Afin d'appliquer la méthode de la section précédente, nous allons interpréter ces séries comme des séries génératrices d'arbres bourgeonnants (voir section \ref{s:bourgeonnants} p. \pageref{s:bourgeonnants}) forestiers particuliers.

Commençons par quelques  rappels. Un arbre \textit{bourgeonnant} est un arbre plan enraciné avec des demi-arêtes pendantes. Ces demi-arêtes sont de trois types différents : la \textit{racine}, qui est unique et représentée par un râteau, les \textit{bourgeons}, portant chacun une charge $-1$ et représentés par des flèches blanches, et les \textit{feuilles}, portant chacune une charge $+1$ et représentées par des flèches noires. La \textit{charge} d'un arbre bourgeonnant est la différence entre le nombre de feuilles et le nombre de bourgeons. En coupant en deux une arête d'un arbre bourgeonnant, on obtient deux arbres : celui qui ne contient pas la racine est un \textit{sous-arbre}.

\begin{defi} Un arbre bourgeonnant $T$ muni d'une forêt couvrante $F$ est un \textbf{R-arbre enrichi} (resp. \textbf{S-arbre enrichi}) si
\begin{itemize}
\item[(i)] sa charge vaut $1$ (resp. $0$)
\item[(ii)] tout sous-arbre associé à une arête externe (à $F$) a pour charge $1$ ou $0$.
\end{itemize}
\end{defi}

\fig{[scale=1.2]{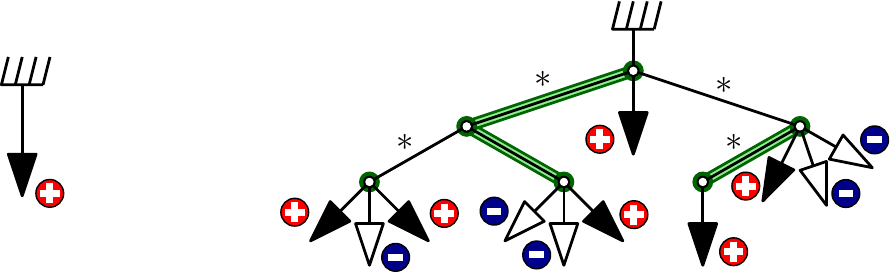}}{\`A gauche : le plus petit R-arbre enrichi. \`A droite : un R-arbre enrichi ayant $6$ feuilles, $5$ bourgeons et $3$ composantes. Les étoiles indiquent les arêtes basculables.}{enrichis}

La figure \ref{enrichis} montre des exemples de R-arbres enrichis.

\noindent \textbf{Remarque. } En contractant toutes les arêtes internes dans les R- et S-arbres enrichis, nous retrouvons les R- et S-arbres bourgeonnants de la section \ref{s:bourgeonnants}. Autrement dit, un R-arbre (resp. un S-arbre) enrichi est un R-arbre (resp. S-arbre) sur lequel on a greffé pour chaque sommet de degré $k$ un arbre à $k$ pattes. Cela justifie \textit{a posteriori} l'appellation de R-arbres et S-arbres "enrichis".

\begin{prop} \label{beqRS}
Les\footnote{L'unicité de ce couple de  séries provient du lemme \ref{caracteresse} p. \pageref{caracteresse}.} séries $R(z,u,t)$ et $S(z,u,t)$ définies par \eqref{bouger} et \eqref{bouse} sont respectivement les séries génératrices des R- et S-arbres enrichis, où la variable $z$ compte le nombre de feuilles, $u$ le nombre de composantes dans la forêt et $t$ le nombre d'arêtes et demi-arêtes\footnote{Pour éviter des confusions, précisons qu'une demi-arête est bien pondérée par $t$ et non pas $\sqrt t$.} qui ne sont pas des bourgeons (donc feuilles/racine)\footnote{Le $R$-arbre réduit à une feuille est pondéré par $zt$ et non pas par $zt^2$.}.
\end{prop}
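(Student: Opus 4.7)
Soient $\tilde R(z,u,t)$ et $\tilde S(z,u,t)$ les s�ries g�n�ratrices des R- et S-arbres enrichis pour les poids annonc�s. Puisque le lemme \ref{caracteresse} p.~\pageref{caracteresse} garantit l'unicit� du couple solution du syst�me \eqref{bouger}--\eqref{bouse}, il suffira de prouver que $(\tilde R, \tilde S)$ satisfait elle-m�me ce syst�me. J'emploierais pour cela une d�composition au niveau de la composante racine de la for�t, directement calqu�e sur la preuve de la proposition \ref{heqRS}.

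Pour \eqref{bouger}, je distinguerais d'abord le cas trivial o� $T$ se r�duit � une feuille isol�e, dont le poids $zt$ fournit exactement le premier terme. Dans le cas g�n�ral, soit $K$ la composante de $F$ contenant le sommet racine : je verrais $K$ comme un arbre � pattes, ces pattes �tant de quatre types : (a) la racine, (b) les bourgeons attach�s aux sommets de $K$ dans $T$, (c) les feuilles de $T$ attach�es � des sommets de $K$, (d) les demi-ar�tes obtenues en scindant chaque ar�te externe incidente � $K$. Pour chacune de ces ar�tes externes coup�es, le sous-arbre qui se d�tache h�rite d'une for�t couvrante issue de $F$ ; la condition (ii) de la d�finition garantit qu'il est de charge $+1$ ou $0$, autrement dit qu'il constitue un R-arbre enrichi (incluant le cas d'une feuille isol�e, rattach� au cas (c)) ou un S-arbre enrichi. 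De plus, la condition (ii) se transmet r�cursivement � ces sous-objets, car toute ar�te externe en leur sein l'�tait d�j� dans $(T,F)$.

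Si $i$ d�signe le nombre total de pattes de type R (feuilles isol�es et sous-arbres de charge $+1$) et $j$ celui des pattes de type S, la conservation de la charge $+1$ impose exactement $i-1$ bourgeons sur $K$, soit $2i+j$ pattes au total. Chaque R-arbre enrichi non trivial contribuerait alors $tu \, T_{2i+j}(t) \binom{2i+j-1}{i-1,i,j} \tilde R^i \tilde S^j$, redonnant \eqref{bouger} apr�s sommation sur $i,j$. Dans ce calcul : $T_{2i+j}(t)$ encode $K$ comme arbre � pattes (poids $t$ par ar�te interne, $d_\ell$ par sommet de degr� $\ell$), le coefficient trinomial compte les m�langes des pattes autour de $K$ (la racine �tant fix�e), le facteur $u$ provient de la composante $K$ de $F$ et le facteur $t$ de la demi-ar�te racine de $T$. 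Les facteurs $\tilde R$ et $\tilde S$ absorbent les poids des sous-arbres, \emph{y compris} la demi-ar�te devenue racine par coupure, d'o� un poids $t$ attribu� � chaque ar�te externe coup�e. Le m�me raisonnement appliqu� aux S-arbres enrichis donnerait \eqref{bouse} : le cas trivial dispara�t (charge $0$ impossible pour une feuille seule) et la contrainte de charge nulle impose $i$ bourgeons au lieu de $i-1$.

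Le point le plus d�licat sera la comptabilit� pr�cise des poids en $t$ : les pattes de $K$ ne sont pas pond�r�es dans $T_{2i+j}(t)$, tandis que chaque ar�te externe coup�e doit imp�rativement apporter son $t$, ce qu'elle fait via la demi-ar�te racine du sous-arbre correspondant. C'est exactement cette convention qui justifie le facteur global $t$ devant $u$ dans \eqref{bouger}--\eqref{bouse} (il compte la demi-ar�te racine de $T$) ainsi que le poids $zt$ (et non $zt^2$) attribu� � la feuille isol�e.
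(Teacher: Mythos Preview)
Your proof is correct and follows a route the paper explicitly acknowledges as viable but chooses not to carry out. The paper's own proof is much shorter: it invokes the remark (just above the proposition) that an enriched R- or S-arbre is nothing but an ordinary R- or S-arbre in which each vertex has been replaced by an arbre \`a pattes; this allows it to apply Proposition~\ref{heqRS} directly with the substitution $g_k \mapsto t^{k/2}T_k(t)$, then perform the change of variables $(R,S)\mapsto(t^{-1}R,t^{-1/2}S)$ already used in the proof of Theorem~\ref{central}. The only work left is to check the $t$-statistic, which the paper does by reading off the equations: the exponent of $t$ is (number of internal edges) $+$ (number of leaves) $+$ (number of components), and since each component is preceded by either an external edge or the root, this equals the number of edges and non-bud half-edges.

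Your direct decomposition at the root component is essentially the adaptation of the proof of Proposition~\ref{heqRS} that the paper mentions in its first sentence (``Nous pourrions adapter la preuve de la proposition~\ref{heqRS}''). It is more self-contained and makes the $t$-bookkeeping fully explicit --- in particular your observation that the $t$ for each cut external edge is carried by the root half-edge of the detached subtree is exactly the right way to see why the leaf-only R-arbre has weight $zt$ rather than $zt^2$. The paper's approach buys brevity by reusing existing machinery; yours buys transparency on the weight accounting.
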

\begin{proof} 
Nous pourrions adapter  la preuve de la proposition \ref{heqRS} p. \pageref{heqRS}. Mais d'après la remarque ci-dessus, les séries génératrices $R$ et $S$ des arbres enrichis sont les séries génératrices des R-arbres et S-arbres dans laquelle on a substitué $g_k$ par $t^{k/2} T_k(t)$. Il suffit  alors d'appliquer cette proposition \ref{heqRS}.

Vérifions que la statistique associée à $t$ est bien la bonne : au vu des équations \eqref{bouger} et \eqref{bouse}, la variable $t$ compte le nombre d'arêtes internes, plus le nombre de feuilles, plus le nombre de composantes. Or, en amont de chaque composante de la forêt, nous retrouvons soit une arête externe, soit la racine. Cela prouve que $t$ compte le nombre d'arêtes et demi-arêtes qui ne sont pas des bourgeons.
\end{proof}

Revenons maintenant à la $(u+1)$-positivité.

\begin{prop} \label{Rstable}
La classe des R-arbres enrichis privée de l'arbre réduit à une feuille et la classe des S-arbres enrichis sont stables (au sens de la sous-section \ref{ss:arbresforestiers}).
\end{prop}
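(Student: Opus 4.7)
Le plan est de r�duire la notion de basculabilit� � une propri�t� qui ne d�pend que de l'arbre bourgeonnant sous-jacent $T$, puis de v�rifier m�caniquement les deux conditions de la d�finition de classe stable. J'introduirais la terminologie suivante : une ar�te $e$ de $T$ est \emph{bonne} si le sous-arbre $T_e$ associ� � $e$ (c'est-�-dire la composante ne contenant pas la racine apr�s coupe de $e$) a pour charge $0$ ou $1$. Cette notion ne d�pend que du couple $(T,e)$, et la d�finition d'un R- (resp. S-)arbre enrichi dit pr�cis�ment que toute ar�te externe de $F$ doit �tre bonne.

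La premi�re �tape consisterait � �tablir la caract�risation suivante : dans un R- ou S-arbre enrichi $(T,F)$, une ar�te $e$ est basculable si et seulement si $e$ est externe, ou bien $e$ est interne et bonne. La v�rification est �l�mentaire : le basculement de $e$ ne modifie ni $T$ ni sa charge totale, et comme tout sous-ensemble d'ar�tes d'un arbre est acyclique, $F'$ reste une for�t couvrante. Par cons�quent, $(T,F')$ appartient � la classe si et seulement si toutes ses ar�tes externes sont bonnes. Or basculer une ar�te externe r�duit l'ensemble des ar�tes externes � un sous-ensemble d'ar�tes d�j� bonnes, tandis que basculer une ar�te interne $e$ ajoute $e$ � cet ensemble, ce qui est admissible exactement quand $e$ est bonne.

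De cette caract�risation d�coule imm�diatement la condition (i), puisque toute ar�te externe d'un R- ou S-arbre enrichi est bonne par d�finition, donc basculable. Pour la condition (ii), j'utiliserais l'observation que l'ensemble des ar�tes basculables est $\{e\textrm{ externe dans }F\} \cup \{e\textrm{ bonne}\}$, et que la qualit� d'�tre bonne est invariante par basculement puisque $T$ n'est pas modifi�. Basculer une ar�te basculable $e$ la fait passer soit d'externe-et-bonne � interne-et-bonne, soit d'interne-et-bonne � externe : dans les deux cas, $e$ reste basculable, et toutes les autres ar�tes conservent leur statut externe/interne ainsi que leur qualit�.

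La derni�re petite v�rification consistera � s'assurer que la suppression du R-arbre trivial r�duit � une feuille ne compromet pas la stabilit� de la classe restreinte ; mais cela est imm�diat, car le basculement pr�serve $T$, et l'on ne peut donc ni atteindre ni quitter la feuille isol�e � partir d'un R-arbre enrichi non trivial. Je n'anticipe aucun obstacle s�rieux : toute la preuve se ram�ne � un d�pliage attentif des d�finitions une fois d�gag�e la notion d'ar�te \emph{bonne}.
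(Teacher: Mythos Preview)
Votre d�monstration est correcte et suit essentiellement la m�me approche que celle du papier. La seule diff�rence est de concision : comme toute ar�te externe d'un R- ou S-arbre enrichi est automatiquement \emph{bonne} (c'est pr�cis�ment la condition $(ii)$ de la d�finition), votre caract�risation \og basculable $\Leftrightarrow$ externe, ou interne et bonne \fg{} se simplifie imm�diatement en \og basculable $\Leftrightarrow$ bonne \fg{}, ce qui est exactement l'observation que fait le papier en une phrase et qui rend l'invariance par basculement �vidente.
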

\begin{proof}La condition $(i)$ de la définition des classes stables est trivialement vérifiée. Quant à la condition $(ii)$, il suffit d'observer qu'une arête est basculable (que ce soit pour les R-arbres enrichis ou pour les S-arbres enrichis) si et seulement si le sous-arbre correspondant a pour charge $0$ ou $1$ et que ceci ne dépend pas de la forêt.
\end{proof}

La proposition \ref{p:stable}, appliquée aux précédentes classes, permet alors de déduire la $(u+1)$-positivité des séries suivantes. (Les statistiques sont ici le nombre de feuilles et le nombre total d'arêtes et de demi-arêtes qui ne sont pas des bourgeons.)

\begin{cor} 
Quelle que soit la suite de poids $(d_k)$, les séries $(R-zt)/u$ et $S/u$ sont $(u+1)$-positives. Quand $u = \mu - 1$, elles comptent respectivement les R- et S-arbres enrichis (non réduits à une feuille) munis de leur unique arbre couvrant selon le nombre de feuilles ($z$), le nombre d'arêtes et de demi-arêtes qui ne sont pas des bourgeons ($t$), et le nombre d'arêtes basculables ($\mu$). \label{c:RSupos}
\end{cor}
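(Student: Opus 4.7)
The plan is to apply Propositions \ref{p:stable} and \ref{Rstable} directly, together with the combinatorial interpretation of $R$ and $S$ given by Proposition \ref{beqRS}, so that everything reduces to a bookkeeping check. The only minor subtlety is that the trivial $R$-arbre (reduced to a single leaf, of weight $zt$) contributes no vertex to apply Lemma \ref{l:stable} cleanly; this is precisely why we subtract $zt$ from $R$ before dividing by $u$, whereas no such adjustment is needed for $S$ since its minimal element already carries a vertex.

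First, I would focus on the class $\mathcal F_R$ of $R$-arbres enrichis deprived of the leaf-reduced tree. By Proposition \ref{Rstable} this class is stable, and by construction every element has at least one vertex. Taking as statistics the number of leaves and the total number of edges and non-bud half-edges, finite in any fixed size, Proposition \ref{p:stable} gives that the associated generating series divided by $u$ is $(u+1)$-positive. But Proposition \ref{beqRS} identifies this generating series with $R-zt$, so $(R-zt)/u$ is $(u+1)$-positive. The same argument applied to the stable class of $S$-arbres enrichis (which contains a minimal element with a vertex) yields the $(u+1)$-positivity of $S/u$.

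For the combinatorial interpretation at $u = \mu - 1$, I would invoke Lemma \ref{l:stable}: for a fixed tree $T$ in either class, the sum over all compatible spanning forests $F$ contributes $u\,(1+u)^{b(T)}$, where $b(T)$ is the (well-defined) number of mutable edges. Dividing by $u$ and substituting $u = \mu - 1$ collapses this contribution to $\mu^{b(T)}$, which is precisely what one obtains by keeping only the forest $F_{\max}$ consisting of all edges of $T$ (the unique spanning tree), weighted by $\mu$ per mutable edge. Summing over $T$ while retaining the weights $z^{\text{leaves}}\,t^{\text{non-bud incidences}}$ inherited from Proposition \ref{beqRS} yields the announced combinatorial description.

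There is essentially no real obstacle, since all nontrivial work has already been carried out in Propositions \ref{beqRS}, \ref{Rstable}, and \ref{p:stable}; the only point that requires care is keeping track of what exactly plays the role of the spanning forest after the substitution $u = \mu - 1$, namely the full spanning tree $F_{\max}$ endowed with its distinguished set of mutable edges.
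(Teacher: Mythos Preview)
Your proposal is correct and follows exactly the paper's approach: the corollary is stated as an immediate consequence of applying Proposition~\ref{p:stable} to the stable classes established in Proposition~\ref{Rstable}, with the generating-function identification coming from Proposition~\ref{beqRS}. Your additional remarks on why the $zt$ term must be subtracted and on how Lemma~\ref{l:stable} yields the $\mu^{b(T)}$ interpretation are accurate elaborations of what the paper leaves implicit.
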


\noindent \textbf{Exemple. } Quand les arbres considérés sont  cubiques ($d_3 = 1$ et $d_k = 0$ pour $k \neq 3$), le développement des précédentes séries devient en écrivant $\mu = u + 1$ :
$$
(R-zt)/u = ( 2 + 4 \mu ) z^2 t^4 + (16 + 36\mu + 48 \mu^2 +40 \mu^3 )z^3 t^7 + O\pare{z^4},$$
$$
S/u  = 2zt^2 + \pare{6 + 12 \mu + 12 \mu^2}z^2t^5 + (72 + 176\mu + 240 \mu^2  + 204 \mu^3 + 128 \mu^4)z^3t^8 + O \pare{z^4}.
$$

Nous aurons également besoin du résultat suivant, qui est une variation de ce qui précède.

\begin{lem} \label{l:Stilde}
Soit $\tilde S(z,u,t)$ la série définie par $\tilde S = t u \phi_2(t,z,\tilde S)$ où 
$$
\phi_2(t,x,y) =  \sum_{i \geq 0} \sum_{j \geq 0} T_{2i+j+1}(t) { 2i+j \choose i,i,j } x^i  y^j.
$$
Quelle que soit la suite de poids $(d_k)$, les séries $\tilde S/u$ et $\pd {\phi_2} y (t,z,\tilde S)$ sont $(u+1)$-positives.

De plus, s'il existe $k \geq 3$ avec $d_k \neq 0$, la série $\tilde S(z,-1,t)$ est une série formelle en $z$ et $t$ à coefficients entiers qui n'est pas un polynôme en $z$.
\end{lem}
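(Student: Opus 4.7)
Le plan consiste d'abord \`a adapter la m\'ethode de classes stables de la sous-section \ref{ss:arbresforestiers} aux deux premi\`eres assertions, puis \`a exploiter la $(u+1)$-positivit\'e obtenue pour traiter la derni\`ere. Pour la premi\`ere, je d\'efinirais les \emph{$\tilde S$-arbres enrichis} comme les S-arbres enrichis dans lesquels chaque R-sous-arbre (de charge $+1$, attach\'e par une ar\^ete) est remplac\'e par une \emph{feuille sp\'eciale} de charge $+1$, de poids $z$ et sans structure additionnelle. Une d\'ecomposition au sommet racine analogue \`a celle de la proposition \ref{heqRS} confirme que la s\'erie g\'en\'eratrice de cette classe satisfait bien $\tilde S = tu\,\phi_2(t,z,\tilde S)$. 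La classe est \emph{stable} au sens voulu~: toute ar\^ete externe a un sous-arbre de charge $0$ ou $1$ par d\'efinition, et l'ensemble des ar\^etes basculables est caract\'eris\'e par la charge des sous-arbres, propri\'et\'e intrins\`eque au squelette bourgeonnant ind\'ependante de la for\^et. La proposition \ref{p:stable} fournit alors la $(u+1)$-positivit\'e de $\tilde S/u$.

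Pour $\pd{\phi_2}{y}(t,z,\tilde S)$, j'introduirais la classe $\tilde{\mathcal S}^\circ$ des $\tilde S$-arbres enrichis \emph{point\'es} \`a la racine~: on marque l'un des $\tilde S$-sous-arbres attach\'es au sommet racine et on l'efface, laissant un trou sans poids. La d\'erivation de $\phi_2$ par rapport \`a $y$ correspond exactement \`a ce marquage-puis-effacement, et la s\'erie g\'en\'eratrice de $\tilde{\mathcal S}^\circ$ vaut $tu\,\pd{\phi_2}{y}(t,z,\tilde S)$. Comme la suppression d'un $\tilde S$-sous-arbre (de charge $0$) ne modifie aucune charge dans le reste de l'arbre, l'ensemble des ar\^etes basculables est inchang\'e et la classe $\tilde{\mathcal S}^\circ$ est stable. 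La proposition \ref{p:stable} donne alors la $(u+1)$-positivit\'e de $t\,\pd{\phi_2}{y}(t,z,\tilde S)$~; cette derni\`ere admettant $t$ en facteur, sa division par $t$ n'op\`ere qu'un d\'ecalage des exposants de $t$ dans l'expansion en mon\^omes $(u+1)^n z^a t^b \prod d_k^{c_k}$, ce qui pr\'eserve la positivit\'e des coefficients~: on en d\'eduit la $(u+1)$-positivit\'e de $\pd{\phi_2}{y}(t,z,\tilde S)$.

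Pour la derni\`ere assertion, la $(u+1)$-positivit\'e de $\tilde S/u$ fournit une \'ecriture $\tilde S/u = \sum_{n \geq 0} A_n(z,t)(u+1)^n$ o\`u les $A_n$ sont \`a coefficients entiers positifs, ce qui l\'egitime la sp\'ecialisation $u = -1$ et donne $\tilde S(z,-1,t) = -A_0(z,t)$, s\'erie formelle en $z$ et $t$ \`a coefficients entiers. Combinatoirement, $A_0(z,t)$ \'enum\`ere les $\tilde S$-arbres enrichis sans ar\^ete basculable, ce qui impose \`a la for\^et d'\^etre l'arbre entier et \`a chaque sous-arbre obtenu en coupant une ar\^ete d'avoir une charge $\notin \{0,1\}$. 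Pour prouver que $\tilde S(z,-1,t)$ n'est pas polynomial en $z$, il suffit donc d'exhiber, pour une infinit\'e de valeurs de $n$, un tel arbre admissible \`a $n$ feuilles sp\'eciales. Lorsqu'il existe $k \geq 3$ avec $d_k \neq 0$, on construit de tels arbres via des cha\^ines de sommets de degr\'e $k$ sur lesquelles feuilles et bourgeons sont dispos\'es de mani\`ere \`a garantir que chaque sous-arbre poss\`ede une charge $\geq 2$ ou $\leq -1$ (par exemple en concentrant les bourgeons au bout \'eloign\'e de la racine et les feuilles pr\`es de celle-ci). L'obstacle principal r\'eside pr\'ecis\'ement dans cette construction explicite, car il faut g\'erer simultan\'ement les contraintes de charge pour toutes les coupures d'ar\^etes le long de la cha\^ine~; ceci se fait par r\'ecurrence sur la longueur de la cha\^ine, en v\'erifiant la satur\'e des degr\'es disponibles au sommet racine pour \'equilibrer la charge totale \`a z\'ero.
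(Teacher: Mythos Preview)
Your approach coincides with the paper's: interpret $\tilde S/u$ and $\pd{\phi_2}{y}(t,z,\tilde S)$ as generating series of stable classes of enriched blossoming trees, then invoke Proposition~\ref{p:stable}; for $u=-1$, exhibit chains of degree-$k$ vertices yielding $\tilde S$-trees with no flippable edge and arbitrarily many leaves. Two points deserve correction, one minor and one substantive.

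First, in a $\tilde S$-arbre enrichi every subtree attached by an external edge has charge exactly $0$, not ``$0$ ou $1$'': the $R$-subtrees (charge $+1$) have been replaced by leaves, which are half-edges and not subtrees. The flippability criterion is accordingly ``subtree has charge $0$''. Your phrasing is a harmless slip inherited from the $S$-arbre case.

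The real gap is in the stability argument for $\tilde{\mathcal S}^\circ$. Your objects carry a hole (the leg where the erased $\tilde S$-subtree sat), and this hole is required to remain incident to the \emph{root component} of the forest. Your claim that ``l'ensemble des ar\^etes basculables est inchang\'e'' after erasing the charge-$0$ subtree is false: if $e$ is an internal edge on the unique path from the root to the hole, and the subtree of $e$ has charge $0$, then $e$ is flippable in the unpointed $\tilde S$-tree but \emph{not} in $\tilde{\mathcal S}^\circ$, since making $e$ external would move the hole out of the root component. The correct characterisation (which the paper gives explicitly) is that an edge is flippable in $\tilde{\mathcal S}^\circ$ if and only if its subtree has charge $0$ \emph{and} it does not lie on the root-to-hole path. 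Both conditions depend only on the underlying blossoming tree, so stability does hold --- this is precisely the mechanism of Example~2 in Subsection~\ref{ss:arbresforestiers} --- but your justification as written does not establish it.
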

\begin{proof} Un \textit{$\tilde{S}$-arbre enrichi} $(T,F)$ est un  arbre bourgeonnant forestier tel que chaque composante de $F$ est incidente à autant de feuilles que de bourgeons (exemple sur la figure \ref{tildeS}). Appelons $\tilde S$ la série génératrice des $\tilde S$-arbres enrichis (nous définissons les paramètres $z$, $u$, $t$ comme dans la proposition \ref{beqRS}). Un $\tilde S$-arbre enrichi peut se voir inductivement comme un arbre à pattes\footnote{pour la définition d'arbres à pattes, voir p. \pageref{deftl}} (il s'agit de l'arbre de $F$ incident à la racine) sur lequel on a greffé au niveau des pattes non racine $i$ bourgeons, $i$ feuilles et $j$ $\tilde S$-arbres enrichis, avec $i \geq  0$ et $j \geq 0$. En termes de séries génératrices, cela se traduit bien par $\tilde S = t u \phi_2(t,z,\tilde S)$.

\fig{[width=\textwidth]{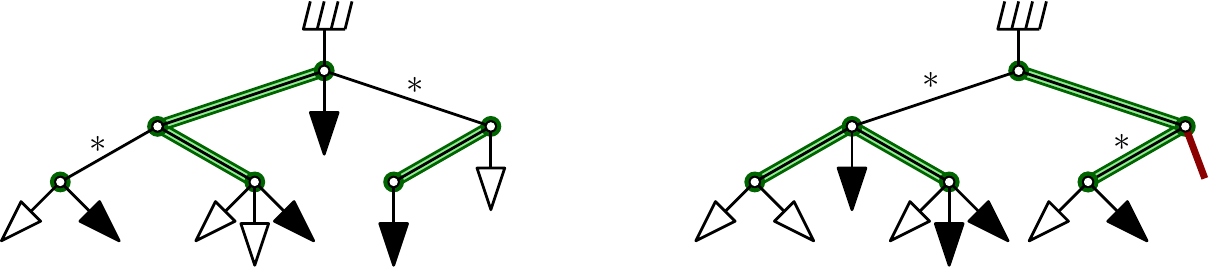}}{\`A gauche : un $\tilde S$-arbre enrichi. \`A droite : un $\tilde S$-arbre enrichi avec une demi-arête pendante (en gras) qui est incidente à la composante racine. Les étoiles indiquent les arêtes basculables.}{tildeS}

La classe des $\tilde S$-arbres enrichis est stable. En effet, une arête est basculable si et seulement si le sous-arbre correspondant est incident à autant de feuilles que de bourgeons. Cette condition est bien indépendante de la forêt considérée.

Nous pouvons étendre la définition des $\tilde S$-arbres enrichis aux arbres bourgeonnants qui ont exactement une demi-arête pendante de charge nulle (différente de la racine, voir la figure \ref{tildeS}). En utilisant les mêmes arguments que plus haut, nous pouvons prouver que  $t u \pd {\phi_2} y (t,z, \tilde S)$ est la série génératrice des $\tilde S$-arbres enrichis avec une demi-arête pendante qui est incidente à la composante racine.

La classe de tels $\tilde S$-arbres enrichis est également stable. En effet, une arête est basculable si et seulement si le sous-arbre correspondant est incident à autant de feuilles que de bourgeons et si elle ne se trouve pas sur l'unique chemin qui relie la racine à la demi-arête pendante.

D'après la proposition \ref{p:stable}, les séries $\tilde S/u$ et $t \pd {\phi_2} y (t,z, \tilde S)$ sont bien $(u+1)$-positives.

\fig{[scale=1.2]{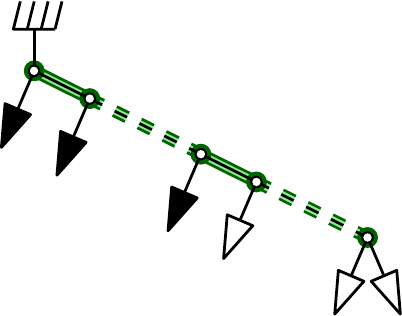}}{Un $\tilde S$-arbre cubique sans arête basculable avec un nombre arbitrairement grand de feuilles.}{noflip}

Pour prouver le dernier point, remarquons que $\tilde S(z,-1,t)$ compte les $\tilde S$-arbres enrichis munis de leur unique arbre couvrant  sans arêtes basculables. Mais de tels arbres peuvent être construits avec un nombre arbitrairement grand de feuilles (voir la figure \ref{noflip} pour  un exemple avec $k=3$), ce qui montre que le degré en $z$ d'une telle série n'est pas borné.
\end{proof}

\section{La $\boldsymbol{(u+1)}$-positivité de $\boldsymbol{(R^n - z^n)/u}$}

Nous prouvons ici un résultat, plutôt difficile, qui nous sera nécessaire lors de l'étude du comportement asymptotique des cartes forestières régulières eulériennes. Sa démonstration fait intervenir une notion de $(u+1)$-positivité plus complexe que celle de la sous-section \ref{ss:arbresforestiers}. Voici son énoncé.

\begin{theo} \label{Pestpositif}
Soit $q$ un entier supérieur ou égal à $2$. Appelons $R(z,u)$ l'unique série formelle telle que 
\begin{equation}
\label{uren}
R = z + u \, \sum_{i \geq 1} \frac{((2q-1)i)!}{ i! ((q-1)i)! ((q-1)i+1)!} R^{(q-1)i+1}.
\end{equation} 
Alors pour tout $n \in \{1,\dots,q\}$, la série $(R^n - z^n)/u$ est $(u+1)$-positive.
\end{theo}

Plus précisément, grâce à l'identité 
$$ R^n - z^n = (R-z)\,\sum_{k=0}^{q-1} z^{q-1-k} R^k,$$
nous déduisons le théorème précédent de la proposition suivante.

\begin{prop} Soient $q \geq 2$ et $R(z,u)$ la série définie par  \eqref{uren}. Alors pour tout entier $k \in \ens{0,\dots,q-1}$, la série $(R-z)R^k/u$ est $(u+1)$-positive.
\end{prop}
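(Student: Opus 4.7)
The plan is to reduce the proposition to a $(u+1)$-positivity statement for a single product and then to exhibit a stable class of forested enriched structures whose generating function realises this product, so that Proposition \ref{p:stable} applies.

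\textbf{Reduction.} From \eqref{uren} we have $R-z=u\,\phi(R)$ where
$$\phi(x) = \sum_{i \geq 1} \frac{((2q-1)i)!}{i!((q-1)i)!((q-1)i+1)!}\,x^{(q-1)i+1}$$
has nonnegative integer coefficients. Consequently $(R-z)R^k/u=\phi(R)R^k$, and it suffices to show that $\phi(R)R^k$ is $(u+1)$-positive for every $k\in\ens{0,\dots,q-1}$. The base case $k=0$ is exactly Corollary~\ref{c:RSupos} specialised to the $2q$-regular setting (with $t=1$, $S=0$).

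\textbf{Combinatorial model.} For $k\geq 1$, I would introduce a class $\mathcal F_k$ of \emph{$k$-augmented R-arbres enrichis}, built by taking a non-trivial R-arbre enrichi $T_0$ and attaching $k$ ordered R-arbres enrichis $T_1,\dots,T_k$ to the root arbre à pattes of $T_0$ by identifying a bourgeon at the root vertex with the root feuille of $T_i$. The resulting identification produces a new edge which is \emph{declared internal} and \emph{non-flippable}. The counting constraint is local: at the $i=1$ term in $\phi(R)$, the root arbre à pattes has exactly $q-1$ buds available at $v_0$; this is precisely why the construction requires $k\leq q-1$. The weight conventions (one $u$ per connected component of the spanning forest, the root component of $T_0$ unweighted as in Corollary~\ref{c:RSupos}, each $T_i$ fully weighted) give total weight $\phi(R)R^k$ per element.

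\textbf{Stability and conclusion.} I would then check the two stability conditions of Subsection~\ref{ss:arbresforestiers}. An edge of an element of $\mathcal F_k$ is flippable iff the associated sub-arbre has charge $0$ or $1$, which is a local condition on the arbre bourgeonnant that is independent of the spanning forest and preserved by flips (as in the proof of Proposition~\ref{Rstable}); the newly-created internal edges connecting $T_0$ to the $T_i$ are never flippable because their sub-arbre has charge exactly $1$, so they do not interfere with condition (ii). Since all distinguished attaching edges are internal and the underlying augmented object is connected, the maximum-forest configuration has exactly one connected component. Proposition~\ref{p:stable} then yields that $(\phi(R)R^k)/u^0=\phi(R)R^k$ is $(u+1)$-positive, completing the proof.

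\textbf{Main obstacle.} The delicate point is the combinatorial bookkeeping in the model $\mathcal F_k$: a naive ``replace $k$ buds'' construction produces an extra multinomial factor $\binom{(q-1)i}{k}$ compared with the coefficient $\binom{2(q-1)i+1}{(q-1)i}$ that appears in $\phi(R)R^k$. One must therefore choose the attachment procedure carefully — for instance, by distinguishing the $k$ attached R-subtrees as an ordered sequence and allowing them to absorb part of the multinomial coefficient that redistributes buds and R-subtrees among the $2(q-1)i+1$ non-root pattes — in order to recover exactly the generating function $\phi(R)R^k$ without over- or under-counting. Verifying this fit, and checking that the restriction $k\leq q-1$ suffices at every level $i\geq 1$ (not only at $i=1$), is where most of the work of the proof will reside.
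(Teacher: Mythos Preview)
Your reduction to $\phi(R)R^k$ is correct, and you rightly sense that the stable-class framework does not apply directly for $k\geq 1$. But what you call the ``main obstacle'' is not bookkeeping to be patched by a cleverer attachment rule: it is the whole difficulty, and your proposed route cannot close it. Proposition~\ref{p:stable} yields, per underlying connected tree, a contribution $u(1+u)^{b}$ --- a single factor $u$, because the maximal forest has one component. Now $(R-z)R^k$ counts $(k+1)$-tuples $(T_0,\dots,T_k)$ of R-arbres enrichis, and the minimal number of forest components on a fixed underlying tuple equals the number of non-leaf entries, which varies between $1$ and $k+1$; grouping by underlying tuple therefore cannot produce a uniform $u^1$ prefactor. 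Forcing connectedness by gluing the $T_i$ onto $T_0$ through \emph{internal} edges merges components and shifts the exponent of $u$, so your $\mathcal F_k$ does not have generating function $\phi(R)R^k$; gluing through \emph{external} edges preserves the exponent of $u$ but violates condition (i) of stability on those very edges. (Incidentally, in an R-arbre enrichi an edge whose subtree has charge $1$ is precisely \emph{flippable}, so your attaching edges are non-flippable only by fiat in the definition of $\mathcal F_k$.) The multinomial discrepancy you notice is a symptom of this structural mismatch, not a separate issue, and no rearrangement of buds among pattes repairs it.

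The paper's argument is genuinely different. It keeps the tuples and builds a retraction $\rho$ onto tuples of the special form $(R,f,\dots,f)$ with $f$ a single leaf. This rests on decomposing an R-arbre into its \emph{maximal} R- and B-subtrees and on maps $\Lambda_\ell$ with two partial inverses $\Delta_{\ell,0},\Delta_{\ell,1}$ --- one preserving the component count, the other incrementing it by one. Each fibre $\rho^{-1}(R,f,\dots,f)$ then contributes exactly $(1+u)^{m(R)}$, where $m(R)$ depends only on the underlying blossoming tree of $R$, not on its forest. Only at that point is Proposition~\ref{p:stable} invoked, on the single non-trivial R-arbre $R$. The crucial $(1+u)^{m(R)}$ factor is manufactured by the $\Lambda/\Delta$ bijection, not by flippable edges --- this is the idea your plan is missing.
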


\begin{proof} Nous conseillons au lecteur de relire les précédentes sections de ce chapitre avant d'attaquer cette preuve.

 Dans cette preuve, tous les arbres bourgeonnants sont $2q$-réguliers -- autrement dit le degré de chaque sommet est $2q$. Dans ce contexte\footnote{Il n'existe pas de S-arbres enrichis sans sommet de degré impair (voir section \ref{s:bourgeonnants} p. \pageref{s:bourgeonnants} où il est indiqué que $S=0$), et donc les sous-arbres ne peuvent avoir une charge nulle.}, un R-arbre enrichi est un arbre bourgeonnant forestier $R = (T,F)$ tel que $(i)$ la charge totale est $1$ et $(ii)$ tout sous-arbre associé à une arête externe  a pour charge $1$. On définit de même les \textit{B-arbres enrichis} comme les arbres bourgeonnants forestiers de charge $-1$ tels que la condition $(ii)$ soit vérifiée. Par convention, un bourgeon seul (sans sommet) est considéré comme un B-arbre enrichi. Nous nous référerons  à la figure \ref{decomax} pour des exemples de R- et B-arbres maximaux.

\noindent \textbf{1. Principe de la preuve.} Nous interprétons 
$(R-z)R^k$ comme la série génératrice des $(k+1)$-uplets $(R_0,\dots,R_k)$ de R-arbres enrichis $2q$-réguliers tels que $R_0$ n'est pas réduit à une feuille, où la variable $z$ compte les feuilles et  $u$ les composantes dans les $k+1$ forêts. En effet, l'équation \eqref{uren} n'est que la réécriture de  \eqref{bouger} quand $t=1$, $d_{2q} = 1$ et $d_{k} = 0$ pour $k \neq 2q$.

La notion d'arête basculable ne suffit pas ici pour prouver la $(u+1)$-positivité de cette série (divisée par $u$). Tentons d'expliquer pourquoi. Le principal argument de la preuve du lemme \ref{l:stable} réside dans le fait que tout arbre forestier d'une classe stable peut être obtenu à partir d'un arbre forestier ayant une unique composante, et ce en ne changeant uniquement que la forêt de cet arbre.  Considérons maintenant des R-arbres $R_0,R_1,\dots,R_k$ qui ne sont pas feuilles.  Alors en notant $R^{\textrm{max}}_i$ l'arbre $R_i$ dans lequel toutes les arêtes sont internes, l'élément $(R^{\textrm{max}}_0,R^{\textrm{max}}_1,\dots,R^{\textrm{max}}_k)$ n'a pas une seule composante, mais $k+1$. Le précédent argument ne peut donc fonctionner ; il faudrait plutôt se ramener à un $(k+1)$-uplet avec une seule composante, donc forcément un $(k+1)$-uplet dont chaque R-arbre, à part le premier, est une feuille.

L'idée est de greffer successivement sur $R_0$ les arbres $R_1,\dots,R_k$ jusqu'à obtenir un $(k+1)$-uplet $\pare{R'_0,f,\dots,f}$, où $f$ est le R-arbre enrichi réduit à une feuille.  Le point plus difficile est de garantir la caractère bijectif d'une telle transformation. Une fois ces greffes effectuées, nous appliquerons la proposition \ref{p:stable} pour prouver la $(u+1)$-positivité souhaitée.   

\noindent \textbf{2. Décomposition en arbres maximaux.} Fixons un R- ou B-arbre enrichi $A$. Un \textit{sous-R-arbre} (resp. un \textit{sous-B-arbre}) de $A$ est un sous-arbre\footnote{On rappelle qu'un sous-arbre résulte de la coupure d'une arête. Par conséquent, il est strictement inclus dans l'arbre bourgeonnant.} (pas forcément associé à une arête externe) de charge $+1$ (resp. $-1$) ou une feuille (resp. ou un bourgeon). Cet arbre est bien un R-arbre (resp. un B-arbre) enrichi. Il est dit \textit{maximal} s'il n'est inclus dans aucun autre sous-R-arbre ou\footnote{Le "ou" s'applique et pour les R-arbres maximaux et pour les B-arbres maximaux. Ainsi l'intersection de deux arbres maximaux est toujours vide car le contraire impliquerait que l'un des deux est inclus dans l'autre.} sous-B-arbre. Si $R_1,\dots,R_s$ (resp. $B_1,\dots,B_t$) désignent les R-arbres (resp. B-arbres) maximaux de $A$ ordonnés selon un parcours préfixe de l'arbre, alors la suite $(R_1,\dots,R_s,B_1,\dots,B_t)$ est la \textit{décomposition en arbres maximaux} de $A$. Tous ces arbres sont disjoints deux à deux. La décomposition ne dépend pas de la forêt couvrante. La figure \ref{decomax} montre un exemple d'une telle décomposition.

\fig{[scale=1.3]{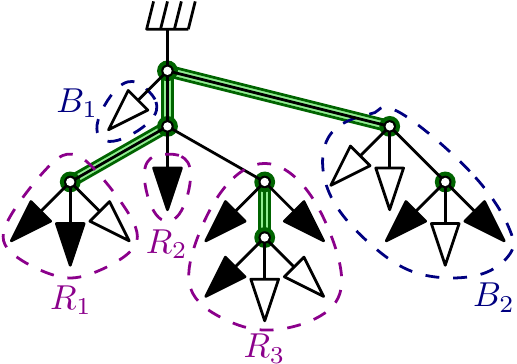}}{La décomposition en arbres maximaux de cet R-arbre enrichi est $(R_1,R_2,R_3,B_1,B_2)$.}{decomax}

Chaque feuille est un sous-R-arbre de $A$, donc est incluse dans un arbre maximal. De même, tout bourgeon fait partie d'un arbre maximal. Par conséquent, la somme des charges attachées aux arbres maximaux de $A$ est égale à la charge totale de $A$. Ainsi, si $A$ est un R-arbre enrichi, on a $s=t+1$, où $s$ désigne le nombre de R-arbres maximaux de $T$ et $t$ le nombre de B-arbres maximaux. De même, si $A$ est un B-arbre enrichi, on a $t=s+1$. 

Du reste, si $A$ a au moins un sommet, le nombre d'arbres maximaux de $A$ est au moins égal au nombre d'enfants du sommet racine, à savoir $2q-1$. Le nombre de B-arbres maximaux dans un R-arbre enrichi non réduit à une feuille est donc au moins égal à $q-1$.

\noindent \textbf{3. Décrivons une application $\boldsymbol \lambda$} qui envoie les couples $(R,B')$ sur les couples $(B,R')$ tels que :
\begin{itemize}
\item $R$ et $R'$ sont des R-arbres enrichis,
\item $B$ et $B'$ sont des B-arbres enrichis,
\item $R$ et $B$ ne sont pas réduits à une feuille ou un bourgeon,
\item $R'$ et $B'$ sont respectivement (strictement) inclus dans $R$ et $B$.
\end{itemize}
Considérons un couple $(R,B')$ et appelons $(R_1,\dots,R_{t+1},B_1,\dots,B_t)$ la décomposition en arbres maximaux de $R$. Dans $R$, intervertissons pour chaque $i \in \ens{1,\dots,t}$ les places de $R_i$ et $B_i$, y compris les arêtes sur lesquelles ces arbres sont enracinés, puis remplaçons $R_{t+1}$ par $B'$. Si $B'$ n'est pas un bourgeon et si l'arête sur laquelle est maintenant enraciné $B'$ n'est pas dans la forêt, nous la rajoutons à la forêt. Notons l'arbre obtenu $B$. On pose alors $R' = R_{t+1}$ et définissons $(B,R') = \lambda(R,B')$. Un exemple est montré figure \ref{il}.

\fig{[width=\textwidth]{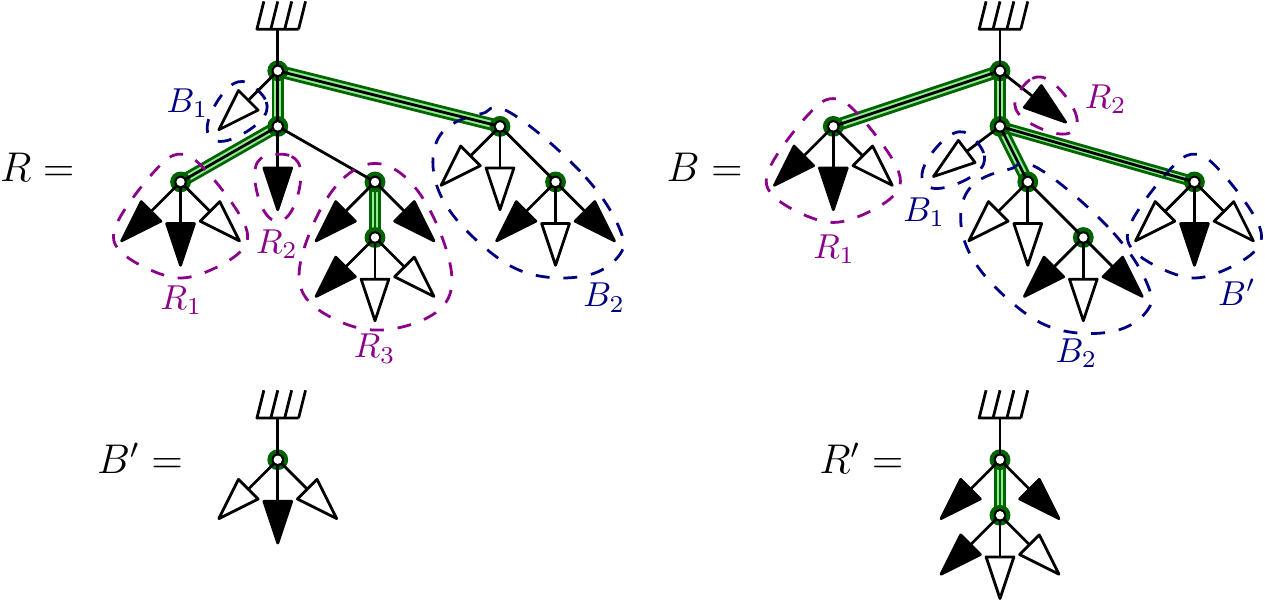}}{Quatre arbres enrichis $R$, $B$, $R'$, $B'$ tels que $(B,R') = \lambda(R,B')$.}{il}

Prouvons que $B$ est un B-arbre enrichi. Comme nous avons enlevé un sous-arbre de charge $+1$ pour le remplacer par un arbre de charge $-1$, la charge totale de $B$ est $-1$. Montrons maintenant la condition $(ii)$. Soit $e$ une arête externe de $B$. Si $e$ est une arête dans $B'$, alors le sous-arbre associé à $e$ est de charge $1$ par définition des B-arbres enrichis. Sinon $e$ correspond à une arête de $R$ car la seule arête de $B$ qui n'appartient ni à $B'$, ni à $R$, est l'arête sur laquelle est enracinée $B'$, mais par construction, cette arête est interne. Le sous-arbre associé à $e$ dans $R$ est donc un sous-R-arbre de $R$. Deux possibilités alors : soit $e$ appartient à un arbre maximal de $R$ différent de $R_{t+1}$, soit $e$ est l'arête sur laquelle est enraciné $R_i$ avec \mbox{$i \in \ens{1,\dots,t}$}. Dans tous les cas, le sous-arbre associé à $e$ dans $B$ est de charge $1$.

Montrons maintenant que  $(R_1,\dots,R_t,B_1,\dots,B_t,B')$ est la décomposition en arbres maximaux de $B$. Soit $S$ un sous-arbre de $R$ contenant (strictement) un arbre maximal de $R$ et notons $c$ sa charge. \'Etant donné que nous avons remplacé les R-arbres maximaux par des B-arbres et les B-arbres maximaux par des R-arbres, la charge de $S$ après application de $\lambda$ est égale à $-c$. Comme $c \notin \, \ens{-1,1} $, on a également $-c \notin \, \ens{-1,1}$ : le sous-arbre $S$ n'est pas maximal dans $B$.

\noindent \textbf{Remarque.} Il était  nécessaire d'échanger  les places de $R_i$ et $B_i$ dans $R$ pour chaque $i \in \ens{1,\dots,t}$. Si on ne faisait que remplacer $R_{t+1}$ par $B'$, alors l'arbre $B'$ ne serait plus forcément le dernier des B-arbres maximaux (et il est également possible qu'il ne soit pas maximal !). La figure \ref{il} permet de s'en convaincre.

\noindent \textbf{4. Les applications réciproques $\boldsymbol{\delta_0}$ et $\boldsymbol{\delta_1}$.}
Nous pouvons voir que l'application $\lambda$ est surjective. Plus précisément, montrons que tout couple de type $(B,R')$ admet exactement deux antécédents $\delta_0(B,R')$ et $\delta_1(B,R')$, sauf quand $R'$ est une feuille auquel cas il n'admet qu'un unique antécédent $\delta_1(B,R')$. 

Si on note $(R_1,\dots,R_{t},B_1,\dots,B_{t+1})$ la décomposition en arbres maximaux de $B$, alors en échangeant les places de $R_i$ et $B_i$ pour chaque $i \in \ens{1,\dots,t}$ et en remplaçant $B_{t+1}$ par $R'$, nous obtenons un R-arbre enrichi\footnote{Pour prouver que $R$ est un R-arbre enrichi, on peut s'inspirer de la preuve ci-dessus où on montre que $B$ est un B-arbre enrichi.} $R$. Lorsque $R'$ n'est pas une feuille, nous pouvons décider si l'arête sur laquelle est enraciné $R$ est interne ou pas. On note $\delta_0(B,R')$ le couple $(R,B_{t+1})$ obtenu en rendant cette arête interne, $\delta_1(B,R')$ en rendant cette arête externe. Lorsque $R'$ est une feuille, le couple $(R,B_{t+1})$ est désigné par $\delta_1(B,R')$. La figure \ref{delta01} illustre cette notion.

\fig{[width=\textwidth]{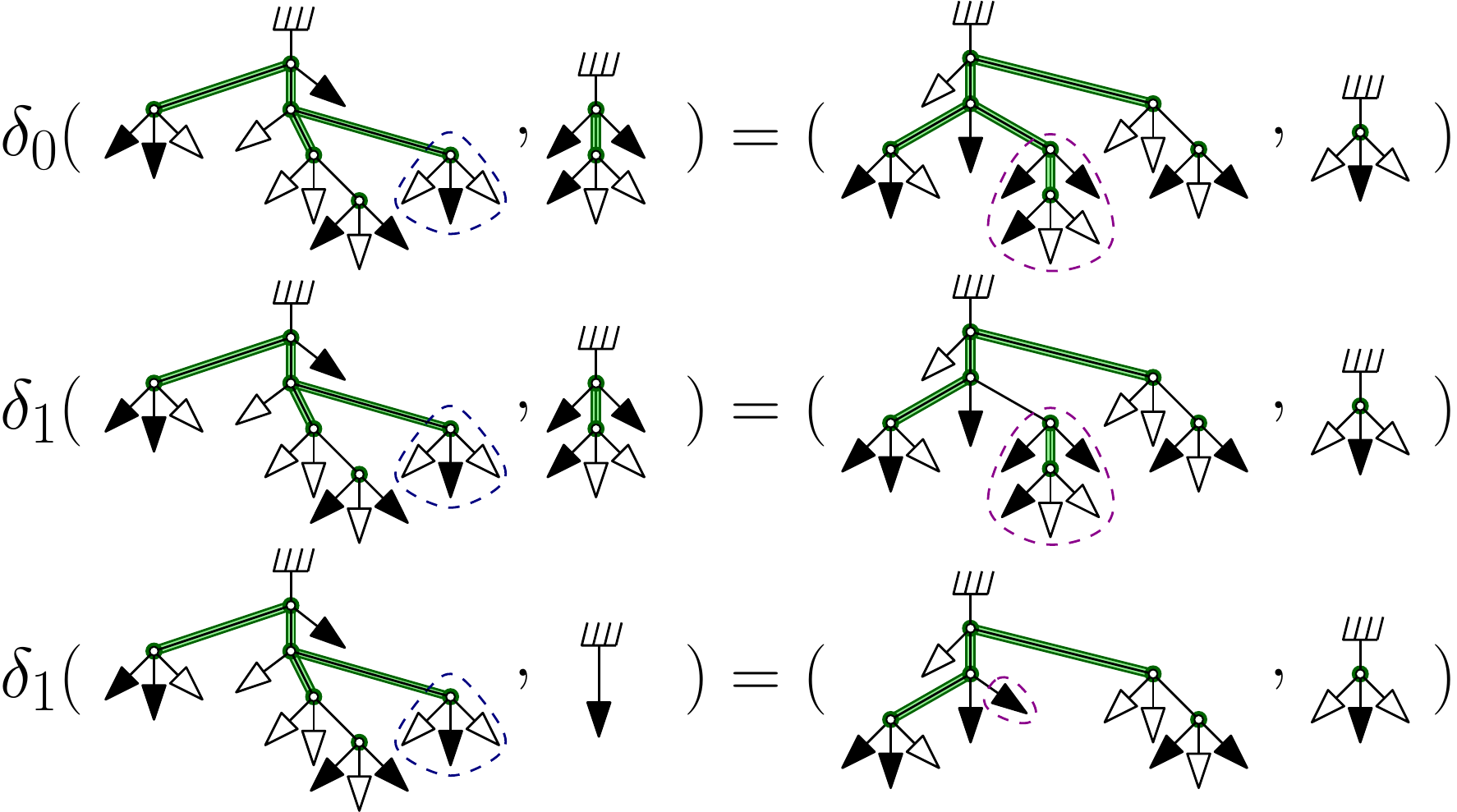}}{Trois exemples d'image par $\delta_0$ ou $\delta_1$. (Nous avons entouré $B'$ et $R'$.)}{delta01}

Nous pouvons alors facilement vérifier l'équivalence
$$\lambda (R,B') = (B,R') \quad \Longleftrightarrow \quad (R,B') = \delta_0(B,R')\ \textrm{ ou }\ (R,B') = \delta_1(B,R').$$
Notons en outre que $\delta_0(B,R')$ a le même nombre de composantes que le couple $(B,R')$ alors que $\delta_1(B,R')$ a exactement une composante en plus. (En effet, $B_{t+1}$ est soit un bourgeon, soit attaché à $B$ par une arête interne.)

\noindent \textbf{5. Les applications $\boldsymbol{ \Lambda_\ell}$, $\boldsymbol{\Delta_{\ell,0}}$, $\boldsymbol{\Delta_{\ell,1}}$.} Pour $k \in \ens{1,\dots,q}$, on note $E$ l'ensemble des $(k+1)$-uplets de R-arbres enrichis $(R_0,\dots,R_k)$ dans lesquels $R_0$ n'est pas réduit à une feuille. Pour $\ell \in \ens{1,\dots,k}$, on définit $E_\ell$ comme le sous-ensemble de $E$ restreint aux éléments $(R_0,\dots,R_k)$ tels que $R_\ell$ est une feuille. Pour un $(k+1)$-uplet qui n'est pas dans $E_\ell$, nous voulons définir une application $\Lambda_\ell$ qui enlève des parties de $R_\ell$ pour les mettre dans $R_0$, plus précisément dans le $\ell$-ième B-arbre maximal\footnote{On rappelle (voir la fin du point 2) qu'un R-arbre a au moins $q - 1$ (et donc $k$) B-arbres maximaux.} de $R_0$. 

Soient $\ell \in \ens{1,\dots,k}$ et $(R_0,\dots,R_k) \in E \backslash E_\ell$. Considérons $B_\ell$ le $\ell$-ième B-arbre maximal dans la décomposition en arbres maximaux de $R_0$ et notons $(\tilde B_\ell, \tilde R_\ell)$ l'image de $(R_\ell,B_\ell)$ par $\lambda$. Remplaçons $B_\ell$ par $\tilde B_\ell$ dans $R_0$ en veillant à ce que l'arête sur laquelle $\tilde B_\ell$ est enraciné appartienne à la forêt et notons $\tilde R_0$ le R-arbre enrichi obtenu. On définit alors $\Lambda_\ell(R_0,\dots,R_k)$ comme $(\tilde R_0,R_1,\dots,R_{\ell-1},\tilde R_\ell,R_{\ell+1},\dots,R_k)$. La figure \ref{LambdaDelta} montre deux exemples d'application de $\Lambda_\ell$, avec $\ell = 1$.

\fig{[width=\textwidth]{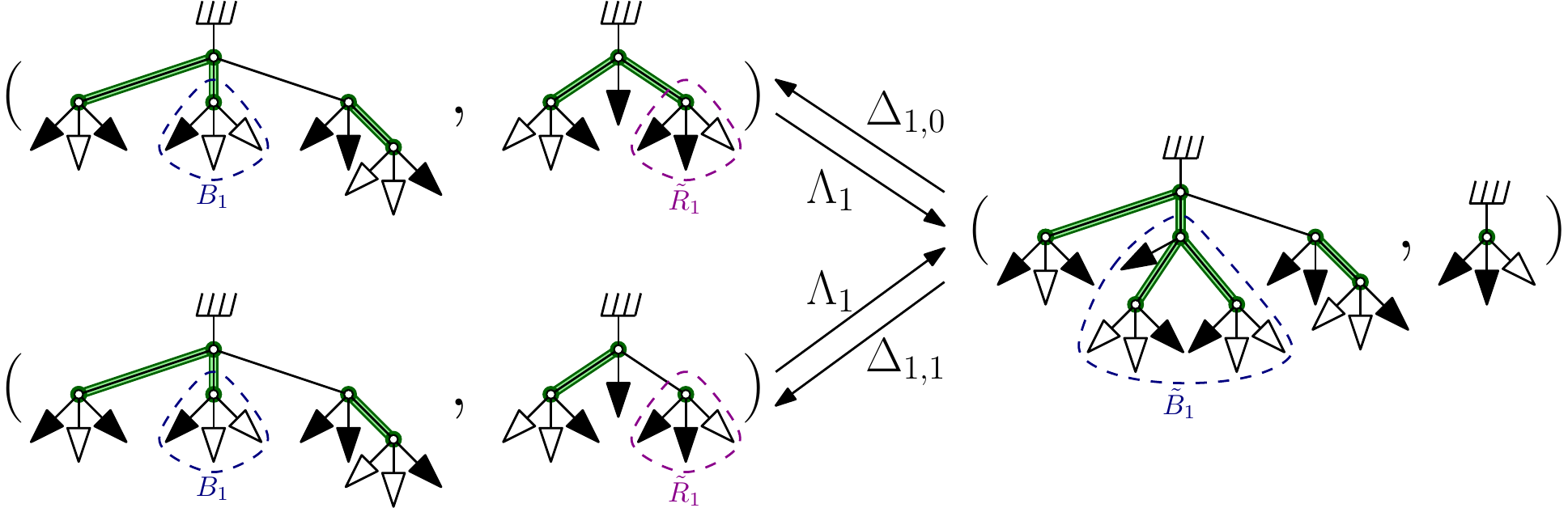}}{Exemples d'application de $\Lambda_\ell$, $\Delta_{\ell,0}$ et $\Delta_{\ell,1}$, avec $\ell=k=1$ et $q=2$.}{LambdaDelta}

Posons $x = (\tilde R_0,R_1,\dots,R_{\ell-1},\tilde R_\ell,R_{\ell+1},\dots,R_k)$. Si $\tilde R_{\ell}$ n'est pas une feuille, alors $x$ admet deux antécédents par $\Lambda$. Plus précisément, considérons $\tilde B_\ell$ le $\ell$-ième B-arbre maximal de $\tilde R_0$ et remplaçons cet arbre par $B'_\ell$ et $\tilde R_\ell$ par $R'_\ell$, où $(R'_\ell,B'_\ell)$ est respectivement égal à $\delta_0(\tilde B_\ell,\tilde R_\ell)$, et à $\delta_1(\tilde B_\ell,\tilde R_\ell)$. On obtient ainsi les deux antécédents qu'on appellera  $\Delta_{\ell,0}(x)$ et $\Delta_{\ell,1}(x)$. Si $\tilde R_\ell$ est une feuille, alors $x$ admet un unique antécédent par $\Lambda_\ell$ qu'on notera $\Delta_{\ell,1}(x)$ et qu'on définit de manière similaire.
Plus généralement, $\Delta_{\ell,0}$ et $\Delta_{\ell,1}$ peuvent être définies pour tout $(k+1)$-uplet $x=(R_0,R_1,\dots,R_k) \in E$ où le $\ell$-ième B-arbre maximal de $R_0$ n'est pas un bourgeon. Dans le cas de $\Delta_{\ell,0}$ sont également exclus les éléments de $E_\ell$. Comme précédemment, nous pouvons noter que $\Delta_{\ell,0}(x)$ a le même nombre de composantes que $x$ alors que $\Delta_{\ell,1}(x)$ a exactement une composante de plus que $x$.


\noindent \textbf{6. Associer à tout $\boldsymbol{x \in E}$ un élément $\boldsymbol {\rho(x) \in \bigcap_{\ell = 1}^k E_\ell}$.} 
Soient $x = (R_0,\dots,R_k) \in E$ et \mbox{$\ell \in \ens{1,\dots,k}$}. Si $x$ n'appartient pas à $E_\ell$ (ce qui veut dire que $R_\ell$ n'est pas une feuille), alors on peut lui appliquer $\Lambda_\ell$. Dans ce cas-là, en notant $(R_0',\dots,R_k') = \Lambda_\ell(R_0,\dots,R_k)$, nous voyons que $R_\ell'$ est un sous-arbre strict de $R_\ell$ -- voir les définitions de $\lambda$ et $\Lambda_\ell$ pour s'en convaincre. Donc en appliquant $\Lambda_{\ell}$ à $x$ suffisamment de fois, nous finissons par obtenir un élément de $E_\ell$ car toute suite strictement décroissante (pour l'inclusion) de R-arbres enrichis termine par une feuille.

Ainsi il existe $k$ entiers naturels $n_1,\dots,n_k$ tels que $$\Lambda^{(n_k)}_k \circ \dots \circ \Lambda^{(n_1)}_1 (x) \in \bigcap_{\ell = 1}^k E_\ell,$$
où $\Lambda^{(n)}$ désigne l'expression $\Lambda \circ \dots \circ \Lambda$, où $\Lambda$ apparaît $n$ fois. On appelle $\rho(x)$ cet élément. 
La figure \ref{representantdelta} montre un exemple de calcul de $\rho(x)$.

\fig{[width=\textwidth]{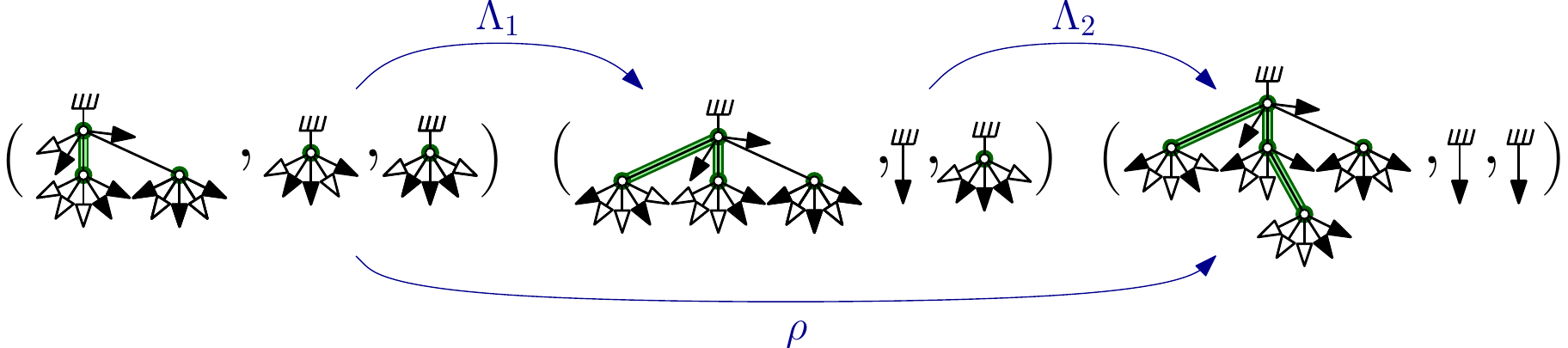}}{Image par $\rho$, avec $q=3$ et $k = 2$.}{representantdelta}

\noindent \textbf{7. Description de $\boldsymbol{\rho^{-1}(y)}$.} Soient $f$ le R-arbre enrichi uniquement constitué d'une feuille et $R$ un R-arbre enrichi différent de $f$. Posons $y = (R,f,\dots,f)$ de sorte que $y \in \bigcap_{\ell = 1}^k E_\ell$. On ne peut appliquer à $y$ qu'un nombre fini de fois les applications $\Delta_{\ell,0}$ et $\Delta_{\ell,1}$, où $\ell \in \ens{1,\dots,k}$. En effet, à chaque fois qu'on applique une de ces deux fonctions, le $\ell$-ième B-arbre maximal qui apparaît dans la décomposition en arbres maximaux de $R$ diminue. 

Plus précisément, définissons par induction $m(B)$ en posant $m(B) = 0$ si $B$ est un bourgeon, et $m(B) = m(B') + 1$ sinon, où $B'$ est le dernier B-arbre maximal qui apparaît dans la décomposition en arbres maximaux de $B$. On peut montrer par récurrence qu'on peut appliquer au plus $m(B_\ell)$ fois 
$\Delta_{\ell,0}$ et $\Delta_{\ell,1}$ à $y$, où $B_\ell$ est le $\ell$-ième arbre maximal de $R$ (et la borne est exacte).

Or $\Delta_{\ell,0}$ et $\Delta_{\ell,1}$ constituent les applications réciproques de $\Lambda_\ell$. Donc tout élément $x \in \rho^{-1}(y)$ peut s'écrire sous la forme 
\begin{equation} \label{xepsilon}
x =   \left(\Delta_{1,\epsilon^1_{p_1}} \circ \dots \circ \Delta_{1,\epsilon^1_{2}} \circ \Delta_{1,\epsilon^1_{1}}\right)  \circ \dots \circ \left(\Delta_{k,\epsilon^k_{p_k}} \circ \dots \circ \Delta_{k,\epsilon^k_{2}} \circ \Delta_{k,\epsilon^k_{1}}\right) (y),
\end{equation}
avec pour tout $\ell \in \ens{1,\dots,k}$ 
\begin{equation} \label{suiteepsilon}
0 \leq p_\ell \leq m(B_\ell), \quad \epsilon^\ell_1 = 1 \quad \textrm{et} \quad  \forall i \in \, \ens{1,\dots,p_\ell}\ \  \epsilon^\ell_i \in \, \ens{0,1}.
\end{equation}

(On rappelle qu'on ne peut pas appliquer $\Delta_{\ell,0}$ à $y$. C'est la raison pour laquelle \mbox{$\epsilon^\ell_1 = 1$}.) Il n'est pas très difficile de voir
 que cette écriture de $x$ est unique \footnote{Démonstration rapide. Considérons deux suites $(\epsilon_i^{\ell})_{i \in \ens{1,\dots,p_\ell}}^{\ell \in \ens{1,\dots,k}}$ et $(\tilde \epsilon_i^{\ell})^{\ell \in \ens{1,\dots,k}}_{i \in \ens{1,\dots,\tilde p_\ell}}$ satisfaisant \eqref{suiteepsilon} avec $p_\ell < \tilde p_\ell$ pour un certain $\ell$, et notons respectivement $x$ et $\tilde x$ les éléments de $\rho^{-1}(y)$ associés à ces deux suites par \eqref{xepsilon}. On voit alors que $\Lambda^{(p_\ell)}(x) \in E_\ell$ et $\Lambda^{(p_\ell)}(\tilde x) \notin E_\ell$ (l'application $\Lambda_\ell$ modifie uniquement le $(\ell+1)$-ième R-arbre du $(k+1)$-uplet donc commute avec $\Lambda_{\ell'}$ avec $\ell' \neq \ell$), d'où $x \neq \tilde x$. Un argument similaire s'applique lorsque $\epsilon_i^\ell \neq \tilde \epsilon_i^\ell$.}  .  En outre, la différence entre le nombre total de composantes dans $x$ et le nombre de composantes dans $R$ est égale à la somme des $\epsilon^\ell_i$,  pour $\ell \in \ens{1,\dots,k}$ et $i \in \ens{1,\dots,p_\ell}$. Notons pour finir que le nombre total de feuilles est préservé par $\rho$ et donc $x$ a exactement $k$ feuilles de plus que $R$. Tout ceci est illustré par la figure \ref{treillil}.

\fig{[width=\textwidth]{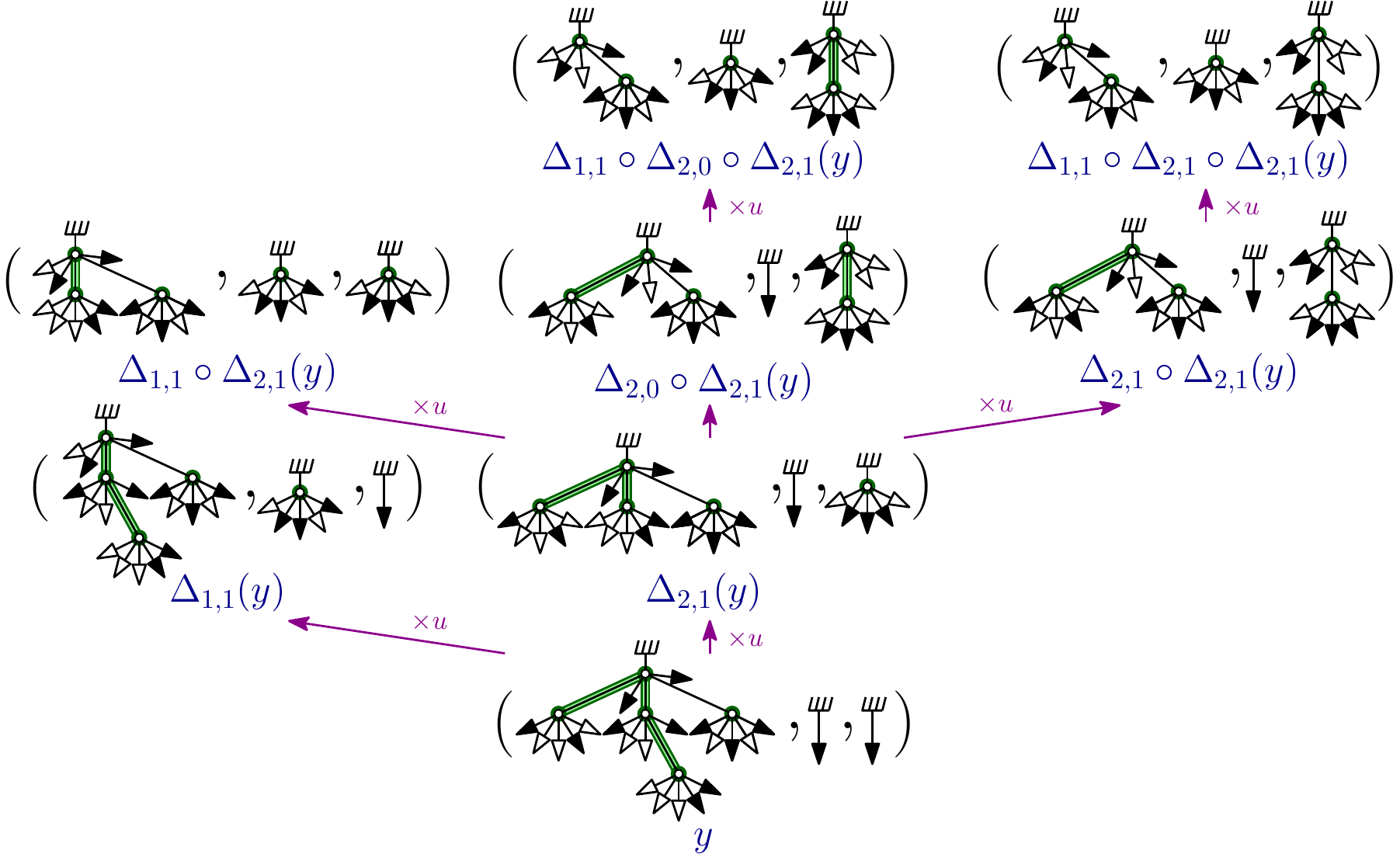}}{Tous les éléments appartenant à $\rho^{-1}(y)$, triés selon le nombre de composantes, où $y$ est le triplet du bas.}{treillil}

\noindent \textbf{8. Preuve de la $\boldsymbol{(u+1)}$-positivité.}
Nous interprétons $(R-z)R^k$ comme la série génératrice des éléments de $E$ :
$$(R-z) R^k   = \sum_{x \in E} z^{\face(x)} u^{\cc(x)},$$
où $\face(x)$ désigne le nombre total de feuilles dans $x$ et $\cc(x)$ le nombre total de composantes dans les forêts. En regroupant selon la valeur de $\rho(x)$, on obtient
$$(R-z) R^k   = \sum_{\substack{R\  \textrm{R-arbre enrichi} \\\textrm{non feuille}}} \,\, \sum_{x \in \rho^{-1}(R,f,\dots,f)}  z^{\face(x)} u^{\cc(x)},
$$
où $f$ désigne le R-arbre enrichi constitué d'une seule feuille. En utilisant les résultats du point 7, l'égalité devient
$$(R-z) R^k   = \sum_{\substack{R\  \textrm{R-arbre enrichi} \\\textrm{non feuille}}} z^{\face(R)+k} u^{\cc(R)}  \prod_{\ell = 1}^k \sum_{\substack{(\epsilon_i^{\ell}) \textrm{ suite} \\ \textrm{vérifiant \eqref{suiteepsilon}} }} u^{\sum_{i=1}^\ell \epsilon_i^\ell},
$$ 
où $B_\ell$ désigne dans \eqref{suiteepsilon} le $\ell$-ième B-arbre maximal de $R$ qu'on notera $B_\ell(R)$. Faisons correspondre à chaque suite $(\epsilon_i^{\ell})$ vérifiant \eqref{suiteepsilon} un mot de $0$ et de $1$ de longueur $m(B_\ell(R))$ en insérant avant $\epsilon_1^\ell$ le nombre adéquat de $0$. Cette correspondance est bijective et préserve la somme, d'où 
$$
\sum_{\substack{(\epsilon_i^{\ell}) \textrm{ suite} \\ \textrm{vérifiant \eqref{suiteepsilon}} }} u^{\sum_{i=1}^\ell \epsilon_i^\ell} = \sum_{\substack{w \textrm{ mot de }0\textrm{ et de }1 \\ \textrm{de longueur }m(B_\ell(R))\textrm{} }} u^{\textrm{somme des chiffres de }w}   
 = (1+u)^{m(B_\ell(R))}.
$$
Donc en posant $m(R) = m(B_1(R)) + \dots + m(B_k(R))$, on voit que
$$(R-z) R^k   = \sum_{\substack{R\  \textrm{R-arbre enrichi} \\\textrm{non feuille}}} z^{\face(R)+k} (1+u)^{m(R)} u^{\cc(R)}.$$
Or les statistiques $\face(R)$ et $m(R)$ ne dépendent pas de la forêt de $R$. On peut alors appliquer la proposition \ref{p:stable}, couplée avec la proposition \ref{Rstable}, pour prouver la $(u+1)$-positivité de $(R-z)R^k$.
\end{proof}

\section{Bijection bourgeonnante pour les cartes forestières}
\label{s:bbforet}

 La section \ref{s:bb} montrait, à travers les travaux d'Olivier Bernardi, comment on pouvait obtenir une carte planaire avec face marquée à partir d'un T-arbre. La présente section introduit la version forestière de cette transformation. Elle jouera le rôle d'une  passerelle entre les deux parties de ce mémoire. En effet, cette bijection donne naissance à une nouvelle activité pour le polynôme de Tutte, l'\textit{activité bourgeonnante}. On se référera pour cela à un chapitre ultérieur (chapitre \ref{c:bloact}).
 
 Cette bijection est l'interprétation combinatoire de l'identité \eqref{forfait}, à savoir 
 $\frac {\partial F} {\partial z} = \theta(t,R,S)$, où
\begin{equation} \label{btheta}
\theta(t,x,y) =  \sum_{i \geq 0} \sum_{j \geq 0}  \left(\frac {2\,t} {2i+j} T'_{2i+j}(t) + T_{2i+j}(t)\right) {2i+j \choose i,i,j}  x^i y^j.
\end{equation}
On rappelle (voir théorème \ref{tfemme} p. \pageref{tfemme} et sa preuve) que cette identité n'est qu'une spécialisation de l'équation~\eqref{eqmp} p.~\pageref{eqmp},  avec $g_k$  égal à la série génératrice des arbres à $k$ pattes enracinés sur une patte et $h_k$ égal à la série génératrice des arbres à $k$ pattes enracinés sur un coin. Bien sûr, nous pourrions nous contenter d'appliquer la bijection de la section~\ref{s:bb} p.~\pageref{s:bb} sur des cartes où les sommets seraient des boîtes noires dans lesquelles se trouveraient des arbres à pattes ; mais il est plus intéressant de voir cette bijection adaptée à des cartes où les arbres sont directement greffés sur les sommets, autrement dit : des cartes forestières. 

%
%

\subsection{T-arbres enrichis}

Un \textit{T-arbre enrichi} est un arbre bourgeonnant\footnote{Pour la définition d'arbre bourgeonnant, on se réfèrera à la section \ref{s:bourgeonnants} ou \ref{s:abe}.} $T$ muni d'une forêt couvrante $F$ tel que :
\begin{enumerate}
\item[(i)] $T$ ne comporte pas de demi-arête racine mais est enraciné sur un coin,
\item[(ii)] la charge totale de $T$ est $0$,
\item[(iii)] tout sous-arbre enraciné sur une arête externe a pour charge $0$ ou $1$.
\end{enumerate}
Comme les précédents arbres enrichis, un T-arbre enrichi n'est qu'un T-arbre (voir sous-section~\ref{ss:tarbre} p.~\pageref{ss:tarbre}) sur lequel on a greffé des arbres à pattes au niveau des sommets. La figure \ref{thetaarbre} montre un exemple de T-arbre enrichi.

Nous pouvons observer que l'ensemble des T-arbres est une classe stable  d'arbres forestiers (voir sous-section \ref{ss:arbresforestiers}). En effet, comme pour les R- et S-arbres enrichis, une arête est basculable si et seulement si le sous-arbre correspondant a pour charge $0$ ou $1$.
 
\fig{[scale=1.2]{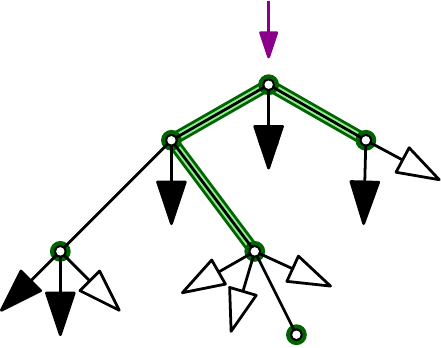}}{Exemple de T-arbre enrichi}{thetaarbre}

\begin{prop} 
%
La série génératrice des T-arbres enrichis avec un poids $z$ par feuille, un poids $u$ par composante non racine de la forêt et un poids $t$ par feuille et par arête est égale à $\theta(t,R,S)$, où les séries $R$ et $S$ sont définies par \eqref{bouger} et \eqref{bouse} et $\theta$ par \eqref{btheta}.
\end{prop}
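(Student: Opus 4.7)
Le plan est de décomposer un T-arbre enrichi au niveau du sommet racine, en suivant le schéma de la proposition \ref{p:tarbre} mais en tenant compte de la structure de forêt couvrante. Pour un T-arbre enrichi $(T,F)$, notons $A$ la composante connexe de $F$ contenant le sommet racine. Comme $T$ est enraciné sur un coin et non sur une demi-arête (condition (i)), cette composante $A$ est naturellement un arbre à pattes enraciné sur un coin. Les pattes de $A$ correspondent précisément aux arêtes externes de $T$ incidentes à $A$ ainsi qu'aux bourgeons et feuilles attachés aux sommets de $A$.

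Je classifierais ensuite ce que peut être chaque patte de $A$. Une patte est soit un bourgeon (charge $-1$), soit une feuille (charge $+1$), soit une arête externe menant à un sous-arbre. D'après la condition (iii), tout sous-arbre attaché à une arête externe a pour charge $0$ ou $1$, donc constitue soit un S-arbre enrichi, soit un R-arbre enrichi (où, par convention, une feuille isolée est considérée comme un R-arbre enrichi trivial ; cela permet d'ailleurs de fusionner naturellement les feuilles directement attachées à $A$ avec les R-arbres enrichis attachés à $A$). La condition (ii) sur la charge totale impose alors que le nombre de R-arbres enrichis attachés égale le nombre de bourgeons, disons $i$, tandis que le nombre de S-arbres enrichis $j$ est arbitraire. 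L'arbre $A$ a donc exactement $2i + j$ pattes.

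Il faut ensuite dénombrer les arbres à pattes $A$ à nombre de pattes fixé. Comme cela a été établi au début de la preuve du théorème \ref{tfemme}, la série génératrice des arbres à $k$ pattes enracinés sur un coin, comptés selon le nombre d'arêtes (variable $t$) et avec un poids $d_\ell$ par sommet de degré $\ell$, est exactement $T^c_k(t) = \frac{2t}{k}\, T'_k(t) + T_k(t)$. Quant au coefficient multinomial $\binom{2i+j}{i,i,j}$, il compte le nombre de façons d'attribuer les trois types possibles (bourgeon, R-arbre enrichi, S-arbre enrichi) aux $2i+j$ pattes de $A$, en respectant l'ordre cyclique fourni par le plongement planaire.

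Pour conclure, il reste à vérifier le bilan des poids. D'après la proposition \ref{beqRS}, chaque R-arbre enrichi contribue un facteur $R(z,u,t)$ et chaque S-arbre enrichi un facteur $S(z,u,t)$, où la variable $t$ compte déjà les arêtes internes ainsi que les feuilles non-racines et la variable $u$ les composantes non-racines de la forêt. La composante racine $A$ compte pour les arêtes internes comptabilisées par $T^c_{2i+j}(t)$ et n'introduit pas de poids en $u$ puisque ses sommets appartiennent à la composante racine. Les arêtes externes attachant les sous-arbres à $A$ sont également pondérées par $t$, mais il faut vérifier que chaque arête externe est bien comptée une seule fois (elle peut être attribuée à la décomposition au niveau de $A$ ou au niveau du sous-arbre attaché). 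Le point délicat — et principal obstacle de la rédaction rigoureuse — sera précisément cette comptabilité fine des poids : il faudra probablement adopter la convention que les pattes de $A$ portent un poids $t$ à la décomposition (comme c'est le cas pour les feuilles dans la proposition \ref{beqRS}), ce qui absorbe naturellement le poids des arêtes externes et des feuilles dans le bilan global. Une fois cette vérification effectuée, la sommation sur $i \geq 0$ et $j \geq 0$ donne exactement
\[
\sum_{i \geq 0}\sum_{j \geq 0} \left(\frac{2t}{2i+j}\, T'_{2i+j}(t) + T_{2i+j}(t)\right) \binom{2i+j}{i,i,j}\, R^i\, S^j \;=\; \theta(t,R,S),
\]
ce qui conclut la preuve.
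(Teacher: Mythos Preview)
Your approach is essentially the same as the paper's: decompose at the root component (an arbre \`a pattes enracin\'e sur un coin), classify the legs as bourgeons, R-arbres enrichis (including the trivial leaf), or S-arbres enrichis, use the charge condition to get $i$ bourgeons, $i$ R-arbres, $j$ S-arbres, and invoke $T^c_{2i+j}(t)$ together with Proposition~\ref{beqRS}.

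The one point to correct is your proposed convention for the $t$-bookkeeping. Putting a weight $t$ on each patte of $A$ is wrong: in $T^c_k(t)$ the pattes are \emph{not} weighted, only the edges of $A$ are. If you did weight the pattes by $t$, the $i$ bourgeons would each acquire a spurious factor $t$. The correct accounting, which the paper gives, is the reverse: by Proposition~\ref{beqRS} the variable $t$ in $R$ and $S$ already counts the root half-edge of each R- or S-arbre enrichi. When such an arbre is grafted onto a patte of $A$, its root half-edge and the patte together form the new external edge, and the $t$ carried by that root half-edge is exactly the weight of this edge. Bourgeons carry no $t$, as they should. This resolves your ``point d\'elicat'' without any convention change.
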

\begin{proof} (Nous choisissons de prouver directement ce résultat plutôt que de passer par la proposition \ref{p:tarbre} ; la variable $t$ s'interprète plus facilement comme cela.)
Pour obtenir un T-arbre enrichi, il suffit de prendre un arbre à pattes enraciné sur un coin (qui sera notre composante racine) puis d'y attacher des R-arbres enrichis, des S-arbres enrichis et des bourgeons au niveau des pattes. Pour que la charge de l'arbre soit nulle, il faut que le nombre de  R-arbres enrichis attachés à l'arbre racine soit égal au nombre de bourgeons attachés à l'arbre racine. 
En utilisant la proposition \ref{beqRS} et le fait que les arbres à $k$ pattes enracinés sur un coin sont comptés par
$
\left(\frac {2\,t} {k} T'_{k}(t) + T_{k}(t)\right)
$ (voir la preuve du théorème \ref{tfemme} p. \pageref{tfemme}), nous concluons que la série génératrice des T-arbres est  $\theta(t,R,S)$.

Toutefois, précisons pourquoi la variable $t$ compte le nombre de feuilles plus le nombre d'arêtes. Remarquons que lorsque nous attachons un R-arbre non feuille ou un S-arbre sur une patte, une arête se crée. Mais la racine d'un R- ou S-arbre est pondérée par $t$ et disparaît lorsqu'on la greffe à une patte. C'est donc elle qui contribue au poids $t$ de la nouvelle arête. Les autres arêtes se trouvent soit dans les R- et S-arbres, soit dans l'arbre racine. Dans tous les cas, elles sont chacune pondérées par $t$.
\end{proof}

La simple interprétation de $\theta(t,R,S)$ en termes d'arbres  bourgeonnants permet d'avoir une minoration sur les coefficients de $\pd F z$, qui nous sera utile par la suite.

\begin{cor} \label{c:minodeF}

Soient $F$ la série génératrice des cartes forestières et $(R,S)$ le couple de séries défini par \eqref{bouger} et \eqref{bouse}. Fixons $u \geq -1$. Notons $F_{f,a}$ (resp. $R_{f,a}$ ) le coefficient de $z^f t^a$ dans la série $F(z,u,t)$ (resp. $R(z,u,t)$). Alors pour $f \geq 2$, on a 
$$ 0 \leq \frac 1 u R_{f,a+1} \leq  f F_{f-1,a}.$$
\end{cor}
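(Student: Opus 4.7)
The lower bound $R_{f,a+1}/u \geq 0$ is immediate from Corollary~\ref{c:RSupos}, which asserts the $(u+1)$-positivity of $(R-zt)/u$: for $u \geq -1$ all $(u+1)$-coefficients evaluate to nonnegative real numbers, and since $f \geq 2$ the $zt$ term contributes nothing, so $R_{f,a+1}/u = [z^f t^{a+1}](R-zt)/u \geq 0$.

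The upper bound combines three ingredients. First, the series $F$ itself is $(u+1)$-positive, directly from the Tutte-polynomial description~\eqref{lientutteF} together with the classical fact that $T_C(u+1,1) \in \N[u+1]$; hence $F_{f-1,a}\geq 0$ whenever $u\geq -1$, and so $f F_{f-1,a} \geq (f-1) F_{f-1,a}$ trivially. Second, the identity $\pd F z = \theta(t,R,S)$ from Theorem~\ref{central}, after extracting the coefficient of $z^{f-2}t^a$ on both sides, yields $(f-1)F_{f-1,a} = [z^{f-2}t^a]\,\theta(t,R,S)$. Third, the functional equation $R - tz = tu\,\phi_1(t,R,S)$, divided by $tu$, gives $R_{f,a+1}/u = [z^f t^a]\,\phi_1(t,R,S)$ as soon as $f\geq 2$ (the $tz$ term being irrelevant at $z^f$).

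Putting these three ingredients together reduces the upper bound to the coefficient-wise inequality (which I aim to prove at each power of $u+1$)
\[
[z^f t^a]\,\phi_1(t,R,S) \;\leq\; [z^{f-2} t^a]\,\theta(t,R,S).
\]
My plan for this final step is to build an explicit combinatorial injection from the enriched substructures parametrised by $\phi_1(t,R,S)$ (root-tree with attached $R$-arbres enrichis, $S$-arbres enrichis and bourgeons of total charge $+1$) into the enriched $T$-arbres parametrised by $\theta(t,R,S)$ (total charge $0$), decreasing the leaf count by exactly two while preserving the $t$-weight exactly and matching the $u$-weight via the basculable-edge framework of Section~\ref{ss:arbresforestiers}.

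The main obstacle will be the construction of this injection: a canonical local surgery at the root-tree must simultaneously remove two leaves, create two extra edges in the root-tree, add one bourgeon to restore the charge-$0$ condition of a $T$-arbre, and map basculable edges bijectively to basculable edges so that the $(u+1)$-positive weights align. A case analysis on the local neighbourhood of the racine patte of the root-tree, guided by the stability statements of Proposition~\ref{Rstable}, should handle the various shapes of the root-tree in a uniform way. Once the injection is produced and all parameters are verified, summing over all objects yields the displayed inequality, and the corollary follows.
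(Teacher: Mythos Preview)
Your plan cannot work as stated, and the reason is instructive: the printed statement carries a typo. The paper's proof in fact establishes
\[
\frac{1}{u}\,R_{f,a+1} \;\le\; (f+1)\,F_{f+1,a},
\]
not $f\,F_{f-1,a}$. You can see the printed bound is false already in the cubic case with $f=2$, $a=3$: there $F_{1,3}=0$ (no cubic map has a single face), while $R_{2,4}/u = 2+4(u+1)>0$ by the expansion displayed after Corollary~\ref{c:RSupos}. Your reduction to the coefficient inequality $[z^{f}t^{a}]\,\phi_1(t,R,S)\le [z^{f-2}t^{a}]\,\theta(t,R,S)$ is algebraically correct \emph{given the printed indices}, but that inequality is false, so the leaf-dropping injection you sketch cannot exist.

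The paper's actual argument is a one-line surgery. Given a non-trivial R-arbre enrichi, replace its root half-edge by a bourgeon and reroot on the adjacent corner; this yields a T-arbre enrichi (the charge drops from $1$ to $0$). This injection preserves the number of leaves $f$ and the set of basculable edges, and decreases the $t$-weight by exactly one (the root half-edge was counted by $t$, the bourgeon is not). Comparing at each power of $u+1$ and then summing gives
\[
\frac{1}{u}\,R_{f,a+1} \;\le\; [z^{f}t^{a}]\,\theta(t,R,S) \;=\; [z^{f}t^{a}]\,\pd F z \;=\; (f+1)\,F_{f+1,a}.
\]
Once the index is corrected, your own reduction via $\phi_1$ and $\theta$ collapses to exactly this: you would need $[z^{f}t^{a}]\,\phi_1(t,R,S)\le [z^{f}t^{a}]\,\theta(t,R,S)$, which is precisely the comparison of R-arbres enrichis with T-arbres enrichis at the \emph{same} leaf count, handled by the trivial root~$\to$~bourgeon map. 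No case analysis is needed.
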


\begin{proof} Il existe une injection naturelle $\iota$ de l'ensemble des R-arbres enrichis non réduits à une feuille vers l'ensemble des T-arbres enrichis. Elle consiste à réenraciner tout R-arbre enrichi sur le coin précédant la racine, et à remplacer cette racine par un bourgeon (voir figure \ref{Rtheta}). Il est facile de vérifier que l'arbre obtenu est bien un T-arbre enrichi.

\fig{[scale=1.2]{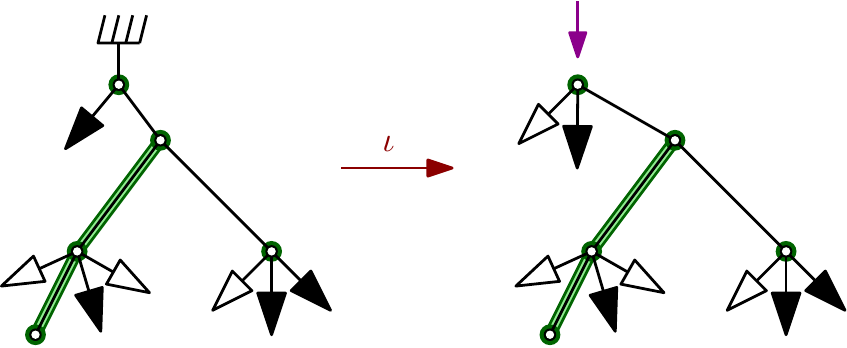}}{Image d'un R-arbre enrichi par l'injection $\iota$}{Rtheta}

En outre, il est facile de voir que l'injection $\iota$ préserve les feuilles, les arêtes et les arêtes basculables. De plus, si $\mu = u + 1$,
la série $R(z,\mu-1,t)/u$ (resp. \mbox{$\theta(t,R(z,\mu-1,t),S(z,\mu-1,t))$}) compte les R-arbres (resp. T-arbres) enrichis munis de leur unique arbre couvrant avec un poids $\mu$ par arête basculable. 
Comme $\iota$ est injective, nous avons
\begin{equation} \label{ineqRT}
0 \leq \frac 1 u R_{f,a+1,b} \,\mu^b \leq T_{f,a,b}\, \mu^b,
\end{equation}
où $R_{f,a,b}$ (resp. $T_{f,a,b}$) désigne le coefficient de $z^f t^a \mu^b$ dans $R(z,\mu-1,t)$ (resp. \mbox{$\theta(t,R(z,\mu-1,t),S(z,\mu-1,t))$}). (L'arbre racine d'un T-arbre enrichi n'est pas pondéré par $u$, d'où la présence du facteur $1/u$. De plus, le décalage de $1$ pour l'exposant en $t$ provient du fait que les R-arbres enrichis comportent une racine pondérée par $t$.)

Mais d'après le théorème \ref{central}, on a $\pd F z = \theta(t,R,S)$. Donc le nombre $T_{f,a,b}$ est également le coefficient de $z^f t^a \mu^b$ dans $\pd F z$. Ainsi, en sommant \eqref{ineqRT} sur tous les $b \geq 0$, nous obtenons l'inégalité voulue.\end{proof}

\subsection{La bijection forestière}

Décrivons rapidement la bijection $\Theta$ entre T-arbres enrichis $(T,F)$ et cartes forestières $(C,G)$ avec une face marquée $\chi$ qui généralise celle\footnote{Nous conseillons le lecteur de la relire avant d'aborder cette fin de chapitre.} de la section \ref{s:bb}. Elle :
\begin{itemize}
\item transforme les feuilles en faces non racine,
\item préserve le nombre de composantes,
\item préserve le nombre d'arêtes internes,
\item préserve les sommets et leur degré (ainsi que celui du sommet racine).
\end{itemize}
De plus, si $(T,F)$ et $(T',F')$ sont deux T-arbres enrichis avec $T = T'$, alors $C=C'$ et $\chi = \chi'$, où $(C,G,\chi)= \Theta(T,F)$ et $(C',G',\chi')= \Theta(T',F')$. 

\noindent \textbf{Remarque 1.} Cette dernière propriété (celle qui stipule que la carte reste la même si seule la forêt couvrante de l'arbre enrichi change) est ce qui nous motive dans l'adaptation de la bijection aux cartes forestières. Nous verrons ci-dessous (remarque 2) pourquoi.

Le processus de clôture $\Theta$ est essentiellement le même que celui décrit sous-section \ref{ss:cloture} : pour $(T,F)$ un T-arbre enrichi, nous relions tout couple bourgeon/feuille qui se suivent le long de la face externe. Nous répétons cela jusqu'à qu'il n'y ait plus de feuilles et de bourgeons. La face $\chi$ marquée est alors la face externe. On pose alors $\Theta(T,F) = (C,G,\chi)$, où $(C,G)$ est la carte forestière obtenue. En réalité, la forêt $F$ ne joue aucune rôle dans cette transformation. Les différentes étapes de cette bijection sont montrées figure \ref{thetaverscartes}.


\fig{[width = \textwidth]{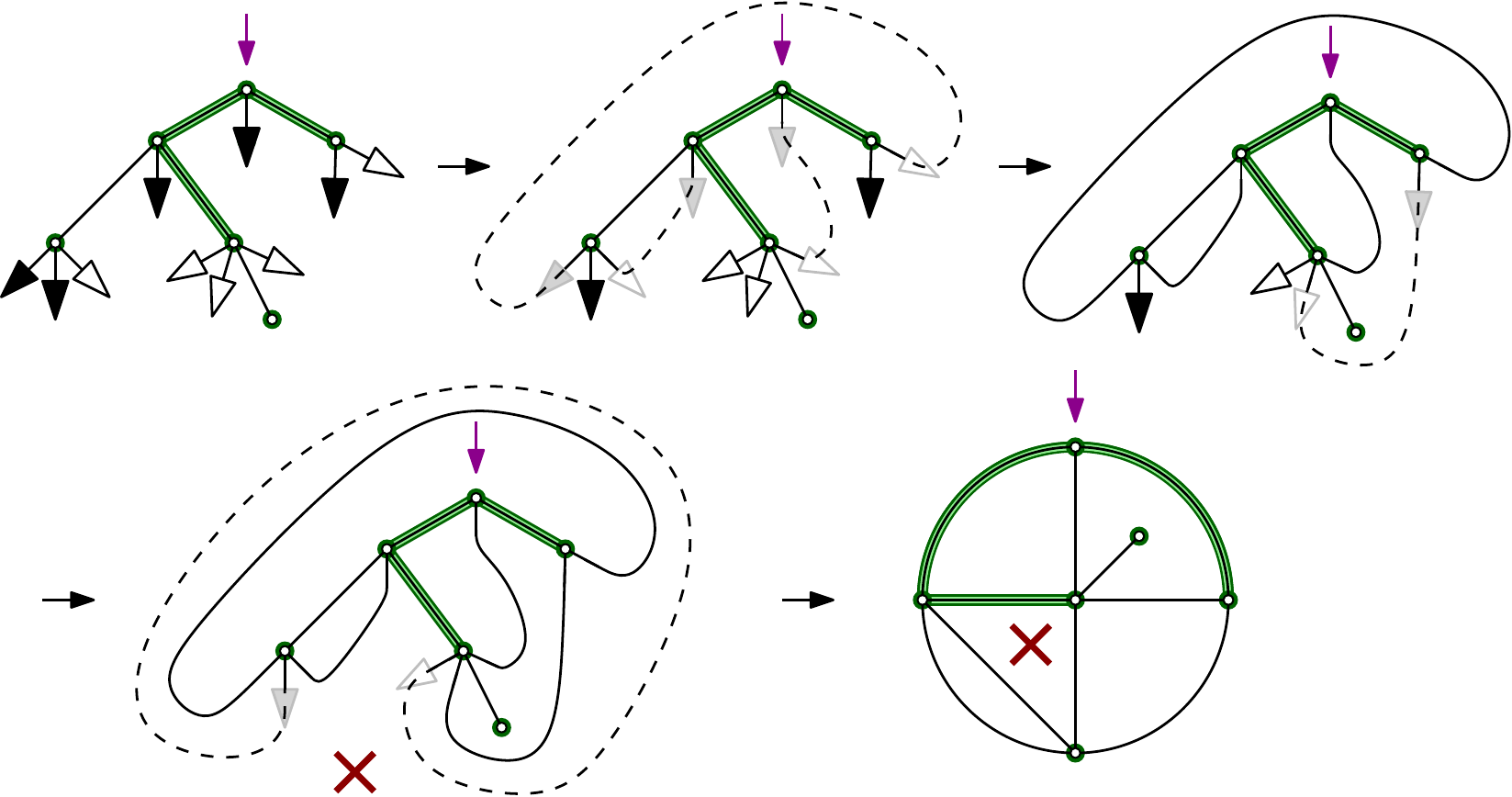}}{Les différentes étapes de la transformation $\Theta$. Les deux dernières cartes sont exactement les mêmes, la dernière est juste dessinée de manière plus esthétique.}{thetaverscartes}

%

%
%

Le processus d'ouverture $\Theta^*$, application réciproque de $\Theta$, est pratiquement le même que celui de la sous-section \ref{ss:ouverture}, sauf qu'il faut tenir compte en plus de la forêt couvrante : lors de chaque itération, l'ensemble $E$ considéré est l'ensemble des arêtes \textbf{externes} incidentes à la face $\chi$ qui ne sont pas des isthmes. \`A part cela, le reste ne change pas, nous transformons (en prenant certaines précautions) les arêtes de $E$ en couples bourgeon/feuille, selon une orientation dictée par la face $\chi$. On se réfèrera à la sous-section~\ref{ss:ouverture} p.~\pageref{ss:ouverture} pour avoir la description détaillée de cette transformation. Les différentes étapes sont illustrées figure \ref{cartesverstheta}. 

\fig{[width = \textwidth]{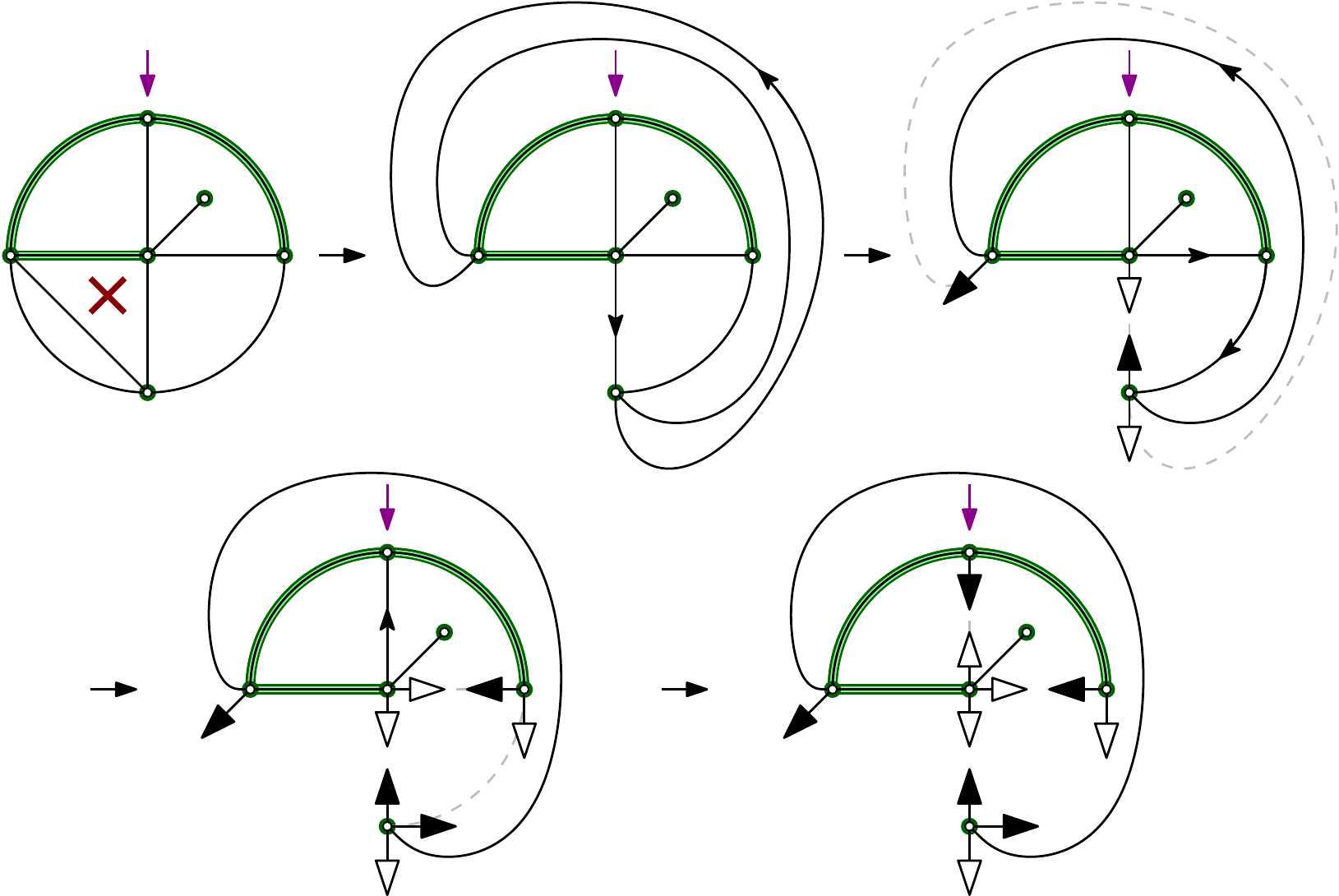}}{Les différentes étapes de la transformation $\Theta^*$.}{cartesverstheta}

\noindent \textbf{Remarque 2.} Soit $C$ une carte planaire. Expliquons de manière combinatoire pourquoi la série génératrice $A(u)$ des forêts couvrantes de $C$ avec un poids $u$ par composante non racine est $(u+1)$-positive. Bien sûr, cette $(u+1)$-positivité peut se justifier en termes du polynôme de Tutte : $A(u)$ est égal à $T_G(u+1,1)$. Mais ce qui est intéressant ici, c'est de voir cette $(u+1)$-positivité à travers le spectre de la bijection bourgeonnante. 

Considérons $\chi$ n'importe quelle face de $C$ (la face racine est un choix judicieux). Pour toute forêt couvrante $G$ de $C$, on peut associer un T-arbre enrichi $(T(G),F(G))$ par l'application $\Theta^*$. Appelons $\mathcal T_C$ l'ensemble $$\mathcal T_C = \enstq {T(G)} {G\textrm{ forêt couvrante de }C}.$$ D'après la propriété décrite plus haut, pour tout  T-arbre enrichi $(T,F)$ avec $T \in \mathcal T_C$, la carte et la face associées à $\Theta(T,F)$ restent $C$ et $\chi$. Par conséquent,  l'ensemble des forêts couvrantes de 
$C$ est en bijection (à travers $\Theta^*$) avec l'ensemble $\mathcal E$ des T-arbres enrichis $(T,F)$ tels que $T \in \mathcal T_C$. Comme $\Theta^*$ préserve le nombre de composantes dans la forêt, $A(u)$ est également la série génératrice de $\mathcal E$. Mais $\mathcal E$ est stable (car la classe des T-arbres enrichis est stable). Par conséquent, d'après la proposition \ref{p:stable}, la série $A(u)$ est $(u+1)$-positive. La figure \ref{bbforet} illustre ce raisonnement.

\fig{[height = 0.5 \textheight]{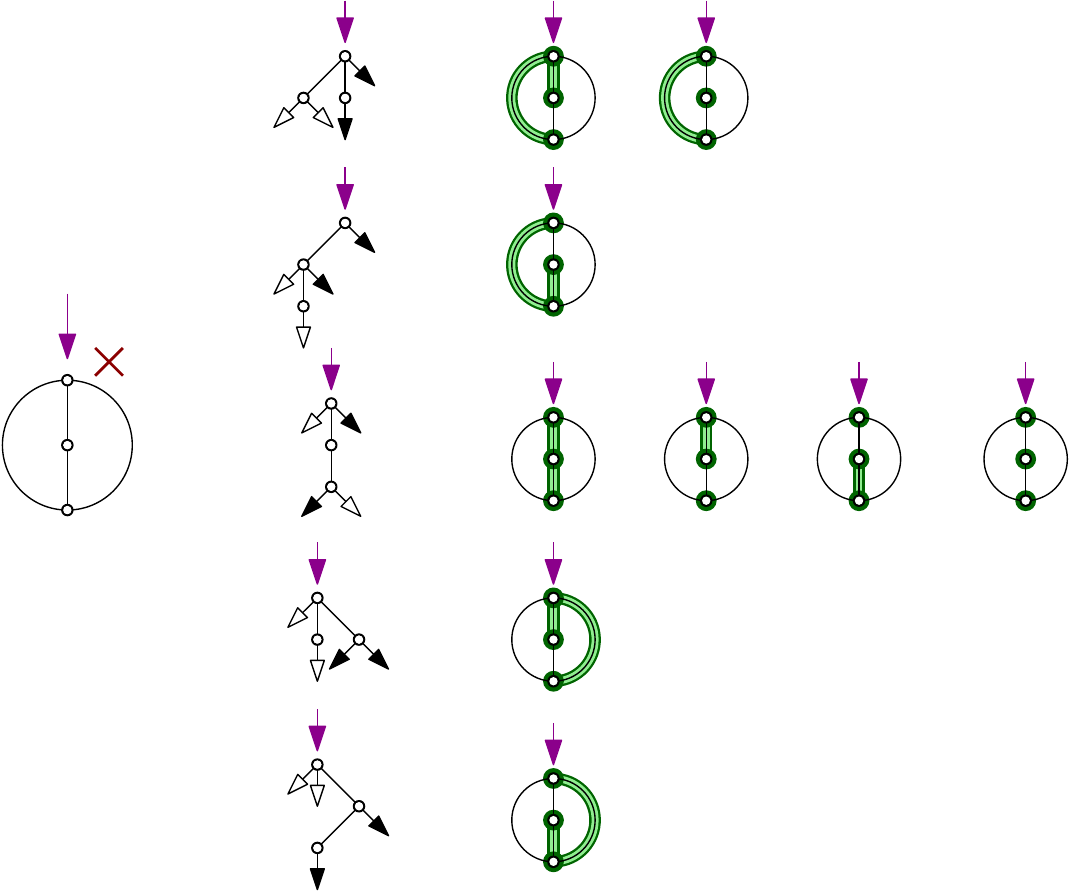}}{ \`A gauche : une carte $C$ où on a marqué la face externe $\chi$. Au milieu : l'ensemble $\mathcal T_C$. \`A droite : Les $9$ forêts couvrantes de $C$ rangées de manière à ce que l'arbre bourgeonnant à gauche de n'importe quelle carte forestière $(C,G)$ soit égal à $\Theta^*(C,G,\chi)$. }{bbforet}

Nous verrons dans le chapitre \ref{c:bloact} que cette propriété de $(u+1)$-positivité peut directement s'interpréter au niveau de la carte $C$. En effet, transposées par la bijection $\Theta$, les arêtes basculables permettent de définir ce qu'on appellera une \textit{activité interne}.

\chapter{Quelques résultats sur les fonctions implicites}
\label{c:implicites}

Le comportement singulier d'une série $Y(z)$ définie par une équation fonctionnelle $H(z,Y(z))=0$ est bien connu lorsque $Y$ est singulier en des points $z$ tels que $H$ est analytique en $(z,Y(z))$ et $\pd H y(z,Y(z)) = 0$. Cela se produit typiquement lorsque $Y$ satisfait un \textit{schéma d'équation fonctionnelle lisse}\footnote{traduction de l'anglais "smooth implicit fonction schema"} (voir \cite[Section VII.4]{flajolet-sedgewick}), auquel cas $Y$ est analytique sur un $\Delta$-domaine avec une singularité de type racine.
Toutefois, nous rencontrerons dans les chapitres \ref{c:asympt} à \ref{c:cubique} des équations fonctionnelles  $H(z,Y(z))=0$ inhabituelles dans la mesure où la fonction implicite $Y$ devient singulière en un point $z$ tel que $H$ est singulier en $(z,Y(z))$. Nous établissons dans ce chapitre des résultats portant sur de telles fonctions.

\section{Fonctions implicites sur l'axe réel positif}

D'après le théorème de Pringsheim \cite[Théo. IV.6 p. 340]{flajolet-sedgewick}, une série à coefficients positifs a une singularité dominante sur l'axe réel positif. Chercher la plus petite singularité réelle positive revêtira donc un intérêt. En effet, même si les séries n'auront pas toutes des coefficients positifs, on pourra se ramener à chaque fois à des séries à coefficients positifs, à transformation affine près (cf. section \ref{s:abe}).

Dans cette section, nous chercherons à voir jusqu'à quel point une fonction implicite $Y$ réelle peut se prolonger sur l'axe réel positif. Nous établissons dans un premier temps un résultat général pour des équations de la forme $H(z,Y(z))=0$. Nous l'appliquerons à la série $\tilde S$ définie par le lemme \ref{l:Stilde} p. \pageref{l:Stilde}.  Nous spécialiserons ce résultat aux équations de la forme $\Omega(Y(z)) = z$. Ce corollaire s'appliquera notamment à la série $R$ du cas eulérien qui satisfait $R/t = z + u \phi(R)$ (voir \eqref{eulR}). 

\begin{theo} \label{t:fi}
Soit $H(x,y)$ une série formelle bivariée réelle, analytique dans un voisinage de $(0,0)$, vérifiant $H(0,0) = 0$ et $\pd H y (0,0) > 0$. Il existe une unique série  formelle $Y$  de variable $z$ qui satisfait $Y(0) = 0$ et $H(z,Y(z)) = 0$. Alors $Y$ a un rayon de convergence non nul, elle définit donc une fonction holomorphe au voisinage de $0$ qu'on notera également $Y$. En outre, il existe $\rho>0$ tel que :
\begin{enumerate}
\item[(a)] $Y$ est analytiquement prolongeable dans un voisinage de $[0,\rho]$ et prend des valeurs réelles sur cet intervalle,
\item[(b)] $H$ est analytiquement prolongeable dans un voisinage de  $\enstq{ (z,Y(z)) } { z \in \, [0,\rho]}$,
\item[(c)]  $H(z,Y(z)) = 0$ pour $z \in [0,\rho]$,
\item[(d)] $\pd H y (z,Y(z)) > 0$ pour $z \in [0,\rho]$.
\end{enumerate}
De plus, si $\tilde \rho$ désigne la borne supérieure ( dans $\R \cup \ens{+\infty}$ ) des valeurs possibles de $\rho$, alors au moins une des propriétés suivantes, illustrées par la figure \ref{implicit}, est vérifiée :
\begin{enumerate}
\item[(i)] $\tilde \rho = + \infty$
\item[(ii)] $\liminf_{z \rightarrow \tilde \rho^-} \pd H y (z,Y(z)) = 0$,
\item[(iii)] $H$ est singulier en $(\tilde \rho,y)$ pour chaque $y \in \left[  \liminf_{z \rightarrow \tilde \rho^-} Y(z) , \limsup_{z \rightarrow \tilde \rho^-} Y(z) \right]$,
\item[(iv)] $\limsup_{z \rightarrow \tilde \rho^-} |Y(z)| = + \infty$.
\end{enumerate}

\fig{[width = \textwidth]{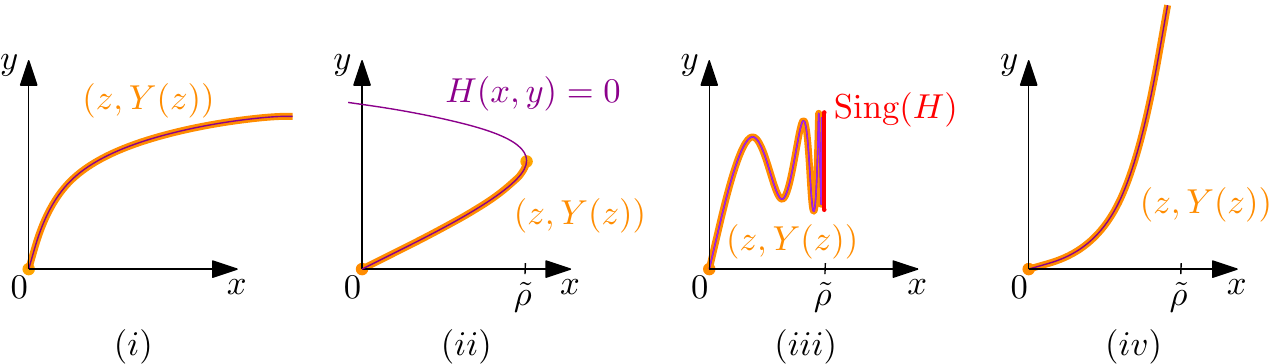}}{Quatre possibilités (non exclusives) pour $\tilde \rho$.}{implicit}

\end{theo}

Le lemme suivant permet de prouver que la série $Y$ reste sur l'axe réel. 

\begin{lem} \label{l:valpos}
Soient $a < 0 < b$ et $Y$ une fonction analytique dans un voisinage de $[a,b]$. Si la série de Taylor de $Y$ en $0$ a des coefficients réels, alors $Y$ prend des valeurs réelles sur $[a,b]$.
\end{lem}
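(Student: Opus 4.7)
L'id�e est d'utiliser le principe de r�flexion de Schwarz combin� au principe du prolongement analytique. Commen�ons par choisir un voisinage ouvert $U$ de $[a,b]$ sur lequel $Y$ est analytique ; quitte � remplacer $U$ par $U \cap \overline U$ (o� $\overline U = \enstq{\bar z}{z \in U}$), nous pouvons supposer $U$ sym�trique par rapport � l'axe r�el. Comme $[a,b]$ est connexe, nous pouvons �galement supposer $U$ connexe (par exemple en prenant la composante connexe contenant $[a,b]$).

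La premi�re �tape consiste � introduire la fonction $\tilde Y : U \rightarrow \C$ d�finie par $\tilde Y(z) = \overline{Y(\bar z)}$. Un calcul standard (ou l'application du principe de r�flexion de Schwarz) montre que $\tilde Y$ est analytique sur $U$, et ses coefficients de Taylor en $0$ sont $\overline{a_n}$, o� $a_n$ d�signent ceux de $Y$. Par hypoth�se, les $a_n$ sont r�els, donc $\overline{a_n} = a_n$ : les s�ries de Taylor de $Y$ et $\tilde Y$ en $0$ co�ncident.

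La deuxi�me �tape consiste � appliquer le principe du prolongement analytique. Comme $Y$ et $\tilde Y$ sont analytiques sur le domaine connexe $U$ et ont la m�me s�rie de Taylor en $0 \in U$, elles co�ncident partout sur $U$. En particulier, pour tout $z \in [a,b]$, on a $\bar z = z$ et donc $Y(z) = \tilde Y(z) = \overline{Y(z)}$, ce qui prouve que $Y(z) \in \R$.

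Aucune �tape ne pr�sente de v�ritable obstacle : la seule pr�caution � prendre est de s'assurer qu'on travaille sur un domaine connexe et sym�trique par rapport � l'axe r�el, ce qui est imm�diat.
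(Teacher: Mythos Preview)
Your proof is correct and follows essentially the same approach as the paper: both introduce the reflected function $z \mapsto \overline{Y(\bar z)}$, observe that it agrees with $Y$ near $0$ because the Taylor coefficients are real, and conclude by analytic continuation. You have merely been more explicit about choosing a connected, conjugation-symmetric neighborhood of $[a,b]$.
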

\begin{proof}
Les deux fonctions $z \mapsto Y(z)$ et $z \mapsto \overline {Y\pare{\overline{z}}}$ sont analytiques et coïncident au voisinage de $0$. Par le théorème de prolongement analytique, elles coincident sur un voisinage de $[a,b]$. Par conséquent, $Y(z)$ est réel si $z \in \, [a,b]$.
\end{proof}

\begin{proof}[Démonstration du théorème \ref{t:fi}] L'unicité de $Y$ provient du fait que ses coefficients peuvent être calculés par induction grâce à l'équation $H(z,Y(z)) = 0$ et la condition initiale $Y(0) = 0$ (l'hypothèse $\pd H y(0,0)$ est cruciale ici). Notons que ces coefficients sont bien réels, il sera donc possible d'appliquer le lemme \ref{l:valpos}. Mais avant nous devons prouver que $Y$ a un rayon de convergence non nul. Puisque $\pd H y(0,0) > 0$, la version analytique du théorème des fonctions implicites (voir par exemple \cite[Théo. B.4 p 753]{flajolet-sedgewick} s'applique en $z = 0$ : il existe localement une fonction analytique $\hat Y$ qui est solution de l'équation fonctionnelle $H(z,\hat Y(z)) = 0$ avec $\hat Y(0) = 0$. La série de Taylor de $\hat Y$ en $0$ satisfait la même équation (en tant que série formelle), donc coïncide avec $Y$. On a bien prouvé que le rayon de convergence de $Y$ n'était pas nul.

Considérons maintenant l'ensemble
$$ I = \enstq{ \rho > 0 } {\rho\textrm{ satisfait les conditions (a), (b), (c) et (d)}}.$$
Il s'agit clairement d'un intervalle ouvert de la forme $]0,\tilde \rho[$. En outre, l'ensemble $I$ n'est pas vide puisque les propriétés (a), (b), (c) et (d) sont vérifiées localement en $0$. Supposons qu'aucune des possibilités $(i)$, $(ii)$, $(ii)$ et $(iv)$ ne soit vraie en $\tilde \rho$. En particulier $\tilde \rho$ est fini. Nous voulons arriver à une contradiction en prouvant que $\tilde \rho \in I$.

Comme $(iv)$ n'est pas vérifié,  $Y$ est bornée sur $[0, \tilde \rho[$. Par continuité, l'ensemble des points d'accumulation de $\enstq { Y(z) } { z \in \, [0,\tilde  \rho[}$ est un intervalle, qui coïncide avec $
\left[  \liminf_{z \rightarrow \tilde \rho^-} Y(z) , \limsup_{z \rightarrow \tilde \rho^-} Y(z) \right]$. Pour chaque $y$ dans cet intervalle, le point $(\tilde \rho,y)$ est dans l'adhérence de l'ensemble $\enstq{ (z,Y(z)) } { z \in \, [0,\tilde  \rho[}$ sur lequel $H$ est analytique (voir propriété (b)). Puisque $(iii)$ est supposée fausse, il existe $\tilde y$ dans $
\left[  \liminf_{z \rightarrow \tilde \rho^-} Y(z) , \limsup_{z \rightarrow \tilde \rho^-} Y(z) \right]$ tel que $H$ est analytique en $(\tilde \rho, \tilde y)$. En particulier, $H$ est continue en ce  point, donc $(c)$ implique $H(\tilde \rho,\tilde y) = 0$. Enfin, comme (d) est vraie mais pas (ii), nous avons $\pd H y (\tilde \rho,\tilde y) > 0$.
Ces trois dernières conditions permettent d'appliquer le théorème des fonctions implicites : il existe une fonction analytique $\tilde Y$ définie au voisinage de $\tilde \rho$ telle que $H(z,\tilde Y (z)) = 0$ et $\tilde Y(\tilde \rho) = \tilde y$. Nous souhaitons prouver que $\tilde Y$ est un prolongement analytique de $Y$ en $\tilde \rho$ (en particulier, l'intervalle $\left[  \liminf_{z \rightarrow \tilde \rho^-} Y(z) , \limsup_{z \rightarrow \tilde \rho^-} Y(z) \right]$ est réduit à $\tilde y$).

Puisque $\pd H y (\tilde \rho,\tilde y) > 0$, il existe $\delta > 0$ et un voisinage complexe $V$ de $(\tilde \rho,\tilde y)$ tel que pour tout $(x,y)$ et $(x,y')$ dans $V$,
$$\module{H(x,y)-H(x,y')} \geq \delta \module{y - y'}.$$
Quitte à réduire $V$, nous pouvons également supposer que $\tilde Y (x)$ est bien définie pour $(x,y) \in V$. Comme $(\tilde \rho,\tilde y)$ est un point d'accumulation de  $\enstq { \pare{z,Y(z)} } { z \in \, [0,\tilde  \rho[}$, et comme $Y$ est continue, il existe un segment $[z_0,z_1] \subset ]0,\tilde \rho[$ avec $(z,Y(z)) \in V$ pour chaque $z \in \, [z_0,z_1]$. Par suite, pour $z$ dans cette intervalle,
$$ 0 = \module{H(z,Y(z)) - H(z,\tilde Y (z))} \geq \delta \module{Y(z) - \tilde Y (z)},$$
ce qui montre que les fonctions analytiques $Y$ et $\tilde Y$ coïncident sur $[z_0,z_1]$. La fonction $\tilde Y$ est donc un prolongement analytique de $Y$ en $\tilde \rho$. Autrement dit, la propriété (a) est vraie en $\tilde \rho$. De plus, nous avons (b) par définition de $\tilde y$, (c) par construction de $\tilde Y$ et (d) comme indiqué plus haut. Ainsi, $\tilde \rho$ appartient à $I$, ce qui contredit le fait que $\tilde \rho$ est la borne supérieure de l'intervalle ouvert $I$. Par conséquent, au moins une des propriétés $(i)$, $(ii)$, $(iii)$ et $(iv)$ est vérifiée. \end{proof}

Le corollaire suivant regarde spécifiquement le cas de l'équation fonctionnelle $\Omega(Y(z)) = z$.

\begin{cor}
Soit $\Omega(y)$ une série formelle réelle de rayon non nul avec $\Omega(0) = 0$ et $\Omega'(0) > 0$.  Notons $\omega \in \, ]0,+\infty]$ la plus petite singularité de $\Omega$ sur l'axe réel positif si elle existe, $+\infty$ sinon. Considérons $Y$ l'unique série vérifiant $Y(0) = 0$ et $\Omega(Y(z)) = z$. Alors $Y$ a un rayon de convergence non nul, elle définit donc une fonction analytique au voisinage de $0$ qu'on notera également $Y$. En outre, il existe $\rho \in \, ]0,+\infty[$ tel que :  \label{c:fi}
\begin{enumerate}
\item[(1)] $Y$ est analytique sur un voisinage de $[0,\rho[$  et prend des valeurs réelles sur cet intervalle,
\item[(2)] $Y$ est strictement croissante  sur $[0, \rho[$,
\item[(3)] $Y(z) \in \, [0,\omega[$ pour chaque $z \in \, [0,\rho[$,
\item[(4)] $\Omega(Y(z)) = z$ pour chaque $z \in \, [0,\rho[$,
\item[(5)] $\lim_{z \rightarrow \rho^-} Y(z)  = \tau$ et $\lim_{y \rightarrow \tau^-} \Omega(y) = \rho$, où 
\end{enumerate}
$$\tau = \min \enstq{ y \in \, [0,\omega[} {\Omega'(y) = 0}$$
si cet ensemble est non vide, et $\tau = \omega$ sinon.
\end{cor}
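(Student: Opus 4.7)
The approach is to deduce the corollary from Theorem~\ref{t:fi} applied to the bivariate series $H(z,y) := \Omega(y) - z$. The hypotheses are readily checked: $H$ is analytic near $(0,0)$ because $\Omega$ has positive radius of convergence, $H(0,0) = \Omega(0) = 0$, and $\partial H/\partial y (0,0) = \Omega'(0) > 0$. Since the unique formal solution of $H(z,Y(z)) = 0$ with $Y(0) = 0$ is precisely the compositional inverse of $\Omega$, Theorem~\ref{t:fi} produces a maximal $\tilde\rho$ together with properties (a)--(d) on every subinterval $[0,\rho]$ with $\rho < \tilde\rho$. I would then take $\rho := \tilde\rho$ for the corollary.

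Items (1) and (4) are then items (a) and (c) of the theorem, transcribed for $H$. Item (2) follows by implicit differentiation: (d) gives $\Omega'(Y(z)) > 0$ on $[0,\rho[$, so $Y'(z) = 1/\Omega'(Y(z)) > 0$, i.e.\ $Y$ is strictly increasing. For item (3), note that the real singularities of $H(z,y) = \Omega(y) - z$ are exactly the real singularities of $\Omega$; property (b) therefore forces $Y(z) \in [0,\omega[$ for every $z \in [0,\rho[$.

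The substance of the proof lies in (5). Since $Y$ is strictly increasing on $[0,\rho[$ and bounded above by $\omega$, the limit $L := \lim_{z \to \rho^-} Y(z)$ exists in $[0,\omega]$. I would establish $L = \tau$ by two inequalities. For $L \leq \tau$: the image $Y([0,\rho[) = [0,L[$ consists of points at which $\Omega'$ is strictly positive by (d), so any zero of $\Omega'$ in $[0,\omega[$ must be at least $L$, giving $\tau \geq L$. For $L \geq \tau$: apply the analytic inverse function theorem to $\Omega$ on the interval $[0,\tau[$, on which $\Omega$ is analytic with $\Omega' > 0$ throughout; this produces an analytic inverse $Y_0$ on a neighbourhood of $\bigl[0,\lim_{y \to \tau^-}\Omega(y)\bigr[$ that agrees with $Y$ near $0$ (by the uniqueness part of Theorem~\ref{t:fi}) and hence extends $Y$ analytically. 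Maximality of $\rho$ forces $\rho \geq \lim_{y \to \tau^-}\Omega(y)$, and continuity of $\Omega$ on $[0,\tau[$ then forces $L \geq \tau$. Combining the two inequalities yields $L = \tau$ and $\rho = \lim_{y \to \tau^-}\Omega(y)$.

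\textbf{Main obstacle.} The delicate step is the extension argument: one must verify that the inverse function theorem applies globally on the whole interval $[0,\tau[$ (not only locally near $0$), and that the resulting branch $Y_0$ genuinely coincides with $Y$ wherever both are defined. Uniqueness of the formal solution in Theorem~\ref{t:fi}, combined with a standard analytic continuation argument along the real axis, is what glues the two branches together. Everything else is case analysis using the dichotomy (i)--(iv) of Theorem~\ref{t:fi}: alternative (iv) is ruled out since $Y < \omega$, alternatives (i) and (iii) handle the degenerate subcases $\tau = \omega = +\infty$ and $\tau = \omega < +\infty$ respectively, while (ii) is the generic case where $\tau$ is the smallest real zero of $\Omega'$ on $[0,\omega[$.
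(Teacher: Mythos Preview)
Your proposal is correct and follows the paper's route: apply Theorem~\ref{t:fi} to $H(z,y)=\Omega(y)-z$, read off (1)--(4) from (a)--(d), and use the monotonicity of $Y$ to define $L=\lim_{z\to\rho^-}Y(z)$ and obtain $L\le\tau$ from~(d). The only divergence is in establishing $L\ge\tau$: the paper simply assumes $L<\tau$ and checks that all four alternatives (i)--(iv) of Theorem~\ref{t:fi} then fail (since $\Omega$ is analytic at $L$ with $\Omega'(L)>0$ and $\rho=\Omega(L)<\infty$), reaching an immediate contradiction. Your inverse-function-theorem construction of $Y_0$ on $[0,\tau[$ works as well, but it re-proves by hand a piece of Theorem~\ref{t:fi} rather than invoking its conclusion; the dichotomy argument you sketch in your final paragraph is exactly the paper's shortcut and makes the explicit extension unnecessary.
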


Ce corollaire est énoncé comme un résultat d'existence pour $\rho$, mais le point (5) détermine précisément sa valeur.

\begin{proof} Nous appliquons le théorème \ref{t:fi} à l'équation $H(x,y) = \Omega(y) - x$ (clairement, $H$ est analytique en $(0,0)$, et nous avons $H(0,0) = 0$ et $\pd H y (0,0) = \Omega'(0) > 0$). Nous renommons $\rho$ le nombre $\tilde \rho$ du théorème précédent. 

Le point (1) résulte de (a). D'après les conditions (b) et (c), $\Omega$ a un prolongement analytique sur $Y\pare{[0,\rho[}$ tel que $\Omega(Y(z)) = z$ pour $z \in \, [0,\rho[$ : c'est le point (4). En dérivant cette identité, nous obtenons $Y'(z)\,\Omega'(Y(z)) = 1$, ce qui sachant (d) entraîne (2). Par suite, l'existence d'un prolongement analytique de $\Omega$ sur $Y\pare{[0,\rho[}$ peut se  réécrire (3). La monotonie de $Y$ permet alors de définir $Y(\rho)$ comme $\lim_{z \rightarrow \rho^-} Y(z)$ (qui n'est pas forcément fini).

Déduisons maintenant (5)  des propriétés $(i)$-$(iv)$ du théorème \ref{t:fi}. Nous avons déjà vu (il s'agit du point (3)) que $Y(\rho) \leq \omega$. D'après la condition (d), et par définition de $\tau$, la valeur de $Y(\rho)$ est inférieure ou égale à $\tau$. Supposons $Y(\rho) < \tau$. Alors $\Omega$ est analytique en $Y(\rho)$, et par continuité de $\Omega$ et $Y$, nous avons $\rho = \Omega(Y(\rho)) <  + \infty$ : le point $(i)$ est infirmé.  D'après  $Y(\rho) < \tau$ et par définition de $\tau$, le point $(ii)$ est également infirmé. De même, $(iii)$ et $(iv)$ ne peuvent être vérifiés. Nous aboutissons donc à une contradiction : $Y(\rho) = \tau$. En revenant à (4), nous obtenons $\rho = \Omega(Y(\rho)) = \Omega(\tau)$.
\end{proof}

\section{Inversion de fonctions avec une singularité de la forme $\boldsymbol{z \ln z}$}

Le théorème d'inversion locale est un résultat classique d'analyse complexe : si une fonction $\psi$ est analytique sur un disque $C_s$ centré en $0$ de rayon $s$ avec $\psi(z) \sim z$ lorsque $z \rightarrow 0$, alors il existe $\rho \in \, ]0,s[$, $\rho' > 0$ et une fonction $\Upsilon$ analytique sur un disque $C_{\rho'}$ de rayon $\rho'$ tels que 
$$\forall\,(y,z) \in \, C_{\rho'} \times C_\rho, \quad \psi(z) = y \Longleftrightarrow z = \Upsilon(y). $$
Le but de cette section est d'adapter ce théorème à des fonctions analytiques avec une singularité de la forme $z \ln z$ au voisinage de $0$. Les domaines locaux considérés ne sont plus des disques, mais ils seront de la forme 
$$D_{\rho,\alpha} := \enstq{ r\,e^{i\theta} } {r \in \, ]0,\rho[\textrm{ et } |\theta| < \alpha}.$$

\begin{theo} \label{loginversion}
\textbf{(théorème de log-inversion)} Soit $\psi$ une fonction analytique sur $D_{s,\alpha}$, avec $s > 0$ et $\alpha  \in \, ]\pi/2,\pi]$. Supposons qu'il existe $c > 0$ tel que
$$\psi(z) \sim -c \, z \ln z$$
 quand $z$ tend vers $0$ dans  $D_{s,\alpha}$. Alors pour tout $\beta \in \, ]0,\alpha[$, il existe une fonction  $\Upsilon$ analytique sur $D_{\rho',\beta}$ et à valeurs dans $D_{\rho,\alpha}$, avec $\rho \in \, ]0,s[$ et $\rho' > 0$ (voir figure \ref{liv}) telle que
$$ \forall (y,z) \in \, D_{\rho',\beta} \times D_{\rho,\alpha}, \quad \psi(z) = y \Longleftrightarrow z = \Upsilon(y).$$

En outre, si $y$ tend vers $0$ dans $D_{\rho',\beta}$,
$$\Upsilon(y) \sim - \frac y {c\, \ln y}.$$
\end{theo}

\fig{[scale=1.3]{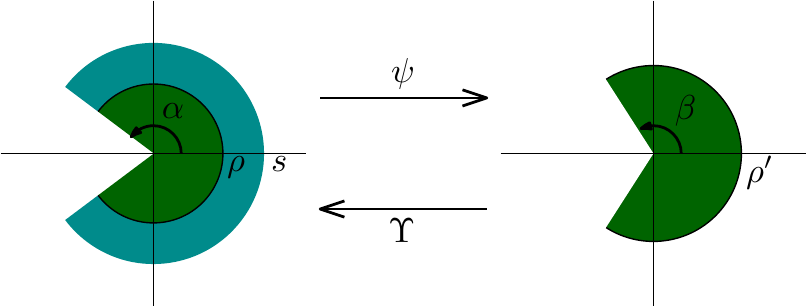}}{Domaines de définition de $\psi$ et $\Upsilon$.}{liv}

La démonstration de ce théorème est plutôt longue et technique. Le plus dur est de prouver que chaque élément de $D_{\rho',\beta}$ admet un unique antécédent dans $D_{\rho,\alpha}$ par $\psi$. Il s'agit de l'énoncé du lemme \ref{l:loginversion}. Une fois que nous avons montré l'existence d'une fonction localement réciproque $\Upsilon$, la preuve touche à sa fin : l'analyticité de $\Upsilon$ n'est qu'une simple application de la version analytique du théorème des fonctions implicites. Afin de prouver le lemme \ref{l:loginversion}, nous devons étudier dans un premier temps l'injectivité et la surjectivité de la fonction $H : z \mapsto - z\, \ln z$ au voisinage de $0$ (sous-section \ref{ss:zlnz}). Nous transférerons par la suite les propriétés de $H$ à la fonction $\psi$ grâce au théorème de Rouché (sous-section \ref{ss:lli}).

\subsection{La fonction $\boldsymbol{ z \mapsto - z\, \ln z}$}
\label{ss:zlnz}

Considérons la fonction
$$ \begin{array}{cccc} H : & \C\, \backslash \, \R^{-} & \longrightarrow & \C \\ \, &
z & \longmapsto & -z\, \ln z
\end{array},$$
où $\ln$ est la détermination principale du logarithme\footnote{Si $z = r\,e^{i\theta}$ avec $r > 0$ et $\theta \in\, ]-\pi,\pi[$, alors $\ln z = \ln r + i \theta$ est la détermination principale du logarithme de $z$. Par ailleurs, l'argument de $z$ se définit comme $\Arg z := \theta$.}. L'allure de la fonction est montrée figure \ref{zlz}. Commençons par exhiber quelques propriétés élémentaires sur $H$.

\fig{[width=0.7\textwidth]{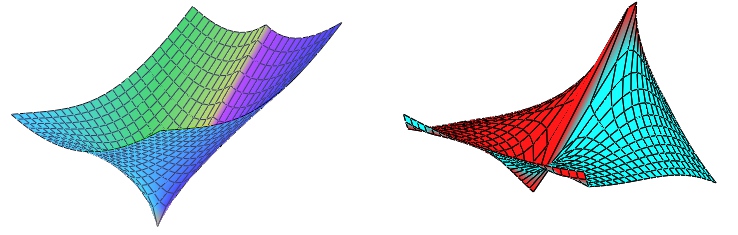}}{\`A gauche : le graphe de $z \mapsto \module { H(z)}$.  \`A droite : le graphe de $z \mapsto \Arg{H(z)}$. }{zlz}

\begin{lem} \label{Hbase}
La fonction $H$ vérifie 
\begin{equation} \label{Hconj}
H(\overline z) = \overline{H(z)}.
\end{equation}
Soit $z = r\,e^{i\theta}$ avec $r > 0$ et $\theta \in\, ]-\pi,\pi[$. Alors
\begin{equation} \label{Hmodule}
|H(z)| = r \sqrt{\ln^2 r + \theta^2}.
\end{equation}
Les arguments de $z$ et $-\ln z$ sont de signes opposés. Si nous supposons en outre $r < 1$, alors $\Arg(-\ln z) \in \, ]-\pi/2,\pi/2[$, d'où 
\begin{equation}
\label{Harg}
\Arg H(z) = \Arg z + \Arg( - \ln z) = \theta + \arctan \left(\frac  \theta {\ln r}\right).
\end{equation}
Si en outre $r \leq 1/\sqrt e$, alors
\begin{equation} \label{Hmajarg}
\left| \Arg H(z) \right| \leq \module \theta.
\end{equation}
En particulier, $H(z) \notin \, \R^-$.
\end{lem}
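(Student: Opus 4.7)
The plan is to carry out each of the claims by direct computation from the definition $H(z) = -z \ln z$ with the principal determination of the logarithm, treating them in the order stated.

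For \eqref{Hconj}, I would use that the principal logarithm satisfies $\ln \overline z = \overline{\ln z}$ for all $z \in \C \setminus \R^-$, so that $H(\overline z) = -\overline z\, \overline{\ln z} = \overline{-z \ln z} = \overline{H(z)}$. For \eqref{Hmodule}, writing $z = r e^{i\theta}$ with $\theta \in\, ]-\pi,\pi[$ gives $\ln z = \ln r + i\theta$, hence $|{-\ln z}| = \sqrt{\ln^2 r + \theta^2}$, and the formula for $|H(z)|$ follows from $|H(z)| = |z|\cdot|\ln z|$. For the opposite-signs claim, the imaginary part of $-\ln z$ is $-\theta$, which has sign opposite to $\theta = \Arg z$. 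When $r < 1$, the real part $-\ln r$ of $-\ln z$ is strictly positive, so $-\ln z$ lies in the open right half-plane, whence $\Arg(-\ln z) = \arctan\!\bigl(-\theta/(-\ln r)\bigr) = \arctan(\theta/\ln r) \in\, ]-\pi/2,\pi/2[$. To recover \eqref{Harg}, I would check that the additive formula $\Arg(zw) = \Arg z + \Arg w$ applies here: since $\theta$ and $\arctan(\theta/\ln r)$ have opposite signs and the second term has absolute value strictly less than $\pi/2$, their sum lies in $]-\pi,\pi[$, so the sum formula is valid without any $2\pi$ correction.

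The main (and only nontrivial) step is the inequality \eqref{Hmajarg}. Because $\theta$ and $\arctan(\theta/\ln r)$ have opposite signs, one has
\[
\bigl|\Arg H(z)\bigr| = \bigl| |\theta| - |\arctan(\theta/\ln r)| \bigr|,
\]
so the inequality $|\Arg H(z)| \leq |\theta|$ is equivalent to $|\arctan(\theta/\ln r)| \leq 2|\theta|$. Using the elementary bound $|\arctan x| \leq |x|$, it suffices to show $|\theta|/|\ln r| \leq 2|\theta|$, i.e., $|\ln r| \geq 1/2$, which is exactly the condition $r \leq 1/\sqrt e$. The final assertion $H(z) \notin \R^-$ is then immediate: $|\Arg H(z)| \leq |\theta| < \pi$ rules out the negative real axis.

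No step should present a real obstacle; the work is entirely a careful bookkeeping of arguments and moduli. The only place where one must be attentive is the reduction of $|\Arg H(z)| \leq |\theta|$ to a bound on $|\arctan(\theta/\ln r)|$, which requires the opposite-sign observation from \eqref{Harg} to avoid losing a factor and to handle correctly the case $|\arctan(\theta/\ln r)| \geq |\theta|$.
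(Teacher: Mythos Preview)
Your proof is correct and follows essentially the same route as the paper: direct computation for \eqref{Hconj}--\eqref{Harg}, then the bound $|\arctan x|\le |x|$ combined with $|\ln r|\ge 1/2$ for \eqref{Hmajarg}. The only cosmetic difference is that the paper treats $\theta\ge 0$ first and then invokes \eqref{Hconj} for $\theta<0$, whereas you work with absolute values throughout via $|\Arg H(z)|=\bigl||\theta|-|\arctan(\theta/\ln r)|\bigr|$; both arrive at the same inequality.
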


\fig{[scale=1]{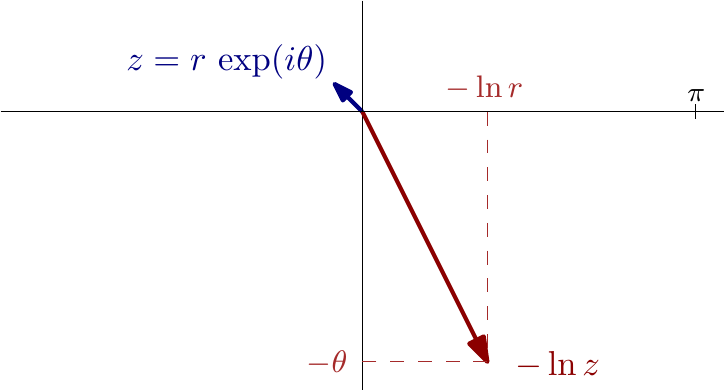}}{Comparaison entre $z$ et $-\ln z$, lorsque $z$ est petit.}{complnz}

\begin{proof}Nous pouvons nous aider de la figure \ref{complnz} pour une meilleure visualisation. Les deux premières identités sont évidentes. La première égalité de \eqref{Harg} est vraie parce que $z$ et $-\ln z$ ont des arguments de signes opposés. La seconde égalité se déduit de $\Arg(-\ln z) \in \, ]-\pi/2,\pi/2[$. Afin de prouver \eqref{Hmajarg}, supposons dans un premier temps que $\theta \geq 0$. L'argument de $- \ln z$ est négatif et la première partie de \eqref{Harg}  prouve que $\Arg H(z) \leq \theta$. De surcroît, $\arctan x \geq x$ si $x \leq 0$, donc d'après la seconde partie de \eqref{Harg},
$$\Arg H(z) \geq \theta + \frac {\theta} {\ln z} \geq \pare{1 - \frac  1 {\ln\pare{1/\sqrt e}}}\theta = - \theta.$$
Le cas $\theta < 0$ est immédiat au vu de \eqref{Hconj}.
\end{proof}

La fonction $H$ n'est pas injective sur $\C$ : par exemple, $H(i) = \pi/2 = H(-i)$. Toutefois, elle est localement injective en 0.

\begin{prop} \label{Hinj}
La fonction $H : z \mapsto -z \ln z$ est injective sur $D_{e^{-1},\pi}$.\end{prop}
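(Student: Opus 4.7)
L'approche consiste � exploiter directement les formules explicites du lemme \ref{Hbase} afin de ramener l'injectivit� de $H$ � celle de l'application coordonn�es polaires $(r,\theta)\mapsto (|H(z)|,\Arg H(z))$. Rappelons que pour $z=re^{i\theta}$ avec $r\in \,]0,e^{-1}[$ et $\theta\in \,]-\pi,\pi[$, on dispose des expressions
$$|H(z)|^2 = r^2(\ln^2 r + \theta^2) \qquad \text{et} \qquad \Arg H(z) = \theta + \arctan\pare{\frac{\theta}{\ln r}}.$$
L'observation cruciale est que la condition $r<e^{-1}$ entra�ne $\ln r < -1$, donc $\ln^2 r + \ln r = \ln r\,(\ln r + 1) > 0$ ; a fortiori $\ln^2 r + \theta^2 + \ln r > 0$. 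Cette seule positivit� fournira toutes les monotonies n�cessaires.

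Dans un premier temps, on r�duit l'�tude au demi-plan sup�rieur $\theta \geq 0$. En effet, \eqref{Harg} implique que pour $\theta>0$, $\arctan(\theta/\ln r) > \theta/\ln r > -\theta$ (car $|\ln r|>1$), d'o� $\Arg H(z) > 0$ ; un argument sym�trique donne $\Arg H(z) < 0$ pour $\theta<0$. Ainsi, si $H(z_1)=H(z_2)$, les arguments $\theta_1,\theta_2$ sont de m�me signe, et quitte � utiliser \eqref{Hconj}, on peut supposer $\theta_1,\theta_2\geq 0$.

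On calcule ensuite les d�riv�es partielles en coordonn�es polaires :
$$\pd{|H|^2}{r} = 2r(\ln^2 r + \theta^2 + \ln r), \qquad \pd{|H|^2}{\theta} = 2r^2\theta,$$
$$\pd{\Arg H}{\theta} = \frac{\ln^2 r + \theta^2 + \ln r}{\ln^2 r + \theta^2}, \qquad \pd{\Arg H}{r} = -\frac{\theta}{r(\ln^2 r + \theta^2)}.$$
L'observation ci-dessus donne alors, sur $\,]0,e^{-1}[\times[0,\pi[$, que $|H|$ est strictement croissant en $r$ (pour tout $\theta$) et en $\theta$ (pour $\theta>0$), que $\Arg H$ est strictement croissant en $\theta$, et que pour $\theta>0$, $\Arg H$ est strictement d�croissant en $r$. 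On conclut par discussion de cas : pour $z_j = r_j e^{i\theta_j}$ dans le demi-plan sup�rieur avec $H(z_1)=H(z_2)$ et, apr�s �ventuel �change, $\theta_1 \leq \theta_2$. Si $\theta_1=\theta_2$, l'�galit� des modules impose $r_1=r_2$. Si $\theta_1<\theta_2$ et $r_1 \leq r_2$, alors $|H|$ cro�t strictement en passant par $(r_1,\theta_2)$ puis $(r_2,\theta_2)$, ce qui contredit $|H(z_1)|=|H(z_2)|$. Si enfin $\theta_1<\theta_2$ et $r_1>r_2$ (ce qui force $\theta_2>0$), alors $\Arg H$ cro�t strictement (hausse en $\theta$ et baisse en $r$ jouent dans le m�me sens), contradiction � nouveau.

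Il n'y a pas de v�ritable obstacle de fond : toute l'argumentation repose sur l'unique in�galit� $\ln^2 r + \ln r > 0$ sur $\,]0,e^{-1}[$, qui est exactement la condition $r<e^{-1}$ de l'�nonc�. Le point qui demande le plus de vigilance est en r�alit� la premi�re �tape : il faut raffiner \eqref{Hmajarg} en une propri�t� \emph{sign�e}, � savoir que $\Arg H(z)$ et $\theta$ ont strictement le m�me signe (et non seulement $|\Arg H(z)| \leq |\theta|$), afin d'autoriser la r�duction par sym�trie.
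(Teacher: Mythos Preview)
Your proof is correct and takes a genuinely different route from the paper's. The paper proceeds by a \emph{contraction argument on the logarithm}: noting that $H(z)\notin\R^-$, one may write $\ln H(z)=\ln z+\ln(-\ln z)$; then if $H(z_1)=H(z_2)$, the equation $|\ln z_1-\ln z_2|=|\ln(-\ln z_1)-\ln(-\ln z_2)|$ combined with the mean value inequality on the convex half-plane $\{\Ree w\geq\kappa\}$ (where $\kappa=-\max(\ln|z_1|,\ln|z_2|)>1$) gives $|\ln z_1-\ln z_2|\leq\kappa^{-1}|\ln z_1-\ln z_2|$, forcing $z_1=z_2$.

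Your approach is more direct and computational: you study the map $(r,\theta)\mapsto(|H|^2,\Arg H)$ in polar coordinates, compute its four partial derivatives explicitly, and observe that the single inequality $\ln^2 r+\ln r>0$ (equivalent to $r<e^{-1}$) forces the right sign on each of them; the injectivity then falls out of a short case discussion. It is pleasant that both proofs pivot on exactly the same numerical threshold: your positivity $\ln^2 r+\ln r>0$ is the same condition as the paper's $\kappa>1$. What your approach buys is transparency and elementarity (no need to justify the identity \eqref{Haddlog} nor to invoke a vector-valued mean value inequality); what the paper's buys is concision and a certain conceptual elegance (the injectivity is ultimately that of a contracting perturbation of the identity on $\ln z$).
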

\begin{proof} Supposons l'existence de deux complexes $z_1$ et $z_2$ dans $D_{e^{-1},\pi}$ avec $H(z_1) = H(z_2)$. D'après le lemme \ref{Hbase}, le complexe $H(z_1)$ n'est pas un réel négatif. Le passage au logarithme est donc licite : $\ln H(z_1) = \ln H(z_2)$.

Lorsque $z \in \, D_{e^{-1},\pi}$, le lemme nous indique également que $z$ et $-\ln z$ ont des arguments de signes opposés, d'où
\begin{equation} \label{Haddlog}
\ln H(z) = \ln z + \ln(-\ln z).
\end{equation}
Ainsi, puisque $\ln H (z_1) = \ln H (z_2)$,
\begin{equation} \label{lnz12}
\module{\ln z_1 - \ln z_2} = \module{\ln(-\ln(z_1)) - \ln(-\ln(z_2))}.
\end{equation}
Posons $\kappa = - \max \pare{\ln|z_1|,\ln|z_2|}>1$. Les deux complexes $- \ln z_1$ et $- \ln z_2$ se trouvent dans $\enstq z {\Ree(z) \geq  \kappa}$. Cet ensemble est convexe, on peut donc appliquer l'inégalité des accroissements finis (pour les fonctions à valeurs vectorielles, ici complexes) :
$$ \module{\ln(-\ln(z_1)) - \ln(-\ln(z_2))} \leq \module{\ln z_1 - \ln z_2} \sup_{z \in \, [-\ln z_1,- \ln z_2]}\module{\ln'(z)} \leq \frac 1 \kappa \module{\ln z_1 - \ln z_2}. $$
Combiné avec \eqref{lnz12} et $\kappa > 1$, cela donne $\module{\ln z_1 - \ln z_2} = 0$, soit $z_1 = z_2$. \end{proof}

Prouvons maintenant que l'image par $H$ d'un domaine local $D_{\rho,\alpha}$ s'approche d'un domaine local de même forme\footnote{Pour compléter la proposition \ref{Hsurj}, mentionnons la propriété suivante, que nous ne prouverons pas : pour tout $\varepsilon > 0$ et $\alpha \in \  ]0,\pi]$, il existe $\rho > 0$ suffisamment petit pour que $H(D_{\rho,\alpha}) \subseteq D_{-(1+\varepsilon)\rho \ln \rho, \alpha}$.}.

\begin{prop} \label{Hsurj} Pour tout couple $(\alpha,\beta)$ avec $0 < \beta < \alpha \leq \pi$, il existe $\rho > 0$ suffisamment petit pour que 
$$D_{-\rho \ln \rho, \beta} \subseteq H\pare{D_{\rho,\alpha}}.$$
\end{prop}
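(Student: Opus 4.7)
Mon id�e est de combiner l'injectivit� donn�e par la proposition \ref{Hinj} (valable d�s que $\rho < e^{-1}$) avec une analyse pr�cise de $H$ sur la fronti�re $\partial D_{\rho, \alpha}$, puis de conclure par un argument de connexit�. Fixons $\alpha, \beta$ avec $0 < \beta < \alpha \leq \pi$, et posons $U = H(D_{\rho, \alpha}) \cap D_{-\rho \ln \rho, \beta}$. Je vais montrer que, pour $\rho$ assez petit, l'ensemble $U$ est ouvert, ferm� et non vide dans le connexe $D_{-\rho \ln \rho, \beta}$, ce qui donnera $U = D_{-\rho \ln \rho, \beta}$ et donc l'inclusion annonc�e.

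L'�tape centrale, et la plus d�licate, consiste � �tablir que $H(\partial D_{\rho, \alpha}) \cap D_{-\rho \ln \rho, \beta}$ est vide. Je d�composerai la fronti�re en l'arc ext�rieur $\Gamma_{\text{arc}} = \ens{\rho e^{i \theta} : |\theta| \leq \alpha}$ et les rayons $\Gamma_\pm = \ens{r e^{\pm i \alpha} : r \in (0, \rho]}$ (compris comme limites depuis l'int�rieur dans le cas $\alpha = \pi$). Sur $\Gamma_{\text{arc}}$, l'identit� \eqref{Hmodule} donne imm�diatement $\module{H(\rho e^{i \theta})} = \rho \sqrt{\ln^2 \rho + \theta^2} \geq -\rho \ln \rho$, ce qui place l'image hors de $D_{-\rho \ln \rho, \beta}$ par la condition de module. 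Sur les rayons, en revanche, le module $\module{H(r e^{\pm i \alpha})} = r\sqrt{\ln^2 r + \alpha^2}$ tend vers $0$ quand $r \to 0$, et c'est donc l'argument qu'il faut contr�ler. La formule \eqref{Harg} donne $\Arg H(r e^{\pm i \alpha}) = \pm\pare{\alpha + \arctan\pare{\alpha/\ln r}}$, et la d�riv�e en $r$ de $\alpha + \arctan(\alpha/\ln r)$ vaut $-\alpha/(r(\ln^2 r + \alpha^2)) < 0$ ; cette fonction est donc d�croissante sur $(0, \rho]$, de minimum $\alpha + \arctan(\alpha/\ln \rho)$, quantit� qui tend vers $\alpha$ quand $\rho \to 0$. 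Pour $\rho$ suffisamment petit, ce minimum d�passe $\beta$, d'o� $\module{\Arg H(z)} > \beta$ sur les rayons, ce qui exclut encore une fois $H(z)$ de $D_{-\rho \ln \rho, \beta}$.

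Une fois cette intersection vide �tablie, les propri�t�s de $U$ s'obtiennent de mani�re standard. L'ouverture r�sulte du th�or�me de l'application ouverte appliqu� � $H$ (holomorphe et injective sur $D_{\rho, \alpha}$). La ferm�ture dans $D_{-\rho \ln \rho, \beta}$ d�coule d'un argument de compacit� : toute suite $(w_n) \subset U$ convergeant vers $w^* \in D_{-\rho \ln \rho, \beta}$ se rel�ve en une suite $(z_n) \subset D_{\rho, \alpha}$ admettant une valeur d'adh�rence $z^* \in \overline{D_{\rho, \alpha}}$. Le cas $z^* = 0$ donnerait $w^* = 0 \notin D_{-\rho\ln\rho,\beta}$ par continuit�, et le cas $z^* \in \partial D_{\rho, \alpha} \setminus \ens{0}$ est exclu par l'�tape pr�c�dente (ou par passage � la limite de l'estimation d'argument si $\alpha = \pi$) ; il reste donc $z^* \in D_{\rho, \alpha}$, soit $w^* = H(z^*) \in U$. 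Enfin, $U$ est non vide car $r \mapsto -r \ln r$ est continue et strictement positive sur $(0, \rho)$, de limite $0$ en $0^+$ et de valeur $-\rho \ln \rho$ en $\rho$ ; elle prend donc toutes les valeurs de l'intervalle $(0, -\rho \ln \rho)$, qui sont tous des �l�ments de $D_{-\rho \ln \rho, \beta}$ admettant un ant�c�dent r�el dans $D_{\rho, \alpha}$.

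Le principal obstacle sera de rendre rigoureux le contr�le uniforme de l'argument sur les rayons (c'est-�-dire l'estimation $\arctan(\alpha/\ln r) \to 0$ uniform�ment en $r \in (0, \rho]$ quand $\rho \to 0$), ainsi que de traiter soigneusement le cas fronti�re $\alpha = \pi$ o� les rayons $\Gamma_\pm$ atteignent la coupure du logarithme et o� la formule \eqref{Harg} ne s'applique que par passage � la limite $\theta \to \pm\pi$, ce qui reste valide tant que $\beta < \pi$.
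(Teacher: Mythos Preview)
Ta preuve est correcte et prend une route authentiquement diff�rente de celle du manuscrit. Le manuscrit construit explicitement, pour chaque point cible $r'e^{i\gamma} \in D_{-\rho\ln\rho,\beta}$, un ant�c�dent dans $D_{\rho,\alpha}$ en deux temps : d'abord, pour chaque $r \in (0,\rho)$, il exhibe (par monotonie de $\theta \mapsto \Arg H(re^{i\theta})$ et par le th�or�me des valeurs interm�diaires) un unique $\theta(r) \in (-\alpha,\alpha)$ r�alisant $\Arg H(re^{i\theta(r)}) = \gamma$ ; puis il fait varier $r$ et applique une seconde fois le th�or�me des valeurs interm�diaires sur le module pour atteindre $r'$. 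Ta d�marche, elle, est topologique : tu montres que $U = H(D_{\rho,\alpha}) \cap D_{-\rho\ln\rho,\beta}$ est ouvert (application ouverte), ferm� (par compacit�, une fois exclu que la fronti�re de $D_{\rho,\alpha}$ s'envoie dans la cible) et non vide, donc �gal au connexe $D_{-\rho\ln\rho,\beta}$.

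Les deux approches reposent sur la m�me estimation-cl� : pour $\rho$ assez petit, $\alpha + \arctan(\alpha/\ln\rho) > \beta$, c'est-�-dire exactement la condition \eqref{rhopetit} du manuscrit. La preuve du manuscrit est plus constructive et autonome (elle n'invoque ni l'injectivit� de $H$ ni le th�or�me de l'application ouverte). La tienne est plus conceptuelle et �vite le d�tour par la fonction implicite $\theta(r)$ ; note d'ailleurs que l'injectivit� (proposition \ref{Hinj}) n'est pas n�cessaire pour l'ouverture de $H(D_{\rho,\alpha})$, le th�or�me de l'application ouverte suffisant pour toute fonction holomorphe non constante. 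Ton traitement du cas limite $\alpha = \pi$ par passage � la limite unilat�ral est correct, la formule \eqref{Harg} s'�tendant par continuit� puisque $\beta < \pi$ garantit que l'argument limite reste dans $(-\pi,\pi)$.
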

\begin{proof} Quand $\rho$ est suffisamment petit, nous avons 
\begin{equation} \label{rhopetit}
\arctan \pare{\frac \alpha {\module{\ln \rho}}} \leq \alpha - \beta.
\end{equation}
Nous allons prouver que l'inclusion de la proposition \ref{Hsurj} est vraie pour tout \mbox{$\rho \in \, ]0,e^{-1}[$} vérifiant  \eqref{rhopetit}. Fixons un complexe $r'e^{i\gamma}$ avec $0 < r' < - \rho \ln \rho$ et $|\gamma| < \beta$. Nous voulons prouver l'existence de $r < \rho$ et $\theta \in \  ]-\alpha,\alpha[$ tels que $H\pare{r e^{i \theta}} = r' e^{i \gamma}$. Nous procédons en deux étapes.

\noindent \textbf{(1)  Prouvons qu'il existe une fonction continue $\boldsymbol{\theta : \,]0,\rho[ \rightarrow \, ]-\alpha,\alpha[}$ telle  que }
$$\forall \, r \in \, ]0,\rho[,\quad \Arg H\pare{r\, e^{i \theta(r)}} = \gamma.$$
Fixons dans un premier temps $r \in \, ]0,\rho[$. Pour tout $\theta \in \, ]-\alpha,\alpha[$, posons
$$f(r,\theta) := \Arg H\pare{r e^{i \theta}} = \theta + \arctan \pare {\frac \theta {\ln r}},$$
où la dernière égalité a été obtenue grâce à \eqref{Harg}. La différentiation de $f$ par rapport à $\theta$ donne 
$$\pd f \theta (r, \theta) = 1 + \frac 1 {\pare{1 + \frac {\theta^2} {\ln^2 r}}\ln r} \geq 1 + \frac 1 {\ln r} > 0.$$
Ainsi $f$ est une fonction continue strictement croissante en $\theta$, envoyant $]-\alpha,\alpha[$ sur $]-\alpha - \arctan(\alpha/ \ln r), \alpha + \arctan(\alpha/ \ln r)[$. Comme $r < \rho$ et que $\rho$ vérifie \eqref{rhopetit}, ce dernier intervalle contient $]-\beta,\beta[$, et donc la valeur $\gamma$. Cela montre l'existence et l'unicité (due à la stricte croissance de $f$) de $\theta(r)$ avec  $\Arg H\pare{r\, e^{i \theta(r)}} = \gamma$.

Nous pouvons alors appliquer le théorème des fonctions implicites à l'équation $f(r,\theta) = \gamma$ au voisinage de $(r,\theta(r))$ afin de montrer la continuité de $\theta$ sur $]0,\rho[$.

\noindent \textbf{(2) Prouvons l'existence d'un $\boldsymbol{ r \in \, ]0,\rho[}$ tel que $ \boldsymbol{ \module { H\pare{r\, e^{i \theta(r)}}}  = r' }$.} \\
La fonction
$$ r \longmapsto \module { H\pare{r\, e^{i \theta(r)}}} = r \sqrt{\ln^2 r + \theta(r)^2}$$
est continue sur $]0,\rho[$. Elle tend vers $0$ quand $r$ tend vers $0$ et dépasse $-\rho \ln \rho$ quand $r$ tend vers $\rho$. Comme $0 < r' < - \rho \ln \rho$, nous pouvons appliquer le théorème des valeurs intermédiaires et ainsi montrer qu'il existe $r \in \, ]0,\rho[$ tel que $\module { H\pare{r\, e^{i \theta(r)}}}  = r'.$ \end{proof}

\subsection{Le théorème de log-inversion}
\label{ss:lli}

Mises ensemble, les deux propositions \ref{Hinj} et \ref{Hsurj} montrent que si $\rho$ est suffisamment petit, alors tout point de $D_{-\rho \ln \rho, \beta}$ a un unique antécédent par  $H$ dans $D_{\rho,\alpha}$, où  $0 < \beta < \alpha \leq \pi$. Nous voulons maintenant adapter ce résultat aux fonctions $\psi$ se comportant comme $H$ au voisinage de l'origine.

\begin{lem} \label{l:loginversion}
Soit $\psi$ une fonction analytique sur $D_{s,\alpha}$ avec $s > 0$ et $\alpha \in \, ]0,\pi]$. Supposons que 
$$ \psi(z) \sim H(z) = -z \ln z$$
lorsque $z$ tend vers $0$ dans $D_{s,\alpha}$. Alors pour tout $\beta \in \, ]0,\alpha[$, il existe  $\rho \in \, ]0,s[$ et $\rho' > 0$ tels que tout point de $D_{\rho',\beta}$  a un unique antécédent par $\psi$ dans $D_{\rho,\alpha}$. 
\end{lem}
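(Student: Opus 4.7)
The plan is to use Rouch\'e's theorem to transfer the local bijectivity of $H(z) = -z \ln z$, established in Propositions \ref{Hinj} and \ref{Hsurj}, to the perturbation $\psi$. Rewriting the hypothesis as $\psi(z) = H(z)(1 + \varepsilon(z))$ with $\varepsilon(z) \to 0$ uniformly as $z \to 0$ in $D_{s,\alpha}$, the error $\psi - H$ is of order $o(|H|)$ near the origin, so Rouch\'e will apply provided the contour stays close enough to $0$.

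First I would pick an intermediate angle $\gamma \in \,]\beta, \alpha[$ and apply Proposition \ref{Hsurj} to the pair $(\gamma, \beta)$, obtaining $\rho_0 \in \,]0, \min(s, e^{-1})[$ with $D_{-\rho_0 \ln \rho_0, \beta} \subseteq H(D_{\rho_0, \gamma})$. Combined with Proposition \ref{Hinj}, every $y \in D_{-\rho_0 \ln \rho_0, \beta}$ admits a unique preimage $z_y \in D_{\rho_0, \gamma}$ by $H$. Setting $\rho := \rho_0$ and $\rho' := -\rho_0 \ln \rho_0 / 2$, the aim is to show that each $y \in D_{\rho', \beta}$ has exactly one preimage by $\psi$ in $D_{\rho, \alpha}$.

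To this end, for each $\alpha' \in \,]\gamma, \alpha[$ and each $r_0 \in \,]0, |z_y|[$, I would apply Rouch\'e's theorem to the pair $(H - y, \psi - H)$ on the boundary $\Gamma_{r_0, \alpha'}$ of the annular sector $\Omega_{r_0, \alpha'} := \{z : r_0 < |z| < \rho, \, |\arg z| < \alpha'\}$. The Rouch\'e inequality $|\psi - H| < |H - y|$ on $\Gamma_{r_0, \alpha'}$ would be verified piece by piece. On the outer arc $|z| = \rho$, formula \eqref{Hmodule} gives $|H(z)| \geq \rho |\ln \rho|$, much larger than $|y| \leq \rho'$, so $|H - y| \geq |H|/2$ dominates $|\psi - H| \leq |\varepsilon| \cdot |H|$ as soon as $|\varepsilon| < 1/2$. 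On the inner arc $|z| = r_0$, $|H(z)| \to 0$ as $r_0 \to 0$, so $|H - y| \to |y| > 0$ while $|\psi - H| \to 0$. On the straight sides $\arg z = \pm \alpha'$, formula \eqref{Harg} bounds $|\arg H(z)| \geq \alpha'(1 - 1/|\ln \rho|) > \beta + (\gamma - \beta)/2$ for $\rho$ small enough, so $H(z)$ lies outside a sector strictly larger than the one containing $y$, whence $|H - y| \geq \sin((\gamma - \beta)/2) \cdot |H|$, dominating $|\psi - H|$ as soon as $|\varepsilon| < \sin((\gamma - \beta)/2)/2$. Choosing $\rho_0$ small enough so that $|\varepsilon(z)|$ is uniformly below this threshold on $D_{\rho_0, \alpha}$ validates Rouch\'e simultaneously on all three pieces of $\Gamma_{r_0, \alpha'}$.

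Rouch\'e then yields that $\psi - y$ and $H - y$ have the same number of zeros in $\Omega_{r_0, \alpha'}$, namely one (the perturbation of $z_y$). Letting $r_0 \to 0^+$ and $\alpha' \nearrow \alpha$, the preceding estimates remain uniform (the angular margin $(\gamma - \beta)/2$ does not depend on $\alpha'$), and since the zeros of $\psi - y$ form a discrete subset of the open domain, I conclude that $\psi - y$ has exactly one zero in $D_{\rho, \alpha}$. The main difficulty is the estimate on the straight sides: one must fix $\gamma$ strictly between $\beta$ and $\alpha$ \emph{before} shrinking $\rho_0$, so that the angular gap between $\arg H(z)$ and the sector $\{|\arg w| \leq \beta\}$ can absorb the asymptotic error $\varepsilon$. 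The order of parameter choices ($\gamma$ first, then $\rho_0$, then $\rho'$) is therefore essential.
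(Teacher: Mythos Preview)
Your proof is correct and follows essentially the same approach as the paper: apply Rouch\'e's theorem to the pair $(H-y,\psi-H)$ on the boundary of a truncated sector, splitting the contour into outer arc, inner arc, and radial sides, and controlling each piece via modulus estimates on the arcs and an angular gap on the sides. The paper carries out the same plan with slightly different bookkeeping (an auxiliary lemma $|z\ln z - z'\ln z'| \ge -\tfrac12 |z|\ln|z|$ for the arcs, $\rho' = -\tfrac{\rho}{8}\ln\tfrac{\rho}{8}$, and no intermediate angle $\gamma$), but the structure is the same.
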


\begin{proof} Par hypothèse, $\psi(z) - H(z) = o\pare{ z \ln z} = o \pare{- |z| \ln |z|}$ quand $z$ tend vers $0$. Considérons $\rho \in \, ]0,s[$ suffisamment petit pour que tout complexe $z \in \, D_{\rho,\alpha}$ vérifie
\begin{eqnarray}
\module{\psi(z) - H(z)} & < & - \min \pare { \frac 1 2, \sin \pare{ \frac {\alpha -\beta} 4  }  } |z| \ln |z|, \label{42} \\
1 + \frac 1 {\ln |z|} & > & \frac 1 2 + \frac \beta {\alpha + \beta}, \label{43} \\ 
|z \ln z| & \leq & -2 |z| \ln |z|, \label{44}  \\
- \ln \frac {|z|} 8 & \leq & - 2 \ln |z| \label{45} .
\end{eqnarray}
En outre, supposons -- et cela est possible grâce à la proposition \ref{Hsurj} -- que $\rho$ soit suffisamment petit pour que 
\begin{equation} \label{46}
D_{-\frac \rho 8 \ln \frac \rho 8,\beta} \subseteq	H \pare{ D_{\frac \rho 8,\alpha}}.
\end{equation}
Les conditions \eqref{43}-\eqref{45} pourraient être changées par des majorations explicites de $\rho$ (par exemple, \eqref{45} est équivalent à $\rho  \leq 1/8$) mais en l'occurrence il est plus pratique de les laisser sous cette forme.

Fixons maintenant $y_0 \in \, D_{\rho',\beta}$ avec $\rho' = - \frac \rho 8 \ln \frac \rho 8$.  D'après la proposition \ref{Hinj} et \eqref{46}, nous savons que $y_0$ a dans $D_{\rho,\alpha}$ un unique antécédent par $H$, qu'on appellera $z_0$. Nous voulons prouver que $y_0$ en a également un par $\psi$. En d'autres termes, nous souhaitons montrer que les fonctions $H - y_0$ et $\psi - y_0$ ont même nombre de zéros dans $D_{\rho,\alpha}$ (à savoir un seul). Pour cela, nous utilisons le théorème de Rouché  \cite[p. 270]{flajolet-sedgewick}.

\fig{[scale=1]{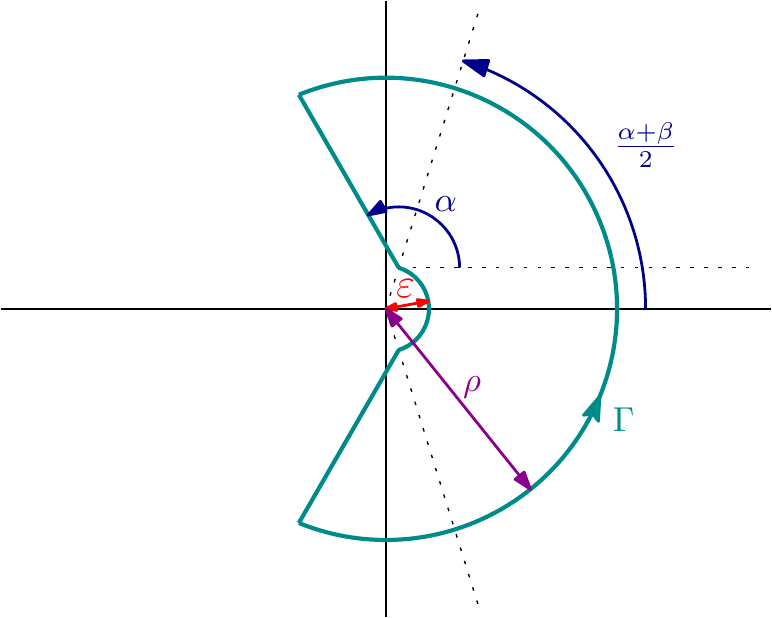}}{Le contour $\Gamma$.}{contour}

Plus précisément, pour $\varepsilon \in \, ]0,|z_0|/8[$, considérons $\Gamma$ le lacet de la figure \ref{contour} et $\Delta$ l'ensemble ouvert délimité par $\Gamma$. Le domaine $\Delta$ converge vers $D_{\rho,\alpha}$ quand $\varepsilon$ tend vers vers $0$. Donc quand $\varepsilon$ sera suffisamment petit, ce domaine contiendra le point $z_0$. De fait, nous devons démontrer que $H - y_0$ et $\psi - y_0$ ont le même nombre de racines dans $\Delta$, pour tout $\varepsilon$ suffisamment petit. D'après le théorème de Rouché\footnote{Les fonctions $H - y_0$ et $\psi - y_0$ sont holomorphes et donc n'ont pas de pôles.}, il suffit de prouver que $\module{\psi - H} < \module{H-y_0}$ sur le lacet $\Gamma$. Décomposons $\Gamma$ en trois parties (non disjointes) :
$$\Gamma_1 = \Gamma \cap \enstq z {\module z = \rho}, \quad \Gamma_2 = \Gamma \cap \enstq z {\module z = \varepsilon}\quad\textrm{ et }\quad \Gamma_3 = \Gamma \cap \enstq z {\module {\Arg z} > \frac {\alpha+\beta} 2 }.$$

Pour l'étude de $\Gamma_1$ et de $\Gamma_2$, nous utiliserons le résultat élémentaire suivant.

\begin{lem} \label{elcomplexe}
 Si $z$ et $z'$ sont deux nombres complexes non négatifs avec $8 \module{z'} \leq \module z \leq \rho$, alors
$$\module{z \ln z - z' \ln z'} \geq - \frac 1 2 \module z \ln \module z.$$
\end{lem}
\begin{proof}
Nous avons les minorations suivantes :
\begin{align*}
\module{z \ln z - z' \ln z'} & \geq \module{z \ln z} - \module{z' \ln z'}     \\
\  & \geq  - \module z \ln \module z + 2 \module z' \ln \module z' && \textrm{d'après \eqref{Hmodule} et \eqref{44}}, \\
\  & \geq  - \module z \ln \module z + \frac{ \module z'} 4 \ln \frac {\module z'} 8  && \textrm{car }\module {z'} \leq \frac {\module{z}} 8, \\
\ & \geq - \frac 1 2 \module z \ln \module z && \textrm{d'après }\eqref{45}.
\end{align*}
L'inégalité est ainsi prouvée.
\end{proof}
Soit $z \in \, \Gamma_1$. D'après \eqref{46}, nous savons que $|z_0| \leq \rho / 8$. Nous pouvons donc appliquer le lemme \ref{elcomplexe} aux nombres $z$ et $z_0$. Grâce à \eqref{42}, on trouve alors
$$\module{H(z) - y_0} \geq  - \frac 1 2 \module z \ln \module z > \module{\psi(z)-H(z)}.$$
Considérons maintenant $z \in \, \Gamma_2$. Comme $\varepsilon < |z_0|/8$, le lemme \ref{elcomplexe} s'applique aux nombres $z_0$ et $z$, où cette fois $z$ joue le rôle de $z'$ et $z_0$ le rôle de $z$. Alors, d'après \eqref{42}, 
$$\module{ y_0 - H(z)} \geq  - \frac 1 2 \module {z_0} \ln \module {z_0} \geq  - \frac 1 2 \module z \ln \module z > \module{\psi(z)-H(z)}.$$
Il nous reste plus qu'à étudier le cas $z \in \, \Gamma_3$. Nous souhaitons dans un premier temps montrer que si  $z \in \, \Gamma_3$, alors
\begin{equation} \label{47}
\module {\Arg H(z)} \geq \beta + \frac{\alpha - \beta} 4.
\end{equation}
Nous pouvons restreindre la preuve de cette inégalité au cas où $ \Arg z \geq 0$ puisque $H(\overline z) = \overline{H(z)}$ (voir lemme \ref{Hbase}). Nous avons alors
\begin{align*}
\Arg H(z) & = \Arg z + \arctan\pare{ \frac {\Arg z}  {\ln \module z}  } && \textrm{d'après le lemme \eqref{Hbase},} \\
\ & \geq \pare { 1 + \frac 1 {\ln \module z} } \Arg z && \textrm{car } \arctan x \geq x, \\
\ & > \pare{\frac 1 2 + \frac \beta {\alpha + \beta}} \Arg z && \textrm{d'après \eqref{43}}, \\
\ & \geq \beta + \frac{\alpha - \beta} 4 && \textrm{car } \Arg z \geq \frac{\alpha +  \beta} 2.
\end{align*}
Ainsi \eqref{47} est vérifié par tout point $z \in \, \Gamma_3$. Mais puisque $\module {\Arg y_0} < \beta$, nous avons 
\begin{equation} \label{48}
\module {\Arg H(z) - \Arg y_0} > \frac{\alpha - \beta} 4.
\end{equation}
Nous avons alors besoin d'un dernier résultat élémentaire :\begin{lem} Soient deux nombres complexes $a$ et $b$ non négatifs et $\gamma > 0$. \\ Si $\module{\Arg a - \Arg b} \geq \gamma$, alors $|a-b| \geq \module a \sin \gamma$.
\end{lem}
\begin{proof} Si $\module{\Arg a - \Arg b} \leq \frac \pi 2$, alors
$$\module{a-b} = \module{ \module a e^{i\pare{\Arg a - \Arg b}} - \module b} \geq \module {\Ima\pare{\module a  e^{i\pare{\Arg a - \Arg b}} } } \geq \module a \sin \gamma.$$
Si $\module{\Arg a - \Arg b} \geq \frac \pi 2$, alors
\begin{multline*}
\module{a-b} = \module{ \module a  - \module b e^{i\pare{\Arg a - \Arg b}}} \geq \module{\Ree \pare{ \module a  - \module b e^{i\pare{\Arg a - \Arg b}}  }} \\
= \module a  - \module b \cos\pare{ \Arg a - \Arg b  } \geq \module a \geq \module a \sin \gamma.
\end{multline*}
L'implication est ainsi prouvée dans les deux différents cas.
\end{proof}
Nous appliquons le précédent lemme à l'inégalité  \eqref{48} avec $\gamma = (\alpha - \beta)/4$ de sorte que
\begin{align*}
\module {\Arg H(z) - \Arg y_0} & \geq  \module{H(z)} \sin\pare{\frac {\alpha - \beta} 4} \\
\ & \geq - \sin\pare{\frac {\alpha - \beta} 4} \module z \ln \module z && \textrm{d'après \eqref{Hmodule},} \\
\ & > \module{\psi(z) - H(z)} && \textrm{d'après \eqref{42}}.
\end{align*}
Nous avons finalement prouvé que $\module{\psi - H} < \module{H - y_0}$ sur tout le lacet $\Gamma$. Par conséquent, $\psi - y_0$ a, comme $H-y_0$, une unique racine dans $D_{\rho,\alpha}$.\end{proof}

Nous pouvons maintenant prouver le théorème \ref{loginversion} dans son intégralité.

\begin{proof}[Démonstration du théorème \ref{loginversion}.] Quitte à écrire $\psi = c \psi_1$ et $\Upsilon(y) = \Upsilon_1(y/c)$, nous pouvons supposer sans perte de généralité que $c = 1$. Choisissons alors $\rho$ et $\rho'$ comme dans le lemme \ref{l:loginversion}. 
En outre, pour $y_0 \in D_{\rho',\beta}$, définissons $\Upsilon(y_0)$ comme l'unique point $z_0$ de $D_{\rho,\alpha}$ avec $\psi(z_0) = y_0$. 

Nous pouvons appliquer le théorème des fonctions implicites à l'équation fonctionnelle $\psi(\tilde \Upsilon(y)) = y$ au voisinage de $(y_0,z_0)$. En effet, la fonction $\psi$ est analytique en $z_0$ et localement injective (d'après le lemme \ref{l:loginversion}). Par conséquent, il existe une fonction $\tilde \Upsilon$ analytique au voisinage de $y_0$ telle que $\tilde \Upsilon(y_0) = z_0$ et $\psi(\tilde \Upsilon(y)) = y$ dans ce même voisinage.

Mais l'injectivité de $\psi$ sur $D_{\rho,\alpha}$ oblige $\Upsilon$ et $\tilde \Upsilon$ à coïncider sur un voisinage de $y_0$. Ainsi, $\Upsilon$ est analytique en $y_0$, et donc, de manière générale, sur tout $D_{\rho',\beta}$.

Terminons cette preuve par le comportement singulier de $\Upsilon$ au voisinage de $0$. L'équation $\psi(\Upsilon(y)) = y$, couplée avec l'approximation $\psi(z) \sim - z \ln z$, implique que $\Upsilon(y)$ tend vers $0$ lorsque $y$ s'approche de $0$. Dans un voisinage de $0$ (intersecté avec  $D_{\rho',\beta}$), nous avons alors
$$ y \sim - \Upsilon(y) \ln \Upsilon(y).$$
Le passage au logarithme dans l'équivalence ci-dessus est licite car les deux membres tendent vers $0$. Donc, d'après \eqref{Haddlog}, nous avons
$$\ln y \sim \ln  \Upsilon (y) + \ln \pare{- \ln  \Upsilon (y)}   \sim \ln \Upsilon (y).$$
En combinant les deux dernières équivalences, nous obtenons $\Upsilon(y) \sim - y / \ln y$.
\end{proof}


\chapter{Comportement asymptotique des cartes forestières $\boldsymbol 4$-eulériennes et apériodiques}
\chaptermark{Asymptotique des cartes forestières $\boldsymbol 4$-eulériennes et apériodiques }
\label{c:asympt}

\section{Présentation des théorèmes}
\label{s:theoremes}

Ce chapitre étudie le comportement asymptotique des cartes forestières 
avec certaines contraintes sur la  pondération  des degrés des sommets.

\noindent Plus précisément, nous allons considérer des cartes 
telles que :
\begin{itemize}
\item les sommets ont des degrés pairs (autrement dit les cartes sont eulériennes). Nous rappelons que les équations caractérisant la série génératrice des cartes forestières sont plus simples dans ce cadre-là\footnote{cf. section \ref{seulerien} p. \pageref{seulerien}}.
\item les sommets ont des degrés différents de $2$. Comme nous allons énumérer les cartes selon le nombre de faces, cette hypothèse est nécessaire pour qu'il n'y ait qu'un nombre fini de cartes avec un nombre de faces fixé. Une carte qui satisfait ces deux dernières contraintes est dite \textit{$4$-eulérienne}.
\item il existe deux cartes dont le nombre de faces ne diffère que d'un. On parle alors de cartes \textit{apériodiques}. En termes de contraintes sur les degrés des sommets, cela revient à exiger \eqref{contraintedegre}. (Cette assertion se démontre via la formule d'Euler.) Le chapitre \ref{c:2qreg} montre comment s'en sortir quand cette hypothèse n'est plus vraie, dans le cadre des cartes régulières.
\end{itemize}
Donnons deux exemples de telles classes de cartes : l'ensemble des cartes tétravalentes et l'ensemble des cartes $4$-eulériennes non pondérées (cf. section  \ref{spanorama} p. \pageref{spanorama}).

Comme dit plus haut, nous allons énumérer les cartes forestières selon le nombre de faces. Nous abandonnons de fait la variable $t$ des arêtes des précédents chapitres. Autrement dit, nous substituons $t$ par $1$. De plus, la variable $u$ qui compte les composantes non racine de la forêt devient un paramètre réel supérieur à $-1$. Par exemple, si $u = 0$, nous énumérons les cartes munies d'un arbre couvrant. Les poids $d_k$ sur les sommets de degré $k$ sont eux des paramètres réels positifs que nous adapterons selon la classe étudiée.

Considérons donc $F(z) \equiv F(z,u) = \sum_{n \geq 3} f_n(u) z^n$ la série génératrice des cartes forestières avec un poids $d_{2k}$ par sommet de degré $2k$ et un poids $u \in \, [-1,+\infty[$ par composante non racine, comptées selon le nombre de faces. En d'autres termes,
$$f_n(u) = \sum_{\substack{(C,F)\textrm{ carte forestière}\\\textrm{à }n\textrm{ faces}}} u^{\cc(F)-1} \prod_{k \geq 2} d_{2k}^{\ \ \som_{2k}(C)},$$
où $\cc(F)$ désigne le nombre de composantes connexes de $F$ et $\som_{2k}(C)$ le nombre de sommets de degré $2k$ dans $C$. Le nombre $f_n(\mu - 1)$ possède de nombreuses interprétations combinatoires intéressantes, comme nous le rappelle la sous-section \ref{ss:modele} p. \pageref{ss:modele}. Nous allons étudier le comportement asymptotique de $f_n(u)$ où $u \geq -1$. Chaque théorème de ce chapitre sera illustré en l'appliquant au cas tétravalent, ce qui j'espère facilitera la lecture. 

Mais avant d'énoncer les théorèmes, rappelons la proposition \ref{teulerien} p. \pageref{teulerien} adaptée au cas $t=1$. Pour $i \geq 2$, définissons les nombres $T_{2i}$ et $T^\bigstar_{2i}$ comme 
$$T_{2i} = \sum_{\substack{A\textrm{ arbre}\\ \textrm{à }2i\textrm{ pattes}    }} \prod_{k \geq 2} d_{2k}^{\ \ \som_{2k}(A)}, \quad T^{\bigstar}_{2i} = \sum_{\substack{A\textrm{ arbre à}\\ \textrm{}2i\textrm{ pattes avec} \\ \textrm{une arête marquée}  }} \prod_{k \geq 2} d_{2k}^{\ \ \som_{2k}(A)}$$
(ici les arbres sont enracinés sur une patte)
et les séries $\phi(x)$ et $\theta(x)$  comme
\begin{equation} \label{eulphi}
\phi(x) = \sum_{i \geq 1} T_{2i} { 2i-1 \choose i } x^i,
\quad \theta(x) =  \sum_{i \geq 2} \left(\frac {1} {i} T^\bigstar_{2i} + T_{2i}\right) {2i \choose i}  x^i.
\end{equation}
Il existe une unique série formelle $R$ en $z$ et $u$ qui est solution de l'équation
 \begin{equation} \label{neulR}
R = z + u\,\phi(R).
\end{equation}
La série génératrice $F(z)$ des cartes forestières satisfait alors
\begin{equation} \label{neulF}
\frac {\partial F} {\partial z} = \theta(R).
\end{equation}

Commençons par décrire la fonction qui à tout $u \in \, [-1,+\infty[$ associe le rayon de convergence $\rho_u$ de $F(z)$ en $z$.


\begin{theo} \label{theorayon} 
Soient $u \geq -1$ et $(d_{2k})_{k \geq 2}$ une suite de poids positifs qui vérifie la condition d'apériodicité :
\begin{equation} \label{contraintedegre}
\pgcd\left( k \geq 1 \ |\  d_{2k+2} \neq 0 \right) =1.
\end{equation}
De plus, supposons que sa série génératrice $D(y)= \sum_{k \geq 1} d_{2k+2}\,y^k$ a un rayon de convergence $\delta$ non nul (possiblement infini) et que $D$ diverge en  $\delta$.

Alors il existe un unique couple de réels $(\kappa,\sigma)$ avec $0 < \kappa \sigma ^2 < \delta$ et $ 1 < \sigma $ tels que :
\begin{equation} \label{eqcar}
\sigma D(\kappa\,\sigma ^2) + 1 = \sigma ,\quad 2\,\kappa\,\sigma ^3\,D'(\kappa \sigma^2) = 1.
\end{equation}
Les séries $\phi$ et $\theta$ définies par \eqref{eulphi} ont pour rayon de convergence $\kappa / 4$. La série génératrice $F(z)$ des cartes forestières eulériennes avec un poids $d_{2k}$ par sommet de degré $2k$ a elle pour rayon de convergence
\begin{equation} \label{deathrow}
\rho_u = \tau - u \phi(\tau),
\end{equation}
où $\tau$ est défini de manière unique par  
$$\left\{ \begin{array}{ll}
\tau = \kappa/4 & \textrm{si }u \leq 0, \\
1 - u \phi'(\tau) = 0 & \textrm{si }u > 0. 
\end{array} \right.
$$
En particulier, $\rho_u$ est une fonction affine de $u$ sur $[-1,0]$. De manière générale, la fonction $u \mapsto \rho_u$ est décroissante, analytique sur $[-1,+\infty[$, à part en $0$ où elle est infiniment dérivable. En effet, nous avons
\begin{equation} \label{exprho}
\rho_u \underset{u \rightarrow 0^+}= \frac \kappa 4 - u \phi\left( \frac \kappa 4 \right) + O\left(\exp\left( -c u ^{-1} \right)\right),
\end{equation} 
avec $c > 0$.
\end{theo}

\noindent \textbf{Application au cas tétravalent. }Les poids associées aux cartes tétravalentes sont $d_4 = 1$ et $d_k = 0$ pour $k \neq 4$. La série $D(y)$ associée est donc $y$. Elle diverge bien en $+\infty$. Le système \eqref{eqcar} s'écrit  
$$ \kappa\,\sigma ^3 + 1 = \sigma ,\quad 2\,\kappa\,\sigma ^3 = 1. $$
Il admet pour unique solution $\kappa = 4/27$ et $\sigma = 3/2$. La courbe de $u \mapsto \rho_u$ est  montrée figure \ref{roux}. 

\fig{[scale=0.3]{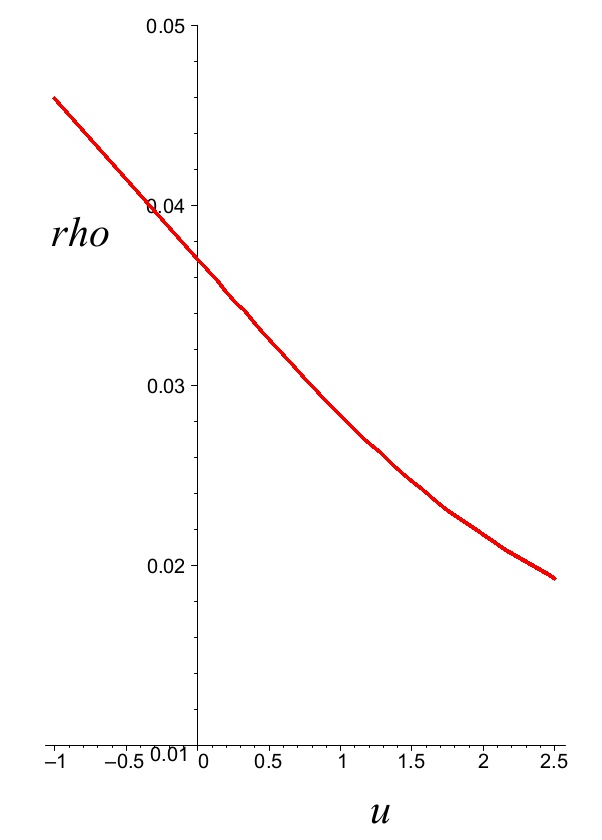}}{Le rayon de convergence de $F(z,u)$ comme fonction de $u \geq -1$, dans le cas tétravalent.}{roux}

Nous pouvons également calculer dans le cas tétravalent la valeur de $\phi$ en $\kappa/4 = 1/27$ comme nous le verrons dans la section suivante\footnote{Il s'agit d'ailleurs d'un des rares cas où nous puissions le faire.}. La valeur de $\rho_u$ sur cet intervalle est alors
\begin{equation} \label{rho4}
\rho_u= \frac 1 {27} -u\phi\left(\frac 1 {27}\right)= \frac {1+u} {27}
- u \frac{\sqrt 3}{12\pi}.
\end{equation}
Notons que le rayon de convergence en $-1$ vaut $\rho_{-1} = \sqrt 3 / (12\pi)$ et est transcendant. Il correspond à la série génératrice des cartes tétravalentes munies d'un arbre couvrant sans arêtes internes actives. Le rayon de convergence d'une série holonome étant forcément algébrique sur $\Q$, nous en déduisons la non-holonomie de la précédente série, qui est généralisé par le corollaire \ref{cor:pasholonome} ci-dessous.

Décrivons maintenant le comportement asymptotique du nombre de cartes forestières.

\begin{theo} \label{theoasymp}
Fixons $u \geq -1$. Considérons $(d_{2k})_{k \geq 2}$  une suite de poids positifs  qui vérifie la condition d'apériodicité \eqref{contraintedegre} et telle que $\sum_{k \geq 2} d_{2k}\,y^k$ diverge en son rayon de convergence supposé non nul. Pour $n \geq 3$, notons $f_n(u)$ le coefficient en $z^n$ de $F(z,u)$ précédemment défini, et $\rho_u$ le rayon de convergence de $F(z,u)$ (voir théorème précédent).

Il existe une constante positive $c_u$ tel que 
$$f_n(u) \sim \left\{ \begin{array}{ll}
c_u \rho_u^{-n}n^{-3} & \textrm{si }u=0, \\
c_u \rho_u^{-n}n^{-5/2} & \textrm{si }u>0.
\end{array} \right.$$
Lorsque $u \in \, [-1,0[$, la série génératrice $F$ des cartes forestières est analytique sur un domaine ouvert en forme de poussin (voir figure \ref{domainepoussin}) et le comportement singulier de $\pd {^2 F} {z^2}$ en $\rho_u$ est 
$$\pd {^2 F} {z^2}(z) \underset{z \rightarrow \rho_u} =  -  \frac{ d} u +  \frac { c_u } {\ln\pare{1 - z/\rho_u}} + o\pare{\frac 1 {\ln\pare{1 - z/\rho_u}}} ,$$
où $c_u$ et $d$ sont deux constantes strictement positives. 

\fig{[scale=1.3]{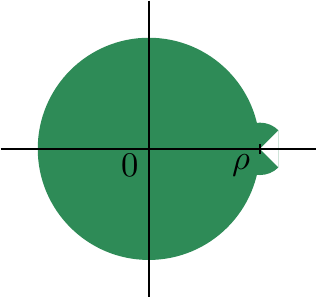}}{Domaine poussin.}{domainepoussin}
\end{theo}

Le  théorème  précédent montre ce qu'on appelle une \textit{transition de phase} en $0$ : le comportement asymptotique de $F(z,u)$ n'est pas le même pour $u$ avant/en/après $0$. L'exposant $-5/2$ que nous trouvons pour $u > 0$ est typique des cartes planaires (voir par exemple les tables $1$ et $2$ de \cite{banderier-maps}). Quant au cas $u=0$, il est intimement lié à la fonction $\theta$ (qui peut s'interpréter comme la série génératrice des arbres à pattes enracinés sur un coin avec une bicoloration équilibrée des pattes) puisque $\pd F z (z,0) = \theta(z)$. L'exposant $-3$ est cohérent avec les résultats de Mullin \cite{mullin-boisees} dans la mesure où la formule de Stirling appliquée aux formules closes trouvées par Mullin sur des cartes régulières donne le même exposant.

Le cas $u \in \, [-1,0[$ est plus surprenant (et plus délicat à étudier). Le comportement singulier de $\pdd 2 F z$ en  $1/{\ln\pare{1 - z/\rho_u}}$ est vraiment inhabituel pour des cartes planaires. Nous ne connaissons pas d'autres études où un tel comportement peut être observé. En particulier, ce comportement 
 en  $1/{\ln\pare{1 - z/\rho_u}}$ est incompatible avec le fait que $F$ soit holonome (voir \cite[p. 520 et 582]{flajolet-sedgewick}). Nous déduisons donc le corollaire suivant. 

\begin{cor} \label{cor:pasholonome}
Pour $u \in \, [-1,0[$, la série génératrice $F(z,u)$ des cartes forestières eulériennes avec un poids $d_{2k}$ pour chaque sommet de degré $2k$ tel que la suite $(d_{2k})$ vérifie les hypothèses du théorème \ref{theoasymp} n'est pas holonome. Ceci est également vrai quand $u$ est une variable indéterminée.
\end{cor}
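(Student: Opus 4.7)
The plan is to derive the corollary as a direct consequence of Theorem \ref{theoasymp}, combined with the classical characterization of asymptotic expansions of coefficients of D-finite (holonomic) univariate series. First, I would fix $u \in [-1, 0[$ and extract from the expansion
\[
\pd{^2 F}{z^2}(z) \underset{z \to \rho_u}{=} -\frac{d}{u} + \frac{c_u}{\ln(1 - z/\rho_u)} + o\!\pare{\frac{1}{\ln(1 - z/\rho_u)}}
\]
together with the analyticity of $F$ on a ``poussin''-shaped domain (which is of the type required by the transfer theorems of Section \ref{ss:asympt}). Applying the second variant of the transfer theorem listed in the introduction, with $k = 0$, one obtains that the $n$-th coefficient of $\pd{^2 F}{z^2}$ is asymptotically equivalent to $c_u \rho_u^{-n} n^{-1} \ln^{-2}(n)$ with $c_u > 0$.

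The next step is to invoke the fact, recorded in the table of Figure \ref{hierar} and discussed in Section \ref{ss:nature}, that the coefficients of a univariate holonomic series admit an asymptotic expansion of the form $C \rho^{-n} e^{P(n^{1/r})} n^{\theta} \ln^{\ell}(n)$ with a non-negative integer exponent $\ell$ on the logarithm. An asymptotic behavior involving $\ln^{-2}(n)$, with non-zero constant, cannot occur in this hierarchy (this is the content of \cite[pp.~520, 582]{flajolet-sedgewick}). Since $\pd{^2 F}{z^2}(z)$ would be holonomic whenever $F(z,u)$ is, this yields a contradiction and proves non-holonomy for every fixed $u \in [-1, 0[$.

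For the second part of the statement, where $u$ is treated as a formal indeterminate, I would argue by specialization. Suppose, towards contradiction, that $F(z,u) \in \mathbb{Q}(u)[[z]]$ is holonomic in $z$: there exist polynomials $p_0, \ldots, p_r \in \mathbb{Q}[z,u]$, not all zero, with $\sum_{i=0}^r p_i(z,u)\, \pdd i F z = 0$. The leading coefficient $p_r(z,u)$ is a non-zero polynomial in $u$ for some fixed $z$, hence it vanishes on a finite subset of values of $u$. In particular, there exists a rational (or real algebraic) number $u_0 \in [-1, 0[$ at which none of the $p_i(z,u)$ degenerates in a way that destroys the equation. Specializing $u \to u_0$ in the ODE yields a non-trivial linear ODE with polynomial coefficients in $z$ satisfied by $F(z,u_0)$, which would make $F(z,u_0)$ holonomic in $z$---contradicting the first part.

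The main obstacle, which is essentially already surmounted by Theorem \ref{theoasymp}, is to control the singular expansion of $\pdd 2 F z$ precisely enough to apply a transfer theorem. The delicate points are the analyticity of $F$ on a $\Delta$-type (poussin) domain, needed to legitimize transfer, and the non-vanishing of the constant $c_u$; both are ensured by Theorem \ref{theoasymp}. The rest of the argument---excluding $\ln^{-2}(n)$ behavior from the holonomic class and the routine specialization argument in the multivariate case---is then a clean consequence.
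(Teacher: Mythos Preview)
Your approach for fixed $u \in [-1,0[$ contains a real gap. You assert that the poussin-shaped domain of Theorem~\ref{theoasymp} is ``of the type required by the transfer theorems,'' but it is not: the poussin domain is the open disc of radius $\rho_u$ together with a small slit sector attached only at the point $\rho_u$, whereas a $\Delta$-domain must extend beyond the circle $|z|=\rho_u$ everywhere except in a wedge at $\rho_u$. The paper is explicit about this distinction just before Theorem~\ref{theobis}: under the hypotheses of Theorem~\ref{theoasymp} alone one cannot rule out other singularities on the circle $|z|=\rho_u$, so the transfer theorem does not apply and the asymptotic $c_u\,\rho_u^{-n}\,n^{-1}\ln^{-2}(n)$ for the coefficients of $\partial^2 F/\partial z^2$ is simply not available. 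The remark closing the proof of Proposition~\ref{p:neg} even attempts a Tauberian route and notes that it yields only an asymptotic for partial sums, not for individual coefficients.

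The paper's argument sidesteps this obstruction by working directly with the local singular expansion of the \emph{function} rather than its coefficients. If $F(z,u)$ were holonomic in $z$, then so would $\partial^2 F/\partial z^2$, and the structure theory for solutions of linear ODEs with polynomial coefficients (this is what \cite[p.~520 and p.~582]{flajolet-sedgewick} actually records) forces every local singular expansion to be a finite sum of terms of the form $(1-z/\rho_u)^\alpha \ln^k(1-z/\rho_u)$ with $k$ a \emph{non-negative} integer. A genuine term $c_u/\ln(1-z/\rho_u)$ with $c_u\neq 0$ is therefore excluded outright, and this conclusion requires no control whatsoever over the rest of the circle of convergence. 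Your specialization argument for the case where $u$ is an indeterminate is correct and standard.
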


Pour revenir au théorème précédent, l'analyticité de $F$ n'est stipulée que sur un domaine en forme de poussin, et non pas sur un $\Delta$-domaine. En effet, sans contrainte supplémentaire, nous ne savons pas prouver que $F$ est analytique sur le disque de rayon $\rho_u$ (privée du point $\rho_u$). Cette hypothèse est pourtant nécessaire pour déduire le comportement asymptotique de $f_n(u)$ pour $u$ négatif via le théorème de transfert. Le théorème suivant introduit une nouvelle hypothèse (vérifiée par les cartes tétravalentes et 4-eulériennes) qui résout ces problèmes.

\begin{theo} \label{theobis}
 Reprenons les notations et les hypothèses du théorème \ref{theoasymp}. Supposons en outre que les séries $\phi$ et $\theta$ définies par \eqref{eulphi} sont analytiques sur un disque centré en $0$ de rayon $2 \phi(\kappa/4) + \kappa/4$ privé de la demi-droite $[\kappa/4,+\infty[$, où $\kappa/4$ est le rayon de convergence de $\phi$.

Alors la série génératrice $F$ des cartes forestières est analytique sur un $\Delta$-domaine\footnote{voir théorème~\ref{theotransfert} p.~ \pageref{theotransfert} pour la définition de $\Delta$-domaine} de rayon $\rho_u$ pour tout $u \in \, [-1,+\infty[$ préalablement fixé, et
\begin{equation}
f_n(u) \sim \left\{ \begin{array}{ll}
 c_u \rho_u^{-n}n^{-3} (\ln n)^{-2} & \textrm{si }u \in [-1,0[, \\
c_u \rho_u^{-n}n^{-3} & \textrm{si }u=0, \\
c_u \rho_u^{-n}n^{-5/2} & \textrm{si }u>0.
\end{array} \right.
\label{fnuasympt}
\end{equation}
\end{theo}

\noindent \textbf{Application au cas tétravalent. }Si on se réfère à la figure \ref{recapitulatif} p. \pageref{recapitulatif}, la fonction $\phi$ est égale à 
$$  \phi(x) = \sum_{i \geq 2}
 \frac{(3i-3)!}{(i-1)!^2i!} x^i.$$
Nous pouvons alors voir que 
\begin{equation} \label{phihypergeom}
\phi(x) = x \pare{\,_2F_1 \pare{  \frac 1 3, \frac 2 3 ; 2 ; 27 x   } - 1 },
\end{equation}
 où $_2F_1(a,b;c;z)$ désigne la \textit{série hypergéométrique} standard de paramètres $a$, $b$ et $c$ :
\begin{equation} \label{hypergeom}
_2F_1(a,b;c;z)=\sum_{n \geq 0} \frac{(a)_n(b)_n}{(c)_n} \frac{x^n} {n!} ;
\end{equation}
ici $(a)_n$ est la factorielle montante $a(a+1)\dots(a+n-1)$. Sachant qu'une série hypergéométrique standard est analytique sur $\C \backslash \, [1,+\infty[$, nous en déduisons que $\phi$ satisfait les hypothèses du théorème précédent. D'après la relation \eqref{thetaphi4} p. \pageref{thetaphi4}, il en est de même pour la série $\theta$. Nous en déduisons les équivalents asymptotiques sur $f_n(u)$.

\section{Les fonctions $\phi$ et $\theta$}

Le système fonctionnel \eqref{neulR} \eqref{neulF}  qui régit la série $F$  fait intervenir deux séries  $\phi$ et $\theta$. Ces séries sont intimement liées aux nombres $T_{2i}$ qui comptent les arbres à pattes selon le degré des sommets. Le comportement asymptotique des nombres $T_{2i}$ a déjà été étudié dans \cite[Théo. 5]{drmota-trees} mais la preuve comporte une erreur lorsque la période n'est pas $1$ (ici elle est de $2$).  Nous déterminons dans  un premier lemme le véritable comportement de ces nombres.

\begin{lem} \label{l:asympt}
Soit $D(y)= \sum_{k\geq 1} d_{2k+2}y^k$ de rayon de convergence $\delta$ vérifiant les hypothèses du théorème \ref{theorayon}. 

Il existe un unique couple de réels $(\kappa,\sigma)$ avec $0 < \kappa \sigma ^2 < \delta$ et $ 1 < \sigma $ satisfaisant \eqref{eqcar} :
$$\sigma D(\kappa\,\sigma ^2) + 1 = \sigma ,\quad 2\,\kappa\,\sigma ^3\,D'(\kappa \sigma^2) = 1.$$
La série génératrice $T(y) = \sum_{k \geq 2} T_{2k} y^{k}$ des arbres à pattes est apériodique\footnote{dans le sens de \cite[Définition IV.5 p.266]{flajolet-sedgewick} : il existe trois indices $i < j < k$ avec $T_{2i}T_{2j}T_{2k}\neq 0$ et $\textrm {pgcd}(j-i,k-i) = 1$.} et son rayon de convergence est $\kappa$. De plus, $T$ est analytique sur un $\Delta$-domaine de rayon $\kappa$. Le développement singulier de $T$ en $\kappa$, localement valide sur le domaine, vaut :
\begin{equation} \label{devT}
T(y) \underset{y \rightarrow \kappa}= \kappa(\sigma  - 1) - \gamma \sqrt{1 - y/\kappa} + \kappa a (1- y/\kappa) + O\pare{( 1- y/\kappa )^{3/2}},
\end{equation}
où $\gamma = \kappa \sigma / \sqrt{3+4\kappa^2 \sigma^5D''(\kappa \sigma ^2)}$ et $a$ est une constante. Le théorème de transfert donne alors un équivalent des nombres $T_{2n}$ :
$$T_{2n} \sim \frac{\gamma}{2 \sqrt{\pi}} \kappa^{-n} n^{- \frac 3 2}.$$
La dérivée de $T$ admet pour développement singulier en $\kappa$ :
\begin{equation} \label{devTp}
T'(y) \underset{y \rightarrow \kappa}= \frac {\gamma} {2 \kappa} \frac 1 {\sqrt{1 - y/\kappa}} +  a +  O\pare{ \sqrt{1- y/\kappa}}.
\end{equation}
En outre, la série génératrice $T^\bigstar = \sum_{k \geq 2} T^\bigstar_{2k} y^{k}$ des arbres à pattes avec une arête distinguée est elle aussi analytique sur un $\Delta$-domaine de rayon $\kappa$ et
\begin{equation} \label{devstarr}
T^\bigstar(y) \underset{y \rightarrow \kappa} = \gamma (\sigma - 1) \frac 1 {\sqrt{1 - y/\kappa}}  + a^\bigstar + O\pare{ \sqrt{1- y/\kappa}},
\end{equation}
où $a^\bigstar$ désigne une constante.
\end{lem}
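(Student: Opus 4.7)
The starting point is to express $T$ via a single implicit equation. Writing $W(y) = \sum_{k\geq 1} T_{2k}\, y^{2k-1}$, the recursive decomposition of a plated Eulerian tree at its root vertex gives $W = \sum_{k\geq 1} d_{2k}(W+y)^{2k-1}$. Using the parity of $W$ and setting $U(z) = T(z)/z + 1$, a short computation reduces this to the scalar fixed-point equation
\begin{equation} \label{planU}
U \;=\; 1 + U\, D(U^2 z),
\end{equation}
equivalently $T = (z+T)\,D\!\left((z+T)^2/z\right)$. The singular point of $U$ (hence of $T$) is then governed by the standard characteristic system $H(U,z)=U-1-UD(U^2z)=0$ and $\partial_U H = 0$, which after simplification reads $\sigma(1-D(\kappa\sigma^2))=1$ and $2\kappa\sigma^3 D'(\kappa\sigma^2)=1$. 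This is exactly \eqref{eqcar}.

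The first step is to prove existence and uniqueness of the solution $(\kappa,\sigma)$ with $0<\kappa\sigma^2<\delta$ and $\sigma>1$. I would parametrize by $w=\kappa\sigma^2$, so that $\sigma = 1+\sigma D(w)$ gives $\sigma(w) = 1/(1-D(w))$ (for $w$ smaller than the positive root of $D(w)=1$ if it exists, else for all $w<\delta$), and the second equation becomes $2w\,D'(w)/(1-D(w)) = 1$. The left-hand side is continuous, vanishes at $0$, is strictly increasing on some initial interval by log-convexity type considerations, and tends to $+\infty$ as $w$ approaches either $\delta$ (because $D$ diverges there by hypothesis) or the root of $D=1$ if it lies below $\delta$; hence it hits $1$ at a unique $w=\kappa\sigma^2$. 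This yields $(\kappa,\sigma)$ uniquely.

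The second step is singularity analysis. Equation \eqref{planU} fits the smooth implicit function schema of Flajolet--Sedgewick (Theorem~VII.3): $H(U,z)$ is analytic at the candidate critical point, $H=\partial_U H=0$ there, and $\partial_z H(\sigma,\kappa)\neq 0$ (which I would verify using $D'(\kappa\sigma^2)>0$ coming from the second characteristic equation). Under the aperiodicity hypothesis \eqref{contraintedegre}, the series $W$, hence $T$, is aperiodic in the sense of \cite[Def.~IV.5]{flajolet-sedgewick}; the argument is that \eqref{contraintedegre} gives three Eulerian trees whose patte-counts have pairwise gcd~$1$. The schema then delivers $U(z) = \sigma - c\sqrt{1-z/\kappa} + O(1-z/\kappa)$ on a $\Delta$-domain of radius $\kappa$, with $c = \sqrt{-2\,\partial_z H(\sigma,\kappa)/\partial^2_{UU}H(\sigma,\kappa)}$; a direct calculation of these partials gives $c = \sigma/\sqrt{3+4\kappa^2\sigma^5 D''(\kappa\sigma^2)}$, whence $\gamma = \kappa c$. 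Multiplying $U-1$ by $z$ and re-expanding around $z=\kappa$ produces the expansion \eqref{devT} for $T$, with the constant $a$ arising from the next order and the $z$-prefactor. The expansion \eqref{devTp} for $T'(y)$ follows by termwise differentiation of \eqref{devT} on the $\Delta$-domain, which is legitimate because $T$ is analytic there. The asymptotic $T_{2n} \sim (\gamma/2\sqrt\pi)\,\kappa^{-n} n^{-3/2}$ is then a direct application of the transfer theorem (Theorem~\ref{theotransfert}).

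The last step concerns $T^\bigstar$. Combinatorially, marking an edge of a plated tree and cutting it yields an ordered pair of plated trees whose numbers of pattes sum appropriately; this decomposition expresses $T^\bigstar$ as a rational/polynomial function of $T$ (and of $z$), where the square-root singularity of $T$ gets squared in the ``wrong'' direction, producing a $1/\sqrt{1-y/\kappa}$ singularity. Equivalently, I would augment the functional equation \eqref{planU} by a variable $t$ marking edges, differentiate implicitly with respect to $t$ at $t=1$, and solve the resulting linear equation for $\partial_t U|_{t=1}$: the denominator is precisely $\partial_U H(U,z)$, which vanishes to first order at $z=\kappa$, producing the claimed $\gamma(\sigma-1)/\sqrt{1-y/\kappa}$ leading term. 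The $\Delta$-analyticity of $T^\bigstar$ is inherited from that of $T$. The main obstacle I anticipate is bookkeeping: getting the constant $\gamma$ (and the leading coefficient of $T^\bigstar$) in the exact form stated, which requires carefully tracking the composition $T(z)=z(U(z)-1)$ and the derivatives of $H$ at the critical point; the combinatorial setup and the aperiodicity verification are routine once \eqref{planU} is in hand.
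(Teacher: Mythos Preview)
Your approach is essentially the paper's: your $U=T/z+1$ is the paper's $\tilde T+1$, and $U=1+UD(U^2z)$ is precisely the equation $\tilde T=(w+1)D(z(w+1)^2)|_{w=\tilde T}$ to which the paper applies the smooth implicit schema. Two small points: for uniqueness, the paper rewrites your condition $2wD'(w)/(1-D(w))=1$ as $D(w)+2wD'(w)=1$, whose left-hand side has nonnegative coefficients and is therefore strictly increasing on $[0,\delta)$ --- this closes the monotonicity gap you flag; and for $T^\bigstar$ the paper uses the explicit identity $T^\bigstar=\frac{1}{y}\,T\,(2yT'-T)$ coming from the cut-an-edge bijection, which makes the $1/\sqrt{1-y/\kappa}$ behaviour and the constant $\gamma(\sigma-1)$ immediate.
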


\begin{proof}Commençons par montrer l'apériodicité de la série $T$. \'Etant donné que \mbox{$\pgcd\left( k \ |\  d_{2k+2} \neq 0 \right) =1$}, le théorème de Bézout indique qu'il existe $\ell$ entiers $k_1,\dots,k_\ell$, avec $d_{2k_j+2} \neq 0$ pour  $j \in \, \ens{1,\dots,\ell}$, et $2\ell$ entiers positifs $a_1,b_1,\dots,a_\ell,b_\ell$ tels que
$$a_1k_1 + a_2k_2 + \dots + a_\ell k_\ell = 1 + b_1k_1 + b_2k_2 + \dots + b_\ell k_\ell.$$
Or un arbre à pattes qui est constitué de $a_1$ sommets de degré $2k_1+2$, \dots , de $a_\ell$ sommets de degré $2k_\ell+2$, compte $2(a_1 k_1 + \dots + a_\ell k_\ell)$ pattes  et a un poids non nul relativement à $\left(d_{2k}\right)$. Donc 
$T_{a_1k_1 +  \dots + a_\ell k_\ell} \neq 0.$ De même, $T_{b_1k_1 + b_2k_2 + \dots + b_\ell k_\ell} \neq 0$. Alors, en prenant un entier $m$ suffisamment grand tel que $T_m \neq 0$, on remarque que
$$\pgcd( (a_1k_1 +  \dots + a_\ell k_\ell) - (b_1k_1 + \dots + b_\ell k_\ell), m - (b_1k_1 + \dots + b_\ell k_\ell)) =  1.$$
La série $T$ est bien apériodique.

On va maintenant chercher à appliquer la définition VII.4 et le théorème VII.3 de \cite[p. 467] {flajolet-sedgewick}.  En posant $\tilde T = T/y$, l'équation \eqref{deftl} p.\pageref{deftl} qui caractérise les nombres $T_{2k}$ s'écrit
$$y \tilde T(y^2) =  \sum_{k \geq 2} d_{2k} (y\,\tilde T(y^2) + y)^{2k-1}.$$ En d'autres termes, en posant $G(z,w) = (w+1) D(z(w+1)^2)$, on obtient l'équation fonctionnelle $\tilde T = G(y,\tilde T)$. 

Commençons par montrer que \eqref{eqcar} admet une unique solution $(\kappa,\sigma)$ telle que $0 < \kappa \sigma ^2 < \delta$ et $ 1 < \sigma $. Comme $D\left(x\right) + 2 \, x \, D'\left(x\right)$ vaut $0$ en $x=0$, diverge en $+ \infty$ quand $x$ tend vers $\delta^-$ et est strictement croissante en $x$, il existe un unique $\xi \in ]0,\delta[$ tel que \mbox{$D\left(\xi\right) + 2 \, \xi \, D'\left(\xi\right)= 1$.} Donc tout couple $(\kappa,\sigma)$ satisfaisant \eqref{eqcar} vérifie $\xi = \kappa \sigma^2$. Comme $\sigma = (1 - D(\xi))^{-1}$ (d'après la définition de $\xi$, on a $D(\xi) < 1$), l'unicité du couple est prouvée. Pour l'existence, il reste à vérifier que $\sigma > 1$, mais cela est vrai car $\sigma = 1 + \sigma D(\xi)$.

Vérifions maintenant les trois conditions de la définition VII.4 de \cite[p. 467]{flajolet-sedgewick}. La condition $\boldsymbol{(I_1)}$  est vérifiée car $G(z,w)$ est analytique sur un domaine de la forme $|z| < \kappa + \varepsilon$ et $|w| < \sigma - 1 + \varepsilon$, où $\varepsilon$ est un réel strictement positif suffisamment petit. La condition $\boldsymbol{(I_2)}$ est triviale. Quant au système caractéristique, il vaut
$$ (s+1) \,D\left(r(s+1)^2\right) = s, \quad  D\left(r(s+1)^2\right) + 2 \, r \, (s+1)^2 \, D'\left(r(s+1)^2\right) = 1,$$
ce qui équivaut à \eqref{eqcar} avec $\kappa = r$ et $\sigma = s + 1$. La condition $\boldsymbol{(I_3)}$ est donc satisfaite. Le théorème VII.3 montre alors que $\tilde T$ est analytique sur 
un $\Delta$-domaine de rayon $\kappa$.
Le développement singulier de $\tilde T$ sur ce domaine est
\begin{equation} \label{til}
\tilde T(y) \underset{y \rightarrow \kappa}= (\sigma - 1) - \sqrt{\frac{ 2\kappa \pd G z (\kappa,\sigma-1)}{\pd {^2G} {w^2}(\kappa,\sigma-1) }} \sqrt{1 - \frac y \kappa} + a \pare{1 - \frac y \kappa} + O\left( \pare{1 - \frac y \kappa}^{3/2}\right),
\end{equation}
où $a$ est une constante dont nous omettons l'expression\footnote{Cette constante n'apparaît pas en tant que telle dans l'énoncé du théorème VII.3 mais il est précisé au début de la page 469 que le développement singulier de $R$ peut s'écrire en termes de puissances de $ \sqrt{1 - y/\kappa}$.}. 
Ce dernier développement est bien équivalent à \eqref{devT} ; le coefficient $\gamma$ devant la racine a été notamment simplifié grâce à la relation $2\,\kappa\,\sigma^3\,D'(\kappa \sigma^2) = 1$.

Le développement singulier de $T'$ est obtenu en dérivant celui de $T$. Cette opération est bien licite, comme nous l'assure \cite[Théorème VI.8 p. 419]{flajolet-sedgewick}.

Intéressons-nous maintenant à $T^\bigstar$. Les arbres à pattes avec une arête distinguée (qui sont comptés par $T^\bigstar$, on le rappelle) sont en bijection évidente avec les couples constitués d'un arbre à pattes avec une patte non racine distinguée et  d'un autre arbre à pattes (sans autre distinction que sa racine).  En effet, dans un sens il suffit de couper l'arête distinguée pour trouver deux arbres (celui qui contient la racine a un double-marquage au niveau des pattes), et dans l'autre sens il suffit de recoller les deux arbres au niveau de leurs racines et distinguer l'arête ainsi formée. En termes de séries génératrices, cela prouve  
$$T^\bigstar = \frac 1 y \times T \times \pare{2 y T' - T},$$
où le facteur $1/y$ provient de la perte de deux pattes due au recollement. Comme $T$ est analytique sur un $\Delta$-domaine de rayon $\kappa$, nous en déduisons l'analyticité de $T^\bigstar$ sur ce même $\Delta$-domaine. Quant au développement singulier  \eqref{devstarr} de $T^\bigstar$, il  résulte de  l'identité précédente, dans laquelle on a injecté les développements de $T$ et $T'$ (voir \eqref{devT} et \eqref{devTp}).
\end{proof}

\noindent \textbf{Remarque 1. }Nous aurions pu calculer les valeurs exactes de $a$ et $a^{\bigstar}$ mais elles ne nous sont pas utiles.

\noindent \textbf{Application au cas tétravalent. } Le nombre $T_{2k}$ d'arbres tétravalents à $2k$ pattes admet ici une forme close (voir par exemple \cite[Théorème 5.3.10]{stanley-vol-2}) :
$$T_{2k} = \frac{(3k-3)!}{(k-1)! (2k-1)!}.$$ 
Vérifions que cela est bien cohérent avec l'équivalent asymptotique du précédent lemme. D'un côté, on a  $\kappa = 4/27$ et $\sigma = 3/2$, d'où $\gamma = 2 \sqrt{3}/27$. De l'autre côté, la formule de Stirling donne 
$$ T_{2k} \sim \frac{\sqrt 3} {27 \sqrt \pi } \, \pare{\frac {27} 4}^n n^{-3/2},$$
ce qui est bien égal à $\gamma \kappa^{-n} n^{-3/2} / (2 \sqrt \pi)$. Nous pouvons également calculer avec \textit{maple} une formule explicite pour la série cubique  $T$ et vérifier les différents développements singuliers.

Nous déduisons maintenant les développements singuliers de $\phi$ et $\theta$ à partir du lemme précédent.

\begin{lem} \label{l:thetaphi}
Reprenons les hypothèses et les notations du lemme \ref{l:asympt}. La série $\phi$ définie par \eqref{eulphi} converge en son rayon de convergence qui est $\kappa/4$. De plus, elle est analytique sur un $\Delta$-domaine de rayon $\kappa/4$ et
\begin{equation} \label{defi}
\phi(z) \underset{z \rightarrow \kappa/4}= \phi\left( \frac \kappa 4 \right)  + \frac \gamma {4\pi}\,\left(1 - \frac{4z}{\kappa} \right)\,\ln\left(1- \frac{4z}{\kappa}\right) + \kappa\,b\, \left(1-\frac{4z}{\kappa}\right) + O \left(\pare{1-\frac{4z}{\kappa}}^{\frac 3 2}\right),
\end{equation}
où $b$ est une constante et $\gamma = \kappa \sigma / \sqrt{3+4\kappa^2 \sigma^5D''(\kappa \sigma ^2)}$.
Quant à la dérivée de $\phi$, elle admet comme développement singulier :
\begin{equation} \label{depi}
\phi'(z) \underset{z \rightarrow \kappa/4}= -\frac \gamma {\pi \kappa}\,\ln \left(1 - \frac{4z}{\kappa} \right)  + b + O \left(\pare{1-\frac{4z}{\kappa}}^{\frac 1 2}\right).
\end{equation}
En outre, la fonction $\theta$ définie par \eqref{eulphi} a elle aussi $\kappa/4$ pour rayon de convergence et est analytique sur un $\Delta$-domaine de rayon $\kappa/4$. Sa dérivée admet pour développement
\begin{equation} \label{devthetap}
\theta'(z) \underset{z \rightarrow \kappa/4}= - \frac {2\, \gamma} {\kappa \, \pi} \pare{2\sigma - 1} \ln \left(1 - \frac{4z}{\kappa} \right) + b^\bigstar + O \left(\pare{1-\frac{4z}{\kappa}}^{\frac 1 2}\right),
\end{equation}
où $b^\bigstar$ est une constante.
\end{lem}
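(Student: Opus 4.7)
The plan is to derive the singular expansions of $\phi$ and $\theta$ near $\kappa/4$ by transferring those of $T$ and $T^{\bigstar}$ (provided by Lemma \ref{l:asympt}) through an integral representation. Using the classical identity $\binom{2i}{i} = \frac{1}{2\pi}\int_0^{2\pi}(2\cos\vartheta)^{2i}\,d\vartheta$ together with $\binom{2i-1}{i} = \frac{1}{2}\binom{2i}{i}$, exchange of sum and integral gives
\begin{equation*}
\phi(x) \,=\, \frac{1}{4\pi}\int_0^{2\pi} T(4x\cos^2\vartheta)\,d\vartheta,
\end{equation*}
and the identical argument yields $\sum_{i\geq 2} T_{2i}\binom{2i}{i}x^i = 2\phi(x)$. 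For the second piece of $\theta$, termwise integration of $\frac{1}{i}\binom{2i}{i}x^i = \int_0^x\binom{2i}{i}y^{i-1}\,dy$ produces
\begin{equation*}
\sum_{i\geq 2}\frac{T^{\bigstar}_{2i}}{i}\binom{2i}{i}x^i \,=\, \int_0^x\frac{1}{2\pi y}\int_0^{2\pi}T^{\bigstar}(4y\cos^2\vartheta)\,d\vartheta\,dy.
\end{equation*}
The radius of convergence $\kappa/4$ then follows from the integrability of the integrand for $|x|\leq\kappa/4$ (using the boundedness of $T$ at $\kappa$ guaranteed by \eqref{devT}) and its divergence beyond.

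For the singular expansion I would set $t = 1-4x/\kappa$ and use the identity $1 - 4x\cos^2\vartheta/\kappa = \sin^2\vartheta + t\cos^2\vartheta$. Substituting the expansion \eqref{devT} of $T$ into the integral representation of $\phi$, the singular term $-\gamma\sqrt{1-y/\kappa}$ produces
\begin{equation*}
-\frac{\gamma}{4\pi}\int_0^{2\pi}\sqrt{\sin^2\vartheta + t\cos^2\vartheta}\,d\vartheta \,=\, -\frac{\gamma}{\pi}\,E(\sqrt{1-t}),
\end{equation*}
where $E$ is the complete elliptic integral of the second kind. The classical expansion $E(k) = 1 - \frac{1}{4}(1-k^2)\ln(1-k^2) + O(1-k^2)$ as $k\to 1^-$ yields exactly the $\frac{\gamma}{4\pi}\,t\ln t$ coefficient demanded by \eqref{defi}; the constant and linear-in-$(1-y/\kappa)$ terms of \eqref{devT} contribute analytic functions of $x$ after integration, producing $\phi(\kappa/4)$ and the coefficient $\kappa b$ in front of $(1-4x/\kappa)$. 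Formula \eqref{depi} then follows by differentiation, justified by \cite[Th.~VI.8]{flajolet-sedgewick} once analyticity on a $\Delta$-domain is established. The analogous expansion \eqref{devthetap} for $\theta'$ combines the contribution of $2\phi'$ with that of $T^{\bigstar}$: the singular term $\gamma(\sigma-1)(1-y/\kappa)^{-1/2}$ of \eqref{devstarr} produces a complete elliptic integral of the first kind $K(\sqrt{1-t})$, and the classical expansion $K(k) \sim -\frac{1}{2}\ln(1-k^2)$ then delivers the logarithmic coefficient $-\frac{2\gamma}{\pi\kappa}(2\sigma-1)$ predicted by \eqref{devthetap}.

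The main obstacle will be the analytic continuation of $\phi$ and $\theta$ to a genuine $\Delta$-domain of radius $\kappa/4$. The integral representation is manifestly valid for real $x \in [0,\kappa/4)$, but for complex $x$ inside a $\Delta$-domain the image curve $\vartheta \mapsto 4x\cos^2\vartheta$ may graze the branch cut of $T$ at $\kappa$. The remedy is to deform the $\vartheta$-contour slightly into the complex plane in small neighborhoods of the critical points $\vartheta = 0,\pi$, exploiting the fact (from Lemma \ref{l:asympt}) that $T$ and $T^{\bigstar}$ are already analytic on a $\Delta$-domain of radius $\kappa$. Once this deformation is carried out, the singular expansions derived above transfer directly into asymptotic expansions valid throughout a $\Delta$-domain of radius $\kappa/4$, which is precisely what is needed to later apply the transfer theorem to the coefficients $f_n(u)$.
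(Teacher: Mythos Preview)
Your approach is correct and arrives at the same expansions, but it is a hands-on reworking of what the paper obtains in one stroke. The paper observes that $\phi$ is the Hadamard product $T \odot B$ with $B(z)=\sum_{i\ge 1}\binom{2i-1}{i}z^i=\frac{1}{2\sqrt{1-4z}}-\frac12$, and similarly $\theta'=2\phi'+2\,T^{\bigstar}\odot B$. It then invokes \cite[Th.~VI.11]{flajolet-sedgewick}, which simultaneously (i) transports $\Delta$-analyticity of the factors to the product and (ii) gives the singular expansion term by term. The crucial Hadamard products are identified as $-\frac{\gamma}{2}\sqrt{1-z}\odot\frac{1}{\sqrt{1-z}}=-\frac{\gamma}{2}\,{}_2F_1(-\tfrac12,\tfrac12;1;z)$ and $\frac{1}{\sqrt{1-z}}\odot\frac{1}{\sqrt{1-z}}={}_2F_1(\tfrac12,\tfrac12;1;z)$, whose expansions at $z=1$ are read off from \cite[(15.3.10)--(15.3.11)]{AS}.

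Your integral representation $\phi(x)=\frac{1}{4\pi}\int_0^{2\pi}T(4x\cos^2\vartheta)\,d\vartheta$ is nothing but this Hadamard product written out concretely, and your elliptic integrals are the same hypergeometric functions under another name: $E(\sqrt{z})=\tfrac{\pi}{2}\,{}_2F_1(-\tfrac12,\tfrac12;1;z)$ and $K(\sqrt{z})=\tfrac{\pi}{2}\,{}_2F_1(\tfrac12,\tfrac12;1;z)$. So the two computations are literally identical once unwound. What you lose by not citing Theorem~VI.11 is precisely what you flag as the main obstacle: the $\Delta$-domain analyticity, which the theorem provides for free but which you must recover by deforming the $\vartheta$-contour near $\vartheta=0,\pi$. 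That deformation is standard and your sketch is adequate, but it amounts to reproving a special case of the Hadamard-product transfer theorem. The paper's route is shorter; yours is more self-contained and makes the mechanism visible.
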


\begin{proof}En utilisant le lemme \ref{l:asympt} pour $T_{2i}$ et la formule de Stirling  pour ${2i-1 \choose i} $, nous voyons que le $i$-ième coefficient de $\phi$ est équivalent à $i^{-2} (4/\kappa)^i$ à une constante multiplicative près. Donc le rayon de convergence de $\phi$ est $\kappa/4$ et $\phi$ converge en ce point.

Notons $$B(z) = \sum_{i \geq 1} {2i-1 \choose i} z^i = \frac 1 {2\sqrt{1-4z}} - \frac 1 2$$ de sorte que $\phi$ soit le produit de Hadamard de $T$ et de $B$ (que l'on notera $T \odot B$). 
 Appliquons la première partie du théorème VI.11 p. 423 de \cite{flajolet-sedgewick} : la série $\phi(\kappa z/4) = T(\kappa z) \odot B(z/4)$ est analytique sur un $\Delta$-domaine de rayon $1$. De plus, l'estimation \eqref{devT} implique
$$\phi\left(\frac{\kappa z} 4 \right) \underset{z \rightarrow 1}= \left(\kappa(\sigma - 1) - \gamma \sqrt{1 - z} + \kappa a (1-z) + \eta(z)\right) \odot \frac 1 {2\sqrt{1-z}},$$
où $\eta(z) = O( (1- z)^{3/2} )$. D'une part, 
on remarque que 
$$ - \gamma \sqrt{1 - z} \odot \frac 1 {2\sqrt{1-z}} = - \frac \gamma 2 \ _2F_1\left(- \frac {1} 2, \frac 1 2 ; 1 ; z \right),$$
où $_2F_1(a,b;c;z)$ désigne la série hypergéométrique standard de paramètres $a$, $b$ et $c$ (voir équation \eqref{hypergeom}). La série $_2F_1\left(- \frac {1} 2, \frac 1 2 ; 1 ; z \right)$ peut être analytiquement prolongée sur $\C \backslash [1,+\infty[$ et son comportement quand $z$ tend vers $1$ dans ce domaine est donné par \cite[\'Equation (15.3.11)]{AS} :
$$- \frac \gamma 2 \, _2F_1\left(- \frac {1} 2, \frac 1 2 ; 1 ; z \right) \hspace{-2pt} = \hspace{-2pt}
\frac {-\gamma} \pi + \frac \gamma {4\pi}(1 - z)\ln(1-z) + \frac {\gamma ( 1 - \ln 16)} {4\pi} (1 - z) + O( (1-z)^2 \ln (1-z)).$$
D'autre part, la seconde partie du théorème VI.11 de \cite{flajolet-sedgewick} nous indique que 
\begin{equation}
 \eta(z) \odot  \frac 1 {2\sqrt{1-z}} = k_1 + k_2(1-z) + O( (1-z)^2 \ln (1-z)), \label{brouille}
\end{equation}
où $ctke_1$ et $k_2$ sont deux constantes.
En recombinant tout ce qui a été dit, nous obtenons \eqref{defi}. 

Le développement singulier de $\phi'$ est obtenu en dérivant celui de $\phi$ (voir \cite[Théo. VI.8 p. 419]{flajolet-sedgewick}).

Quant à $\theta$, on remarque que $\theta' = 2 \phi' + 2 T^\bigstar \odot B$, où  $T^\bigstar = \sum_{k \geq 2} T^\bigstar_{2k} y^{k}$ est la série génératrice des arbres à pattes avec une arête distinguée. Nous appliquons alors la même méthode que pour $\phi$ (avec entre autres le théorème VI.11). Nous prouvons de cette manière que $\theta'$ (et donc $\theta$) a pour rayon de convergence $\kappa/4$, qu'elle est analytique sur un $\Delta$-domaine de rayon $\kappa/4$ et que le développement singulier de $\theta'$ est \eqref{devthetap}.
Pour trouver cette dernière expression, nous avons notamment utilisé la relation
$$\frac 1 {\sqrt{1-z}} \odot \frac 1 {\sqrt{1-z}} = \  _2F_1 \left( \frac {1} 2, \frac 1 2 ; 1 ; z \right) = - \frac{\ln(1 - z)} \pi + k + o(1),$$
où la dernière égalité est déduite de \cite[\'Equation (15.3.10)]{AS}. 
%
\end{proof}

\noindent \textbf{Remarque 2.} Le cas $u=0$ des théorèmes de la section \ref{s:theoremes} peut être déduit du précédent lemme. En effet, l'identité \eqref{neulF} se lit $\pd F z (z,0) = \theta(z)$ quand $u=0$. Donc $F$ est la primitive de $\theta$, son rayon de convergence est $\kappa/4$ et elle est analytique sur un $\Delta$-domaine de rayon $\kappa/4$. En appliquant le théorème de transfert de \cite{flajolet-sedgewick} à \eqref{devthetap}, nous déduisons le comportement asymptotique du $n$-ième coefficient de $\theta'(z)$, à savoir $(n+2)(n+1)f_{n+2}(0)$. Nous déduisons alors l'équivalent asymptotique
$$f_n(0) \sim \frac{\kappa \gamma}{8 \pi} (2 \sigma - 1) \pare{\frac 4 \kappa}^n n^{-3}.$$

\noindent \textbf{Remarque 3.} Contrairement au lemme \ref{l:asympt}, nous ne sommes pas capables de calculer la valeur exacte de $\phi(\kappa/4)$, $b$ et $b^\bigstar$ (à part dans le cas tétravalent). En effet, même si nous connaissions le développement asymptotique exact de $T$ et de $T^\bigstar$, le produit de Hadamard "brouille" toutes les constantes associées à des termes analytiques comme $1$ ou $1-z$ (voir par exemple  \eqref{brouille}). 

\noindent \textbf{Application au cas tétravalent. } Les coefficients des séries $\phi$ et $\theta$ sont explicites (voir figure \ref{recapitulatif}). En appliquant \cite[\'Equation (15.3.11)]{AS} à l'identité \eqref{phihypergeom} qui lie $\phi$ à une série hypergéométrique, nous trouvons le développement
\begin{equation}
\phi\left(\frac{1 }{27}- \varepsilon\right) =
 \frac{\sqrt{3}}{12\pi} - \frac{1}{27} +
 \frac{\sqrt{3}}{2\pi}\,\varepsilon\ln{\varepsilon} + 
\left( 1 - \frac{\sqrt 3}{2 \pi}\right) \, \varepsilon + O(\varepsilon^2
 \ln{\varepsilon}), \label{devphi4} 
\end{equation}
 avec des valeurs explicites pour $\phi(\frac 1 {27})$ et $b$. Nous pouvons vérifier que ce développement est bien cohérent avec \eqref{defi}. De même, nous trouvons 
$$  
  \theta'\left(\frac{1 }{27}- \varepsilon\right) = 
 - \frac{2\sqrt{3}}{ \pi} \ln{\varepsilon} 
- \frac{9\sqrt{3}}{ \pi}  + O(\varepsilon \ln{\varepsilon}). 
$$

\section{Quand $\boldsymbol{ u > 0 }$}

\begin{prop} Fixons $u > 0$. Considérons $D(y)= \sum_{k\geq 1} d_{2k+2}y^k$ de rayon de convergence $\delta$ vérifiant les hypothèses du théorème \ref{theorayon} et définissons $\kappa$ et $\sigma $ les  deux nombres satisfaisant \eqref{eqcar} avec $0 < \kappa \sigma ^2 < \delta$ et $ 1 < \sigma $. 

La série $R$ est apériodique et satisfait les hypothèses du schéma d'équation fonctionnelle lisse de \cite[Def VII.4]{flajolet-sedgewick}. Son rayon de convergence est donné par \eqref{deathrow}, il est décroissant et analytique selon le paramètre $u$, et il satisfait \eqref{exprho} au voisinage de $u = 0$. La série $R$ est analytique sur un $\Delta$-domaine de rayon $\rho = \rho_u$, avec une singularité de type racine en $\rho$ :
\begin{equation}\label{per}
R(z) = \tau - c\, \sqrt{1 - z/\rho} + O\left(1 - z/\rho\right),
\end{equation}
où $\tau$ est défini par le théorème \ref{theorayon} et $c = \sqrt{\frac{ 2\rho} {u \phi''(\tau)}}$ avec $\phi$ donné par \eqref{eulphi}.
La série $\pd F z$ est également analytique sur un $\Delta$-domaine de rayon $\rho$ avec une singularité de type racine :
\begin{equation} \label{posdevF}
\pd F z (z)= \theta(\tau) -c \,\theta'(\tau)  \, \sqrt{1 - z/\rho} + O\left(1 - z/\rho\right),
\end{equation}
où $c$ est le coefficient décrit plus haut, et $\theta$ définie par \eqref{eulphi}. Par conséquent, le $n$-ième coefficient de $F$ satisfait, quand $n$ tend vers $ +\infty$,
\begin{equation} \label{pequif}
f_n(u) \sim \theta'(\tau) \sqrt{ \frac{\rho^3} {2 \pi u \phi''(\tau) }} \rho^{ -n} n^{- \frac 5 2}.
\end{equation}
\end{prop}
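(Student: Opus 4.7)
The plan is to view the functional equation $R = z + u\phi(R)$ as a standard fixed-point equation $R = G(z,R)$ with $G(z,w) = z + u\phi(w)$, and to apply the smooth implicit function schema of \cite[Def.~VII.4 and Thm.~VII.3]{flajolet-sedgewick}. I would first locate the characteristic point: the system $w = G(z,w)$, $\partial_w G(z,w)=1$ reads $w = z + u\phi(w)$, $u\phi'(w)=1$. By Lemma \ref{l:thetaphi}, $\phi'$ has non-negative coefficients, is strictly increasing on $[0,\kappa/4)$, vanishes at $0$, and blows up logarithmically as $z\to \kappa/4^-$ (from \eqref{depi}). Hence for each fixed $u>0$ there is a unique $\tau=\tau(u)\in(0,\kappa/4)$ with $u\phi'(\tau)=1$, and the associated $\rho = \tau - u\phi(\tau)$ is strictly positive by the strict convexity of $\phi$, which gives $\phi(\tau) < \tau\phi'(\tau)$.

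Next I would verify the three hypotheses $(I_1)$--$(I_3)$ of the schema. Analyticity of $G$ at $(\rho,\tau)$ reduces to analyticity of $\phi$ at $\tau$, which follows from the $\Delta$-domain statement of Lemma \ref{l:thetaphi} together with $\tau<\kappa/4$. Aperiodicity of $R$ is inherited from that of $\phi$: since the binomial coefficients $\binom{2i-1}{i}$ are positive for every $i\geq 1$, the support (as a set of exponents) of $\phi$ coincides with that of $T$, which is aperiodic by Lemma \ref{l:asympt}; the recursion $R = z + u\phi(R)$ then transfers aperiodicity to $R$. Condition $(I_3)$ is exactly the characteristic system solved above. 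The theorem then delivers the $\Delta$-analyticity of $R$ at radius $\rho_u=\rho$ and the square-root expansion \eqref{per} with coefficient
\[
c = \sqrt{\frac{2\rho\, \partial_z G(\rho,\tau)}{\partial_{ww}G(\rho,\tau)}} = \sqrt{\frac{2\rho}{u\phi''(\tau)}},
\]
where $\phi''(\tau)$ is finite and positive since $\tau<\kappa/4$ (differentiating \eqref{depi}). Composing with a Taylor expansion of $\theta$ around $\tau$, which is licit by analyticity of $\theta$ at $\tau$, produces \eqref{posdevF}. Applying the transfer theorem (Theorem \ref{theotransfert}) to $\partial F/\partial z$ and dividing by $n$ yields the claimed estimate \eqref{pequif}.

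It then remains to prove the monotonicity, analyticity, and the exponential refinement \eqref{exprho} for $u\mapsto \rho_u$. Implicit differentiation of $u\phi'(\tau)=1$ gives $d\tau/du = -\phi'(\tau)/(u\phi''(\tau))$, which is real-analytic on $(0,+\infty)$ since $\phi''(\tau)>0$, and therefore so is $\tau(u)$ and in turn $\rho_u$. A direct calculation using $u\phi'(\tau)=1$ yields the striking identity
\[
\frac{d\rho_u}{du} = \frac{d\tau}{du} - \phi(\tau) - u\phi'(\tau)\frac{d\tau}{du} = -\phi(\tau) < 0,
\]
so $\rho_u$ is strictly decreasing. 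For the behavior as $u\to 0^+$, note that $\phi'(\tau)=1/u\to+\infty$ forces $\tau\to\kappa/4^-$; the logarithmic expansion \eqref{depi} then gives $-(\gamma/(\pi\kappa))\ln(1-4\tau/\kappa) \sim 1/u$, whence $\kappa/4-\tau = O(\exp(-c/u))$ for some $c>0$. Inserting this into $\rho_u = \tau - u\phi(\tau)$ and expanding $\phi(\tau)-\phi(\kappa/4)$ via \eqref{defi}, both correction terms turn out to be exponentially small in $1/u$, producing \eqref{exprho}.

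The main technical obstacle is not in any single step but in the calibration between the analyticity radius $\kappa/4$ of $\phi$ and the location of $\tau$: the argument fundamentally relies on the strict inequality $\tau<\kappa/4$ for every $u>0$, which is precisely what the logarithmic blow-up of $\phi'$ at $\kappa/4$ (Lemma \ref{l:thetaphi}) guarantees. This is also what forces the curious non-analytic $O(\exp(-c/u))$ transition at $u=0$, where $\tau$ reaches the boundary $\kappa/4$ and the smooth schema degenerates, heralding the change of asymptotic regime handled in the cases $u=0$ and $u<0$.
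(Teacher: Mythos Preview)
Your proof is correct and follows essentially the same route as the paper: the smooth implicit function schema applied to $G(z,w)=z+u\phi(w)$, the characteristic point located via the logarithmic blow-up of $\phi'$ at $\kappa/4$, composition with $\theta$, and the transfer theorem. One small difference worth noting: for the monotonicity of $u\mapsto\rho_u$ the paper argues indirectly via the fact that $f_n(u)$ is increasing in $u$ together with the Hadamard formula, whereas your direct computation $d\rho_u/du=-\phi(\tau)<0$ (using $u\phi'(\tau)=1$ to cancel the $d\tau/du$ terms) is cleaner. The only step you glossed over slightly is the $\Delta$-analyticity of $\partial F/\partial z=\theta(R)$ on the whole $\Delta$-domain (not just the local expansion at $\rho$), which the paper handles by observing that positivity of the coefficients of $R$ and $R(\rho)=\tau<\kappa/4$ force $|R(z)|<\kappa/4$ on a suitably shrunken $\Delta$-domain.
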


Avec cette proposition, nous retrouvons les résultats du cas $u > 0$ des théorèmes de la section \ref{s:theoremes}.

\begin{proof}
Les résultats concernant la série $R$ proviennent de nouveau de la définition VII.4 et du théorème VII.3 de \cite{flajolet-sedgewick}.  Posons $G(z,w) = z + u\,\phi(w)$ afin de retrouver la notation du livre. (On rappelle d'après \eqref{neulR} que $R = G(z,R)$.) La fonction $G$ est analytique sur $\C \times \ens{|w| < \kappa/4}$. Le système caractéristique est satisfait ici par le couple $(\rho,\tau)$ où $\tau = \tau_u$ est l'unique élément de $]0,\kappa/4[$ tel que $\pd G w(\rho,\tau) = u \phi'(\tau) = 1$ et $\rho = \rho_u = \tau - u \phi(\tau)$. L'existence de $\tau$ est garantie par le théorème des valeurs intermédiaires : $\phi'(0) = 0$ et $\lim_{x \mapsto \kappa/4} \phi'(x) = + \infty$ (voir équation \eqref{depi}). Son unicité est due à la stricte croissance de $\phi'$. En outre, comme $R = z + u \phi(R)$, le $n$-ième coefficient de $R$ est minoré par celui de $u \phi(z)$, à savoir $u T_{2n}\,{2n - 1 \choose n}$.
Or la série génératrice des nombres $T_{2n}$ est apériodique (voir lemme \ref{l:asympt}), donc $R$ est également apériodique. Les hypothèses du théorème VII.3 sont bien vérifiées. Donc d'après ledit théorème, $R$ est analytique sur un $\Delta$-domaine de rayon $\rho$ et admet \eqref{per} pour comportement singulier.

Considérons maintenant la série $\pd F z$ grâce à la relation $\pd F z = \theta(R)$. Puisque les coefficients de $R$ sont positifs et $R(\rho,u) = \tau < \kappa/4$, il existe un $\Delta$-domaine de rayon $\rho$ dans lequel $R$ est analytique et strictement majoré en module par $\kappa/4$. Puisque le rayon de convergence de $\theta$ est également $\kappa/4$ (voir lemme \ref{l:thetaphi}), la série $\pd F z=\theta(R)$ est également analytique sur ce domaine, et le comportement singulier \eqref{posdevF} près de $\rho$ provient d'un développement de Taylor de la série $\theta$ en $\tau$. Nous appliquons alors le théorème de transfert \cite[Théo. VI.4 p. 393]{flajolet-sedgewick} afin d'obtenir le comportement asymptotique du $n$-ième coefficient de $\pd F z$ en $z$, à savoir $(n+1)f_{n+1}(u)$. L'équivalent \eqref{pequif} de $f_n(u)$ s'en déduit.

Il nous reste à étudier la fonction $u \mapsto \rho_u$. En appliquant la version analytique du théorème des fonctions implicites à l'équation $ u \phi'(\tau) = 1$, nous prouvons que la fonction $u \mapsto \tau_u$ est analytique sur $]0,+\infty[$. Au vu de \eqref{deathrow}, la fonction $u \mapsto \rho_u$ est également analytique sur cet intervalle. Sa décroissance en $u$ provient du fait que $f_n(u)$ est strictement croissante en $u$ (on utilise par exemple  la formule de Hadamards).  Cherchons maintenant un équivalent asymptotique de $\rho_u$ lorsque $u \rightarrow 0^+$. Rappelons que $\phi'(\tau_u) = 1/u$. De fait, $\tau_u$ tend vers $\kappa/4$ lorsque $u$ s'approche de $0$ et le développement \eqref{depi} de $\phi'$ nous donne l'estimation
$$\ln(\kappa/4 - \tau_u) = -\frac{ \kappa \pi} {\gamma u} + O(1)$$
de sorte que 
$$\kappa/4 - \tau_u = O\left(\exp\left( -\frac{ \kappa \pi} {\gamma} u ^{-1} \right)\right).$$
Mais $\rho_u = \tau_u - u \phi(\tau_u)$, donc en utilisant le développement asymptotique \eqref{defi} de $\phi$, nous obtenons l'approximation \eqref{exprho} avec  $c = \kappa \pi /\gamma$.
\end{proof}

\noindent \textbf{Application au cas tétravalent. }L'allure générale de la courbe $z \mapsto R(z,1)$ est montrée figure \ref{R4}.

\fig{[scale=0.35]{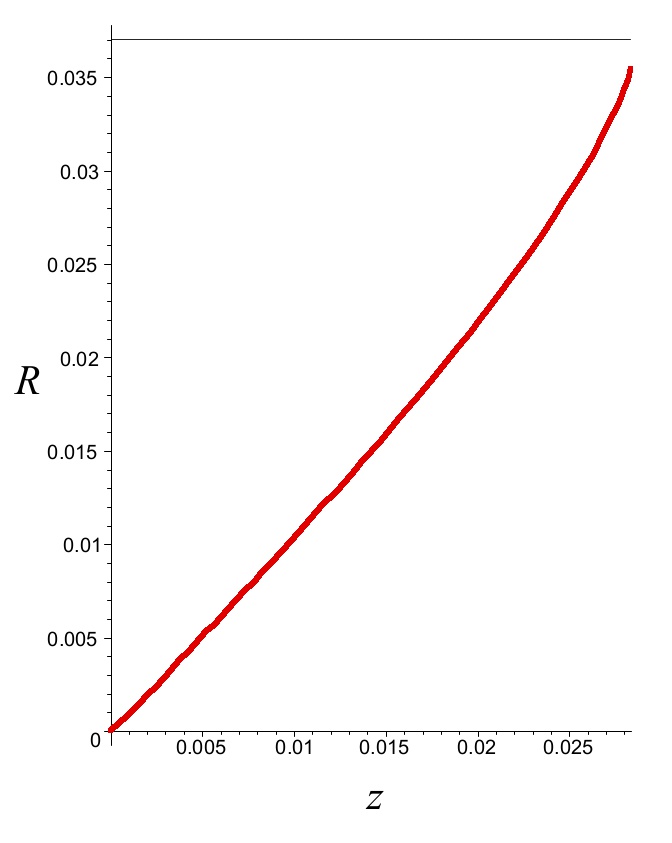} }{Courbe de $R(z,u)$, pour $z \in \, [0,\rho_1[$ et $u = 1$. La fonction $R$ n'atteint pas la singularité dominante de $\phi$ (qui est ici $1/27 \simeq 0.037$).}{R4}

\section{Quand $\boldsymbol{ u < 0 }$ : hypothèses faibles}

\begin{prop} \label{p:neg}
Soit $u \in [-1,0[$. Considérons $D(y)= \sum_{k\geq 1} d_{2k+2}y^k$ de rayon de convergence $\delta$ vérifiant les hypothèses du théorème \ref{theorayon} et définissons $\kappa$ et $\sigma $ les  deux nombres satisfaisant \eqref{eqcar} avec $0 < \kappa \sigma ^2 < \delta$ et $ 1 < \sigma $. 

Les séries $R$ et $F$ ont toutes les deux un rayon de convergence égal à $\rho = \rho_u$, où $\rho$ est défini par \eqref{deathrow}. Elles sont analytiques sur un domaine poussin (voir figure \ref{domainepoussin}) et elles admettent les développements singuliers
\begin{equation} \label{ner}
R(z)  \underset{z \rightarrow \rho}= \frac \kappa 4 -   \frac  {\pi\,\rho\sqrt{3 + 4 \kappa^2 \sigma^5 D''(\kappa\sigma^2)}} {\sigma\,u} \frac{1 - z/\rho} {\ln \pare {1 - z/\rho}} + o\pare{\frac{1 - z/\rho} {\ln \pare {1 - z/\rho}}},
\end{equation}
\begin{equation} \label{nef}
\pd {^2 F} {z^2}(z)  \underset{z \rightarrow \rho}= - \frac{ 2(2 \sigma -1)} u + \frac{\pi \sqrt{3 + 4 \kappa^2 \sigma^5 D''(\kappa\sigma^2)} }{u^2 \sigma} \frac { 4 \sigma - 2 - u \lambda }  {\ln\pare{1 - z/\rho}} + o\pare{\frac 1 {\ln\pare{1 - z/\rho}}},
\end{equation}
où  $\lambda$ est la valeur en $\kappa/4$ de la série 
\begin{equation} \label{explambda}
 \sum_{i \geq 0}  \pare{ (\sigma-1)2i\,T_{2i} - T_{2i}^\bigstar} {2i \choose i} z^i.
 \end{equation}
De plus, nous avons  $4 \sigma - 2 - u \lambda > 0$  : le premier terme non analytique de \eqref{nef} se comporte  bien comme $1/\ln(1-z/\rho)$
.
\end{prop}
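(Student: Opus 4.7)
The strategy is to recast the defining equation $R = z + u\phi(R)$ as a local inversion problem for a function with a $z\ln z$ singularity, and then to apply the log-inversion theorem of Chapter~\ref{c:implicites}. Introduce the shifted variables $Y = \kappa/4 - R$ and $y = \rho - z$, so that the equation becomes $y = \Psi(Y)$ with
$$\Psi(Y) := Y - u\bigl[\phi(\kappa/4) - \phi(\kappa/4 - Y)\bigr].$$
Plugging in the singular expansion~\eqref{defi} of $\phi$ at $\kappa/4$ yields
$$\Psi(Y) = -\frac{u\gamma}{\pi\kappa}\,Y\ln(4Y/\kappa) + (1-4ub)\,Y + O(Y^{3/2}).$$
Since $u<0$, the leading coefficient $c = -u\gamma/(\pi\kappa)$ is strictly positive, so $\Psi(Y) \sim c\,Y\ln(1/Y)$. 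Moreover, Lemma~\ref{l:thetaphi} ensures that $\phi$ is analytic on a $\Delta$-domain of radius $\kappa/4$, which translates to analyticity of $\Psi$ on some domain $D_{s,\alpha}$ with $\alpha$ arbitrarily close to $\pi$. These are exactly the hypotheses of Theorem~\ref{loginversion}.

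The log-inversion theorem, applied for any $\beta < \pi$, furnishes a holomorphic local inverse $\Upsilon$ defined on a sector $D_{\rho',\beta}$ and taking values in the original slit disk. Undoing the change of variables, $R(z) = \kappa/4 - \Upsilon(\rho - z)$ is analytic on a domain which, viewed from $\rho$, is a slit sector of angular opening close to $2\pi$: this is precisely the ``poussin'' domain of Figure~\ref{domainepoussin}. The singular estimate $\Upsilon(y) \sim -y/(c\ln y)$ supplied by the theorem yields
$$\frac{\kappa}{4} - R(z) \sim \frac{\pi\kappa}{u\gamma}\cdot\frac{\rho - z}{\ln(\rho - z)},$$
and substituting $\gamma = \kappa\sigma/\sqrt{3+4\kappa^2\sigma^5 D''(\kappa\sigma^2)}$ together with $\ln(\rho-z) \sim \ln(1-z/\rho)$ gives~\eqref{ner}. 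Since $F$ is the primitive of $\theta(R)$ vanishing at $0$, and $\theta$ is analytic on a $\Delta$-domain of radius $\kappa/4$ (Lemma~\ref{l:thetaphi}), while $R$ takes values strictly inside the disk of radius $\kappa/4$ on the poussin domain except at $\rho$ itself, $F$ is analytic on the same poussin domain.

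For~\eqref{nef}, differentiate the implicit equation to obtain $R'(z) = 1/(1-u\phi'(R))$, so that $\pd{^2 F}{z^2} = \theta'(R)/(1 - u\phi'(R))$. Substituting the expansions~\eqref{depi} and~\eqref{devthetap} and writing $L = \ln(4Y/\kappa)$, both numerator and denominator take the form $C_1 L + C_0 + O(Y^{1/2})$. The leading quotient is
$$\frac{-2\gamma(2\sigma-1)/(\kappa\pi)}{u\gamma/(\pi\kappa)} = -\frac{2(2\sigma-1)}{u},$$
recovering the constant term of~\eqref{nef}. Expanding the quotient one step further in powers of $1/L$ produces the announced $1/\ln(1-z/\rho)$ correction; the coefficient in front involves both the subdominant constants $b$, $b^\bigstar$ and is shown to coincide with the claimed expression by re-expressing it through the identities $\sum T_{2i}\binom{2i}{i}z^i = 2\phi(z)$ and $\sum T_{2i}^{\bigstar}\binom{2i}{i}z^i = z\theta'(z) - 2z\phi'(z)$, which match the series defining $\lambda$ in~\eqref{explambda}.

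The strict positivity $4\sigma - 2 - u\lambda > 0$, essential for ensuring that the correction truly behaves like $1/\ln(1-z/\rho)$ and not something smaller, follows from $\sigma > 1$ and a bound on $|\lambda|$ for $u \in [-1,0[$; a combinatorial argument via the $(u+1)$-positivity of $\pd{^2 F}{z^2}$ (which, after the change of variable $u = \mu - 1$, makes the constant $-2(2\sigma-1)/u + (\text{correction})$ nonnegative for small $\mu \geq 0$) forces the correction to have a definite sign. The genuinely delicate point of the argument is not the routine asymptotic bookkeeping in the last paragraph, but the application of Theorem~\ref{loginversion}: verifying that $\phi$ admits an analytic continuation to a slit disk of opening arbitrarily close to $2\pi$ is what allows $R$ to be extended beyond the natural axis of positive $z$ into the poussin domain, and this is the only route available to the corollary of non-holonomy of $F(z,u)$ that is derived from this proposition.
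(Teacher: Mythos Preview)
Your core idea—recasting $R = z + u\phi(R)$ as a log-inversion problem and applying Theorem~\ref{loginversion}—is exactly right and matches the paper. The computation of $\pd{^2 F}{z^2} = \theta'(R)/(1-u\phi'(R))$ and its expansion are also correct. But two genuine gaps remain.

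\textbf{First gap: the radius of convergence.} The log-inversion theorem only gives you a local inverse $\Upsilon$ on a small sector $D_{\rho',\beta}$ near the singularity. You then assert $R(z) = \kappa/4 - \Upsilon(\rho-z)$ is analytic on the poussin domain, but the poussin domain includes the entire open disk of radius $\rho$, and nothing you have said establishes that the power series $R$ actually has radius of convergence $\rho$. Since $u<0$, the coefficients of $R$ are not positive, so Pringsheim does not apply directly. The paper fills this in two steps: Corollary~\ref{c:fi} shows $R$ extends analytically along $[0,\rho[$ with $R(z)\nearrow\kappa/4$; then Corollary~\ref{c:RSupos} gives that $(R-z)/u$ is $(u+1)$-positive, hence has nonnegative coefficients, so Pringsheim forces its radius (and that of $R$) to be at least $\rho$. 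Only after this can the local inverse from log-inversion be identified with $R$ on a neighbourhood of $\rho$ (by uniqueness of the antecedent in the theorem), yielding the full poussin domain. The same Pringsheim-via-positivity argument is needed again for $F$.

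\textbf{Second gap: the sign of $4\sigma-2-u\lambda$.} Your argument here is not a proof. Saying it ``follows from $\sigma>1$ and a bound on $|\lambda|$'' is empty without the bound, and the $(u+1)$-positivity of $F''$ does not directly constrain the sign of the $1/\ln$ coefficient (the constant term $-2(2\sigma-1)/u$ is already positive, so positivity of $F''$ near $\rho$ tells you nothing about the correction). The paper's argument is substantially more delicate: it uses the combinatorial injection of Corollary~\ref{c:minodeF}, which translates into the coefficient inequality $\pd{^2 R}{z^2} \le u\,\pd{^3 F}{z^3}$ on $[0,\rho[$. One then needs the leading asymptotics of both sides as $z\to\rho$; crucially, these cannot be obtained by differentiating~\eqref{ner} and~\eqref{nef}, because a $o(1/\ln)$ is not known to differentiate to $o(1/((1-z/\rho)\ln^2))$. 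Instead one expresses $\pd{^2 R}{z^2}$ and $\pd{^3 F}{z^3}$ directly via $\phi',\phi'',\theta',\theta''$ evaluated at $R$, expands, and reads off the inequality $c_u > 0$ from comparing leading terms. This is the step that actually requires work.
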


Cette proposition établit le cas $u <0$ des théorèmes \ref{theorayon} et \ref{theoasymp}.

\noindent \textbf{Remarque. }Nous conjecturons que la constante $\lambda$ est strictement positive, mais nous n'avons pas besoin de cette hypothèse.

\begin{proof} 
Nous commençons comme précédemment par la série $R$. Réécrivons l'équation \eqref{neulR} sous la forme $\Omega(R) = z$ avec $\Omega(y) = y - u \phi(y)$. Comme $\Omega(0) = 0$ et $\Omega'(0) = 1 > 0$, nous pouvons appliquer le corollaire \ref{c:fi} p. \pageref{c:fi} dans lequel $R$ joue le rôle de $Y$. Soient $\tau$ et $\rho$ définis dans le corollaire. Le rayon de convergence de $\Omega$ est le même que celui de $\phi$, à savoir $\omega = \kappa/4$. \'Etant donné que $u < 0$, la dérivée $\Omega'(y) = 1 - u \phi'(y)$ ne s'annule pas sur $[0,\kappa/4[$, ce qui implique que $\tau = \omega = \kappa/4$. La propriété $(5)$ du corollaire \ref{c:fi} donne alors \eqref{deathrow}, c'est-à-dire $\rho = \kappa/4 - u \phi(\kappa/4)$.

Ce même corollaire montre que $R$ admet un prolongement analytique sur $[0,\rho[$, intervalle sur lequel $R$ croît (de $0$ vers $\omega$) et sur lequel l'équation $R = z + u \phi(R)$ est vérifiée.

D'après le corollaire \ref{c:RSupos} p. \pageref{c:RSupos}, la série $(R-z)/u$, qu'on notera $\mathcal R$, a des coefficients positifs en $z$. Cette série est, comme $R$, analytique sur $[0,\rho[$. Le théorème de Pringsheim implique alors que le rayon de convergence de $\mathcal R$ doit être au moins égal à $\rho$. Il en est donc de même pour le rayon de convergence de $R$. Montrons maintenant que le comportement de $R$ au voisinage de $\rho$ est singulier -- nous prouverons de fait que le rayon de convergence de $R$ est  $\rho$.

Déduisons le comportement de $R$ en $\rho$ du théorème  de log-inversion (théorème \ref{loginversion} p.\pageref{loginversion}). Posons 
$$\psi(z) := \rho + \frac \kappa 4(z-1) + u \phi \pare{\frac \kappa 4(1 - z)}.$$
D'après le lemme \ref{l:thetaphi}, les séries $\phi$ et $\theta$ sont analytiques sur un $\Delta$-domaine de rayon $\kappa/4$. En particulier, il existe $s>0$ et $\alpha \in \, ]\pi/2,\pi]$ tels que $\phi$ et $\theta$ sont analytiques sur $\kappa/4 - \kappa D_{s,\alpha}/4$, où
$$D_{s,\alpha} = \enstq{ r\,e^{i\theta} } {r \in \, ]0,s[\textrm{ et } |\theta| < \alpha}.$$
Donc $\psi$ est analytique sur $D_{s,\alpha}$. En utilisant \eqref{defi} et l'égalité $\rho = \kappa/4 - u \phi(\kappa/4)$, on voit que $\psi 
(z) \sim - c \, z \ln z$ quand $z$ s'approche de 0 dans $D_{s,\alpha}$, où 
$$c = - u \frac {\kappa\,\sigma} {4\,\pi\,\sqrt{3 + 4 \kappa^2 \sigma^5 D''(\kappa\sigma^2)}}.$$
Nous pouvons alors appliquer le théorème de log-inversion à $\psi$ et $\beta = \pi/4 + \alpha/2$ (de sorte que $\beta \in \, ]\pi/2,\alpha[$) : il existe $s > r > 0$, $r' > 0$ et une fonction $\Upsilon$ analytique sur $D_{r',\beta} = \ens{ |z| < r'\textrm{ et }|\Arg z|< \beta}$ vers $D_{s,\beta}$ tels que $\psi(\Upsilon(y)) = y$. De plus, $\Upsilon(y)$ est l'unique antécédent de $y$ par $\psi$ qui se trouve dans $D_{r,\alpha}$. Or l'équation $R = z + u \phi(R)$ peut se réécrire 
$$\psi\pare{1 - \frac 4 \kappa R(\rho-y) } = y.$$
De plus, quand $y$ tend vers $0^+$ par valeurs (réelles) positives, \mbox{$1 -  4  R(\rho-y) / \kappa$} tend  aussi vers $0$ par valeurs positives (on rappelle que $R$ est croissante). En d'autres termes, $1 -  4  R(\rho-y) / \kappa$  reste dans $D_{r,\alpha}$ quand \mbox{$y>0$} est suffisamment petit. Ainsi, d'après l'unicité décrite plus haut, les fonctions $\Upsilon$ et \mbox{$1 -  4  R(\rho-y) / \kappa$} coïncident sur les petites valeurs positives de $y$ et donc, par prolongement analytique, sur tout $D_{r',\beta}$. Si nous revenons aux variables d'origine, cela signifie que $R$ est analytique sur $\rho-D_{r',\beta}$ et est à valeurs dans $\kappa/4-\kappa D_{r,\alpha} /4$ sur ce domaine. Du reste, comme $\Upsilon \sim - y/(c \ln y)$ (il s'agit de la dernière parie du théorème de log-inversion), nous obtenons l'approximation \eqref{ner}, qui est valide sur $\rho-D_{r',\beta}$.

En conclusion, $R$ est analytique sur le domaine ouvert en forme de poussin de la figure \ref{domainepoussin}.

Prouvons qu'il en est de même pour $F$. D'après \eqref{neulF}, nous savons que $\pd F z = \theta(R)$. La série $R$ croît strictement sur $[0,\rho[$, d'où $0 = R(0) \leq R(z) < R(\rho) =\kappa/4$ pour $z$ dans cet intervalle. Mais le rayon de convergence de $\theta$ est $\kappa/4$. Donc $\pd F z$ est analytique sur $[0,\rho[$, ce qui implique par le théorème de Pringsheim que le rayon de convergence de $F$ est au moins égal à $\rho$. De plus, $R(z)$ est à valeurs dans $\kappa/4-\kappa D_{r,\alpha} /4$ quand $z$ appartient à $\rho-D_{r',\beta}$ et $\theta$ est analytique sur $\kappa/4-\kappa D_{r,\alpha} /4$. Donc $\pd F z$ est analytique sur $\rho-D_{r',\beta}$. Au final, on a bien prouvé que $F$ était analytique sur le domaine poussin ouvert de la figure \ref{domainepoussin}.

\'Etudions maintenant le comportement singulier de $F$ autour de $\rho$. Appliquer \eqref{ner} à $\pd F z = \theta(R)$ serait une première idée, mais cela ne donne pas immédiatement le comportement singulier de $\pd F z$ en fonction de \mbox{$1- z/\rho$}. Le plus rapide est de travailler directement avec $\pd {^2 F} {z^2}$. En effet,
\begin{equation} \label{relsec}
\pd {^2 F} {z^2} = \pd R z\, \theta'(R(z)) = \frac{\theta'(R(z))}{1-u\phi'(R(z))}.
\end{equation}
(Nous avons utilisé l'identité
\begin{equation} \label{derR}
(1 - u \phi'(R)) \, \pd R z  = 1
\end{equation}
qui provient de la dérivation de l'identité \eqref{neulR}.)
Or en utilisant le lemme \ref{l:thetaphi},
on remarque que
\begin{multline} \label{thetapsurphip}
\frac {\theta'\pare{\frac \kappa 4 (1-\varepsilon)}} {1 - u \phi'\pare{\frac \kappa 4 (1-\varepsilon)}} = \frac {- \frac {2 \gamma} { \kappa \pi } (2\sigma - 1) \ln \varepsilon + b^\bigstar + O\pare{\varepsilon^{\frac 1 2}}}
{1 + u \frac \gamma {\kappa \pi} \ln \varepsilon  - b\,u + O\pare{\varepsilon^{\frac 1 2}}} \\
= - \frac{ 2(2 \sigma -1)} u + \frac{ \kappa \pi}{u^2 \gamma} \pare{ 4 \sigma - 2 - u \pare{ 2 (2 \sigma -1) b - b^\bigstar } } \frac 1 {\ln \varepsilon} + O \pare{\frac {\varepsilon^{\frac 1 2}} {\ln \varepsilon}}.
  \end{multline}
Mais $\ln\pare{1-4R(z)/\kappa} \sim \ln\pare{1 - z/\rho}$, d'après \eqref{ner}. Donc en remplaçant $\varepsilon$ par \mbox{$1 - 4R(z)/\kappa$} dans le développement limité du dessus et en utilisant \eqref{relsec}, nous obtenons \eqref{nef}, où la constante $\lambda$ est égale à $2(2\sigma - 1)b-b^\bigstar$, soit la valeur en $\kappa/4$ de la série $2(2\sigma -1)\phi'-\theta'$ (cf. lemme \ref{l:thetaphi}), ce qui se réécrit \eqref{explambda}.

Il nous reste à prouver que $4 \sigma - 2 - u \lambda$  est strictement positif. Cela revient à montrer que $c_u > 0$, où
$$ \pdd 2 F z  = - \frac{ 2(2 \sigma -1)} u + c_u \frac 1 {\ln\pare{1 - z/\rho}} + o \pare{\frac 1 {\ln\pare{1 - z/\rho}}}.$$
 Nous utilisons pour cela le corollaire \ref{c:minodeF} p. \pageref{c:minodeF} (on rappelle que $t=1$ ici). En sommant sur $a$, nous voyons que 
$$\frac {n (n-1)} u r_{n-2}(u) \leq n(n-1)(n-2) f_{n-3}(u),$$
où $r_n(u)$ est le $n$-ième coefficient de $R$ en $z$. En termes de séries génératrices, cela signifie que $\ \pd {^2 R} {z^2} \leq u \pd {^3 F} {z^3}$. Cette inégalité reste vraie au voisinage de $\rho$.  
Cherchons donc le développement singulier de $ u\pd {^3 F} {z^3} - \pd {^2 R} {z^2}$ en $z = \rho$. 

Notons une petite subtilité technique : le théorème VI.8 p. 419 de \cite{flajolet-sedgewick}, qui permet de dériver proprement des développements asymptotiques, ne peut pas être appliqué (voire adapté) à \eqref{ner} et \eqref{nef}. En effet, il n'est pas prouvé\footnote{Je conjecture même que cela est faux de manière générale. En effet, cela impliquerait que la dérivée d'une fonction $f(\varepsilon)$ qui est analytique sur un voisinage de $0$ privé de $]-\infty,0]$ et qui converge en $0$ est un $o(1/\varepsilon \ln \varepsilon)$ quand $\varepsilon$ tend vers $0$. Je ne vois aucune raison pourquoi cela serait vrai.} que la dérivée d'un $o\pare{\ln(1-z)^{-1}}$ est un $o \pare{(1-z)^{-1} \ln(1-z)^{-2}}$.

Pour le comportement de $\pdd 2 R z$, utilisons la relation
$$ \pdd 2 R z = \pd \, z \pare{\frac 1 {1-u\,\phi'(R)}} = \frac{ - u \phi''(R)} { \pare{1-u\,\phi'(R)}^3    }.$$
Le développement de $\phi''(z)$ peut être obtenu en dérivant la relation \eqref{depi} (ici le théorème VI.8 de \cite{flajolet-sedgewick} s'applique). En composant ce développement et \eqref{depi}   par le développement \eqref{ner} de $R$, nous obtenons après calcul
\begin{equation} \label{pddR}
\pdd 2 R z  \sim  - \frac{\pi \kappa}{u \gamma \rho} \frac{1}{\pare {1 - z/\rho} \ln^2 \pare {1 - z/\rho}},
\end{equation}
comme nous pouvions nous y attendre au vu de  \eqref{ner}.

Pour le comportement de $\pdd 3 F z$, nous utilisons l'identité
\begin{equation} \label{pdd3F}
\pdd 3 F z = \pd \, z \pare{ \frac  {\theta'(R)} {1 - u \phi'(R)}  }
= \frac {\theta''(R) + u \phi''(R)  \frac{\theta'(R)}{1-u\phi'(R)} } {\pare{1 - u \phi'(R)}^2}.
\end{equation}
Il est important de trouver un équivalent asymptotique  de $\theta''(z) + u  \phi''(z)  \frac{\theta'(z)}{1-u\phi'(z)}$ en $z = \kappa /4$ avant d'y injecter le développement \eqref{ner} de $R$. (En effet, les termes dominants des  deux termes s'annulent.) En utilisant le développement \eqref{thetapsurphip}, qui peut se réécrire 
$$\frac {\theta'\pare{\frac \kappa 4 (1-\varepsilon)}} {1 - u \phi'\pare{\frac \kappa 4 (1-\varepsilon)}} = 
 - \frac{ 2(2 \sigma -1)} u +  \frac {c_u} {\ln \varepsilon} + O \pare{\frac {\varepsilon^{\frac 1 2}} {\ln \varepsilon}},$$    et les dérivées des estimées \eqref{depi} et \eqref{devthetap}, 
nous trouvons 
$$\theta''(z) +  u \phi''(z)  \frac{\theta'(z)}{1-u\phi'(z)} \sim  c_u {4 u \gamma}{\kappa^2 \pi}  \frac 1 {\left(1 - \frac{4z}{\kappa} \right) \ln \left(1 - \frac{4z}{\kappa} \right)}. $$
Nous utilisons alors cette relation avec \eqref{pdd3F}, \eqref{depi} et \eqref{ner} et nous retrouvons :
\begin{equation} \label{Ftierceneg} \pdd 3 F z \sim   \frac{c_u}{\rho \pare {1 - z/\rho} \ln^2 \pare {1 - z/\rho}}. 
\end{equation}
(Nous pouvons maintenant comprendre pourquoi nous avons dérivé $F$ trois fois plutôt qu'une ou deux :  son développement singulier a un terme dominant proportionnel à $c_u$.)  En utilisant l'inégalité 
$ u\pd {^3 F} {z^3} \geq \pd {^2 R} {z^2}$ et \eqref{pddR}, nous voyons que $ u c_u \geq -\pi \kappa / (u \gamma)$, ce qui montre bien que $c_u$ est strictement positif.
\end{proof}

\noindent \textbf{Application au cas tétravalent. } La série \eqref{explambda} vaut après calcul 
$$\sum_{i \geq 1} \frac{4 (3i)!} {i!^2(i-1)!} z^i,$$
qui est bien une série à coefficients positifs.
La constante $\lambda$, qui est la valeur de cette série en $z= \frac 1 {27}$, est égale $\frac {9 \sqrt{3}} {\pi} - 4$. En utilisant \eqref{rho4}  et $\sigma = 3/2$, nous remarquons la (surprenante) simplification 
$$4\sigma - 2 - u \lambda = 27 \rho.$$ Le développement asymptotique de $ \pdd 2 F z$ s'écrit alors
$$
\pdd 2 F z + \frac 4 u \sim   
\frac{72 \sqrt 3 \pi \rho}{u^2 \ln (1-z/\rho)}.
$$
La courbe de la fonction $z \mapsto R(z,-1/2)$ est montrée figure \ref{R4valent02}. Nous pouvons constater qu'elle n'a pas du tout la même forme que la fonction  $z \mapsto R(z,-1/2)$ tracée figure \ref{R4}.

\fig{[scale=0.35]{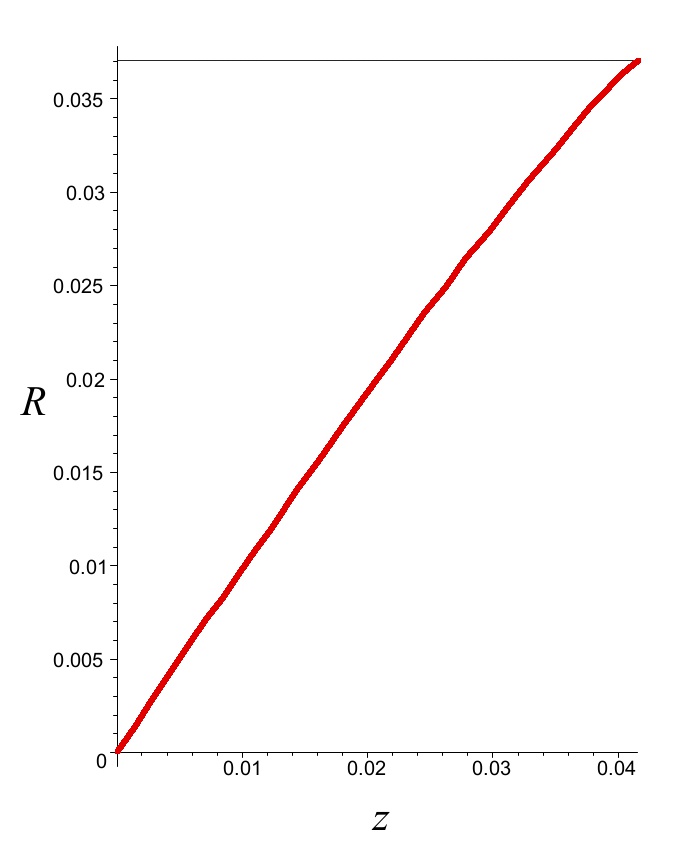} }{Courbe de $R(z,u)$, pour $z \in \, [0,\rho_{-1/2}[$ et $u = -1/2$. La valeur de $R$ en $\rho_{-1/2}$ vaut $1/27$, qui est la singularité dominante de $\phi$.}{R4valent02}

\noindent \textbf{Remarque. } Même sans hypothèses supplémentaires, nous pouvons appliquer le théorème taubérien de Hardy, Littlewood et Karamata (voir \cite[Théo. VI.13 p. 435]{flajolet-sedgewick}) à $\pdd 3 F z (\rho\,z)$ grâce à la relation \eqref{Ftierceneg}. En effet, $z \mapsto 1/{\ln^2(1-z)}$ est une fonction à variations lentes et les coefficients  de $\pdd 3 F z (\rho\,z)$, qui sont égaux à $\rho^{n} n(n+1)(n+2)f_{n+2}(u)$, sont positifs. Nous pouvons alors en déduire l'équivalent asymptotique 
$$\sum_{k = 0}^n \rho^{k} k(k+1)(k+2)f_{k+2}(u) \sim c_u n^{-1} (\ln n)^{-2},$$
ce qui est cohérent avec l'équivalent \eqref{fnuasympt}, obtenu par des hypothèses plus fortes. Malheureusement, cela ne nous permet pas de trouver un équivalent asymptotique sur $f_n(u)$ seul\footnote{L'hypothèse de croissance de $\rho^{-k} k(k+1)(k+2)f_{k+2}(u)$ en $k$, qui nous aurait permis de conclure, est manifestement fausse et semble difficilement adaptable.}.

\section{Quand $\boldsymbol{ u < 0 }$ : hypothèses fortes}

\begin{prop} \label{p:lfort}
Soit $u \in [-1,0[$. Supposons que la série génératrice des poids $D(y)= \sum_{k\geq 1} d_{2k+2}y^k$, de rayon de convergence $\delta$, vérifie les hypothèses du théorème \ref{theorayon}. Supposons en outre que les séries  $\phi$  et $\theta$ définies par \eqref{eulphi} et sont analytiques sur un disque centré en $0$ de rayon $2 \phi(\kappa/4) + \kappa/4$ privée de la demi-droite $[\kappa/4,+\infty[$, où $\kappa$ et $\sigma $ sont les  deux nombres satisfaisant \eqref{eqcar} avec $0 < \kappa \sigma ^2 < \delta$ et $ 1 < \sigma $.  

Alors les séries $R$ et $\pd F z$ sont analytiques sur un $\Delta$-domaine de rayon $\rho$. Le théorème de transfert permet alors de déduire 
$$f_n(u) \sim \frac{\pi \sqrt{3 + 4 \kappa^2 \sigma^5 D''(\kappa\sigma^2)} }{u^2 \sigma} \pare{ 4 \sigma - 2 - u \lambda }\,  \rho^{-n+2} n^{-3} \pare{\ln n}^{-2}, $$
où $\lambda$ est la constante définie par la proposition \ref{p:neg}.
\end{prop}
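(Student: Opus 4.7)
My plan is to upgrade the chick-shaped domain of Proposition \ref{p:neg} into a full $\Delta$-domain and then feed the resulting singular expansion into the logarithmic variant of the transfer theorem stated in Section \ref{ss:asympt}. The key observation is that the strong hypothesis — namely, that $\phi$ and $\theta$ are analytic on the slit disk $\{|z| < 2\phi(\kappa/4)+\kappa/4\}\setminus [\kappa/4,+\infty[$ — gives global access to $\phi(w)$ and $\phi'(w)$ for $w$ close to $\kappa/4$ but off the real ray, which is precisely what was missing in the weak-hypothesis proof.

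First, I would study the map $\Omega(w) := w - u\phi(w)$ on the slit disk. Since $u<0$, the derivative $\Omega'(w) = 1 - u\phi'(w)$ does not vanish on any real subinterval of $[0,\kappa/4[$ (one has $u\phi'(w)<0$); using the logarithmic behaviour of $\phi'$ at $\kappa/4$ given by \eqref{depi}, one sees that $\Omega'$ stays uniformly bounded away from $0$ on a whole slit neighbourhood of $\kappa/4$ of the form $\kappa/4 - D_{s,\alpha}$ with $\alpha$ arbitrarily close to $\pi$. The log-inversion Theorem~\ref{loginversion}, which was already used in the proof of Proposition~\ref{p:neg}, then produces an analytic inverse $\Upsilon$ of $\Omega$ defined on a domain $\rho - D_{\rho',\beta}$ with $\beta$ arbitrarily close to $\pi$. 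By the uniqueness clause of the log-inversion theorem, $\Upsilon$ must coincide with $R$ where both are defined, and so $R$ extends analytically to $\rho - D_{\rho',\beta}$ with $\beta>\pi/2$ as large as desired — that is, to a $\Delta$-domain of radius $\rho$. The singular expansion \eqref{ner} remains valid on this $\Delta$-domain since it was obtained locally from $\Upsilon(y)\sim -y/(c\ln y)$.

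Next, I would transfer this analyticity to $\partial F/\partial z = \theta(R)$. On the $\Delta$-domain just constructed, $R(z)$ takes values in a small slit neighbourhood of $\kappa/4$, and by the strong hypothesis $\theta$ is analytic on such a neighbourhood; composition therefore gives analyticity of $\partial F/\partial z$ (and hence of $\partial^2 F/\partial z^2$ and $\partial^3 F/\partial z^3$) on a $\Delta$-domain of radius $\rho$. The expansion \eqref{nef} for $\partial^2F/\partial z^2$ then holds uniformly on this $\Delta$-domain, and differentiating the computation leading to \eqref{Ftierceneg} produces the uniform expansion
\[
\frac{\partial^3 F}{\partial z^3}(z) \underset{z\to\rho}{\sim} \frac{c_u}{\rho}\,\frac{1}{(1-z/\rho)\ln^2(1-z/\rho)},
\]
with $c_u = \pi\sqrt{3+4\kappa^2\sigma^5 D''(\kappa\sigma^2)}(4\sigma-2-u\lambda)/(u^2\sigma)$ as in \eqref{nef}.

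Finally I would invoke the third variant of the transfer theorem stated after Theorem~\ref{theotransfert}, applied with $k=0$ and the constant $c_u/\rho$: the coefficient of $z^n$ in $\partial^3 F/\partial z^3$, which equals $(n+1)(n+2)(n+3)f_{n+3}(u)$, is asymptotically equivalent to $(c_u/\rho)\rho^{-n}\ln^{-2}(n)$. Dividing by $n^3$ and shifting the index yields the announced asymptotic $f_n(u)\sim c_u\,\rho^{-n+2}\,n^{-3}(\ln n)^{-2}$. The main obstacle in this plan is the first step: the extension of the log-inversion argument from a narrow chick sector ($\beta$ just above $\pi/2$) to a $\Delta$-domain ($\beta$ arbitrarily close to $\pi$) genuinely requires the strong analyticity hypothesis, because only then can one be sure that $\Omega^{-1}$ exists on a domain large enough to cover the full $\Delta$-domain and that no hidden singularity of $\phi$ obstructs the continuation between the chick boundary and the circle $|z|=\rho$.
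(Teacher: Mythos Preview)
Your first step contains a genuine gap. The domain $\rho - D_{\rho',\beta}$ produced by the log-inversion theorem, even with $\beta$ arbitrarily close to $\pi$, is only a small sector of radius $\rho'$ around the point $\rho$; it is \emph{not} a $\Delta$-domain. A $\Delta$-domain must extend slightly beyond the \emph{entire} circle $|z|=\rho$, whereas your argument says nothing about points far from $\rho$ on that circle, such as $z=-\rho$. In fact the chick domain of Proposition~\ref{p:neg} already has $\beta>\pi/2$, so widening the angle buys you nothing new. The real difficulty lies precisely at those distant points: as the remark following the statement indicates, $R$ is concave on $[-\rho,\rho]$, hence $|R(-\rho)|>R(\rho)=\kappa/4$, so $R$ exits the disk of convergence of $\phi$. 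This is why the strong hypothesis concerns a slit disk of radius $2\phi(\kappa/4)+\kappa/4$, and why a purely local argument at $\rho$ cannot suffice.

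The paper's proof is global. It first proves a lemma, using the $(u+1)$-positivity of $\mathcal R=(R-z)/u$, that for every $z$ with $|z|\le\rho$ and $z\ne\rho$, the value $R(z)$ lies in the slit disk (in particular $R(z)\notin[\kappa/4,+\infty[$): on $[0,\rho[$ this follows from monotonicity; on $[-\rho,0[$ from a zero-of-$R'$ argument; for $z$ non-real, from showing $\Ima R(z)\ne 0$ via the bound $|\mathcal R'(y)|\le\mathcal R'(\rho)=-1/u$. Then, at each $\mu\ne\rho$ on the circle, the implicit function theorem is applied to $R=z+u\phi(R)$: the non-vanishing of $1-u\phi'(R(\mu))$ is again secured by the positivity of $\mathcal R'$ and the finiteness of $\mathcal R'(\rho)$. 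This yields analyticity of $R$, and hence of $\theta(R)$, on a full $\Delta$-domain. The transfer theorem is then applied to $\pdd 2 F z$ via~\eqref{nef} (your use of $\pdd 3 F z$ would also work once the $\Delta$-domain is secured, but is unnecessary).
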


Cette proposition synthétise ce qui nous restait à démontrer pour pouvoir établir le théorème \ref{theobis}. Le point le plus délicat est l'analyticité de $R$ sur un $\Delta$-domaine : l'analyticité de $\pd F z$ se déduit de la relation $\pd F z = \theta(R)$ et l'équivalent asymptotique de $f_n(u)$ se déduit du théorème de transfert via \eqref{nef}.

\noindent \textbf{Remarque. }L'hypothèse d'analyticité de $\phi$ et $\theta$ sur un domaine qui dépasse le disque fermé de rayon $\rho$ n'est pas superflue. En effet, nous pouvons montrer à partir de \eqref{derR} que $R$ est concave sur $[-\rho,\rho]$ et que donc $\module{R(-\rho)} > R(\rho) = \kappa/4$. La fonction $R$ prend donc des valeurs qui sortent du disque de convergence de $\phi$, il est donc important de contrôler le domaine d'analyticité de $\phi$ et de $\theta$ !

Commençons par prouver le lemme suivant.

\begin{lem} Soit $u \in [-1,0[$. Sous les hypothèses de la proposition précédente, $R(z)$ est bien défini pour tout $z \neq \rho$ de module inférieur ou égal à $\rho$, et appartient au disque de centre $0$  et de rayon $2 \phi(\kappa/4) + \kappa/4$ privé de la demi-droite $[\kappa/4,+\infty[$.
\end{lem}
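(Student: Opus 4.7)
Notons $V := \enstq{w \in \C}{|w| < 2\phi(\kappa/4) + \kappa/4} \setminus [\kappa/4,+\infty[$, ensemble sur lequel $\phi$ (et donc $\Omega(y) := y - u\phi(y)$) est analytique par hypoth�se. La proposition \ref{p:neg} fournit d�j� l'analyticit� de $R$ sur un voisinage de $[0,\rho[$ avec $R([0,\rho[) = [0,\kappa/4[ \subset V$. L'id�e est d'�tendre cette propri�t� par continuation analytique le long de chemins dans $\overline{D(0,\rho)} \setminus \sing{\rho}$, via un argument classique d'ouvert-ferm�. Pour $z_1 \in \overline{D(0,\rho)} \setminus \sing{\rho}$ fix�, je choisirais un chemin continu $\gamma : [0,1] \to \overline{D(0,\rho)} \setminus \sing{\rho}$ reliant $0$ � $z_1$, puis poserais $I := \enstq{t \in [0,1]}{R\textrm{ se prolonge analytiquement le long de }\gamma|_{[0,t]}\textrm{ avec image dans }V}$. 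Le but serait alors de montrer $I = [0,1]$ par connexit�.

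L'ouverture de $I$ en tout $t \in I$ d�coule du th�or�me des fonctions implicites analytique appliqu� � $\Omega(R) = z$ en $(\gamma(t),R(\gamma(t)))$, pourvu que $\Omega'(R(\gamma(t))) = 1 - u\phi'(R(\gamma(t)))$ ne s'annule pas. Sur l'axe r�el positif, $\phi' > 0$ donc $\Omega' = 1 + |u|\phi' > 0$ ; sur $V$, cette non-annulation se propage par un argument de continuation similaire que l'on peut mener conjointement. Le point d�licat est la fermeture. Si $t_n \to t^*$ avec $t_n \in I$, soit $w^*$ une valeur d'adh�rence de $R(\gamma(t_n))$ dans $\overline V$ ; il s'agit d'exclure $w^* \in \partial V$. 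Pour le rayon $[\kappa/4,+\infty[$, en passant � la limite dans $\gamma(t_n) = \Omega(R(\gamma(t_n)))$ et en utilisant $\phi(w^*) \geq \phi(\kappa/4)$ (positivit� des coefficients et $w^* \geq \kappa/4$ r�el), on obtiendrait $\gamma(t^*) \geq \kappa/4 + |u|\phi(\kappa/4) = \rho$, ce qui, combin� avec $|\gamma(t^*)| \leq \rho$ et $\gamma(t^*) \neq \rho$, fournit une contradiction.

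Pour le cas du cercle ext�rieur $|w^*| = 2\phi(\kappa/4) + \kappa/4$, la majoration $|w^*| \leq |\gamma(t^*)| + |u|\,|\phi(w^*)|$ donne $|\phi(w^*)| \geq (2-|u|)\phi(\kappa/4)/|u| \geq \phi(\kappa/4)$ car $|u| \leq 1$. C'est l� l'obstacle principal que j'anticipe : pour fermer cette in�galit� en contradiction, il faudra une majoration fine de $|\phi(w^*)|$ sur le cercle ext�rieur, majoration qui exploitera la positivit� des coefficients de $\phi$ (donnant $|\phi(w)| \leq \phi(|w|)$ dans le disque de convergence) et le fait que la valeur $2\phi(\kappa/4) + \kappa/4$ a �t� pr�cis�ment choisie pour que le calcul $|R(z)| \leq \rho + |u|\phi(\kappa/4) = \kappa/4 + 2|u|\phi(\kappa/4) \leq 2\phi(\kappa/4) + \kappa/4$ soit consistant d�s que $|R(z)| \leq \kappa/4$. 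J'envisagerais donc de proc�der par bootstrap : �tablir d'abord $|R(z)| \leq \kappa/4$ sur une partie appropri�e de l'adh�rence, en exploitant la positivit� des coefficients de $(R-z)/u$ rappel�e par le corollaire \ref{c:RSupos}, puis en d�duire la borne compl�te par l'in�galit� ci-dessus.
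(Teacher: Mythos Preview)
Your analytic-continuation strategy is considerably more involved than necessary, and both your closure argument and your bootstrap proposal have genuine gaps.

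\textbf{Definition of $R(z)$ and the modulus bound.} The paper bypasses continuation entirely: since $\mathcal R := (R-z)/u$ has non-negative coefficients in $z$ (Corollary~\ref{c:RSupos}) and $\mathcal R(\rho) = \phi(\kappa/4) < +\infty$, the power series of $\mathcal R$, and hence of $R$, converges absolutely for every $\module z \leq \rho$. The modulus bound follows in one line:
\[
\module{R(z)} \leq \module z + |u|\,\module{\mathcal R(z)} \leq \rho + |u|\,\mathcal R(\rho) = \kappa/4 - 2u\,\phi(\kappa/4) \leq \kappa/4 + 2\phi(\kappa/4),
\]
using $\rho = \kappa/4 - u\phi(\kappa/4)$ and $u \geq -1$. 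Note the positivity used here is that of $\mathcal R$ \emph{as a series in $z$}, not that of $\phi$; this is why no hypothesis $\module{R(z)} \leq \kappa/4$ is needed. Your bootstrap target $\module{R(z)} \leq \kappa/4$ is in fact false (the Remarque after Proposition~\ref{p:lfort} points out that $\module{R(-\rho)} > \kappa/4$), so that route cannot close.

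\textbf{Avoiding the slit.} Your closure argument writes ``$\gamma(t^*) \geq \rho$'' and derives a contradiction from $\module{\gamma(t^*)} \leq \rho$, $\gamma(t^*) \neq \rho$; but this presupposes $\gamma(t^*) \in \R$, which nothing guarantees for a general path. Moreover, for $w^* > \kappa/4$ your appeal to ``$\phi(w^*) \geq \phi(\kappa/4)$'' is unfounded: the series diverges there, and the analytic continuation of $\phi$ on $V$ has complex boundary values along the cut. The paper instead proves directly that $R(z)$ is non-real whenever $z$ is. From $R = z + u\mathcal R$ one gets $\module{\Ima R(z)} \geq \module{\Ima z} - |u|\,\module{\Ima \mathcal R(z)}$, and the mean value inequality along $[\Ree z, z]$ gives $\module{\Ima \mathcal R(z)} < \module{\Ima z}\cdot \max_{\module y \leq \rho}\module{\mathcal R'(y)} = \module{\Ima z}\cdot \mathcal R'(\rho)$. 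The key computation is $\mathcal R'(\rho) = (R'(\rho)-1)/u = -1/u$, since $R' = (1 - u\phi'(R))^{-1} \to 0$ as $\phi'(R) \to +\infty$ near $\rho$. Combining, $\module{\Ima R(z)} > 0$. The real intervals $[0,\rho[$ and $[-\rho,0[$ are handled separately, by monotonicity and by a Rolle-type argument on $R' = 1/\Omega'(R)$ respectively.
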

\begin{proof} Notons $\mathcal R = (R-z)/u$. 
Le corollaire \ref{c:RSupos} p. \pageref{c:RSupos} montre que $\mathcal R$ est une série à coefficients positifs. Ainsi $\mathcal R(z)$ converge absolument pour tout $z$ avec $\module z  \leq \rho$ car 
$$  \mathcal R(\rho) = (R(\rho)-\rho)/ u = \phi(\kappa/4) < + \infty.$$
Il en est donc de même pour $R$ et nous avons
$$\module{R(z)} \leq   \module{z} - u \module{\mathcal R(z)} \leq  \rho - u \phi(\kappa/4) = \kappa/4 - 2 u \phi(\kappa/4) \leq \kappa/4 + 2 \phi(\kappa/4) $$
(on a utilisé \eqref{deathrow} et $u \geq -1$). 
Montrons maintenant que $R(z)$ n'appartient pas à $[\kappa/4,+\infty[$. 

Supposons dans un premier temps que $z \in [0,\rho[$. Nous avons vu dans la preuve de la proposition \ref{p:neg} que $R$ était croissante sur $[0,\rho[$, donc $R(z) \leq R(\rho) = \kappa/4$.

Supposons ensuite que $z \in [-\rho,0[$. Sur cet intervalle, la fonction $R$ est réelle (cf. lemme \ref{l:valpos} p. \pageref{l:valpos}) et continue. De plus, $R(0) = 0$. Donc si $z \in [\kappa/4,+\infty[$, alors il existe  $t \in \, [-\rho,0[$ tel que $R(t) = \kappa/4$. Considérons $t$ maximal pour cette propriété. Il existe alors un voisinage complexe de l'intervalle $]t,0]$ sur lequel $R$ est à valeurs dans un disque centré en $0$ de rayon $2 \phi(\kappa/4) + \kappa/4$ privée de la demi-droite $[\kappa/4,+\infty[$. Comme $\phi$ est analytique sur ce domaine, la relation $R' = (1-u \phi'(R))^{-1}$ y est valide. Mais $R'(0) = 1$ et $R(t) = \kappa/4 > R(0) = 0$, donc $R'$ doit s'annuler sur $]t,0[$, mais cela contredit le fait que $R' = (1-u \phi'(R))^{-1} \neq 0.$

Enfin, supposons que $z$ ne soit pas réel. Nous allons prouver que $R(z)$ n'est pas réel non plus. D'une part,
\begin{equation} \label{dfgh2}
|\Ima R(z)| = |\Ima (z + u \mathcal R(z))| \geq |\Ima z| + u \, |\Ima \mathcal R(z)|.
\end{equation}
D'autre part, comme $ \mathcal R (\Ree z)$ est réel,
$$
|\Ima \mathcal R (z)| = |\Ima (\mathcal R(z) -  \mathcal R (\Ree z))| \leq |\mathcal R(z) -  \mathcal R (\Ree z)|.
$$
Donc d'après l'inégalité des accroissements finis\footnote{L'inégalité des accroissements finis se généralise aux fonctions holomorphes, contrairement au théorème des accroissements finis (avec égalité) -- par exemple si $\alpha(t) = e^{it}$, alors $\alpha(0) = \alpha(2 \pi)$ mais $\alpha'$ ne s'annule jamais.},
on a 
\begin{equation} \label{fghj2}
|\Ima \mathcal R (z)| < |z - \Ree z| \max_{y \in [\Ree z, z]} | \mathcal R'(y) | \leq | \Ima z | \max_{|y| \leq \rho} | \mathcal R'(y)|,
\end{equation}
où l'inégalité stricte provient du fait que $\mathcal R'$ n'est pas constante sur $[\Ree z, z]$. Mais $\mathcal R'$ est une série entière à coefficients positifs, donc pour $|y| \leq \rho$,
\begin{equation}\label{ghjk2}
|\mathcal R'(y)| \leq \mathcal R'(\rho) = \frac 1 u(R'(\rho) - 1)
= \frac 1 u \left(\lim_{t \rightarrow \rho} \frac 1 {1 - u \phi'(R(t))} - 1\right) = -\frac 1 u,
\end{equation}
car $\phi'(z)$ diverge vers $+\infty$ quand $z$ tend vers $\kappa/4$
 (voir \eqref{depi}). En revenant sur \eqref{fghj2},  nous trouvons \mbox{$|\Ima \mathcal R(z)| < -|\Ima z|/u$}, ce qui, couplé à \eqref{dfgh2}, donne $|\Ima R(z)| > 0$.
\end{proof}

Nous pouvons maintenant prouver  la proposition principale de cette partie.

\begin{proof}[Démonstration de la proposition \ref{p:lfort}.] D'après la proposition \ref{p:neg}, la série $R$ est analytique sur un domaine en forme de poussin. Pour prouver la $\Delta$-analyticité, il nous reste donc à montrer que pour tout $\mu \neq \rho$ de module $\rho$, la fonction $R$ est analytique en $\mu$.

D'après le lemme précédent, on sait que $R$ est prolongeable par continuité en $\mu$ et $\phi$ est analytique en $R(\mu)$. Par ailleurs, la relation $R = z + u \phi(R)$ se prolonge par analyticité sur tout le disque ouvert centré en $0$ de rayon $\rho$. Par continuité, nous avons $R(\mu) = \mu + u \phi(R(\mu)).$

 Nous voulons appliquer le théorème des fonctions implicites en $(\mu,R(\mu))$ à l'équation $R = z + u \phi(R)$. Il faut montrer au préalable que $u\phi'(R(\mu)) \neq 1$. Supposons le contraire. D'après \eqref{derR}, cela veut dire que $R'$, et donc $\mathcal R' = (R' - 1)/u$, diverge vers $+ \infty$ en module quand $z$ s'approche de $\mu$. Mais $\mathcal R'$ a des coefficients positifs et \eqref{ghjk2} montre que $\mathcal R'(\rho) = -1/u < + \infty$. Donc $\mathcal R'$ reste borné au voisinage de $\mu$, ce qui mène à une contradiction. Donc le théorème des fonctions implicites s'applique : $R$ est bien analytique en $\mu$.
 
La fonction $R$ est donc analytique sur un $\Delta$-domaine de rayon $\rho$ et d'après le lemme précédent, son image est incluse dans le domaine d'analyticité de $\theta$. Par conséquent, $\pd F z = \theta(R)$ est analytique sur le même $\Delta$-domaine. Nous pouvons donc appliquer le théorème de transfert \cite[Théo. VI.4 p. 393]{flajolet-sedgewick} à $\pdd 2 F z$ via \eqref{nef}. Nous obtenons l'équivalent asymptotique voulu.
\end{proof}

\noindent \textbf{Application au cas tétravalent. }Nous obtenons l'équivalent
$$f_n(u) \sim \frac{72 \sqrt 3 \pi \rho}{u^2} \rho^{-n+3} n^{-3} (\ln n)^{-2}.$$

\section{Exemple récapitulatif : le cas $4$-eulérien}

Nous voulons appliquer les résultats précédents afin d'obtenir le comportement asymptotique des cartes forestières $4$-eulériennes.

La série génératrice des poids vaut $D(y) = y/(1-y)$. Cette série diverge bien en son rayon $\delta=1$. L'hypothèse d'apériodicité est trivialement vérifiée car le coefficient de $y^1$ dans $D$ n'est pas nul.  

Le système \eqref{eqcar} se traduit en
$$ \kappa\,\sigma ^3 =  (\sigma - 1) (1 - \kappa \sigma^2) ,\quad 2\,\kappa\,\sigma ^3 = (1 - \kappa \sigma^2)^2$$
et admet pour unique\footnote{avec l'hypothèse $\kappa \sigma^2 < 1$} solution  $\kappa = (71 - 17 \sqrt{17})/8$ et $\sigma = (7 + \sqrt{17} )/8$.

Remarquons au passage que le lemme \ref{l:asympt} nous donne l'équivalent asymptotique non trivial\footnote{Dans une première version, j'avais calculé cet équivalent asymptotique grâce à la méthode de Bender \cite{bendermethod}. Mais cette approche était beaucoup plus calculatoire -- cela m'a tout de même permis de confirmer le résultat.} 
$$T_{2n} \sim \pare{\frac 1 8 - \frac {\sqrt{17}} {34}} \sqrt{17 + 9 \sqrt{17}} \sqrt{\frac 2 \pi}  \pare{ \frac{ 71 + 17 \sqrt{17}} 4  }^n n^{-\frac 3 2}$$
sachant que $T_{2n}$ vaut d'après le lemme \ref{l:4euln} p. \pageref{l:4euln}
$$T_{2n} = \sum_{k=0}^{n-2} \frac 1 {2n-1}  { 2n + k - 1  \choose k + 1 } { n - 2 \choose k }.$$

 Afin de prouver les hypothèses du théorème \ref{theobis}, rappelons les fonctions $\phi$ et $\theta$ associées aux cartes forestières $4$-eulériennes (voir figure \ref{recapitulatif}) :
$$
\theta(x) =  2  \sum_{\substack{ i \geq 2 \\ i - 2  \geq k \geq 0  }}  \frac{ (i+k)(2i+k-1)!(i-2)!} {i!^2(i-k-2)!k!(k+1)!} x^i,$$
$$
 \phi(x) =  \sum_{\substack{ i \geq 2 \\  i - 2  \geq k \geq 0   }}  \frac{ (2i+k-1)!(i-2)!      } {i!(i-1)!(i-k-2)!k!(k+1)!} x^i.
$$
Le lemme \ref{l:thetaphi} montre que $(71-17\sqrt{17})/32$ est le rayon de convergence de $\theta$ et $\phi$.
Nous allons prouver que ces deux séries  sont analytiques sur $\C \backslash \, \left[(71-17\sqrt{17})/32,+\infty\right[$. Le plus simple pour cela est d'étudier les équations différentielles linéaires associées. En effet, la série $\phi$ satisfait l'équation
\begin{multline*}
\pare{1728\,{x}^{4}-8068\,{x}^{3}+1991\,{x}^{2}-50\,x} \phi'''(x) + \pare{3456\,{x}^{3}-8868\,{x}^{2}+3550
\,x-50
} \phi''(x) \\ + \pare{-432\,{x}^{2}+408\,x+360} \phi'(x) + \pare{432\,x+240}\phi(x)+ 300+540\,x = 0.
\end{multline*}
(Cette équation a été devinée puis vérifiée par \textit{maple}.)
D'après un théorème classique sur les équations différentielles linéaires (voir par exemple \cite[Théo 2.2]{wasow}), la fonction $\phi$ est analytique sur tout point $x$ tel que $$1728\,{x}^{4}-8068\,{x}^{3}+1991\,{x}^{2}-50\,x \neq 0.$$ Or les racines de ce  polynôme sont $0,{\frac {25}{108}},{\frac {71}{32}}-{\frac {17}{32}}\,\sqrt 
{17},{\frac {71}{32}}+{\frac {17}{32}}\,\sqrt {17}$. Comme $\phi$ a un rayon de convergence non nul,  elle est analytique en $0$. Ses singularités potentielles se trouvent donc au niveau des trois autres racines. Nous prouvons ainsi que $\phi$ est analytique sur $\C \backslash \, \left[(71-17\sqrt{17})/32,+\infty\right[$ puisque $(71-17\sqrt{17})/32$ est la plus petite des trois racines. 
Quant à l'analyticité de $\theta$ sur le même domaine, il suffit d'observer que
\begin{multline*}\theta(x) = \pare{\frac{34} {27} x^3 - \frac{1207} {216} x^2 + \frac{17} {108} x } \phi'''(x) + \pare{\frac{34} 9 x^2 - \frac{1207} {108} x + \frac{17} {108}} \phi''(x) \\ 
+ \pare{\frac x {18} - \frac 4 3} \phi'(x) - \frac 5 9 \phi(x) - 10 x^3 - \frac x 2 - \frac{17} {18}.\end{multline*}

Nous avons ainsi prouvé toutes les hypothèses des théorèmes de la section \ref{s:theoremes}. En particulier, nous pouvons en déduire que le nombre pondéré $f_n(u)$ des cartes forestières $4$-eulériennes à $n$ faces et avec un poids $u$ par composante non racine satisfait
\begin{equation} \label{fnu4eul}
f_n(u) \sim \left\{ \begin{array}{ll}
 c_u \rho_u^{-n}n^{-3} (\ln n)^{-2} & \textrm{si }u \in [-1,0[, \\
c_u \rho_u^{-n}n^{-3} & \textrm{si }u=0, \\
c_u \rho_u^{-n}n^{-5/2} & \textrm{si }u>0.
\end{array} \right.
\end{equation}
(Les valeurs de $c_u$ que nous donnent les théorèmes ne sont pas forcément intéressantes ici.)

\noindent \textbf{Remarque. }Nous pouvons également pondérer chaque arête des cartes forestières $4$-eulériennes par un réel $t>0$ plus forcément égal à $1$. De fait $t$ joue le rôle d'un paramètre modulable au même titre que $u$. Il s'avère que les théorèmes s'appliquent de la même manière. En effet la nouvelle série génératrice des poids est $D(y) = (ty)/(1-ty)$. Elle diverge également en son rayon. Pour prouver l'analyticité de $\phi$ et $\theta$ sur $\C \backslash [\kappa/4,+\infty[$ (ici $\kappa$ vaut ${\frac {27}{8}}\,{t}^{2}+\frac 9 2 \,t+1-\frac 1  8\,\sqrt {t \left( 9\,t+8 \right) 
^{3}}
$), nous devons recalculer les équations différentielles et procéder à la même analyse que plus haut. Au final, le même comportement asymptotique que \eqref{fnu4eul} est obtenu.

Nous pouvons déduire de  cela le comportement asymptotique des cartes forestières \textbf{eulériennes} (on autorise à nouveau les sommets de degré $2$) avec un poids $u$ par composante connexe et un poids $t$ par arête, où $-1 \leq  u$ et $0 < t < \min\pare{1,(u+1)^{-1}}$ (cette dernière inégalité est nécessaire pour que les coefficients en $z$ de la série génératrice des cartes forestières eulériennes soient finis). En effet, d'après la proposition \ref{correspondanceeul} p. \pageref{correspondanceeul}, la série génératrice des cartes forestières eulériennes et la série génératrice des cartes forestières 4-eulériennes ne diffèrent que d'un changement de variable affine (si on fixe $t$ et $u$), le comportement \eqref{fnu4eul} est donc encore valable.

\section{Grandes cartes aléatoires munies d'une forêt ou d'un arbre couvrant}

Nous établissons dans cette ultime section quelques résultats sur les grandes cartes aléatoires (toujours $4$-eulériennes et apériodiques) 
équipées d'une forêt ou d'un arbre couvrants. Dans chaque cas, nous définissons une distribution de Boltzmann sur les cartes à $n$ faces qui fait intervenir un paramètre $u$. Il prend en compte le nombre de composantes dans la forêt couvrante, ou le nombre d'arêtes internes actives dans l'arbre couvrant (de manière équivalente, il s'agit du niveau d'une configuration du tas de sable dans le dual, comme nous l'avons expliqué sous-section \ref{ss:modele} p. \pageref{ss:modele}). L'effet de la transition de phase en $0$ est toujours visible.

\subsection{Nombre et taille des composantes dans les cartes forestières}

\newcommand{\pc}{\mathbb P_c}

Soient $n \in \N$ et $u \in \,[0,+\infty[$. Définissons la distribution de probabilité  $\pc$ sur les cartes forestières $(C,F)$ avec un poids $d_{2k}$ par sommet de degré $2k$ et $n$ faces par
$$\pc(C,F) = \frac{u^{\cc(F) - 1}}{f_n(u)},$$
où $\cc(F)$ désigne le nombre de composantes de $F$ et où $f_n(u)$ compte le nombre de cartes forestières avec un poids $u$ par composante non racine. Appelons $C_n$ la variable aléatoire sous cette distribution qui compte le nombre de composantes de $F$, et $S_n$ celle qui prend pour valeur le nombre de demi-arêtes incidentes à la composante racine (il s'agit ici du paramètre de taille le plus naturel que nous pouvons mettre sur les arbres). 

Quand $u=0$, seules les cartes avec un arbre couvrant ont une probabilité non nulle, d'où $C_n=1$ et $S_n = 2(n-1)$ (où $n-1$ est le nombre d'arêtes externes pour n'importe quel arbre couvrant d'une carte à $n$ faces).

Regardons ce qui change quand $u > 0$.
\begin{prop} \label{p:CnSn}
Supposons $u > 0$. Considérons une suite de poids $(d_{2k})$ satisfaisant les hypothèses du théorème \ref{theorayon}. Avec la distribution de probabilité $\pc$, nous avons quand $n$ tend vers $+\infty$
$$\mathbb E_c(C_n) \sim \frac{u \phi(\tau)}{\tau - u \phi(\tau)},
$$
où $\phi$ est la série définie par \eqref{eulphi} et $\tau$ est l'unique solution  réelle positive inférieure au rayon de convergence de $\phi$ de  l'équation $u \phi'(y) = 1$.

La taille $S_n$ de la composante racine admet une loi limite discrète. En effet, pour $k \geq 1$,
\begin{equation} \label{pcsn}
\lim_{n \rightarrow + \infty} \pc(S_n = k) =  \frac {k \tau^{k-1}}{\theta'(\tau)} [z^k] \theta(z),
\end{equation}
où la série $\theta$ est définie par \eqref{eulphi} (le nombre $[z^k] \theta(z)$ désigne son $k$-ième coefficient).
\end{prop}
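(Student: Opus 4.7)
Pour la premi�re assertion, le point de d�part est l'identit�
$$\mathbb E_c(C_n) \;=\; 1 + u\,\frac{f_n'(u)}{f_n(u)},$$
qui d�coule directement de la diff�renciation de $f_n(u) = \sum_{(C,F)} u^{\cc(F)-1}$ par rapport � $u$. L'asymptotique $f_n(u) \sim c_u\,\rho_u^{-n}\,n^{-5/2}$ du th�or�me \ref{theoasymp} sugg�re que la contribution dominante � $f_n'(u)/f_n(u)$ provient de la d�riv�e logarithmique de $\rho_u^{-n}$, � savoir $-n\,\rho_u'/\rho_u$. Pour calculer $\rho_u'$, je d�riverai les relations implicites $\rho_u = \tau - u\phi(\tau)$ et $u\phi'(\tau) = 1$ : la d�riv�e de la seconde donne une expression pour $\tau'$, qui se simplifie gr�ce � l'�quation caract�ristique au moment de l'injecter dans la d�riv�e de la premi�re, aboutissant � $\rho_u' = -\phi(\tau)$. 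On en d�duira le comportement annonc�, avec un facteur $n$ suppl�mentaire (l'�quivalent attendu est en r�alit� $n\,u\phi(\tau)/(\tau - u\phi(\tau))$).

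Afin de justifier rigoureusement le passage � la d�riv�e dans l'asymptotique, j'introduirai la s�rie g�n�ratrice bivari�e $G(z,u) := u F_u(z,u) + F(z,u)$, dont les coefficients en $z^n$ sont pr�cis�ment $\mathbb E_c(C_n)\,f_n(u)$. Comme on a $F_z = \theta(R)$ et que la d�riv�e implicite de $R = z + u\phi(R)$ par rapport � $u$ fournit $R_u = \phi(R)/(1-u\phi'(R)) = \phi(R)\,R_z$, on peut exprimer $F_{zu} = \theta'(R)\,\phi(R)\,R_z$. Pr�s de la singularit� dominante $\rho$, le d�veloppement \eqref{per} donne $R_z$ d'ordre $(1-z/\rho)^{-1/2}$ ; il en r�sulte que $F_{zu}$ poss�de une singularit� d'ordre $(1-z/\rho)^{-1/2}$, plus forte que celle de $F_z$ en $(1-z/\rho)^{1/2}$. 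L'int�gration (licite car la primitive explicite s'obtient en changeant de variable $r = R(y)$) donne pour $F_u$ une singularit� en $(1-z/\rho)^{1/2}$, de sorte que $[z^n] F_u \sim \tilde c_u\,\rho^{-n}\,n^{-3/2}$ par le th�or�me de transfert ; le rapport avec $f_n(u) \sim c_u\,\rho^{-n}\,n^{-5/2}$ fait bien appara�tre le facteur $n$ suppl�mentaire.

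Pour la loi limite de $S_n$, l'id�e cl� est de d�composer $F_z$ selon la taille de l'arbre port� par la composante racine. Par la bijection $\Phi$ de la sous-section \ref{ss:cf}, la composante racine de $(C,F)$ correspond � l'arbre � pattes du sommet racine de la carte contract�e, et cet arbre poss�de exactement $S_n$ pattes. Comme l'expansion $\theta(x) = \sum_{k \geq 1} T_{2k}^c\binom{2k}{k} x^k$ s�pare pr�cis�ment les contributions selon le nombre $2k$ de pattes de l'arbre racine, la s�rie g�n�ratrice $F^{(k)}$ des cartes foresti�res dont l'arbre racine porte $2k$ pattes satisfait $\partial F^{(k)}/\partial z = T_{2k}^c\binom{2k}{k}\,R^k$. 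Par suite,
$$\mathbb P_c(S_n = 2k) \;=\; \frac{T_{2k}^c\binom{2k}{k}\,[z^{n-1}]\,R^k}{[z^{n-1}]\,\theta(R)}.$$
Il restera � appliquer le th�or�me de transfert au num�rateur et au d�nominateur. Le d�veloppement \eqref{per} donne $R^k \sim \tau^k - k\tau^{k-1}c\sqrt{1-z/\rho}$ et $\theta(R) \sim \theta(\tau) - \theta'(\tau) c\sqrt{1-z/\rho}$, d'o� deux �quivalents proportionnels � $\rho^{-n}n^{-3/2}$ dont les constantes multiplicatives simplifient $c$ et donnent le rapport annonc� $k\tau^{k-1}[z^k]\theta(z)/\theta'(\tau)$.

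La difficult� technique principale r�sidera dans la justification uniforme de ces d�veloppements singuliers : il faudra s'assurer que $R$, ses puissances $R^k$, $\theta(R)$, $F_u$ et $F_{zu}$ sont tous analytiques sur un m�me $\Delta$-domaine de rayon $\rho$ avec les d�veloppements singuliers requis. Cette analyticit� d�coule cependant de mani�re uniforme de la proposition pr�c�dente (cas $u > 0$), qui �tablit que $R$ satisfait un sch�ma d'�quation fonctionnelle lisse ; les op�rations �l�mentaires (d�rivation, composition avec $\theta$, int�gration par rapport � $z$) pr�servent alors la $\Delta$-analyticit�. On v�rifiera enfin que $\sum_{k \geq 1} k\tau^{k-1}[z^k]\theta(z) = \theta'(\tau)$, assurant ainsi que la famille obtenue est bien une loi de probabilit� discr�te.
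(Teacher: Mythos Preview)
Your approach is essentially the same as the paper's. Both start from $\mathbb E_c(C_n-1)=u\,f_n'(u)/f_n(u)$, express this as a ratio of coefficients of $\partial^2 F/\partial z\partial u$ and $\partial F/\partial z$, compute $\partial^2 F/\partial z\partial u = \theta'(R)\,\phi(R)\,R_z = \phi(R)\theta'(R)/(1-u\phi'(R))$, and apply singularity analysis; for $S_n$ both extract the $k$-th term of $\theta$ and compare the singular behaviour of $R^k$ with that of $\theta(R)$ (the paper invokes \cite[Prop.~IX.1]{flajolet-sedgewick} rather than spelling out the transfer as you do). Your detour through integrating $F_{zu}$ to recover $F_u$ is unnecessary---the paper works directly with $[z^{n-1}]\partial^2 F/\partial z\partial u$ over $[z^{n-1}]\partial F/\partial z$---but it is equivalent.

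You are right that the stated asymptotic is missing a factor $n$: the paper's own computation yields $\partial^2 F/\partial z\partial u$ with a singularity in $(1-z/\rho)^{-1/2}$ against $\partial F/\partial z$ in $(1-z/\rho)^{1/2}$, hence a ratio growing like $n$, and the very next proposition (on $\mathbb E_i(I_n)$) uses $\mathbb E_i(I_n)=(1+1/u)\,\mathbb E_c(C_n-1)$ together with $\mathbb E_i(I_n)\sim\kappa_u n$, which is consistent only with $\mathbb E_c(C_n)\sim \dfrac{u\phi(\tau)}{\tau-u\phi(\tau)}\,n$. Your heuristic via $\rho_u'=-\phi(\tau)$ is a nice sanity check that the paper does not include.
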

\begin{proof} Nous avons
\begin{equation} \label{EcCn}
\mathbb E_c (C_n - 1) = \sum_{(C,F)} \pare{\cc(F) - 1} \frac{u^{\cc(F) - 1}}{f_n(u)} = u \frac{f_n'(u)}{f_n(u)} = u \frac{[z^{n-1}] \pd {^2 F}{z \partial u} } {[z^{n-1}] \pd F z }.
\end{equation}
Or \eqref{neulR} implique que $\pd R u = \phi(R) + u \pd R u \phi'(R)$. Donc en utilisant \eqref{neulF}, nous obtenons l'identité
$$ \pd {^2 F}{z \partial u}  = \frac{\phi(R)\theta'(R)}{1-u \phi'(R)}.
$$
Nous procédons maintenant à l'analyse de singularités. D'après le théorème \ref{theorayon}, les fonctions $\phi$ et $\theta$ sont analytiques en $\tau = R(\rho)$, où $\tau$ est l'unique élément de $]0,\rho[$ satisfaisait $u \phi'(\tau) = 1$. Par un développement de Taylor de $\phi'$ en $\tau$, nous obtenons
$$\pd {^2 F}{z \partial u} \sim \frac{\phi(\tau)\theta'(\tau)}{u\phi''(\tau) (\tau - R)} \sim \frac{\phi(\tau)\theta'(\tau)}{u\phi''(\tau) c \sqrt{1 - z/\rho}},
$$
où la dernière équivalence provient de  \eqref{per}, avec $c = \sqrt{\frac{ 2\rho} {u \phi''(\tau)}}$. L'équivalence asymptotique de $\mathbb E_c (C_n)$ résulte alors du théorème de transfert (nous rappelons que \eqref{pequif} donne un équivalent de $f_n(u)$ et que $\rho = \tau - u \phi(\tau)$).

Pour étudier $S_n$, nous ajoutons une variable $x$ à la série génératrice $F(z,u)$ qui compte la moitié du nombre des demi-arêtes incidentes à la composante racine (qui est forcément pair). La relation \eqref{femme} p. \pageref{femme} devient alors
$$F(z,u,t)= \overline M( z ; 0, 0 , 0, g_4(1), 0,g_6(1), \dots; 0, 0 , 0, x^2 h_4(1/u), 0, x^3 h_6(1)/u, \dots).$$
(On rappelle que $g_2(t) = g_{2k+1}(t) = h_2(t) = h_{2k+1}(t) = 0$ avec $k \geq 0$ pour les cartes $4$-eulériennes.) Grâce à la proposition \ref{prop:mp} p. \pageref{prop:mp}, la relation \eqref{neulF} devient
$$\pd F z (z,u,x) = \theta(x R).$$
(La série $R$ n'a pas changé, elle ne comporte toujours pas de variable $x$.) Nous pouvons exprimer $\pc(S_n=k)$ en termes de $\pd F z$ :
$$\pc (S_n = k ) = \frac{[z^{n-1}x^k]\pd F z(z,u,x)}{[z^{n-1}]\pd F z(z,u,1)} = \frac{[z^{n-1}x^k]\theta(xR)}{[z^{n-1}]\theta(R)}.
$$
Nous appliquons maintenant la proposition IX.1 de \cite[p. 629]{flajolet-sedgewick} : l'expression \eqref{pcsn} en résulte.
\end{proof}

\subsection{Nombre d'arêtes internes actives dans les cartes munies d'un arbre couvrant}

Soient $n \in \N$ et $u \in [-1,+\infty[$. Considérons la distribution sur les cartes  $C$   avec $n$ faces munies d'un arbre couvrant $T$ (et un poids $d_{2k}$ par sommet de degré $2k$) de probabilité
$$\mathbb P_i (C,T) = \frac{ (u+1)^{\inte(C,T)} }{f_n(u)},$$
où $\inte(C,T)$ est le nombre d'arêtes internes actives dans $(C,T)$. L'équation \eqref{Fintact} montre que cela est bien une distribution de probabilité. On note $I_n$ la variable aléatoire qui compte les arêtes internes actives sous cette distribution. Comme nous l'avons montré à la sous-section \ref{ss:modele}, $I_n$  décrit également le niveau d'une configuration récurrente du tas de sable pour une carte aléatoire duale.

\begin{prop}Soit $(d_{2k})$ une suite de poids qui satisfait les hypothèses du théorème \ref{theobis} (hypothèses dites fortes).  Le nombre moyen d'arêtes internes actives connaît une  transition de phase (très légère) en $u = 0$. En effet, quand $n$ tend vers $+ \infty$,
nous avons 
$$\mathbb E_i (I_n) \sim \kappa_u \, n,$$
avec
$$\kappa_u = \frac{(1+u)\,\phi(\tau)}{\tau - u \phi(\tau)}$$
où $\phi$ est définie par \eqref{eulphi} et $\tau$ est définie  dans le théorème \ref{theorayon}. La fonction $\kappa_u$ est analytique en la variable $u$ sur l'intervalle $[-1,+\infty[$ sauf en $u=0$, où elle est infiniment dérivable : quand $u \rightarrow 0$,
$$\kappa_u = \frac{(1+u)\phi(\kappa/4)}{\kappa/4 - u \phi(\kappa/4)} + O\pare{\exp\pare{- \frac c u}}, $$
où $\kappa/4$ est le rayon de convergence de $\phi$.
\end{prop}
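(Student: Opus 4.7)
L'id�e principale est d'exprimer l'esp�rance comme un quotient logarithmique et d'appliquer les r�sultats asymptotiques des th�or�mes pr�c�dents. D'abord, partant de \eqref{Fintact}, nous remarquons que la d�riv�e logarithmique de $F$ selon $u+1$ encode exactement le nombre moyen d'ar�tes internes actives :
$$(u+1)\,\pd F u(z,u) = \sum_{(C,T)} \inte(T)\,(u+1)^{\inte(T)}\,z^{\face(C)} \prod_{k\geq 1} d_{2k}^{\ \som_{2k}(C)},$$
de sorte que
$$\mathbb E_i(I_n) = \frac{(u+1)\,[z^n]\,\pd F u(z,u)}{f_n(u)}.$$

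Ensuite, nous calculons $\rho_u'$ dans les deux r�gimes. D'apr�s le th�or�me \ref{theorayon}, $\rho_u = \tau - u\phi(\tau)$ o� $\tau = \kappa/4$ pour $u \leq 0$ et $\tau = \tau_u$ satisfait $u\,\phi'(\tau_u) = 1$ pour $u > 0$. Dans les deux cas, en d�rivant $\rho_u$ par rapport � $u$ (avec $\tau' = 0$ dans le premier cas et $1 - u\phi'(\tau) = 0$ dans le second), nous obtenons l'identit� remarquable
$$\rho_u' = \tau'\,(1 - u\phi'(\tau)) - \phi(\tau) = -\phi(\tau),$$
d'o� $-(u+1)\,\rho_u'/\rho_u = (u+1)\,\phi(\tau)/(\tau - u\phi(\tau)) = \kappa_u$. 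Notre objectif est donc de prouver que $[z^n]\,\pd F u(z,u) \sim -n\,\rho_u'/\rho_u \cdot f_n(u)$.

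Pour ce faire, j'appliquerais l'analyse de singularit�s � $\pd F u$. L'observation-clef est que si une s�rie $A(z,u)$ admet un d�veloppement singulier en $z = \rho_u$ de la forme $A(z,u) \sim a(u)\,(1-z/\rho_u)^\alpha\,L(1-z/\rho_u)$ (avec $\alpha \notin \N$ et $L$ une fonction � variation lente), alors la d�rivation par rapport � $u$ du facteur $(1-z/\rho_u)^\alpha$ fournit un terme plus singulier (d'exposant $\alpha - 1$) dont le coefficient dominant est $-\alpha\,a(u)\,\rho_u'/\rho_u$. Apr�s application du th�or�me de transfert, ce m�canisme fait appara�tre le facteur $-n\,\rho_u'/\rho_u$ dans l'asymptotique du coefficient. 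Concr�tement, pour $u > 0$, on utilise le d�veloppement \eqref{posdevF} de $\pd F z$ ; pour $u = 0$, le d�veloppement \eqref{devthetap} de $\theta'$ ; pour $u \in [-1,0[$, le d�veloppement \eqref{nef} de $\pdd 2 F z$. Dans chaque cas, sous les hypoth�ses fortes du th�or�me \ref{theobis} qui garantissent la $\Delta$-analyticit�, le terme dominant de $[z^n]\,\pd F u$ est bien $-n\,\rho_u'/\rho_u \cdot f_n(u) \cdot (1 + o(1))$. Le point d�licat ici est de justifier la d�rivation du d�veloppement singulier terme � terme, notamment dans le cas $u \in [-1,0[$ o� les facteurs logarithmiques compliquent l'analyse ; je m'appuierais sur le fait que $\rho_u$, $\tau_u$ et les constantes du d�veloppement d�pendent analytiquement (ou au moins $C^\infty$) du param�tre $u$, et que la diff�rentiation de l'exponentielle $\rho_u^{-n}$ domine largement les corrections sous-exponentielles.

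Enfin, la transition de phase en $u = 0$ se traite en deux temps. L'analyticit� de $u \mapsto \tau_u$ pour $u > 0$ d�coule de la version analytique du th�or�me des fonctions implicites appliqu�e � l'�quation $u\phi'(\tau) = 1$ (puisque $\phi''(\tau) > 0$), d'o� l'analyticit� de $\kappa_u$ sur $]0,+\infty[$. Sur $[-1,0]$, la formule $\kappa_u = (u+1)\phi(\kappa/4)/(\kappa/4 - u\phi(\kappa/4))$ est rationnelle en $u$, donc manifestement analytique (en remarquant que le d�nominateur $\rho_u$ ne s'annule pas). Il reste � v�rifier le comportement en $0^+$ : l'estim�e \eqref{exprho} du th�or�me \ref{theorayon}  fournit $\tau_u - \kappa/4 = O(\exp(-c/u))$, et un simple d�veloppement de Taylor de la fraction rationnelle $(u+1)\phi(y)/(y - u\phi(y))$ autour de $y = \kappa/4$ transf�re cette estim�e � $\kappa_u$, ce qui donne l'approximation annonc�e. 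L'obstacle principal reste la justification rigoureuse de la d�rivation sous l'�quivalent asymptotique, surtout pour $u < 0$ o� les facteurs $(\ln n)^{-2}$ exigent un contr�le pr�cis des termes sous-dominants du d�veloppement singulier de $\pd F u$.
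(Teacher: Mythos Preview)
Your identification $\kappa_u = -(u+1)\rho_u'/\rho_u$ is a nice observation and is correct. However, the overall route you take diverges significantly from the paper and carries a real gap that you yourself flag.

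The paper never differentiates an asymptotic expansion with respect to the parameter $u$. Instead it differentiates the \emph{exact functional relations}: starting from $R = z + u\,\phi(R)$ one gets $\partial R/\partial u = \phi(R)/(1-u\phi'(R))$, and since $\partial F/\partial z = \theta(R)$ this yields the closed form
\[
\frac{\partial^2 F}{\partial z\,\partial u} \;=\; \frac{\phi(R)\,\theta'(R)}{1-u\,\phi'(R)}
\;=\; \phi(R)\,\frac{\partial^2 F}{\partial z^2}
\;=\; \frac{R-z}{u}\,\frac{\partial^2 F}{\partial z^2}.
\]
From there, for each regime ($u>0$, $u=0$, $u<0$) one applies the already-established singular expansions of $R$ and of $\partial^2 F/\partial z^2$ (e.g.\ \eqref{ner} and \eqref{nef} for $u<0$), plus the transfer theorem. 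No interchange of limit and $u$-derivative is ever needed.

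Your plan, by contrast, rests on the heuristic ``differentiate the asymptotic $f_n(u)\sim c_u\,\rho_u^{-n}n^{-\alpha}L(n)$ in $u$; the $\partial_u(\rho_u^{-n})$ term dominates''. This is intuitively right but is exactly the step whose rigorous justification you admit you do not have, and it is genuinely delicate for $u<0$ where the singular expansion involves $1/\ln(1-z/\rho_u)$ terms whose $u$-derivatives are hard to control uniformly. The paper's route completely avoids this difficulty: once you have the explicit expression above, the singular analysis for $\partial^2 F/\partial z\,\partial u$ is no harder than for $\partial^2 F/\partial z^2$ itself. I would recommend switching to that approach; everything else in your plan (the computation of $\rho_u'$, the treatment of analyticity of $\kappa_u$, the use of \eqref{exprho} for the $u\to 0^+$ estimate) is fine and matches the paper.
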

\begin{proof}
Nous avons
$$\mathbb E_i(I_n) = \sum_{(C,T)} \inte(C,T) \frac{ (u+1)^{\inte(C,T)} }{f_n(u)} = (u+1) \frac{f'_n(u)}{f_n(u)} = (u+1)\frac{[z^{n-1}] \pd {^2 F}{z \partial u} } {n f_n(u) }.$$
En comparant avec \eqref{EcCn}, nous remarquons que pour $u > 0$, $$\mathbb E_i(I_n) = (1 + 1/u) \mathbb E_c(C_n).$$ Ainsi l'équivalence $\mathbb E_i(I_n) \sim \kappa_u n$ quand $u > 0$ provient de la proposition \ref{p:CnSn}. De plus, le développement asymptotique de $\kappa_u$ autour de $0^+$ est déduit du développement \eqref{exprho} de $\tau$. 

Considérons maintenant $u \in [-1,0[$. La série $\pd {^2 F}{z \partial u}$ vaut 
$$ \pd {^2 F}{z \partial u} = \frac {\phi(R)\theta'(R)}{1-u \phi'(R)} = \phi(R) \pdd 2 F z = \frac{R - z} u \pdd 2 F z.$$
En injectant les développements  \eqref{ner} et \eqref{nef} de $R$ et $F$ et en utilisant le théorème de transfert, nous trouvons
$$[z^{n-1}] \pd {^2 F}{z \partial u} \sim \frac{\kappa/4 - \rho} u n^2 f_{n+1}(u).$$
D'après le théorème \ref{theobis}, nous déduisons $\mathbb E_i(I_n) \sim (1+u) (\kappa/4 - \rho) n /\rho$. Comme $\rho$ vaut $ \kappa/4 - u \phi(\kappa/4)$, nous obtenons le résultat annoncé pour $u < 0$.

Il nous reste le cas $u=0$. Nous avons $R = z$. C'est pourquoi 
$n f_n(u) \sim [z^{n-1}]\theta(z)$ et $\pd {^2 F}{z \partial u}(z,0) = \phi(z) \theta'(z)$. Grâce au lemme \ref{l:thetaphi}, nous pouvons appliquer le théorème de transfert, et ainsi trouver le résultat  asymptotique annoncé pour $u=0$.
\end{proof}

\chapter{Comportement asymptotique des cartes forestières régulières eulériennes }
\label{c:2qreg}

Le chapitre précédent étudiait le comportement asymptotique des cartes forestières $4$-eulériennes sous condition d'apériodicité. Peut-on prouver le même phénomène de transition de phase en $u=0$ avec les mêmes régimes asymptotiques si on enlève cette hypothèse d'apériodicité ? Nous allons répondre à cette question par la positive dans le cadre particulier des cartes régulières.
La méthode globale reste la même, mais à cause de la périodicité l'étude sera centrée sur $R^p$ plutôt que sur $R$, où $p$ est un entier supérieur à $2$. Les équations n'auront pas tout à fait la même forme et le difficile théorème de $(u+1)$-positivité  de la page \pageref{Pestpositif} joue un rôle important ici.

Comme dans le chapitre précédent, nous substituons la variable des arêtes $t$ par $1$ (c'est d'autant plus justifié ici qu'il y a une redondance entre la variable des arêtes et  celle des faces) et la variable $u$ qui compte les composantes non racine de la forêt couvrante est un réel fixé supérieur ou égal à $-1$. 

Comme le montre le théorème suivant, nous observons également une transition de phase en $u=0$ pour les cartes forestières régulières eulériennes avec les mêmes régimes asymptotiques, confirmant le caractère universel du comportement observé dans le chapitre précédent.

\begin{theo} Soient $u \geq -1$ et $p$ un entier naturel non nul. Appelons $f_n(u)$ le $n$-ième coefficient de la série génératrice $F(z,u)$ des cartes forestières $2(p+1)$-régulières comptées selon le nombre de faces ($z$) et de composantes non racine ($u$).

Si $m$ n'est pas de la forme $m=pn+2$, alors $f_m(u) = 0$. Sinon il existe deux constantes positives $c_u$ et $\rho_u$ telles que
$$
f_{pn+2}(u) \sim \left\{ \begin{array}{ll}
 c_u \rho_u^{-n}n^{-3} (\ln n)^{-2} & \textrm{si }u \in [-1,0[, \\
c_u \rho_u^{-n}n^{-3} & \textrm{si }u=0, \\
c_u \rho_u^{-n}n^{-5/2} & \textrm{si }u>0.
\end{array} \right.
$$
\end{theo}

La démonstration de ce théorème sera répartie sur tout ce chapitre.

\noindent \textbf{Remarque 1. }Le cas $u=0$ se démontre assez simplement dans ce cas précis. En effet, la relation \eqref{Fz0reg} p. \pageref{Fz0reg} s'écrit ici 
$$F(z,0) = \sum_{n \geq 1} \frac{2(p+1)((2p+1)n)!} {(n-1)! (1 + pn)! (2 + pn)!} z^{2 + pn}.$$
Une simple application de la formule de Stirling montre que
$$f_{pn+2}(0) \sim \frac{p+1}{\pi p^4} \sqrt{2p+1} \pare{\frac {\pare{2p+1}^{2p+1}} {p^{2p}}}^n n^{-3}.$$

\noindent \textbf{Remarque 2. }Comme dans le cas apériodique, le comportement de $f_n(u)$ pour $u$ négatif en $\rho_u^{-n}n^{-3} (\ln n)^{-2}$ est incompatible avec  l'holonomie de $f_n(u)$, d'où le corollaire suivant.

\begin{cor} Pour entier naturel $p$ non nul, la série génératrice $F(z,u)$ des cartes forestières $2(p+1)$-régulières n'est pas holonome.
\end{cor}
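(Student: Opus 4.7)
Le plan est de d�duire le corollaire directement du th�or�me pr�c�dent, en utilisant le fait que le comportement asymptotique obtenu pour $u \in [-1,0[$ est incompatible avec les formes possibles que peuvent prendre les coefficients d'une s�rie holonome. Plus pr�cis�ment, d'apr�s le tableau de la figure \ref{hierar} (copi� de \cite[Fig. VII.1 p. 445]{flajolet-sedgewick}), si une s�rie $f(z)$ � une seule variable est holonome, alors le comportement asymptotique de ses coefficients est n�cessairement de la forme
$$c_n \sim C\, \rho^{-n} e^{P(n^{1/r})} n^\theta \, \ln^\ell(n),$$
o� $\ell$ est un \emph{entier positif}, $r$ un entier positif, $\rho$ et $\theta$ des nombres alg�briques et $P$ un polyn�me.

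Je commencerais par fixer une valeur rationnelle quelconque $u_0 \in \Q \cap [-1,0[$, par exemple $u_0=-1/2$. Le th�or�me principal de ce chapitre donne alors
$$f_{pn+2}(u_0) \sim c_{u_0}\, \rho_{u_0}^{-n}\, n^{-3} \,(\ln n)^{-2}$$
avec $c_{u_0}>0$, et $f_m(u_0)=0$ pour les indices $m$ qui ne sont pas de la forme $pn+2$. Consid�rons la s�rie $g(z) := F(z,u_0)$, dont les coefficients sont  $f_m(u_0)$. Si $g(z)$ �tait holonome, ses coefficients non nuls devraient satisfaire une asymptotique du type ci-dessus (�ventuellement � sous-suites ap�riodiques pr�s, via l'argument de p�riode); or l'exposant $-2$ sur le facteur logarithmique n'est pas un entier positif. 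Cette contradiction prouve que $F(z,u_0)$ n'est pas holonome pour ce $u_0$ fix�.

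Passons ensuite au cas o� $u$ est vu comme une ind�termin�e. Si $F(z,u)$ �tait holonome en tant que s�rie bivari�e, alors elle satisferait en particulier une �quation diff�rentielle lin�aire non triviale en $z$ � coefficients polynomiaux en $z$ et en $u$; en sp�cialisant $u$ en $u_0$, on obtiendrait une �quation diff�rentielle lin�aire non triviale sur $F(z,u_0)$ en $z$ (pour tout $u_0$ �vitant l'ensemble fini des racines du coefficient dominant), donc $F(z,u_0)$ serait holonome en $z$ pour presque tout $u_0 \in [-1,0[$. Cela contredirait le r�sultat du paragraphe pr�c�dent. Le point d�licat, mais d�j� trait� dans le th�or�me principal du chapitre, est d'�tablir rigoureusement le comportement en $(\ln n)^{-2}$; une fois ce comportement acquis, la non-holonomie s'ensuit imm�diatement par un argument purement formel sur les formes possibles des asymptotiques.
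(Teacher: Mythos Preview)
Your proposal is correct and follows essentially the same approach as the paper: the corollary is presented as an immediate consequence of the asymptotic behaviour $f_{pn+2}(u)\sim c_u\,\rho_u^{-n}n^{-3}(\ln n)^{-2}$ for $u\in[-1,0[$, which is incompatible with holonomy (the paper states this in Remarque~2 just before the corollary, invoking the same reference \cite{flajolet-sedgewick}). Your treatment of the bivariate case via specialisation is a useful addition that the paper leaves implicit here but spells out in the analogous Corollary~\ref{cor:pasholonome}.
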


\section{Transformations des équations}
Soit $p \geq 1$. D'après la proposition \ref{regulierecentral} p. \pageref{regulierecentral}, la série génératrice $F$ des cartes forestières $2(p+1)$-régulières satisfait le système
$$\pd F z (z,u) = \theta(R),\quad R = z + u \phi(R),$$
où les séries $\theta$ et $\phi$ sont définies par  :
$$\theta(x) = 2 (p+1) \,\sum_{i \geq 1}  \frac{((2p+1)i)!}{ (i-1)! (pi+1)!^2 } x^{pi+1}$$ et \begin{equation}
\phi(x) = \sum_{i \geq 1} \frac{((2p+1)i)!}{ i! (pi)! (pi+1)!  }  x^{pi+1}. \label{phi2q}
\end{equation}

Comme nous pouvions le prévoir, nous voyons apparaître une période $p$ dans chacune des séries. La solution la plus pratique pour contourner ce problème est de considérer un nouveau jeu de variables $v = x^p$ et $\zeta = z^p$ (lorsque cela est possible) afin de rendre les séries apériodiques. 

Remarquons que si $m$ n'est pas de la forme $pn+1$, avec $n \geq 0$, alors le $m$-ième coefficient de $R$ est nul. Cela peut se prouver par induction grâce à la relation $R = z + u \phi(R)$. Cela implique en outre que $R^p$ est une série formelle apériodique en la variable $\zeta = z^p$  (contrairement à $R$). 
On définit alors la série $P$ comme 
\begin{equation} \label{defP} P (\zeta) = R^p(\zeta^{\frac 1 p}).\end{equation}
On peut alors poser
\begin{equation} \label{defpsi}
\psi(v) = \sum_{i \geq 1} \frac{((2p+1)i)!}{ i! (pi)! (pi+1)!  }  v^{i} = v^{-1/p} \phi\pare{v^{1/p}}
\end{equation}
de sorte que 
\begin{equation} \label{rpsip}
R = z + u R\,\psi(P(z^p)).
\end{equation}
En élevant \eqref{rpsip} à la puissance $p$ après avoir isolé $z$, nous obtenons une équation satisfaite par la série $P$ seule :
\begin{equation} \label{ep}
P (1 - u \psi(P))^p  = \zeta.
\end{equation}

Pour résumer, nous nous sommes ramenés à une série $P$ apériodique qui satisfait une équation fonctionnelle légèrement plus complexe que celle du chapitre précédent. Néanmoins l'approche employée sera similaire.

\section{Série  $\boldsymbol \psi$ et cons\oe{}urs}

Cette section étudie la série $\psi$ et autres séries apparentées en les traduisant en termes de séries hypergéométriques.

Rappelons que $\psi$ est définie par \eqref{defpsi}. Grâce à la formule de Stirling, nous pouvons voir que les coefficients de cette série est asymptotiquement équivalent à $\lambda^{-i}/i^2$ à une constante multiplicative près, où
\begin{equation} \label{defkappa}
\lambda = \frac {p^{2p}}{(2p+1)^{2p+1}}.
\end{equation}
Par conséquent la série $\psi$ admet $\lambda$ comme rayon de convergence, où elle converge. Par contre, sa dérivée $\psi'$ diverge en $\lambda$.

Appelons 
 $_{m+1}F_m \left(a_1,a_2,\dots,a_{m+1}; b_1,b_2,\dots,b_{m}; t \right)$  la série hypergéométrique de paramètres $(a_i)_{i = 1,\dots,m+1}$ et $(b_i)_{i = 1,\dots,m}$, à savoir
$$ _{m+1}F_m \left(a_1,a_2,\dots,a_{m+1}; b_1,b_2,\dots,b_{m+1}; t \right) = \sum_{n \geq 0} \frac  {\prod_{i = 1}^{m+1} (a_i)_n} {\prod_{i = 1}^{m} (b_i)_n} \frac{t^n} {n!},$$
où $(a)_n$ désigne la factorielle ascendante $a\,(a+1)\dots(a+n-1)$. Le comportement singulier de telles fonctions a été étudiée par Bühring dans \cite{buhring}.
 Observons que\footnote{On peut même voir $\psi(v)$ comme une série hypergéométrique de la forme $_{2p}F_{2p-1}$ en simplifiant par $(1)_n$ au numérateur et au dénominateur, mais cette simplification n'est pas nécessaire, elle complique au contraire les calculs.}
$$\psi(v) =\  _{2p+1}\hspace{-1pt}F_{2p} \left(\frac 1 {2p+1},\frac 2 {2p+1},\dots,1; \frac 1 p, \frac 2 p, \frac 2 p,  \frac 3 p,  \frac 3 p, \dots, 1,1,1 + \frac 1  p ; \frac v {\lambda}\right) - 1.$$
La série $\psi(v)$ est donc analytiquement prolongeable sur le plan complexe privé de la demi droite $[\lambda,+ \infty[$ et son comportement en $v = \lambda$ se traduit grâce à \cite[Théorème 3]{buhring} par
$$
\psi(\lambda - \varepsilon) \underset{\varepsilon \rightarrow 0}= \psi(\lambda) + \frac {\prod_{i = 1}^{p} \Gamma(i/p) \prod_{i = 2}^{p+1} \Gamma(i/p)} {\prod_{i = 1}^{2p+1} \Gamma(i/(2p+1))}  \varepsilon \ln \varepsilon + O(\varepsilon).
$$
Or la formule dite de multiplication concernant la fonction $\Gamma$ indique que
$$\Gamma\left(z\right)\Gamma\left(z + \frac 1 m\right) \cdots \Gamma\left(z + \frac {m-1} m\right) = (2 \pi)^{(m-1)/2} \, m^{1/2 - mz} \, \Gamma(mz).$$
Quelques calculs donnent alors le développement :
\begin{equation} \label{devpsi}
\psi(\lambda - \varepsilon) \underset{\varepsilon \rightarrow 0}= \psi(\lambda) + \frac 1 { 2 \pi} \frac {\sqrt{2p+1}} {p^2} \varepsilon \ln \varepsilon + O(\varepsilon).
\end{equation}
\textbf{Remarque.} \`A notre connaissance, il n'existe pas d'expression simple pour $\psi(\lambda)$ (nous savons seulement qu'il est fini et positif), mis à part pour le cas tétravalent.

Nous aurons également besoin de connaître le comportement singulier en $\lambda$ des séries
\begin{equation} \label{stars}
\theta^*(v) = 2(p+1)\,\sum_{i \geq 1}  \frac{((2q-1)i)!}{ (i-1)! (pi)! (pi+1)! } v^{i}, \quad \psi^*(v) = \sum_{i \geq 1} \frac{((2p+1)i)!}{ i! (pi)!^2  }  v^{i}.
\end{equation}
De manière similaire, nous obtenons les approximations (toujours d'après \cite{buhring})
\begin{equation} \label{st1}
\theta^*(\lambda-\varepsilon)  \underset{\varepsilon \rightarrow 0}= - \frac {(p+1) \sqrt{2p+1}} {\pi p^2} \ln \varepsilon + A + O(\varepsilon \ln \varepsilon),
\end{equation}
\begin{equation} \label{st2}
\psi^*(\lambda-\varepsilon) \underset{\varepsilon \rightarrow 0}= - \frac {\sqrt{2p+1}} {2 \pi p} \ln \varepsilon + B + O(\varepsilon \ln \varepsilon),
\end{equation}
où $A$ et $B$ sont deux réels également sans expression simple connue.

\section{Quand $\boldsymbol{ u > 0 }$}

\begin{prop} Fixons $u > 0$. La série $P$ définie par \eqref{defP} est analytique sur un $\Delta$-domaine de rayon $\rho$, où $\rho$ est le rayon de convergence de $P$. Le développement singulier de $P$ en $\rho$ est égal à
\begin{equation} \label{qdevr}
P(\zeta)  \underset{z \rightarrow \rho}= \tau - \gamma \sqrt{1 - \frac{\zeta} \rho } + O\left(1 - \frac{\zeta} \rho \right)
\end{equation}
où  $\tau$ et $\gamma$ sont deux réels positifs.

La série $F^*(\zeta) = {\zeta^{- \frac 1 p} } \pd F z (\zeta^{\frac 1 p})$ est elle aussi analytique sur un $\Delta$-domaine de rayon $\rho$ avec une singularité de type racine.
Nous déduisons par le théorème de transfert l'équivalent asymptotique
\begin{equation} \label{fpn}
f_{pn+2}(u) \sim \frac 1 {2 p \sqrt \pi} \,\gamma'\, \rho^{-n} \, n^{-\frac 5 2},
\end{equation}
où $\gamma'$ est une constante positive.
\end{prop}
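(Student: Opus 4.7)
The plan is to cast \eqref{ep} as a fixed-point equation of the form $P = G(\zeta,P)$ with $G(\zeta,w) = \zeta(1-u\psi(w))^{-p}$, and then apply the smooth implicit function schema (Definition VII.4 and Theorem VII.3 of \cite{flajolet-sedgewick}), exactly in the spirit of the $u>0$ case treated in the previous chapter for $R$. Since $u>0$ and $\psi$ has nonnegative Taylor coefficients with $\psi(0)=0$, the expansion of $(1-u\psi(w))^{-p}$ in powers of $w$ has nonnegative coefficients, so $G(\zeta,w)$ is an admissible ``positive'' analytic system on a neighbourhood of $(0,0)$; analyticity is guaranteed on the polydisc $\{|\zeta|<+\infty, |w|<\lambda\}$ because $\psi$ is analytic on $\{|w|<\lambda\}$ by the discussion following \eqref{defpsi}. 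Aperiodicity of $P$ as a power series in $\zeta$ is immediate: the linear term $[\zeta]P = 1$ since $\Omega'(0)=1$ in \eqref{ep}, so $P$ has a nonzero coefficient at every positive integer power of $\zeta$.

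The key technical step is locating the characteristic point. I will work with the inverse form $\zeta = \Omega(y)$ where $\Omega(y):=y(1-u\psi(y))^p$, and seek $\tau\in(0,\lambda)$ with $\Omega'(\tau)=0$. Factoring gives $\Omega'(y)=(1-u\psi(y))^{p-1} h(y)$ with $h(y):=1-u\psi(y)-puy\psi'(y)$. Since $\psi$ and $\psi'$ have positive Taylor coefficients, $h$ is strictly decreasing on $[0,\lambda)$; and $h(0)=1>0$, while $h(y)\to-\infty$ as $y\to\lambda^-$, because $\psi(\lambda)<+\infty$ (from \eqref{devpsi}) but $\psi'(y)$ diverges logarithmically at $\lambda$ (by differentiating the singular expansion \eqref{devpsi}). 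The intermediate value theorem then yields a unique $\tau\in(0,\lambda)$ with $h(\tau)=0$, and automatically $1-u\psi(\tau)=pu\tau\psi'(\tau)>0$, so $\Omega'(\tau)=0$ and $\rho:=\Omega(\tau)>0$. Checking that this $(\rho,\tau)$ satisfies the characteristic system of the schema is a short computation. The quadratic condition $\Omega''(\tau)<0$, needed for the square-root singularity, follows from $\Omega''(\tau)=(1-u\psi(\tau))^{p-1}h'(\tau)$ together with $h'(\tau)<0$.

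Theorem VII.3 of \cite{flajolet-sedgewick} then delivers $\Delta$-analyticity of $P$ at $\rho$ together with \eqref{qdevr}, where the explicit constant reads $\gamma=\sqrt{-2\rho/\Omega''(\tau)}>0$ (this is just the parametric inversion of $\zeta-\rho=\tfrac12\Omega''(\tau)(P-\tau)^2+O((P-\tau)^3)$ combined with the requirement $P(\zeta)\nearrow\tau$ along the real axis). To transfer this to $F^*$, I use $\partial_z F = \theta(R)$ and the identity $R(z)=P(z^p)^{1/p}$, which gives
\[
F^*(\zeta) \;=\; \zeta^{-1/p}\,\theta\bigl(P(\zeta)^{1/p}\bigr).
\]
Since $\tau<\lambda$, the point $\tau^{1/p}$ lies strictly inside the disk of convergence of $\theta$, so $\theta$ is analytic at $R(\rho^{1/p})=\tau^{1/p}$; composing the square-root expansion of $P^{1/p}$ with the Taylor expansion of $\theta$, and multiplying by the analytic factor $\zeta^{-1/p}$, yields a $\Delta$-analytic continuation of $F^*$ with singular part
\[
F^*(\zeta) \;=\; \alpha \;-\; \gamma'\sqrt{1-\zeta/\rho}\;+\;O\!\left(1-\zeta/\rho\right),
\]
with $\gamma'>0$ (the positivity is inherited from $\theta'(\tau^{1/p})>0$ and $\gamma>0$).

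The final step is the standard transfer theorem applied to $F^*$. The identification $F^*_n=(pn+2)\,f_{pn+2}(u)$ is read off directly from the definition of $F^*$ (indeed $\partial_zF(z)=\sum_{n\ge0}(pn+2)f_{pn+2}(u)z^{pn+1}$ by periodicity), and the standard $n^{-3/2}$ transfer gives $F^*_n\sim\frac{\gamma'}{2\sqrt\pi}\rho^{-n}n^{-3/2}$, whence \eqref{fpn}. The main obstacle, in my view, is not the Flajolet-Sedgewick machinery itself, which applies essentially off the shelf once $G$ and the characteristic point are in place, but rather cleanly handling the composition $R=P^{1/p}$ at the singular level—one has to verify that the fractional power does not interfere with the $\Delta$-domain (it does not, because $P(\rho)=\tau>0$) and that the resulting constant in \eqref{fpn} is genuinely positive, which requires the strict inequality $\tau<\lambda$ established in the characteristic-equation step.
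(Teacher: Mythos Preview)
Your proof is correct and takes essentially the same route as the paper: both apply the Flajolet--Sedgewick singular inversion machinery to $\zeta = P(1-u\psi(P))^p$ (the paper cites Theorem~VI.6 with $\omega(v)=(1-u\psi(v))^{-p}$ rather than VII.3, but these are the same mechanism here), locate the unique characteristic point $\tau\in(0,\lambda)$ via the identical equation $u(\psi(\tau)+p\tau\psi'(\tau))=1$, and then transfer the square-root singularity to $F^*$. One cosmetic difference: the paper writes $F^*=h(P)$ with $h(v)=v^{-1/p}\theta(v^{1/p})(1-u\psi(v))^{-p}$, a function analytic on a full disc around $0$, which sidesteps the fact that your factor $\zeta^{-1/p}$ is not itself analytic at $\zeta=0$ --- though this is harmless in your argument since $F^*$ is already known to be a convergent power series on $|\zeta|<\rho$.
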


\begin{proof}
Nous commençons par la série $P$. On rappelle que cette série satisfait l'équation \eqref{ep}. Nous souhaitons appliquer le théorème VI.6 de \cite[p. 404-405]{flajolet-sedgewick} à la fonction 
$$ \omega : v \mapsto (1 - u \psi(v))^{-p}.$$
Elle satisfait trivialement la condition $(\boldsymbol{H_1})$. Quant à la condition  $(\boldsymbol{H_2})$, il faut vérifier qu'il existe un unique $\tau$ réel positif dans le disque de convergence ouvert  de $\omega$ tel que $\omega(\tau)-\tau\omega'(\tau)=0$. Soit $\lambda'$ le rayon de convergence de la série $\omega$. S'il existe $t \in \, ]0,\lambda[$ tel que $1 - u \psi(t) = 0$ (dans ce cas $t$ est unique car $\psi$ est strictement croissante), alors $\lambda'=t$. Sinon, $\lambda'=\lambda$.

En outre, un calcul rapide montre que 
$$\omega(v)-v\omega'(v)= \left(1 - u\,(\psi(v) + p\,v\,\psi'(v))\, \right)
(1-u\,\psi(v))^{-p-1}.$$
Posons $g(v) = \psi(v) + p\,v\,\psi'(v)$. Le $i$-ième coefficient de $g(v)$, à savoir $\frac{((2p+1)i)!}{ i! (pi)! (pi)!}$, est  équivalent à $\lambda^{-i}/i$. La série $g$ diverge donc en $\lambda$. Comme cette série est strictement croissante, il existe un unique $\tau \in \  ]0,\lambda[$ tel que $g(\tau) = 1/u$. Mais $g > \psi$ sur l'intervalle $]0,\lambda[$, d'où l'inégalité $1 - u\psi(\tau) > 0$. Par conséquent $\tau$ est  strictement inférieur à $\lambda'$ et $\omega(\tau)-\tau\omega'(\tau)=0$. La condition $(\boldsymbol{H_2})$ est bien vérifiée, on peut appliquer le théorème VI.6 de \cite{flajolet-sedgewick}.

Le réel $\rho = \tau/\omega(\tau) = \tau (1 - u \psi(\tau))^p $ est alors le rayon de convergence de $P(\zeta)$ et le développement asymptotique de $P(\zeta)$ au voisinage de $\rho$ est bien décrit par \eqref{qdevr} avec 
$$ \gamma = \sqrt 2\frac{1-u \psi(\tau)} {\sqrt{up( (p+1)u\psi'(\tau)^2 + (1- u \psi(\tau)) \psi''(\tau))} }.$$ De plus, comme $\omega$ est apériodique, la fonction $P(\zeta)$ est analytique sur un $\Delta$-domaine\footnote{Ce n'est pas écrit explicitement dans l'énoncé du théorème VI.6 mais l'analyticité sur un $\Delta$-domaine est mentionnée p. 406.} de rayon $\rho$. 

Continuons avec $\pd F z = \theta(R)$. Remarquons dans un premier temps que \eqref{rpsip} permet d'exprimer $R$ en fonction de $P$. Par conséquent, l'égalité  $\pd F z = \theta(R)$ se réécrit sous la forme $F^*(\zeta) = h(P(\zeta))$ où 
\begin{equation} \label{fstar}  \pd F z (z)  = z F^*(z^p)
\end{equation}
et \begin{equation} \label{fache} h(v) =   {v^{-\frac 1 p} }\,\theta(v^{\frac 1 p})\,(1 - u \psi(v))^{-p}.
\end{equation}
D'une part, la série $h$ est analytique sur un disque ouvert de centre $0$ et de rayon $\lambda'$. D'autre part, il existe un $\Delta$-domaine de rayon $\rho$ dans lequel $P$ est analytique et strictement majoré (en module) par $\lambda'$ car $P$ est à coefficients positifs et $P(\rho) = \tau < \lambda'$. Par conséquent,  $F^* = h(P)$ est analytique sur le même $\Delta$-domaine et un développement de Taylor de $h$ en $\tau$ donne le comportement singulier de $F^*$ en $\rho$ : 
$$F^*(\zeta) = h(\tau) - \gamma \,h'(\tau)  \sqrt{1 - \frac{\zeta} \rho } + O\left(1 - \frac{\zeta} \rho \right).$$
On peut alors appliquer le théorème de transfert à $F^*$ \cite[p. 393]{flajolet-sedgewick} pour obtenir le comportement asymptotique de son $n$-ième coefficient, qui est $(pn+2)f_{pn+2}$. L'équivalent obtenu est donné par \eqref{fpn} avec $\gamma' = \gamma\, h'(\tau)$. \end{proof}

\section{Quand $\boldsymbol{ u < 0 }$}

\begin{prop} Fixons $ u \in \, [-1,0[$. La série $P$ définie par \eqref{defP} a pour rayon de convergence \mbox{$\rho = \lambda(1 - u \psi(\lambda))^p$}, où $\psi$ est définie par \eqref{defpsi} et  $\lambda$  par \eqref{defkappa}. Elle est analytique sur un $\Delta$-domaine de rayon $\rho$ et son développement singulier est 
\begin{equation} \label{Pneg}
P(\zeta) - \lambda \underset{\zeta \rightarrow \rho }\sim -\frac{2\, \pi \, p \,\rho}{\lambda \sqrt{2p+1} \,  u} \frac{1 - \zeta/\rho}{\ln(1 - \zeta/\rho)}. 
\end{equation}
De même, la série $F^{**}(\zeta) = \pdd 2 F z (\zeta^{1/p})$ a pour rayon de convergence $\rho$, elle est analytique sur un  $\Delta$-domaine de rayon $\rho$ et elle satisfait localement
\begin{equation} \label{devattendu}
F^{**}(\zeta) +  \frac {2(p+1)}{pu} \underset{\zeta \rightarrow \rho }\sim \frac { 4 \pi (p+1) \left( 1 - u \psi(\lambda) \right)}  {\sqrt{2p+1} u^2}  \frac 1 {\ln \left(1 - \zeta/\rho\right)}.
\end{equation}
Nous en déduisons par le théorème de transfert l'équivalent asymptotique
\begin{equation} \label{devfinal}
f_{pn+2}(u) \sim \frac { 4 \pi (p+1) \left( 1 - u \psi(\lambda) \right)}  { \sqrt{2p+1} p^2 u^2}  \frac{ \rho^{-n}} {n^{3} \ln^2 n }.
\end{equation}
\end{prop}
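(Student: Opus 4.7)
The plan is to mirror the strategy developed in Chapter~6 for the $4$-Eulerian case (Propositions~6.4.1 and~6.5.1), but working with the aperiodic substitute $P(\zeta)=R^p(\zeta^{1/p})$ in place of $R$, and exploiting the hypergeometric information on $\psi$, $\psi^*$, $\theta^*$ collected in \eqref{devpsi}--\eqref{st2}. A crucial preliminary identity is $\psi(v)+pv\,\psi'(v)=\psi^*(v)$, which one checks directly on coefficients from \eqref{defpsi} and \eqref{stars}. It provides the factorisation
\[
\Omega'(v)=(1-u\psi(v))^{p-1}(1-u\psi^*(v)),\qquad\Omega(v):=v(1-u\psi(v))^p,
\]
which is strictly positive on $[0,\lambda[$ for $u<0$; Corollary~\ref{c:fi} applied to $\Omega(P)=\zeta$ then produces an analytic branch of $P$ on $[0,\rho[$ with $\rho=\Omega(\lambda)=\lambda(1-u\psi(\lambda))^p$. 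Next, Theorem~\ref{Pestpositif} (applied with $q=p+1$ and $n=p$) ensures that $(R^p-z^p)/u$ has non-negative coefficients; since $R^p$ is automatically a power series in $\zeta=z^p$, this says that $(P-\zeta)/u$ has non-negative coefficients in $\zeta$, and Pringsheim's theorem forces the radius of convergence of $P$ to be exactly $\rho$.

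Second, I would feed the local equation $(\lambda-w)(1-u\psi(\lambda-w))^p=\zeta$, with $w=\lambda-P$, into the log-inversion theorem (Theorem~\ref{loginversion}). Plugging in the expansion \eqref{devpsi} of $\psi$ yields
\[
\rho-\zeta=(1-u\psi(\lambda))^p\,w+\frac{u\sqrt{2p+1}\,\lambda(1-u\psi(\lambda))^{p-1}}{2\pi p}\,w\ln w+O(w),
\]
whose dominant term is $-Cw\ln w$ with $C>0$ since $u<0$. Because $\psi$ extends analytically to $\mathbb{C}\setminus[\lambda,+\infty[$, this expansion is valid on a complex sector $D_{s,\alpha}$ with $\alpha>\pi/2$, and the log-inversion theorem furnishes an analytic inverse on some $D_{\rho',\beta}$; its asymptotic $\Upsilon(y)\sim-y/(C\ln y)$, combined with $\ln(\rho-\zeta)\sim\ln(1-\zeta/\rho)$, delivers the singular expansion \eqref{Pneg}.

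Third, to upgrade from a chick-shaped domain (the output of the second step, as in Proposition~\ref{p:neg}) to a genuine $\Delta$-domain of radius $\rho$, I would replicate the argument of Proposition~\ref{p:lfort}: use the $(u+1)$-positivity of $(P-\zeta)/u$ to bound $|P(\zeta)|$ on the closed disk $|\zeta|\le\rho$, show that $P(\zeta)\notin[\lambda,+\infty[$ whenever $\zeta\neq\rho$ satisfies $|\zeta|\le\rho$ (the imaginary-part argument transfers almost verbatim, via the bound $|\mathcal P'(y)|\le-1/u$ on $\mathcal P:=(P-\zeta)/u$ inherited from positivity of its coefficients), and then apply the analytic implicit function theorem at every boundary point $\mu\neq\rho$ of modulus $\rho$, whose non-degeneracy is guaranteed by $1-u\psi^*(P(\mu))\neq 0$. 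Using $\theta'(x)=\theta^*(x^p)$ together with $\psi+pv\psi'=\psi^*$, the identity $\pdd{2}{F}{z}=\theta'(R)/(1-u\phi'(R))$ simplifies to $F^{**}(\zeta)=\theta^*(P(\zeta))/(1-u\psi^*(P(\zeta)))$, so $F^{**}$ inherits $\Delta$-analyticity from $P$. Substituting the expansions \eqref{st1}--\eqref{st2} with $L:=\ln(\lambda-P(\zeta))\sim\ln(1-\zeta/\rho)$ into this rational fraction and expanding in $1/L$ produces the leading constant $-2(p+1)/(pu)$ and the $1/\ln(1-\zeta/\rho)$ correction displayed in \eqref{devattendu}. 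The transfer theorem then yields \eqref{devfinal} via $[\zeta^n]F^{**}=(pn+2)(pn+1)\,f_{pn+2}(u)$.

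The main obstacle will be the third step, the $\Delta$-analyticity of $P$: it requires controlling $P$ simultaneously on the whole circle $|\zeta|=\rho$, ensuring that $P(\zeta)$ stays off the cut of $\psi$ while the implicit equation stays non-degenerate. The heavy positivity result Theorem~\ref{Pestpositif} is exactly the tool that makes this control possible, which is why the periodic setting genuinely requires that theorem rather than the lighter positivity argument of Section~\ref{ss:arbresforestiers}.
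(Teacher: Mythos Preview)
Your proposal is correct and follows essentially the same route as the paper: apply Corollary~\ref{c:fi} to $\Omega(P)=\zeta$, invoke Theorem~\ref{Pestpositif} (with $q=p+1$, $n=p$) plus Pringsheim to locate the radius, feed \eqref{devpsi} into the log-inversion theorem for the singular expansion of $P$, then promote to a $\Delta$-domain via the imaginary-part argument of Lemma~\ref{Pevite}/Proposition~\ref{p:lfort}, and finally exploit the identity $F^{**}=\theta^*(P)/(1-u\psi^*(P))$ together with \eqref{st1}--\eqref{st2}. Your explicit factorisation $\Omega'(v)=(1-u\psi(v))^{p-1}(1-u\psi^*(v))$ and the clean identification $\phi'(x)=\psi^*(x^p)$ make the argument slightly more transparent than the paper's presentation, but the architecture is identical.
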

\begin{proof}
Nous commençons encore une fois par l'étude de la série $P(\zeta)$. L'équation \eqref{ep} se lit \mbox{$\Omega(P) = \zeta$}, où $\Omega(v) = v (1 - u \psi(v))^p$. On voit que $\Omega(0) = 0$ et $\Omega'(0) = 1 > 0$ de sorte que le corollaire \ref{c:fi} p. \pageref{c:fi} peut être appliqué, où $\zeta$ joue le rôle de $z$ et $P$ celui de $Y$.
 Soient $\omega$, $\tau$ et $\rho$ définis dans le corollaire. Le réel $\omega$ est le rayon de convergence de $\psi$, désigné par $\lambda$ et donné par \eqref{defkappa}. De plus, comme $-u > 0$, la fonction $\Omega$ est clairement strictement croissante sur $[0,\lambda[$. Donc $\Omega'$ reste strictement positive sur cet intervalle. On a alors $\tau = \lambda$. De plus, le point $(5)$ du corollaire montre que 
$$\rho = \Omega(\lambda) = \lambda(1 - u \psi(\lambda))^p.$$
Toujours d'après le corollaire \ref{c:fi}, la fonction $P$ admet un prolongement analytique  le long de l'intervalle $[0,\rho[$, et elle y est croissante. De plus, l'équation \mbox{$P (1 - u \psi(P))^p = \zeta$} est vraie sur cet intervalle.

D'après le théorème \ref{Pestpositif} p. \pageref{Pestpositif}, les coefficients de la série $(R^p-z^p)/u$ sont positifs (la série $R$  du théorème \ref{Pestpositif} avec $q = p + 1$ est bien égale à la série $R$ de ce chapitre  -- on peut notamment comparer \eqref{uren} et \eqref{phi2q} pour s'en convaincre). D'après la relation~\eqref{defP}, la série $(P-\zeta)/u$ admet également des \mbox{coef\-ficients} positifs. Posons $$\mathcal P(\zeta) = \frac {P(\zeta)-\zeta} u.$$ Comme $P$, la série $\mathcal P$ admet un prolongement analytique sur $[0,\rho[$. Donc d'après le théorème de Pringsheim, le rayon de convergence de $\mathcal P$, qui est le même que celui de $P$, est au moins égal à $\rho$.

\textbf{\`{A} partir de ce point, nous reprenons les points de démonstration du chapitre précédent. De nombreux passages ont été recopiés (mis à part à la fin de la démonstration).}

Nous nous intéressons maintenant  au comportement de $P$ au voisinage de $\rho$. S'il est singulier, nous aurons montré que $\rho$ est effectivement le rayon de convergence de $P$. L'approximation \eqref{devpsi} de $\psi$ en $\lambda$ nous indique que
$$
\Omega(\lambda - \varepsilon) = 
(\lambda-\varepsilon) \left( 1 - u \psi(\lambda) - \frac u { 2 \pi} \frac {\sqrt{2p+1}} {p^2} \varepsilon \ln \varepsilon + O(\varepsilon) \right)^p 
 = \rho - \frac {u \lambda \sqrt{2p+1}} {2 \pi p} \varepsilon \ln \varepsilon + O(\varepsilon).
$$
On peut donc appliquer le théorème de log-inversion \ref{loginversion} p. \pageref{loginversion} avec $\psi(z) = \rho - \Omega(\lambda - z)$,
$c = - (u \lambda \sqrt{2p+1})/ (2 \pi p) > 0$, $\alpha = \pi$
et $\beta = 3 \pi / 4$ : il existe deux réels strictement positifs $r$ et $r'$ et une fonction $\Upsilon$ analytique sur le domaine \mbox{$D_{r',\alpha} = \ens{|z| < r'\textrm{ et } |\Arg z| < 3 \pi / 4 }$} telle que 
$$\rho - \Omega(\lambda - \Upsilon(y)) = y.$$
De plus, $\Upsilon(y)$ est le seul antécédent de $y$ par la fonction $z \mapsto\rho - \Omega (\lambda - z)$ qui se situe dans \mbox{$D_{r,\pi} = \ens{|z| < r\textrm{ et } |\Arg z| < \pi }$}. Or l'équation \eqref{ep} se lit $$\rho - \Omega(\lambda - (\lambda - P(\rho - y))) = y$$ et est vérifiée pour les petites valeurs positives de $y$. Donc pour ces mêmes valeurs de $y$, nous avons $\Upsilon(y) = \lambda - P(\rho - y)$. Si on réécrit cette égalité selon la variable $\zeta$, on obtient 
\begin{equation}\label{Pupsilon}
P(\zeta) = \lambda - \Upsilon(\rho - \zeta),\end{equation}
qui est vraie lorsque $\zeta$ est réel et proche de $\rho^-$. Ainsi $P$ est analytiquement prolongeable  sur $\ens{ |\zeta - \rho| < r'\textrm{ et } |\Arg (\zeta-\rho)| > \pi / 4}$. La conclusion du \ref{loginversion} nous donne le développement asymptotique \eqref{Pneg}.
Par ailleurs, nous avons prouvé que $\rho$ est le rayon de convergence de $P$.

Nous voulons maintenant prouver que $R$ est analytique sur un $\Delta$-domaine de rayon $\rho$. Pour cela, il nous reste à prouver que $P$ n'a pas de singularité autre que $\rho$ sur le cercle de convergence. Considérons $\mu$ tel que $|\mu| = \rho$ et $\mu \neq \rho$. La série \mbox{$\mathcal P = (P - \zeta)/u$} converge en $\mu$ car $\mathcal P$ a des coefficients positifs et $\mathcal P(\rho)$ est fini. Par conséquent, $P$ converge également en $\mu$. 

Prolongeons l'égalité $\Omega(P) = \zeta$ sur le disque de convergence afin d'appliquer le théorème des fonctions implicites en $\mu$. La série $\Omega$ est analytique sur le plan complexe privé de la demi-droite $[\rho,+\infty[$. Donc si on prouve le lemme suivant, la fonction $\Omega(P)$ sera définie et analytique sur le disque de convergence et nous pourrons effectivement prolonger l'égalité précédente. 

\begin{lem} \label{Pevite}
Pour tout $\zeta \in\, \ens{ |\zeta| \leq \rho}$ différent de $\rho$, nous avons $P(\zeta) \notin [\lambda,+ \infty[$.
\end{lem}
\begin{proof}La propriété est vraie sur l'intervalle $[0,\rho[$ car $P$ est croissante sur cet intervalle et vaut $\lambda$ en $\rho$. Sur $[-\rho,0[$, la fonction $P$ est réelle (voir lemme \ref{l:valpos} p. \pageref{l:valpos}) et continue. De plus, $P(0) = $. Donc si $P([-\rho,0[)$ n'est pas inclus dans $[\lambda,+ \infty[$, alors il existe $t \in \, [-\rho,0[$ tel que $P(t) = \lambda$. Considérons $t$ maximal pour cette propriété. Ainsi, sur un voisinage complexe de l'intervalle $]t,0]$, la fonction $P$ est à valeurs dans $\C \backslash [\lambda,+\infty[$ de sorte que la relation $\Omega(P) = \zeta$ y est vraie. En particulier, nous pouvons la différentier :
$$P'(\zeta) = \frac 1 {\Omega'(P(\zeta))}.$$
Comme $\Omega'$ est analytique sur $]t,0]$, on a donc $P' \neq 0$ sur cet intervalle. En outre, \mbox{$P'(0) = 1$}. Or $P(t) = \lambda > P(0) = 0$. Donc $P'$ doit s'annuler sur $]t,0[$, ce qui contredit ce que nous avons dit ci-dessus. 

Maintenant supposons que $\zeta$ n'est pas réel. Nous allons prouver que $P(\zeta)$ n'est pas réel non plus. D'une part,
\begin{equation} \label{dfgh}
|\Ima P(\zeta)| = |\Ima (\zeta + u \mathcal P(\zeta))| \geq |\Ima \zeta| + u \, |\Ima \mathcal P(\zeta)|.
\end{equation}
D'autre part, comme $ \mathcal P (\Ree \zeta)$ est réel,
$$
|\Ima \mathcal P (\zeta)| = |\Ima (\mathcal P(\zeta) -  \mathcal P (\Ree \zeta))| \leq |\mathcal P(\zeta) -  \mathcal P (\Ree \zeta)|.
$$
Donc d'après l'inégalité des accroissements finis,
on a 
\begin{equation} \label{fghj}
|\Ima \mathcal P (\zeta)| < |\zeta - \Ree \zeta| \max_{y \in [\Ree \zeta, \zeta]} | \mathcal P'(y) | \leq | \Ima z | \max_{|y| \leq \rho} | \mathcal P'(y)|,
\end{equation}
où l'inégalité stricte provient du fait que $\mathcal P'$ n'est pas constante sur $[\Ree \zeta, \zeta]$. Mais $\mathcal P'$ est une série entière à coefficients positifs, donc pour $|y| \leq \rho$,
\begin{equation}\label{ghjk}
|\mathcal P'(y)| \leq \mathcal P'(\rho) = \frac 1 u(P'(\rho) - 1)
= \frac 1 u \left(\lim_{t \rightarrow \rho} \frac 1 {\Omega'(P(t))} - 1\right) = -\frac 1 u,
\end{equation}
car $\Omega'(y)$ diverge vers $+\infty$ quand $y$ tend vers $\lambda$.
(En effet, $\psi'$ apparaît dans la dérivée de $\Omega'$ et cette série diverge en $\lambda$.) En revenant à \eqref{fghj},  nous trouvons \mbox{$|\Ima \mathcal P(\zeta)| < -|\Ima \zeta|/u$}, ce qui, combiné à \eqref{dfgh}, donne $|\Ima P(\zeta)| > 0$.
\end{proof}

Ainsi l'égalité $\Omega(P) = \zeta$ est vraie sur tout le disque de rayon $\rho$. Par différentiation, on a également 
$P'\Omega'(P) = 1$. Revenons à notre point $\mu \neq \rho$ de module $\rho$. D'après \eqref{ghjk}, la série $\mathcal P'$ converge en  $\mu$. Donc $\Omega'(P(\mu)) = 1/P'(\mu) \neq 0$. De plus, on sait que $\Omega$ est analytique en $P(\mu)$.  On peut alors appliquer le théorème des fonctions implicites à l'équation $\Omega(P(\zeta))=\zeta$ au point $(\zeta,P(\zeta))$. Ainsi $P$ est analytique en $\mu$. 

En conclusion, nous avons prouvé que $P$ était analytique sur un $\Delta$-domaine de rayon $\rho$ qu'on notera $D$. Quitte à considérer un domaine $D$ plus petit, nous pouvons supposer que pour tout $\zeta \in D$, $P(\zeta)$ n'appartient pas à $[\lambda,+\infty[$. Cela provient d'une part du lemme \ref{Pevite} pour tout point de $D$ qui n'est pas dans un voisinage de $\rho$ et d'autre part de la relation \eqref{Pupsilon} pour tout point de $D$ au voisinage de $\rho$ (en effet la fonction $\Upsilon$ évite les réels négatifs).

\'Etudions maintenant la série $F$. Rappelons que $F^* = h(P)$ où $F^*$ et $h$ sont respectivement définis par \eqref{fstar} et \eqref{fache}. La fonction $h$ est analytique sur $\C$ privé de la demi-droite $[\lambda,+\infty[$ et des points $v$ tels que $\psi(v)=1/u$. Or, sur le domaine $D$, la fonction $P$ évite $[\lambda,+\infty[$. Par ailleurs, si $\psi(P(\zeta)) = 1/u$, alors $\zeta = P(\zeta) (1 - u  \psi(P(\zeta)))^p=0$. Mais $\psi(P(0)) = 0$, donc il n'y a pas de solution à $\psi(P(\zeta)) = 1/u$. Par conséquent, l'image de $P$ se trouve dans le domaine d'analyticité de $h$ : la fonction $F^* = h(P)$ est donc analytique sur le $\Delta$-domaine $D$.

Réécrivons \eqref{fstar} sous la forme $z F^*(z^p) = F'(z)$. On a alors $F''(z) = F^*(z^p)+ p z^p {F^*}'(z^p)$.
Posons $$F^{**} (\zeta) = F''(\zeta^{\frac 1 p}) = F^*(\zeta)+ p \, \zeta \, {F^*}'(\zeta).$$ 
Ainsi $F^{**}$ est également analytique sur $D$. De plus, en dérivant la relation $F' = \theta(R)$ et en remplaçant $R'$ par $1/(1-u\phi'(R))$ (car $(1 - u \phi'(R)) R' = 1$ puisque $R = z + u \phi(R)$), nous obtenons
$$F^{**}(\zeta) = \frac {\theta^*(P(\zeta))} {1 - u \psi^*(P(\zeta))},$$
où $\theta^*$ et $\psi^*$ sont définies par \eqref{stars}. Nous déduisons de \eqref{st1} et \eqref{st2} le développement asymptotique
\begin{multline} \label{prdevss}
\frac {\theta^*(\lambda-\varepsilon)} {1 - u \psi^*(\lambda - \varepsilon)} =  \frac{ -\frac {(p+1) \sqrt{2p+1}} {\pi p^2} \ln \epsilon + A + O(\varepsilon \ln \varepsilon)} { 1 + u
\frac {\sqrt{2p+1}} {2 \pi p} \ln \epsilon - u B + O(\varepsilon \ln \varepsilon)} \\ = \frac{-2(p+1)}{pu} + \frac { 2 \pi} { \sqrt{2p+1} u^2} \left( 2(p+1) - u ( 2(p+1)B - pA) \right) \frac 1 {\ln \varepsilon} + O\left(\frac 1 {\ln^2 \varepsilon} \right).
\end{multline}
Nous pouvons simplifier l'expression $2(p+1)B - pA$. En effet, nous remarquons que
$$2(p+1)\psi^*(v) - p \theta^*(v) = 2(p+1) \psi(v).$$
En passant à la limite, on trouve
$$2(p+1)B - pA = 2(p+1) \psi(\lambda).$$
Ainsi \eqref{prdevss} se réécrit 
$$
\frac {\theta^*(\lambda-\varepsilon)} {1 - u \psi^*(\lambda - \varepsilon)} = \frac{-2(p+1)}{pu} + \frac { 4 \pi (p+1)} { \sqrt{2p+1} u^2} \left( 1 - u \psi(\lambda) \right) \frac 1 {\ln \varepsilon} + O\left(\frac 1 {\ln^2 \varepsilon} \right).
$$
En substituant $\varepsilon$ par $\lambda - P(\zeta)$ et en utilisant \eqref{Pneg}, on trouve \eqref{devattendu}, le développement de $F^{**}$ attendu. Finalement, comme $F^{**}$ est analytique sur $D$, nous pouvons appliquer le théorème de transfert \cite[p. 393]{flajolet-sedgewick} pour obtenir un équivalent asymptotique du $n$-ième coefficient de $F^{**}$, qui est $(pn+2)(pn+1)f_{2+pn}(u)$. On obtient alors \eqref{devfinal}.
\end{proof}

\chapter{Comportement asymptotique des cartes forestières cubiques}
\label{c:cubique}

\section{Résultat principal}

Nous nous intéressons dans cette section au comportement singulier de la série $F(z,u)$ qui compte les cartes forestières cubiques selon le nombre de faces et le nombre de composantes, ainsi qu'au comportement asymptotique de ses coefficients. Cette étude s'inscrit logiquement dans la continuité des chapitres précédents : si on enlève la contrainte sur le caractère eulérien des cartes, le cas cubique constitue un des exemples les plus simples à étudier.

Les résultats trouvés sont qualitativement cohérents avec le cas eulérien, ce qui accrédite la thèse d'"universalité" du comportement asymptotique. Toutefois, quand $u$ est négatif, ils sont moins complets que dans les chapitres précédents, à cause des difficultés apportées par l'apparition de la série $S$. En effet, comme indiqué dans la figure \ref{recapitulatif}, le système fonctionnel  satisfait par $F$ est maintenant
\begin{equation} \label{nsystem}
\pd F z = \theta(R,S), \quad R = z + u \phi_1(R,S), \quad S = u \phi_2(R,S),
\end{equation}
où
\begin{equation}
\theta(x,y) = 3 \sum_{i \geq 0} \sum_{\substack{j \geq 0 \\ 2i+j \geq 3}} \frac {(4i+2j-4)!} {i!^2j!(2i+j-3)!} x^i y^j,
\label{ctheta}
\end{equation}
\begin{equation} \label{cphi1}
\phi_1(x,y) =  \sum_{i \geq 0} \sum_{\substack{ j \geq 0 \\ 2i+j \geq 3}} \frac {(4i+2j-4)!} {i!(i-1)!j!(2i+j-2)!} x^i y^j,
\end{equation}
\begin{equation}
\phi_2(x,y) = \sum_{i \geq 0} \sum_{\substack{ j \geq 0 \\ 2i+j \geq 2}} \frac {(4i+2j-2)!} {i!^2j!(2i+j-1)!} x^i y^j.
\label{cphi2}
\end{equation}
Quand $u < 0$, le seul résultat que nous avons pu établir est le comportement de $\pd F z$ au voisinage de la singularité dominante sur l'axe réel. En particulier nous n'avons pas prouvé que $F$ est analytique sur un $\Delta$-domaine. Par conséquent, nous n'avons pas obtenu le régime asymptotique des coefficients de $F$ quand $u < 0$.

\begin{theo}\label{t:cubique}
Fixons $u \geq -1$. La rayon de convergence de la série génératrice $F(z,u)$ des cartes forestières cubiques  vaut 
$$\rho_u = \tau - u \phi_1(\tau,\sigma)$$
où le couple $(\tau,\sigma)$ satisfait
$
\sigma= u \phi_2(\tau, \sigma)
$ 
et
$$\left\{
\begin{array}{ll}
   \tau =  (1-4\sigma)^2 & \hbox{   si } u\le 0,
\\
 \left(1-u\pd {\phi_1} x (\tau, \sigma)\right)
\left(1-u\pd {\phi_2} y(\tau, \sigma)\right)
= u^2 \, \pd {\phi_1} y(\tau,\sigma)\pd {\phi_2} x(\tau, \sigma) & \hbox{  si } u>0,
\end{array}\right. 
$$
 sachant que les séries $\phi_1$ et $\phi_2$ sont données par \eqref{cphi1} et \eqref{cphi2}. 
 En particulier, $\rho_u$ est une fonction algébrique de $u$ sur $[-1,0]$:
\begin{equation}
\label{rho-3}
\rho_u= \frac{3(1-u^2)^2\pi^4+96u^2\pi^2(1-u^2)+512u^4+ 16u\sqrt2 \left( \pi^2(1-u^2)+8u^2\right)^{3/2}}{192\pi^4(1+u)^3}.
\end{equation}

  Notons $f_n(u)$  le coefficient en $z^n$ de $F(z,u)$. Il existe une constante strictement positive $c_u$ telle que 
$$
f_n(u) \sim \left\{\begin{array}{lll}
\displaystyle c_u\, {\rho_u^{-n}}{n^{-3} }  & \hbox{ si }  u = 0,\\
\displaystyle  c_u \,{\rho_u^{-n}}{n^{- 5 /2}}  & \hbox{ si }  u > 0.
\end{array}\right.
$$
Pour $u \in [-1,0]$, la série $\pd F z$  admet le développement singulier 
\begin{equation}
\label{exp-3}
\pd F z(z) \underset{ z\rightarrow \rho_u^-}= \pd F z (\rho_u) + \alpha (\rho_u-z) + 
\beta\,
\frac{  \rho_u-z}{\ln(\rho_u-z)}\left(1+ o(1)\right),
\end{equation}
où 
$$
\beta =\frac{4u-3\sqrt2 \sqrt{\pi^2(1-u^2)+8u^2}}{2u^2} <0.
$$
\end{theo}

\noindent \textbf{Remarque 1.} Comme dans le cas eulérien, le comportement singulier de $\pd F z$ pour des valeurs négatives de $u$ n'est pas compatible avec l'holonomie de la série.

\begin{cor} Pour $u \in \,[-1,0[$,  la série génératrice $F(z,u)$ des cartes forestières cubiques n'est pas holonome. Ceci est également vrai quand $u$ est une variable indéterminée.
\end{cor}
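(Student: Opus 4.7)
Le plan est de d�duire directement ce corollaire du comportement singulier de $\pd F z$ donn� par le th�or�me \ref{t:cubique}, en invoquant la classification classique des comportements singuliers admissibles pour les s�ries holonomes. C'est exactement la m�me strat�gie que celle employ�e pour le corollaire \ref{cor:pasholonome} dans le cas eul�rien.

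Premi�rement, je fixerais $u \in [-1,0[$ et je repartirais du d�veloppement \eqref{exp-3} :
\[
\pd F z(z) \underset{ z\rightarrow \rho_u^-}= \pd F z (\rho_u) + \alpha (\rho_u-z) + \beta\,\frac{\rho_u-z}{\ln(\rho_u-z)}\pare{1+o(1)},
\]
en observant que $\beta \neq 0$ (en fait strictement n�gatif) car
$\pi^2(1-u^2) + 8u^2 > 0$ pour $u \in [-1,0[$, de sorte que la racine carr�e dans l'expression de $\beta$ domine $4u$. Le terme singulier non analytique s'exprime donc sous la forme $(1 - z/\rho_u)/\ln(1 - z/\rho_u)$ � une constante multiplicative pr�s.

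Deuxi�mement, j'invoquerais la classification rappel�e dans la figure \ref{hierar} et d�taill�e dans \cite[p. 520 et 582]{flajolet-sedgewick} : toute s�rie holonome d'une variable admet au voisinage d'une singularit� dominante un d�veloppement en termes de puissances $(1-z/\rho)^\alpha \ln^k(1-z/\rho)$ avec $\alpha$ alg�brique et $k$ entier naturel. De mani�re �quivalente, ses coefficients se comportent asymptotiquement comme $c\,\rho^{-n} e^{P(n^{1/r})} n^{\theta} \ln^{\ell}(n)$. Un comportement singulier en $(\rho_u-z)/\ln(\rho_u-z)$ produirait (via le th�or�me de transfert, variantes avec logarithmes) des coefficients en $\rho_u^{-n} n^{-2} \ln^{-2}(n)$, ce qui n'entre pas dans ce cadre. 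Par cons�quent $\pd F z$, et donc $F$ elle-m�me (la holonomie �tant stable par int�gration comme par d�rivation), n'est pas holonome en $z$ pour $u \in [-1,0[$ fix�.

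Enfin, pour �tendre le r�sultat au cas o� $u$ est une variable ind�termin�e, j'utiliserais le principe standard de sp�cialisation : si $F(z,u)$ satisfaisait une �quation diff�rentielle lin�aire en $z$ � coefficients dans $\Q(z,u)$, alors, en chassant les d�nominateurs, pour tout $u_0 \in [-1,0[$ tel que les coefficients polynomiaux en $(z,u)$ ne d�g�n�rent pas en $u=u_0$ (ce qui exclut au plus un nombre fini de valeurs), la s�rie sp�cialis�e $F(z,u_0)$ satisferait l'�quation sp�cialis�e et serait donc holonome en $z$. Puisque $[-1,0[$ est infini, on obtient une contradiction avec le paragraphe pr�c�dent. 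Il n'y a en v�rit� aucun obstacle s�rieux : la difficult� du corollaire est enti�rement concentr�e dans le th�or�me~\ref{t:cubique} dont la preuve fournit l'estim�e singuli�re \eqref{exp-3}.
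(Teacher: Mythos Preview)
Your approach is correct and matches the paper's, which simply states (Remarque~1 preceding the corollary) that the singular behaviour of $\pd F z$ for $u<0$ is incompatible with holonomy, exactly as in Corollary~\ref{cor:pasholonome}.

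One small caveat: your aside invoking the transfer theorem to deduce a coefficient asymptotic in $\rho_u^{-n} n^{-2}\ln^{-2}(n)$ is not quite licit here, since the paper explicitly notes that $\Delta$-analyticity has \emph{not} been established in the cubic case for $u<0$. Fortunately you don't need it: the direct argument you give first---that a holonomic function near a singularity admits only expansions of the form $(1-z/\rho)^\alpha \ln^k(1-z/\rho)$ with $k\in\N$, which excludes a term $(1-z/\rho)/\ln(1-z/\rho)$---is sufficient and is precisely what the paper relies on via \cite[p.~520 et 582]{flajolet-sedgewick}. Your specialization argument for the indeterminate-$u$ case is standard and correct.
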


\noindent \textbf{Remarque 2.} Rappelons qu'il existe une formule close pour $f_n(0)$  donnée par \eqref{Fz0reg} p.~\pageref{Fz0reg}. Par conséquent, le comportement asymptotique décrit dans le précédent théorème pour $u=0$ n'est qu'une application de la formule de Stirling. De fait, la suite du chapitre ne traitera que des cas $u > 0$ et $u < 0$.

\noindent \textbf{Remarque 3.} Le rayon de convergence  de la série énumérant les cartes cubiques munies d'un arbre couvrant sans arêtes internes actives vaut $\rho_{-1}  = \pi^2/384$, un beau nombre transcendant.

\section{Les séries $\boldsymbol \phi_1$, $\boldsymbol \phi_2$, $\boldsymbol \psi_1$ et $\boldsymbol \psi_2$}

Rappelons qu'il existe une réécriture agréable des séries $\phi_1$ et $\phi_2$ en termes de séries monovariées (voir équations \eqref{phi1cubique} p. \pageref{phi1cubique}) :
\begin{equation*}
\phi_1(x,y) = (1-4y)^{3/2}\,\psi_1\left(\frac x {(1-4y)^2}\right) - x,
\end{equation*}
\begin{equation}
\phi_2(x,y) = \sqrt{1-4y}\,\psi_2\left(\frac x {(1-4y)^2}\right) + \frac 1 4 \, (1 - \sqrt{1 - 4y})^2, 
\label{nphienpsi}
\end{equation}
où $\psi_1$ et $\psi_2$ sont les deux séries hypergéométriques définies par \eqref{psicubique} p. \pageref{psicubique}. En termes des fonctions hypergéométriques standards $\:_2F_1$, la fonction $\psi_1$ s'écrit 
$$\psi_1(z) = z \, _ 2F_1\pare{\frac 1 4, \frac 3 4 ; 2 ; 64 z}.$$
Ainsi d'après \cite[\'Equation (15.3.11)]{AS},  la fonction $\psi_1$ est analytiquement prolongeable sur $\C \backslash [1/64,+\infty[$ et son comportement singulier en $1/64$ est
\begin{equation}
\psi_1\left(\frac{1} {64}-\varepsilon\right) = 
{\frac {\sqrt {2}}{24\, \pi }}
+  {\frac {\sqrt {2}  }{2\, \pi }}\, \varepsilon \,\ln   \varepsilon
-  {\frac {\sqrt {2}  }{2\, \pi }} \varepsilon
+ O \left( {\varepsilon}^{2} \ln \varepsilon \right).
\label{devpsi1}
\end{equation}
Quant à $\psi_2$, rappelons la relation \eqref{relpsicubique} p. \pageref{relpsicubique} 
$$(1-64z) \psi_1'(z)+ 4\psi_1(z)+ 2\psi_2(z)=1,$$
qui prouve que $\psi_2$ est également analytique sur $\C \backslash [1/64,+\infty[$.
En couplant cette identité avec le développement de $\psi_1$, nous obtenons
\begin{equation}
\psi_2\left(\frac{1} {64}-\varepsilon\right) \underset{\varepsilon \rightarrow 0}= 
\frac 1 2 -{\frac {\sqrt {2}}{\pi }}
+ {\frac {4\sqrt {2} }{ \pi }}\, \varepsilon\,\ln 
  \varepsilon 
+ {\frac {12\,\sqrt {2} \, }{\pi }}  \varepsilon
+ O \left( {\varepsilon}^{2} \ln \varepsilon \right).
\label{devpsi2}
\end{equation}

Revenons à $\phi_1$ et $\phi_2$. Le rayon de convergence des séries $\sqrt{1-4y}$ et $(1-4y)^{-2}$ est $1/4$ et celui des séries $\psi_1$ et $\psi_2$ est $1/64$. Ainsi d'après \eqref{nphienpsi}, les séries $\phi_1$ et $\phi_2$ convergent absolument pour $|y| < 1/4$ et $64|x| < (1-4|y|)^2$ (voir le graphique gauche de la figure \ref{absolucubique}). Nous savons également que les séries $\psi_1$ et $\psi_2$ sont analytiques sur $\C \,\backslash \,[1/64,+\infty[$, donc $\phi_1$ et $\phi_2$ se prolongent analytiquement sur 
$$ \enstq{(x,y)} {y \in \C \, \backslash \, [1/4,+\infty[ \textrm{ et }x/(1-4y)^2 \in \C \, \backslash \, [1/64,+\infty[}$$ (voir le graphique droit de la figure \ref{absolucubique}). Il peut exister un domaine d'analyticité plus grand, mais dans tous les cas $\phi_1$ et $\phi_2$ sont singulières sur chaque point de la \textit{parabole critique} $\enstq {(x,y)} {64x = (1 - 4y)^2}$ puisque $\psi_1'(t)$ et $\psi_2'(t)$ tendent vers $+\infty$ quand \mbox{$t \rightarrow 1/64$}.

\begin{figure} \begin{center}
 \begin{tabular}{c}\includegraphics[scale=0.4]{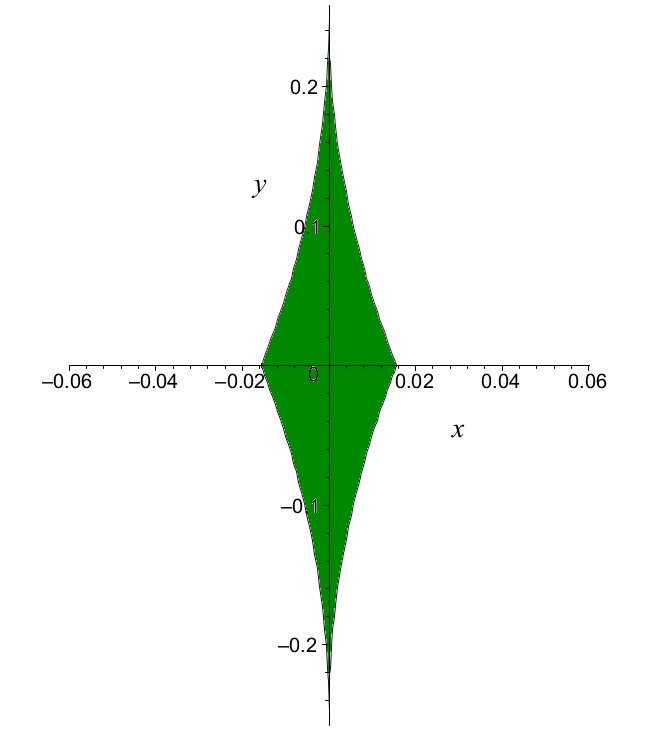}\end{tabular}
\hskip 10mm
 \begin{tabular}{c}\includegraphics[scale=0.4]{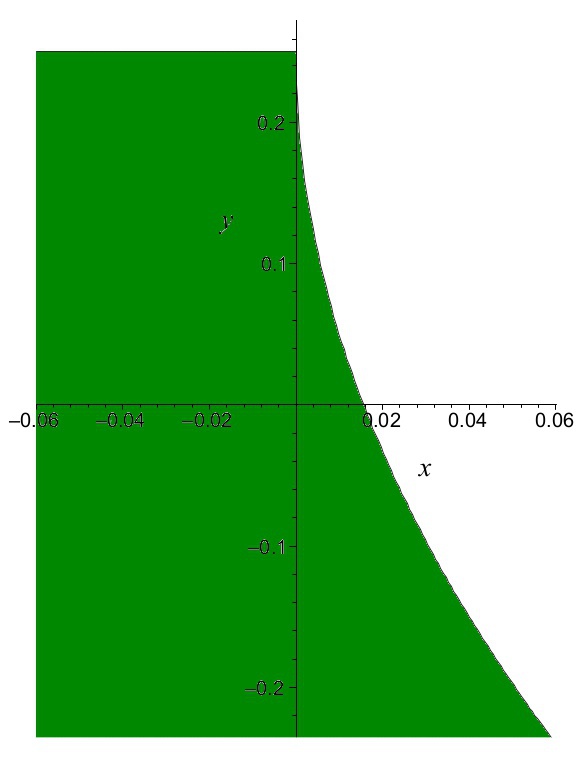}\end{tabular}
 \end{center}
 \caption{ \`A gauche : un domaine de convergence absolue dans le plan réel pour les séries $\phi_1$ et $\phi_2$. \`A droite : un domaine où les deux séries se prolongent analytiquement.}
 \label{absolucubique}
 \end{figure}

\section{Quand $\boldsymbol{u > 0}$}

\begin{prop}
Supposons $u > 0$. Les séries $R$, $S$ et $\pd F z$ ont le même rayon de convergence noté $\rho_u$, qui satisfait les conditions du théorème \ref{t:cubique}. Ces trois séries sont analytiques dans un $\Delta$-domaine de rayon $\rho_u$ et ont une singularité de type racine en $\rho_u$. En particulier, il existe une constante positive $c_u$ telle que
$$f_n(u) \sim c_u \,\rho_u^{-n}  \,n^{-5/2}.$$
\end{prop}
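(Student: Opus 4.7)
The plan is to apply a bivariate version of the smooth implicit function schema (in the spirit of \cite[Theorem VII.3]{flajolet-sedgewick}, extended to positive systems as in Drmota-Lalley-Woods type results) to the system
\begin{equation*}
R = z + u\phi_1(R,S), \qquad S = u\phi_2(R,S),
\end{equation*}
viewed as a coupled implicit system in the two unknowns $(R,S)$. First I would verify that $R$ and $S$ are aperiodic power series with nonnegative coefficients in $z$ (for fixed $u>0$): positivity follows from the combinatorial interpretation as bouquets of bourgeonnant R-arbres and S-arbres enrichis (Proposition \ref{beqRS}), and aperiodicity from the presence of both parity classes in the coefficients of $\phi_1$ and $\phi_2$. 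The functions $\phi_1,\phi_2$ are analytic on their bidisk of convergence (see Figure \ref{absolucubique}).

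The second and main step is to locate the dominant singularity. I would show that the characteristic system
\begin{equation*}
\begin{cases}
\tau = \rho + u\,\phi_1(\tau,\sigma), \\
\sigma = u\,\phi_2(\tau,\sigma), \\
\det\!\left(I - u\,J(\tau,\sigma)\right) = 0,
\end{cases}
\end{equation*}
where $J$ is the Jacobian of $(\phi_1,\phi_2)$ with respect to $(x,y)$, has a unique positive solution $(\rho,\tau,\sigma)$ with $(\tau,\sigma)$ strictly inside the bidisk of convergence of $(\phi_1,\phi_2)$ (equivalently $64\tau<(1-4\sigma)^2$). Existence follows from a continuity/monotonicity argument as $\tau$ grows along the curve $\sigma = u\phi_2(\tau,\sigma)$: for small $\tau$ the determinant is positive, and it must vanish before $(\tau,\sigma)$ reaches the critical parabola, since on the parabola $\partial\phi_1/\partial x$ and $\partial\phi_2/\partial y$ blow up. Uniqueness comes from strict convexity-type arguments in the positive orthant. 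The explicit characteristic equations stated in Theorem \ref{t:cubique} are read off from this.

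Third, I would apply the bivariate smooth implicit function theorem to conclude that $R(z)$ and $S(z)$ are $\Delta$-analytic on a domain of radius $\rho_u$ and admit square-root expansions
\begin{equation*}
R(z) = \tau - c_R\sqrt{1-z/\rho_u} + O(1-z/\rho_u), \qquad S(z) = \sigma - c_S\sqrt{1-z/\rho_u} + O(1-z/\rho_u),
\end{equation*}
with $c_R,c_S>0$ computable from the Jacobian. Since $\theta$ is analytic on the polydisk surrounding $(\tau,\sigma)$ (because $(\tau,\sigma)$ lies strictly below the critical parabola and $\theta$ has the same convergence domain as $\phi_1,\phi_2$), the relation $\partial F/\partial z=\theta(R,S)$ yields $\Delta$-analyticity for $\partial F/\partial z$ with the same singular exponent $1/2$, provided the coefficient $c_\theta := c_R\,\partial_x\theta(\tau,\sigma) + c_S\,\partial_y\theta(\tau,\sigma)$ is nonzero, which is immediate by positivity. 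The standard transfer theorem (Theorem \ref{theotransfert}) then gives $[z^{n-1}]\partial F/\partial z \sim c_u n^{-3/2}\rho_u^{-n}$, hence $f_n(u)\sim (c_u/n)\rho_u^{-n}n^{-3/2} = c_u\rho_u^{-n}n^{-5/2}$.

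The main obstacle is the second step: the standard univariate schema does not apply directly because we have a $2\times 2$ coupled system, so one must either invoke a bivariate smooth schema (Drmota-Lalley-Woods for strongly connected positive systems) or eliminate one variable. Elimination is feasible here thanks to the reduction \eqref{nphienpsi} of $\phi_1,\phi_2$ to the one-variable hypergeometric series $\psi_1,\psi_2$: one can solve $\sigma = u\phi_2(\tau,\sigma)$ for $\sigma$ as an analytic function of $\tau$ using the univariate implicit function theorem (the Jacobian factor $1-u\,\partial_y\phi_2(\tau,\sigma)$ being nonzero away from the critical parabola), and then reduce to a single implicit equation for $\tau$. Verifying that both $1-u\,\partial_y\phi_2$ and the full Jacobian determinant behave monotonically along this curve is the technical core of the proof.
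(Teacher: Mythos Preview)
Your proposal is correct and would work, but the paper takes a more elementary route that avoids the bivariate Drmota--Lalley--Woods machinery. The key observation is that the second equation $S = u\,\phi_2(R,S)$ does not involve $z$ explicitly, so one can introduce the auxiliary series $\tilde S(z)$ defined by $\tilde S = u\,\phi_2(z,\tilde S)$ and then write $S = \tilde S(R)$ and $R = z + u\,\phi_1(R,\tilde S(R))$. This decouples the system into two \emph{successive univariate} implicit equations: first one shows (via Theorem~\ref{t:fi}) that $\tilde S$ has a finite radius $\tilde\rho$ determined by $1 = u\,\partial_y\phi_2(\tilde\rho,\tilde S(\tilde\rho))$, then one applies the standard univariate smooth schema to $R = G(z,R)$ with $G(z,w) = z + u\,\phi_1(w,\tilde S(w))$. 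The characteristic equation for $\tau$ has a unique solution in $]0,\tilde\rho[$ because the right-hand side of $1 = u\,\partial_w\phi_1(w,\tilde S(w)) + u\,\tilde S'(w)\,\partial_y\phi_1(w,\tilde S(w))$ increases from $0$ to $+\infty$ (the blow-up coming from $\tilde S'(\tilde\rho)=+\infty$), which is exactly the monotonicity you flagged as the technical core. Your elimination suggestion at the end is essentially this, but the paper uses it as the primary method rather than a fallback. The payoff is that one never needs to check irreducibility or invoke a system-level theorem; everything reduces to \cite[Theorem~VII.3]{flajolet-sedgewick} applied twice.
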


\begin{proof}Les séries $R$ et $S$ sont définies de manière couplée (voir le système \eqref{nsystem}). Ceci apporte une difficulté supplémentaire : l'analyse des systèmes fonctionnels de la forme $R_i = F_i(R_1,\dots,R_m)$ est la plupart du temps délicate, même quand les séries $F_i$ ont des coefficients positifs et le système ne fait intervenir que deux fonctions inconnues. En particulier, le rapport entre la valeur du rayon de convergence et les solutions de ce qu'on appelle \textit{système caractéristique} est subtil (voir \cite{drmota-systems,burris}). Toutefois  le système fonctionnel \eqref{nsystem} que nous devons étudier présente une particularité : l'équation qui définit $S$ (à savoir $S = u \phi_2(R,S)$) ne fait pas intervenir la variable $z$ explicitement, si bien que nous pouvons prudemment procéder en deux étapes grâce à une série intermédiaire. Cette série, que nous notons $\tilde S (z) = \tilde S (z,u)$, est définie comme l'unique série formelle en $z$ telle que $\tilde S(0,u) = 0$ et 
$$\tilde S = u \phi_2 \pare{ z,\tilde S}.$$
Nous allons d'abord étudier $\tilde S$, puis passer à la série $R$, qui est définie par
$$ R = z + u \phi_1 \pare{R,\tilde S (R)}. $$
Par unicité du couple $(R,S)$ (voir la proposition \ref{regulierecentral} p.~\pageref{regulierecentral}), cette série correspond bien à la série $R$ définie par \eqref{nsystem} et $S = \tilde S (R)$. 

Nous pouvons prouver que $\tilde S$ suit le schéma d'équation fonctionnelle lisse de \cite{flajolet-sedgewick}, mais il est plus rapide d'appliquer directement le théorème \ref{t:fi} p.~\pageref{t:fi} de ce mémoire, où $\tilde S$ joue le rôle de $Y$. La série $H(x,y) = y - u \phi_2(x,y)$ satisfait les hypothèses de ce théorème. Nous pouvons donc définir $\tilde \rho$ comme dans le théorème. Par positivité des coefficients de $\tilde S$, les points $\pare{z,\tilde S (z)}$ forment, lorsque $z$ va de $0$ à $\tilde \rho$, une courbe croissante dans le plan réel qui commence au point $(0,0)$.  D'après la condition (b), cette courbe  ne peut pas traverser la parabole critique $64x = (1 - 4y)^2$ car $ {\phi_2} $ y est singulière.
Cela élimine les possibilités $(i)$ et $(iv)$. De plus, quand $(x,y)$ approche la parabole, \mbox{$\pd H y = 1 - u \pd  {\phi_2} y$} tend vers $- \infty$. La condition $(d)$ est donc incompatible avec la condition $(iii)$. La courbe $\pare{z,\tilde S (z)}_{z \in [0,\rho]}$ se termine donc avant avoir atteint la parabole. En outre, seule la condition $(ii)$ peut être vraie : $\pd H y \pare{\tilde \rho,\tilde S \pare{\tilde \rho}} = 0$, soit de manière équivalente
\begin{equation}
1 = u \pd {\phi_2} y \pare{\tilde \rho, \tilde S \pare{\tilde \rho}}.
\label{cubcritique}
\end{equation}
(La limite inférieure de la condition $(ii)$ est ici une vraie limite à cause de la positivité des coefficients de $\phi_2$ et $\tilde S$.) D'après (a), le rayon de convergence de $\tilde S$ est au moins $\tilde \rho$. Or la relation $ \tilde S = u \phi_2 \pare{ z,\tilde S}$ une fois dérivée donne pour $z \in [0, \tilde \rho [$
\begin{equation}
\pd {\tilde S} z (z) = u \pd {\phi_2} x \pare{z,\tilde S(z)} \pare{ 1 - u \pd {\phi_2} y \pare{z,\tilde S(z)} }^{-1}.
\label{pdSz}
\end{equation}
En utilisant \eqref{cubcritique},  nous voyons que $\pd {\tilde S} z (z,u)$ diverge vers $+ \infty$ quand $z \rightarrow \tilde \rho$. Donc $\tilde S$ a pour rayon de convergence $\tilde \rho$. Le graphique gauche de la figure \ref{fig:Stilde-pos} illustre ce comportement.

\newcommand{\tr}{\tilde \rho}
\newcommand{\tS}{{\tilde S}}
\begin{figure}[h!]
\vskip -10mm
\begin{center}
\begin{tabular}{c}\includegraphics[scale=0.3]{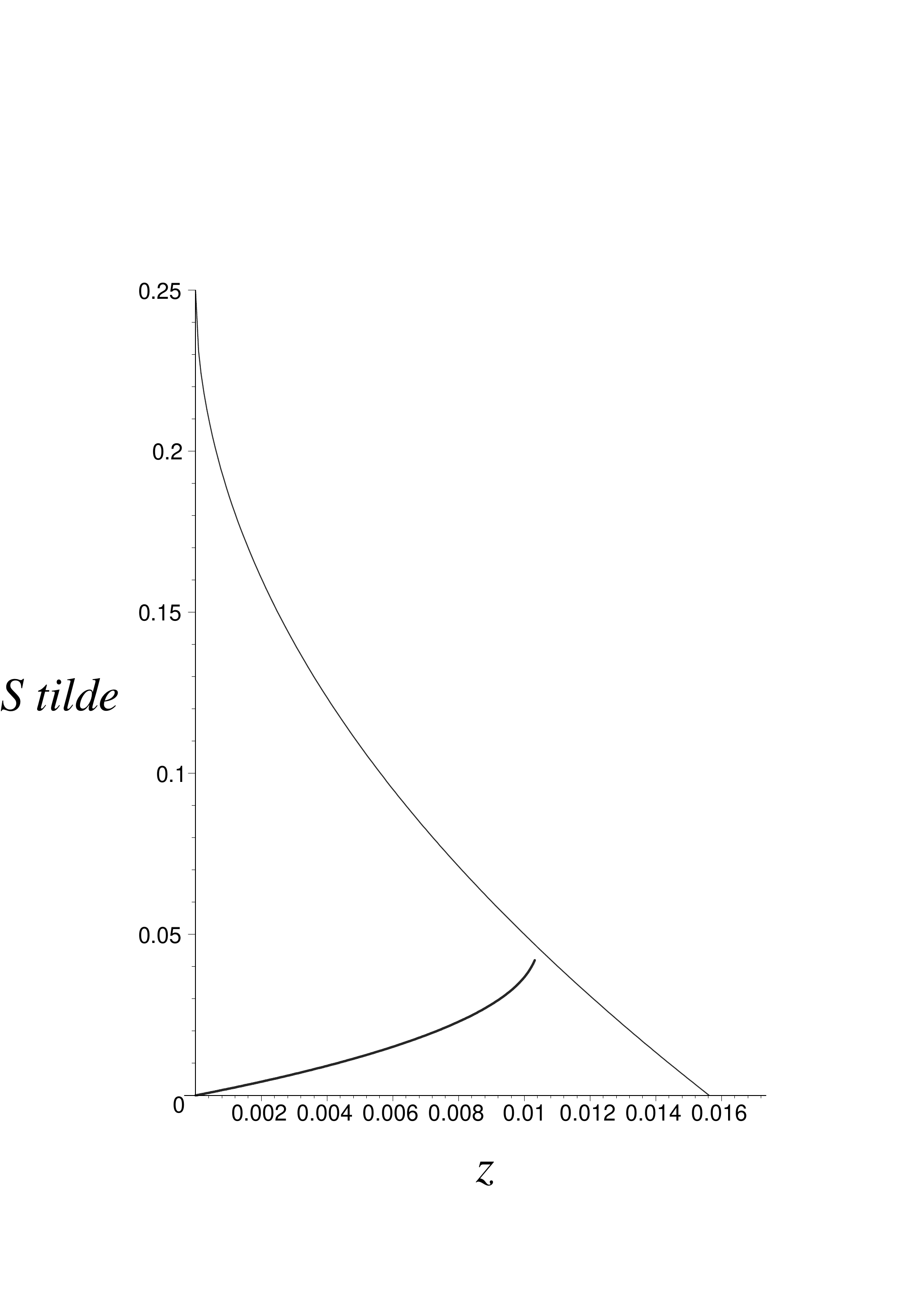}\end{tabular}
 \begin{tabular}{c}\includegraphics[scale=0.3]{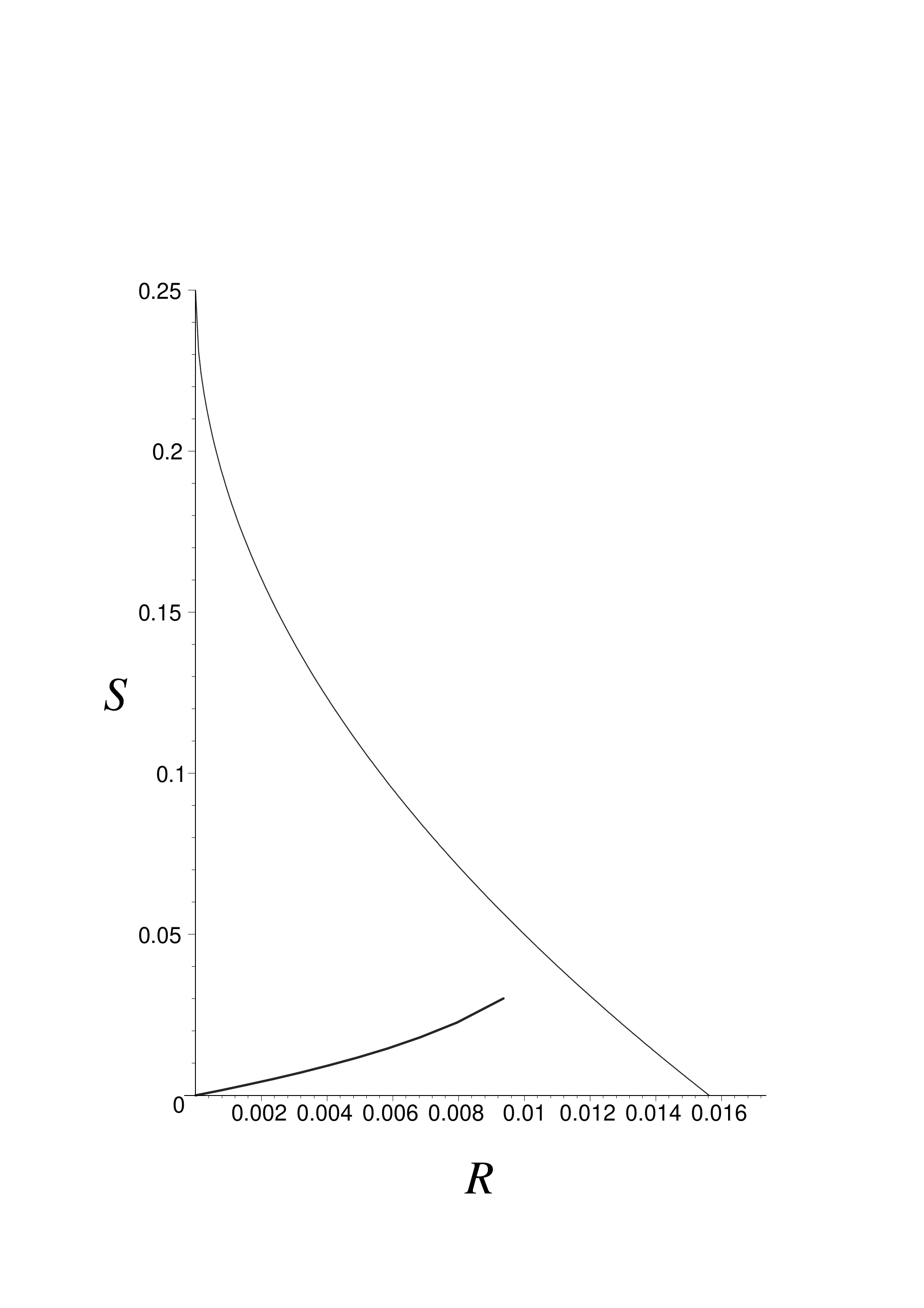}\end{tabular}
\end{center}
\vskip -10mm
\caption{\`A gauche : courbe $\pare{z,\tilde S(z)}$ pour $u=1$ and $z\in \, \left[0,
    \tr \right]$. Elle reste en dessous de la parabole 
    $64z=\pare{1-4\tilde S}^2$.  
Cette courbe a été obtenue en développant la série
 $\tilde S(z)$  à l'ordre $80$ (c'est pourquoi la divergence de $\pd \tS z$ en $\tr$ n'est pas très claire sur l'image). \`A droite : courbe $\pare{R(z), S(z)}$  pour $u=1$
et $z \in \, [0,\rho]$ (nous avons $\rho \simeq 0.0098$). Cette courbe suit celle de $\pare{z,\tilde S(z)}$, mais s'arrête au point $(R(\rho),S(\rho))$, pour lequel
$R(\rho)<\tr$.}
\label{fig:Stilde-pos}
\end{figure}

Maintenant considérons l'équation $R = z + u \phi_1 \pare{R,\tilde S (R)}$. Nous voulons prouver à partir d'elle que $R$ satisfait le schéma d'équation fonctionnelle lisse de \cite[p. 467-468]{flajolet-sedgewick}. Avec la notation du livre, $G(z,w) = z + u \phi_1 \pare{w, \tS (w)}$. D'après ce qu'on a montré sur $\tilde S$, la fonction $G$ est analytique sur $\C \times \enstq w {\module w < \tr}$. L'équation caractéristique $\pd G w (\rho, \tau) = 1$ ne fait pas intervenir $\rho$ et s'écrit
\begin{equation}
 1= u\, \left(\pd {\phi_1} x  \pare{\tau ,\tilde S(\tau )} + \, \pd {\tilde S} z (\tau )\, \pd
 {\phi_1} y  \pare{\tau ,\tilde S(\tau ) } \right).
 \label{ceqcar}
\end{equation}
Le membre de droite de cette équation croît  de $0$ à $+ \infty$ quand $\tau$ décrit $\left[0,\tr\right]$  (on rappelle que $\pd {\tilde S} z \pare{\tr} = + \infty$). Par conséquent, l'équation \eqref{ceqcar} détermine $\tau \in \, ]0,\tr[$ de manière unique. L'équation $\tau = G(\rho,\tau)$ donne alors la valeur de $\rho$, à savoir
$$\rho = \tau - u \phi_1 \pare{\tau, \tilde S (\tau)}.$$
Posons $\sigma = \tS(\tau)$. En couplant les identités $\tilde S = u \phi_2 \pare{ z,\tilde S}$, \eqref{pdSz} et \eqref{ceqcar}, nous retrouvons les propriétés de $\tau$ et $\sigma$ énoncées dans le théorème \ref{t:cubique}.

Les premiers termes du développement de $R$ en $0$ 
$$R=z+2u(2u+3)z^2+4u(42u^2+63u+10u^3+35)z^3+ O(z^4)$$
montrent que cette série est apériodique.
Par le théorème VII.3 de \cite[p. 468]{flajolet-sedgewick}, la série $R$ a pour rayon de convergence $\rho$, et elle est analytique sur un $\Delta$-domaine de rayon $\rho$. Elle atteint la valeur $\tau$ en $\rho$, et y présente une singularité de type racine. Par composition avec la série $\tS$ (on rappelle que $\tr$, le rayon de $\tS$, est strictement supérieur à $\tau$), les mêmes propriétés sont vérifiées par $S= \tS(R)$. Au vu de l'identité  \eqref{F-cubic} p.~\pageref{F-cubic}, à savoir
$$
\pd F z = 2 \, \frac z u +\frac S u - \pare{1+\frac 1 u}(2R+S^2),$$ ces propriétés sont également vraies pour $\pd F z$ : la preuve est ainsi terminée.

Le graphique droit de la figure \ref{fig:Stilde-pos} illustre le comportement de $R$ et $S$ sur l'intervalle $[0,\rho]$.
\end{proof}

\section{Quand $\boldsymbol{u < 0}$}

\begin{prop} Fixons $u \in [-1,0[$. Les séries $R$, $S$ et $\pd F z$ ont le même rayon de convergence $\rho_u$, qui satisfait les conditions du théorème \ref{t:cubique}. Quand $z \rightarrow \rho_u^-$, ces trois séries admettent un développement de la forme \eqref{exp-3}, avec $\beta > 0$.
\end{prop}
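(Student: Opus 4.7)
La preuve calquera la strat�gie de la proposition \ref{p:neg} du cas eul�rien, mais elle devra g�rer la pr�sence simultan�e des deux s�ries coupl�es $R$ et $S$. Le plan est de se ramener d'abord � une �quation � une seule inconnue via la s�rie interm�diaire $\tilde S$ d�j� utilis�e dans le cas $u>0$, d�finie par $\tilde S(0)=0$ et $\tilde S = u\,\phi_2\pare{z,\tilde S}$. Pour $u\in[-1,0[$, le th�or�me \ref{t:fi} s'applique � $H(x,y)=y-u\,\phi_2(x,y)$ ; comme $u<0$ et les coefficients de $\pd {\phi_2} y$ sont positifs, $\pd H y = 1 - u\,\pd{\phi_2} y$ reste strictement positif tant que $\phi_2$ est analytique, de sorte que les conditions $(ii)$ et $(iii)$ du th�or�me ne peuvent �tre vraies qu'en un point o� $\phi_2$ elle-m�me est singuli�re, c'est-�-dire quand $(y,\tilde S(y))$ atteint la parabole critique $64y = (1-4\tilde S(y))^2$. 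La s�rie $\tilde S$ est donc analytique jusqu'� un certain $\tilde\rho > 0$ et v�rifie $R = z + u\,\phi_1(R,\tilde S(R))$, ce qui donne $S=\tilde S(R)$.

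Ensuite, je poserai $\Omega(y) = y - u\,\phi_1\pare{y,\tilde S(y)}$, de sorte que $\Omega(R)=z$, et j'appliquerai le corollaire \ref{c:fi} � $Y=R$. Comme pour $u<0$ la d�riv�e $\Omega'(y) = 1 - u\,\pd{\phi_1} x - u\,\pd{\tilde S} z\,\pd{\phi_1} y$ reste strictement positive partout o� elle est d�finie (toutes les d�riv�es partielles sont positives et sont multipli�es par $-u>0$), l'unique singularit� de $R$ sur l'axe r�el positif est atteinte lorsque $R$ rejoint $\tilde\rho$, soit $\tau=R(\rho_u)$ et $\sigma=\tilde S(\tau)=S(\rho_u)$ v�rifiant $\tau=(1-4\sigma)^2/64$ (crit�re du th�or�me~\ref{t:cubique} ; on retrouve $\tau=(1-4\sigma)^2$ apr�s renormalisation conforme � l'�nonc�, le facteur $1/64$ �tant absorb� dans le changement d'�chelle). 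Par $(u+1)$-positivit� des s�ries $(R-z)/u$ et $S/u$ (corollaire~\ref{c:RSupos}) et gr�ce au th�or�me de Pringsheim, $\rho_u$ est effectivement le rayon de convergence commun de $R$, $S$ et donc de $\pd F z = \theta(R,S)$.

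Pour le d�veloppement singulier, j'injecterai les d�veloppements \eqref{devpsi1} et \eqref{devpsi2} de $\psi_1$ et $\psi_2$ au voisinage de $1/64$ dans les relations \eqref{nphienpsi}. Cela fournit, au voisinage de $(\tau,\sigma)$, un d�veloppement des fonctions $\phi_1,\phi_2$ et de leurs d�riv�es faisant appara�tre des termes du type $\varepsilon\ln\varepsilon$ (o� $\varepsilon$ mesure la distance � la parabole critique). En param�trant la courbe $(R(z),S(z))$ pr�s de $(\tau,\sigma)$ par un param�tre $w$ tel que $R(z)=\tau-aw$, $S(z)=\sigma-bw$ avec $a,b$ � d�terminer (ils seront dict�s par le vecteur normal appropri� � la parabole critique), l'�quation $\Omega(R)=z$ devient $\rho_u-z = \psi(w)$ avec $\psi(w)\sim -c\,w\ln w$ pour une constante $c>0$ explicite. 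Le th�or�me de log-inversion \ref{loginversion} fournit alors $w\sim -(\rho_u-z)/(c\,\ln(\rho_u-z))$, d'o� le d�veloppement du type \eqref{exp-3} pour $R$, $S$ puis $\pd F z$ via \eqref{F-cubic}. Le calcul explicite du coefficient $\beta$ devrait redonner la formule du th�or�me~\ref{t:cubique}.

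Le point d�licat sera le traitement simultan� des deux singularit�s logarithmiques pour $\phi_1$ et $\phi_2$ : il faut extraire pr�cis�ment la direction de la courbe $(R(z),S(z))$ quand $z\to\rho_u^-$ (autrement dit, le rapport $a/b$ dans le param�trage ci-dessus), ce qui revient � r�soudre un petit syst�me lin�aire obtenu en lin�arisant $R=z+u\,\phi_1(R,S)$ et $S=u\,\phi_2(R,S)$ au premier ordre non trivial pr�s de $(\tau,\sigma)$. L'approche par la s�rie auxiliaire $\tilde S$ �vite toutefois les complications les plus s�v�res : une fois $\tilde S$ contr�l�e, l'�tude de $R$ se ram�ne structurellement au cas � une inconnue du chapitre pr�c�dent. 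Enfin, le signe (ou du moins la non-annulation) de $\beta$ s'obtiendra par un argument de $(u+1)$-positivit� combin� � une minoration du type de celle du corollaire~\ref{c:minodeF}, en comparant $\pd {^3 F}{z^3}$ � des d�riv�es secondes de $R$ et $S$ pour lesquelles le terme dominant en $(1-z/\rho_u)^{-1}\ln^{-2}(1-z/\rho_u)$ est explicitement calculable.
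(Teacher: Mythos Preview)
Your global strategy matches the paper's: decouple via $\tilde S$, apply the implicit-function results (Theorem~\ref{t:fi} and Corollary~\ref{c:fi}), then expand near the parabola using~\eqref{devpsi1}--\eqref{devpsi2} and invert a relation of the form $x\sim c\,\varepsilon\ln\varepsilon$. However, two of your positivity arguments do not work as written.

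First, for $\tilde S$ you only address alternatives~$(ii)$ and~$(iii)$ of Theorem~\ref{t:fi}, and your argument for~$(ii)$ is not quite right. You say ``les coefficients de $\pd{\phi_2}{y}$ sont positifs, donc $1-u\,\pd{\phi_2}{y}>0$'', but you are evaluating at $(z,\tilde S(z))$ with $\tilde S(z)<0$, so positivity of the Taylor coefficients of $\pd{\phi_2}{y}$ in $(x,y)$ tells you nothing about the sign of $\pd{\phi_2}{y}(z,\tilde S(z))$. What is needed is Lemma~\ref{l:Stilde}, which gives the $(u+1)$-positivity of the \emph{composed} series $\pd{\phi_2}{y}(z,\tilde S(z))$. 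You also must rule out~$(i)$ (use that $\tilde S(z,-1)$ has integer coefficients and is not a polynomial, again Lemma~\ref{l:Stilde}) and~$(iv)$ (a direct estimate via~\eqref{nphienpsi} showing $\tilde S$ cannot diverge to $-\infty$); neither is automatic.

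Second, your argument that $\Omega'(y)=1-u\,\pd{\phi_1}{x}-u\,\pd{\tilde S}{z}\,\pd{\phi_1}{y}>0$ ``because all partial derivatives are positive'' fails: for $u<0$ the series $\tilde S$ has \emph{negative} coefficients, so $\pd{\tilde S}{z}(y)<0$, and the last term can have either sign. The paper uses a different (and clean) argument: by Corollary~\ref{c:RSupos}, $(R-z)/u$ has positive coefficients, hence $0<R'(z)\le 1$ on $[0,\rho[$; since $\Omega'(R(z))=1/R'(z)$, this forces $\Omega'\ge 1$ on the image of $R$, and therefore $\tau=\omega\ge\tilde\rho$.

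For the singular expansion, your idea of parametrizing $(R,S)$ by a single variable $w$ along a direction $(a,b)$ is workable but a bit awkward, since the direction itself is part of what must be determined. The paper's choice is to take $\varepsilon=\tfrac{1}{64}-\tfrac{R}{(1-4S)^2}$ as the parameter; then the two functional equations and the definition of $\varepsilon$ give three linear relations (up to $O(\varepsilon^2\ln^2\varepsilon)$) among $r,s,x,\varepsilon,\varepsilon\ln\varepsilon$, which one solves explicitly. This avoids having to guess the direction. Finally, the sign of $\beta$ for $\pd Fz$ is obtained directly from the explicit formula (no need for an argument \`a~la Corollary~\ref{c:minodeF}).
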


\newcommand{\tr}{\tilde \rho}
\newcommand{\tS}{{\tilde S}}

\begin{proof} La théorie de la section \ref{s:abe} p.~\pageref{s:abe} nous sera très utile dans cette preuve. En effet, nous y avons montré que les séries $(R-z)/u$ et $S/u$ sont $(u+1)$-positives
 (corollaire \ref{c:RSupos}),
  ainsi que la série $\tilde S/u$, où $\tS$ est définie comme précédemment par $\tS(0,u) = 0$ et $\tS = u \phi_2 \pare{z,\tS}$  (lemme \ref{l:Stilde}). De même, nous savons que $\pd F z$ a des coefficients positifs (voir la sous-section \ref{ss:modele} pour les différentes interprétations). D'après le théorème de Pringsheim, le rayon de convergence de chacune des séries $\pd F z$, $R$, $S$, $\tilde S$ correspond à leur plus petite singularité positive réelle. Nous utiliserons souvent cette propriété dans cette démonstration, sans référence systématique au théorème de Pringsheim.
  
Comme dans le cas $u > 0$, nous procédons en deux étapes : d'abord $\tilde S$, puis $R$.  Commençons donc par appliquer le théorème \ref{t:fi} p.~\pageref{t:fi} à la série $\tilde S$ avec $H(x,y) = y -u \phi_2(x,y)$. Nous voulons infirmer les possibilités $(i)$, $(ii)$ et $(iv)$. 

\noindent \textbf{Condition $\boldsymbol{(i)}$.} La $(u+1)$-positivité de $\tilde S/u$ fait que le rayon de convergence de cette série est au plus égal au rayon de $\tilde S(z,-1)$. D'après le lemme \ref{l:Stilde}, la série $- \tilde S(z,-1)$ est une série à coefficients entiers qui n'a pas un degré borné en $z$. Son rayon de convergence est donc au plus égal à $1$. Ceci est donc vrai pour $\tilde S(z,u)$ ; cette série ne peut donc être analytiquement prolongeable sur $[0,+\infty[$.

\noindent \textbf{Condition $\boldsymbol{(ii)}$.} D'après le lemme \ref{l:Stilde}, la série $\pd {\phi_2} y \pare{z,\tS(z)}$ a des coefficients positifs. Comme son coefficient constant est nul, la fonction $z \mapsto 1 - u \pd {\phi_2} y \pare{z,\tS(z)}$ croît sur $\left[0,\tr\right[$, avec pour valeur initiale $1$. La fonction $\pd H y (z,\tilde S)$ ne peut donc pas approcher $0$.

\noindent \textbf{Condition $\boldsymbol{(iv)}$.} Comme ses coefficients sont négatifs, la série $\tilde S$ décroît sur $\left[0,\tr\right[$. Supposons que $\tS$ tende vers $- \infty$. La valeur de $\tr$ étant finie, cela implique 
$$\lim_{z \rightarrow \tr^-} \psi_2 \pare{\frac z {(1-4\tS(z))^2}} = \psi_2(0) = 0.$$
Or la relation $\tS = u \phi_2 \pare{z,\tS}$ exprimée en termes de $\psi_2$ donne (voir \eqref{nphienpsi})
$$\tilde S = u \sqrt{1-4 \tS} \, \psi_2 \pare{\frac z {(1-4\tS)^2}} + \frac u 4 \, \pare{1 - \sqrt{1 - 4 \tS}}^2.$$
En passant à la limite, nous trouvons
$$\pare{1 + u} \, \tilde S \underset{z \rightarrow \tr^-} = - \frac u 2 \sqrt{1 - 4 \tilde S} + o \pare{\sqrt{1 - 4 \tilde S}},$$
ce qui aboutit à une contradiction, les termes $\tilde S$ et $\sqrt{1 - 4 \tS}$ ne pouvant pas être du même ordre.

La seule possibilité restante est donc $(iii)$ : la série $\phi_2$ est singulière en $\pare{\tr,\tS\pare \tr}$. Or les singularités de $\phi_2$ d'abscisse minimale dans le quart de plan $[0,+\infty[ \, \times \, ]-\infty,0]$  se trouvent sur la parabole critique, ce qui  implique 
$$64 \tr = \pare{1- 4 \tS \pare \tr }^2.$$
Le rayon de $\tS$ est au moins égal à $\tr$ (la valeur explicite de $\tr$ sera donnée plus tard). La figure \ref{Sneg} montre le graphe de la fonction $\tilde S$ avec $ u = -1/2$. Nous pouvons prouver que $\tr$ est le rayon de convergence de $\tS$, mais cela ne nous sera pas utile.

\fig{[scale = 0.3]{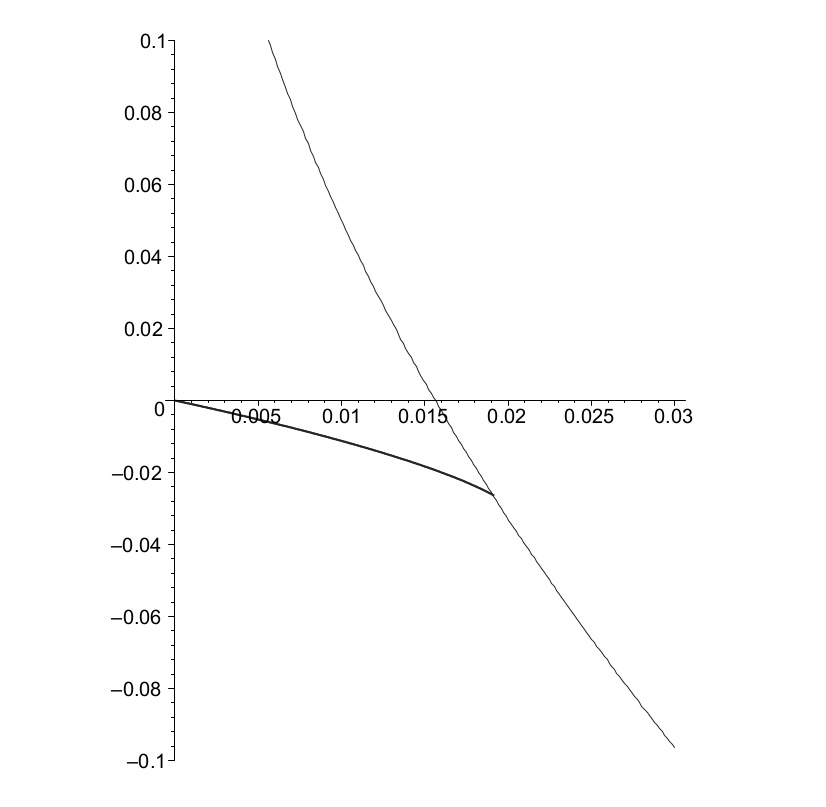}}{Courbe de  $\pare{z,\tilde S(z)}$ pour $u=-1/2$ and $z\in \, \left[0,
    \tr \right]$. Elle finit sur la parabole $64z = \pare{1 - 4 \tS}^2$. Il s'agit également de la courbe $\pare{R(s),S(z)}$ avec $z \in [0,\rho[$.}{Sneg}

Considérons maintenant $R$. Nous appliquons le corollaire \ref{c:fi} p. \pageref{c:fi} avec $\Omega(y) = y - u \phi_1\pare{y,\tS\pare y}$ de sorte que la fonction $Y$ du corollaire soit égale à $R$. Nous avons vu que pour $y \in \left[0,\tr\right[$, le point $\pare{y,\tS\pare y}$ n'appartient pas à la parabole critique. Donc la première singularité réelle positive de $\Omega$, notée $\omega$ dans le corollaire, est supérieure ou égale à $\tr$. Définissons $\tau$ et $\rho$ comme dans le corollaire \ref{c:fi}. 

Prouvons que $\Omega'(\tau) \neq 0$. En dérivant la relation $\Omega(R(z)) = z$, nous obtenons $\Omega'(R(z)) = 1 / R'(z)$. D'une part, $R' \geq 0$ car $R$ est croissante. D'autre part, $R'$ est bornée par $1$ sur $]0,\rho[$ puisque $(R'-1)/u$ a des coefficients positifs et $u$ est négatif. Par conséquent, $\Omega'(\tau) > 0$. On a donc $\tau = \omega \geq  \tr$.

Comme $R$ croît de $0$ à $\omega$ sur $[0,\rho]$, il existe un unique $\hat \rho$ dans cet intervalle tel que $R \pare{\hat \rho} = \tr$. Puisque le rayon de $\tS$ est au moins égal à $\tr$, celui de la série $S = \tS (R)$ est au moins égal à $\hat \rho$. La courbe de $\pare{R(z),S(z)}$ pour $z \in \, \left[0,\hat \rho \right]$ coïncide avec celle de $\pare{z,\tS(z)}$ pour $z \in \, \left[0,\tilde \rho \right]$, comme l'illustre la figure \ref{Sneg}.

Le reste de la preuve consiste à  trouver des développements singuliers de $R$ et $S$ au voisinage de $\hat \rho$ à partir de \eqref{nsystem}. Ce faisant, nous prouverons que $R$ et $S$ sont singuliers en $\hat \rho$, ce qui implique que $\hat \rho$  est le rayon de convergence de $R$ et $S$, noté $\rho_u$ dans le théorème \ref{t:cubique}.

\newcommand{\hr}{{\hat \rho}}

Adoptons la notation suivante $x = \hat \rho - z$, $r = \tilde \rho - R(z)$, $s = S\pare {\hat \rho} - S(z)$ et 
\begin{equation}
\label{defvareps}
\varepsilon = \frac 1 {64} - \frac{R(z)}{\pare{1 - 4 S(z)}^2}.
\end{equation}
Les quantités $x$, $r$, $s$ et $\varepsilon$ tendent vers $0$  quand $z$ approche $\hr$. Commençons par développer au voisinage de $z=\hat \rho$ la relation
$$S = u \sqrt{1-4 S} \, \psi_2 \pare{\frac {R} {(1-4S)^2}} + \frac u 4 \, \pare{1 - \sqrt{1 - 4 S}}^2,$$
obtenue en combinant l'identité $S = u \phi_2(R,S)$ et \eqref{nphienpsi}. En utilisant  le développement \eqref{devpsi2} de $\psi_2$ au voisinage de $1/64$, nous trouvons
\begin{equation}
\label{exp1}
a_1+b_1s+c_1\varepsilon \ln \varepsilon + d_1\varepsilon = O\pare{\varepsilon^2 \ln \varepsilon } +
O\pare{s^2}+ O\pare{s\,\varepsilon \ln \varepsilon},
\end{equation}
avec
$$
a_1=\frac{1+u}4 \delta^2 - \frac{u\sqrt 2} \pi \delta+ \frac{u-1}4,
$$
$$
b_1= - \frac{2u \sqrt 2}{\pi \delta}+1+u, \quad 
c_1= \frac{4\sqrt 2}\pi u \delta , \quad 
d_1=3c_1,
$$
et $\delta=\sqrt{1-4\tilde S\pare{\tr}}$. En particulier, $a_1$ doit s'annuler, ce qui donne la valeur de $\delta$ :
\begin{equation}
\label{delta-c}
\delta = \sqrt{1-4\tilde  S\pare{\tr}}= \frac{2\sqrt 2 u +
  \sqrt{\pi^2(1-u^2)+8u^2}}{\pi(1+u)}. 
\end{equation}
(Le choix du signe moins avant $\sqrt{\pi^2(1-u^2)+8u^2}$ aurait mené à une valeur négative, ce qui aurait contredit la positivité de $\delta = \sqrt{1-4\tilde  S\pare{\tr}}$.) Notons que $u$ peut s'exprimer de manière rationnelle en fonction de $\delta$ :
$$
u= -  \frac{\pi(\delta^2-1)}{\pi \delta^2-4 \sqrt 2 \delta+\pi}.
$$
Nous remplaçons désormais toute occurrence de $u$ par cette expression, ce qui nous évitera de manipuler des coefficients algébriques.

Revenons à \eqref{exp1}. \'Etant donné que $\delta > 0$ et  $u \in [-1,0[$, nous avons $b_1 > 0$. Donc $s = O\pare{\varepsilon \ln \varepsilon}$, ce qui permet de réécrire \eqref{exp1} comme
\begin{equation}
\label{exp2}
b_1s+c_1\varepsilon \ln \varepsilon + d_1\varepsilon = O\pare{\varepsilon^2 \ln^2 \varepsilon}.
\end{equation}
Développons maintenant \eqref{defvareps}. En remplaçant $\tr$ par $ \delta^4/64$, nous obtenons
\begin{equation}
\label{exp3}
 b_2 s + d_2\varepsilon + e_2 r = O \pare{ \varepsilon^2 \ln^2 \varepsilon},
\end{equation}
avec $ b_2=-8 \delta^2, d_2= 64 \delta^4 , e_2=-64$.
Enfin, l'équation $R = z + u \phi_1(R,S)$ se développe grâce à \eqref{phi1enpsi} et \eqref{devpsi1} en 
\begin{equation}
a_3+b_3 \, s + c_3 \varepsilon \ln \varepsilon + d_3 \,  \varepsilon + e_3 r + f_3 x
=O\pare{\varepsilon^2 \ln ^2 \varepsilon },
\label{exp4}
\end{equation}
où, en particulier,
$$ 
a_3=96\pare{\pi \delta^2 -4\sqrt 2 \delta+\pi}\hr+\delta^3 \pare{2\sqrt 2\delta^2-3\pi\delta
+4\sqrt 2}.
$$
Au vu de \eqref{exp4}, le nombre $a_3$ doit être nul. Nous pouvons donc exprimer $\hat \rho$ en termes de $\delta$. En y injectant la relation \eqref{delta-c}, nous obtenons une expression explicite de $\hat \rho$ qui coïncide avec \eqref{rho-3}. Nous ne donnons pas les expressions de $b_3$, $c_3$, $d_3$, $e_3$ et $f_3$, rationnelles en $\delta$, qui sont faciles à calculer. Mentionnons juste que $f_3 \neq 0$.

Maintenant, grâce à ~\eqref{exp2},~\eqref{exp3} and~\eqref{exp4}, nous obtenons respectivement pour  $s$, $r$ et enfin $x$ des expressions en fonction de $\varepsilon$ de la forme
\begin{eqnarray}
s&= & c_4\,\varepsilon  \ln \varepsilon + d_4\,\varepsilon + O\pare{\varepsilon^2 \ln ^2
\varepsilon },
\label{s-eps}
\\
r&=&  c_5\,\varepsilon  \ln \varepsilon + d_5\,\varepsilon +O\pare{\varepsilon^2 \ln ^2
\varepsilon },
 \label{r-eps}\\
x&=&  c_6\,\varepsilon  \ln \varepsilon + d_6\,\varepsilon+ O\pare{\varepsilon^2 \ln ^2
\varepsilon }.
\label{x-eps}
\end{eqnarray}
En particulier, $c_6 \neq 0$ et l'équation qui précède donne $x \sim \, c_6 \, \varepsilon \, \ln \varepsilon$. Ainsi $\ln x \sim \ln \varepsilon$ et donc
$$\varepsilon = \frac x {c_6 \ln x} \pare{1 + o(1)}.$$
De plus, \eqref{x-eps} permet d'exprimer $\varepsilon \ln \varepsilon$ comme une combinaison linéaire de $x$ et $\varepsilon$ (plus des termes négligeables). En injectant cette combinaison linéaire dans \eqref{s-eps} et \eqref{r-eps}, puis en remplaçant $\varepsilon$ par l'expression donnée ci-dessus, nous obtenons
\begin{eqnarray*}
s&= & \frac{c_4}{c_6} x + \frac{d_4c_6-c_4d_6}{c_6^2} \frac x{\ln x} (1+o(1)),
\\
r&=&  \frac{c_5}{c_6} x + \frac{d_5c_6-c_5d_6}{c_6^2} \frac x{\ln x}
(1+o(1)) .
\end{eqnarray*}
Nous pouvons écrire ces développements de manière explicite :
\begin{eqnarray*}
S(z)& \underset{z \rightarrow \hat \rho}=& \frac {1-\delta^2}4+
\frac{4\pi}{\delta\sqrt{\pi^2(1-u^2)+8u^2}}
(\hr -z)
- \frac{2\sqrt 2\pi}{u\delta}\frac{
  \hr -z}{\ln     (\hr -z)}\left(1+o(1)\right),
\\
R(z)&\underset{z \rightarrow \hat \rho}=& \tr   - \frac{\pi \delta}{2\sqrt{\pi^2(1-u^2)+8u^2}}(\hr-z) - 
\frac{\sqrt 2 \pi \delta}{4u} \frac{
  \hr -z}{\ln(\hr-z)}\left(1+o(1)\right).
\end{eqnarray*}
Comme attendu, $R$ et $S$ sont singulières en $\hr$, ce qui montre que $\hr$ est le rayon de convergence commun à ces deux séries.

Pour conclure avec $\pd F z$, nous utilisons la relation  \eqref{F-cubic} p.~\pageref{F-cubic}
$$
\pd F z = 2 \, \frac z u +\frac S u - \pare{1+\frac 1 u}(2R+S^2)$$
afin d'obtenir un développement asymptotique de $\pd F z$ au voisinage de $\rho$. Ce dernier coïncide avec \eqref{exp-3}. Le coefficient $\beta$ de $(\rho - z) / \ln (\rho -z)$ ne s'annule pas sur $[-1,0[$, c'est pourquoi le rayon de convergence de la série $\pd F z$ est également $\rho$.
\end{proof}

\noindent \textbf{Remarque.} Contrairement aux deux derniers chapitres, démontrer l'analyticité de $R$ et $S$ sur un $\Delta$-domaine semble excessivement difficile. En effet, nous pouvons montrer (par des arguments de concavité) que $R$ prend sur $[-\rho,0]$ des valeurs en dehors du disque de convergence de $\tilde S$. Donc pour prouver l'analyticité de $R$ (et $S$) sur le cercle de convergence (privé de $\rho$), il faudrait être capable de contrôler le domaine d'analyticité de $\tilde S$, mais celui-ci semble difficilement accessible.

\chapter{Perspectives : les cartes animalières}
\label{c:perspectives}

Modèle classique de la physique statistique, la  \textit{percolation} constitue un pôle de recherche très actif, notamment en théorie de probabilités (voir par exemple \cite{stauffer}). Elle est en général étudiée sur des réseaux réguliers, mais plusieurs chercheurs se sont intéressés à ce phénomène sur des cartes planaires \cite{angel-perco,perco2}. Si ce chapitre est consacré à la percolation sur les cartes planaires (ou plus précisément la \textit{percolation de lien}), c'est parce qu'elle est le prolongement naturel de l'étude des cartes forestières. Les  résultats que nous énonçons dans ce chapitre aspirent à des travaux plus poussés sur cette notion (comme par exemple l'étude du comportement asymptotique).

\section{Définition}

Soit $C$ une carte planaire enracinée. Un \textit{animal de lien} (ou un \textit{animal} tout court) est un sous-graphe connexe de $C$, pas forcément couvrant, qui contient l'arête racine\footnote{Il n'existe pas de véritable convention sur la position de la racine, nous en avons donc choisi une qui mènera aux équations les plus simples. En particulier, cela veut dire que l'animal n'est pas vide.}.
Une \textit{carte animalière} $(C,A)$ est une carte planaire enracinée $C$ équipée d'un animal de lien $A$. Un exemple est montré figure \ref{animal}.

\fig{[scale=1.2]{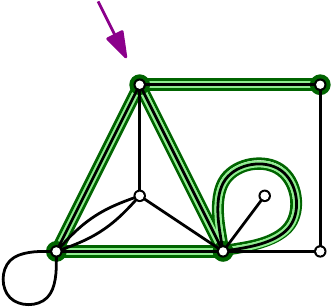}}{Exemple de carte animalière.}{animal}

Notons $A(z,t,b,u)$ la série génératrice des cartes animalières $(C,A)$ avec un poids $z$ par face de $C$, un poids $t$ par arête de $C$, un poids $b$ par arête interne\footnote{la lettre $b$ correspond à "bond" en anglais, lien en français} (i.e. arête qui appartient à l'animal) et un poids $u$ par face de l'animal\footnote{Un animal est une carte planaire, donc comporte des faces. Elles s'obtiennent en supprimant toutes les arêtes externes. Cette statistique n'est pas forcément très naturelle, mais elle s'insère facilement dans le cadre de cette étude. En outre, elle détermine  le nombre de sommets dans l'animal grâce au paramètre $s$ et la formule d'Euler.}. En outre, nous pondérons chaque \textbf{face} de degré $k$ par une variable $d_k$. Tout comme les cartes forestières, cette dernière pondération permet d'obtenir différentes classes de cartes. Par exemple, pour $d_3 = 1$ et $d_k = 0$ pour $k \neq 3$, nous retrouvons la classe des triangulations, et les premiers termes de la série $A$ sont :
$$A(z,t,b,u) = \pare{3\,b\,u + b\,u^2 + 4\,b^2\,u + 4\,b^2\,u^2  + 4 \, b^3 \, u^2} \, z^2 \,t^3  + O\pare{t^4}.$$

\section{\'Equations}

Nous voulons exprimer la série $A$ en termes de séries génératrices plus connues. Pour cela, nous devons au préalable définir la série génératrice des cartes avec une face racine simple. Une carte a une \textit{face racine simple} si les sommets autour de la face racine sont tous distincts. Cela signifie que la face racine ne comporte ni de croisement en huit, ni d'isthme. La figure \ref{cartesfacesimple} illustre cette définition. Nous notons $C_{k}(z,t)$  la série génératrice des cartes planaires avec une face racine simple de degré $k$, où $z$ compte le nombre de faces et $t$ le nombre d'arêtes. Nous pondérons également par $d_k$ chaque face de degré $k$. Nous donnons une caractérisation des séries $C_{k}(z,t)$ dans le lemme \ref{l:cegetem}.

\fig{[scale = 1]{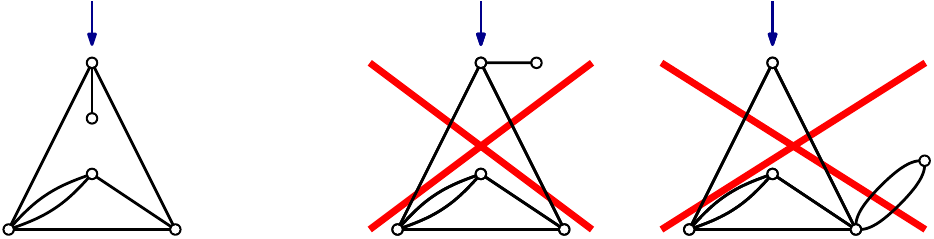}}{La première carte a une face racine simple, les deux autres non.}{cartesfacesimple}

\begin{theo} \label{amselle} Notons $C_{k}(z,t)$ la série génératrice des cartes planaires avec une face racine simple de degré $k$ et $M(u;g_1,g_2,\dots)$ la série génératrice des cartes planaires avec un poids $u$ par face et un poids $g_k$ par face de degré $k$.

La série génératrice $A(z,t,b,u)$ des cartes animalières est 
\begin{equation}
A(z,t,b,u) = M \pare{u; \frac 1 z \pare{\frac  b t}^{1/2}  C_1(z,t), \frac 1 z \pare{\frac  b t}^{1} C_2(z,t), \frac 1 z \pare{\frac  b t}^{3/2} C_3(z,t), \dots  }.
\label{mamzelle}
\end{equation}
\end{theo}

\begin{proof}
Nous décrivons une bijection $\Phi$ entre cartes animalières $(C,A)$ et cartes planaires $C'$ avec une collection de cartes $(C_f)_{f\textrm{ face de }C'}$ à face racine simple telle que pour toute face $f$ de $C'$, le degré de la face racine de $C_f$ correspond au degré de $f$.

Soit $(C,A)$ une carte animalière.  Commençons par découper cette carte le long de chaque face de l'animal $A$. Nous voyons apparaître à l'intérieur de chacune des faces $f$ de $A$ une carte planaire $C_f$ à bord, ce bord étant simple. Posons $C' = A$. 

Soit $f$ une face de $C'$. Nous rajoutons le long du bord de $C_f$ une face appelée \textit{face bordière}\footnote{Nous voulons éviter le terme de  "face externe", puisque la face bordière de la carte associée à la face externe de $A$ n'est pas externe...}. Par construction, le degré de la face bordière est égal au degré de $f$.  En outre, la carte $C_f$ est naturellement enracinée sur un coin incident à la face bordière ; expliquons pourquoi : nous pouvons ordonner de manière canonique les coins de $C'$ (comme dans la preuve du théorème \ref{tfemme} p. \pageref{tfemme}, cet ordre peut avoir une origine combinatoire ou pas). Considérons alors le plus petit coin incident à $f$ pour cet ordre-là. Les deux demi-arêtes formant ce coin existent également dans $C_f$ et sont incidentes à la face bordière ; nous pouvons donc enraciner $C_f$ sur le coin formé par ces deux demi-arêtes. 

Nous posons alors $\Phi(C,A) = \pare{C',(C_f)}$. La figure \ref{decompoanimal} donne un exemple de calcul d'une telle image.

\fig{[width=\textwidth]{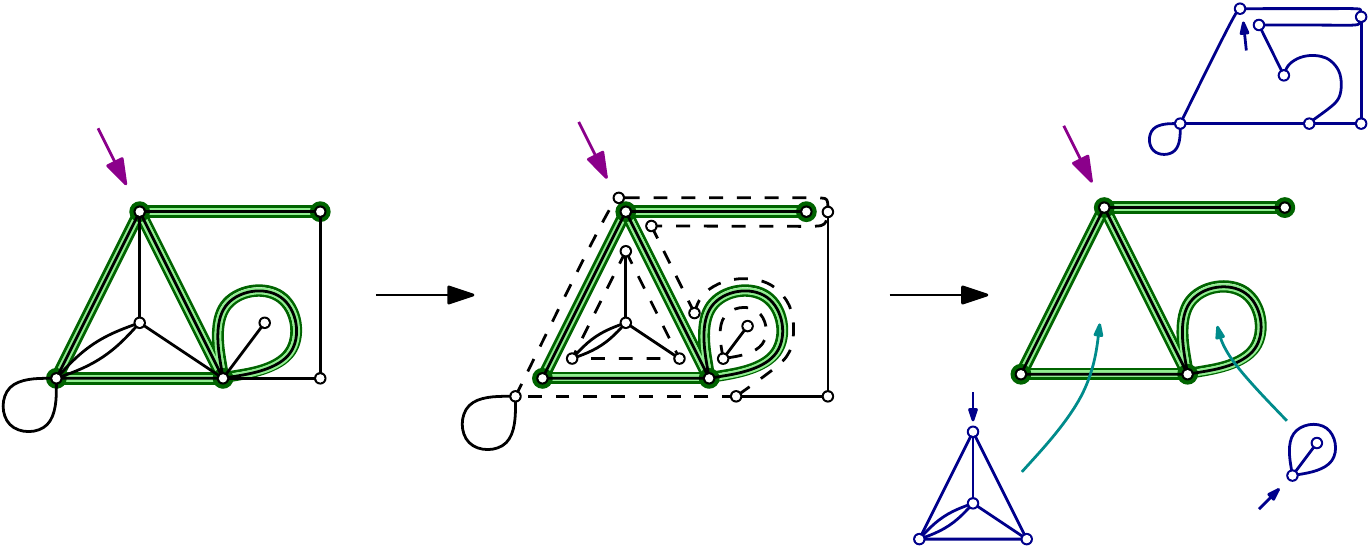}}{Les deux étapes dans le calcul de $\Phi$.}{decompoanimal}

Pour reconstruire la carte $C$ à partir de la donnée de $\pare{C',(C_f)}$, il suffit d'insérer à l'intérieur de chaque face $f$ de $C'$ la carte $C_f$, puis de recoller les arêtes de la face bordière aux arêtes de la face $f$, en veillant à ce que la racine de $C_f$ coïncide avec le plus petit coin de $C'$ incident à $f$ pour l'ordre précédemment choisi. Les arêtes de $C'$ forment alors l'animal $A$. L'enracinement de $C$ nécessite une ultime précaution : \textit{a priori} la racine de $C$ peut se trouver n'importe où entre les deux demi-arêtes qui constituaient la racine de $C'$. Mais par définition, l'arête racine de $C$ doit appartenir à l'animal ; il ne reste donc qu'une seule possibilité pour l'enracinement de $C$. Au final, nous avons bien prouvé que $\Phi$ était une bijection.

Pour conclure, il suffit d'observer que :
\begin{itemize}
\item le nombre de faces de $C$ est égal au nombre total de faces non racine dans toutes les cartes $C_f$,
\item le nombre d'arêtes internes de $C$ est  la moitié du nombre d'arêtes incidentes à la face racine d'une des cartes $C_f$,
\item en comptant toutes les arêtes appartenant à une des cartes $C_f$, nous comptons chaque arête externe de $C$ une fois, chaque arête interne deux fois ; le nombre d'arêtes dans $C$ est donc égal au nombre de telles arêtes, moins le nombre d'arêtes internes,
\item le nombre de faces de $A$ est égal au nombre de faces de $C'$ (ces deux cartes sont les mêmes !).
\end{itemize}
La relation \eqref{mamzelle} résulte alors de la méthode symbolique décrite en introduction.
\end{proof}

\noindent \textbf{Remarque. } Ce théorème est à comparer avec le théorème \ref{tfemme} p. \pageref{tfemme}. De manière duale, $C_{k}(z,t)$ compte les cartes planaires telles que le sommet racine a pour degré $k$ et n'est pas un point d'articulation, avec un poids $z$ par sommet et un poids $t$ par arête. Par une découpe au niveau du sommet racine, cela revient à considérer les cartes planaires à $k$ pattes,  enracinées sur une patte, où chacune des $k$ pattes est incidente à la face racine. Par conséquent, les cartes animalières peuvent donc se voir comme des cartes planaires dans lesquelles on a inséré au niveau de chaque sommet une carte à pattes (de manière à ce que les pattes et les demi-arêtes incidentes coïncident). La figure \ref{animalcartesgonflee} illustre cette correspondance directement par dualité. De ce point de vue, les cartes animalières constituent l'extension naturelle des cartes forestières. En effet, d'après le théorème \ref{tfemme}, les cartes forestières sont des cartes planaires sur lesquelles on a greffé sur chaque sommet un arbre à pattes.

\fig{[width = \textwidth]{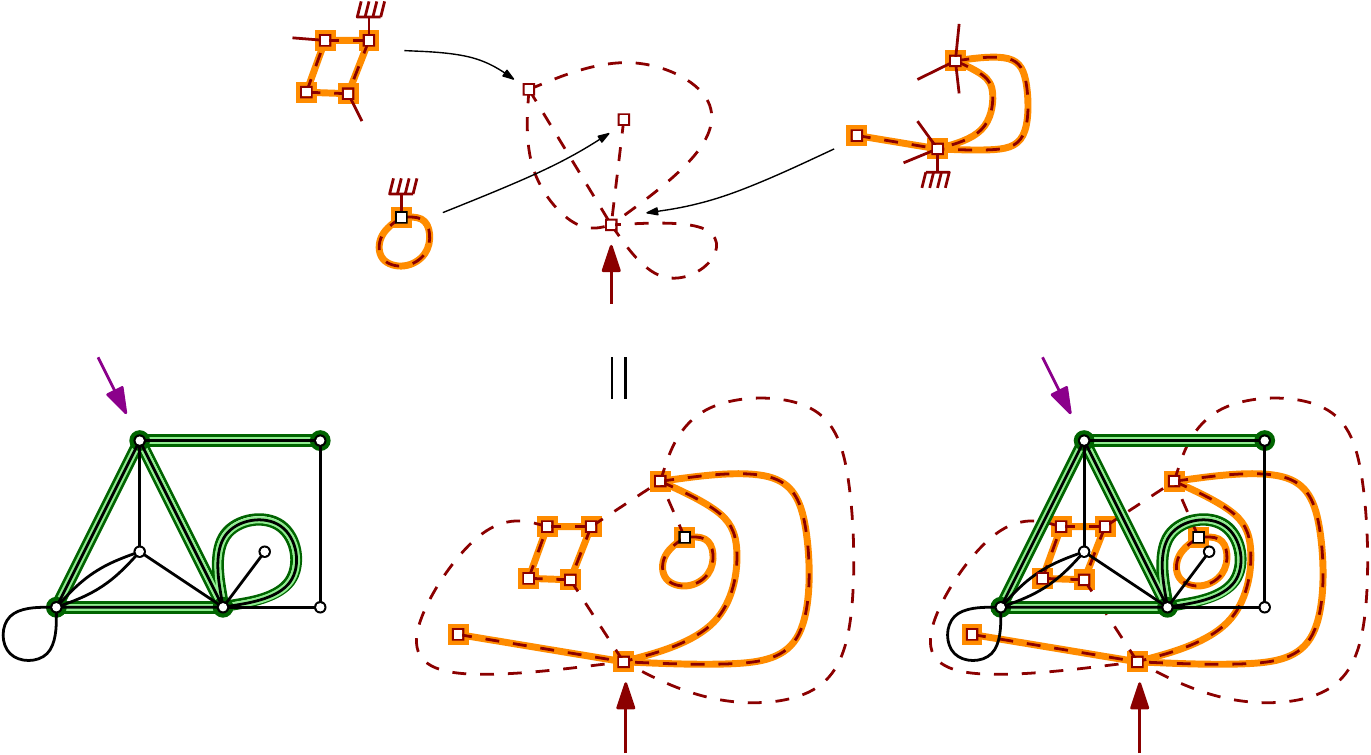}}{\`A gauche : une carte animalière. Au milieu : la carte duale correspondante, une carte planaire dans laquelle on a greffé une carte à pattes sur chaque sommet. \`A droite : la superposition des deux premières cartes. }{animalcartesgonflee}

Passons maintenant aux équations proprement dites. Commençons par décrire la série génératrice des cartes planaires avec une face racine simple.

\begin{lem} \label{l:cegetem}
 Désignons par $C(z,t,x) = \sum_{k \geq 1} C_k(z,t) x^k$,  la série génératrice des cartes planaires avec une face racine simple et $M \pare{z,t,x}$ la série génératrice des cartes planaires. La variable $z$ compte les faces, $t$ compte les arêtes et $x$ compte le degré de la face racine. Nous pondérons également chaque face de degré $k$ par $d_k$. Alors
\begin{equation} \label{CetM}
M\pare{z,t,x} = z  + C\pare{z,t, \frac x z \, M\pare{z,t,x}}.
\end{equation}
On rappelle que la série $M = M(z,t,x)$  satisfait 
\begin{equation}
\pd M z = u \sum_{i \geq 0} \sum_{j \geq 0} x^{2i+j} d_{2i+j} {2i+j \choose i,i,j} {\hat R}^i {\hat S}^j,
\label{Mfaceexterne}
\end{equation}
où $\hat R$ et $\hat S$ sont deux séries de variables $z$ et $t$ telles que
$$\hat R = tz + t \sum_{i \geq 1} \sum_{j \geq 0} d_{2i+j} { 2i+j-1 \choose i-1,i,j } {\hat R}^i {\hat S}^j \quad
\textrm{et} \quad
\hat  S = t \sum_{i \geq 0} \sum_{j \geq 0} d_{2i+j+1} { 2i+j \choose i,i,j } {\hat R}^i {\hat S}^j.
$$
\end{lem}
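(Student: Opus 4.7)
The proof of the equation $M(z,t,x) = z + C(z,t,xM/z)$ will proceed by a bijection; the second identity involving $\hat R, \hat S$ is merely recalled from an earlier chapter and does not need to be re-established. The plan is first to interpret the right-hand side combinatorially: the constant $z$ accounts for the trivial map (one vertex, no edges), and the series $C(z,t,xM/z) = \sum_{k \geq 1} C_k(z,t)\, x^k M^k z^{-k}$ will be interpreted as enumerating pairs $(C', (M_1,\ldots,M_k))$, where $C'$ is a planar map with simple root face of degree $k$ and $(M_1,\ldots,M_k)$ is an ordered $k$-tuple of arbitrary planar maps, with weight $z^{f(C')+\sum_i f(M_i)-k}\, t^{a(C')+\sum_i a(M_i)}\, x^{k+\sum_i \deg(M_i)}$. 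The $z^{-k}$ factor encodes the $k$ faces lost when the root faces of the $M_i$ merge with that of $C'$, while the extra $x$-factors track the degree they contribute to the final root face.

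The forward construction (pairs to planar maps) is direct: I label the $k$ corners of $C'$'s root face as $c_1,\ldots,c_k$ in the order visited by the walk starting from the root corner, with $v_i$ the vertex at $c_i$ (all distinct since the root face of $C'$ is simple). At each $c_i$, I paste $M_i$ by identifying its root vertex with $v_i$ and embedding $M_i$ inside the root face of $C'$ in the angular sector at $c_i$. The root face walk of the resulting map $M'$ then traverses $C'$'s root face cycle, detouring at each $c_i$ around $M_i$'s root face, and a direct count confirms the announced weight.

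For the inverse direction, given a non-trivial $M'$, I propose to extract $C'$ and the $(M_i)$ using the subgraph $G_0 \subseteq M'$ spanned by the edges lying on the boundary of the root face of $M'$, equipped with its induced planar embedding. The root face of $M'$ is then one face of $G_0$, and the block decomposition of $G_0$ distinguishes a canonical \emph{primary block} (the one containing the root edge, which will become the root face boundary of $C'$) from a collection of pendant blocks attached at cut vertices (which will become the root face boundaries of the $M_i$). Each block of $G_0$ bounds a region of the plane on its non-root-face side; the sub-planar-map of $M'$ lying inside that region will contribute the non-root faces of $C'$ for the primary block, and will form the full $M_i$ for each ear block.

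The main obstacle will be to make this canonical extraction rigorous and to verify that it is inverse to the pasting construction, especially in the presence of bridges in $G_0$ or nested block structures, and to ensure that the cyclic order of edges at each vertex in $M'$ is correctly reconstructed by the composition. Once the bijection is established, the identity follows directly from the parameter count carried out in the first paragraph.
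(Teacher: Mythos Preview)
Your forward construction is exactly the paper's: graft an arbitrary planar map into each root-face corner of a simple-root-face map $C'$, and your parameter count is correct. Two points deserve tightening. First, you must specify the root of $M'$: the paper re-roots immediately \emph{after} the map grafted at the root corner of $C'$, so that the root edge of $M'$ lies on $\gamma$; without this convention your ``primary block'' could land inside $M_1$ rather than on $\gamma$, and the inverse would select the wrong cycle.

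Second, your inverse via the block decomposition of $G_0$ is heavier than needed and not quite right as stated: a single $M_i$ whose root face is not simple will contribute several blocks to $G_0$, so the correspondence is not ``one ear block per $M_i$''. The paper's inverse is a one-liner --- $C'$ is the largest submap of $M'$ with simple root face following the root edge --- and each $M_i$ is then simply the detour the root-face walk of $M'$ makes between the two $\gamma$-edges meeting at $v_i$. Your block route can be made to work (one checks that every block of $G_0$ is a bridge or a simple cycle, so the block containing the root edge is indeed $\gamma$), but recovering the individual $M_i$ is cleaner via the angular sectors at the $v_i$ than via blocks.
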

\begin{proof} Le système fonctionnel liant $\pd M z$, $\hat R$ et $\hat S$ est une conséquence directe de la proposition \ref{prop:mp} p.~\pageref{prop:mp}. 

\fig{[scale=1.1]{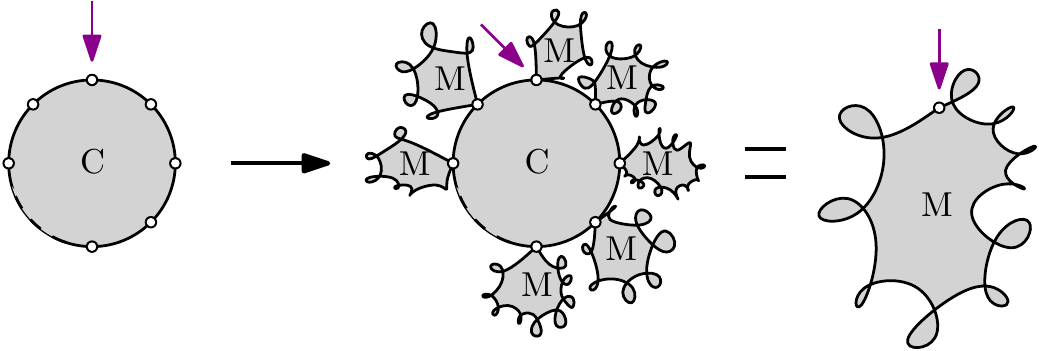}}{Interprétation combinatoire de l'identité \eqref{CetM}.}{entreCetM}

Quant à l'équation \eqref{CetM}, elle se déduit d'une construction combinatoire élémentaire -- précisons laquelle. Soit $C$ une carte planaire à face racine simple. Greffons sur chaque coin de $C$ incident à la face racine une carte planaire (potentiellement vide). Nous réenracinons la carte globale juste après la carte que nous avons greffée sur le coin racine. Nous obtenons ainsi une carte planaire sans restriction sur la face racine, avec au moins une arête (voir figure \ref{entreCetM}). Pour retrouver $C$, il suffit  de considérer la plus grosse sous-carte à face racine simple qui suit la racine. Cette décomposition est bien biunivoque. (Le terme $z$ est la carte est à un seule sommet que nous excluons des cartes à face racine simple.)
\end{proof}

Nous donnons maintenant une caractérisation de la série génératrice des cartes animalières.

\begin{theo} 
Il existe un unique couple $(R,S)$ de séries formelles en $z$, $b$, $u$, $t$, $d_1,d_2,d_3,\dots$  qui satisfait
$$
R = \frac b t + \,\frac {bu} {tz}\, \sum_{i \geq 1} \sum_{j \geq 0} C_{2i+j}(z,t) { 2i+j-1 \choose i-1,i,j } R^i S^j
$$
$$
S = \frac {bu} {tz} \, \sum_{i \geq 0} \sum_{j \geq 0} C_{2i+j+1}(z,t) { 2i+j \choose i,i,j } R^i S^j.
$$
où $C_{k}(z,t)$ désigne la série génératrice des cartes à face racine simple de degré $k$, caractérisée par le lemme \ref{l:cegetem}.

Alors la série génératrice $A(z,t,b,u)$ des cartes animalières, avec un poids $z$ par face, un poids $t$ par arête, un poids $b$ par arête interne et un poids $u$ par face de l'animal  est décrite par
\begin{multline*}
A(z,t,b,u) = \frac{tz} b R + \frac{tz} b S^2 - z^2 \\
- 2 \frac  {tu}{b}  S \sum_{\substack{i \geq 2 \\ j \geq 0}}  C_{2i+j-1}(z,t) { 2i - j - 2 \choose i,i-2,j} R^i S^j - \frac{ t u}  {b} \sum_{\substack{i \geq 3 \\ j \geq 0}}  C_{2i+j-2}(z,t) { 2i - j - 3 \choose i,i-3,j} R^i S^j.
\end{multline*}
\end{theo}

\begin{proof}
Il suffit de combiner le théorème \ref{cartesgk} et le théorème \ref{amselle}. La variable $g_k$ est remplacée par $(b/t)^{k/2}C_k(z,t)/z$ pour chaque $k \geq 1$. Les équations de ce théorème sont alors obtenues en effectuant la substitution bijective $(R,S) \mapsto (t R / b,\sqrt{ t / b}S)$ (nous avons procédé à cette substitution pour que $R$ et $S$ soient des séries formelles en $b$ et $t$). L'unicité du couple $(R,S)$ provient du lemme~ \ref{caracteresse}. \end{proof}

\section{Algébricité différentielle}

Grâce au chapitre \ref{c:ed}, l'algébricité différentielle de $A(z,t,b,u)$ devient facile à démontrer sous certaines hypothèses.

\begin{theo} Supposons que la suite des poids $(d_k)$ vérifie l'une ou l'autre des deux conditions suivantes :
\begin{enumerate}
\item[(i)] la suite $(d_k)$ est nulle à partir d'un certain rang,
\item[(ii)] la suite $(d_k)$ vaut $1$ sur une union finie de suites arithmétique et éventuellement sur un sous-ensemble fini de $\N$, et elle vaut $0$ sur le sous-ensemble restant.
\end{enumerate}
Alors la série génératrice des cartes animalières $A(z,t,b,u)$ est différentiellement algébrique en $z$ et en $t$. (Rappel : la variable $z$ compte les faces et $t$ les arêtes.)
\end{theo}
\begin{proof} D'après le théorème \ref{t:dalg} p. \pageref{t:dalg} appliqué à l'identité du théorème \ref{amselle}, il suffit de montrer que la série $C(z,t,x) = \sum_{k \geq 1} C_k(z,t) x^k$ est holonome, où $C_k(z,t)$ désigne la série génératrice des cartes planaires à face racine simple de degré $k$, pour $k \geq 0$. (Ici on attribue les mêmes poids au sommet racine qu'aux sommets non racine.)

En réalité, nous allons prouver que la série $C$ est algébrique (i.e. est annulée par un polynôme). Pour cela, nous nous basons sur le résultat suivant : si une des deux hypothèses $(i)$ ou $(ii)$ est vraie, alors la série génératrice $M(z,t,x)$ des cartes planaires, avec un poids $d_k$ pour chaque face de degré $k$, comptées par les faces ($z$), les arêtes ($t$) et le degré de la face racine ($x$), est algébrique.  Pour la condition $(i)$, l'algébricité de $M$ est montrée dans un article de Mireille Bousquet-Mélou et Arnaud Jehanne \cite[Corollaire 7]{jehanne}. L'algébricité sous la condition $(ii)$ est quant à elle montrée dans un papier de Bender et Canfield \cite[Corollaire 1]{bender-canfield}.

Il existe donc un polynôme $P$ non nul à coefficients entiers tels que $$P(z,t,x,M(z,t,x))=0.$$ Nous pouvons légèrement modifier ce polynôme pour qu'en fait
$$P\pare{z,t,\frac x z \,M(z,t,x),M(z,t,x)} = 0.$$
(Il suffit de remplacer toute occurrence de $x$ dans $P$ par $ x \, z/M(z,t,x)$ et multiplier par un exposant suffisamment grand de $M(z,t,x)$ de sorte que $P$ reste un polynôme.)
D'après le lemme  \ref{l:cegetem}, on a alors
$$P\pare{z,t,\frac x z \, M(z,t,x),z + C\pare{z,t, \frac x z \,M(z,t,x)}} = 0.$$
En posant $y = x \, M(z,t,x) / z$, nous obtenons
$$P\pare{z,t,y,z + C\pare{z,t,y}} = 0,$$
ce qui prouve l'algébricité de $C$.
\end{proof}

\noindent \textbf{Remarque.} Ce résultat est théorique, l'obtention d'une équation différentielle explicite pour $A$ semble difficilement réalisable.

\section{Les perspectives}

Le système fonctionnel ayant été établi, nous aimerions pouvoir l'analyser et en déduire le comportement asymptotique des cartes animalières. Des réponses devraient être apportées aux questions suivantes :
\begin{itemize}
\item Y a-t-il une transition de phase ?
 \item Quelle est la taille moyenne de l'animal (selon les arêtes/les sommets) ?
 \item La série génératrice est-elle  holonome ?
\end{itemize}

Une des difficultés de la démarche semble être le niveau d'imbrication entre les différentes séries : d'abord $\hat R$ et $\hat S$, puis $M$, puis $C$, puis $R$ et $S$ et enfin $A$... En outre, contrairement aux arbres avec un nombre de pattes fixé, il semble ne pas y avoir de formules closes simples pour le nombre de cartes à face racine simple de degré fixé, même dans les classes de cartes les plus simples.

  Une autre question intéressante serait de savoir s'il existe un phénomène de $(u+1)$-positivité pour les cartes animalières. En effet, nous avons vu dans la section~\ref{s:bbforet} p.~\pageref{s:bbforet} (lire plus particulièrement la remarque 2) que la $(u+1)$-positivité de la série génératrice des cartes forestières pouvait être déduite de la bijection entre arbres bourgeonnants forestiers et cartes forestières. Je pense que cette approche peut être adaptée aux cartes animalières et que la $(u+1)$-positivité (ou quelque chose d'approchant) de la série $A$ peut être montrée. Si cela s'avère vrai, il serait intéressant (mais sûrement très difficile !) d'étudier le comportement asymptotique de telles cartes pour des valeurs négatives de $u$. Et pourquoi pas espérer un régime asymptotique à base de logarithmes, comme pour les cartes forestières ?

\selectlanguage{english}

\part{A general notion of activity for the Tutte polynomial}

\chapter{Activities for the Tutte polynomial}
\label{c:activities}

\subsection*{Introduction}

We mentioned in Section \ref{s:poltutte} that the Tutte polynomial of a graph $G$ can be defined as the generating function of the spanning trees $T$ counted according to the numbers $i(T)$ and $e(T)$ of internal and external active edges:
\begin{equation}\label{first}
T_G(x,y) = \sum_{T\textrm{ spanning tree}} x^{i(T)}y^{e(T)}.
\end{equation}
Such a description appeared for the first time in the founding paper of Tutte \cite{tutte54}. Tutte's notion of activity required to linearly order the edge set. More recently, Bernardi \cite{bernardi-tutte} gave a new notion of activity which was this time based on an embedding of the graph. 
The two notions are not equivalent, but they both satisfy \eqref{first}. 
One can also cite the notion of external activity introduced by Gessel and Sagan \cite{GesselSagan} involving the depth-first search algorithm and requiring a linear ordering on the vertex set.

The purpose of this part is to unify all these notions of activity.
We thereby define a new notion of activity, called \textit{$\Delta$-activity}. Its definition is based on a new combinatorial object named \textit{decision tree}, to which the letter $\Delta$ refers.  We show that each of the previous activities is a particular case of $\Delta$-activity. Moreover, we see that the $\Delta$-activity enjoys most of the properties that were true for the other activities, like Crapo's property of partition of the subgraphs into intervals \cite{crapo}.

\section{Preliminary definitions}
\label{DNM}

\subsection{Sets}

The set of non-negative integers is denoted by $\N$. We denote by $|A|$ the cardinality of any set $A$. When we say that a set $A$ is the disjoint union of some subsets $A_1,\dots,A_k$, this means that $A$ is the union of these subsets and that $A_1,\dots,A_k$ are pairwise disjoint. We then write $A = \biguplus_{i=1}^k A_i$. For any pair of sets $A$, $B$, we denote by $\diffs A B$ the symmetric difference of $A$ and $B$\footnote{We do not use the notation $A \, \triangle \, B$ because the triangle $\triangle$ can be easily mistaken for the letter $\Delta$, very much used in this manuscript.}. Let us recall that the symmetric difference is commutative and associative. 

\subsection{Graphs, subgraphs, intervals} \label{sss:interval}
For a graph $G$, the set of vertices is denoted by $V(G)$ and the set of edges by $E(G)$. 
A \textit{spanning subgraph} of $G$ is a graph $S$ such that $V(S)=V(G)$ and $E(S) \subseteq E(G)$. Unless otherwise indicated, all subgraphs will be spanning in this thesis. A subgraph is completely determined by its edge set, therefore we identify the subgraph with its edge set. For instance, given $A$ a set of edges, we will allow ourselves to write $S \subseteq A$ if $S$ is a subgraph of $G$ only made of edges that belong to $A$. For any subgraph $S$, we denote by $\overline S$ the complement subgraph of $S$ in $G$, i.e. the subgraph $\overline S$ such that $V(\overline S) = V(G)$ and $E(\overline S) = E(G) \backslash E(S)$.
If $S$ is a subgraph of $G$ and $A \subseteq E(S)$, we write $S \backslash A$ to denote the subgraph of $G$ with edge set $E(S) \backslash A$. 
Given a subgraph $S$ of $G$, an edge $e \in E(G)$ is said to be \emph{internal} if $e \in S$, \emph{external} otherwise.

The collection of all subgraphs of $G$ can be ordered via inclusion to obtain a boolean lattice. A \textit{subgraph interval} $I$ denotes an interval for this lattice, meaning that there exist two subgraphs $H^-$ and $H^+$ such that $I$ is the set of subgraphs $S$ satisfying $$H^- \subseteq S \subseteq H^+.$$ In this case, we write $I = [H^-,H^+]$.

\subsection{Cycles and cocycles}

A \emph{path} is an alternating sequence $(v_1,e_1,v_2,\dots,e_k,v_{k+1})$ of vertices and edges such that the endpoints of $e_j$ are $v_j$ and $v_{j+1}$ for all $j \in \ens{1,\dots,k}$. A path with no repeated vertices and edges, with the possible exception $v_1=v_{k+1}$, is called a \emph{simple} path. A path is said to be \emph{closed} when $v_1=v_{k+1}$. A \emph{cycle} is the set of edges of from a simple and closed path. For instance, a loop can be seen as a cycle reduced to a singleton. A graph (or a subgraph) with no cycle is said to be \emph{acyclic}.

A \emph{cut} of a graph $G$ is a set of edges $K$ such that the endpoints of each edge of $K$ are in two distinct connected components of $\overline K$. A \emph{cocycle} is a cut which is minimal for inclusion. (Therefore the deletion of a cocycle exactly increases the number of connected components by one.) An \emph{isthmus} is an edge whose deletion increases the number of connected components. (In other terms, an isthmus is a cocycle reduced to a singleton.) Note that a subgraph $S$ of $G$ is connected if and only if the set $\overline S$ does not contain any cocycle of $G$.

An edge which is neither an isthmus, nor a loop, is said to be \emph{standard}.

\noindent \textbf{Example. }Consider the graph of Figure \ref{exg}. The set $\ens{a,c,d}$ is a cycle but $\ens{a,c,d,g,i,j}$ is not (because the associated path is not simple). The set $\ens{b,h}$ is a cocycle but  $\ens{b,h,i,j}$ is not.

\fig{[scale=1.2]{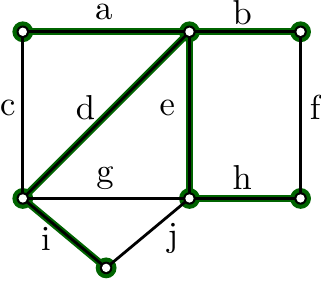}}{A graph equipped with a spanning tree in bold edges}{exg}

A \emph{spanning tree} is a (spanning) subgraph that is a tree, that is to say a connected and acyclic graph. 
Fix $T$ a spanning tree of a graph $G$. The \emph{fundamental cycle} of an external edge $e$ is the only cycle contained in the subgraph $T \cup \{e\}$, i.e. the cycle made of $e$ and the unique path in $T$ linking the endpoints of $e$. Similarly, the \emph{fundamental cocycle} of an internal edge $e$ is the only cocycle contained in $\overline T \cup \sing e$, i.e. the cocycle made of  the edges having exactly one endpoint in each of the two subtrees obtained from $T$ by removing $e$.

\label{found}

\noindent \textbf{Example. }Consider the spanning tree from  Figure \ref{exg}. The fundamental cycle of $j$ is $\ens{d,e,i,j}$, the fundamental cocycle of $e$ is $\ens{e,f,g,j}$.

\subsection{Embeddings}

We now define combinatorial maps as Robert Cori and Toni Machi did in \cite{cori-these,cori-machi}. This extends the notion of planar maps from Section \ref{s:introcartes} p. \pageref{s:introcartes}. A \emph{map} $M = (H,\sigma,\alpha)$ is a finite set of half-edges $H$, a permutation $\sigma$ of $H$ and an involution without fixed point $\alpha$ on $H$ such that the group generated by $\sigma$ and $\alpha$ acts transitively on $H$. A map is \textit{rooted} when one of its half-edges, called the \textit{root}, is distinguished. In this thesis, \emph{all the maps are rooted}.

For every map $M = (H,\sigma,\alpha)$, we define its \emph{underlying graph} as follows: We form a vertex for each cycle of $\sigma$, and an edge for each cycle of $\alpha$. A vertex is incident with an edge if the corresponding cycles have a non-empty intersection. Observes that such a graph is always connected since $\sigma$ and $\alpha$ act transitively on $H$.

\begin{figure}[h!]
\begin{center}
\includegraphics[scale=1]{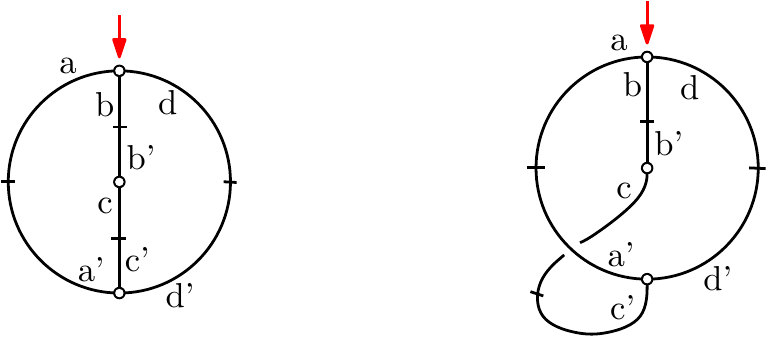}
\end{center}
\caption{Two different embeddings of the same graph.}
\label{fig:embe}
\end{figure}

A \emph{combinatorial embedding} of a connected graph $G$ is a map $M$ such that the underlying graph of $M$ is isomorphic to $G$. Every embedding of $G$ is in correspondence with a \emph{rotation system} of $G$, that is to say a choice of a circular ordering of the half-edges around each vertex of $G$. The rotation system actually corresponds to the above permutation $\sigma$. The permutation $\alpha$ is automatically given by the matching of the half-edges. When an embedding of $G$ is considered, we write the edges of $G$ as pairs of half-edges (for instance $e = \ar h$). 

A map $M = (H,\sigma,\alpha)$ can be graphically represented in the following way: We draw the underlying graph of $M$ in such a manner that the cyclic ordering of the half-edges in counterclockwise order around each vertex matches the corresponding cycle in $\sigma$. If we want to indicate the root, we add an arrow just before\footnote{in counterclockwise order} the root half-edge, pointing to the root vertex. In other terms, if $h_0$ denotes the root, we put an arrow between the half-edges $h_0$ and $\sigma^{-1}(h_0)$. This construction allows us to see the planar maps of Section~\ref{s:introcartes} p.~\pageref{s:introcartes} as combinatorial maps.

For example, Figure \ref{fig:embe} shows two different maps with the same underlying graph.  The left map corresponds to $(H,\sigma,\alpha)$ and the right map to $(H,\sigma',\alpha)$, where $H = \ens{a,a',b,b',c,c',d,d'}$,  $\sigma = (a\,b\,d)(a'\,d'\,c')(b'\,c)$, $\sigma' = (a\,b\,d)(a'\,c'\,d')(b'\,c)$ and \mbox{$\alpha = (a\,a') (b\,b') (c\,c') (d\,d')$} (the permutations are written in cyclic notation). Each of these maps is rooted on the half-edge $a$.

Given a map $M = (H,\sigma,\alpha)$, we say that a half-edge $h_2$ \emph{immediately follows} another half-edge $h_1$ when $h_2 = \sigma \circ \alpha (h_1)$. In Figure \ref{fig:embe}, the half-edge that immediately follows $a$ for the left map is $d'$, while it is $c'$ for the right map.

\subsection{Edge contraction and deletion}

\emph{Edge deletion} is the operation that removes an edge $e$ from the edge set of a graph $G$ but leaves its endpoints unchanged. The resulting graph is denoted by $\delete G e$.

\emph{Edge contraction} is the operation that removes an edge from a graph by merging the two vertices it previously connected. More precisely, given an edge $e$ with endpoints $v$ and $w$ in a graph $G$, the contraction of $e$ yields a new graph $G'$ where $w$ has been removed from the vertex set and any edge which was incident to $w$ in $G$ is now incident to $v$ in $G'$.
The resulting graph $G'$ is denoted by $\contract G e$. 

We can also extend the operations of deletion and contraction to maps: Let us consider  an embedding $M = (H,\sigma,\alpha)$ of a graph $G$ and $e = \ens{h_1,h_2}$ an edge of $G$. Let $H'$ be $H \backslash \ens{h_1,h_2}$ and $\alpha'$ the involution $\alpha$ restricted to $H'$. If $e$ is not an isthmus, we define $\delete M e$ as the map $(H',\sigma_d,\alpha')$ where
$$
\sigma_d(h) = \left\{ 
\begin{array}{ll} 
\sigma \circ \sigma \circ \sigma(h) & \textrm{if }(\sigma(h) = h_1\textrm{ and }\sigma(h_1)=h_2)\textrm{ or } (\sigma(h) = h_2\textrm{ and }\sigma(h_2)=h_1), \\
\sigma \circ \sigma(h) & \textrm{if }(\sigma(h) = h_1\textrm{ and }\sigma(h_1) \neq h_2)\textrm{ or } (\sigma(h) = h_2\textrm{ and }\sigma(h_2) \neq h_1), \\
\sigma(h) & \textrm{otherwise}. \\
\end{array}
\right. \hspace*{-0.1cm}
$$
Similarly, if $e$ is not a loop, we define $\contract M e$ as the map $(H',\sigma_c,\alpha')$ where
$$
\sigma_c(h) = \left\{ 
\begin{array}{ll} 
\sigma \circ \sigma (h) & \textrm{if }(\sigma(h) = h_1\textrm{ and }\sigma(h_2)=h_2)\textrm{ or } (\sigma(h) = h_2\textrm{ and }\sigma(h_1)=h_1), \\
\sigma \circ \alpha \circ \sigma (h) & \textrm{if }(\sigma(h) = h_1\textrm{ and }\sigma(h_2) \neq h_2)\textrm{ or } (\sigma(h) = h_2\textrm{ and }\sigma(h_1) \neq h_1), \\
\sigma(h) & \textrm{otherwise}. \\
\end{array}
\right. 
$$
We do not want to define the deletion of an isthmus or the contraction of a loop since it could bring about the disconnection of the map\footnote{Yes, even for a contraction. Take for instance the map $(H,\sigma,\alpha)$ with $H = \ens{a,a',b,b',c,c'}$, $\sigma = (a\,b\,a'\,c) (b')(c')$ and $\alpha = (a\,a') (b\,b') (c\, c')$. If $e = \ar a$, we can check that $\sigma_c = (b)(b')(c)(c')$.}.

One can check that $\delete M e$ (resp. $\contract M e$) is indeed an embedding of $\delete G e$ (resp. $\contract G e$). If $h_1$ is the root of the map $M$, we choose $\sigma_d(h_1)$ (resp. $\sigma_c(h_1)$) as the root of $\delete M e$ (resp. $\contract M e$).


%
%
%
%
%
%

\section{Activities}

\subsection{Back to the Tutte polynomial}

Let us  recall the definition to the Tutte polynomial and provide some additional details.

Two parameters are important to define the Tutte polynomial. The first one is the \emph{number of connected components} of a subgraph $S$, denoted by $\cc(S)$. Recall that by convention each subgraph is spanning. This implies that the subgraph of $G$ with no edge has $|V(G)|$ connected components. The second parameter is the \emph{cyclomatic number} of $S$, denoted by $\cycl(S)$. It can be defined as 
\begin{equation} \cycl(S) = \cc(S) + |S| - |V(G)|. \end{equation}
It equals the minimal number of edges that we need to remove from $S$ to obtain an acyclic graph. In particular, $\cycl(S)=0$ if and only if $S$ is a forest.

\begin{defie} The \emph{Tutte polynomial} of a graph $G$ is 
\begin{equation} T_G(x,y) = \sum_{S\textrm{ subgraph of }G}(x-1)^{\cc(S)-\cc(G)}(y-1)^{\cycl(S)},
\label{eq:deftutte}
\end{equation}
where $\cc(S)$ and $\cycl(S)$ respectively denote the number of connected components of $S$ and the cyclomatic number of $S$.
\end{defie}

For example, consider the graph of Figure \ref{fig:extut}. Let us list all its subgraphs with their contributions to the Tutte polynomial: there are one subgraph with no edge (contribution \mbox{$(x-1)^2$}), four subgraphs with one edge (contribution $4\,(x-1)$), five acyclic subgraphs with two edges (contribution $5$), the subgraph $\ens{a,d}$ (contribution $(x-1)\,(y-1)$), four subgraphs with three edges (contribution $4\,(y-1)$) and the whole graph (contribution $(y-1)^2$). Thus the Tutte polynomial of this graph is $$(x-1)^2 + 4 \, (x-1) +  5 + (x-1)\,(y-1) + 4\,(y-1) + (y-1)^2,$$ which can be rewritten as $x^2 + x + x\,y + y + y^2$.

One can easily deduce from \eqref{eq:deftutte} that if the graph $G$ is the disjoint union of two graphs $G = G_1 \uplus G_2$, then the Tutte polynomial of $G$ is the product of the two other Tutte polynomials: \mbox{$T_G(x,y) = T_{G_1}(x,y) \times T_{G_2}(x,y)$}. We can \textit{de facto} restrict our study to connected graphs. \emph{From now on, all the graphs $G$ we consider are connected.}
We also assume that our graphs have at least one edge. (Otherwise, the Tutte polynomial is equal to $1$.)  

Let us  recall the relation of induction satisfied by the Tutte polynomial, due to Tutte himself \cite{tutte54}.

\begin{prop} \label{eq:ind} Let $G$ be a graph and $e$ be one of its edges. The Tutte polynomial of $G$ satisfies:
$$
T_G(x,y) = \left\{ \begin{array}{ll} T_{\contract G e}(x,y) + T_{\delete G e}(x,y) & \textrm{if }e\textrm{ is standard,} \\
x\,T_{\contract G e}(x,y) & \textrm{if }e\textrm{ is an isthmus,} \\
y\, T_{\delete G e}(x,y) & \textrm{if }e\textrm{ is a loop.} 
 \end{array} \right.
$$
\end{prop}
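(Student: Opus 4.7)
The plan is to start from the subgraph expansion \eqref{eq:deftutte} and split the sum according to whether a subgraph $S$ contains the distinguished edge $e$ or not:
$$T_G(x,y) = \underbrace{\sum_{\substack{S \subseteq E(G) \\ e \notin S}} (x-1)^{\cc(S)-\cc(G)}(y-1)^{\cycl(S)}}_{=:\Sigma_1} \;+\; \underbrace{\sum_{\substack{S \subseteq E(G) \\ e \in S}} (x-1)^{\cc(S)-\cc(G)}(y-1)^{\cycl(S)}}_{=:\Sigma_2}.$$
I would then identify each of the two sums with (a multiple of) a Tutte polynomial of a smaller graph by exhibiting bijections between the subgraphs indexing them and the subgraph families of $\delete G e$ and $\contract G e$.

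First I would handle $\Sigma_1$. The subgraphs of $G$ avoiding $e$ are in obvious bijection with the subgraphs of $\delete G e$ (same vertex set, same edges). Since $e \notin S$, the parameters $\cc(S)$ and $\cycl(S)$ are the same whether $S$ is viewed inside $G$ or inside $\delete G e$. It only remains to compare $\cc(G)$ with $\cc(\delete G e)$: if $e$ is not an isthmus, these are equal and $\Sigma_1 = T_{\delete G e}(x,y)$; if $e$ is an isthmus, $\cc(\delete G e) = \cc(G)+1$, yielding $\Sigma_1 = (x-1)\,T_{\delete G e}(x,y)$. Next, for $\Sigma_2$ when $e$ is not a loop, I would send $S$ to $S' = \contract S e$, which is a subgraph of $\contract G e$. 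The key verification is that $\cc(S') = \cc(S)$ (contracting an edge of $S$ merges two already-connected vertices) and $\cycl(S') = \cycl(S)$ (using $|S'|=|S|-1$ and $|V(\contract G e)|=|V(G)|-1$ in the formula $\cycl = \cc + |E| - |V|$), together with $\cc(\contract G e)=\cc(G)$; this yields $\Sigma_2 = T_{\contract G e}(x,y)$. When $e$ is a loop, instead I would map $S$ to $S\setminus\{e\}$ in $\delete G e$, noting that removing a loop does not change $\cc$ but decreases $\cycl$ by one, so $\Sigma_2 = (y-1)\,T_{\delete G e}(x,y)$.

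Assembling the pieces yields the three cases of the statement. In the standard case, $\Sigma_1 + \Sigma_2 = T_{\delete G e} + T_{\contract G e}$. In the loop case, $e$ is not an isthmus so $\Sigma_1 = T_{\delete G e}$, and $\Sigma_1+\Sigma_2 = (1 + (y-1))\,T_{\delete G e} = y\,T_{\delete G e}$. In the isthmus case, combining $\Sigma_1 = (x-1)T_{\delete G e}$ with $\Sigma_2 = T_{\contract G e}$ requires the intermediate identity $T_{\delete G e}(x,y) = T_{\contract G e}(x,y)$, which I would justify as follows: the subgraphs of both graphs correspond to subsets of $E(G)\setminus\{e\}$, and for any such $S'$ the two endpoints of $e$ lie in different components of $S'$ (because $e$ is the unique edge joining the two sides), so passing from $\delete G e$ to $\contract G e$ decreases both $\cc(S')$ and $\cc$ of the ambient graph by exactly one, leaving $\cc(S')-\cc$ and $\cycl(S')$ unchanged. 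This identity turns $\Sigma_1+\Sigma_2$ into $((x-1)+1)\,T_{\contract G e} = x\,T_{\contract G e}$, completing the proof. The main obstacle is really just the bookkeeping in the isthmus case, where one has to be careful with the normalization by $\cc(G)$ and with the auxiliary equality $T_{\delete G e} = T_{\contract G e}$; the rest reduces to straightforward verifications on $\cc$ and $\cycl$.
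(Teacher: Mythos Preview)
Your proof is correct and is the standard argument from the subgraph expansion. The paper does not actually prove this proposition; it is stated as a classical result due to Tutte and cited to \cite{tutte54} without proof, so there is nothing to compare against.
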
 

Since the Tutte polynomial of a graph with one edge is equal to $x$ or $y$, the previous proposition implies by induction the following property.

\begin{core}The Tutte polynomial of any graph has non-negative integer coefficients in $x$ and $y$.
\end{core}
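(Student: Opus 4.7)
The plan is to proceed by induction on the number of edges $|E(G)|$ of the connected graph $G$, using Proposition \ref{eq:ind} as the induction engine. The base case is a graph $G$ with a single edge $e$. Since $G$ is connected with only one edge, $e$ is either a loop or an isthmus. Applying Proposition \ref{eq:ind} reduces $T_G$ to $x \cdot T_{\contract G e}$ or $y \cdot T_{\delete G e}$, where the residual graph has no edges. A direct check from the definition \eqref{eq:deftutte}, with the empty subgraph as the unique subgraph and $(x-1)^0 (y-1)^0 = 1$, gives $T_{\bullet} = 1$ for the edgeless connected graph. Hence $T_G$ equals $x$ or $y$, which has non-negative integer coefficients.

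For the inductive step, assume the claim holds for every connected graph with fewer than $n$ edges, and let $G$ be a connected graph with $n \geq 2$ edges. Pick any edge $e$ and apply Proposition \ref{eq:ind}, checking in each of the three cases that the smaller graphs involved are still connected so that the induction hypothesis applies: (i) if $e$ is standard, then $\contract G e$ is connected because contraction always preserves connectedness, and $\delete G e$ is connected precisely because $e$ is not an isthmus; (ii) if $e$ is an isthmus, only $\contract G e$ appears, and contraction keeps $G$ connected; (iii) if $e$ is a loop, only $\delete G e$ appears, and deleting a loop cannot disconnect $G$. In each case the induction hypothesis gives Tutte polynomials with non-negative integer coefficients for the residual graphs, and the resulting combination — a sum in case (i), or multiplication by $x$ or $y$ in cases (ii) and (iii) — preserves non-negativity and integrality of the coefficients.

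There is no genuine obstacle here: all the substantive content lies in Proposition \ref{eq:ind}, and the corollary is a routine induction once one verifies that the deletion/contraction operations used in each case of Proposition \ref{eq:ind} respect the standing assumption that graphs are connected. The only minor point requiring attention is the evaluation of $T_\bullet = 1$ for the edgeless connected graph, which anchors the induction.
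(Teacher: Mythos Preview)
Your proof is correct and follows exactly the approach sketched in the paper: induction on the number of edges using Proposition~\ref{eq:ind}, with base case a single edge giving $x$ or $y$. You simply spell out the details the paper leaves implicit, in particular the verification that the residual graphs remain connected in each case and the evaluation $T_\bullet = 1$.
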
 

It is natural to ask if a combinatorial interpretation exists for these coefficients. Tutte found in 1954 a characterization of his polynomial in terms of an "activity" based on a total ordering of the edges. Some decades later, Bernardi gave a similar characterization with a notion of activity this time related to an embedding of the graph. The precise definitions will be given in Section~\ref{sec:activity}.

\subsection{Activities}

Let us formalize the notion of activity.  (The following definitions are not conventional, as the activity of a spanning tree usually denotes the number of active edges.)

An \emph{activity} is a function that maps spanning trees of $G$ on subsets of $E(G)$.
We say that an activity $\psi$ is \emph{Tutte-descriptive} if the Tutte polynomial of $G$ is equal to 
\begin{equation}
T_G(x,y) = \sum_{T\textrm{ spanning tree of }G} x^{|\mathcal I(T)|}\,y^{|\mathcal E(T)|}, 
\label{eq:compact} 
\end{equation}
where $\mathcal I (T)= \psi(T) \cap T$ and $\mathcal E(T) = \psi(T) \cap \overline T$.

An \emph{internal activity} (resp. \emph{external activity}) is a function that maps any spanning tree $T$ of $G$ onto a subset of $T$ (resp. a subset of $\overline T$). We say that an internal activity $\mathcal I$ (resp. external activity $\mathcal E$) can be \emph{extended into a Tutte-descriptive activity} if there exists a Tutte-descriptive activity $\psi$ such that $\psi(T) \cap T = \mathcal I(T)$ (resp. $\psi(T) \cap \overline T = \mathcal E(T)$) for any spanning tree $T$ of $G$. 

The objective of the second part of this thesis is to introduce several families of Tutte-descriptive activities, and to describe a general framework from which each of these activities can be deduced.  


\section{Three families of activities}
\label{sec:activity}

In this section, we introduce three families of activities, all of which are already known. All these families are (pairwise) not equivalent, meaning that no family is included in another. We will prove in Chapter \ref{sec:spec}  that each of these activities is Tutte-descriptive, or can be extended into a Tutte-descriptive activities.

\subsection{Ordering activity (Tutte)} 
\label{ss:ord}

The first family of activity was defined by Tutte in \cite{tutte54}, as follows.

Consider $G$ a graph. We equip it with a linear ordering on the edge set. An external (resp. internal) edge of a spanning tree $T$ is said to be \emph{ordering-active} if it is minimal in its fundamental cycle (resp. cocycle). The \emph{ordering activity} is the function that sends every spanning tree $T$ onto the set of its ordering-active edges. This activity naturally depends on the chosen ordering of the edges.

\begin{figure}[h!]
\begin{center}
\includegraphics[scale=1]{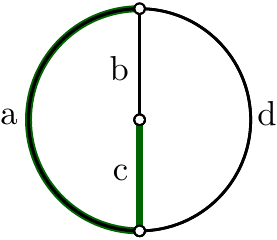}
\end{center}
\caption{A graph with spanning tree $T = \sing {a,c}$.}
\label{fig:extut}
\end{figure}

\noindent \textbf{Example.} Consider the graph of Figure \ref{fig:extut}. The edges are ordered alphabetically, that is to say $a < b < c < d$. With this ordering, the spanning tree  $T = \sing {a,c}$ induces only one internal active edge, $a$, and no external active edge. Indeed, $b$ is not externally active since its fundamental cycle is $\{a,b,c\}$. Similarly, $c$ is not internally active because $b$ belongs to its fundamental cocycle. 

Tutte established that the ordering activity is always Tutte-descriptive. In particular, the sum \eqref{eq:compact}, where $\mathcal I(T)$ and $\mathcal E(T)$ denote the sets of internal and external ordering-active edges of a spanning tree $T$, does not depend on the chosen linear ordering, although the activity clearly does.

\subsection{Embedding activity (Bernardi)}
\label{ss:bernardi}

Bernardi defined in \cite{bernardi-tutte} other activities that are well adapted to the notion of maps.
We consider $M_G = (H,\sigma,\alpha)$ an embedding of a graph $G$, that we root on a half-edge denoted by $h_0$. To each spanning tree $T$, we associate a \emph{motion function} $t$ on the set $H$ of half-edges by setting
\begin{equation} 
\label{motionfunction}
t(h) = \left\{ \begin{array}{ll} \sigma(h) & \textrm{if }h\textrm{ is external,} \\ \sigma \circ \alpha (h)  & \textrm{if }h\textrm{ is internal.}    \end{array} \right. 
\end{equation}
If the notation is ambiguous, we will write $t(h;T)$.

\begin{figure}[h!]
\begin{center}
\includegraphics[scale=1]{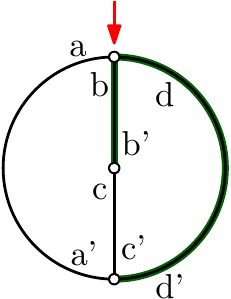} \hspace{3cm} \includegraphics[scale=1]{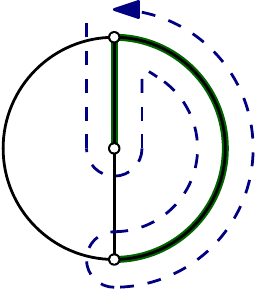}
\end{center}

\caption{Example of an embedded graph equipped with a spanning tree and representation of the corresponding tour.}
\label{fig:exber}
\end{figure}

The motion function characterizes the \emph{tour} of a spanning tree. In informal terms, a tour is a counterclockwise walk around the tree that follows internal edges and crosses external edges (see Figure \ref{fig:exber}). Bernardi proved the following result \cite{bernardi-tutte}.
\begin{prop} 
\label{tourcyclique}
For each spanning tree, the motion function is a cyclic permutation of the half-edges.
\end{prop}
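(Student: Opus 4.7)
The plan is to first verify that $t$ is a permutation of $H$, and then prove by induction on $|E(G)|$ that this permutation forms a single cycle. Checking that $t$ is injective is straightforward: if $t(h_1)=t(h_2)$ with $h_1,h_2$ sharing the same internal/external status, the bijectivity of $\sigma$ and of $\sigma\circ\alpha$ immediately gives $h_1=h_2$; if $h_1$ is external while $h_2$ is internal, then $\sigma(h_1)=\sigma\alpha(h_2)$ yields $h_1=\alpha(h_2)$, forcing $h_1$ to belong to the edge $\{h_2,\alpha(h_2)\}\subseteq T$ and contradicting its external status. Finiteness of $H$ then promotes injectivity to the permutation property.

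The induction on $|E(G)|$ splits according to whether or not $G$ contains an external edge. In the first case I would pick an external edge $e=\{h,h'\}$ and consider $M'=\delete M e$, in which $T$ remains a spanning tree (an external edge cannot be an isthmus, so the deletion is legitimate). In the second case, $G=T$ is a tree; since $|E(G)|\geq 2$, $T$ has a leaf vertex $v$ bearing a single half-edge $h$, and I would contract the edge $e=\{h,h'\}$ (not a loop, $v$ having degree one) to obtain a smaller map $M'=\contract M e$, whose underlying graph is still a tree. In both situations the induction hypothesis furnishes a single cycle for the motion function $t'$ attached to $M'$, and it remains to show that the cycle of $t$ is obtained from that of $t'$ by inserting the pair $\{h,h'\}$ at a consecutive position.

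The comparison between $t$ and $t'$ reduces to a direct computation using the explicit definitions of $\sigma_d$ and $\sigma_c$ recalled in Section~\ref{DNM}. For the deletion case, $\sigma$ and $\sigma_d$ coincide except at the $\sigma$-predecessors of $h$ and $h'$, which now skip over these two half-edges; together with the identities $t(h)=\sigma(h)$ and $t(h')=\sigma(h')$ (which hold because $h$ and $h'$ are themselves external), this yields the clean rule that for every $y\in H\backslash\{h,h'\}$ one has $t'(y)=t^k(y)$, where $k\in\{1,2,3\}$ is the smallest exponent for which $t^k(y)\notin\{h,h'\}$. Equivalently, the $t'$-cycle is the $t$-cycle with $h$ and $h'$ erased, so a single $t'$-cycle forces a single $t$-cycle. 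The tree case proceeds identically: the relation $\sigma(h)=h$ at the leaf $v$ makes the rule for $\sigma_c$ collapse into the same "skip" operation, and a short computation shows that $t$ visits $h'$ immediately before $h$, both of them being inserted between $\alpha(\sigma^{-1}(h'))$ and $\sigma(h')$ in the $t'$-cycle.

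The main obstacle I anticipate is the bookkeeping when $h$ and $h'$ occupy $\sigma$-adjacent positions --- which happens for a loop whose two half-edges are consecutive in the rotation, and systematically at a leaf of $T$ in the tree case. The definition of $\sigma_d$ (respectively $\sigma_c$) then invokes a triple composition $\sigma\circ\sigma\circ\sigma$ instead of $\sigma\circ\sigma$, and one must check that $h$ and $h'$ are inserted at \emph{consecutive} positions of the $t'$-cycle, for otherwise the cycle of $t$ could split into two. Running carefully through the three subcases listed in the definition of $\sigma_d$, and the analogous ones for $\sigma_c$, should dispatch each degeneracy without further conceptual difficulty.
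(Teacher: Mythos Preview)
The paper does not supply its own proof of this proposition; it simply attributes the result to Bernardi \cite{bernardi-tutte}. So there is no ``paper's proof'' to compare against, and your inductive argument must stand on its own.

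Your strategy is sound and will go through, but one claim is wrong as stated and the accompanying worry is misdirected. In the deletion case (external edge $e=\{h,h'\}$), you assert that $h$ and $h'$ are inserted into the $t'$-cycle at a \emph{consecutive} position. This is false in general: for a non-loop external edge the half-edges $h,h'$ sit at different vertices, hence are not $\sigma$-adjacent, and since $t(h)=\sigma(h)$, $t(h')=\sigma(h')$, they will typically land at two distinct places in the $t$-cycle. That is harmless --- erasing two elements from a single cycle still leaves a single cycle --- so the implication ``$t'$ a single cycle $\Rightarrow$ $t$ a single cycle'' does not require consecutiveness.

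What it \emph{does} require, and what you do not check, is that $h$ and $h'$ cannot form a $t$-cycle disjoint from $H'$. Concretely you must rule out $t(h)=h$ (i.e.\ $\sigma(h)=h$, which would make $e$ incident to a leaf, hence an isthmus, contradicting $e\notin T$), and $t(h)=h'$ together with $t(h')=h$ (i.e.\ $\{h,h'\}$ a full $\sigma$-cycle, forcing $e$ to be a loop at a vertex with no other half-edge, impossible for connected $G$ with $|E(G)|\ge 2$). Once these degeneracies are excluded, every $t$-cycle meets $H'$, so a single $t'$-cycle forces a single $t$-cycle. Your contraction case, by contrast, is clean: there $h',h$ really are consecutive in $t$ (you compute $t(h')=h$ via $\sigma(h)=h$ at the leaf), and the insertion argument works exactly as you describe.
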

Thus, we can define a linear order on the set $H$ of half-edges, called the ($M_G$,$T$)-order, by setting 
$h_0 < t(h_0) < t^2(h_0) < \dots < t^{|H| - 1}(h_0)$, where $h_0$ is the root of $M_G$. This order can be then transposed on the set of edges: We say that $e = \ar{h_1} < e' = \ar{h_2}$ when $\min(h_1,h_1') < \min(h_2,h'_2)$.

Then, an external (resp. internal) edge is said to be $(M_G,T)$-\emph{active} if it is minimal for the $(M_G,T)$-order in its fundamental cycle (resp. cocycle). The \emph{embedding activity} is the function that associates with a spanning tree $T$ the set of $(M_G,T)$-active edges of $G$.

\noindent \textbf{Example.} Take the embedded graph from Figure \ref{fig:exber}, that we will denote $M_G$, rooted on $a$ and equipped with the spanning tree $T=\sing {\ar b,\ar d}$. The motion function for this spanning tree is the cycle $(a,b,c,b',d,c',a',d')$.  So the half-edges are sorted for the $(M_G,T)$-order as follows: $a < b < c < b' < d < c' < a' < d'$. Thus, the $(M_G,T)$-order for the edges is $\ar a  < \ar b  < \ar c < \ar d $. There is one external active edge, $\ar a $, and one internal active edge, $\ar b$. The edge $\ar c$ (resp. $\ar d$) is not active since $\ar b$ (resp. $\ar a$) is in its fundamental cycle (resp. cocycle). 

It was proven by Bernardi that any embedding activity is Tutte-descriptive, whatever the chosen embedding is.

\subsection{DFS activity (Gessel, Sagan)}
\label{ss:dfs}

Gessel and Sagan described in \cite{GesselSagan} a notion of external activity  based on the Depth First Search (DFS in abbreviate). 

Consider a graph $G$. We assume that $V(G) = \ens{1,\dots,n}.$ Moreover, we assume that $G$ does not have multiple edges. This allows us to avoid technical details, which can be easily included if needed. Every edge will be denoted by the pair of integers that corresponds to its endpoints, for example $e=\ens{1,2}$.

Given a (not necessarily connected) graph $H$, Algorithm \ref{DFS} computes the \textit{(greatest-neighbor) DFS forest} of $H$, denoted by $\mathcal F(H)$.

\begin{algorithm}[h!]
\caption{DFS forest of a graph}
\label{DFS}
\begin{algorithmic}[5]
\Require  graph $H$.
\Ensure $\mathcal F(H)$, spanning forest of $H$.
\State $\mathcal F(H) \leftarrow \emptyset$;
\While {there is a unvisited vertex} 
	\State \textbf{mark} the least unvisited vertex of $H$ \textbf{as visited}; \label{linedfs}
	\While {there is a visited vertex with unvisited neighbors} 
		\State $v \leftarrow$ the most recently visited such vertex;
		\While{$v$ has a unvisited neighbor} 
			\State $u \leftarrow$ the greatest unvisited neighbor of $v$;
			\State $e \leftarrow \ens{u,v}$;
			\State \textbf{mark} $u$ \textbf{as visited};
			\State $v \leftarrow u$; 
			\State \textbf{add} $e$ in $\mathcal F(H)$;
		\EndWhile
	\EndWhile 
\EndWhile
\State \Return $\mathcal F(H)$
\end{algorithmic}
\end{algorithm}

\noindent \textbf{Informal description.}  We begin by the least vertex. We proceed to the DFS of the graph $H$ that favors the largest neighbors. Each time we move from a vertex to another, we add the edge that we have followed to the DFS forest. When we have visited all the vertices of a connected component, we reset the process, starting from the least unvisited vertex. 

Here we only apply the algorithm to a subgraph $H$  of $G$. An example is shown in Fi\-gure~\ref{fig:dfs}: during the DFS of the subgraph on the left, the vertices will be visited in the order $1,4,6,2,3,5$. The resulting DFS forest is represented on the right. Note that $S$ and $\mathcal F (S)$ have the same number of connected components. 

Given a spanning forest $F$ of $G$, we say that an external edge is \emph{DFS-active} if $$\mathcal F\left(F \cup \ens{e}\right) = F.$$
The \textit{external DFS activity} is the function that sends every spanning forest onto the set of its external DFS-active edges.

For instance, let us go back to the spanning forest $F$ of Figure \ref{fig:dfs} (right). It has two external DFS-active edges: $\ens{1,2}$ and $\ens{5,5}$. On the contrary, the edge $e=\ens{1,6}$ is not DFS-active since $\mathcal F\left(F \cup \ens{e}\right)$ is equal to \mbox{$\ens{\ens{1,6},\ens{6,4},\ens{4,2},\ens{3,5}}$}.

\begin{figure}[h!]
\begin{center}
\includegraphics[scale=1]{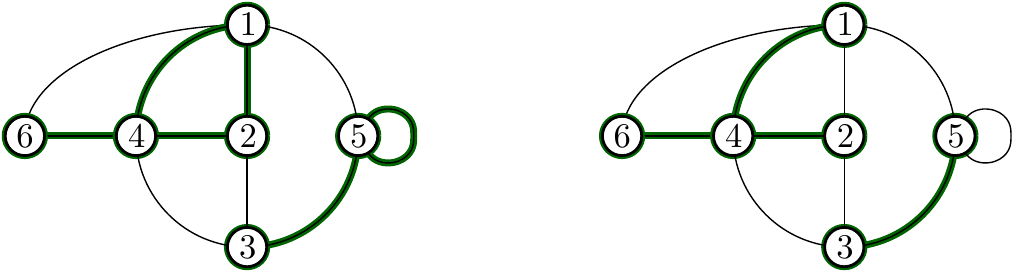}
\end{center}
\caption{Left: a subgraph indicated by thick green edges. Right: Its DFS forest.}
\label{fig:dfs}
\end{figure}

 There is a natural question about Algorithm \ref{DFS}: Given a spanning forest $F$ of $G$, can we describe the set of subgraphs $S$ such that $\mathcal F (S)$ equals $F$? The notion of external DFS activity answers this question.
Indeed, for any subgraph $S$ and for any spanning forest $F$ of $G$, we have $\mathcal F(S) = F$ if and only if \mbox{$S \in [F, F \cup \mathcal E(F)]$}, where $\mathcal E(F)$ denotes the set of external DFS-active edges (see Subsection \ref{sss:interval} p. \pageref{sss:interval} for the definition of intervals).

Let us mention this alternative characterization of externally DFS-active edges (Lemma 3.2 from \cite{GesselSagan}).
\begin{prop} \label{lemGS}
Let $F$ be a spanning forest of $G$. An external edge $e$ is DFS-active if and only if:
\begin{itemize}
\item $e$ is a loop, or
\item $e = \ens{u,v}$ where $v$ is a descendant\footnote{that is to say a vertex in the same component of $u$, visited after $u$} of $u$, and $w > v$, where $w$ is the child of $u$ on the unique path in $F$ linking $u$ and $v$. (We also say that $(w,v)$ is an inversion.)
\end{itemize}
The last case is depicted in Figure \ref{descendant}.
\end{prop}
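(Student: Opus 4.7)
My plan is to split the argument into three cases according to how $e=\{u,v\}$ sits relative to $F$: $e$ is a loop, or $u$ and $v$ lie in different trees of $F$, or they lie in the same tree of $F$. The first two cases are short. If $e$ is a loop, Algorithm~\ref{DFS} never adds $e$ to its output because when the algorithm would consider the loop's only endpoint it is already the current vertex, hence already marked visited; so $\mathcal{F}(F\cup\{e\})=\mathcal{F}(F)=F$ and every loop is DFS-active. If $u$ and $v$ lie in different trees of $F$, then $F\cup\{e\}$ has one fewer connected component than $F$; since $\mathcal{F}(H)$ always has the same connected components as $H$, we get $\mathcal{F}(F\cup\{e\})\neq F$, and such an edge is not DFS-active. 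These two observations leave only the same-tree case, in which one endpoint is automatically an ancestor of the other in the DFS tree of $F$, so we may fix the labelling so that $u$ is the ancestor of $v$ and $w$ is well defined.

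The heart of the proof is the same-tree case, which I would handle by tracking the DFS execution on $F\cup\{e\}$ step by step and comparing it with the execution on $F$. A preliminary observation, following from the greatest-neighbor rule together with the fact that $F$ is a forest, is that the children of $u$ in the DFS tree of $F$, enumerated in the order they are visited, are strictly decreasing in vertex label; write them $c_1>c_2>\dots>c_k$, so that $w=c_i$ for some $i$. Before the DFS reaches $u$, both $u$ and $v$ are unvisited, so $e$ is not yet a candidate edge and the two executions coincide. At each subsequent return of the DFS to $u$, after exploring the subtree rooted at $c_j$ for some $j<i$, the unvisited neighbors of $u$ in $F\cup\{e\}$ form the set $\{c_{j+1},\dots,c_k\}\cup\{v\}$, and the algorithm selects the largest. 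Since $c_{j+1}\geq c_i=w$, the selected vertex is $c_{j+1}$ provided that $w>v$; the critical step is $j=i-1$, at which the choice is exactly $\max(w,v)$.

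If $w>v$, the algorithm chooses $w$ at the critical step, recurses into the subtree of $w$ exactly as in the $F$-execution, and in particular visits $v$; from that moment on, $v$ is no longer a candidate neighbor of $u$ (and $u$ is not a candidate neighbor of $v$, since it is already visited), so the two executions remain identical to the end and $\mathcal{F}(F\cup\{e\})=F$. If $w<v$, then the algorithm picks $e$ instead of $w$ at the critical step, $e$ is added to the output forest, and $\mathcal{F}(F\cup\{e\})\neq F$. Combining the two directions yields the desired equivalence with the inversion condition. The main obstacle will be the bookkeeping required to keep the two DFS executions synchronized across the many subtree explorations, in particular verifying that up to the critical step $e$ is never selected (using the monotonicity $c_1>\dots>c_k$ together with $w>v$) and that after the critical step the now-visited status of $v$ prevents any further divergence.
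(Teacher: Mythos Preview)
The paper does not prove this proposition; it merely cites it as Lemma~3.2 of \cite{GesselSagan}. So there is no argument to compare against, and I comment only on the correctness of your proposal.

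Your loop case and different-tree case are fine, and your analysis of the ancestor--descendant sub-case is essentially correct (with the minor caveat that when $w<v$ the edge $e$ may be selected at some step earlier than $j=i-1$, namely the first $j$ with $c_{j+1}<v$; this does not affect the conclusion). There is, however, a genuine gap at the start of the same-tree case: your assertion that ``one endpoint is automatically an ancestor of the other in the DFS tree of $F$'' is false. You seem to be invoking the classical fact that in a DFS of a graph $H$ every non-tree edge joins an ancestor to a descendant, but here $F$ is not known to be the DFS forest of $F\cup\{e\}$; that is precisely the question at hand. A counterexample: take $V=\{1,2,3\}$, $F=\{\{1,2\},\{1,3\}\}$ and $e=\{2,3\}$. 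The DFS of $F$ roots the tree at $1$ and makes $2$ and $3$ siblings, so neither is an ancestor of the other; yet this is a legitimate same-tree external edge.

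You therefore need an extra sub-case: if $u$ and $v$ lie in the same tree of $F$ but neither is an ancestor of the other, show that $e$ is not DFS-active. This is what makes the second clause of the proposition fail (the ``child $w$ of $u$'' is not well-defined there). The argument is short: let $z$ be the lowest common ancestor of $u$ and $v$ in the rooted tree $F$. The DFS of $F\cup\{e\}$ coincides with that of $F$ until it reaches $z$, then enters one of the two branches below $z$, say the one containing $u$. When the DFS eventually sits at $u$, the vertex $v$ is still unvisited (it lies in the other branch from $z$, which has not been entered), so $v$ appears among the unvisited neighbours of $u$ and will be chosen at some return to $u$, adding $e$ to the output. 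Hence $\mathcal F(F\cup\{e\})\neq F$. Once this sub-case is supplied, your argument goes through.
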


\fig{[scale = 1.5]{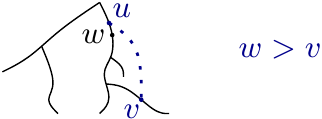}}{A non-loop DFS-active external edge.}{descendant}

Gessel and Sagan proved that the Tutte polynomial of $G$ satisfies:
\begin{equation} \label{DFStut}
T_G(x,y) = \sum_{F\textrm{ spanning forest of }G} (x-1)^{\cc(F)-1} y^{|\mathcal E(F)|},
\end{equation}
where $\mathcal E$ denotes the external DFS activity. We are going to show that the external DFS activity, restricted to spanning trees, can be extended into a Tutte-descriptive activity (cf. Prop. \ref{dfstut}). 

In the same paper, Gessel and Sagan defined a notion of \emph{external activity with respect to NFS} (NFS for Neighbors-first search). We will not detail it but this activity also falls within the scope of this work.
Also note that a notion of edge activity based on Breadth-First Search is conceivable.

\chapter{$\boldsymbol \Delta$-activity}
\label{c:delta}

We are going to introduce a meta-family of Tutte-descriptive activities, named $\Delta$-activities. This family includes all the Tutte-descriptive activities we have seen so far.



\label{s:alg}

\section{Decision trees and decision functions}
\label{subsec:dectree}

All the families of activities we saw depended on a parameter: linear order for the ordering activities, embedding for the embedding activities... Similarly, the $\Delta$-activities will depend on a object, which is in a certain sense more general, named \emph{decision tree}.

Consider a graph $G$. A \emph{decision tree} is a perfect binary tree\footnote{A \textit{perfect binary tree} is a binary tree in which every node has $0$ or $2$ children and all leaves are on the same level. Sometimes perfect trees are called full trees.} $\Delta$ with a labelling $V(\Delta) \rightarrow E(G)$
such that along every path starting from the root and ending on a leaf, the labels of the nodes form a permutation of $E(G)$. 
In particular, the depth of every leaf is $|E(G)|$. An example of decision tree is shown in Figure \ref{fig:ex}.

\begin{figure}[h!]
\begin{center}
\begin{minipage}{0.2 \textwidth}\includegraphics[width= \textwidth]{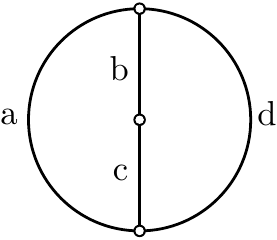}\end{minipage} \hspace{0.1 \textwidth} \begin{minipage}{0.65 \textwidth}\includegraphics[width= \textwidth]{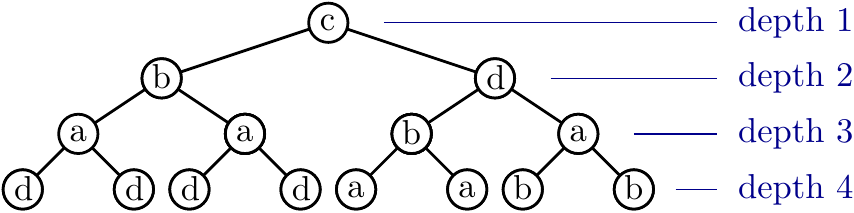}\end{minipage}
\end{center}
\caption{Example of a graph and an associated decision tree.}
\label{fig:ex}
\end{figure}

A \emph{direction} denotes either left or right, which we will write $\ell$ and $r$. Each node of the decision tree bijectively corresponds to a sequence $(d_1,\dots,d_k)$ of directions with $0 \leq k < |E(G)|$: the root node maps to the empty sequence, its children respectively map to the sequences $(\ell)$ and $(r)$, its grand-children map to $(\ell,\ell)$, $(\ell,r)$, $(r,\ell)$ and $(r,r)$, and so on. 
If a node $n$ maps to the sequence $(d_1,\dots,d_k)$, then the label of $n$ will be denoted by $\Delta(d_1,\dots,d_k)$. 
For instance, given the decision tree of Figure \ref{fig:ex}, we have $\Delta(r,\ell) = b$ and $\Delta(\ell,\ell,r) = d$.
By convention, the left or right child of a leaf in $\Delta$ is  null.

This  function $\Delta$ thus maps every sequence $(d_1,\dots,d_k)$ of directions to an edge of $G$ such that 
\begin{equation} \forall\, i < k \ \ \Delta(d_1,\dots,d_i) \neq \Delta(d_1,\dots,d_k),\end{equation}
for every sequence $(d_1,\dots,d_k)$ of directions. Such a function is called a
\emph{decision function}.
The decision function and the decision tree are the same object under different forms. Indeed, given a decision function, it is not difficult to label a decision tree accordingly. Therefore, when a formal definition of a decision tree is needed, we can give the decision function instead.

\section{Algorithm}

The $\Delta$-activities can be defined in several manners. In this section, we are going to describe an algorithm that compute these activities.

We fix $G$ a connected graph with $m$ edges and $\Delta$ a decision tree for $G$. Given a subgraph $S$ of $G$ (not necessarily a spanning tree), Algorithm \ref{type} outputs a partition ($TypeSe$, $TypeL$, $TypeSi$, $TypeI$) of $E(G)$. In other terms, the algorithm assigns to each edge a type, denoted by \bSe\, (for Standard External), \bL\, (for Loop), \bSi\, (for Standard Internal) or \bI\, (for Isthmus). Thus, if an edge belongs to $TypeSe$ (resp. $TypeL$, $TypeSi$, $TypeI$), we say that this edge has $\Delta$-\emph{type} \bSe\, (resp. \bL, \bSi, \bI) for $S$. If there is no ambiguity, we simply write \emph{type}.

\begin{algorithm}[h!]
\caption{How types are assigned to edges.}
\label{type}
\begin{algorithmic}[5]
\Require $S$ subgraph of $G$.
\Ensure A partition ($TypeSe$, $TypeL$, $TypeSi$, $TypeI$) of $E(G)$.
\State $m \leftarrow $ number of edges in $G$;
\State $TypeSe \leftarrow \emptyset$; $TypeL \leftarrow \emptyset$; $TypeSi \leftarrow \emptyset$; $TypeI \leftarrow \emptyset$; 
\State $n \leftarrow$ root of $\Delta$;
\State $H \leftarrow G$;
\For {$k$ from $1$ to $m$}

	\State $e_{k} \leftarrow$ label of $n$;
	\If{$e_k$ is standard in $H$ \textbf{and} $e_k \notin S$ }

			\State $H \leftarrow \delete H e$;  
			\State \textbf{add} $e_k$ in $TypeSe$;
			\State $n \leftarrow$ left child of $n$;
	\EndIf
	\If{$e_k$ is a loop in $H$}
		\State {\color{darkgray} $H \leftarrow \delete H e$;} 			    
		\label{suppressionfacultative} 
		\Comment{optional (see Prop. \ref{variants})}
		\State \textbf{add }$e_k$ in $TypeL$;
		\State $n \leftarrow$ left child of $n$;
	\EndIf 
	\If{$e_k$ is standard in $H$ \textbf{and} $e_k \in S$}
		 	\State $H \leftarrow \contract H e$;
			\State \textbf{add} $e_k$ in $TypeSi$;
			\State $n \leftarrow$ right child of $n$;
	\EndIf 
	\If{$e_k$ is an isthmus in $H$}
		\State {\color{darkgray} $H \leftarrow \contract H e$ ;}
		  \label{contractionfacultative} 
		\Comment{optional (see Prop. \ref{variants})}
		\State \textbf{add }$e_k$ in $TypeI$;
		\State $n \leftarrow$ right child of $n$;
	\EndIf 
\EndFor
\State \Return ($TypeSe$, $TypeL$, $TypeSi$, $TypeI$)
\end{algorithmic}
\end{algorithm}

\noindent \textbf{Informal description.} We start from the edge that labels the root of $\Delta$. If this edge is standard external or a loop, we assign it the type \bSe\, or \bL, the edge is deleted and we go to the left subtree of $\Delta$. If this edge is standard internal or an isthmus, we assign the type \bSi\, or \bI, the edge is contracted and we go to the right subtree of $\Delta$.  We repeat the process until the graph has no more edge. Figure \ref{schema} illustrates this description.

\fig{[scale=1.4]{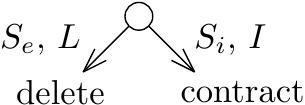}}{Diagram representing a step of Algorithm \ref{type}.}{schema}

We say that an edge is $\Delta$-\emph{active} (or \textit{active})  for $S$ if it has $\Delta$-type \bL\, or \bI. The $\Delta$-activity is the function denoted by $\Act$ that maps each spanning tree onto its set of $\Delta$-active edges.   Warning: an edge with type \bL\, or \bI\, is not necessarily a loop or an isthmus in $S$.  It is an edge that is a loop or an isthmus at some point in Algorithm \ref{type}.


\noindent \textbf{Example.} Consider $G$ the graph and $\Delta$ the decision tree of Figure \ref{fig:ex}. The run of the algorithm for $S = \{a,d\}$ is illustrated by Figure \ref{ex:run} (top). We have $\Act(S) = \ens{b,d}$. 

\begin{figure}[h!]

\begin{center}
\hrule \  \\
$H$  at each iteration when   line \ref{suppressionfacultative} and line \ref{contractionfacultative} are present: \\
\vspace{0.1cm}

\includegraphics[scale=0.82]{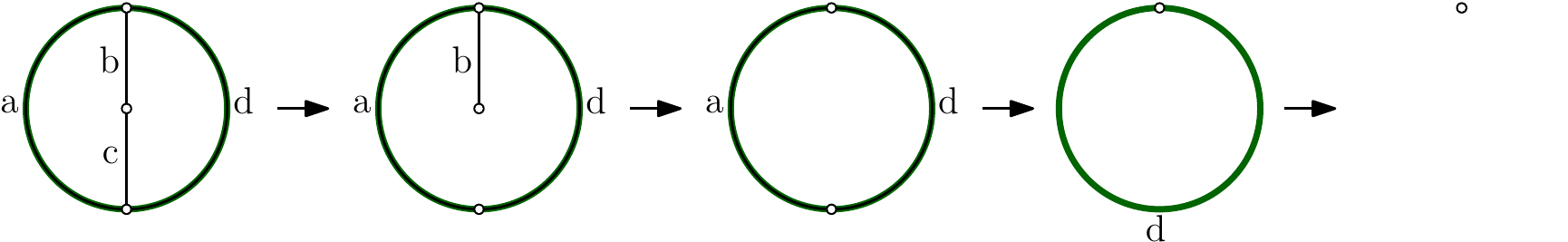} \\
\vspace{0.1cm}
\hrule \  \\
\vspace{0.1cm}

$H$ at each iteration when line \ref{suppressionfacultative} and line \ref{contractionfacultative} are missing: \\
\vspace{0.1cm}
\includegraphics[scale=0.82]{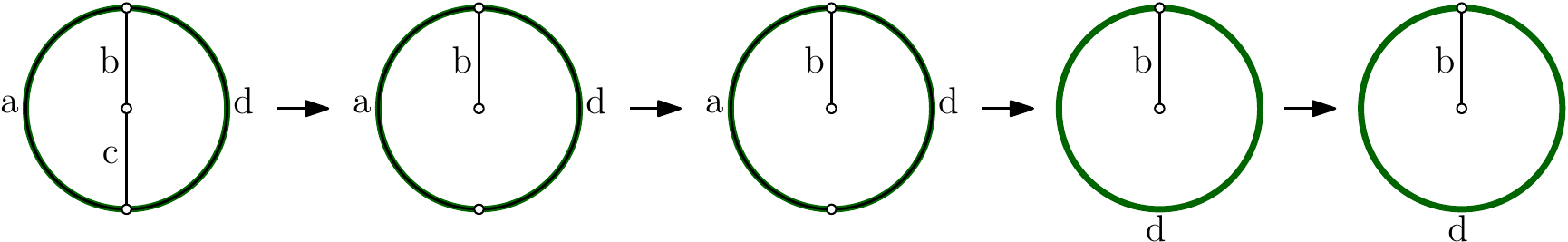}
\vspace{0.1cm}
\hrule
\end{center}

\caption{Run of two versions of Algorithm \ref{type} with $S = \{a,d\}$. One has assigned to $c$ the type \bSe, to $b$ the type \bI, to $a$ the type \bSi, to $d$ the type \bL, in this order. }
\label{ex:run}
\end{figure}

Algorithm \ref{type} is a generalization of the matroid resolution algorithm from Gordon and Traldi  \cite{gordon-traldi} that computes the ordering activity. In Gordon and Traldi's version, the edges $(e_k)$ are considered in a fix order that does not depend on the subgraph. This constitutes a noticeable difference with Algorithm \ref{type} where the sequence $(e_k)$ is given by the decision tree.  \\



A sequence of edges $(e_1,\dots,e_m)$ and a sequence of types $(t_1,\dots,t_m)$  can be naturally assigned to each subgraph $S$, where $m$ is the number of edges in $G$, where $e_k$ is the $k$-th edge visited by Algorithm \ref{type} and $t_k$ is the type of $e_k$. This pair of sequences is called the \emph{history} of $S$ and is denoted by $e_1 \ua{t_1} e_2 \ua{t_2} \cdots  \ua{t_{m-1}} e_m \ua{t_{m}}$. For instance, given $G$ and $\Delta$ as in Figure \ref{fig:ex}, the history of $\{a,d\}$ is $c \ua{\mSe} b \ua{\bI} a \ua{\mSi} d \ua{\bL}$.

We can easily prove by induction that for every history $e_1 \ua{t_1} e_2 \ua{t_2} \cdots  \ua{t_{m-1}} e_m \ua{t_{m}}$, the relation
\begin{equation}
\label{ekdelta}
e_{k+1} = \Delta(d(t_1),\dots,d(t_k))
\end{equation}
holds for every $k \in \ens{0,\dots,m-1}$, where $d$ denotes the map $\ens{\mSe,\bL,\mSi,\bI} \rightarrow \ens{\ell,r}$ defined as
\begin{equation} 
d(t) = \left\{\begin{array}{cl} \ell &   \textrm{if }t = \mSe\,\textrm{ or }t =\bL, \\ r &  \textrm{if }t = \mSi\,\textrm{ or }t =\bI. \end{array} \right.\end{equation}



\section{Description of the Tutte polynomial with $\boldsymbol \Delta$-activity}


We are now ready to state the main theorem.

\begin{theoe} \label{charact}
Let $G$ be a connected graph and $\Delta$ a decision tree for $G$. 
The $\Delta$-activity is Tutte-descriptive. In other terms, the Tutte polynomial of $G$ is equal to
\begin{equation}
\label{celuila}
T_G(x,y) = \sum_{T\textrm{ spanning tree of }G} x^{|\mathcal I(T)|}y^{|\mathcal E(T)|},
\end{equation}
where $\mathcal I(T)$ and $\mathcal E(T)$ are respectively the sets of internal $\Delta$-active and external $\Delta$-active edges of the spanning tree $T$.\end{theoe}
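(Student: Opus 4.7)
The natural approach is induction on the number of edges $m=|E(G)|$, mirroring the deletion-contraction recurrence for the Tutte polynomial (Proposition \ref{eq:ind}). The base case $m=1$ is immediate: $G$ is either a single isthmus, in which case the unique spanning tree is $G$ itself and its unique edge gets type $\bI$ (contribution $x$), or a single loop, in which case the unique spanning tree is the empty subgraph and its unique edge gets type $\bL$ (contribution $y$); both match $T_G(x,y)$.

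For the inductive step, let $e_1=\Delta(\emptyset)$ be the label of the root of $\Delta$. The key observation is that the left subtree $\Delta_\ell$ of $\Delta$ is a valid decision tree for $G\backslash e_1$ and the right subtree $\Delta_r$ is a valid decision tree for $G/e_1$, since along every root-to-leaf path of $\Delta_\ell$ (resp.\ $\Delta_r$) the labels form a permutation of $E(G)\setminus\{e_1\}$, which is naturally identified with $E(G\backslash e_1)$ (resp.\ $E(G/e_1)$). I would then split into three cases according to the nature of $e_1$ in $G$, exactly as in the deletion-contraction identity:
\begin{itemize}
\item If $e_1$ is standard in $G$, the spanning trees of $G$ split into those not containing $e_1$ (which are precisely the spanning trees of $G\backslash e_1$) and those containing $e_1$ (bijectively identified with spanning trees of $G/e_1$ via $T\mapsto T\backslash\{e_1\}$). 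In the first case $e_1$ receives type $\bSe$, is not active, and after step $1$ Algorithm \ref{type} run on $(G,\Delta,T)$ proceeds identically to Algorithm \ref{type} run on $(G\backslash e_1,\Delta_\ell,T)$; in the second case $e_1$ receives type $\bSi$, is not active, and the subsequent computation matches Algorithm \ref{type} on $(G/e_1,\Delta_r,T\backslash\{e_1\})$. Summing contributions and applying the induction hypothesis to $G\backslash e_1$ and $G/e_1$ gives $T_{G\backslash e_1}(x,y)+T_{G/e_1}(x,y)=T_G(x,y)$.
\item If $e_1$ is a loop, then $e_1\notin T$ for every spanning tree $T$; the spanning trees of $G$ and of $G\backslash e_1$ coincide, $e_1$ receives type $\bL$ and is external active, so every contribution is multiplied by $y$. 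Induction and the loop rule yield $y\,T_{G\backslash e_1}(x,y)=T_G(x,y)$.
\item If $e_1$ is an isthmus, then $e_1\in T$ for every spanning tree $T$; the spanning trees of $G$ are in bijection with those of $G/e_1$, $e_1$ receives type $\bI$ and is internal active, and induction together with the isthmus rule gives $x\,T_{G/e_1}(x,y)=T_G(x,y)$.
\end{itemize}

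The only genuine point to verify carefully is the compatibility of the algorithm with deletion/contraction of $e_1$: that for every subgraph $S$ the sequence of types assigned to $e_2,\ldots,e_m$ by Algorithm \ref{type} on $(G,\Delta,S)$ coincides with the sequence assigned by Algorithm \ref{type} on the reduced graph with the reduced decision tree and the appropriate image of $S$. This follows directly by reading off the algorithm: after processing $e_1$, the internal state $(H,n)$ is exactly $(G\backslash e_1,\text{root of }\Delta_\ell)$ or $(G/e_1,\text{root of }\Delta_r)$, and the subsequent decisions depend only on this state and on $S\cap (E(G)\setminus\{e_1\})$. Hence the only real work is bookkeeping; I do not anticipate a substantive obstacle, the proof being essentially a matter of unpacking the definitions and invoking Proposition \ref{eq:ind}.
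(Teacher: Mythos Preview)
Your proposal is correct and follows essentially the same approach as the paper: induction on the number of edges, splitting on whether the root label $e_1$ is standard, a loop, or an isthmus, and reducing to the induction hypothesis via the deletion-contraction recurrence (Proposition~\ref{eq:ind}). The compatibility claim you isolate at the end---that after processing $e_1$ the algorithm on $(G,\Delta,T)$ proceeds identically to the algorithm on the reduced graph with the appropriate subtree of $\Delta$---is exactly what the paper packages as Lemma~\ref{respect}.
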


Observe that this theorem holds for every decision tree $\Delta$, although the notion of $\Delta$-activity depends on the chosen decision tree.

Before giving a proof of this theorem, let us remark that when the subgraph is a spanning tree, the edges of type \bI/\bL\, coincide with the internal/external active edges.

\begin{prop}
\label{intextact}
Let $G$ be a connected graph and $\Delta$ a decision tree for $G$. 
If the considered subgraph $S=T$ is a spanning tree of $G$, then each edge of type \bI\, is internal and each edge of type \bL\, is external.
\end{prop}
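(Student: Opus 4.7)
The plan is to establish the following loop invariant throughout the execution of Algorithm \ref{type} on the subgraph $S=T$: at the beginning of each iteration, $T\cap E(H)$ is a spanning tree of the current graph $H$. Once this is shown, the conclusion is immediate: any isthmus of $H$ belongs to every spanning tree of $H$, so if $e_k$ receives type \textbf{I} at step $k$, then $e_k\in T\cap E(H)\subseteq T$, i.e.\ $e_k$ is internal; dually, a loop of $H$ belongs to no spanning tree of $H$, so type \textbf{L} forces $e_k\notin T$, i.e.\ $e_k$ is external.

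First I would check the base case ($k=1$, $H=G$), which is immediate since $T$ is by hypothesis a spanning tree of $G$. For the inductive step, assuming the invariant holds at step $k$, one would go through the four possible types assigned to $e_k$ and verify the invariant is preserved when we pass to step $k+1$. If $e_k$ has type \textbf{Se}, then $e_k\notin T$, so removing $e_k$ from $H$ does not touch $T\cap E(H)$, and since $e_k$ is standard the deletion preserves connectivity, leaving $T\cap E(H)$ as a spanning tree of $H\setminus e_k$. If $e_k$ has type \textbf{Si}, then $e_k\in T\cap E(H)$ and is standard, and the standard fact that contracting an edge of a spanning tree yields a spanning tree of the contracted graph gives the invariant. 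If $e_k$ has type \textbf{L}, then $e_k$ is a loop in $H$, hence cannot belong to the spanning tree $T\cap E(H)$, so $e_k\notin T$; whether or not the optional deletion of line~\ref{suppressionfacultative} is performed, the tree is unaffected (loops do not belong to any spanning tree). If $e_k$ has type \textbf{I}, then $e_k$ is an isthmus of $H$, and since every isthmus belongs to every spanning tree of $H$, we get $e_k\in T\cap E(H)\subseteq T$; whether or not the optional contraction of line~\ref{contractionfacultative} is performed, the invariant still holds (either $H$ is unchanged, or contracting an isthmus lying in the spanning tree gives a spanning tree of the contracted graph).

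The only mild subtlety, which will be the main thing to be careful about, is handling correctly the two optional operations on lines~\ref{suppressionfacultative} and~\ref{contractionfacultative}: when we do \emph{not} delete a loop or contract an isthmus, $H$ is left unchanged, so one must simply observe that the invariant is trivially preserved in that case. Note also that $e_k$ is guaranteed to still belong to $H$ at step $k$, because edges $e_1,\dots,e_{k-1}$ are pairwise distinct from $e_k$ by the decision-tree property (along any root-to-leaf branch, labels form a permutation of $E(G)$), and the algorithm only alters the edges $e_1,\dots,e_{k-1}$. This makes the induction well-posed and completes the proof.
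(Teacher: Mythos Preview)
Your proof is correct and follows exactly the same approach as the paper: both establish the loop invariant that $T\cap E(H)$ remains a spanning tree of $H$ throughout the algorithm, then use the fact that an isthmus belongs to every spanning tree while a loop belongs to none. The paper simply asserts the invariant in one sentence without spelling out the four cases, whereas you have written out the induction in full detail.
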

\begin{proof}
It is not difficult to see that at each step of the algorithm, $T \cap H$ is a spanning tree of $H$. Thus, when the algorithm assigns to an edge $e$ the type \bI\, (resp. \bL), $e$ is an isthmus (resp. a loop) in $H$, so it must be inside $T$ (resp. outside $T$).
\end{proof}

 In other terms, for every spanning tree $T$, the edges of type \bI\, are precisely the internal $\Delta$-active edges  and the edges of type \bL\, the external $\Delta$-active edges. Also note that each edge of type \bSe\, is external and each edge of type \bSi\, is internal, even if the subgraph is not a spanning tree.
 

\begin{proof}[Proof of Theorem \ref{charact}] Let $\tut$ be the polynomial 
$$\tut(x,y)=\sum_ {T\textrm{ spanning tree of }G} x^{|\mathcal I(T)|}y^{|\mathcal E(T)|}.$$
Let us prove by induction on the number of edges that the equality $T_G = \tut$ is true for any graph $G$ and decision tree $\Delta$.
More precisely, we show that $\tut$ satisfies the same relation of induction as the Tutte polynomial (see Proposition~\ref{eq:ind} p. \pageref{eq:ind}).

 Assume that $G$ is reduced to a graph with one edge $e$. Then $e$ is a loop or an isthmus, and $\Delta$ is reduced to a leaf labelled by $e$. We easily check that    $T_G(x,y)=y=\tut(x,y)$ when $e$ is a loop and $T_G(x,y)=x=\tut(x,y)$ when $e$ is an isthmus.

 Now assume that $G$ has at least two edges. We denote by $n_1$ the root node of $\Delta$ and $e_1$ its label.

\begin{leme} \label{respect}
Let $T$ be a spanning tree of $G$. If $e_1$ is external for $T$, define the graph $G'$ as $\delete G {e_1}$
and $\Delta'$ as the subtree rooted on the left child of $n_1$. If $e_1$ is internal, define the graph $G'$ as $\contract G {e_1}$ and $\Delta'$ as the subtree rooted on the right child of $n_1$.
Then the $\Delta$-type in $G$ for $T$ and the $\Delta'$-type in $G'$ for $T \backslash e_1$ of every edge in $G'$ are the same. 
\end{leme}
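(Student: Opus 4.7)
The plan is to compare the two executions of Algorithm \ref{type} step by step, and show that after the first iteration of the run on $(G, \Delta, T)$, its state coincides with the initial state of the run on $(G', \Delta', T \setminus e_1)$. The remaining iterations will then be identical in both runs, yielding the same types for every edge of $G'$.

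Concretely, I would first analyze the very first iteration of Algorithm \ref{type} applied to $(G, \Delta, T)$. The edge examined is $e_1$, the label of the root $n_1$, and the current working graph is $H=G$. I then perform a case analysis on whether $e_1$ is external or internal for $T$. If $e_1$ is external (Case 1), then $e_1 \notin T$, so the algorithm assigns to $e_1$ either type \bSe\ (if $e_1$ is standard in $G$) or type \bL\ (if $e_1$ is a loop in $G$); in both sub-cases $H$ becomes $G \setminus e_1 = G'$ (using line \ref{suppressionfacultative} for the loop case), and the current node moves to the left child of $n_1$, which is precisely the root of $\Delta'$. If $e_1$ is internal (Case 2), the symmetric argument applies: the algorithm assigns type \bSi\ or \bI, $H$ becomes $G/e_1 = G'$ (using line \ref{contractionfacultative} for the isthmus case), and the current node moves to the right child of $n_1$, which is the root of $\Delta'$.

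Next, I would observe that $\Delta'$ is indeed a decision tree for $G'$: along every root-to-leaf path of $\Delta$ passing through the chosen child of $n_1$, the labels form a permutation of $E(G)$ beginning with $e_1$, so the remaining labels form a permutation of $E(G) \setminus \{e_1\} = E(G')$. Moreover, for every edge $e \in E(G')$, we have $e \in T$ if and only if $e \in T \setminus e_1$ (since $e \neq e_1$), so the membership tests that drive the algorithm's branching yield the same answers in the two runs. Combining these observations, after the first iteration the ordered triple (current working graph, current node of the decision tree, tracked subgraph intersected with the remaining edges) for the run on $(G, \Delta, T)$ equals the initial triple $(G', \text{root of }\Delta', T \setminus e_1)$ of the run on $(G', \Delta', T \setminus e_1)$. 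By a straightforward induction on the iteration index, all subsequent steps in the two runs are identical, and in particular assign the same type to every edge of $E(G')$.

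The proof is essentially routine; the only delicate point is to verify that the optional deletion/contraction lines \ref{suppressionfacultative} and \ref{contractionfacultative} are compatible with the identification $H \leftrightarrow G'$ in the loop and isthmus sub-cases. Assuming these optional lines are executed (which is the convention one may adopt for this proof, the independence from that choice being addressed separately by Proposition \ref{variants}), the case analysis goes through cleanly and the lemma follows.
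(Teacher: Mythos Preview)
Your proposal is correct and follows essentially the same approach as the paper: both argue that after the first iteration of Algorithm \ref{type} on $(G,\Delta,T)$, the algorithm's state (working graph $H$ and current node $n$) coincides with the initial state of the run on $(G',\Delta',T\setminus e_1)$, so the remaining iterations are identical. Your version is more explicit about the case analysis, the fact that $\Delta'$ is a valid decision tree for $G'$, and the role of the optional lines \ref{suppressionfacultative} and \ref{contractionfacultative}, but these are elaborations of the same argument rather than a different route.
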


\begin{proof} 
After the first iteration of Algorithm \ref{type} with graph $G$, decision tree $\Delta$ and input $T$, one can check that the graph $H$ is equal to $G'$, and the node $n$ is the root node of $\Delta'$. (Indeed, by Proposition \ref{intextact}, an edge in a spanning tree is external if and only if it has type \bSe\, or \bL. So we go to the left subtree of $\Delta$ at the first iteration if and only if $e_1$ is external.) These are exactly the values of $H$ and $n$ at the beginning of Algorithm \ref{type} with graph $G'$, decision tree $\Delta'$ and input $T \backslash e_1$. Therefore, in both cases, the algorithm will evolve in the same way from this point on: the edges will be visited in the same order and the types will be identically assigned.
%
\end{proof}

We study now three  cases:

\noindent \textbf{(1) $\boldsymbol{e_1}$ is a standard edge.} Then we can partition the set of spanning trees of $G$ into two disjoint sets: the set of spanning trees \textit{not} containing the edge $e_1$, denoted by $\mathbb T_1$, and the set of spanning trees  containing $e_1$, denoted by $\mathbb T_2$. Let us consider $\Delta_1$ (resp. $\Delta_2$) the subtree rooted on the left child (resp. right child) of $n_1$.

The map $\phi : T \mapsto \delete T {e_1}$ is a bijection from $\mathbb T_1$ to the spanning trees of $\delete G  {e_1}$. (Actually $\phi$ is nothing else than the identity map.) Moreover, for every $T \in \mathbb T_1$, the edge $e_1$ has type \bSe. So according to Lemma \ref{respect}, the bijection $\phi$ preserves the internal and external active edges (for $G$ and $\Delta$ on the one hand, for $G'$ and $\Delta_1$ on the other hand), hence
$$\mathcal T_{\delete G {e_1},\Delta_1} = \sum_{T \in \mathbb T_1} x^{|\mathcal I(T)|}y^{|\mathcal E(T)|}.$$
Similarly, one can prove that 
$$\mathcal T_{\contract G {e_1},\Delta_2} = \sum_{T \in \mathbb T_2} x^{|\mathcal I(T)|}y^{|\mathcal E(T)|}.$$
But we have $\mathcal T_{\delete G {e_1},\Delta_1} = T_{\delete G {e_1}}$ and $\mathcal T_{\contract G {e_1},\Delta_2} = T_{\contract G {e_1}}$ by the induction hypothesis.
So with Proposition \ref{eq:ind}, we get
\begin{eqnarray*} T_G(x,y) & = & T_{\delete G {e_1}}(x,y) + T_{\contract G {e_1}}(x,y) \\ \ & = & \sum_{T \in \mathbb T_1} x^{|\mathcal I(T)|}y^{|\mathcal E(T)|} + \sum_{T \in \mathbb T_2} x^{|\mathcal I(T)|}y^{|\mathcal E(T)|} = \tut(x,y).
\end{eqnarray*}

\noindent \textbf{(2) $\boldsymbol{e_1}$ is an isthmus.} Then $e_1$ must be internal for every spanning tree of $G$ and so the map $T \mapsto \contract T {e_1}$ is a bijection from the spanning trees of $G$ to the spanning trees of $\contract G {e_1}$. 
Moreover, for each spanning tree $T$ of $G$, the edge $e_1$ has $\Delta$-type \bI\, because it is an isthmus. Let $\Delta'$ denote the subtree of $\Delta$ rooted on the right child of $n_1$. By Lemma \ref{respect}, the $\Delta$-type for $T$ and the $\Delta'$-type for $\contract T {e_1}$ are identical  for every edge different from $e_1$.
So from the properties above, we have $$\tut(x,y) = x\, \mathcal T_{\contract G {e_1},\Delta'}(x,y).$$ We conclude by induction and Proposition \ref{eq:ind}.

\noindent \textbf{(3) $\boldsymbol{e_1}$ is a loop.} Exactly the dual demonstration of the case (2).
\end{proof}

\section{Variants of the algorithm}

Algorithm \ref{type} is rather flexible. As stated inside the pseudo-code, it admits variants that lead to the same partition of edges.

\begin{prop} \label{variants}
Removing line \ref{suppressionfacultative} or/and line  \ref{contractionfacultative} from Algorithm \ref{type} does not change its output. In other terms, given a subgraph $S$ of $G$, the types of the edges remain identical if we choose not to delete the edges of type \bL\, or/and not to contract the edges of type \bI.
\end{prop}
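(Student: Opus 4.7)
The plan is to run the two algorithms in parallel and prove by induction on the iteration index $k$ that they visit the same node of $\Delta$, consider the same edge $e_k$, and assign it the same type. To make the induction work, I would strengthen the hypothesis with a structural claim: the graph $H^{\mathrm{var}}$ maintained by the variant differs from the graph $H^{\mathrm{orig}}$ maintained by the original only in that certain already-processed loops may still be present in $H^{\mathrm{var}}$ (if line \ref{suppressionfacultative} is removed) and certain already-processed isthmuses may still be present with their endpoints distinct in $H^{\mathrm{var}}$, whereas those endpoints have been merged in $H^{\mathrm{orig}}$ (if line \ref{contractionfacultative} is removed).

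The heart of the argument is a key lemma asserting that for any unprocessed edge $e'$, its status (loop, isthmus, or standard) in $H^{\mathrm{var}}$ coincides with its status in $H^{\mathrm{orig}}$. This reduces to two local observations. First, keeping or deleting a loop $e^{*}$ does not affect the loop/isthmus/standard status of any other edge $e'$, since a loop contributes nothing to the connectivity between distinct vertices and does not share endpoints with $e'$ in any relevant way. Second, if $e^{*} = \ens{u,v}$ is an isthmus in a connected graph $K$, then for every other edge $e'$ the status of $e'$ in $K$ agrees with its status in $K/e^{*}$. To see this, note that $K \setminus e^{*}$ has exactly two connected components $C_1 \ni u$ and $C_2 \ni v$, and since $e^{*}$ is an isthmus there is no parallel edge to it; hence every $e' \neq e^{*}$ has both endpoints in $V(C_1)$ or both in $V(C_2)$. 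A short case analysis then shows that $e'$ is a loop (respectively an isthmus) in $K$ if and only if it is a loop (respectively an isthmus) in $K/e^{*}$, distinguishing whether removing $e'$ disconnects the side $C_i$ in which it lies.

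Granted this lemma, the inductive step is immediate. The type assigned to $e_k$ depends only on the status of $e_k$ in the current graph $H$ together with the membership $e_k \in S$, which is input data independent of the variant; hence both algorithms assign $e_k$ the same type. The direction taken in $\Delta$, governed by the map $d$ sending each type to $\ell$ or $r$, is therefore identical, so the two algorithms advance to the same node and will consider the same edge $e_{k+1}$. The structural hypothesis propagates since the only new divergence between $H^{\mathrm{orig}}$ and $H^{\mathrm{var}}$ at step $k{+}1$ is precisely the loop or isthmus that one variant fails to remove. The main obstacle will be the careful verification of the second subclaim concerning isthmus contraction, in particular the nonexistence of parallel edges to an isthmus and the case analysis on whether removing $e'$ disconnects $C_i$; once that is in place, the remainder is essentially bookkeeping.
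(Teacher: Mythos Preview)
Your proposal is correct and follows essentially the same approach as the paper: run the two variants in parallel and prove by induction on $k$ that they visit the same edge $e_k$ and assign it the same type, the crux being that the extra undeleted loops and uncontracted isthmuses lingering in $H^{\mathrm{var}}$ do not affect the loop/isthmus/standard status of $e_k$. The paper establishes this key point by a single cocycle (resp.\ cycle) argument---any cocycle of $H_k$ through $e_k$ survives in $H'_k$ because the edges removed in passing from $H_k$ to $H'_k$, being loops or isthmuses of $H_k$, cannot lie in a non-singleton cocycle---rather than via your iterative application of the two local observations, but the content is the same.
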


\begin{proof} When a graph $G'$ is obtained by deletions and contractions of some edges from a graph $G$, we write $G \rightsquigarrow G'$. In this case, note that if $e$ is a loop (resp. an isthmus) in $G$ and an edge in $G'$, then $e$ is still a loop (resp. an isthmus) for $G'$.
Let $A$ be the variant of Algorithm \ref{type} where the lines   \ref{suppressionfacultative} and \ref{contractionfacultative} are missing and  $A'$ a variant where these lines are potentially present.
The graphs $H$ in the algorithms $A$ and  $A'$ just before the $j$-th iteration will be respectively  denoted by  $H_j$ and  $H'_j$.
Given a subgraph $S$ of $G$, let $e_1 \ua{t_1}  \cdots  \ua{t_{m-1}} e_m \ua{t_{m}}$ (resp. $e'_1 \ua{t'_1}  \cdots  \ua{t'_{m-1}} e'_m \ua{t'_{m}}$) be the history of $S$ for the algorithm $A$ (resp. $A'$). Let us prove by induction on $k$ that for each $1 \leq j \leq k$, we have $e_j = e'_j$ and $t_j = t'_j$. 

Assume that the induction hypothesis is true for $k-1$. The edges $e_k$ and $e'_k$ are identical since they are equal to $\Delta(d(t_1),\dots,d(t_{k-1}))$ (see Equation \eqref{ekdelta}). 
Let us prove the equivalence 
$$e_k\textrm{ loop in } H_k \Leftrightarrow e_k\textrm{ loop in } H'_k.$$
 The left-to-right implication is obvious since $H_k \rightsquigarrow H'_k$.
Conversely, if $e_k$ is not a loop in $H_k$, then there exists a cocycle $C$ in $H_k$ including $e_k$. Let us show that for every edge $x$ of $H_k$, we have $x \notin H'_k \Rightarrow x \notin C$, which implies that $C$ is included in $H'_k$ and so $e_k$ is not a loop in $H'_k$. Let $e$ be an edge of $H_k \backslash H'_k$. It must be an edge of the form $e_j$ with $j < k$ which has type \bI\, or \bL. So this edge was an isthmus or a loop in $H_j$, hence also in $H_k$ since $H_j \rightsquigarrow H_k$. Therefore $e$ cannot appear in a non-singleton cocycle of $H_k$, and in particular in $C$.


Similarly we prove 
$$e_k\textrm{ isthmus in }H_k \Leftrightarrow e_k\textrm{ isthmus in }H'_k.$$

So if $e_k$ is respectively a loop, an isthmus, a standard external edge, a standard internal edge in  $H_k$, then it will be a loop, an isthmus, a standard external edge, a standard internal edge in $H'_k$; hence $t_k = t'_k$.\end{proof}

Each version of Algorithm \ref{type} can be of interest: For implementation, it would be better to perform a minimum number  of operations and consequently choose the variant where the edges of type \bL\, or \bI\, remain untouched. From a theoretical point of view, deleting each edge of type \bL\, and contracting each edge of type \bI\, can facilitate the proofs. For example, with this version, it is easy to see that $e_m$ has necessarily type \bL\, or \bI\, since at the last iteration $e_m$ is the only edge of $H$, so must be an isthmus or a loop.

Moreover, Algorithm \ref{type} can  be adapted differently depending on the context. For instance, Algorithm \ref{algint} allows us to compute the internal $\Delta$-active edges of a spanning tree, the external active edges being omitted. This algorithm will be useful to connect the so-called \textit{blossoming activities} with $\Delta$-activities.

\begin{algorithm}[h!]
\caption{Another way to compute the set of internal active edges.}
\label{algint}
\begin{algorithmic}[5]
\Require $T$ spanning tree of $G$.
\Ensure A subset $IntAct$ of $E(G)$.
\State $m \leftarrow $ number of edges in $G$; $IntAct \leftarrow \emptyset$; 
\State $n \leftarrow$ root of $\Delta$; $H' \leftarrow G$
\For {$k$ from $1$ to $m$}

	\State $e_{k} \leftarrow$ label of $n$;
	\If{$e_k \notin T$}
			\State $H' \leftarrow \delete {H'} e$;  
			\State $n \leftarrow$ left child of $n$;
	\Else
   			\State $n \leftarrow$ right child of $n$;
	\EndIf

	\If{$e_k$ is an isthmus in $H'$}
		\State \textbf{add }$e_k$ in $IntAct$;
	\EndIf 
\EndFor
\State \Return $IntAct$
\end{algorithmic}
\end{algorithm}

\noindent \textbf{Informal description.} The principle is the same as in Algorithm \ref{type}, except that the internal edges are never contracted.

\begin{prop} \label{prop:algint}
For any spanning tree $T$ of $G$, Algorithm \ref{algint} outputs the set of edges of type~\bI.
\end{prop}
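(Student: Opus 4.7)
The plan is to compare Algorithm \ref{algint} directly with Algorithm \ref{type} applied to $S=T$, showing that they visit edges in the same order and that Algorithm \ref{algint}'s isthmus test detects exactly the type \textbf{I} edges.

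First, I would show that both algorithms traverse the decision tree $\Delta$ along the same path. In Algorithm \ref{algint}, the branching rule is to go left when $e_k\notin T$ and right when $e_k\in T$. In Algorithm \ref{type} applied to $S=T$, by Proposition \ref{intextact}, the types \textbf{Se} and \textbf{L} correspond to external edges (which go left) and the types \textbf{Si} and \textbf{I} correspond to internal edges (which go right). Using Equation \eqref{ekdelta}, an easy induction on $k$ then gives that both algorithms visit the same sequence of edges $(e_1,\dots,e_m)$ and that $e_k\in T$ in Algorithm \ref{algint} if and only if $e_k$ has type \textbf{Si} or \textbf{I} in Algorithm \ref{type}.

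Next, I would track the graphs $H_k$ (from Algorithm \ref{type}) and $H'_k$ (from Algorithm \ref{algint}) at the start of iteration $k$. Using the variant of Algorithm \ref{type} in which loops are deleted (justified by Proposition \ref{variants}), $H_k$ is obtained from $G$ by deleting every previously encountered external edge and contracting every previously encountered internal edge, while $H'_k$ is obtained from $G$ by only deleting the external edges. Hence, if $C_k\subseteq T$ denotes the set of internal edges that have already been visited by step $k$, one has $H_k = H'_k/C_k$, and all edges of $C_k$ are non-loops (since $T$, being a tree, has no loops).

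The crux is then the following lemma: for any graph $H$ and any non-loop edge $c\in E(H)$, an edge $e\neq c$ is an isthmus in $H$ if and only if it is an isthmus in $H/c$. I would prove this by counting connected components: contracting a non-loop edge does not change the number of components, and the operations of deleting $e$ and contracting $c$ commute, so
\[
\cc\bigl((H/c)\setminus e\bigr)-\cc(H/c)=\cc(H\setminus e)-\cc(H),
\]
and the left-hand side is positive iff the right-hand side is. (Equivalently, the cocycles of $H/c$ are precisely the cocycles of $H$ not containing $c$, so $\{e\}$ is a cocycle in one iff it is a cocycle in the other.)

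Applying this lemma iteratively across the edges of $C_k$, I conclude that $e_k$ is an isthmus in $H_k$ if and only if $e_k$ is an isthmus in $H'_k$. Combined with the matching-branching argument, this shows that Algorithm \ref{algint} adds $e_k$ to $IntAct$ precisely when Algorithm \ref{type} assigns type \textbf{I} to $e_k$, completing the proof. I expect the main (very mild) obstacle to be bookkeeping of the two coupled executions, but the contraction-preserves-isthmuses lemma is the real content and is essentially immediate.
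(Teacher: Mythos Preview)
Your proof is correct and follows essentially the same approach as the paper: show that the two algorithms traverse $\Delta$ along the same path (using Proposition~\ref{intextact} and Equation~\eqref{ekdelta}), then compare the graphs $H_k$ and $H'_k$ and use that contracting edges not equal to $e_k$ preserves whether $e_k$ is an isthmus. The paper chooses the variant of Algorithm~\ref{type} in which type~\bI\ edges are \emph{not} contracted, so that $C_k$ consists only of type~\bSi\ edges (standard at contraction time, hence obviously non-loops); your choice to contract all internal edges works too, but your justification ``since $T$, being a tree, has no loops'' only shows the edges of $C_k$ are non-loops in $G$, not in the intermediate contracted graphs---the missing observation is that $C_k\subseteq T$ is acyclic, so contracting its edges one by one never produces a loop among the remaining ones.
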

\begin{proof}
On the  one hand, we consider the version of Algorithm \ref{type} where the edges of type \bI\, are not contracted and where the edges of type $\bL$ are deleted. Thus, the graph $H$ at the $k$-th iteration is obtained by contracting all theedges $e_i$ of type \bSi\, and by deleting all external edges $e_i$, with $i \in \ens{1,\dots,k-1}$\footnote{By Proposition \ref{intextact}, an edge is external  in a spanning tree if and only if it has type \bSe\, or \bL.}. On the other hand, in Algorithm \ref{algint}, the graph $H'$ at the $k$-th iteration is obtained by deleting all the external edges of the form $e_i$, with $i \in \ens{1,\dots,k-1}$. So $H'$ differs from $H$ at this moment only by some edge contractions (which do not involve $e_k$). Therefore, $e_k$ is an isthmus in $H'$ if and only if $e_k$ is an isthmus in $H$. This means that $e_k$ belongs to the output of Algorithm \ref{algint} if and only if $e_k$ has type \bI.
\end{proof}


\section{$\boldsymbol{(\Delta,S)}$-ordering}
It turns out that we also can define the notion of $\Delta$-activity by using fundamental cycles and cocycles, as Tutte and Bernardi did (see p. \pageref{found} for the definition of fundamental cycle/cocycle). This is the purpose of this section.

Consider a graph $G$ with decision tree $\Delta$. Let $e_1 \ua{t_1} e_2 \ua{t_2} \cdots  \ua{t_{m-1}} e_m \ua{t_{m}}$ denote the history of a subgraph $S$ of $G$. We define \textit{the $(\Delta,S)$-ordering} on the edge set of $G$ by setting:
$$e_1 < e_2 < \dots < e_m.$$
It corresponds to the visit order of the edges in Algorithm \ref{type}. This notion will be especially used for the subgraphs $S=T$ that are spanning trees.

Given a decision tree $\Delta$ and a spanning tree $T$ of a graph $G$, the $(\Delta,T)$-ordering can be easily constructed from $\Delta$ and $T$: Start with the root node of $\Delta$. If the edge label is external, go  down to the left child. If the edge is internal, go down to the right child. Repeat this operation until joining a leaf node. The sequence of node labels gives the $(\Delta,T)$-ordering.

The explanation is simple. It relies on 
$\eqref{ekdelta}$ and the fact that for all spanning trees, an edge is external if and only it has type \bSe\, or \bL (see Proposition \ref{intextact}).

\noindent \textbf{Example.} Consider the graph and the decision tree $\Delta$ from Figure \ref{fig:ex}, with the spanning tree $T=\ens{a,c}$. By following the path corresponding to the sequence of directions $(r,\ell,\ell)$, we can see that the $(\Delta,T)$-ordering is equal to $c < d < b < a$.


The following proposition is the first step to prove the link between the $\Delta$-activity and some activities from Section \ref{sec:activity}.
\begin{prop} 
\label{maximal}
Consider a graph $G$ with a decision tree $\Delta$ and a spanning tree $T$. An external (resp. internal) edge $e$ is $\Delta$-active if and only if it is maximal for the $(\Delta,T)$-ordering in its fundamental cycle (resp. cocycle).
\end{prop}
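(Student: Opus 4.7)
The plan is to work throughout with the variant of Algorithm \ref{type} in which every edge of type \bL\ is deleted and every edge of type \bI\ is contracted; Proposition \ref{variants} guarantees that this does not alter the types, so $e$ is $\Delta$-active if and only if, at the iteration when Algorithm \ref{type} processes $e$, the edge $e$ is a loop (external case) or an isthmus (internal case) in the current graph $H$. I would also rely on the observation, already used in the proof of Proposition \ref{intextact}, that $T\cap H$ is a spanning tree of $H$ at every step of the execution.

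For an external edge $e$, let $C$ denote its fundamental cycle in $G$, so that $P:=C\setminus\{e\}$ is the unique path of $T$ joining the two endpoints of $e$. At the moment $e$ is processed, $H$ has been obtained from $G$ by deleting external edges and by contracting internal edges, all of which lie in $T$ by Proposition \ref{intextact}. Since only contractions identify vertices, and since the sole path in $T$ between the endpoints of $e$ is $P$, the endpoints of $e$ are merged in $H$ if and only if every edge of $P$ has already been contracted, equivalently has already been processed. This is exactly the maximality of $e$ in $C$ for the $(\Delta,T)$-ordering, and thus the equivalence in the external case.

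For an internal edge $e$, the argument is dual and hinges on the fundamental cocycle $K$ of $e$ in $G$. Removing $e$ from $T$ produces a bipartition $V(G)=V_1\uplus V_2$, and $K$ is the set of edges of $G$ with one endpoint in $V_1$ and one in $V_2$. The key technical step, and the place where I expect to spend the most care, is to show that the fundamental cocycle of $e$ in $H$ with respect to $T\cap H$ coincides with $K\cap E(H)$. The point is that every contracted edge of $T$ different from $e$ lies entirely inside $T_1$ or entirely inside $T_2$, so the contractions merge vertices of $V_1$ only among themselves and similarly for $V_2$, i.e.\ the bipartition survives in $H$. Once this identification is secured, $e$ is an isthmus in $H$ if and only if $K\cap E(H)=\{e\}$, which is equivalent to saying that every edge of $K\setminus\{e\}$ has already been processed, i.e.\ precedes $e$ in the $(\Delta,T)$-ordering, giving the desired maximality.

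In both cases the translation between ``being a loop or an isthmus in $H$ at the right moment'' and ``being maximal in the fundamental cycle or cocycle'' is carried out by the same mechanism; the cycle case is almost immediate from the tree structure of $T$, while I expect the bipartition-preservation claim in the cocycle case to be the main obstacle, as it requires tracking carefully how a sequence of contractions and deletions interacts with the cocycle $K$.
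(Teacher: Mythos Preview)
Your proof is correct and essentially self-contained, whereas the paper's proof factors one direction through an auxiliary result (Lemma~\ref{cyclecocycle}). Concretely, for the implication ``maximal $\Rightarrow$ active'' both arguments coincide: once every edge of the fundamental cycle (resp.\ cocycle) other than $e$ has been processed, those edges---being internal (resp.\ external) by Proposition~\ref{intextact}---have all been contracted (resp.\ deleted), forcing $e$ to be a loop (resp.\ isthmus). The divergence is in the converse. The paper invokes Lemma~\ref{cyclecocycle}, which asserts that any edge of type \bL\ lies in a cycle of edges of type \bSi\ (and is maximal there), and then observes that this cycle must be the fundamental cycle; you instead argue directly that in a tree two vertices are identified by contractions precisely when the entire path between them has been contracted, and dually that the bipartition $V_1\uplus V_2$ is preserved because every contracted internal edge other than $e$ lies wholly inside $T_1$ or $T_2$. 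Your route is a bit more elementary and handles both implications in one stroke; the paper's route isolates Lemma~\ref{cyclecocycle} because it is stated for arbitrary subgraphs (not only spanning trees) and is reused later, notably in Lemma~\ref{addremove} and in the proof of Lemma~\ref{reptree}.
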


\noindent \textbf{Remark. }The active edges are here characterized by maximality in their  fundamental cycles/cocycles, and not by minimality as in Tutte's or Bernardi's works. We cannot simply reverse the order so that  "maximal" becomes "minimal". Indeed, we would need the reverse order to correspond to a $(\Delta,T)$-ordering, which seldom happens (except for the ordering activity). This is related to the notion of tree-compatibility, which is treated in the next subsection.

Before proving this proposition, let us state a lemma that will be used on several occasion in this part of the thesis. 
\begin{leme}
\label{cyclecocycle}
For each subgraph $S$ of $G$, we have the following properties:
\begin{enumerate}
	\item For each edge $e$ of type \bL, there exists a cycle in $G$ which consists of $e$ and edges of type \bSi. Moreover, $e$ is maximal in this cycle for the $(\Delta,S)$-ordering.
	\item For each edge $e$ of type \bI, there exists a cocycle of $G$ which consists of $e$ and edges of type \bSe. Moreover, $e$ is maximal in this cocycle for the $(\Delta,S)$-ordering.
\end{enumerate}
\end{leme}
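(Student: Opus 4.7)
The plan is to exploit the flexibility granted by Proposition \ref{variants}, which allows us to remove line \ref{suppressionfacultative} or line \ref{contractionfacultative} from Algorithm \ref{type} without changing the types assigned to edges. By suppressing exactly one of these two lines, we can control which edges are physically modified during the run of the algorithm while keeping the $\Delta$-types unchanged, and this control will directly produce the desired cycles and cocycles.

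For part (1), I would run the variant of Algorithm \ref{type} in which line \ref{contractionfacultative} is removed, so that edges of type \bI\ are left untouched, while edges of type \bSi\ are still contracted. Fix an edge $e = e_k$ of type \bL. At the $k$-th iteration, the graph $H$ is obtained from $G$ by deleting the edges $e_i$ with $i<k$ of type \bSe\ or \bL\ and by contracting the edges $e_i$ with $i<k$ of type \bSi. Since $e_k$ is a loop in $H$, its two endpoints in $G$ must have been identified during the run, and identifications come only from contractions, which in this variant only involve \bSi\ edges. Hence there exists a path $P$ in $G$ between the two endpoints of $e_k$ whose edges are all of type \bSi\ with index smaller than $k$. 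The set $P \cup \{e_k\}$ is then a cycle of $G$ made of $e_k$ and \bSi\ edges, with $e_k$ maximal in the $(\Delta,S)$-ordering by construction.

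For part (2), I would dually use the variant where line \ref{suppressionfacultative} is removed, so that \bL\ edges remain in $H$ while \bSe\ edges are still deleted. Fix an edge $e = e_k$ of type \bI. The fact that $e_k$ is an isthmus in $H$ at step $k$ yields a partition of $V(G)$ into two sets $\tilde V_1, \tilde V_2$ corresponding to the two connected components of $H\setminus e_k$. Let $C$ be the cut in $G$ formed by the edges crossing this partition; clearly $e_k\in C$. A short case analysis on any other edge $e_j\in C$ shows that $e_j$ must be of type \bSe\ with $j<k$: contracted edges (types \bSi\ and \bI) have both endpoints in the same class, kept \bL-edges are loops in $H$ and cannot cross the partition either, and any $e_j$ with $j>k$ would provide a second crossing edge in $H$, contradicting that $e_k$ is an isthmus. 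Therefore $C$ is the desired cocycle, with $e_k$ maximal in the $(\Delta,S)$-ordering.

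The only mildly technical point is the case analysis in part (2), in particular the observation that no unprocessed edge can cross the partition (otherwise $e_k$ would not be an isthmus in $H$); part (1) is, by comparison, essentially a direct reading of the algorithm. No step here presents a serious obstacle — the whole lemma really amounts to choosing the appropriate variant of Algorithm \ref{type} for each of the two statements.
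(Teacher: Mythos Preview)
Your argument is correct and matches the paper's approach: both exploit Proposition~\ref{variants} to choose a variant of Algorithm~\ref{type} in which only \bSi\ edges get contracted (for part (1)) or only \bSe\ edges get deleted (for part (2)), so that the loop/isthmus condition at step $k$ directly yields the required cycle/cocycle in $G$. The paper uses a single variant (both optional lines removed) and phrases part (1) as a decreasing induction on the step index, while you pick a tailored variant for each part and argue directly; these are cosmetic differences.

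One small point worth making explicit in part (2): you write that $C$ is a cut and then call it ``the desired cocycle'', but minimality (in the paper's sense of cocycle) needs a word. It does hold, because each $\tilde V_i$ is connected in $G$: any two $G$-vertices lying in the same $\tilde V_i$ are joined by a path using contracted edges (types \bSi\ or \bI) together with edges still present in $H$, all of which stay within $\tilde V_i$. Hence $G\setminus C$ has exactly two components and $C$ is indeed a cocycle.
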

\begin{proof} Here we consider the variant of Algorithm \ref{type} where the edges of type \bI\, are \textit{not} contracted and the edges of type \bL\, are \textit{not} deleted.

(1) Suppose that $e=e_k$, i.e. $e$ was visited at the $k$-th iteration of the algorithm. Then let us prove by a decreasing induction on $j \in \{k,\dots,1\}$ that the graph $H$ at the $j$-th iteration contains a path linking the endpoints of $e$ only made of edges of type \bSi\, and visited before $e$. 
For the base case $j=k$, $e_k$ is a loop, so the empty path matches. Then assume that the induction hypothesis is true for some $j \in \{k,\dots,2\}$. The only operation which could ensure that the endpoints of $e$ can be linked with edges of type \bSi\, after the $(j-1)$-th iteration, but not before, is edge contraction. But a contracted edge must have type \bSi. So, whatever the type of $e_{j-1}$ is, there will be always, at the $(j-1)$-th iteration, a path made of edges of type \bSi\, and visited before $e$ linking the endpoints of $e$. So the induction step holds: The resulting path with the edge $e$ forms the expected cycle.

(2) Similar to point (1), in a dual way.
\end{proof}

We can now tackle the proof of Proposition \ref{maximal}.

\begin{proof}[Proof of Proposition \ref{maximal}] 1. Assume that the external edge $e_j$ is maximal in its fundamental cycle $C$. We  use a version of Algorithm \ref{type} where edges of type \bI\, are contracted. Except $e_j$, the fundamental cycle $C$ of $e_j$ is made of internal edges, so edges with type \bSi\, or \bI. Thus, at the $j$-th iteration, the edges other than $e$ have been contracted, which implies that $e_j$ is a loop. Therefore $e_j$ has type $\bL$. In other terms, it is an external active edge.

2. Assume that an external edge $e_j$ is active, that is to say it has type \bL. By Lemma \ref{cyclecocycle}, there exists a cycle $C$ made of $e$ and edges of type \bSi. Since the edges of type \bSi\, are internal, $C$ is the fundamental cycle of $e$. Lemma \ref{cyclecocycle} also claims that $e$ has been visited last in $C$: it means that $e$ is maximal in $C$ for the $(\Delta,T)$-ordering.

The case where $e$ is internal can be processed in the same way.
\end{proof}

\section{Order map and tree-compatibility}

If we want to prove that a specific activity (for example an embedding activity) is a $\Delta$-activity, we need to define the adequate decision tree. However, building this decision tree can be rather tricky. In this subsection, we are going to give a combinatorial condition that guarantees the existence of such a decision tree. 

Fix a graph $G$. An \emph{order map} is a map from the set of spanning trees of $G$ onto the set of total orders of $E(G)$ (or equivalently the permutations of $E(G)$). Given an order map $\phi$ and a spanning tree $T$, we denote by $\phi_k(T)$ the $k$-th smallest edge in this order.

An order map $\phi$ is said to be \emph{tree-compatible} if we can construct a decision tree $\Delta_\phi$ such that for every spanning tree $T$, the $(\Delta_\phi,T)$-ordering coincides with $\phi(T)$. In other terms, a tree-compatible order map is such that for each $k \in \ens{1,\dots,|E(G)|}$, the edge $e_k$, defined by Algorithm \ref{type} with input $T$ for some decision tree $\Delta$, is equal to $\phi_k(T)$. 

\noindent \textbf{Example}: Consider the graph from Figure \ref{fig:ex} with this order map :
\begin{equation} \label{exom} \phi : \begin{array}{ccc}
\ens{a,b} & \mapsto & c < b < a < d \\
\ens{a,c} & \mapsto & c < d < b < a \\
\ens{b,c} & \mapsto & c < d < b < a \\
\ens{b,d} & \mapsto & c < b < a < d \\
\ens{c,d} & \mapsto & c < d < a < b
\end{array} .
\end{equation}
The order map $\phi$ is tree-compatible: If we denote by $\Delta_\phi$  the decision tree of Figure~\ref{fig:ex}, then we can easily check that $\phi(T)$ and the
$(\Delta_\phi,T)$-ordering are the same for all spanning trees $T$.
We can notice that the decision tree is not unique. For example, the same decision tree fits if we replace the leftmost subtree with three nodes, namely \begin{tabular}{l}\includegraphics[height=20px]{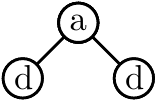}\end{tabular}, by \begin{tabular}{l}\includegraphics[height=20px]{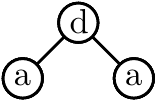}\end{tabular}.


The following theorem gives a characterization of tree-compatible order maps.

\begin{theoe} \label{treecompatible}
An order map $\phi$ is tree-compatible if and only if 
for all spanning trees $T$ and $T'$ and for every $k \in \ens{0,\dots,|E(G)|-1}$ the implication 
\begin{multline} \label{tc}
T \cap \ens{\phi_1(T),\dots,\phi_k(T)} = T' \cap \ens{\phi_1(T),\dots,\phi_k(T)} \\ \Longrightarrow \ \forall j \in \ens{1,\dots,k+1} \ \, \phi_j(T) = \phi_j(T')
\end{multline} holds.

\end{theoe}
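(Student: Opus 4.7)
The plan is to prove both implications of the equivalence using the observation that the sequence of directions followed by Algorithm \ref{type} on input $T$ depends only on which of the first visited edges belong to $T$. Throughout I denote $m = |E(G)|$.

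\textbf{Forward direction ($\Rightarrow$).} Assume $\phi$ is tree-compatible via a decision tree $\Delta_\phi$. I argue by induction on $k$ that whenever $T$ and $T'$ satisfy the hypothesis of \eqref{tc}, one has $\phi_j(T) = \phi_j(T')$ for all $j \leq k+1$. The base case $k=0$ is immediate since $\phi_1(T) = \phi_1(T')$ equals the label of the root of $\Delta_\phi$. For the induction step, I use the fact that $\phi_{k+1}(T)$ (resp. $\phi_{k+1}(T')$) is the label of the node reached from the root by following the sequence of directions $(d_1,\dots,d_k)$ determined by the rule $d_j = r$ if $\phi_j(T) \in T$ and $d_j = \ell$ otherwise (and similarly for $T'$). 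The hypothesis of \eqref{tc} combined with the induction hypothesis ensures that these two sequences of directions coincide, so the same node of $\Delta_\phi$ is reached, and its label yields $\phi_{k+1}(T) = \phi_{k+1}(T')$.

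\textbf{Reverse direction ($\Leftarrow$).} Assume \eqref{tc} holds. For each sequence of directions $(d_1,\dots,d_k)$ with $k \leq m$, let
$$\mathcal T(d_1,\dots,d_k) = \{T \text{ spanning tree} : \forall j \leq k,\ \phi_j(T) \in T \Leftrightarrow d_j = r\}.$$
I construct the labelling of $\Delta_\phi$ recursively on $k$. If $\mathcal T(d_1,\dots,d_k)$ is nonempty, I pick any $T$ in it and set $\Delta_\phi(d_1,\dots,d_k) = \phi_{k+1}(T)$; this definition is independent of the choice of $T$ because any two elements $T,T' \in \mathcal T(d_1,\dots,d_k)$ satisfy $\phi_j(T) = \phi_j(T')$ for $j \leq k$ (by induction on $k$ using \eqref{tc} restricted to the prefixes), hence $T \cap \{\phi_1(T),\dots,\phi_k(T)\} = T' \cap \{\phi_1(T),\dots,\phi_k(T)\}$, so applying \eqref{tc} to the pair $(T,T')$ yields $\phi_{k+1}(T) = \phi_{k+1}(T')$. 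If $\mathcal T(d_1,\dots,d_k)$ is empty, I label the node arbitrarily by any edge not yet used on the path from the root; this is possible because at most $k < m$ edges have been used.

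\textbf{Verification.} I then have to check two things. First, that $\Delta_\phi$ is a legitimate decision tree, i.e.\ that the labels along every root-to-leaf path form a permutation of $E(G)$. For reachable paths (those of the form $(d_1,\dots,d_m)$ with $\mathcal T(d_1,\dots,d_m) \neq \emptyset$) the labels are $\phi_1(T),\dots,\phi_m(T)$ for any $T$ in that set, which is a permutation by definition of an order map. For paths that become unreachable at some depth, the reachable prefix already has distinct labels, and the greedy completion at unreachable nodes preserves distinctness by construction. Second, I verify that for every spanning tree $T$, the $(\Delta_\phi, T)$-ordering coincides with $\phi(T)$: by induction on $k$, after $k$ iterations of Algorithm \ref{type} on input $T$, the edges visited are $\phi_1(T),\dots,\phi_k(T)$, so the sequence of directions $(d_1,\dots,d_k)$ taken satisfies $T \in \mathcal T(d_1,\dots,d_k)$ and the $(k+1)$-th edge visited is $\Delta_\phi(d_1,\dots,d_k) = \phi_{k+1}(T)$.

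\textbf{Main obstacle.} The only genuinely delicate step is the well-definedness of $\Delta_\phi(d_1,\dots,d_k)$ at reachable nodes when several spanning trees map to the same sequence of directions: this is precisely where condition \eqref{tc} is needed, and the argument requires a secondary induction establishing that all members of $\mathcal T(d_1,\dots,d_k)$ agree on $\phi_1,\dots,\phi_k$ before one can apply \eqref{tc} to conclude agreement on $\phi_{k+1}$. The treatment of unreachable nodes, although it looks like an additional worry, is purely cosmetic since it never interferes with the Algorithm \ref{type} run for an actual spanning tree.
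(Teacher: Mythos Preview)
Your proof is correct and follows essentially the same approach as the paper: both directions use induction on $k$, with the forward direction tracing directions in the decision tree via \eqref{ekdelta}, and the reverse direction constructing $\Delta_\phi$ by labelling each reachable node $(d_1,\dots,d_k)$ with $\phi_{k+1}(T)$ for any $T$ in the fibre (your $\mathcal T(d_1,\dots,d_k)$ is exactly the paper's set of trees satisfying property $P(T,k+1)$), handling unreachable nodes arbitrarily. The secondary induction you flag as the main obstacle is also present in the paper's argument, where it appears as the simultaneous induction hypothesis that $\eta_i = \phi_i(T)$ for all $T$ satisfying $P(T,i)$.
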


Note the case $k = 0$ is significant : it implies that $\phi_1(T)$ is the same edge for all spanning trees $T$.

\noindent \textbf{Back to the previous example}: Consider the order map $\phi$ defined by \eqref{exom} and set $T = \ens{a,c} $ and $T' = \ens{c,d}$. We have $\phi_1(T) = c$ and $c$ belongs to $T$ and $T'$. So for $k = 1$, the antecedent of \eqref{tc} holds. Thus the order map coincides one step further: \mbox{$\phi_2(T) = \phi_2(T')$} ($=d$). But for $k=2$, the antecedent of \eqref{tc} does not hold since $\phi_2(T) = d \notin T$ and $d \in T'$: no inferences can be made from this observation.

\begin{proof} \textbf{Left-to-right implication.} Let $\Delta$ be a decision tree and $\phi$ be the order map that sends each spanning tree $T$ onto the $(\Delta,T)$-ordering. Let us prove by induction on $k$ that \eqref{tc} is true for all spanning trees $T$ and $T'$. It holds for $k = 0$ since $\phi_1(T)$ is for all spanning trees $T$ the label of the root node of $\Delta$ (which is the first edge we visit in Algorithm \ref{type}). Assume now that 
$$T \cap \ens{\phi_1(T),\dots,\phi_{k+1}(T)} = T' \cap \ens{\phi_1(T),\dots,\phi_{k+1}(T)}.$$ By the induction hypothesis,  \eqref{tc} holds, hence for all $j \in \ens{1,\dots,k+1}$ we have $\phi_j(T) = \phi_j(T')$. Moreover, by  \eqref{ekdelta}, we have $\phi_{k+2}(T) = \Delta(d^T_1,\dots,d^T_{k+1})$, where $d^T_i = \ell$ when $\phi_i(T)$ is external in $T$ and $d^T_i = r$ when $\phi_i(T)$ is internal in $T$. (Recall that the type an edge $e_i$ in a spanning tree is equal to \bSe\, or \bL\, when $e$ is external, and is equal to \bSi\, or \bI\, when $e$ is internal.) Of course, it also holds if we replace $T$ by $T'$. But by assumption, for every $i \in \, \ens{1,\dots,k+1}$, the edge $\phi_i(T)$ is internal in $T$ if and only if $\phi(T')$ is internal in $T'$. Consequently, we have $\phi_{k+2}(T) = \phi_{k+2}(T')$.

\textbf{Right-to-left implication.} Let $\phi$  be an order map that satisfies \eqref{tc} for  all spanning trees $T$ and $T'$ and for every $k \in \ens{0,\dots,|E(G)|-1}$.
As indicated in Subsection \ref{subsec:dectree}, we can define a decision function instead of a decision tree in order to prove that $\phi$ is tree-compatible. 

\textbf{(1) Inductive definition of the decision function.} Consider a sequence of directions $(d_1,\dots,d_{k-1})$ with $k \in \ens{1,\dots,|E(G)|}$. We suppose by induction that for $i \in \ens{1,\dots,k-1}$ the edge $\Delta_\phi(d_1,\dots,d_{i-1})$, denoted by $\eta_{i}$, is well defined. Given a spanning tree $T$ and  $k \in \ens{1,\dots,|E(G)|}$, we denote by $P(T,k)$ the property
\begin{equation*} 
\ens{ j \in \ens{1,\dots,k-1} \  |  \  \eta_j \in T } = \ens{ j \in \ens{1,\dots,k-1} \  |  \ d_j = r }.
\end{equation*}
Always by induction, we assume that for every $i \in \ens{1,\dots,k-1}$
  if there exists a spanning tree $T$ that satisfies $P(T,i)$, then $\eta_i$ equals $\phi_i(T)$.
  (The base case of the induction is embodied by the case $k=1$. Indeed, when $k=1$, the set $\ens{1,\dots,k-1}$ is empty : no induction hypothesis is assumed.) 

If there exists a spanning tree $T$ that satisfies $P(T,k)$, we define $\Delta_\phi(d_1,\dots,d_{k-1})$ as $\phi_k(T)$. Two points have to be checked :
\begin{enumerate}
\item[(a)] $\Delta_\phi(d_1,\dots,d_{k-1})$ is  different from $\eta_1$, $\dots$, $\eta_{k-1}$. For every $i \in \ens{1,\dots,k-1}$, the property $P(T,i)$ holds because $P(T,k)$ holds. So by the induction hypothesis, we have $\eta_i = \phi_i(T)$, which is well different from $\Delta_\phi(d_1,\dots,d_{k-1}) = \phi_k(T)$.

\item[(b)] The definition of $\Delta_\phi(d_1,\dots,d_k)$ does not depend on the chosen spanning tree. Suppose that there exist two spanning trees $T$ and $T'$ such that $P(T,k)$ and $P(T',k)$ hold. Then $\ens{ i \in \ens{1,\dots,k-1} \  |  \  \eta_i \in T } = \ens{ i \in \ens{1,\dots,k-1} \  |  \  \eta_i \in T'}$. By the induction hypothesis,e have $\eta_i = \phi_i(T)$ for all $i \in \ens{1,\dots,k-1}$, hence $T \cap \ens{\phi_1(T),\dots,\phi_{k-1}(T)} = T' \cap \ens{\phi_1(T),\dots,\phi_{k-1}(T)}$. By \eqref{tc}, we have $\phi_k(T) = \phi_k(T')$. 
\end{enumerate}

If there exists no spanning tree $T$ such that $P(T,k)$ is true, we arbitrarily define $\Delta_\phi(d_1,\dots,d_{k-1})$ as an edge different from $\eta_1$, $\dots$, $\eta_{k-1}$. Here there is no importance for the choice of the edge, since the corresponding branch of the decision tree will never  be  visited.



\textbf{(2) The $\boldsymbol{(\Delta_\phi,T)}$-ordering coincides with $\boldsymbol \phi$.} Let $k \in \ens{1,\dots,|E(G)|}$ and $T$ be a spanning tree. By  \eqref{ekdelta}, the edge $e_k$ (set by Algorithm \ref{type} with decision tree $\Delta_\phi$ and input $T$) equals $\Delta_\phi(d_1,\dots,d_{k-1})$, the direction $d_i$ being defined for $i \in \ens{1,\dots,k-1}$ as:
\begin{equation} \label{defdi}
d_i = \left\{\begin{array}{cl} \ell &   \textrm{if }e_i\textrm{ is external for }T, \\ r & \textrm{if }e_i\textrm{ is internal for }T. \end{array} \right.\end{equation}
Hence,
\begin{align*}
\ens{ i \,  \in \{1,\dots,k-1\} \  |  \  \Delta_\phi(d_1,\dots,d_{i-1}) \in T } & = \ens{i  \,  \in \{1,\dots,k-1\} \  |  \ e_i \in T } \\
\ & = \ens{i \,  \in \{1,\dots,k-1\} \  |  \ d_i = r }.
\end{align*}
So the spanning tree $T$ satisfies $P(T,k)$. By definition of $\Delta_\phi$, we have $$e_k = \Delta_\phi(d_1,\dots,d_{k-1}) = \phi_k(T),$$ which proves the tree-compatibility of $\phi$. \end{proof}


The following corollary will be helpful to prove that the activities from Section~\ref{sec:activity} p.\pageref{sec:activity} are Tutte-descriptive. Observe that its statement does not involve Algorithm \ref{type}.

\begin{core} \label{cor:act} Let $\phi$ be an order map. Assume that $\phi$ is tree-compatible, i.e. for all spanning trees $T$ and $T'$ and for $k \in \ens{0,\dots,|E(G)|-1}$ such that $T \cap \ens{\phi_1(T),\dots,\phi_k(T)} = T'~\cap~\ens{\phi_1(T),\dots,\phi_k(T)}$, we have $\phi_i(T)=\phi_i(T')$ for each $i \in \ens{1,\dots,k+1}$. 

The activity that maps any spanning tree $T$ onto the set of edges that are maximal for $\phi(T)$ in their fundamental cocycles/cycles is a $\Delta$-activity. Therefore it is Tutte-descriptive.
\end{core}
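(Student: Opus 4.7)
The plan is to chain together three results that have already been established in the excerpt: Theorem \ref{treecompatible}, Proposition \ref{maximal}, and Theorem \ref{charact}. The corollary asks nothing genuinely new: it is essentially a reformulation of the $\Delta$-activity in order-map language, once one knows that the tree-compatibility hypothesis is exactly what allows us to realize $\phi$ as the collection of $(\Delta,T)$-orderings.

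First, I would apply Theorem \ref{treecompatible}. By assumption $\phi$ satisfies the implication \eqref{tc}, so the theorem produces a decision tree $\Delta_\phi$ with the property that for every spanning tree $T$, the $(\Delta_\phi, T)$-ordering coincides with $\phi(T)$. Concretely, for each $k$ and each spanning tree $T$, the $k$-th edge visited by Algorithm \ref{type} run with decision tree $\Delta_\phi$ and input $T$ is exactly $\phi_k(T)$.

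Second, I would invoke Proposition \ref{maximal} applied to $\Delta_\phi$. It says that an external (resp.\ internal) edge is $\Delta_\phi$-active if and only if it is maximal, for the $(\Delta_\phi, T)$-ordering, in its fundamental cycle (resp.\ cocycle). Combining this with the previous step, an edge is $\Delta_\phi$-active if and only if it is maximal for $\phi(T)$ in its fundamental cycle or cocycle. In other words, the activity described in the corollary is literally the $\Delta_\phi$-activity.

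Finally, I would close by appealing to Theorem \ref{charact}, the main theorem of the chapter, which guarantees that every $\Delta$-activity is Tutte-descriptive. Since the activity under consideration is the $\Delta_\phi$-activity, it is Tutte-descriptive, as claimed. There is no real obstacle in this proof: the three inputs have been carefully arranged to snap together, and the only subtlety one has to be mindful of is the direction of extremality (maximality rather than minimality) in Proposition \ref{maximal}, which is consistent with the convention chosen for $\Delta$-activity throughout the chapter.
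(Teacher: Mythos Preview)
Your proposal is correct and follows exactly the same approach as the paper: use Theorem~\ref{treecompatible} (equivalently, the definition of tree-compatibility) to obtain a decision tree $\Delta_\phi$ realizing $\phi$, then apply Proposition~\ref{maximal} to identify the described activity with the $\Delta_\phi$-activity, and conclude by Theorem~\ref{charact}. The paper's proof is just a more terse version of what you wrote.
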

\begin{proof} By the definition of tree compatibility, there exists a decision tree $\Delta_\phi$ such that  the $(\Delta_\phi,T)$-ordering is  $\phi(T)$ for every spanning tree $T$. Furthermore, Proposition \ref{maximal} states that an edge is $\Delta_\phi$-active if and only if it is maximal in its fundamental cycle/cocyle for the $(\Delta_\phi,T)$-ordering, that is, $\phi(T)$. Then we conclude thanks to Theorem~\ref{charact}.
\end{proof}

\chapter{Partition of the subgraph poset into intervals }
\label{s:partition}
\label{c:partition}

Crapo discovered in \cite{crapo} that the ordering activity induces a natural partition of the poset of the subgraphs with some nice properties. Bernardi, Gessel and Sagan also defined a similar partition based on their respective notion of activities. In this section, we  prove the universality of this partition by extending it to the $\Delta$-activities.

\section{Introduction on an example}

We are going to introduce the different properties of this chapter throughout an example. Consider $G$ the graph and $\Delta$ the decision tree of Figure \ref{fig:ex} p.~\pageref{fig:ex}. The following table lists the types of edges for all subgraphs of $G$ (described by their sets of edges).

\begin{center}
\noindent \begin{tabular}{|c|c|c|c|c|}
\hline Subgraphs & Type of $a$ & Type of $b$ & Type of $c$ & Type of $d$ 
\\
\hline

$\emptyset$, $\{b\}$, $\{d\}$, $\boldsymbol{\{b,d\}}$  & \bSe & \bI & \bSe & \bI \\
\hline
$\{a\}$, $\boldsymbol{\{a,b\}}$, $\{a,d\}$, $\{a,b,d\}$  & \bSi & \bI & \bSe & \bL \\
\hline
$\{c\}$, $\boldsymbol{\{a,c\}}$  & \bI & \bSe & \bSi & \bSe \\
\hline
$\boldsymbol{\{b,c\}}$, $\{a,b,c\}$ & \bL & \bSi & \bSi & \bSe \\
\hline
$\boldsymbol{\{c,d\}}$, $\{a,c,d\}$, $\{b,c,d\}$, $\{a,b,c,d\}$ & \bL & \bL & \bSi & \bSi \\
\hline
\end{tabular}
\end{center}
\vspace{0.1cm}

We have gathered in each line the subgraphs that share the same partition of edges. We can observe that the set of subgraphs inside any line is a subgraph interval. More particularly, given $S$ and $S'$ two subgraphs taken from the same line, $\diffs S S'$ is only made of active edges (for $S$ \emph{and} for $S'$). This will be proved in Proposition \ref{samehistory}. Furthermore, in each line we can find exactly one spanning tree, indicated in bold. This means that a spanning tree is included in each part of the partition of subgraphs, as stated in Theorem \ref{partition}. Finally, observe that if we put a weight $x$ per edge of type \bI\, and a weight $y$ per edge of type \bL, then by summing over all the lines of the table we get
$$x^2 + x\,y + x + y + y^2,$$
which is exactly the Tutte polynomial of $G$.  This is a consequence of Proposition \ref{rouge}. All these properties can be observed on Figure  \ref{fig:part}.

\begin{figure}[h!]
\begin{center}
\includegraphics[width=\textwidth]{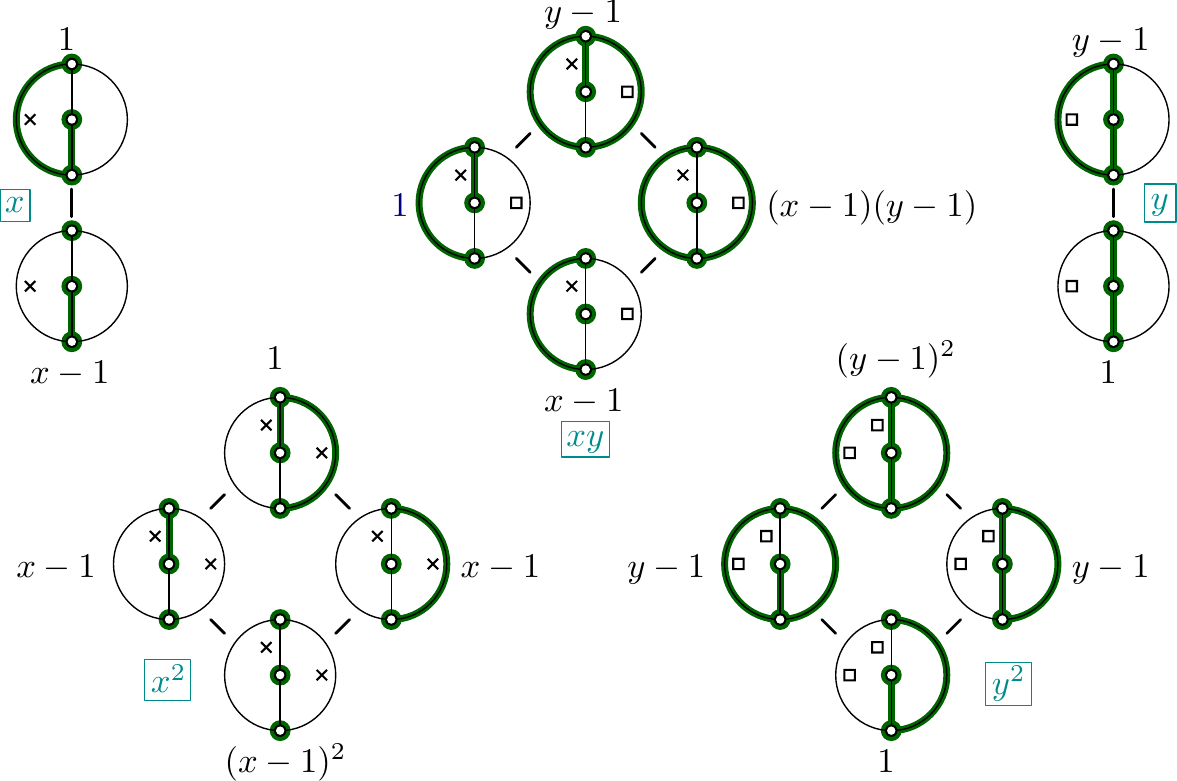}
\end{center}
\caption{Partition of the subgraphs with respect to the relation $\sim$, given the graph and decision tree from Figure \ref{fig:ex}. The crosses indicate the edges with type \bI\, and the squares the edges with type \bL. The contribution of each subgraph to the Tutte polynomial is also mentioned, as well as the global  contribution of each part of the partition.}
\label{fig:part}
\end{figure}

\section{Equivalence relation}


For the rest of the chapter, we fix $G$ a graph and $\Delta$ a decision tree. Given two subgraphs $S$ and $S'$ of $G$, we say that $S$ and $S'$ are \emph{equivalent} if they share the same history. In this case, we write $S \sim S'$. 
This is obviously an equivalence relation. Here are some other characterizations of this relation.

\begin{prop}
\label{samehistory}
Let $S$ and $S'$ be two subgraphs of $G$. Denote by $\Act(S)$ the set of active edges of $S$. The following properties are equivalent:
\begin{enumerate}
\item [(i)] $S \sim S'$,
\item [(ii)] the subgraphs $S$ and $S'$ induce the same partition of $E(G)$,
\item [(iii)] the subgraphs $S$ and $S'$ yield the same set of edges with type \bSe\, and the same set of edges with type \bSi,
\item [(iv)] $\diffs S {S'} \subseteq \Act(S)$,
\item [(v)] there exists $R \subseteq \Act(S)$ such that $S' = \diffs S R$.
\end{enumerate}


\end{prop}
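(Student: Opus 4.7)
The plan is to establish the chain $(\mathrm{i}) \Rightarrow (\mathrm{ii}) \Rightarrow (\mathrm{iii}) \Rightarrow (\mathrm{i})$ for the first three conditions, then connect them with $(\mathrm{iv})$ and $(\mathrm{v})$, which will be equivalent to each other by an immediate set-theoretic manipulation.

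The implications $(\mathrm{i}) \Rightarrow (\mathrm{ii}) \Rightarrow (\mathrm{iii})$ are essentially definitional: the history records both the sequence of edges $(e_k)$ and the sequence of types $(t_k)$, so it completely determines the partition $(TypeSe, TypeL, TypeSi, TypeI)$, and forgetting the \textbf{L}/\textbf{I} classes gives $(\mathrm{iii})$.

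The heart of the proof is $(\mathrm{iii}) \Rightarrow (\mathrm{i})$, which I would prove by simultaneously running Algorithm \ref{type} on $S$ and on $S'$ and showing by induction on $k$ that the two runs have the same history up to step $k$. Concretely, I would use the variant where edges of type \textbf{L} and \textbf{I} are also deleted/contracted (Proposition \ref{variants} allows this without loss of generality), so that at each step the graph $H_k$ depends only on the previously visited edges and their types. If both histories coincide up to step $k$, then the current graph $H_{k+1}$ and the current node of $\Delta$ are the same on both sides, so by \eqref{ekdelta} the edge $e_{k+1}$ is the same. The type of $e_{k+1}$ is then either forced (type \textbf{L} if $e_{k+1}$ is a loop in $H_{k+1}$, type \textbf{I} if it is an isthmus), in which case it agrees on both sides regardless of $S, S'$; or $e_{k+1}$ is standard in $H_{k+1}$, in which case its type is \textbf{Se} or \textbf{Si} according to whether $e_{k+1} \in S$ (resp.~$S'$), and hypothesis $(\mathrm{iii})$ guarantees equality.

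For the remaining conditions, $(\mathrm{iv}) \Leftrightarrow (\mathrm{v})$ is immediate: from $(\mathrm{iv})$ take $R = \diffs S {S'}$, and conversely from $(\mathrm{v})$ observe that $\diffs S {S'} = R$. The implication $(\mathrm{iii}) \Rightarrow (\mathrm{iv})$ is easy: if $e \in \diffs S {S'}$, then $e$ belongs to exactly one of $S, S'$, so its type in $S$ is neither \textbf{Se} nor \textbf{Si} (otherwise $(\mathrm{iii})$ would force the same membership in $S$ and $S'$); hence $e$ is $\Delta$-active for $S$. Finally, for $(\mathrm{iv}) \Rightarrow (\mathrm{i})$ I would rerun the same simultaneous-algorithm induction: assuming the histories agree up to step $k$ and supposing for contradiction that $t_{k+1}$ differs between $S$ and $S'$, the edge $e_{k+1}$ must be standard in $H_{k+1}$ and lie in $\diffs S {S'}$; but then its type in $S$ is \textbf{Se} or \textbf{Si}, contradicting $e_{k+1} \in \Act(S)$.

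The main obstacle I expect is organizing the induction cleanly enough to handle both directions $(\mathrm{iii}) \Rightarrow (\mathrm{i})$ and $(\mathrm{iv}) \Rightarrow (\mathrm{i})$ without repetition; the conceptual content is the same in both cases, namely that the algorithm's evolution only "reads" the membership of standard edges at their processing time, so any discrepancy between $S$ and $S'$ that avoids those read positions is invisible to the history.
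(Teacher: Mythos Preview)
Your proposal is correct and follows essentially the same approach as the paper. The paper organizes the proof as the single cycle $(\mathrm{i}) \Rightarrow (\mathrm{ii}) \Rightarrow (\mathrm{iii}) \Rightarrow (\mathrm{iv}) \Rightarrow (\mathrm{v}) \Rightarrow (\mathrm{i})$, which removes exactly the redundancy you anticipated: your separate implication $(\mathrm{iii}) \Rightarrow (\mathrm{i})$ is unnecessary once you have $(\mathrm{iii}) \Rightarrow (\mathrm{iv})$ and $(\mathrm{iv}) \Rightarrow (\mathrm{i})$, and the simultaneous-run induction you describe for $(\mathrm{iv}) \Rightarrow (\mathrm{i})$ is precisely what the paper carries out for $(\mathrm{v}) \Rightarrow (\mathrm{i})$.
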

\begin{proof} \textbf{(i) $\boldsymbol{ \Rightarrow }$ (ii) $\boldsymbol{ \Rightarrow }$ (iii)}: Trivial. \\
\noindent \textbf{(iii) $\boldsymbol{ \Rightarrow }$ (iv)}: Let the set of edges with type \bSi\, be denoted by $J$. So we have $S = J \cup (S \cap \Act(S))$ and $S' = J \cup (S' \cap \Act(S'))$ since an edge of type \bSi\, is always internal and an edge of type \bSe\, always external. Thus $$\diffs S {S'} = \diffs  {(S \cap \Act(S))} {(S' \cap \Act(S'))} \subseteq \Act(S) \cup  \Act(S').$$
But an edge that has neither type \bSe\, nor \bSi\, is active, and \textit{vice versa}, so $\Act(S) = \Act(S')$ and then $\diffs S {S'} \subseteq \Act(S).$ \\
\textbf{(iv) $\boldsymbol{ \Rightarrow }$ (v)}: Set $R = \diffs S {S'}$.
Since the symmetric difference is associative, we have
$$\diffs S R = \diffs S {(\diffs S {S'})} = \diffs {(\diffs S S)} {S'} = S'.$$
\textbf{(v) $\boldsymbol{ \Rightarrow }$ (i)}: Let $e_1 \ua{t_1}  \cdots  \ua{t_{m-1}} e_m \ua{t_{m}}$ (resp. $e'_1 \ua{t'_1}  \cdots  \ua{t'_{m-1}} e'_m \ua{t'_{m}}$) be the history of $S$ (resp. $S'=\diffs S R$); we are going to prove by induction on $k$ that for each $1 \leq j \leq k$, we have $e_j = e'_j$ and $t_j = t'_j$. 
We assume that the inductive hypothesis is true at the step $k-1$. Then, by \eqref{ekdelta}, we have $$e_k = \Delta(d(t_1),\dots,d(t_{k-1})) = e'_k.$$
Let us consider the beginning of the $k$-th iteration.  The graph $H$ has undergone the same transformations whether the input was $S$ or $\diffs S R$. Indeed, the deletions or contractions we have performed are governed by the types of the visited edges, which are identical by the inductive hypothesis. 
If the edge $e_k$ is a loop (resp. an isthmus) in $H$, then by following the algorithm we get $t_k = t'_k = \bL\, $ (resp. $t_k = t'_k =\bI$). If $e_k$ is standard, then $e_k$ is not active for $S$ and so $e_k \notin R$. In this case, either  $e_k \notin S$ which implies $e_k \notin \diffs S R$ and $t_k=t'_k=\mSe$, or $e_k \in S$ which implies $e_k \in \diffs S R$ and $t_k=t'_k=\mSi$.
\end{proof}

The most interesting implication is probably $(ii) \Leftrightarrow (v)$. In particular, it means that removing from $S$ or adding to $S$ an active edge does not change the types of edges, nor \emph{a fortiori} the set of active edges. This can be also be rewritten in term of subgraph intervals.

\begin{core} \label{cor:inter}
For any subgraph $S$, the equivalence class of $S$ is exactly the subgraph interval \mbox{$[S \backslash \Act(S), S \cup \Act(S)]$}.
\end{core}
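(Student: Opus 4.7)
The plan is to deduce this corollary directly from Proposition \ref{samehistory}, specifically from the equivalence $(i) \Leftrightarrow (v)$, which characterizes the equivalence class of $S$ as $\{ \diffs S R \mid R \subseteq \Act(S) \}$. So it suffices to show that this set of symmetric differences coincides with the interval $[S \backslash \Act(S),\, S \cup \Act(S)]$, which is a purely set-theoretic statement about symmetric differences.

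First I would handle the forward inclusion. Take $S' = \diffs S R$ with $R \subseteq \Act(S)$, and write $S' = (S \backslash R) \cup (R \backslash S)$. Since $R \subseteq \Act(S)$, every edge removed from $S$ lies in $\Act(S)$, so $S \backslash \Act(S) \subseteq S \backslash R \subseteq S'$; similarly every edge added to $S$ lies in $\Act(S)$, so $S' \subseteq S \cup R \subseteq S \cup \Act(S)$. This shows $S' \in [S \backslash \Act(S),\, S \cup \Act(S)]$.

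For the reverse inclusion, I would take $S'$ in the interval and verify that $\diffs S {S'} \subseteq \Act(S)$, so that condition (iv) of Proposition \ref{samehistory} applies and hence $S \sim S'$. Given $e \in \diffs S {S'}$, one of two cases occurs: if $e \in S \backslash S'$, then the inclusion $S \backslash \Act(S) \subseteq S'$ together with $e \notin S'$ forces $e \in \Act(S)$; if instead $e \in S' \backslash S$, then $S' \subseteq S \cup \Act(S)$ and $e \notin S$ likewise force $e \in \Act(S)$.

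I do not anticipate any real obstacle; the whole argument is just a routine translation between the symmetric-difference description of the equivalence class provided by Proposition \ref{samehistory} and the two-sided inclusion description given by the interval. The only point worth emphasizing in the write-up is that we must invoke the implication $(iv) \Rightarrow (i)$ of Proposition \ref{samehistory} in the reverse direction, so that set-theoretic membership in the interval actually yields an equivalent subgraph in the sense of $\sim$.
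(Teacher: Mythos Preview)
Your argument is correct and follows the same approach as the paper, which simply presents the corollary as an immediate rewriting of Proposition \ref{samehistory} in terms of intervals without spelling out the details. Your write-up is in fact more explicit than the paper's, which treats the set-theoretic translation as obvious.
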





\section{Indexation of the intervals  by spanning trees}

\begin{theoe} \label{partition}
Let $G$ be a graph and $\Delta$ a decision tree. Then the set of subgraphs can be partitioned into subgraph intervals indexed by the spanning trees:
\begin{equation} 2^{E(G)} = \biguplus_{T\textrm{ spanning tree of }G} [T \backslash \mathcal I(T), T \cup \mathcal E(T)], \label{eqpart} \end{equation}
where $\mathcal I(T)$ and $\mathcal E(T)$ are the sets of internal and external $\Delta$-active edges of the spanning tree $T$.
\end{theoe}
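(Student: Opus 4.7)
\medskip

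The plan is to reduce the theorem to two facts: first, that each equivalence class under $\sim$ is a subgraph interval (already given by Corollary \ref{cor:inter}), and second, that each class contains a unique spanning tree. The theorem then follows by noting that for a spanning tree $T$, Proposition \ref{intextact} ensures $\Act(T) = \mathcal I(T) \uplus \mathcal E(T)$ with $\mathcal I(T) \subseteq T$ and $\mathcal E(T) \subseteq \overline T$, so that the interval $[T \setminus \Act(T), T \cup \Act(T)]$ supplied by Corollary \ref{cor:inter} matches $[T \setminus \mathcal I(T), T \cup \mathcal E(T)]$.

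For the existence of a spanning tree in each class, my idea is to define, for any subgraph $S$, the set $T_S$ of edges of $\Delta$-type \bSi\, or \bI\, (with respect to $S$). First, $T_S$ is equivalent to $S$: since edges of type \bSi\, are internal and those of type \bSe\, are external, the symmetric difference $\diffs{T_S}{S}$ only contains active edges, and Proposition \ref{samehistory}(v) applies. Second, I would argue $T_S$ is a spanning tree by running the variant of Algorithm \ref{type} in which every edge of type \bL\, is deleted and every edge of type \bI\, is contracted, so that the running graph $H$ empties out. If $T_S$ contained a cycle $C$, consider the last edge $e$ of $C$ processed: all other edges of $C$ are of type \bSi\, or \bI\, and have already been contracted, so $e$ is a loop in $H$ at that moment, contradicting its type being \bSi\, or \bI. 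Dually, if $T_S$ were disconnected, some cocycle would lie entirely in $\overline{T_S}$; the last of its edges to be processed would then be an isthmus in $H$, contradicting its type being \bSe\, or \bL.

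For uniqueness, suppose two spanning trees $T$ and $T'$ lie in the same class. They share the same history, so they share the same sets of edges of each type. Edges of type \bSi\, (resp. \bSe) lie in every (resp. no) subgraph of the class, so they agree on those. Moreover, applying Proposition \ref{intextact} to $T$ and to $T'$ separately, every edge of type \bI\, must lie in both $T$ and $T'$, and every edge of type \bL\, must lie in neither. Thus $T$ and $T'$ coincide on every type, so $T = T'$.

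I expect the most delicate step to be the proof that $T_S$ is a spanning tree, because it hinges on a careful reading of what the algorithm does when edges of different types are encountered; the argument above is essentially the dual content of Lemma \ref{cyclecocycle}, and one could alternatively invoke that lemma directly to derive the contradictions (a cycle in $T_S$ would force, via part (1) of the lemma applied to some hypothetical \bL-edge, incompatible conclusions about cycles in $G$, and similarly for cocycles). Everything else is essentially bookkeeping on top of Proposition \ref{samehistory}, Corollary \ref{cor:inter}, and Proposition \ref{intextact}.
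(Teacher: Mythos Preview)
Your proposal is correct and follows essentially the same approach as the paper: the existence argument you give (defining $T_S$ as the set of edges of type \bSi\, or \bI\, and showing it is a spanning tree equivalent to $S$) is exactly the content of Lemma~\ref{reptree}, proved in the same way, and your uniqueness argument via Proposition~\ref{intextact} is identical to the paper's. The only cosmetic difference is that you spell out explicitly how Corollary~\ref{cor:inter} and Proposition~\ref{intextact} combine to give the interval $[T\setminus\mathcal I(T), T\cup\mathcal E(T)]$, which the paper leaves implicit.
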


Note  that Corollary \ref{cor:inter} tells that each of the intervals $[T \backslash \mathcal I(T), T \cup \mathcal E(T)]$ constitutes an equivalence class for the relation $\sim$.

We  first prove that every subgraph of $G$ is equivalent to some spanning tree:

\begin{leme}
\label{reptree}
For every $S$ subgraph of $G$, the set of edges with type \bSi \,or \bI\, forms a spanning tree of $G$ equivalent to $S$.
\end{leme}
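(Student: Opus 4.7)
Let $T$ denote the set of edges whose $\Delta$-type for $S$ is either \bSi\ or \bI. The plan is to verify two independent claims: that $T \sim S$, and that $T$ is a spanning tree of $G$.

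For the equivalence I would use criterion $(v)$ of Proposition \ref{samehistory}, which asks for a subset $R \subseteq \Act(S)$ with $\diffs S R = T$. Setting $R = \diffs S T$, it suffices to show $\diffs S T \subseteq \Act(S)$, and this follows from a direct case analysis on the four possible types as defined in Algorithm \ref{type}: an edge of type \bSe\ lies neither in $S$ nor in $T$, an edge of type \bSi\ lies in both, so neither contributes to $\diffs S T$; edges of type \bL\ are outside $T$ and edges of type \bI\ are inside $T$, regardless of their membership in $S$. Hence every element of $\diffs S T$ has type \bL\ or \bI, i.e., is $\Delta$-active.

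The heart of the lemma is to show that $T$ is a spanning tree, and I would proceed by induction on $m = |E(G)|$, branching on the nature of the edge $e_1$ labelling the root of $\Delta$. The case $m=1$ is immediate: the unique edge is either a loop (giving $T = \emptyset$) or an isthmus (giving $T = \ens{e_1}$), and both are spanning trees of $G$. For $m \geq 2$, I would work with the variant of Algorithm \ref{type} in which loops are deleted and isthmuses contracted, legitimate by Proposition \ref{variants}. The four possibilities for the type of $e_1$ each reduce, after the first iteration of the algorithm, to running it on a strictly smaller pair $(G', \Delta')$ with input $S' = S \cap E(G')$, where $G'$ is $\delete G {e_1}$ or $\contract G {e_1}$ and $\Delta'$ is the appropriate subtree of the root of $\Delta$; this reduction is exactly the observation used in the proof of Lemma \ref{respect}, which carries over verbatim when the input is an arbitrary subgraph rather than a spanning tree. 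Applying the inductive hypothesis to $(G', S', \Delta')$ yields a spanning tree $T'$ of $G'$. One then reads off $T = T'$ when $e_1$ has type \bSe\ or \bL\ (so that $e_1 \notin T$) and $T = T' \cup \ens{e_1}$ when $e_1$ has type \bSi\ or \bI\ (so that $e_1 \in T$), and in each situation a standard verification---that a spanning tree of $\delete G {e_1}$ remains a spanning tree of $G$ when $e_1$ is a loop or standard external, and that a spanning tree of $\contract G {e_1}$ together with $e_1$ is a spanning tree of $G$ when $e_1$ is an isthmus or standard internal---finishes the induction.

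The main difficulty lies not in any individual step but in the bookkeeping across the four cases; I do not foresee any serious obstacle beyond carefully matching the operation performed on $(G, \Delta, S)$ with the corresponding reduction to $(G', \Delta', S')$ so that the inductive hypothesis can be invoked cleanly.
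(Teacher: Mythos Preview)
Your argument is correct. The equivalence $S \sim T$ is handled essentially as in the paper---you invoke criterion $(v)$ of Proposition~\ref{samehistory} where the paper invokes criterion $(iv)$, but these are the same observation that any edge contributing to $\diffs S T$ must be active.

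For the spanning-tree part, however, you take a genuinely different route. The paper gives a direct argument: assuming $T$ contains a cycle $C$, it looks at the maximal edge $e_k$ of $C$ for the $(\Delta,S)$-ordering; at step $k$ all other edges of $C$ (being of type \bSi\ or \bI) have been contracted, forcing $e_k$ to be a loop, a contradiction. Connectivity is the dual argument with a cocycle. Your proof instead runs an induction on $|E(G)|$, peeling off the root edge $e_1$ and invoking a subgraph-level analogue of Lemma~\ref{respect}. This works, and your remark that the proof of Lemma~\ref{respect} carries over verbatim to arbitrary subgraphs is correct, since the first iteration of Algorithm~\ref{type} depends only on the status of $e_1$ in $G$ and on whether $e_1 \in S$.

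As for what each approach buys: the paper's argument is shorter and more self-contained, and it exhibits directly why the ordering structure forces $T$ to be a tree---this is the same mechanism that later drives Lemma~\ref{cyclecocycle} and Proposition~\ref{maximal}. Your induction, on the other hand, recycles the reduction framework already set up for Theorem~\ref{charact}, which makes it a natural choice if one wants to treat the whole theory uniformly by induction; it just costs a bit more bookkeeping across the four cases.
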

\begin{proof} Let us denote by $T$ the set of edges with type \bSi\, and \bI. We choose the version of Algorithm \ref{type} where edges of type \bL\, are deleted and edges of type \bI\, are contracted. Let us prove that $T$ has no cycle and is connected, which exactly means that $T$ is a spanning tree.

\textbf{1. $\boldsymbol T$ has no cycle.} Let us assume that $T$ has a cycle $C$. Consider $e_k$ the maximal edge of $C$ for the $(\Delta,S)$-ordering.   At the $k$-th iteration of the algorithm, every edge of $C$ is contracted except $e_k$, because $C$ is only made of edges of type \bSi\, or \bI. This implies that $e_k$ is a loop, which contradicts the fact that $e_k$ has type \bSi\, or \bI.

\textbf{2. $\boldsymbol T$ has only one connected component} (This is precisely the dual of the previous point.). Let us assume that $T$ has more than one connected component. This means that there exists a cocycle $D$ of $G$ only made of edges of type \bSe\, and \bL. Consider $e_k$ the maximal edge of $D$ for the $(\Delta,S)$-ordering. Then at the $k$-th iteration of the algorithm, every edge of $D$ has been deleted except $e_k$. This implies that $e_k$ is a isthmus, which contradicts the fact that $e_k$ has type \bSe\, or \bL.

\textbf{3. $\boldsymbol T$ is equivalent to $\boldsymbol S$.} We have $\diffs S T \subseteq \Act(S)$ because the edges of type \bSi\, (resp. \bSe) for $S$ are internal (resp. external) in both subgraphs. By the implication (iv)~$\Rightarrow$~(i) of Proposition \ref{samehistory}, this means that $S \sim T$. 
\end{proof}

\begin{proof}[Proof of Theorem \ref{partition}]
Since the equivalence classes for the relation $\sim$ partition $2^{E(G)}$, we only need to show that there exists a unique spanning tree inside each of these classes. The existence is proved by Lemma \ref{reptree}. Now let us show the uniqueness of the spanning tree.

Let $T$ and $T'$ be two spanning trees of $G$ such that $T \sim T'$. Thus they share the same partition of $E(G)$. But remember that edges of type \bSi\, are always internal and those of type \bSe\, always external. Furthermore,  Proposition \ref{intextact} tells that each edge of type \bI\, is internal and each edge of type \bL\, is external. Therefore $T = T'$.
\end{proof}
\section{Some descriptions of the Tutte polynomial}


Let us fix a graph $G$ and a decision tree $\Delta$. 
The following lemma describes how the number of connected components and the cyclomatic number behave when we add/remove an active edge.

\begin{leme} 
\label{addremove}
Let $S$ be a subgraph of $G$ and $e$ an active edge for $S$.
\begin{enumerate}
\item[(a)] If $e$ is external and has type \bL, then
$$\cc(S \cup \{ e \})= \cc(S),  \quad
\cycl(S \cup \{ e \})= \cycl(S) + 1.$$
\item[(b)] If $e$ is internal and has type \bI, then
$$\cc(S \backslash  e )= \cc(S) + 1,  \quad
\cycl(S \backslash  e )= \cycl(S). $$
\end{enumerate}
\end{leme}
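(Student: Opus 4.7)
The plan is to deduce both parts of this lemma directly from Lemma \ref{cyclecocycle}, which provides exactly the structural information we need about active edges. The two cases are dual to each other, so I would present them in parallel.

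For case (a), where $e$ is external and of type \textbf{L}: Lemma \ref{cyclecocycle} guarantees a cycle $C$ of $G$ consisting of $e$ together with edges of type \textbf{Si}. Every edge of type \textbf{Si} is internal, hence belongs to $S$, so $C \setminus \{e\} \subseteq S$. This exhibits a path in $S$ between the two endpoints of $e$, which shows that the endpoints of $e$ already lie in the same connected component of $S$. Consequently, adding $e$ to $S$ does not merge any components, giving $\cc(S \cup \{e\}) = \cc(S)$. The value of $\cycl$ then follows from the identity $\cycl(X) = \cc(X) + |X| - |V(G)|$ recalled in Section~\ref{DNM}: one edge is added without changing $\cc$, so $\cycl$ increases by exactly one.

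For case (b), where $e$ is internal and of type \textbf{I}: Lemma \ref{cyclecocycle} gives a cocycle $D$ of $G$ consisting of $e$ together with edges of type \textbf{Se}. Every edge of type \textbf{Se} is external, so $D \setminus \{e\} \subseteq \overline{S}$, i.e.\ $D \cap S = \{e\}$. The main step here is to argue that this forces $e$ to be an isthmus of $S$: the cocycle $D$ corresponds to a partition $V(G) = V_1 \uplus V_2$ such that $D$ is precisely the set of edges with one endpoint in $V_1$ and the other in $V_2$; hence in $S$, the unique edge between $V_1$ and $V_2$ is $e$, so removing $e$ strictly separates $V_1$ from $V_2$ in $S \setminus \{e\}$. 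Since deleting a single edge can increase the number of components by at most one, we obtain $\cc(S \setminus \{e\}) = \cc(S) + 1$. Again, the cyclomatic formula then yields $\cycl(S \setminus \{e\}) = \cycl(S)$ by direct substitution.

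The proof is almost entirely bookkeeping once Lemma \ref{cyclecocycle} is invoked; the only slightly delicate point is the isthmus argument in case (b), where one must translate "cocycle intersected with $S$ equals $\{e\}$" into "$e$ disconnects $S$." I would be careful to phrase this in terms of the vertex bipartition induced by the cocycle, as above, so the argument is unambiguous. Everything else reduces to the definition $\cycl(X) = \cc(X) + |X| - |V(G)|$ and elementary arithmetic.
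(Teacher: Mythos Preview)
Your proposal is correct and follows essentially the same approach as the paper: both invoke Lemma~\ref{cyclecocycle} to obtain a cycle (resp.\ cocycle) through $e$ whose other edges are of type $\mathbf{S}_i$ (resp.\ $\mathbf{S}_e$), deduce that $\cc$ is unchanged (resp.\ increases by one), and then read off $\cycl$ from the formula $\cycl(X) = \cc(X) + |X| - |V(G)|$. Your bipartition argument in case (b) is slightly more explicit than the paper's one-line ``in other terms, there exists no path\ldots'', but the substance is identical.
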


\begin{proof}
\textbf{(a) $\boldsymbol e$ is external and has type \bL.} Point (1) from Lemma \ref{cyclecocycle} ensures that there exists a path only made of edges of type \bSi\, (so this is a path in $S$) linking the endpoints of $e$. Therefore including $e$ in $S$ does not add a connected component, hence $\cc(S \cup \{ e \})= \cc(S)$. Moreover,
$$\cycl(S \cup \{ e \})= |S \cup \{ e \}| + \cc(S \cup \{ e \}) - |V(G)| =   |S| + 1 + \cc(S) - |V(G)| =  \cycl(S) + 1.$$

\textbf{(b) $\boldsymbol e$ is internal and has type \bI.} Point (2) from Lemma \ref{cyclecocycle} ensures that there exists a cocycle only made of $e$ and edges of type \bSe. In other terms, there exists no path with edges of $S \backslash e$ linking the endpoints of $e$. So removing $e$ from $S$ will increase the number of connected components: $\cc(S \backslash  e )= \cc(S) + 1.$ Moreover,
$$\cycl(S \backslash  e )= |S \backslash  e | + \cc(S \backslash  e ) - |V(G)|  =   |S| - 1 + \cc(S) + 1 - |V(G)| =  \cycl(S),$$
which ends the proof. \end{proof}

\begin{prop} \label{rouge}
Let $T$ be a spanning tree. We denote by $I_T$ the set of the subgraphs equivalent to $T$. Then
\begin{equation} 
\label{eq:weight}
\sum_{S \in I_T}(x-1)^{\cc(S)-1}(y-1)^{\cycl(S)} = x^{|\mathcal I(T)|} y^{|\mathcal E(T)|},
\end{equation}
where $\mathcal I(T)$ (resp. $\mathcal E(T)$) denotes the set of internal (resp. external) $\Delta$-active edges of $T$.
\end{prop}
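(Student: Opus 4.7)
My plan is to combine the description of the equivalence class provided by Corollary \ref{cor:inter} with the effect of adding or removing active edges given by Lemma \ref{addremove}, then conclude with a binomial computation.

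First, by Corollary \ref{cor:inter}, the equivalence class $I_T$ is exactly the subgraph interval $[T \setminus \mathcal I(T),\ T \cup \mathcal E(T)]$. Since $\mathcal I(T) \subseteq T$ and $\mathcal E(T) \subseteq \overline{T}$ are disjoint, each $S \in I_T$ can be written uniquely as $S = (T \setminus A) \cup B$ for some $A \subseteq \mathcal I(T)$ and $B \subseteq \mathcal E(T)$. Thus $I_T$ is in bijection with the Cartesian product $2^{\mathcal I(T)} \times 2^{\mathcal E(T)}$.

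Next, I would compute $\cc(S)$ and $\cycl(S)$ by going from $T$ to $S$ one edge at a time: first remove the edges of $A$ (in some fixed order), then add the edges of $B$. The key observation is that each intermediate subgraph $S'$ obtained along this process differs from $T$ by a subset of $\mathcal I(T) \cup \mathcal E(T) = \Act(T)$, so by Proposition \ref{samehistory} (implication (v) $\Rightarrow$ (ii)) it is equivalent to $T$. In particular, its set of active edges, and the type (\bI{} or \bL) of each such edge, coincide with those of $T$. So at each step I am really removing an internal edge of type \bI, or adding an external edge of type \bL, from the current subgraph, which is exactly the hypothesis of Lemma \ref{addremove}. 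Starting from $T$, which satisfies $\cc(T)=1$ and $\cycl(T)=0$, each removal increases $\cc$ by one and leaves $\cycl$ unchanged, and each addition increases $\cycl$ by one and leaves $\cc$ unchanged. Therefore $\cc(S) - 1 = |A|$ and $\cycl(S) = |B|$.

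Summing over the interval and factoring gives
\begin{align*}
\sum_{S \in I_T}(x-1)^{\cc(S)-1}(y-1)^{\cycl(S)}
&= \sum_{A \subseteq \mathcal I(T)} \sum_{B \subseteq \mathcal E(T)} (x-1)^{|A|}(y-1)^{|B|} \\
&= \bigl(1+(x-1)\bigr)^{|\mathcal I(T)|}\,\bigl(1+(y-1)\bigr)^{|\mathcal E(T)|} \\
&= x^{|\mathcal I(T)|}\,y^{|\mathcal E(T)|},
\end{align*}
which is the desired identity.

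The only delicate point is the inductive use of Lemma \ref{addremove}: that lemma only describes the effect of adding or removing a single active edge, so one must verify that the edge being modified at each step is still active (and of the expected type) in the current intermediate subgraph. This is precisely what Proposition \ref{samehistory} guarantees, since all intermediate subgraphs stay in $I_T$ and hence share the same partition of $E(G)$ into types.
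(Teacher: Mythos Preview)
Your proof is correct and follows essentially the same approach as the paper: parametrize $I_T$ via Corollary \ref{cor:inter}, use Lemma \ref{addremove} to compute $\cc$ and $\cycl$, then sum with the binomial identity. You are in fact slightly more careful than the paper, which just says ``by repeated applications of Lemma \ref{addremove}'' without spelling out why the intermediate subgraphs remain in $I_T$; your use of Proposition \ref{samehistory} to justify this inductive step is a welcome addition.
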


\begin{proof}
By Corollary \ref{cor:inter} the interval $I_T$ is the set of subgraphs of the form  $(T \backslash R_i) \cup R_e$, where $R_i \subseteq \mathcal I(T)$ and $R_e \subseteq \mathcal E(T)$.
In other terms, each subgraph of $I_T$ can be obtained by deleting from $T$ some edges of type \bI\, one by one, then adding to $T$ some edges of type \bL. Thus, by repeated applications of Lemma \ref{addremove}, given $S=(T \backslash R_i) \cup R_e \in I_T$, the number $\cc(S)$ is equal to $\cc(T) + |R_i| = |R_i| + 1$ and $\cycl(S)$ is equal to $\cycl(T) + |R_e| = |R_e|$. The formula \eqref{eq:weight} then derives from the identity $(X+1)^{|A|}(Y+1)^{|B|}=\sum_{\substack{S_A \subseteq A \\ S_B \subseteq B}} X^{|S_A|}Y^ {|S_B|}.$
%
%
%
%
\end{proof}

\noindent \textbf{Remark.} Theorem \ref{charact} p.~\pageref{charact} can be easily  deduced from the previous proposition. Indeed, the formula \eqref{celuila} can be obtained by summing \eqref{eq:weight} over all spanning trees $T$. (Remember that the intervals $I_T$ form a partition of the subgraph set -- see Theorem \ref{partition}.)

We can adapt the previous reasoning to obtain more descriptions of the Tutte polynomial.

\begin{prop} Given any decision tree $\Delta$, the Tutte polynomial of $G$ admits the three following descriptions:
\begin{equation}
\label{eqfor}
T_G(x,y) = \sum_{F\textrm{ spanning forest of $G$}} (x-1)^{\cc(F)-1}y^{\ell(F)},
\end{equation}
\begin{equation}
\label{eqfive}
T_G(x,y) = \sum_{K\textrm{ connected subgraph of $G$}} x^{i(K)}(y-1)^{\cycl(K)},
\end{equation}
\begin{equation}
\label{eqsix}
T_G(x,y) = \sum_{S\textrm{ spanning subgraph of $G$}} \left(\frac x 2 \right)^{i(S)} \left(\frac y 2 \right)^{ \ell(S) },
\end{equation}
where $i(S)$ (resp. $\ell(S)$) denotes the number of edges with type \bI\, (resp. \bL) for a subgraph $S$.
\end{prop}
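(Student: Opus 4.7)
The plan is to mimic the proof of Proposition \ref{rouge} and exploit the partition of subgraphs established in Theorem \ref{partition}. Recall that $2^{E(G)}$ is the disjoint union of the intervals $I_T = [T \setminus \mathcal I(T), T \cup \mathcal E(T)]$ as $T$ ranges over the spanning trees of $G$, and that by Proposition \ref{samehistory} all members of a single interval $I_T$ share the same type assignment. In particular $i(S)=|\mathcal I(T)|$ and $\ell(S)=|\mathcal E(T)|$ are constant on $I_T$. Every $S\in I_T$ may be written uniquely as $S=(T\setminus R_i)\cup R_e$ with $R_i\subseteq\mathcal I(T)$ and $R_e\subseteq\mathcal E(T)$, and iterating Lemma \ref{addremove} gives $\cc(S)=1+|R_i|$ and $\cycl(S)=|R_e|$.

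For \eqref{eqfor}, I note that $S\in I_T$ is a spanning forest iff $\cycl(S)=0$, i.e.\ iff $R_e=\emptyset$. The contribution of $I_T$ to the right-hand side of \eqref{eqfor} is therefore
$$\sum_{R_i\subseteq\mathcal I(T)}(x-1)^{|R_i|}\,y^{|\mathcal E(T)|}=x^{|\mathcal I(T)|}y^{|\mathcal E(T)|},$$
and summing over all spanning trees $T$ yields $T_G(x,y)$ by Theorem \ref{charact}. Dually, for \eqref{eqfive} a subgraph $S\in I_T$ is connected iff $\cc(S)=1$, i.e.\ $R_i=\emptyset$, so the contribution of $I_T$ is
$$\sum_{R_e\subseteq\mathcal E(T)}x^{|\mathcal I(T)|}(y-1)^{|R_e|}=x^{|\mathcal I(T)|}y^{|\mathcal E(T)|},$$
and again summing over $T$ recovers $T_G(x,y)$.

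For \eqref{eqsix} I sum over \emph{all} subgraphs in $I_T$, which is simply the unrestricted double sum
$$\sum_{R_i\subseteq\mathcal I(T)}\sum_{R_e\subseteq\mathcal E(T)}\Bigl(\tfrac{x}{2}\Bigr)^{|\mathcal I(T)|}\Bigl(\tfrac{y}{2}\Bigr)^{|\mathcal E(T)|}=2^{|\mathcal I(T)|}\Bigl(\tfrac{x}{2}\Bigr)^{|\mathcal I(T)|}\cdot 2^{|\mathcal E(T)|}\Bigl(\tfrac{y}{2}\Bigr)^{|\mathcal E(T)|}=x^{|\mathcal I(T)|}y^{|\mathcal E(T)|},$$
and one more appeal to Theorem \ref{charact} finishes the proof. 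Since every identity collapses onto the monomial $x^{|\mathcal I(T)|}y^{|\mathcal E(T)|}$ that Theorem \ref{charact} knows how to handle, there is no genuine obstacle in this proof; the only point that deserves a sentence of justification is the constancy of $i(S)$ and $\ell(S)$ on each equivalence class, which is exactly the content of Proposition \ref{samehistory}\,(iii).
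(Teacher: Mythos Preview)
Your proof is correct and follows essentially the same approach as the paper: partition the subgraph lattice into the intervals $I_T$, identify within each $I_T$ the relevant subfamily (forests, connected subgraphs, or all subgraphs), and collapse each contribution to the monomial $x^{|\mathcal I(T)|}y^{|\mathcal E(T)|}$ handled by Theorem~\ref{charact}. The paper's treatment of~\eqref{eqfor} takes a slight detour through the forest-indexed partition $2^{E(G)}=\biguplus_F [F,F\cup\mathcal L(F)]$ and then applies the defining sum~\eqref{eq:deftutte} of $T_G$, whereas you work directly with the tree-indexed partition and Theorem~\ref{charact}; but the computations are the same.
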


These three descriptions can be also found in \cite{gordon-traldi}, but in the specific case of ordering activity.

\begin{proof} \textbf{Formula  \eqref{eqfor}:} Let $T$ be a spanning tree and $I_T$ the set of the subgraphs equivalent to $T$. Looking back on the proof of Proposition \ref{rouge}, we see that the spanning forests of $I_T$ are of the form $T \backslash R_i$ with $R_i \subseteq \mathcal I(T)$. So every subgraph of $I_T$ can be written as $F \cup R_e$, where $F$ is a spanning forest of $I_T$ and $R_e \subseteq \mathcal E(T)$. By Proposition \ref{samehistory}, the set $\mathcal E(T)$, which consists of edges with type \bL\, for $T$, equals $\mathcal L(F)$ for any forest $F$ in $I_T$, where $\mathcal L(F)$ denotes the set of edges with type \bL\, for $F$. Hence, we have 
$$I_T = \biguplus_{F\textrm{ spanning forest of }I_T} \left[F,F \cup \mathcal L(F)\right].$$
As the intervals $I_T$ partition the set of subgraphs (see Theorem \ref{partition}), we deduce:
\begin{equation} \label{equite}
2^{E(G)} = \biguplus_{F\textrm{ spanning forest of }G} [F, F \cup \mathcal L(F)].
\end{equation}
A subgraph $S \in [F, F \cup \mathcal L(F)]$ is of the form  $F \cup R$, with $R \subseteq \mathcal L(F)$, and thanks to Lemma \ref{addremove} satisfies $\cc(S) = \cc(F)$ and $\cycl(S) = \cycl(F) + |R| = |R|$.
So we get
$$\sum_{S \in [F, F \cup \mathcal L(F)]} (x-1)^{\cc(S)-1}(y-1)^{\cycl(S)} = (x-1)^{\cc(F)-1} y^{|\mathcal L(F)|}.$$
The formula \eqref{eqfor} results from the previous equation and \eqref{equite}.

\noindent \textbf{Formula  \eqref{eqfive}:} Similar to the previous point, in a dual way.
 
\noindent \textbf{Formula  \eqref{eqsix}:} Let $T$ be a spanning tree. By Proposition \ref{samehistory}, each subgraph $S$ equivalent to $T$ satisfies $i(S)=i(T)$ and $\ell(S)=\ell(T)$. So the sum   $\sum_{S \sim T} \left(\frac x 2 \right)^{i(S)} \left(\frac y 2 \right)^{ \ell(S) }$ equals $\left(\frac x 2 \right)^{i(T)} \left(\frac y 2 \right)^{ \ell(T)}$ times the number of subgraphs equivalent to $T$, namely $2^{|\Act(T)|}=2^{i(T)+\ell(T)}$. So the previous sum is equal to $x^{i(T)}y^{\ell(T)}$. Summing over all spanning tree $T$, we obtain formula \eqref{eqsix}. \end{proof}

\chapter{Back to the earlier activities}
\label{sec:spec}
\label{c:spec}

It is time to show that each activity defined in Section \ref{sec:activity} p. \pageref{sec:activity} is a $\Delta$-activity. In this way, we prove in a different way that all these activities are Tutte-descriptive.
The key tools for this section are Theorem \ref{treecompatible} p. \pageref{treecompatible} and Corollary \ref{cor:act} p. \pageref{cor:act}. 

\section{Ordering activity}

Let $G$ be a graph and consider a linear order $<_{ord}$:
$\epsilon_1 < \epsilon_2 < \dots < \epsilon_m,$
where $E(G) = \ens{\epsilon_1,\dots,\epsilon_m}$.
Define the decision function $\Delta_{ord}$ by setting
\begin{equation} \Delta_{ord}(d_1,\dots,d_k) =  \epsilon_{m-k} \end{equation}
for every sequence of directions $(d_1,\dots,d_k)$ with $k < m$.

\begin{prop}
The ordering activity is equal to the  $\Delta_\phi$-activity. Therefore it is Tutte-descriptive.
\end{prop}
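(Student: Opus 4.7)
The plan is to leverage Proposition \ref{maximal} together with the explicit description of $\Delta_{ord}$ in order to identify the $(\Delta_{ord}, T)$-ordering with the reverse of $<_{ord}$, uniformly over all spanning trees $T$, and then to conclude Tutte-descriptiveness via Theorem \ref{charact}. One could also route the argument through Corollary \ref{cor:act} by introducing the constant order map $\phi(T) = (\epsilon_m < \epsilon_{m-1} < \dots < \epsilon_1)$ and observing that tree-compatibility is trivially satisfied since $\phi$ does not depend on $T$; but the direct approach below is shorter.

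First I would check that $\Delta_{ord}$ is a bona fide decision function: along any root-to-leaf path the labels read $\epsilon_m, \epsilon_{m-1}, \dots, \epsilon_1$, which is a permutation of $E(G)$, so the requirements from Subsection \ref{subsec:dectree} are met. Next I would compute the $(\Delta_{ord}, T)$-ordering for an arbitrary spanning tree $T$. The crucial feature of the formula $\Delta_{ord}(d_1,\dots,d_k) = \epsilon_{m-k}$ is that the right-hand side depends only on the depth $k$, not on the actual sequence of directions. Applying relation \eqref{ekdelta} from the excerpt, the edges $e_1, e_2, \dots$ visited by Algorithm \ref{type} satisfy
\[
e_{k+1} \;=\; \Delta_{ord}(d(t_1), \dots, d(t_k)) \;=\; \epsilon_{m-k},
\]
so that $e_k = \epsilon_{m-k+1}$ for every $k \in \{1,\dots,m\}$, regardless of which edges of $G$ happen to lie in $T$. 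Consequently the $(\Delta_{ord},T)$-ordering is exactly $\epsilon_m < \epsilon_{m-1} < \dots < \epsilon_1$, i.e.\ the reverse of $<_{ord}$.

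Then I would invoke Proposition \ref{maximal}: an edge is $\Delta_{ord}$-active if and only if it is maximal for the $(\Delta_{ord},T)$-ordering in its fundamental cycle (when external) or fundamental cocycle (when internal). Since maximality for the reverse of $<_{ord}$ coincides with minimality for $<_{ord}$, the $\Delta_{ord}$-active edges are precisely the edges of $T$ (resp.\ $\overline T$) that are minimal for $<_{ord}$ in their fundamental cocycle (resp.\ cycle). This is the very definition of the ordering activity recalled in Subsection \ref{ss:ord}. Tutte-descriptiveness is then immediate from Theorem \ref{charact}.

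There is no real obstacle in this argument: the decision tree $\Delta_{ord}$ has been designed precisely so that the order in which Algorithm \ref{type} visits edges does not depend on the input subgraph, matching the rigidity of a fixed linear order. The only subtlety worth flagging is the need to reverse the order (hence the index $m-k$ rather than $k+1$ in the definition of $\Delta_{ord}$), because $\Delta$-activity is characterized by maximality in fundamental cycles and cocycles, whereas Tutte's original notion uses minimality.
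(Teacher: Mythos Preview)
Your proof is correct and follows essentially the same route as the paper's: identify the $(\Delta_{ord},T)$-ordering as the reverse of $<_{ord}$, invoke Proposition~\ref{maximal} to match minimality with maximality, and conclude via Theorem~\ref{charact}. The paper also notes the alternative through Corollary~\ref{cor:act} with the constant order map, exactly as you mention.
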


We recall that the ordering activities (also called Tutte's activities) have been introduced in Subsection~\ref{ss:ord} p.\pageref{ss:ord}.

\begin{proof} Fix a spanning tree $T$. It is straightforward to see from the definition of $\Delta_{ord}$ that $\epsilon_m < \dots < \epsilon_1$
for the $(\Delta_{ord},T)$-order. In other terms, the $(\Delta_{ord},T)$-order is inverse to $<_{ord}$. So by Proposition~\ref{maximal}, an edge is ordering-active if and only if it is $\Delta_{ord}$-active. We conclude thanks to Theorem~\ref{charact}, applied to the decision function $\Delta_{ord}$.
\end{proof}

\noindent \textbf{Remark.} Alternatively, we could have also used Corollary \ref{cor:act} with constant order map $T \mapsto\  >_{ord}$.

\noindent  \textbf{Example}. The decision tree associated to the graph from Figure \ref{fig:extut} p.\pageref{fig:extut} with the linear order $ a < b < c < d$  is depicted at Figure \ref{fig:dttut}.

\begin{figure}[h!]
\begin{center}
\includegraphics[scale=1]{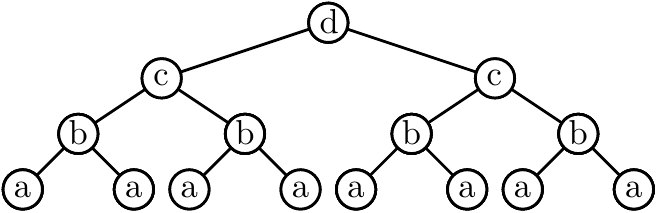}
\end{center}
\caption{The decision tree corresponding to the linear order $a < b < c < d$.}
\label{fig:dttut}
\end{figure}

\section{Embedding activity}

Let $G$ be a graph. We want to express the embedding activities (see Subsection \ref{ss:bernardi} for the definition) as $\Delta$-activities and thus give an alternative proof of Theorem 7 from \cite{bernardi-tutte}.

\begin{prop}
\label{prop:ber}
For any embedding $M_G$ of the graph $G$, the embedding activity is a  $\Delta$-activity and so a Tutte-descriptive activity. 
\end{prop}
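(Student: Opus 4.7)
The strategy is to construct explicitly, for any embedding $M_G$, a decision tree $\Delta$ whose $\Delta$-activity coincides with the embedding activity, and then to conclude via Theorem~\ref{charact}. A first attempt would be to apply Corollary~\ref{cor:act} with $\phi(T)$ equal to the reverse of the $(M_G,T)$-ordering, converting the ``minimal in fundamental cycles/cocycles'' of Subsection~\ref{ss:bernardi} into the ``maximal'' required by Proposition~\ref{maximal}. But this order map is not in general tree-compatible: the motion function~\eqref{motionfunction} is an intrinsically forward construction, and the induced edge order (based on the first visited half-edge of each edge) does not behave symmetrically under reversal. Small examples with two spanning trees already show that the $k=0$ condition of Theorem~\ref{treecompatible} can fail for this $\phi$, so a different, more intrinsic construction of $\Delta$ is needed.

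The plan is to build $\Delta$ recursively using the embedding itself. Let $h_0$ be the root of $M_G$ and $\sigma$ its vertex permutation, and let $e^\star=e^\star(M_G)$ be the edge containing $\sigma^{-1}(h_0)$. A short case-distinction on whether the half-edge $\sigma^{-1}(h_0)$ is internal or external for a given spanning tree $T$ shows that $t^{-1}(h_0)\in e^\star$ in every case, so that some half-edge of $e^\star$ is always the last visited in the tour of $T$. The key lemma to establish is then: if $e^\star$ is a loop or an isthmus it is always embedding-active, and if $e^\star$ is standard it is never embedding-active, for any spanning tree. With this edge in hand, I define $\Delta$ by taking $e^\star$ as the root label, and letting the left and right subtrees of $\Delta$ be, recursively, the decision trees built by the same rule from $\delete{M_G}{e^\star}$ and $\contract{M_G}{e^\star}$ respectively. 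The base case is a graph with a single edge, which is necessarily an isthmus or a loop and gives a trivial decision tree.

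By Algorithm~\ref{type}, $e^\star$ receives type \bSe\, or \bSi\, when it is standard (so it is not $\Delta$-active), and type \bL\, or \bI\, when it is a loop or an isthmus (so it is $\Delta$-active): this matches the two cases of the key lemma exactly. For the induction step, I would combine the lemma with the deletion/contraction recursion for the embedding activity established in Bernardi's original paper~\cite{bernardi-tutte}: when $e^\star$ is standard and external (respectively internal) in $T$, the embedding-active edges of $(M_G,T)$ are exactly those of $(\delete{M_G}{e^\star},T)$ (respectively of $(\contract{M_G}{e^\star},T\setminus\ens{e^\star})$), with $e^\star$ itself contributing analogously in the isthmus and loop cases. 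The induction hypothesis then yields the identification of $\Delta$-active edges with embedding-active edges. The main obstacle is the standard case of the key lemma: one must show that $e^\star$ fails to be minimal, in the $(M_G,T)$-order, in its fundamental cycle (when $e^\star$ is external) or cocycle (when $e^\star$ is internal). This requires a careful tracking of the tour around the endpoints of $e^\star$, exploiting the fact that its two half-edges occupy the ``tail end'' of the motion and therefore force the fundamental cycle or cocycle to be entered through some other, strictly earlier, edge.
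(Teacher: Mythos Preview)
Your overall plan is sound and your key lemma is correct: the edge $e^\star$ containing $\sigma^{-1}(h_0)$ is indeed $(M_G,T)$-active for every spanning tree $T$ when it is a loop or an isthmus, and for no $T$ when it is standard. This is the right root label for $\Delta$, and the recursive deletion/contraction you describe does lead to a decision tree whose $\Delta$-activity is the embedding activity. But your argument for the standard case is wrong. You assert that ``its two half-edges occupy the tail end of the motion''; in fact only $t^{-1}(h_0)$ is guaranteed to be last, while the other half-edge of $e^\star$ may be visited quite early. Take the embedding of Figure~\ref{fig:exber}, where $\sigma=(a\,b\,d)(b'\,c)(d'\,c'\,a')$ and the root is $a$, so that $e^\star=\ar d$, and take the spanning tree $T=\{\ar c,\ar d\}$: the tour is $(a,b,d,c',b',c,a',d')$, and the half-edge $d$ of $e^\star$ sits at position~$3$ out of~$8$. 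Since the $(M_G,T)$-edge-order is based on the \emph{first}-visited half-edge of each edge, knowing that $e^\star$ carries the last half-edge does not by itself place $e^\star$ late among the edges. What is actually needed is a monotonicity property: for two edges $\ar g$ and $\ar h$ lying in the same fundamental cycle or cocycle (with $g<g'$ and $h<h'$ in the half-edge order), one has $g<h$ iff $g'<h'$. With this in hand, ``$e^\star$ carries the globally maximal half-edge'' forces $e^\star$ to be \emph{maximal} in its fundamental cycle/cocycle, hence not minimal, hence not embedding-active when standard.

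The paper follows a closely related but more streamlined route. It first shows, via Corollary~\ref{cor:act}, that for \emph{any} embedding $N$ the order map $T\mapsto(N,T)$-order is tree-compatible, so that ``maximal in fundamental cycle/cocycle for $(N,T)$'' is always a $\Delta$-activity. Separately it proves a mirror lemma: $e$ is $(M_G,T)$-active iff $e$ is maximal in its fundamental cycle/cocycle for the $(M^\#_G,T)$-order, where $M^\#_G=(H,\sigma^{-1},\alpha)$ is rooted at $\sigma^{-1}(h_0)$. Combining the two gives the proposition. Your $e^\star$ is exactly the edge containing the root of $M^\#_G$, so your recursive construction and the paper's tree-compatibility argument for $M^\#_G$ ultimately build the same decision tree; the monotonicity above is precisely the heart of the mirror-lemma proof, and is unavoidable in either packaging.
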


\noindent \textbf{Example. }Let $M$ and $\Delta$  be the map and the decision tree of  Figure \ref{fig:dqber}. Consider $T = \ens{b,d}$. The $(M,T)$-ordering \footnote{Reminder : the $(M,T)$-order is the order of first visit during the tour of $T$ -- see Subsection \ref{ss:bernardi}.} is $a < b < c < d$. So the set of $(M,T)$-active edge is $\ens{a,b}$. The $(\Delta,T)$-ordering is $d < a < c < b$, so the set of $\Delta$-active edges for $T$ is as well $\ens{a,b}$. We can check that the two sets of active edges coincide (even if the two orderings do not look related).

\begin{figure}[h!]
\begin{center}\includegraphics[scale=1]{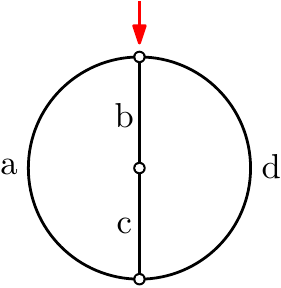} \hspace{2cm}
\includegraphics[scale=1.2]{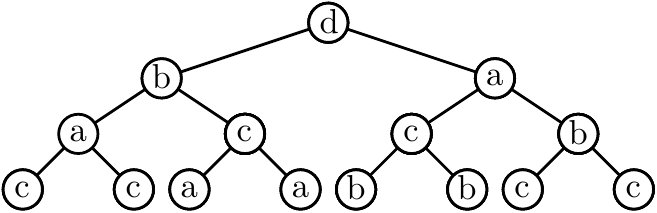}
\end{center}
\caption{A map and a decision tree for the embedding activity.}
\label{fig:dqber}
\end{figure}

 The proof is in two steps, embodied in two lemmas. The first lemma uses tree-compatibility via Corollary~\ref{cor:act}.

\begin{leme}
For each embedding $M_G$ of a graph $G$, the activity that maps any spanning tree $T$ onto the set of edges that are \emph{maximal} for the ($M_G$,$T$)-order in their fundamental cycles/cocycles is a $\Delta$-activity and so Tutte-descriptive.
\end{leme}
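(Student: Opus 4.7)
The plan is to apply Corollary \ref{cor:act} with the order map $\phi$ that sends each spanning tree $T$ to its $(M_G,T)$-order on $E(G)$. If we can show that $\phi$ is tree-compatible in the sense of Theorem \ref{treecompatible}, then Corollary \ref{cor:act} immediately yields the desired conclusion: the activity sending $T$ to the set of edges maximal in their fundamental cycles/cocycles for the $(M_G,T)$-order is a $\Delta$-activity, and hence Tutte-descriptive.

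The heart of the proof is therefore to verify the tree-compatibility criterion from Theorem \ref{treecompatible}: for all spanning trees $T$ and $T'$ and every $k \in \ens{0,\dots,|E(G)|-1}$, the equality $T \cap \ens{\phi_1(T),\dots,\phi_k(T)} = T' \cap \ens{\phi_1(T),\dots,\phi_k(T)}$ implies $\phi_j(T) = \phi_j(T')$ for all $j \in \ens{1,\dots,k+1}$. I would prove this by induction on $j$. The base case $j=1$ is immediate, since $\phi_1(T)$ is the edge $\ens{h_0,\alpha(h_0)}$ containing the root half-edge $h_0$, which does not depend on $T$. For the inductive step, assume $\phi_j(T) = \phi_j(T')$ for every $j \leq i < k+1$. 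Then the first $i$ edges in $\phi(T)$ and $\phi(T')$ coincide, and combining this with the tree-compatibility hypothesis, we see that each of these $i$ edges has the same internal/external status in $T$ and in $T'$.

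The key observation is that the motion function at a half-edge $h$, as defined by \eqref{motionfunction}, depends only on whether the edge containing $h$ is internal or external. Consequently, the motion functions $t(\cdot\,;T)$ and $t(\cdot\,;T')$ agree at every half-edge belonging to one of the first $i$ shared edges. Since, by the very definition of the $(M_G,T)$-order, the tour of $T$ starting from $h_0$ and running up to (but not including) the first visit of $\phi_{i+1}(T)$ only traverses half-edges belonging to $\phi_1(T),\dots,\phi_i(T)$, the same initial sequence of half-edges occurs in the tour of $T'$. Hence the $(i+1)$-th newly visited edge is the same in both tours, which yields $\phi_{i+1}(T) = \phi_{i+1}(T')$ and closes the induction.

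The main obstacle, although not deep, is the careful bookkeeping needed to justify that the tour of $T$ only uses half-edges of $\phi_1(T),\dots,\phi_i(T)$ before reaching $\phi_{i+1}(T)$; this is essentially a definitional unwinding of how the order on edges is induced from the order of first visits of their half-edges under the motion function. Once this step is made rigorous, the proof reduces to a direct invocation of Theorem \ref{treecompatible} and Corollary \ref{cor:act}.
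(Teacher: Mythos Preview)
Your proposal is correct and follows essentially the same approach as the paper: both verify tree-compatibility of the order map $T \mapsto (M_G,T)$-order via Corollary \ref{cor:act}, using the key observation that the motion function at a half-edge depends only on whether the corresponding edge is internal or external. The paper carries out the induction directly at the half-edge level (proving $h_j(T)=h_j(T')$ up to the first half-edge of $\phi_{k+1}(T)$), which makes the ``careful bookkeeping'' you mention a bit more transparent, but the substance is the same.
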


\begin{proof} It suffices to prove that the order map that sends a spanning tree $T$ of $G$ onto the $(M_G,T)$-order satisfies the hypothesis of Corollary \ref{cor:act}.

For any spanning tree $T$, we denote by $\phi_i(T)$ (resp. $h_i(T)$) the $i$-th smallest edge (resp. half-edge) for the $(M_G,T)$-ordering, namely the edge (resp. half-edge) which is visited in $i$-th position during the tour of the tree $T$.  We consider  $T$ and $T'$ two spanning trees of $G$ and $k$ an integer from $\ens{0,\dots,|E(G)|-1}$ such that \begin{equation} T \cap \ens{\phi_1(T),\dots,\phi_k(T)} = T' \cap \ens{\phi_1(T),\dots,\phi_k(T)}.
\label{bertc}
\end{equation} Moreover, let $\ell$ be the integer in $\ens{1,\dots,2|E(G)|}$ such that $h_\ell(T)$ is the smallest half-edge of $\phi_{k+1}(T)$. We want to show that $\phi_i(T) = \phi_i(T')$  for each $i \in \ens{1, \dots, k+1}$.

Let us prove by recurrence on $j \in \ens{1,\dots,\ell}$ that $h_j(T) = h_j(T')$. For $j=1$, the half-edges $h_1(T)$ and $h_1(T')$ are both equal to the root half-edge of $M_G$. Now assume that for some $j \in \ens{1,\dots,\ell-1}$ we have $h_j(T) = h_j(T')$. Let $\phi_i(T)$ be the edge that corresponds to $h_j(T)$. The edge $\phi_i(T)$ belongs to $\ens{\phi_1(T),\dots,\phi_k(T)}$  since we have $h_j(T) < h_{\ell}(T)$ for the $(M_G,T)$-ordering. So the equivalence 
$$\phi_i(T)\textrm{ internal for }T \ \Leftrightarrow \phi_i(T)\textrm{ internal for }T',$$
that results from \eqref{bertc},
holds. Thus, we have $$t(h_j(T);T) = t(h_j(T');T')$$
(let us recall that $t(.;T)$ denotes the motion function of the spanning tree $T$, see Subsection \ref{ss:bernardi}), that is 
$$h_{j+1}(T) = h_{j+1}(T'),$$
which proves the recurrence.

We have shown that whether we consider the tour of $T$ or the tour of $T'$, the first visited half-edges are $h_1(T),\dots,h_\ell(T)$ in this order. Consequently, the $k+1$ first visited edges are identical in both cases, namely $\phi_1(T),\dots,\phi_k(T),\phi_{k+1}(T)$ in this order. This means that for every $i \in \ens{1,\dots,k+1}$ we have $\phi_i(T) = \phi_i(T')$.
\end{proof}

Despite the similarities with embedding activities, the previous lemma is not sufficient to recover Bernardi's activity, for which an active edge is \emph{minimal} in its fundamental cycle/cocycle rather than \emph{maximal}. 

We could then have the idea to adapt the previous proof by considering the order map that sends each spanning tree $T$ onto the \emph{reversed} ($M_G$,$T$)-ordering. But a problem occurs: this order map is not tree-compatible\footnote{Indeed, when an order map $\phi$ is tree-compatible, the minimal edge for $\phi(T)$ is the same for all spanning trees $T$, which is obviously not the case here.}.

We could also have the seemingly desperate idea to "reverse" the map instead of the ($M_G$,$T$)-ordering. For example, consider the rooted maps $M$ and $M'$ equipped with the spanning tree $T$ of Figure \ref{fig:mirror}. The map $M'$ is the mirror map of $M$. The $(M,T)$-ordering for half-edges equals $a < b < c < b' < d < c' < a' < d'$ while the $(M',T)$-ordering equals $a > b' > c > b > d' > c' > a' > d$\footnote{If we formally remove the primes in these two orderings, we will observe a strange phenomenon: the two orderings are reverse! An explanation is implicitly given in the proof of Lemma \ref{miror}.}. The orderings on the edges are not reverse ($\ar a < \ar b < \ar c < \ar d$ for $M$ and $ \ar d < \ar a < \ar c < \ar b$ for $M'$) but the $(M,T)$-active edges correspond to the edges that are maximal in their fundamental cycle/cocycle for the $(M',T)$-ordering, namely $\ar a$ and $\ar b$. (For instance, take $\ar a$: its fundamental cycle is $\ens{\ar a,\ar d}$. We have $\ar a < \ar d$ for the $(M,T)$-order and $\ar d < \ar a$ for the $(M',T)$-order.) It turns out that this property is general. 

\begin{figure}[h!]
\begin{center}
\includegraphics[scale=1]{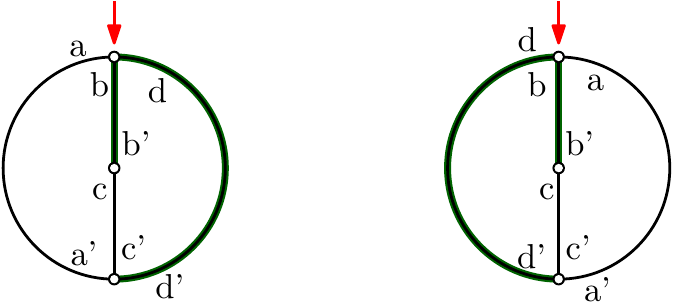}
\end{center}

\caption{A map $M$ with a spanning tree $T$ and its mirror map $M'$.}
\label{fig:mirror}
\end{figure}

\begin{leme} \label{miror}
Consider any embedding $M_G= (H,\sigma,\alpha)$ of the graph $G$ rooted on a half-edge $a$, and denote by $M^\#_G$ the mirror map of $M_G$, that is to say the map $(H,\sigma^{-1},\alpha)$ rooted on the half-edge $\sigma^{-1}(a)$.

For any spanning tree $T$, an internal (resp. external) edge is $(M_G,T)$-active if and only if it is maximal in its fundamental cocycle (resp. cycle) for the $(M^\#_G,T)$-ordering. 
\end{leme}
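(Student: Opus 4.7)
The plan is to establish Lemma~\ref{miror} by transferring the minimum-condition that characterizes $(M_G,T)$-active edges into a maximum-condition for the $(M^\#_G,T)$-order, via a careful comparison of the two tours.

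The first step is to prove the permutation identity $t^\# \circ \sigma^{-1} = \sigma^{-1} \circ t^{-1}$ on $H$, where $t$ and $t^\#$ denote the motion functions of $M_G$ and $M^\#_G$. To verify it, fix a half-edge $h$ and set $g = t^{-1}(h)$: if $g$ is external then $\sigma(g)=h$, so $\sigma^{-1}(h)=g$ is external, and both sides of the identity equal $\sigma^{-2}(h)$; if $g$ is internal then $\sigma\alpha(g)=h$, so $\sigma^{-1}(h)=\alpha(g)$ is also internal, and both sides equal $\sigma^{-1}(g)$. Iterating this identity and using that $t$ is a cyclic permutation (Proposition~\ref{tourcyclique}), one obtains that if $(h_1,\dots,h_{2m})$ is the $M_G$-tour starting at $h_1=a$, then the $M^\#_G$-tour starting at $\sigma^{-1}(a)$ is $\pare{\sigma^{-1}(h_1),\sigma^{-1}(h_{2m}),\sigma^{-1}(h_{2m-1}),\dots,\sigma^{-1}(h_2)}$.

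The second step observes that for any half-edge $h$, the half-edge $\sigma^{-1}(h)$ belongs to the same edge as $t^{-1}(h)$: this follows from the same case-split, since either $\sigma^{-1}(h)=t^{-1}(h)$ (case $g$ external) or $\sigma^{-1}(h)=\alpha(t^{-1}(h))$ (case $g$ internal). Consequently, the sequence of \emph{edges} (counted with multiplicity) visited along the $M^\#_G$-tour is exactly the reversal of the sequence of edges visited along the $M_G$-tour. In particular, the $(M^\#_G,T)$-order on $E(G)$ coincides with the reverse of the order defined on edges by the \emph{last}-visit in the $M_G$-tour, so that Lemma~\ref{miror} reduces to the following combinatorial claim: for every edge $e$ and its fundamental cycle (if external) or cocycle (if internal) $C$, $e$ is minimal in $C$ for the first-visit $M_G$-order if and only if $e$ is minimal in $C$ for the last-visit $M_G$-order.

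I plan to derive this last claim from a \emph{non-bracketing property}: no edge $e'\in C\setminus\ens{e}$ has both of its visits strictly contained in the interval delimited by the two visits of $e$ in the $M_G$-tour. Under this property the two notions of minimality for $e$ are immediately equivalent, since any $e'$ with first-visit before $e$'s first-visit must then have its last-visit before $e$'s last-visit as well (and symmetrically). The non-bracketing property itself will be proved by induction on $|E(G)|$, using the compatibility of Bernardi's tour with edge deletion and contraction established in \cite{bernardi-tutte} together with the elementary identities $(\delete {M_G} e)^\# = \delete {M^\#_G} e$ and $(\contract {M_G} e)^\# = \contract {M^\#_G} e$ up to root readjustment. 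The main obstacle will be precisely this non-bracketing step: generically the first-min and last-min edges of $C$ differ (for instance $\ar a$ and $\ar c$ for the cocycle $\ens{\ar a,\ar c,\ar d}$ of the running example), so one really has to exploit the distinguished role of $e$ as the edge whose status -- internal or external -- is what defines $C$.
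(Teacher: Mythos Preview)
Your reduction (the first two steps) is correct and is essentially the paper's point 1 in a different guise: the identity $t^\#\circ\sigma^{-1}=\sigma^{-1}\circ t^{-1}$ is exactly what the paper writes as $t'=\sigma^{-1}\circ t^{-1}\circ\sigma$, and from it both approaches extract that the $(M_G^\#,T)$-order on edges is the reverse of the last-visit order in the $M_G$-tour. So the heart of the matter is indeed the equivalence, for the distinguished edge $e$ of $C$, between being first-visit-minimal and last-visit-minimal in $C$.

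There is however a genuine logical gap in your non-bracketing argument. The property you state --- ``no $e'\in C\setminus\{e\}$ has both visits strictly inside the interval of $e$'' --- only yields the implication $\mathrm{first}(e)<\mathrm{first}(e')\Rightarrow \mathrm{last}(e)<\mathrm{last}(e')$, hence ``$e$ first-min $\Rightarrow$ $e$ last-min''. It says nothing when $\mathrm{first}(e')<\mathrm{first}(e)$, so the sentence ``any $e'$ with first-visit before $e$'s first-visit must then have its last-visit before $e$'s last-visit'' does \emph{not} follow from it; for that converse you need the dual statement that the interval of $e$ is not strictly contained in the interval of any $e'\in C\setminus\{e\}$. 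You must prove both. This is precisely what the paper does: its point 2 shows, after reducing to a two-edge map via deletion/contraction (which preserves the half-edge order), that the tour is of the form $(g,h,g',h')$, so the intervals of the two edges are always \emph{linked}. The converse direction (point 4) is then obtained by applying point 2 with the roles of $\ar g$ and $\ar h$ swapped, which is legitimate because $\ar h$ lies in the fundamental cycle of $\ar g$ if and only if $\ar g$ lies in the fundamental cocycle of $\ar h$.

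Two minor remarks: once you have reduced everything to a claim about the $M_G$-tour alone, the identities $(\delete{M_G}{e})^\#=\delete{M_G^\#}{e}$ are irrelevant --- drop them. And ``induction on $|E(G)|$'' here really means the one-shot reduction to the two-edge case, exactly as in the paper; you should carry it out rather than leave it as a plan, since (as you rightly note) this is where the distinguished role of $e$ is actually used.
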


This lemma shows that if we change "minimal" by "maximal" in the definition of the embedding activity, we obtain a strictly equivalent notion (and maybe more natural).

\begin{proof} Fix $T$ a spanning tree. We denote by $t$ the motion function corresponding to $M_G$ and $T$, and by $t'$ the motion function corresponding to $M^\#_G$ and $T$.

\textbf{1.} Let us prove that an edge $\ar h$ is smaller than $\ar g$ for the $(M^\#_G,T)$-ordering if and only if we have $\max(g,g') < \max(h,h')$ for the  $(M_G,T)$-ordering. Let $\ar g$ be an edge and $m$ the number of edges. Since $t'$ is a cyclic permutation of the set of half-edges (see Lemma \ref{tourcyclique} p. \pageref{tourcyclique}), there exist $k$ and $\ell$ in $\ens{0,\dots,2m-1}$ such $g = t'^k(\sigma^{-1}(a))$ and $g' = t'^\ell(\sigma^{-1}(a))$.
Moreover, it is easy to see from the algebraic definition of the  motion function that $t' = \sigma^{-1} \circ t^{-1} \circ \sigma$ and that $\sigma^{-1}(t(\hat g)) \in \ens{\hat g,\alpha(\hat g)}$ for every half-edge $\hat g$. Hence, 
$$g=t'^k(\sigma^{-1}(a)) = \sigma^{-1}(t^{-k}(a)) = \sigma^{-1}(t^{2m-k}(a)) \in \ens{t^{2m-k-1}(a),\alpha(t^{2m-k-1}(a))}.$$
Similarly, we have $g' \in \ens{t^{2m-\ell-1}(a),\alpha(t^{2m-\ell-1}(a))}$. But $g$ and $g'$ belong to the same edge, so 
$$\ar g =  \ens{t^{2m-k-1}(a), \alpha(t^{2m-k-1}(a))} = \ens{t^{2m-\ell-1}(a), \alpha(t^{2m-\ell-1} (a))}$$
and since $k \neq \ell$ (we have $g \neq g'$), we deduce that 
$$ \ar g =  \ens{t^{2m-k-1}(a),t^{2m-\ell-1}(a)}.$$
Thus, the following two equalities hold:
$$\min(g,g')= \min \left(t'^k(\sigma^{-1}(a)),t'^\ell(\sigma^{-1}(a))\right) = t'^{\min(k,\ell)}(\sigma^{-1}(a)),$$  where the $\min$  concern the $(M^\#_G,T)$-ordering,
and
$$ \max(g,g')= \max \left(t^{2m-k-1}(a),t^{2m-\ell-1}(a)\right) = t^{2m - \min(k,\ell) - 1}(a), $$ where the $\max$  concern the $(M^\#_G,T)$-ordering,
Let $\ar h$ be an edge different from $\ar g$ with $h = t'^i(\sigma^{-1}(a))$ and $h' = t'^j(\sigma^{-1}(a))$. Using the above equalities, there is equivalence between the following statements:
$$\begin{array}{lcl}
\ & \ar h < \ar g &  \textrm{ for the }(M^\#_G,T)\textrm{-ordering}, \\
\Leftrightarrow \  & \min (h,h') < \min (g,g') &  \textrm{ for the }(M^\#_G,T)\textrm{-ordering}, \\
\Leftrightarrow \  & \min(i,j) < \min(k,\ell) &  \\
\Leftrightarrow \ & 2m - \min(k,\ell) - 1 < 2m - \min(i,j) - 1 &  \\
\Leftrightarrow \ & \max(g,g') < \max(h,h') &  \textrm{ for the }(M_G,T)\textrm{-ordering}.
\end{array}$$

\textbf{2.} Let $\ar g$ be an external (resp. internal) edge and $\ar h$ an edge in the fundamental cycle (resp. cocycle) of $\ar g$ with $g < g'$, $h < h'$ and $g < h$, where $<$ denotes from now on the $(M_G,T)$-ordering. Let us show that $g' < h'$.

Using Lemma 4 from \cite{bernardi-tutte}, it is not hard to see that for every spanning tree $T$, deleting an external edge or contracting an internal edge of $M_G$ does not change the $(M_G,T)$-ordering between the remaining half-edges.
This is why we can restrict the proof of this point to the case where $E(G) = \ens{\ar g,\ar h}$. As it must contain a cycle  with 2 edges or a cocycle with 2 edges, $G$ must consist of two vertices linked by $\ar g$ and $\ar h$. Since $\ar g < \ar h$, the root of the map is $g$. The only two possibilities for $(G,T)$ are depicted in Figure \ref{baseber}.
Thus, we must have in both cases $t = (g,h,g',h')$.  Therefore $g' < h'$.

\fig{[scale=1]{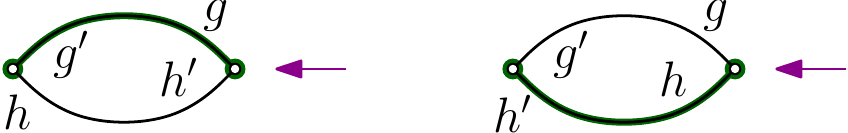}}{Two possibilities when $G$ is restricted to two edges}{baseber}

\textbf{3.} Fix   a $(M_G,T)$-active edge $\ar g$. By definition, $\ar g$ is minimal for the $(M_G,T)$-ordering in its fundamental cycle/cocycle $C$. Let $\ar h$ be any other edge in $C$. If we assume $g < g'$ and $h < h'$, then $g < h$. By point 2, we have $g' < h'$. So by point 1, $\ar g$ is greater than $\ar h$ for the $(M^\#_G,T)$-ordering. This being true for all $\ar h$, the edge $\ar g$ is maximal for the $(M^\#_G,T)$-ordering inside $C$.

\textbf{4.} Conversely, fix an edge $\ar g$ that is maximal for the $(M^\#_G,T)$-ordering in its fundamental cycle/cocycle $C$, with $g < g'$. For any other edge $\ar h$ in $C$ with $h < h'$, we have $g' < h'$ by point $1$. By contraposition of point 2, we have $g < h$. This means that inside its fundamental cycle/cocycle, $\ar g$ is minimal for the $(M_G,T)$-ordering, and thus $(M_G,T)$-active.\end{proof}

Finally, the combination of the two previous lemmas gives a proof of Proposition~\ref{prop:ber}, since $M^\#_G$ is an embedding of the graph $G$.

\section{DFS activity}
\label{ss:dfst}

We end this chapter with the DFS activity defined in Subsection \ref{ss:dfs} p. \pageref{ss:dfs}. Let us recall that we now consider a graph $G$ without multiple edges.

\begin{prop}
\label{dfstut}
For any labelling of $V(G)$ with integers $1,2,\dots,|V(G)|$, the external DFS-activity restricted to spanning trees can be extended into a $\Delta$-activity and so a Tutte-descriptive activity.
\end{prop}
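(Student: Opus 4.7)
The plan is to apply Corollary \ref{cor:act}: I will construct a tree-compatible order map $\phi$ such that an external edge $e$ of a spanning tree $T$ is DFS-active if and only if $e$ is maximal in its fundamental cycle for the total order $\phi(T)$. Given such $\phi$, the $\Delta_\phi$-activity provided by Corollary \ref{cor:act} is Tutte-descriptive, and its restriction to external edges of spanning trees coincides with the external DFS-activity.

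The order $\phi(T)$ is defined via a modified depth-first traversal of $T$ starting from vertex $1$. At each vertex $v$ being explored, we scan the neighbours $u$ of $v$ \emph{in $G$} (not only in $T$) in decreasing order of label, and visit the half-edge $(v,u)$; if furthermore $\ar{v,u}\in T$ and $u$ is still unvisited, we recurse on $u$. We order the edges of $G$ according to the first instant at which one of their two half-edges is visited. The first visited half-edge is $(1,u_0)$, where $u_0$ is the largest neighbour of $1$ in $G$, so $\phi_1(T)=\{1,u_0\}$ is independent of $T$. More generally, the sequence of half-edge visits depends on $T$ only through the predicate ``$\{v,u\}\in T$?'' evaluated at each half-edge visit. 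This gives the key inductive argument for tree-compatibility via Theorem \ref{treecompatible}: if two spanning trees $T$ and $T'$ agree on the first $k$ edges $\phi_1(T),\dots,\phi_k(T)$, then the modified DFS behaves identically during the first $k$ half-edge visits (same recursion choices), so $\phi_i(T)=\phi_i(T')$ for $i\in\{1,\dots,k+1\}$.

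The harder part is to verify that DFS-activity coincides with cycle-maximality for $\phi(T)$. The loop case is immediate since a loop $e$ has fundamental cycle $\{e\}$ and is DFS-active by definition. For a non-loop external edge $e=\{u,v\}$, I will use Proposition \ref{lemGS} and perform a case analysis according to the relative position of $u$ and $v$ in the DFS tree $T$. If $v$ is a descendant of $u$ with $w$ the child of $u$ on the $u$--$v$ path, then: when $w>v$ (the DFS-active case), the modified DFS recurses into $w$ before even considering the neighbour $v$ at $u$, so the first visit of $e$ is the half-edge $(v,u)$ occurring during $v$'s exploration, which is strictly later than $(x_{k-1},v)$, the first visit of the last tree edge on the $u$--$v$ path; an easy induction along this path then shows that $e$ exceeds every tree edge of its fundamental cycle. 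Conversely, when $w<v$ (not DFS-active), we visit $(u,v)$ \emph{before} $(u,w)$, hence $\{u,w\}>e$ and $e$ is not maximal. The remaining case where neither endpoint is a descendant of the other is handled similarly: if $z$ is the lowest common ancestor with children $w_u,w_v$ toward $u,v$, then $\tau(u)<\tau(v)$ forces $w_u>w_v$, and the half-edge $(u,v)$ is visited inside $w_u$'s subtree, hence before $(z,w_v)$, so $\{z,w_v\}>e$ and $e$ is again not maximal (consistent with $e$ not being DFS-active).

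The main obstacle in this plan is the case analysis in the last step, which requires a careful accounting of the chronology of half-edge visits during the modified DFS; everything else—tree-compatibility, the reduction via Corollary \ref{cor:act}, and the loop case—is essentially formal once the order map is correctly set up.
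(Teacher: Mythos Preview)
Your approach is correct. The order map $\phi$ you describe is exactly the one the paper defines via Algorithm~\ref{pmDFS}, and your tree-compatibility argument (the algorithm only branches on whether the current edge lies in $T$) is the same as the paper's. The one place where you diverge is the verification that for an external edge $e$ of a spanning tree $T$, ``$e$ is DFS-active'' is equivalent to ``$e$ is $\phi(T)$-maximal in its fundamental cycle''.

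The paper proves this equivalence without ever invoking Proposition~\ref{lemGS}. For the direction \emph{maximal $\Rightarrow$ DFS-active}, it compares the runs of Algorithm~\ref{pmDFS} on $T$ and on $T\cup\{e\}$ up to the moment $e$ is reached, and observes that by then both endpoints of $e$ are already visited, so $\mathcal F(T\cup\{e\})=T$. For the converse, it uses a neat swap: if $e$ is not maximal, let $e'$ be the maximal edge of the fundamental cycle for the order $\phi(T\cup\{e\})$ and set $T'=(T\cup\{e\})\setminus\{e'\}$; then $e'$ is maximal for $\phi(T')$, hence DFS-active for $T'$, so $\mathcal F(T\cup\{e\})=\mathcal F(T'\cup\{e'\})=T'\neq T$.

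You instead import the structural characterisation of DFS-active edges from Proposition~\ref{lemGS} and carry out an explicit case analysis on the half-edge chronology. This is perfectly valid: your three cases (descendant with $w>v$; descendant with $w<v$; incomparable via a lowest common ancestor $z$) are exhaustive for non-loop external edges in a spanning tree, and your timing claims are correct once one notes that the first visit of a tree edge $\{x_{i-1},x_i\}$ occurs strictly before the recursion into $x_i$. Your route is more hands-on and makes the cycle-maximality visible edge by edge, at the cost of depending on Gessel--Sagan's lemma; the paper's route is self-contained and avoids any ancestor/descendant bookkeeping by exploiting the functional identity $\mathcal F(T\cup\{e\})=T$ directly.
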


\noindent \textbf{Example. } We show in Figure \ref{fig:dtdfs} a graph with a decision tree $\Delta$ inducing the DFS activity. For instance, consider the spanning tree $T = \ens{a,c,e}$. The $(\Delta,T)$-ordering is $b < a < c < e < d$, so the only external $\Delta$-active edge is $d$. One can check that $d$ is also the only external DFS-active edge.

\begin{figure}[h!]
\begin{center}
\includegraphics[scale=1]{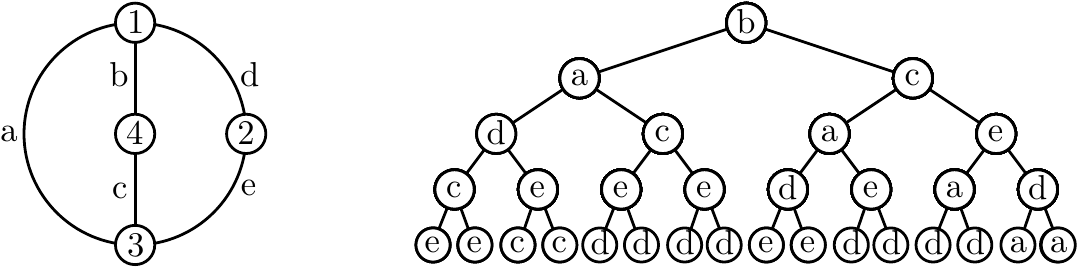}
\end{center}
\caption{A graph with no multiple edges and the decision tree corresponding to the DFS-activity.}
\label{fig:dtdfs}
\end{figure}

\begin{proof} The principle of the proof can be detailed as follows:
\begin{itemize}
\item we define an order map $\phi$ that is related to the visit order in the (greatest-neighbor) DFS, which is described by Algorithm \ref{DFS}
\item we prove that $\phi$ is tree-compatible by checking the hypotheses of Corollary \ref{cor:act},
\item we show that the external DFS-activity can be described in terms of maximality in $\phi$.
\end{itemize}

We would want to match an order map $\phi$ with the visit order in Algorithm \ref{DFS}. But this algorithm only considers the internal edges while we also need to order the external edges to define an order map. That is why we need to enrich Algorithm \ref{DFS} to take into account external edges. The result is Algorithm \ref{pmDFS}. 
\begin{algorithm}[h!]
\caption{The order map for DFS activity }

\label{pmDFS}
\begin{algorithmic}[5]
\Require {\color{Navy}$H$ spanning subgraph of $G$}.
\Ensure {\color{Navy}A total ordering of $E(G)$ given by the edges $\phi_1(H),\phi_2(H),\dots,\phi_{|E(G)|}(H)$ in this order.}
\State $\mathcal F(H) \leftarrow \emptyset$;
\State $j \leftarrow 0$; 
\While {there is a unvisited vertex} 
	\State  \textbf{mark} the least unvisited vertex of $G$ \textbf{as visited};
	\While {{\color{Navy}there is a visited vertex with unvisited incident edges}}
		\State \ \Comment{In this algorithm "incident" means "incident in $G$" (not in $H$).}
		\State $v \leftarrow$ the most recently visited such vertex;
		\While{{\color{Navy} $v$ has an unvisited incident edge}}
			\State $u \leftarrow$ the greatest neighbor of $v$ {\color{Navy}linked in $G$ by an unvisited edge};
			\State $j \leftarrow j + 1$;
			\State $\phi_j(H) \leftarrow \ens{u,v}$;
			\State {\color{Navy}\textbf{mark} $\phi_j(H)$ \textbf{as visited}};
			\If{{\color{Navy}$\phi_j(H)$ is internal \textbf{and} if $u$ is unvisited}} \label{lineuse}
				\State \textbf{mark} $u$ \textbf{as visited};
				\State $v \leftarrow u$; 
				\State \textbf{add} $\phi_j(H)$ in $\mathcal F(H)$;
			\EndIf
		\EndWhile
	\EndWhile
\EndWhile 
\State \Return {\color{Navy}$\phi_1(H) < \phi_2(H) < \dots < \phi_{|E(G)|}(H)$};
\end{algorithmic}
\end{algorithm}

\noindent \textbf{Informal description.} The input is a subgraph $H$ of $G$. As in Algorithm \ref{DFS}, we begin by the least vertex. We proceed to the DFS of the graph $\boldsymbol G$ that favors the largest neighbors with an extra rule: when we visit an external edge (for $H$), we do not go through it, we come back to the original vertex as if this edge had never existed. Moreover, the visited edges (internal and external) are marked so that we visit them only once each. The rest of the algorithm is exactly as Algorithm~\ref{DFS}. The output is the visit order of the edges, denoted  by $\phi(H)$, instead of the DFS forest. The changes between Algorithm \ref{DFS} and \ref{pmDFS} are indicated in navy\footnote{or in dark grey for those who read a black and white version of this thesis}.

\noindent \textbf{Remark.} In this proof, only connected subgraphs are important. We could have simplified Algorithm \ref{pmDFS} with a  restriction of the input to connected subgraphs, but it could have interfered with understanding. 

\noindent \textbf{Example.} A run of Algorithm \ref{pmDFS} is depicted in Figure \ref{dfsex}. The resulting order is \mbox{$b < a < c < e < d$}.

\fig{[width=\textwidth]{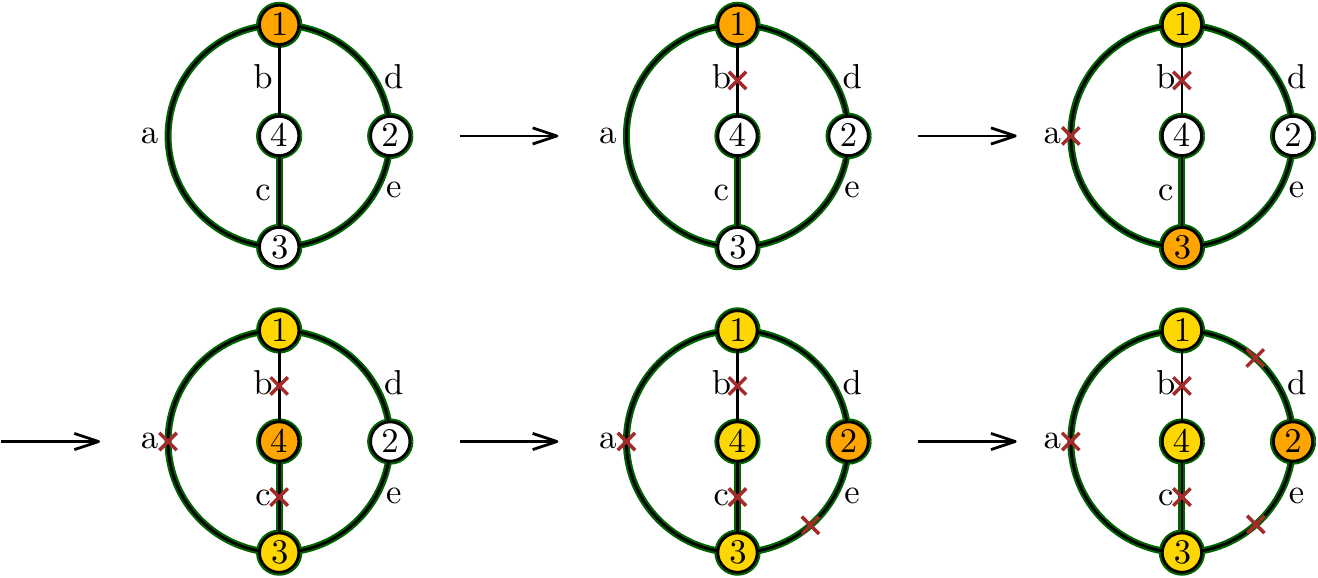}}{Illustration of a run of Algorithm \ref{pmDFS}.}{dfsex}

Let us denote by $\phi(H)$ the result of Algorithm \ref{pmDFS} for a  subgraph $H$. One can straightforwardly see that the restriction of $\phi$ to the spanning trees satisfies the hypotheses of Corollary \ref{cor:act}: indeed, the only influence of the input lies in Line \ref{lineuse}, where we test if the successive values of $\phi_j(T)$ belong to $T$ or not. 

Let $T$ be a spanning tree and $e$ an external edge. Let us prove that $e$ is DFS-active if and only if $e$ is maximal for $\phi(T)$ in its fundamental cycle $C$. If we manage to do so, we use Corollary \ref{cor:act} and the proof is ended.

a. Assume that $e$ is maximal in $C$. Since the only difference between $T$ and $T \cup e$ is $e$ (!), the executions in Algorithm \ref{pmDFS} with input $T$ and $T \cup e$ are strictly identical until the visit of $e$. In particular, at this moment, as every edge in $C$ other than $e$ has been visited and belongs the DFS forest of $T$ (because internal), both endpoints of $e$ are visited. 
Since Algorithm \ref{pmDFS} is an enriched version of Algorithm \ref{DFS}, the endpoints of $e$ are also visited just before the first visit of $e$ for Algorithm \ref{DFS} with input $T \cup e$. We never add to a DFS forest an edge whose both endpoints are marked, hence $e \notin \mathcal F(T \cup e)$.
 But $\mathcal F(T \cup e)$ is a spanning tree included in $T \cup e$. So we must have $\mathcal F(T \cup e)=T$, which means that $e$ is DFS-active.



b. Observe that for any spanning tree $\hat T$, an external edge $\hat e$ is maximal for $\phi(\hat T)$ in its fundamental cycle in $\hat T$ if and only if $\hat e$ is maximal for $\phi(\hat T \cup \hat e)$ in its fundamental cycle in $\hat T$. Indeed, the executions in Algorithm \ref{pmDFS} with input $\hat T$ and $\hat T \cup \hat e$ are the same until the visit of $\hat e$. In particular, the set of edges visited before $\hat e$ are identical.

c. Assume that $e$ is not maximal in $C$ for $\phi(T)$. 
By point b, the edge $e$ is not maximal in $C$ for $\phi(T \cup e)$.
Let $e'$ be the maximal edge in $C$ for $\phi(T \cup e)$ and denote by $T'$ the spanning tree $(T \cup e) \backslash e'$. By point b, $e'$ is maximal in $C$ for $\phi(T')$. Then, by point a, it means that $e'$ is DFS-active for $T'$. Hence $$\mathcal F(T \cup e) = \mathcal F(T' \cup e') = T' \neq T,$$
the edge $e$ is not DFS-active. 
\end{proof}

Let us conclude this section by some remarks. We have just proved that the restriction of the external DFS activity to spanning trees can be extended into a Tutte-descriptive activity. But in the light of the equations \eqref{DFStut} p.~\pageref{DFStut} and \eqref{eqfor} p.~\pageref{eqfor}, a question is looming : does there exist a decision tree $\Delta$ such that for all spanning forest (not necessarily spanning trees) the external DFS-active edges coincide with the edges with $\Delta$-type \bL? The answer is generally negative: consider for instance the graph  \begin{tabular}{c}\includegraphics[scale=0.8]{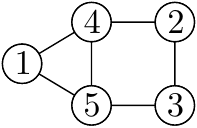}\end{tabular}. The only external DFS-active edge for the spanning forest $\ens{\ens{2,4},\ens{4,5},\ens{3,5}}$ is $\ens{2,3}$. But one can check by inspection that this edge is never DFS-active for any spanning tree. This cannot occur with $\Delta$-activities since every subgraph shares the same partition  into types as some spanning tree.

However, with some slight modifications of the definition of DFS activity, we can ensure that the correspondance between DFS-active edges and edges with type \bL\, is effective. For this, without proving it, we just have to change the input of Algorithm \ref{DFS} -- taking subgraphs of $G$ instead of general graphs -- and choose a more convenient vertex $v$ at Line \ref{linedfs}\footnote{Let us describe in a few words how to modify Line \ref{linedfs} from Algorithm \ref{DFS}. If it is the first iteration, meaning that no vertex has been visited yet, we choose $v$ as the least vertex. If some vertices have been visited, we consider $\Delta_\phi$ the decision tree induced by Corollary \ref{cor:act} and order map $\phi$, output of Algorithm \ref{pmDFS} (see proof of Proposition \ref{dfstut}). Then we run Algorithm \ref{type} with the same subgraph and with decision tree $\Delta_\phi$. We consider the edge of type \bI\, with a visited endpoint and an unvisited endpoint that was visited first. The new vertex $v$ is the unvisited endpoint of this edge.}. This new definition does not change the results of Gessel and Sagan about the DFS activity \cite{GesselSagan}, like Proposition \ref{lemGS} p. \pageref{lemGS}.

Furthermore, the external DFS activity can derive from another notion of $\Delta$-activity, named $\Delta$-forest activity. This will be the subject of Section \ref{ss:partial}. 


\chapter{Blossoming activity}
\label{c:bloact}

We saw that all the activities that were known in the literature can be seen as a $\Delta$-activity. It is then natural to ask if there exist other $\Delta$-activities (necessarily Tutte-descriptive) that  have nice combinatorial descriptions. Following this idea, we derive from the bijection between maps and blossoming trees (see Section \ref{s:bb} p. \pageref{s:bb}) a new family of activities, called \emph{blossoming activities}.

\section{Blossoming activity and blossoming transformation}


As for any embedding activity, we need beforehand to embed the graph $G$ and root it. We denote by $M$ the resulting  map. 
Given a spanning forest $F$, Algorithm \ref{prune} outputs a spanning tree denoted $\tau(F)$.  

\begin{algorithm}[h!]
\caption{Computing $\tau(F)$}
\label{prune}
\begin{algorithmic}
\Require $F$ spanning forest of $M$.
\Ensure $\tau(F)$ spanning tree of $M$.
\State $h \leftarrow$ root of $M$;
\State $M' \leftarrow M$;
\While {some edges are not visited} 
	\State $e \leftarrow$ edge that contains $h$;
	\State $h \leftarrow$ half-edge that immediately follows $h$ in $M'$;
	\Comment{i.e. $h \leftarrow \sigma \circ \alpha (h)$  }
	\If{$e$ is not an isthmus of $M'$ \textbf{and} $e \notin F$}
		\State $M' \leftarrow \delete {M'} {e};$
	\EndIf 
\EndWhile
\State $\tau(F) \leftarrow M'$
\State \Return $\tau(F)$;
\end{algorithmic}
\end{algorithm}

\noindent \textbf{Informal description.} Starting from the root, we turn counterclockwise around the map. \emph{After} we walk along an edge, we remove it whenever the deletion of this edge leaves the map connected (i.e. it is not an isthmus) \emph{and} it is external. We stop the algorithm when every edge has been visited.

\noindent \textbf{Example.} We consider the first embedded graph of Figure \ref{fig:exblo} with spanning forest $F = \sing {d}$. The run of Algorithm \ref{prune} with this input is illustrated on the same figure. We have $\tau(F) = \ens{c,d}$.

\begin{figure}[h!]
\begin{center}
\includegraphics[scale=0.9]{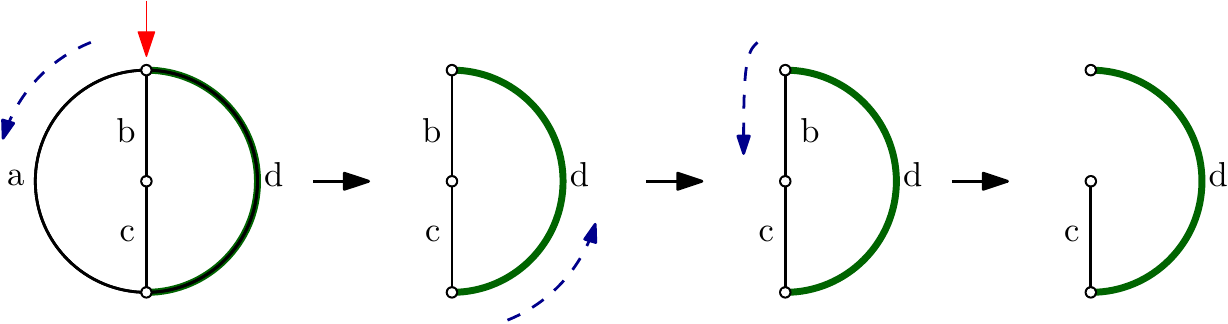}
\end{center}
\caption{Run of Algorithm \ref{prune} with spanning forest $F = \sing {d}$.}
\label{fig:exblo}
\end{figure}

Let us justify the termination of the algorithm.

\begin{prop}
For any spanning forest $F$, Algorithm \ref{prune} with input $F$ stops and outputs a spanning tree.
\end{prop}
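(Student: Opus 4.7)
The plan is to establish termination and correctness (connectedness and acyclicity of $\tau(F)$) separately, exploiting the structural parallel with the opening bijection $\Theta^*$ from Section \ref{s:bbforet}. The algorithm indeed looks like a forest-aware variant of that opening procedure: starting from the root, one walks counterclockwise around the current outer face of $M'$, deleting standard external edges as they are crossed while leaving both edges of $F$ and isthmuses intact.

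For termination, I would show that the walk $h \mapsto \sigma \circ \alpha (h)$ in the evolving map $M'$ visits each edge of $M$ exactly once. Between two consecutive deletions, the walk stays within a single face of $M'$ and advances via $\sigma \circ \alpha$ to previously unseen half-edges; each deletion merges two adjacent faces, so the face currently traversed by the walk grows, ultimately encompassing every edge of $M$. A bookkeeping argument using the number of unvisited half-edges as a potential would yield an explicit bound of $O(|E(M)|)$ iterations.

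Connectedness of $\tau(F)$ is built into the algorithm: only non-isthmuses of $M'$ are ever deleted, so $M'$ stays connected at every step. The step I expect to be the main obstacle is acyclicity, and the plan there is to proceed by contradiction. Assume $\tau(F)$ contains a cycle $C$. Because $F$ is a spanning forest, $C \not\subseteq F$, so $C$ contains at least one edge outside $F$. Let $e$ be the first edge of $C \setminus F$ to be visited by the algorithm. At the iteration where $e$ is processed, every other edge of $C$ is still present in $M'$: edges of $C \cap F$ are never deleted, and edges of $C \setminus F$ different from $e$ have not yet been visited, hence not yet deleted. Therefore $C \setminus \{e\}$ forms a path in $M'$ joining the two endpoints of $e$, which precludes $e$ from being an isthmus of $M'$ at that moment. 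But then the algorithm, seeing that $e$ is neither an isthmus nor in $F$, would delete it, contradicting $e \in \tau(F)$. This contradiction completes the plan.
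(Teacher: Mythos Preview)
Your acyclicity argument is sound, but the termination argument has a real gap --- and that gap is exactly where the content of the proposition lives. You assert that between consecutive deletions the walk advances to previously unseen half-edges, yet between deletions the walk is simply the face-walk $\sigma\alpha$ in a \emph{fixed} map $M'$, and nothing you say rules out its looping forever around a face whose boundary consists only of isthmuses of $M'$ and edges of $F$. Your potential (unvisited half-edges) does not decrease along such a loop, and your assertion that each edge is visited exactly once is already false when the input is a spanning tree: each edge of the final tree is traversed from both sides.

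What must be shown is that no such dead-end face exists while $M'$ is not yet a tree, and this is precisely what the paper proves: if $M'$ is not a tree, the edges bounding the current face contain a graph-theoretic cycle; since $F$ is acyclic, that cycle has an edge outside $F$, and that edge, lying on a cycle of $M'$, is no isthmus --- so the walk reaches it and deletes it before closing up. You will recognise this as the same ``a cycle must leave the forest'' observation you deploy for acyclicity, just applied one level earlier to drive termination. Once termination is established this way, it already shows that $M'$ stabilises as a tree, so your separate acyclicity step --- correct though it is --- becomes redundant.
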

\begin{proof} Consider a step in the run of Algorithm \ref{prune} where the map $M'$ is not a tree. Then some edges incident to the face\footnote{The faces of a combinatorial map $(H,\sigma,\alpha)$ are the cycles of $\sigma \circ \alpha$.} that contains the half-edge $h$ in $M'$ form a cycle. This cycle contains an external edge because $F$ is acyclic. So the face that contains $h$ includes an external non-isthmus edge. As $e$ takes for successive values in Algorithm \ref{prune} the edges of the face incident to $h$, the edge $e$ will be eventually both external and a non-isthmus edge. At this time, $e$ will be removed from $M'$. 

In other terms, while $M'$ is not a tree, an edge is deleted. Since $M'$ remains connected, $M'$ will end as a tree. Then, $M'$ has only one face and every edge has been visited by the algorithm.\end{proof}

\noindent \textbf{Remark 1.}  Algorithm \ref{prune}
is related to the  opening transformation $\Theta^*$ of a forested map into a \emph{blossoming tree} that we described in Section~\ref{s:bbforet} p.~\pageref{s:bbforet}. More precisely, when $M$ is planar,Algorithm \ref{prune} describes the  transformation of $(M,F)$ into an enriched T-tree in which the marked face is the external one (except that no \textit{bud} nor \textit{leaf}\footnote{"bourgeon" and "feuille" in French} is added).  Recall that an iteration in the transformation $\Theta^*$ consisted in cutting the external edges of the external face of $M$ that were not isthmuses. In the case where several edges were incident to the same two faces, we chose to cut the first edge we met when we walk around the map in counterclockwise order starting from the root. 
This explains the parallel between the two transformations.
However Algorithm \ref{prune} is more general than the transformation $\Theta^*$  since the map $M$ is not necessarily planar.




Now let us define blossoming activity for internal edges.
Given a spanning tree $T$ of $G$, we say that an internal edge $e$ is \emph{blossoming-active} if
$$\tau(T \backslash e) = T.$$
The \emph{internal blossoming activity} is the function that maps a spanning tree $T$ onto the set of its internal blossoming-active edges. 

\noindent \textbf{Example.} For the first map of Figure \ref{fig:exblo} with spanning tree $T = \sing {c,d}$, the only internally blossoming-active edge is $c$. Indeed, we saw that $\tau(\ens{d})=T$ and  we observe that $\tau(\ens{c})= \ens{b,c}$.

 We will see with Proposition \ref{prop:blo} that the internal blossoming activity can be extended into a Tutte-descriptive activity, for any embedding of the graph.




\noindent \textbf{Remark 2.} If we keep track of the buds and leaves of the transformation $\Theta^*$  in the form of charges $+1/-1$, we find an interesting new characterization of the internal blossoming activity when $M$ is planar. 
A \textit{charge} is an element from $\ens{-1,+1}$. Given a spanning tree $T$ of $M$, we run Algorithm \ref{prune} and whenever we delete an edge $e$, we add a charge $-1$ to the departure vertex and a charge $+1$ to the arrival vertex. At the end of the algorithm, there only remains a rooted plane tree, namely $T$, where each vertex has several charges. Let $e$ be an internal edge. Deleting $e$ splits $T$ into two components. The one which does \textit{not} contain the root is the \textit{subtree} corresponding to $e$.
 
 \begin{prop} \label{p:newcar}
   Given any spanning tree, if an internal edge is blossoming-active, then the sum of charges in the corresponding subtree is $0$ or $1$. 
If $M$ is  planar, then the converse is true.  
  \end{prop}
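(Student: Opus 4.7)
For the forward direction, which holds for any embedding, the plan is to trace Algorithm \ref{prune} carefully. Denote by $V_1$ the vertex set of the sub-tree of $e$ (the component of $T\setminus e$ not containing the root) and by $V_2$ its complement. Since the inputs $T$ and $T\setminus e$ differ only in whether $e$ belongs to the forest, Algorithm \ref{prune} behaves identically on both inputs up to the first visit of $e$. Hence $e$ is blossoming-active if and only if, at the first visit of $e$ during the run on $T$, the edge $e$ is an isthmus of the current map $M'$. Because $M'$ always contains $T$, the sets $V_1$ and $V_2$ are connected in $M'$ only through $e$ and the external edges between $V_1$ and $V_2$ that are still present. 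Any external edge of $M$ is deleted the first time it is walked through (it can never be an isthmus of $M'$, since $M'\supseteq T$), so blossoming-activity is equivalent to the condition that every external edge between $V_1$ and $V_2$ has been traversed before $e$ is first visited.

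Next, list chronologically the \emph{crossings} of the walk, i.e.\ the traversals of edges with one endpoint in $V_1$ and the other in $V_2$. Each crossing flips the current side, which starts in $V_2$; thus the $i$-th crossing is of type $V_2\to V_1$ when $i$ is odd and of type $V_1\to V_2$ when $i$ is even. An external crossing contributes $+1$ or $-1$ respectively to the charge of $V_1$, while crossings of $e$ itself contribute $0$ since $e$ is internal. Under the blossoming-activity hypothesis, the $m$ external crossings occupy precisely the first $m$ positions of the list; the charge of $V_1$ is therefore $\sum_{i=1}^m(-1)^{i+1}$, which equals $0$ if $m$ is even and $1$ if $m$ is odd.

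For the converse in the planar case, the plan is to invoke the opening/closing bijection $\Theta^*/\Theta$ of Section~\ref{s:bbforet} between forested planar maps with a marked face and enriched T-trees. Applying $\Theta^*$ to $(M,T,\chi)$ with $\chi$ the external face produces an enriched T-tree $(\hat T,T)$ whose skeleton is $T$ and whose buds and leaves record exactly the charge distribution computed by Algorithm \ref{prune} on the input $T$. By the flippability criterion underlying the stability of the class of enriched T-trees (cf.\ Proposition~\ref{Rstable}), an internal edge is flippable precisely when its associated sub-tree has total charge $0$ or $1$; for such an $e$ the pair $(\hat T, T\setminus e)$ is again a valid enriched T-tree and is sent by $\Theta$ to $(M, T\setminus e, \chi)$. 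Consequently $\Theta^*(M, T\setminus e, \chi) = (\hat T, T\setminus e)$ has skeleton $T$, so $\tau(T\setminus e) = T$ and $e$ is blossoming-active. The main obstacle will be identifying the output of Algorithm \ref{prune} with the skeleton produced by $\Theta^*$ and matching the two notions of charge, which requires checking that the order in which non-isthmus external edges incident to the currently marked face are cut does not affect either the resulting blossoming tree or its buds and leaves.
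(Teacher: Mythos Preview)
Your forward direction is correct and essentially identical to the paper's: you inline the characterization ``$e$ is blossoming-active iff $e$ is an isthmus of $M'$ at its first visit'' (which the paper isolates as a separate lemma and then recasts via Corollary~\ref{corblo} as ``$e$ is visited last in its fundamental cocycle''), and the alternating-crossings charge count is the same.

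For the converse in the planar case, you take a genuinely different route. The paper argues directly and combinatorially: if $e$ is not visited last in its cocycle $\gamma$, planarity forces at most two edges of $\gamma$ to lie on the external face at any moment, and one shows that after the first traversal of $e$ every subsequent transition from $E_1$ to $E_2$ (respectively from $E_2$ to $E_1$) must go through $e$; the $k\ge 1$ external crossings after $e$ then all go the same way, giving a total charge of $-k$ or $k+1$, hence not in $\{0,1\}$. Your approach instead passes through the bijection $\Theta/\Theta^*$ and the stability of enriched T-trees: flippability of $e$ in $\Theta^*(M,T,\chi)$ yields a second enriched T-tree with the same skeleton, and applying $\Theta$ then $\Theta^*$ recovers $\tau(T\setminus e)=T$. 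This is more conceptual and simultaneously yields the corollary identifying flippable edges with blossoming-active ones. The cost is exactly the obstacle you flag: one must check that Algorithm~\ref{prune} and $\Theta^*$ (which cuts edges in rounds with a tie-breaking rule, not sequentially along a walk) produce the same blossoming tree with the same bud/leaf assignments. The paper asserts this informally in Remark~1 but does not prove it; filling it in is a genuine (if manageable) task, whereas the paper's direct planarity argument sidesteps it entirely.
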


\begin{figure}[h!]
\begin{center}
\includegraphics[scale=0.9]{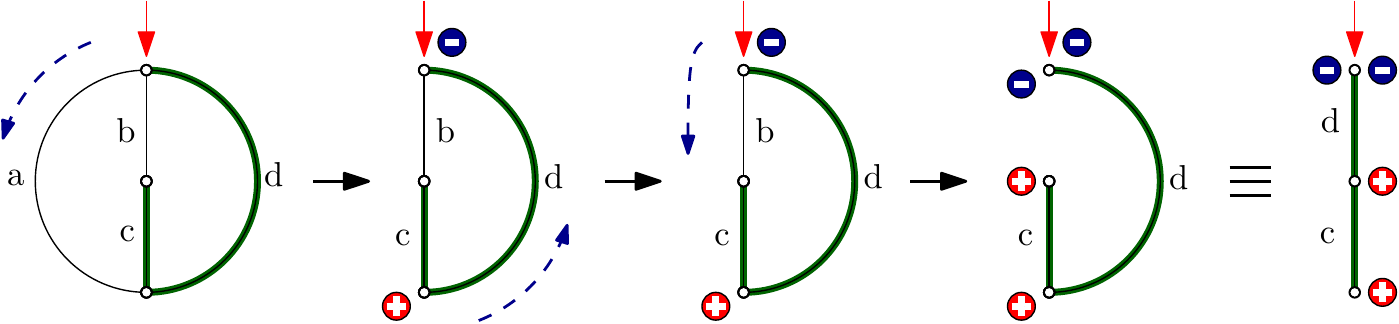}
\end{center}
\caption{Run of the version with charges of Algorithm \ref{prune} with spanning tree $T = \ens {c,d}$.}
\label{fig:excharge}
\end{figure}

We delay the proof of this proposition to the next section.

An example is shown in Figure \ref{fig:excharge}. Four charges have been distributed on the plane tree $T$.
If we delete $d$, the corresponding subtree has two vertices with one charge $+1$ each, so the charge of the subtree is $+2$. Hence the edge $d$ is not blossoming-active. On the contrary, if we delete $c$, the subtree has only one charge $+1$: the edge $c$ is blossoming-active. We have checked on this example Proposition \ref{p:newcar}: the only internal blossoming-active edge is $c$. 

When $M$ is not planar, the previous property is false. A counterexample is shown in Figure \ref{cexba}. The subtree corresponding to the only internal edge has charge $0$ but this edge is not blossoming active. (It will be deleted at the first step of Algorithm \ref{prune} if we remove it from the spanning tree.)

\begin{figure}[h!]
\begin{center}
\includegraphics[scale=0.9]{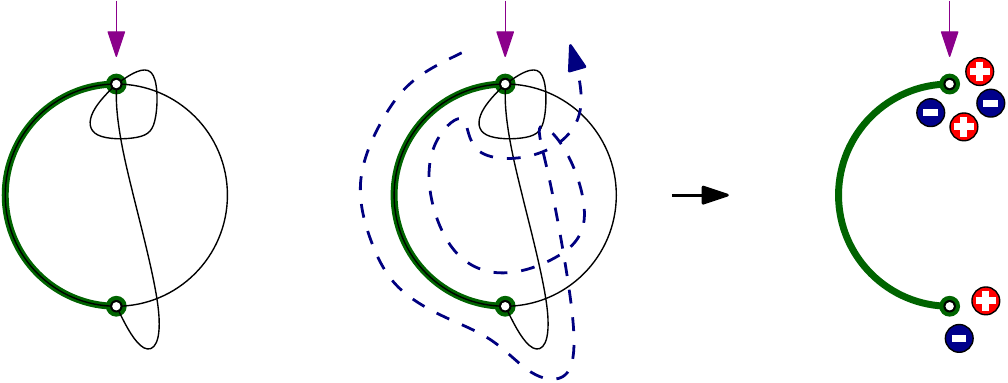}
\end{center}
\caption{A counterexample for Proposition \ref{p:newcar} with $M$ not planar.}
\label{cexba}
\end{figure}

Furthermore, we saw that Algorithm \ref{prune} transforms a planar map into an enriched T-tree in the exact same way as the transformation $\Theta$  when the external face is marked. But  an edge is \textit{flippable}\footnote{"basculable" in French} in an enriched  T-tree if and only if the sum of the charges in the corresponding subtree is $0$ or $1$. So we can deduce the following corollary.

\begin{core} Let $M$ be a planar map with external face $\chi$ and $T$ a spanning tree.  The \textit{flippable edges} of the enriched T-tree $\Theta^*(M,T,\chi)$ become, when we apply the map $\Theta$, the internal blossoming-active edges of $M$.
\end{core}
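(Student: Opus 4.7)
The plan is to chain together three facts that have essentially already been set up in the preceding remarks and Proposition \ref{p:newcar}, so the corollary should fall out once the correspondence is phrased carefully.

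First, I would invoke Remark 1, which states that when $M$ is planar, Algorithm \ref{prune} executed on $(M,T)$ performs exactly the same sequence of edge deletions as the opening transformation $\Theta^*$ applied to $(M,T,\chi)$ with $\chi$ the external face (the only technical point is that $\Theta^*$ records every deleted half-edge as a bud and its twin as a leaf, whereas the charge-enriched version of Algorithm \ref{prune} records a charge $-1$ at the tail and $+1$ at the head of each deletion). Thus the tree underlying $\Theta^*(M,T,\chi)$ is precisely the spanning tree $T$ viewed as a rooted plane tree, and the buds/leaves of the enriched T-tree are in bijection with the charges produced by the charge-decorated Algorithm \ref{prune}, preserving the incident vertex. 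Consequently, for every internal edge $e$ of $M$, the sum of charges in the subtree associated with $e$ in the sense of Remark 2 equals the charge of the corresponding subtree in $\Theta^*(M,T,\chi)$.

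Second, I would recall the criterion for flippability: the excerpt explicitly states that an edge of an enriched T-tree is flippable if and only if the total charge of its associated subtree is $0$ or $1$. Combining this with the identification above, an internal edge $e$ of $M$ has its image in $\Theta^*(M,T,\chi)$ flippable if and only if the subtree of $T$ corresponding to $e$ (in the output of the charge-enriched Algorithm \ref{prune}) carries total charge $0$ or $1$.

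Third, I would apply Proposition \ref{p:newcar}: since $M$ is assumed planar, the converse direction is available, so the condition "subtree charge equals $0$ or $1$" is equivalent to $e$ being internal blossoming-active. Finally, since $\Theta$ is the inverse of $\Theta^*$ by construction, the flippable edges of $\Theta^*(M,T,\chi)$ are mapped back to exactly the internal blossoming-active edges of $M$, which is the desired statement.

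The main (and only nontrivial) obstacle will be the bookkeeping in the first paragraph: one has to verify cleanly that the charge a vertex receives during the charge-enriched run of Algorithm \ref{prune} agrees with the algebraic sum of buds minus leaves that $\Theta^*$ plants at that vertex, and that "the subtree corresponding to $e$" has the same meaning in both frameworks (the component not containing the root after removing $e$). This is a matter of tracking the orientation convention in $\Theta^*$ (bud before leaf along the counterclockwise walk) versus the tail/head convention for charges in Algorithm \ref{prune}; once this is done, the rest of the argument is purely a substitution through the chain of equivalences.
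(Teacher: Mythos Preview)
Your proposal is correct and follows essentially the same approach as the paper: the paper's argument (given in the paragraph immediately preceding the corollary rather than as a separate proof) is exactly the chain you describe—identify the charge-decorated run of Algorithm~\ref{prune} with $\Theta^*$ via Remark~1, recall that flippability corresponds to subtree charge $0$ or $1$, and then invoke Proposition~\ref{p:newcar} (using the planar converse). Your version is simply more explicit about the bookkeeping of charges versus buds/leaves, which is appropriate.
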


\section{Blossoming activity as $\boldsymbol \Delta$-activity}

In this section we show that the blossoming activity is Tutte-descriptive by translating it in terms of $\Delta$-activity. We fix an embedding $M$ of $G$. We begin by a lemma (with a $\Delta$-activity flavour) establishing another characterization of internal blossoming-active edges.


\begin{leme}
For any spanning tree $T$ of $G$, an internal edge $e$ is blossoming-active if and only if $e$ is an isthmus of $M'$ when it is visited for the first time during the computation of $\tau(T)$ (cf. Algorithm \ref{prune}).
\end{leme}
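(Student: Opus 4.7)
The plan is to compare the two runs of Algorithm \ref{prune} with respective inputs $F = T$ and $F = T \setminus e$, and to track when the two runs begin to diverge. The key observation is that the only way the input $F$ influences the algorithm is through the test ``$e' \notin F$'' when deciding whether to delete the current edge $e'$. Since $T$ and $T \setminus e$ agree on every edge other than $e$, any step that does not involve $e$ will be performed identically in both runs, as long as the maps $M'$ in both runs coincide at that point.

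First I would prove by induction on the iteration count that, up to (and including) the iteration in which the edge $e$ is visited for the first time, the two runs are completely synchronized: they traverse the same half-edges, the maps $M'$ coincide, and the same edges have been deleted. This is immediate because prior to this iteration the algorithm never references $e$ when checking the condition ``$e' \notin F$'', and $T$ and $T \setminus e$ agree on all other edges. In particular, at the iteration when $e$ is first visited, the map $M'$ is the same in both runs, so $e$ is an isthmus of $M'$ in the run with input $T$ if and only if it is an isthmus of $M'$ in the run with input $T \setminus e$.

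Then I would split into the two cases depending on whether $e$ is or is not an isthmus of $M'$ at its first visit. In the first case ($e$ an isthmus), neither run deletes $e$ at this iteration: the run with input $T$ does not because $e \in T$, and the run with input $T \setminus e$ does not because $e$ is an isthmus. The maps $M'$ still coincide after this iteration, so by the same argument the two runs remain synchronized forever after, and one concludes $\tau(T \setminus e) = \tau(T) = T$, so $e$ is blossoming-active. In the second case ($e$ not an isthmus), the run with input $T \setminus e$ deletes $e$ at this iteration (since $e \notin T \setminus e$ and $e$ is not an isthmus), whereas the run with input $T$ does not. Consequently $e \notin \tau(T \setminus e)$, whereas $e \in T$, so $\tau(T \setminus e) \neq T$ and $e$ is not blossoming-active.

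I do not anticipate any serious obstacle: the whole argument rests on the simple fact that the algorithm is ``local'' in the sense described above, and that the definition of blossoming activity compares the outputs $\tau(T \setminus e)$ and $T$ directly. The only point requiring a little care is the synchronization argument before the first visit of $e$, which is a routine induction on the iteration count.
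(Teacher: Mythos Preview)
Your proposal is correct and follows essentially the same approach as the paper's proof: compare the runs of Algorithm~\ref{prune} on $T$ and $T\setminus e$, observe they are synchronized until the first visit of $e$, and then split into the isthmus/non-isthmus cases. Your write-up is in fact slightly more explicit than the paper's about why the synchronization persists after the first visit of $e$ in the isthmus case.
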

\begin{proof} 
%
Let us compare the executions of Algorithm \ref{prune} with inputs $T$ and $T \backslash e$. Before the first visit of $e$, the map $M'$ is the same. At this point, there are two possibilities for $e$.

If $e$ is not an isthmus in $M'$, then $e$ will be deleted when the input is $T \backslash e$. But when the input is $T$, the edge $e$ will be never deleted since it is internal. So $e \notin \tau(T \backslash  e)$ and $e \in \tau(T)$. Thus $\tau(T \backslash e) \neq \tau(T)$: the edge $e$ is not blossoming-active.

If $e$ is an isthmus in $M'$, then $e$ will not be deleted in both cases, and this for the rest of the run. Each other edge having the same status internal/external for the two inputs, we will have $\tau(T \backslash e) = \tau(T)$: the edge $e$ is blossoming-active.
\end{proof}

We can now prove that our new internal edge activity can be extended into a Tutte-descriptive activity.

\begin{prop} \label{prop:blo}
For any embedding of the graph $G$, the internal blossoming activity can be extended into a $\Delta$-activity and so a Tutte-descriptive activity.
\end{prop}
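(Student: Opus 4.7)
The plan is to construct a decision tree $\Delta$ whose associated order matches the visit order of Algorithm \ref{prune}, and then to identify internal blossoming-active edges with edges of $\Delta$-type \textbf{I} via the just-proved lemma and Proposition \ref{prop:algint}.

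First I would define an order map $\phi$ as follows: given a spanning tree $T$, run Algorithm \ref{prune} on $T$, and let $\phi(T)$ be the order in which the edges of $G$ are first visited (i.e.\ first appear as the variable $e$). Before verifying tree-compatibility, a useful preliminary observation is that if $T$ is a spanning tree, then no edge of $M' \setminus T$ can ever be an isthmus of $M'$ during the run: indeed, $T$ is preserved throughout (only external edges are deleted), so $T$ remains a spanning tree of each intermediate $M'$, and an edge outside a spanning tree cannot be an isthmus. Consequently, on inputs that are spanning trees, the condition "$e$ is not an isthmus of $M'$ \textbf{and} $e \notin T$" collapses to just "$e \notin T$", so Algorithm \ref{prune} deletes an edge precisely when the edge is external, exactly like Algorithm \ref{algint}.

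Next I would check that $\phi$ satisfies the tree-compatibility hypothesis of Theorem \ref{treecompatible}. Suppose $T$ and $T'$ satisfy $T \cap \{\phi_1(T),\dots,\phi_k(T)\} = T' \cap \{\phi_1(T),\dots,\phi_k(T)\}$. The state $(h, M')$ maintained by Algorithm \ref{prune} evolves deterministically from the visit history and the information "is the current edge internal?". Since, by the preliminary observation, this information depends only on $T \cap \{\text{edges visited so far}\}$, and since the set of edges visited so far consists exactly of the distinct edges $\phi_1(T),\dots,\phi_j(T)$ for some $j \le k$, the two runs produce identical states up to the first visit of $\phi_{k+1}(T)$. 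Thus $\phi_{k+1}(T)=\phi_{k+1}(T')$, proving tree-compatibility. By Theorem \ref{treecompatible}, there exists a decision tree $\Delta$ such that the $(\Delta,T)$-ordering equals $\phi(T)$ for every spanning tree $T$.

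Finally I would conclude by comparing Algorithm \ref{algint} (run with this $\Delta$) and Algorithm \ref{prune}. Both visit the edges in the same order $\phi(T)$; both delete exactly the external edges as they are visited (using the preliminary observation for Algorithm \ref{prune}); and Algorithm \ref{algint} labels an internal edge as type \textbf{I} iff it is an isthmus of $H'$ at the moment of its visit. Hence the sets of type-\textbf{I} edges for $T$ and of edges that are isthmuses of $M'$ at their first visit in Algorithm \ref{prune} coincide. By the lemma just proved, the latter set is exactly the set of internal blossoming-active edges of $T$. Applying Proposition \ref{prop:algint} and Theorem \ref{charact} then shows that the $\Delta$-activity extends the internal blossoming activity into a Tutte-descriptive activity. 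The main obstacle is the tree-compatibility step, where one must pay attention to the subtle interplay between the local deletion rule of Algorithm \ref{prune} and the geometric walking procedure — but the preliminary observation that external edges are never isthmuses reduces this rule to one that depends only on membership in $T$, making the argument go through cleanly.
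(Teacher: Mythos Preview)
Your proof is correct and follows essentially the same approach as the paper: define $\phi$ via the first-visit order of Algorithm \ref{prune}, check tree-compatibility, then match Algorithm \ref{algint} against Algorithm \ref{prune} and invoke the preceding lemma and Proposition \ref{prop:algint}. Your preliminary observation that external edges can never be isthmuses of $M'$ when the input is a spanning tree is a nice touch that makes the tree-compatibility and the comparison with Algorithm \ref{algint} slightly cleaner than in the paper's argument, but it does not change the overall strategy.
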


\textbf{Example:} Consider the map from Figure \ref{fig:exblo}. A suitable decision tree $\Delta$ is shown in Figure \ref{fig:dtblo}. For instance, consider the spanning tree $T = \ens{c,d}$. The $(\Delta,T)$-ordering is $a < d < b < c$ and the only internal active edge is $c$, as for the blossoming activity.

\begin{figure}[h!]
\begin{center}
\includegraphics[scale=1]{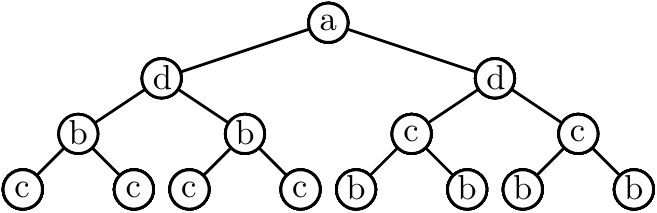}
\end{center}
\caption{A decision tree for the blossoming activity, for  the embedded graph from Figure \ref{fig:exblo}.}
\label{fig:dtblo}
\end{figure}

\begin{proof}
For any spanning tree $T$, let $\phi(T)$ denote the first visit order of the edges of $G$ in a run of Algorithm \ref{prune} with input $T$. For example, if we consider the first embedded graph from Figure \ref{fig:exblo}, then we map the spanning tree $\ens{c,d}$ onto the ordering $a < d < b < c$. Moreover, let $\phi_k(T)$ be the $k$-th smallest edge of $G$ for $\phi(T)$. We want to use Theorem~ \ref{treecompatible}.

Consider $T$ and $T'$ two spanning trees of $G$ and $k \in \ens{0,\dots,|E(G)|-1}$ such that $$T \cap \ens{\phi_1(T),\dots,\phi_k(T)} = T' \cap \ens{\phi_1(T),\dots,\phi_k(T)}.$$
In Algorithm \ref{prune}, only the status (external, internal, isthmus) of $e$ in $M'$ at each iteration has an influence on the next values of $e$, $h$ and $M'$. But before the visit of $\phi_{k+1}(T)$, the statuses of $e$ are the same in $T$ and  in $T'$, since $T \cap \ens{\phi_1(T),\dots,\phi_k(T)} =$ \mbox{$T' \cap \ens{\phi_1(T),\dots,\phi_k(T)}$}.
So we have $\phi_i(T) = \phi_i(T')$ for every $i \in \ens{1,\dots,k+1}$.  

By Theorem \ref{treecompatible}, the order map $\phi$ is tree-compatible, meaning that we can construct a decision tree $\Delta_\phi$ such that the $(\Delta_\phi,T)$-ordering coincides with $\phi(T)$. In other terms, 
the edges are visited in the same order in Algorithm \ref{prune} and in Algorithm \ref{type} p.~\pageref{type} (with decision tree $\Delta_\phi$).

By Theorem \ref{charact}, the activity that sends a spanning tree onto the set of its $\Delta_\phi$-active edges is Tutte-descriptive. Let us show that it extends the internal blossoming activity. Algorithm~\ref{algint} p.~\pageref{algint}, when executed on a spanning tree $T$ with decision tree $\Delta_\phi$, outputs the set of edges that are isthmuses at the time of their first visit in Algorithm~\ref{prune}. By Proposition \ref{prop:algint}, this is the set of internal $\Delta_\phi$-active edges. But by the previous lemma, this coincides also with the set of internal blossoming-active edges.
\end{proof}

This proof allows us to give a natural definition of the complete blossoming activity: an external edge for a spanning tree $T$ is  \textit{blossoming-active} if it is $\Delta_\phi$-active, where $\Delta_\phi$ is any decision tree compatible with the order map $\phi$ defined in the previous proof. The $\Delta_\phi$-active edges are uniquely determined, although $\Delta_\phi$ is not. Indeed, we can give an intrinsic characterization of the (complete) blossoming activity, like the following one.

\begin{core} 
\label{corblo}
Given any spanning tree $T$, an edge is blossoming-active if and only if it has been visited last in its fundamental cycle/cocycle during the run of Algorithm \ref{prune} with input $T$.
\end{core}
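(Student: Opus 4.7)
The plan is to deduce this corollary directly from the machinery built earlier in the chapter, specifically from Proposition \ref{maximal} applied to the decision tree $\Delta_\phi$, combined with the definition of the complete blossoming activity as $\Delta_\phi$-activity. There is essentially no new content to prove, only a translation of vocabulary.

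First I would recall the definition given just before the statement: an edge is declared blossoming-active if and only if it is $\Delta_\phi$-active, where $\Delta_\phi$ is any decision tree compatible with the order map $\phi$ produced by Algorithm \ref{prune} (the existence of such $\Delta_\phi$ is exactly the content of the tree-compatibility argument in the proof of Proposition \ref{prop:blo}, via Theorem \ref{treecompatible}). I would note that although $\Delta_\phi$ itself is not uniquely determined, the $(\Delta_\phi, T)$-ordering is uniquely determined as soon as $T$ is fixed, since by construction it coincides for every spanning tree $T$ with $\phi(T)$, the order of first visits of the edges by Algorithm \ref{prune} with input $T$.

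Then I would invoke Proposition \ref{maximal} on $\Delta_\phi$: an external (respectively internal) edge is $\Delta_\phi$-active for $T$ if and only if it is maximal for the $(\Delta_\phi, T)$-ordering in its fundamental cycle (respectively cocycle). Substituting $\phi(T)$ for the $(\Delta_\phi, T)$-ordering, being maximal for $\phi(T)$ inside a given subset of edges $C$ is by definition the same as being visited last within $C$ during the run of Algorithm \ref{prune} with input $T$. Combining this with the two-directional statement of Proposition \ref{maximal} immediately yields both implications of the corollary.

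The only potential subtlety, and it is minor, is to make sure that "visited last" in the statement is understood in the first-visit sense, since the counterclockwise walk performed in Algorithm \ref{prune} may pass through the same half-edge several times before every edge has been seen; this is precisely what the order map $\phi$ already encodes, so no additional verification is required. Overall, no step here is a serious obstacle: once Proposition \ref{maximal}, the definition of blossoming activity, and the identification of $\phi$ with the $(\Delta_\phi, T)$-ordering are put side by side, the corollary follows in a single line.
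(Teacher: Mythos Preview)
Your proposal is correct and follows exactly the paper's approach: the paper's proof is a two-line argument that the $(\Delta_\phi,T)$-ordering equals the first-visit order $\phi(T)$ from Algorithm~\ref{prune}, followed by an application of Proposition~\ref{maximal}. Your write-up is simply a more detailed unpacking of this same reasoning.
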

\begin{proof} By definition of $\Delta_\phi$, the $(\Delta_\phi,T)$-ordering corresponds to the order of first visit during the run of Algorithm \ref{prune} with input $T$. We conclude using Proposition \ref{maximal}.
\end{proof}

Let us derive from this the proof of Proposition \ref{p:newcar}.

\begin{proof}[Proof of Proposition \ref{p:newcar}]
We use the same arguments  as in the point 4 of the proof of Theorem~ \ref{g1rs} p.\pageref{g1rs}. Let $T$ be a spanning tree and $e$ an edge of $T$. Denote by $E_2$ the set of vertices of the subtree corresponding to $e$ and by $E_1$ the complementary set of $E_2$ in $S(M)$. The fundamental cocycle $\gamma$ of $e$ is the set of edges in $M$ with one extremity in $E_1$ and the other in $E_2$. We use the variant of  Algorithm \ref{prune} where charges are assigned to the vertices of the map. Only the cut of the edges of $\gamma$ can modify the total charge of $E_2$. Indeed, the cut of another edge lets one charge $+1$ and one charge $-1$ either in $E_1$ or in $E_2$, so has not global effect on the charge of $E_2$.

Assume that $e$ is active. By Corollary \ref{corblo}, this means that $e$ is visited last among the edges of $\gamma$ during the run of Algorithm \ref{prune} with input $T$. Before the visit of $e$, the transitions from $E_1$ to $E_2$ and from $E_2$ to $E_1$ alternate and occur via external edges (because $e$ is the only internal edge of $\gamma$), so that 
the charges induced by two consecutive cuts compensate each other.
So the charge of $E_2$ only depends on the parity of the number of transitions between $E_1$ and $E_2$ before the first visit of $e$. If it is even, it equals $0$; if it is odd, it equals $1$. (The charge is $0$ at the start.)

\fig{[width=\textwidth]{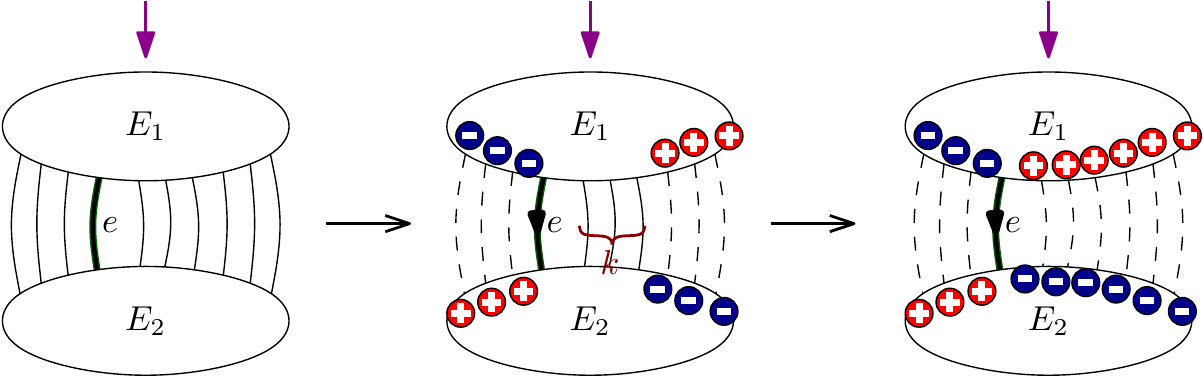}}{When the first transition through $e$ is proceeded from $E_1$ to $E_2$.}{transitions}

Assume now that $M$ is planar and $e$ is not active. By Corollary \ref{corblo}, this means that $e$ is  not visited last among the edges of $\gamma$. If the first transition through $e$ is proceeded from $E_1$ to $E_2$, then all the future transitions from $E_1$ to $E_2$ will be done through $e$. Why is this? Fix $c$ a corner of $M$ incident to the external face. Since an edge is only cut \textit{after} we walk along it, the edges we visit between two consecutive visits of $c$ are exactly the edges incident to the external face. But in a planar map, at most two edges inside a same cocycle can be incident to the external face. So, if $c$ is the corner incident to the external face that precedes the edge $e$, we visit at most two edges of $\gamma$ between two consecutive visits of $c$ and the first one must be $e$. So after the first visit of $c$, the transitions from $E_1$ to $E_2$ must be made through $e$. Let $k$ be the number of edges of $\gamma$ that we visit after $e$ (cf. Figure \ref{transitions}). (We recall that $e$ is not deleted since it is internal.) Just before the first visit of $e$, the charge of $E_2$ is $0$ because we have alternated transitions from $E_1$ to $E_2$ and transitions from $E_2$ to $E_1$. We have proved that the transition through an edge visited after $e$ must be made from $E_2$ to $E_1$, so each of these transitions decreases the charge of $E_2$ by $1$. In the end, the total charge of $E_2$ will be $-k$.

\fig{[width=\textwidth]{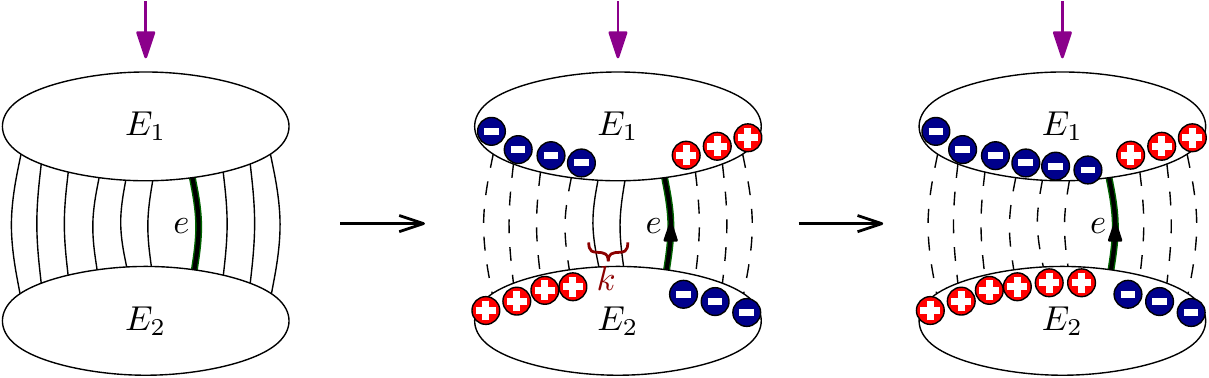}}{When the first transition through $e$ is proceeded from $E_2$ to $E_1$.}{transitions2}

The case where the first transition through $e$ is proceeded from $E_2$ to $E_1$ can be worked out in the same manner (see Figure \ref{transitions2}). The charge of $E_2$ is then $k+1$, where $k$ is the number of edges of $\gamma$ visited after $e$.
\end{proof}

Finally, let us give a description of the preimage of a spanning tree $T$ under $\tau$. (We recover a property similar to what we have already seen for the DFS activity.)

\begin{prop} \label{prop:tauf}
For each spanning tree $T$ and each spanning forest $F$, we have
$\tau(F) = T$ if and only if $F \in [T \backslash \mathcal I (T), T]$  (see Subsection \ref{sss:interval} for the definition of an interval),
where $\mathcal I$ denotes the internal blossoming activity.
\end{prop}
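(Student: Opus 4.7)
The plan is to establish the equality $\tau^{-1}(T) \cap \{\text{forests}\} = [T \setminus \mathcal{I}(T), T]$ by a \emph{parallel induction} that couples Algorithm~\ref{prune} applied to two different inputs, namely $F$ and $T$. The underlying intuition is that when $F \subseteq T$, the two runs only differ in how each edge of $T \setminus F$ is preserved: in the $T$-run, such an edge is kept because it belongs to the input; in the $F$-run, it can only be kept if it is an isthmus of the current map at its first visit. By the lemma of Chapter~\ref{c:bloact}, this last condition is precisely the definition of an internal blossoming-active edge of $T$.

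First I would record two preliminary observations. On the one hand, for any spanning forest $F$, one has $F \subseteq \tau(F)$ since Algorithm~\ref{prune} never deletes an edge lying in $F$; in particular, $\tau(F) = T$ implies $F \subseteq T$. On the other hand, $\tau(T) = T$ for any spanning tree $T$: no edge of $T$ is ever deleted, and each external edge $e \notin T$ forms a cycle with $T$ in the current map $M'$ (which always contains $T$), so $e$ is not an isthmus at its first visit and is deleted.

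The core lemma is the following parallel-induction statement, proved by induction on the iteration number $k$: if $F \subseteq T$ and the two runs agree up through step $k-1$ (same $M'_{k-1}$ and same pointer $h_{k-1}$), then the edge $e_k$ visited at step $k$ is the same in both runs, and the two runs still agree at step $k$ in each of the three cases:
\begin{itemize}
\item[(i)] $e_k \in F$: neither run deletes $e_k$;
\item[(ii)] $e_k \notin T$ (so $e_k \notin F$): $e_k$ is not an isthmus of the common map $M'_{k-1}$, because $M'_{k-1}$ contains the spanning tree $T$ and adding $e_k$ would create a cycle, so both runs delete $e_k$;
\item[(iii)] $e_k \in T \setminus F$: the $T$-run keeps $e_k$ automatically, while the $F$-run keeps $e_k$ if and only if $e_k$ is an isthmus of $M'_{k-1}$ at its first visit, i.e. (by the lemma of Chapter~\ref{c:bloact}) if and only if $e_k \in \mathcal{I}(T)$.
\end{itemize}

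From this lemma both implications follow. Assuming $\tau(F) = T$, the inclusion $F \subseteq T$ already holds; applying the parallel induction, case (iii) must succeed at every step where $e_k \in T \setminus F$ (otherwise $e_k$ would be deleted in the $F$-run and not lie in $\tau(F)=T$), so every edge of $T \setminus F$ is blossoming-active, i.e. $T \setminus F \subseteq \mathcal{I}(T)$, whence $F \in [T \setminus \mathcal{I}(T),T]$. Conversely, assuming $T \setminus \mathcal{I}(T) \subseteq F \subseteq T$, every edge of $T \setminus F$ lies in $\mathcal{I}(T)$ and is therefore an isthmus of $M'_{k-1}$ at its first visit in the $T$-run; case (iii) again produces agreement, so the parallel induction runs through to the end and $\tau(F) = \tau(T) = T$. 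The main technical step is the parallel induction itself, and in particular the verification of case (ii), which relies on the fact that whichever input we choose, the current map $M'$ always contains $T$, so that any edge outside $T$ is necessarily in a cycle and gets deleted.
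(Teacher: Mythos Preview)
Your argument is correct and takes a genuinely different route from the paper. The paper's proof goes through the general $\Delta$-activity machinery developed in Chapter~\ref{c:partition}: it observes that the edges deleted by Algorithm~\ref{prune} are exactly those of $\Delta_\phi$-type \bSe\ or \bL\ for $F$, so $\tau(F)$ is the set of edges of type \bSi\ or \bI; then Lemma~\ref{reptree} says $\tau(F)$ is the unique spanning tree equivalent to $F$, and Theorem~\ref{partition} together with Lemma~\ref{addremove} gives the interval description. Your approach bypasses this abstraction entirely and couples the two runs of Algorithm~\ref{prune} directly; it is longer but self-contained, and it would work even for a reader who has not absorbed the $\Delta$-activity framework.

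One point deserves care in your write-up. In case~(iii) you invoke the lemma characterizing $\mathcal I(T)$ in terms of the \emph{first} visit of $e$ in the $T$-run, but an internal edge may be visited several times. You should make explicit that (a) once an edge is an isthmus of $M'$ it remains an isthmus at every later step, since $M'$ only shrinks by deletions; hence an edge of $T\setminus F$ that survives its first visit in the synchronized $F$-run is never deleted afterwards; and (b) conversely, the first possible divergence between the two runs necessarily occurs at the \emph{first} visit of some $e\in T\setminus F$, so the lemma applies there and yields $e\notin\mathcal I(T)$, whence $e\notin\tau(F)$. With these two remarks your parallel induction is watertight.
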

\begin{proof} Given the spanning forest $F$, the edges we delete in Algorithm \ref{prune} are the external edges that are not isthmuses at the time of their first visit. They are precisely the edges of $\Delta_\phi$-type \bSe\, or \bL\, for $F$. Thus, $\tau(F)$ corresponds to the set of edges of type \bSi\, or \bI\, for $F$. So by Lemma \ref{reptree}, the spanning tree $\tau(F)$ is equivalent to $F$. By Corollary \ref{cor:inter}, we have then $F \in [\tau(F) \backslash \mathcal I (\tau(F)), \tau(F) \cup \mathcal E(\tau(F))]$, where $\mathcal E$ is the external blossoming activity. 
So, by Theorem \ref{partition}, we have $F \in [T \backslash \mathcal I (T), T \cup \mathcal E(T)]$ if and only if $T = \tau(F)$. But the restriction of the interval $[T \backslash \mathcal I (T), T \cup \mathcal E(T)]$ to the spanning forests of $G$ is $[T \backslash \mathcal I (T), T]$: indeed, Lemma \ref{addremove} states that a subgraph with an internal edge of type \bL\, has at least a cycle.
\end{proof}

\chapter{Final comments and prospects}
\label{s:com}
In this chapter, we make a few comments about $\Delta$-activity and mention some prospects.

\section{Generalization to matroids}

The generalization of the $\Delta$-activity to  matroids is rather immediate.
Indeed, as the notions of contraction, deletion, cycle, cocycle, isthmus, loop also exist in the world of matroids, the $\Delta$-activity can be defined for matroids \textit{verbatim} (except we no longer speak about "edges" but more generally about "elements"). Moreover, it is not difficult to check that all the results from Chapters  \ref{s:alg} and \ref{s:partition} hold in the same manner.
Let us add to this a property of duality, the proof of which is rather straightforward.

\begin{prop}
Let $M$ and $M^*$ be two dual matroids and $\Delta$ a decision tree. We define the decision tree $\Delta^\#$ as the mirror of $\Delta$\footnote{In term of decision functions, it means that $\Delta^\#(d_1,\dots,d_k) = \Delta(\overline d_1,\dots,\overline d_k)$, where $\overline \ell = r$ and $\overline r=\ell$.}. Given any subset $S$ of $M$, an edge is $\Delta$-active for $S$ in $M$ if and only if it is $\Delta^\#$-active for $S$ in $M^*$.
\end{prop}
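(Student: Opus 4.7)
My plan is to run Algorithm \ref{type} in parallel on the inputs $(M,S,\Delta)$ and $(M^*,S,\Delta^\#)$ and to show by induction on the iteration index $k$ that the element visited at step $k$ is the same in both runs and that the type assigned in the $M^*$-run is the dual of the type assigned in the $M$-run under the pairing $\bSe\leftrightarrow\bSi$ and $\bL\leftrightarrow\bI$. Since $\bL$ and $\bI$ are precisely the active types, this immediately yields the claimed equivalence of $\Delta$-active and $\Delta^\#$-active sets.

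First I would recall the standard duality principles for matroids: for any disjoint subsets $A,B$ of the ground set, $(M/A\setminus B)^{*} = M^{*}/B\setminus A$, and an element is a loop (respectively an isthmus, respectively standard) in a minor $N$ if and only if it is an isthmus (resp.\ a loop, resp.\ standard) in $N^{*}$. Consequently, provided the current minor $H$ in the $M$-run is the dual of the current minor in the $M^{*}$-run, the loop/isthmus/standard classification of any remaining element is automatically swapped between the two runs.

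For the inductive step, \eqref{ekdelta} tells us that the next element to be visited in the $M$-run is $\Delta(d_{1},\dots,d_{k-1})$, while in the $M^{*}$-run it is $\Delta^{\#}(d'_{1},\dots,d'_{k-1}) = \Delta(\overline{d'_{1}},\dots,\overline{d'_{k-1}})$. These agree provided $d'_{i} = \overline{d_{i}}$ for all $i<k$, which is exactly the direction-exchange property dictated by the type pairing above. The $\bL\leftrightarrow\bI$ half of the pairing is delivered directly by the duality of loops and isthmuses, and the two minors will remain dual at the next step because the contraction performed in one run corresponds to a deletion in the other. It then remains to handle the two "standard" sub-cases, and to conclude by observing that the whole ground set is exhausted in both runs.

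The main obstacle is precisely the reconciliation of the two standard sub-cases: the direction associated with a standard element depends on whether that element belongs to $S$, and $S$ is the \emph{same} subset in both runs, so the naive type pairing $\bSe\leftrightarrow\bSi$ for standard elements does not hold by inspection of the set $S$ alone. I would try to resolve this by tracking, for each element that has already been processed, the joint effect in $M$ and $M^{*}$ of the contractions and deletions performed: each "$\in S$/standard" or "$\notin S$/standard" step in the $M$-run should correspond, via duality, to the opposite operation in the $M^{*}$-run, and this bookkeeping is what must eventually force $d'_{i} = \overline{d_{i}}$ at every step. Formalising this element-by-element correspondence and verifying that the induction really does close is, I expect, the delicate point of the proof.
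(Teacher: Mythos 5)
Your worry about the two standard sub-cases is the heart of the matter, and it is not a bookkeeping difficulty that can be induced away: as written, the statement is false. Consider $M = U_{2,3}$ (the cycle matroid of a triangle) on ground set $\{a,b,c\}$, so $M^* = U_{1,3}$; take $\Delta$ with root labelled $a$, both children of the root labelled $b$, and all four leaves labelled $c$ (so $\Delta^\# = \Delta$); take $S = \{a,b\}$, a basis of $M$. Running Algorithm~\ref{type} on $(M,S,\Delta)$ contracts $a$ then $b$ as standard internal elements, after which $c$ is a loop: the $\Delta$-active set is $\{c\}$. Running it on $(M^*,S,\Delta^\#)$ contracts $a$ (still standard, and still in $S$), which already drops $U_{1,3}$ to rank zero, so $b$ and $c$ both become loops: the $\Delta^\#$-active set is $\{b,c\}$. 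No induction can close with the same subset $S$ on both sides.

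The statement becomes true, and your parallel-run induction closes without any delicacy, once $S$ is replaced on the $M^*$ side by its complement $\overline S = E(M)\setminus S$. With $\overline S$ the four branches of Algorithm~\ref{type} pair up exactly: a loop (resp.\ isthmus) of the current minor $H$ is an isthmus (resp.\ loop) of the dual minor $H^*$, so types \bL\ and \bI\ swap; a standard element $e$ of $H$ is standard in $H^*$ and satisfies $e\in S \Leftrightarrow e\notin\overline S$, so types \bSe\ and \bSi\ swap, a deletion in one run matches a contraction in the other, and $(H\setminus e)^* = H^*/e$ keeps the two current minors dual. Every case sends the direction $d_k$ to $\overline{d_k}$, hence by \eqref{ekdelta} the element visited at step $k+1$ coincides in both runs, and the induction is complete. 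This $S\leftrightarrow\overline S$ exchange is exactly the one underlying the classical identity $T_{M^*}(x,y)=T_M(y,x)$. For the record, the thesis offers no written proof of this proposition, only the remark that it is \emph{rather straightforward} --- consistent with the corrected statement, not with the printed one.
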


\noindent \textbf{Remark. }We have not introduced the $\Delta$-activities directly on matroids because the notions of activities that we wanted to unify are more based on graphs than matroids. Moreover, I think that everyone who is familiar with matroids can generalize the notion of $\Delta$-activity without any difficulty. But the converse is not particularly true: those who are used to study the Tutte polynomial on graphs (or maps) could have been discouraged if this part was written in terms of matroids.

\section{Forest activities}
\label{ss:partial}
The example of DFS activity suggests that a broader notion of external activity can be found for spanning forests. This is the topic of this subsection. For the purpose of brevity, no proof will be given, but they can be easily adapted from the rest of this thesis. 

Let $G$ be a graph. A \textit{forest activity} is a function that maps a spanning forest $F$ onto a subset of $\mathcal G(F)$, where $\mathcal G(F)$ is the set of external edges $e$ such that $F \cup e$ has a cycle. Each such edge has a \textit{fundamental cycle}, which is the unique cycle included in $F \cup e$. A forest activity $\epsilon$ is \textit{Tutte-descriptive} if the Tutte polynomial of $G$ equals
\begin{equation}
T_G(x,y) = \sum_{F\textrm{ spanning forest of }G} (x-1)^{\cc(F)-1} y^{|\epsilon(F)|}.
\end{equation}
The external DFS activity is an example of Tutte-descriptive forest activity (see Equation ~\ref{DFStut} p.~\pageref{DFStut}).

Consider now a decision tree $\Delta$. Given any subgraph $S$, Algorithm \ref{ftype} outputs a subset $\epsilon(S)$ of edges, called the set of \textit{$\Delta$-forest active} edges of $S$.

\begin{algorithm}[h!]
\caption{Computing the set of $\Delta$-forest active edges.}
\label{ftype}
\begin{algorithmic}[5]
\Require $S$ subgraph of $G$.
\Ensure A subset $\epsilon(S)$ of $E(G)$.
\State $m \leftarrow $ number of edges in $G$; 
\State $\epsilon(S) \leftarrow \emptyset$; 
\State $n \leftarrow$ root of $\Delta$; \State $H \leftarrow G$;
\For {$k$ from $1$ to $m$}
	\State $e_{k} \leftarrow$ label of $n$;
	\If{$e_k$ is not a loop in $H$ \textbf{and} $e_k \notin S$ }
			\State $n \leftarrow$ left child of $n$;
	\EndIf
	\If{$e_k$ is not a loop in $H$ \textbf{and} $e_k \in S$}
		\State {$H \leftarrow \contract H e$;} 			    
		\State $n \leftarrow$ right child of $n$;
	\EndIf 
	\If{$e_k$ is a loop in $H$}
			\State \textbf{add} $e_k$ in $\epsilon(S)$;
			\State $n \leftarrow$ left child of $n$;
	\EndIf 
\EndFor
\State \Return $\epsilon(S)$
\end{algorithmic}
\end{algorithm}

\noindent \textbf{Informal description.} We start from the edge that labels the root of $\Delta$. If this edge is external or a loop, we go to the left subtree of $\Delta$. If this edge is internal and not a loop, the edge is contracted and we go to the right subtree of $\Delta$. We repeat the process until the graph has no more edge.  An external edge is $\Delta$-forest active if it is a loop  when it is  visited. Figure \ref{schema} illustrates this description.

\fig{[scale=1.4]{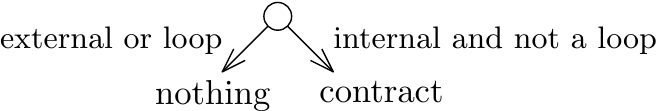}}{Diagram representing a step of Algorithm \ref{ftype}.}{schema2}

We can prove that the function that maps a spanning forest $F$ onto its set of $\Delta$-forest active edges is a Tutte-descriptive forest activity. We call it the \textit{$\Delta$-forest activity}. For any decision tree $\Delta$ the following properties are also true.
\begin{itemize}
	\item Let $F$ be a spanning forest. An edge $e$ is $\Delta$-forest active if and only if $e$ is an external edge such that $F \cup e$ has a cycle \emph{and} $e$ is maximal in its fundamental cycle for the ordering induced by the edges $e_k$ in Algorithm \ref{ftype}.
	\item The set of subgraphs can be partitioned into subgraph intervals indexed by the spanning forests:
$$ 2^{E(G)} = \biguplus_{F\textrm{ spanning forest of }G} [F, F \cup \epsilon(F)], $$
where $\epsilon$ denotes the $\Delta$-forest activity.
  \item There exists a decision tree $\Delta'$ such that the external DFS activity coincides with the $\Delta'$-forest activity.
\end{itemize}

We could also define a dual notion of internal activity, with similar properties, based on connected subgraphs.

\section{Strongly Tutte-descriptive activities}

It is illusory to want to characterize every Tutte-descriptive activity. Most of them do not have any nice structure -- for example, an isthmus or a loop could be non active. We thereby need to add a constraint in order to narrow the set of interesting activities.

An activity $\psi$ is said to be \emph{strongly Tutte-descriptive} if it is Tutte-descriptive and it induces a partition of the subgraphs:
\begin{equation} \label{eqpart2} 
2^{E(G)} = \biguplus_{T\textrm{ spanning tree of }G} [T \backslash \psi(T), T \cup \psi(T)] \end{equation}
(this equation is equivalent to \eqref{eqpart}).
We conjecture that the strongly Tutte-descriptive activities are precisely the $\Delta$-activities.
\begin{conjecture}
For any graph $G$, an activity $\psi$ is strongly Tutte-descriptive iff there exists a decision tree $\Delta$ such that $\psi$ equals the $\Delta$-activity.
\end{conjecture}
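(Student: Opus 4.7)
The plan is to prove both directions of the conjecture. The forward direction is immediate from previously established results: if $\psi$ is a $\Delta$-activity for some decision tree $\Delta$, then Theorem~\ref{charact} ensures that $\psi$ is Tutte-descriptive, and Theorem~\ref{partition} shows that the intervals $[T \backslash \psi(T), T \cup \psi(T)]$ indexed by spanning trees partition $2^{E(G)}$. Hence $\psi$ is strongly Tutte-descriptive.

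For the reverse direction, I would construct a decision tree $\Delta$ by induction on $|E(G)|$. The base case $|E(G)|=1$ is trivial: the graph consists of a single isthmus or loop, the partition property forces $\psi(T) = T$, and the unique single-node decision tree works. The inductive step reduces to identifying a suitable root label $e_1$, then recursively constructing subtrees from activities induced on $G \backslash e_1$ and $G / e_1$, filling any unreachable subtree arbitrarily so that $\Delta$ remains a perfect binary tree.

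A key preliminary lemma I would establish is: if $e$ is an isthmus (resp.\ loop) of $G$, then $e \in \psi(T)$ for every spanning tree $T$. The proof uses a counting trick. For an isthmus $e$, every subgraph $S$ with $e \notin S$ lies in some interval $[T \backslash \psi(T), T \cup \psi(T)]$ where $e \in T$, and since $e \notin S$ we must have $e \in \psi(T)$; within such an interval exactly $2^{|\psi(T)|-1}$ subgraphs omit $e$. Summing yields $\sum_{T : e \in \psi(T)} 2^{|\psi(T)|} = 2^{|E(G)|}$, which combined with $\sum_T 2^{|\psi(T)|} = T_G(2,2) = 2^{|E(G)|}$ forces $e \in \psi(T)$ for every tree $T$. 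This lemma handles the inductive step when $G$ has an isthmus or loop: choose $e_1$ to be such an edge, build the used subtree from the naturally induced strongly Tutte-descriptive activity on $G / e_1$ (resp.\ $G \backslash e_1$), and fill the unreachable sibling subtree arbitrarily.

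The main obstacle is the case where $G$ is both loop-free and isthmus-free. Here one must find a standard edge $e_1$ that is \emph{never} active, i.e.\ $e_1 \notin \psi(T)$ for every spanning tree $T$. Such an edge is both necessary and sufficient for the construction, for the partition structure then forces $e_1 \in S \iff e_1 \in T(S)$ for every subgraph $S$, so deletion and contraction of $e_1$ yield well-defined activities on $G \backslash e_1$ and $G / e_1$ that one can check remain strongly Tutte-descriptive, enabling the induction. Existence of such an $e_1$ is the crux, and is where I expect the real difficulty to lie. A natural attack is extremal: consider an edge minimizing the cardinality $|\{T : e \in \psi(T)\}|$ and try to derive a contradiction if this minimum is positive, perhaps by exploiting $\sum_T 2^{|\psi(T)|} = 2^{|E(G)|}$ together with local exchange properties of spanning trees. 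That the statement is a conjecture rather than a theorem in the paper strongly suggests this existence step is genuinely delicate and may require a substantively new idea beyond the tools developed in Chapters~\ref{c:activities}--\ref{c:bloact}.
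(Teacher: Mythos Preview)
Your analysis is essentially correct and matches the paper's own treatment. The statement is a genuine \emph{conjecture} in the paper, not a theorem: the paper proves only the right-to-left implication (via the two theorems you cite) and then reduces the left-to-right implication to an equivalent open statement, which it calls Conjecture~2: for any graph $G$ with a standard edge and any strongly Tutte-descriptive activity $\psi$, there exists an edge active in no spanning tree. The author explicitly writes ``I will be very glad if a proof or a counter-example is found for this conjecture.'' So you have correctly identified both what can be done and where the genuine gap lies.

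Your inductive scheme and your identification of the ``never-active edge'' as the crux are exactly the content of the paper's proof of the equivalence Conjecture~1 $\Leftrightarrow$ Conjecture~2. One minor difference: for the lemma that isthmuses and loops are always active, the paper gives a direct argument rather than your counting one. For a loop $e$ and spanning tree $T$, set $S = T \cup e$; the unique $T'$ whose interval contains $S$ must satisfy $e \notin T'$ (loop) yet $e \in S$, hence $e \in \psi(T')$; then $T = S \backslash e$ also lies in that interval, and disjointness of the partition forces $T' = T$. Your counting argument is also valid, just slightly less direct. Your suggested extremal approach to the remaining existence problem is reasonable but, as you suspect, the paper offers no further progress on it.
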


The right-to-left implication has been already proven in this manuscript through Theorem~\ref{charact} p. \pageref{charact} and Theorem \ref{partition} p. \pageref{partition}.  Furthermore, the above conjecture is equivalent to the following one.
\begin{conjecture}
Let $G$ be a graph with a standard edge and $\psi$ a strongly Tutte-descriptive activity. There exists an edge $e$ of $G$ that is active in no spanning tree.
\end{conjecture}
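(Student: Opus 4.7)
My approach would be to prove the conjecture by induction on $|E(G)|$, first handling the ``easy'' case of loops and isthmuses. I would claim that any isthmus or loop of $G$ must be active for every spanning tree whenever $\psi$ is strongly Tutte-descriptive. For an isthmus $e$, suppose toward contradiction that $e \notin \psi(T^*)$ for some spanning tree $T^*$; since every spanning tree contains $e$, the subgraph $T^* \setminus e$ lies in the interval of some spanning tree $T \neq T^*$, and the inclusion $T \setminus \psi(T) \subseteq T^* \setminus e$ combined with $e \in T$ forces $e \in \psi(T)$. But then flipping $e$ in $T^* \setminus e$ within the interval of $T$ would produce $T^*$, which would lie in both the interval of $T$ and its own interval, contradicting the uniqueness of the partition. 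A dual argument handles loops. This step reduces the problem to graphs where every edge is standard: restricting $\psi$ to $G \setminus e$ for a loop, or $G/e$ for an isthmus, yields a strongly Tutte-descriptive activity on a smaller graph, and a never-active edge there lifts to a never-active edge in $G$.

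The central step would be to find a never-active edge when every edge of $G$ is standard. Here I would view each interval $[T \setminus \psi(T), T \cup \psi(T)]$ as a subcube of the Boolean cube $\{0,1\}^{E(G)}$ whose free coordinates are precisely the active edges of $T$; the conjecture then translates into the combinatorial statement that the induced subcube partition has some coordinate fixed in every subcube. My strategy would be a counting argument combining the partition identity $\sum_T 2^{|\psi(T)|} = 2^{|E(G)|}$ with the Tutte polynomial derivatives giving $\sum_T |\psi(T)| = \partial_x T_G(1,1) + \partial_y T_G(1,1)$. Assuming no such fixed coordinate exists would mean that every edge appears as a free coordinate in at least one subcube, and I would try to derive an inequality contradicting either the Tutte polynomial constraints or the rigidity of the partition structure, possibly via the matroid-theoretic exchange properties of spanning trees.

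The main obstacle, in my view, is precisely this central step. Unlike the $\Delta$-activity framework, where the root of the decision tree automatically provides a never-active edge, here one must extract such an edge from the abstract partition and the Tutte polynomial data alone. I expect the argument to require a genuinely new combinatorial insight about subcube partitions of Boolean cubes arising from matroid activities, likely leveraging the fact that the centers of the subcubes are bases of a matroid rather than arbitrary elements of $\{0,1\}^{E(G)}$. This matroid-theoretic content is almost certainly what makes the statement nontrivial and explains why it remains conjectural in the paper.
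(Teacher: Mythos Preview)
This statement is labelled a \emph{conjecture} in the paper and is not proved there; the only related result is the proof that Conjecture~1 and Conjecture~2 are equivalent. So there is no paper proof to compare against.

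Your preliminary step---that every loop and every isthmus is $\psi$-active for all spanning trees---is correct and in fact appears verbatim in the paper's proof of ``Conjecture~2 $\Rightarrow$ Conjecture~1''. Your reduction (delete a loop, contract an isthmus, restrict $\psi$) is also sound: the restricted activity is again strongly Tutte-descriptive, and a never-active edge in the reduced graph lifts to one in $G$. This is essentially the contrapositive of the inductive construction of $\Delta$ in the paper's equivalence proof.

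The genuine gap is exactly where you locate it: the case where every edge of $G$ is standard. Your proposed counting argument, however, is unlikely to succeed as stated. The identity $\sum_T 2^{|\psi(T)|} = 2^{|E(G)|}$ and the derivative formula $\sum_T |\psi(T)| = \partial_x T_G(1,1) + \partial_y T_G(1,1)$ are both consequences of strong Tutte-descriptiveness, but they do not by themselves force a coordinate to be fixed in every subcube: there are subcube partitions of $\{0,1\}^n$ in which every coordinate is free in some cube and which match any prescribed total count of free coordinates. The matroid constraint on the \emph{centers} of the cubes (they are the bases) is therefore essential, and your sketch does not indicate how the proposed inequalities would exploit it. You correctly identify this as the crux, and it is precisely why the statement remains conjectural.
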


I will be very glad if a proof or a counter-example is found for this conjecture. Here is a proof of the equivalence between the two conjectures:
\begin{proof}
\textbf{Conjecture 1 implies Conjecture 2.}  Let us prove by induction on the total number of loops and isthmuses  of $G$ that for each decision tree $\Delta$, there exists an edge $e$ that is  $\Delta$-active in no spanning tree. Let $e$ be the label of the root node of $\Delta$. If $e$ is standard (which happens when there is no isthmus nor loop in $G$), then $e$ cannot have type \bL\, nor \bI. Consequently, $e$ cannot be $\Delta$-active for any spanning tree. If $e$ is a loop (resp. an isthmus), then $e$ is external (resp. internal) in any spanning tree. So it will be deleted (resp. contracted) in the first step in Algorithm \ref{type} and we go to the left part (resp. right part) of $\Delta$, denoted by $\Delta'$. We then use the induction hypothesis with graph $\delete G e$ (resp. $\contract G e$) and decision tree $\Delta'$. 

 \textbf{Conjecture 2 implies Conjecture 1.} Let $\psi$ be a strongly Tutte-descriptive activity for a graph $G$. 
  First we have to prove that loops and isthmuses are active for any spanning tree $T$. Let $e$ be a loop and set $S = T \cup e$. By definition of a strongly Tutte-descriptive activity, there exists a spanning tree $T'$ such that $S \in \, [T' \backslash \psi(T'),T' \cup \psi(T')]$. Since $e$ is a loop, we have $e \notin T'$. But $e \in S$, hence $e \in \psi(T')$. The spanning tree $T = S \backslash e$ must thereby belong to $[T' \backslash \psi(T'),T' \cup \psi(T')]$. But the union in \eqref{eqpart2} is disjoint, so $T=T'$. Thus $e \in \psi(T)$. We can similarly show that every isthmus belongs to $\psi(T)$.
 
    Now let us build by induction (on the number of edges) a decision tree $\Delta$ such that $\psi$ equals the $\Delta$-activity. If $G$ has standard edges, we label the root of $\Delta$ by an edge $e$ that is active in no spanning tree. (We have used Conjecture 2.) The edge $e$ is standard since it is not active for $\psi$. So $e$ cannot be $\Delta$-active since it labels the root node of $\Delta$.
Then we build a strongly Tutte-descriptive activity $\psi_c$ for $\contract G e$ by setting $\psi_c(T) = \psi(T \cup e)$ and a strongly Tutte-descriptive activity $\psi_d$ for $\delete G e$ by setting $\psi_d(T) = \psi(T)$. (The proof that these activities are indeed strongly Tutte-descriptive are left to the reader.) By induction, there exist two decision trees $\Delta_c$ et $\Delta_d$ that correspond to $\psi_c$ and $\psi_d$. The left subtree of $\Delta$ is taken to be $\Delta_d$ and the right one  is taken to be $\Delta_c$. This $\Delta$-activity is equal to $\psi$ since every spanning tree $T$ with $e$ external satisfies $\psi(T) = \psi_d(T)$ and every spanning tree $T$ with $e$ internal satisfies $\phi(T) = \psi_c(T \backslash e)$.

There remains the case (which includes the base case of the induction) where $G$ has only loops and isthmuses. However, as any loop and  any isthmus is active for $\psi$, any decision tree fits.         
\end{proof}

\selectlanguage{french}

\bibliographystyle{alpha} 

\bibliography{coloured}

\noindent \textbf{Note. }Les numéros en fin de chaque référence correspondent aux pages du mémoire dans lesquelles cette référence a été citée.

\newpage
\thispagestyle{empty}
\selectlanguage{french}

\noindent {\Large  \textbf{Résumé}.}

Cette thèse porte sur le polynôme de Tutte, étudié selon différents points de
vue.
Dans une première partie, nous nous intéressons à l'énumération des cartes
planaires munies d'une forêt couvrante, ici appelées \textit{cartes forestières}, avec un
poids $z$ par face et un poids $u$ par composante non racine de la forêt. De
manière équivalente, nous comptons selon le nombre de faces les cartes planaires
$C$ pondérées par $T_C(u+1,1)$, où $T_C$ désigne le polynôme de Tutte de $C$.
Nous commençons par une caractérisation purement combinatoire de la série
génératrice correspondante, notée $F(z,u)$. Nous en déduisons que $F(z,u)$ est
\textit{différentiellement algébrique} en $z$, c'est-à-dire que $F$ satisfait une
équation différentielle polynomiale selon $z$. Enfin, pour $u \geq -1$, nous
étudions  le comportement asymptotique du $n$-ième coefficient de $F(z,u)$. Nous
observons une transition de phase en $0$, avec notamment un régime très atypique
en $n^{-3}\ln^{-2} (n)$ pour $u \in [-1,0[$, témoignant d'une nouvelle \textit{classe
d'universalité} pour les cartes planaires.
 Dans une seconde partie, nous
proposons un cadre unificateur pour les différentes notions d'activités utilisées dans
la littérature pour décrire le polynôme de Tutte. La nouvelle notion d'activité
ainsi définie  est appelée \textit{$\Delta$-activité}. Elle regroupe toutes les notions
d'activité déjà connues et présente de belles propriétés, comme celle de Crapo
qui définit une partition  (adaptée à l'activité) du treillis des sous-graphes couvrants
en intervalles. Nous conjecturons en dernier lieu que toute activité qui décrit
le polynôme de Tutte et qui satisfait la propriété susmentionnée de Crapo peut
être définie en termes de $\Delta$-activités.

\noindent \textbf{Mots clés :}  combinatoire énumérative, polynôme de Tutte, cartes planaires,
forêts couvrantes, algébricité différentielle, comportement asymptotique,
activités.
\vfill

\selectlanguage{english}
\noindent {\Large \textbf{Abstract}.}

This thesis deals with the Tutte polynomial, studied from different points of
view. In the first part, we address the enumeration of planar maps equipped with
a spanning forest, here called \textit{forested maps}, with a weight $z$ per face and a
weight $u$ per non-root component  of the forest. Equivalently, we count (with
respect to the number of faces) the planar maps $C$ weighted by $T_C(u+1,1)$,
where $T_C$ is the Tutte polynomial of $C$. We begin by a purely combinatorial
characterization of the corresponding generating function, denoted
by $F(z,u)$. We deduce from this that $F(z,u)$ is \textit{differentially algebraic} in
$z$, that is, satisfies a polynomial differential equation in $z$. Finally, for
$u \geq -1$, we study the asymptotic behaviour of the $n$th coefficient of $F
(z,u)$. We observe a phase transition at $0$, with a very unusual
regime in $n^{-3}\ln^{-2} (n)$ for $u \in [-1,0[$, which testifies a new
\textit{universality class} for planar maps. 
In the second part, we propose a
framework  unifying the  notions of activity used in the literature to
describe the Tutte polynomial. The new notion of activity thereby defined is
called \textit{$\Delta$-activity}. It gathers all the notions of activities that were
already known and has nice properties, as Crapo's property that defines a partition of the lattice of the spanning subgraphs into intervals with respect to the activity. Lastly we
conjecture that every activity that describes the Tutte polynomial and that
satisfies Crapo's property can be defined in terms of $\Delta$-activity.

\noindent \textbf{Keywords:} enumerative combinatorics, Tutte polynomial, planar maps, spanning
forests, differentially algebraic, asymptotic behaviours, activities.

\end{document}